 \theoremstyle{plain}
\newtheorem{thm}{Theorem}[section]
 \theoremstyle{plain}
 \theoremstyle{plain}
\newtheorem{conj}{Conjecture}[section]
\theoremstyle{plain}
  \newtheorem{prop}[thm]{Proposition}
\theoremstyle{plain}
 \newtheorem{lemma}[thm]{Lemma}
\theoremstyle{plain}
\theoremstyle{plain}
\newtheorem{cor}[thm]{Corollary}
\theoremstyle{plain}
\theoremstyle{definition}
  \newtheorem{defn}[thm]{Definition}
\theoremstyle{definition}
 \theoremstyle{definition}
  \newtheorem{warning}[thm]{Warning}
  \newtheorem{hypothesis}[thm]{Hypothesis}
 \newtheorem{property}[thm]{Property}
\theoremstyle{remark}
\newtheorem{rmk}[thm]{Remark}
\numberwithin{equation}{section}
\newcommand{\Z}{\mathbb{Z}}
\newcommand{\Q}{\mathbb{Q}}
\newcommand{\Qp}{\mathbb{Q}_p}
\newcommand{\R}{\mathbb{R}}
\newcommand{\F}{\mathbb{F}}
\newcommand{\N}{\mathbb{N}}
\newcommand{\A}{\mathbb{A}}
\newcommand{\fM}{\mathfrak{M}}
\newcommand{\fP}{\mathfrak{P}}
\newcommand{\fm}{\mathfrak{m}}
\newcommand{\fp}{\mathfrak{p}}
\newcommand{\bA}{\mathbb{A}}
\newcommand{\bT}{\mathbb{T}}
\newcommand{\cA}{\mathcal{A}}
\newcommand{\cC}{\mathcal{C}}
\newcommand{\cJ}{\mathcal{J}}
\newcommand{\cO}{\mathcal{O}}
\newcommand{\cP}{\mathcal{P}}
\newcommand{\cW}{\mathcal{W}}
\newcommand{\cX}{\mathcal{X}}
\newcommand{\eps}{\varepsilon}
\newcommand{\phz}{\varphi}
\newcommand{\Zp}{\mathbb{Z}_p}
\newcommand{\Id}{\mathrm{id}}
\newcommand{\Gal}{\mathrm{Gal}}
\newcommand{\Hom}{\mathrm{Hom}}
\newcommand{\Ext}{\mathrm{Ext}}
\newcommand{\Tor}{\mathrm{Tor}}
\newcommand{\Res}{\mathrm{Res}}
\newcommand{\End}{\mathrm{End}}
\newcommand{\Aut}{\mathrm{Aut}}
\newcommand{\GL}{\mathrm{GL}}
\newcommand{\Spec}{\mathrm{Spec}\ }
\newcommand{\Frob}{\mathrm{Frob}}
\newcommand{\Fp}{\F_p}
\newcommand{\un}[1]{\underline{#1}}
\renewcommand{\bf}[1]{\mathbf{#1}}
\newcommand{\Rep}{\mathrm{Rep}}
\newcommand{\tld}[1]{\widetilde{#1}}
\newcommand{\JH}{\mathrm{JH}}
\newcommand{\supp}{\mathrm{Supp}}
\newcommand{\Adm}{\mathrm{Adm}}
\newcommand{\Trns}{\mathfrak{Tr}}
\newcommand{\rbar}{\overline{r}}
\newcommand{\rhobar}{\overline{\rho}}
\newcommand{\taubar}{\overline{\tau}}
\newcommand{\Spf}{\mathrm{Spf}}
\newcommand{\rG}{\mathrm{G}}
\newcommand{\rad}{\mathrm{rad}}
\newcommand{\cosoc}{\mathrm{cosoc}}
\newcommand{\soc}{\mathrm{soc}}
\newcommand{\defeq}{\stackrel{\textrm{\tiny{def}}}{=}}
\newcommand{\ovl}[1]{\overline{#1}}
\newcommand{\leqeta}{\leq\mkern-1.5mu\eta}
\newcommand{\orient}{\textrm{or}}
\newif\iffinalrun
  \newcommand{\mar}[1]{}
  \newcommand{\mar}[1]{\marginpar{\raggedright\tiny #1}}
\DeclareMathOperator{\Mod}{Mod}
\DeclareMathOperator{\Coh}{Mod}
\DeclareMathOperator{\Fil}{Fil}
\DeclareMathOperator{\gr}{gr}
\DeclareMathOperator{\coker}{coker}
\newcommand{\ra}{\rightarrow}
\newcommand{\into}{\hookrightarrow}
\newcommand{\surj}{\twoheadrightarrow}
\newcommand{\onto}{\twoheadrightarrow}
\newcommand{\risom}{\buildrel\sim\over\rightarrow} \usepackage{enumitem}
\setlist{nolistsep}
\title{$K_1$-invariants in the mod $p$ cohomology of $U(3)$ arithmetic manifolds}
\author{Daniel Le}
\address{Department of Mathematics,
Purdue University,
150 N. University Street, 
West Lafayette, IN 47907-2067}
\email{ledt@purdue.edu}
\author{Bao V.~Le Hung}
\address{Department of Mathematics,
Northwestern University, 
2033 Sheridan Road\\
Evanston, IL 60208, USA}
\email{lhvietbao@googlemail.com}
\author{Stefano Morra}
\address{LAGA, UMR 7539, CNRS, Universit\'e Paris 13 - Sorbonne Paris Cit\'e, 
Universit\'e de Paris 8,
99 avenue Jean Baptiste Cl\'ement,
93430 Villetaneuse,
France }
\email{morra@math.univ-paris13.fr}
\begin{document}

\begin{abstract}
Let $F/F^+$ be a CM extension and $H_{/F^+}$ a definite unitary group in three variables that splits over $F$. 
We describe Hecke isotypic components of mod $p$ algebraic modular forms on $H$ at first principal congruence level at $p$ and ``minimal" level away from $p$ in terms of the restrictions of the associated Galois representation to decomposition groups at $p$ when these restrictions are tame and sufficiently generic. 
This confirms an expectation of local-global compatibility in the mod $p$ Langlands program. 
To prove our result, we develop a local model theory for multitype deformation rings and new methods to work with patched modules that are not free over their scheme-theoretic support. 
\end{abstract}

\maketitle

\tableofcontents

\section{Introduction}

\subsection{The main result}
In this paper, we describe some Hecke isotypic components of spaces of algebraic modular forms at first principal congruence level for definite unitary groups in three variables. 
We begin by motivating this problem. 
Let $p$ be a prime and $\F/\F_p$ be a (sufficiently large) finite extension. 
Let $F/F^+$ be a CM extension for which $p$ is inert in $F^+$ and splits in $F$. 
Let $n$ be a positive integer and $H_{/F^+}$ be an outer form of $\GL_n$ which splits over $F$ and is definite at infinity i.e.~$H(F^+ \otimes_{\Q}\R)$ is compact. 
Let $U^p \subset H(\A_{F^+}^{\infty p})$ be a compact open subgroup. 
We define a space of mod $p$ modular forms
\[
S(U^p,\F) \defeq \{f: H(F^+) \backslash H(\A_{F^+}^\infty)/U^p \ra \F \textrm{ locally constant}\}
\]
at infinite level at $p$. 
This has a faithful action of a Hecke algebra $\bT = \F[T_{\tld{v}}^{(j)}]_{1\leq j \leq n,v\in \cP}$ over $\F$ at ``good places" $v\in \cP$ where $\cP$ is a cofinite subset of the places of $F^+$ that split in $F$ (the indexing of the Hecke operators depends on a choice of a place $\tld{v}$ of $F$ lying over $v$). 
The ring $\bT$ is semilocal, and for a homomorphism $\alpha: \bT\ra \F$ with kernel $\fm$ there is a continuous semisimple representation $\rbar: G_F \ra \GL_n(\F)$ whose conjugacy class is characterized by the equations 
\[
\det(xI_n-\rbar(\Frob_{\tld{v}})) = \sum_{i=0}^n (-1)^i (\mathbf{N}\tld{v})^{{i}\choose{2}} \alpha(T_{\tld{v}}^{(j)}) x^{n-i} 
\]
for each $v\in\cP$ (see \cite[Proposition 3.4.2]{CHT}). 
Suppose from now on that $\rbar$ is irreducible. 
Motivated by the (classical) local Langlands correspondence and local-global compatibility, there is the following central conjecture in the mod $p$ Langlands program. 

\begin{conj} \label{conj:LLC}
For a finite extension $K/\Qp$, there is an injection 
\[
\mathrm{LLC}:\{G_K \ra \GL_n(\F)\}_{/\cong} \ \lhook\joinrel\longrightarrow  \ \{\textrm{finite length smooth admissible }\GL_n(K)\textrm{-representations}/\F\}_{/\cong}
\] 
such that for a place $w|p$ of $F$, $S(U^p,\F)[\fm] \cong \mathrm{LLC}(\rbar|_{G_{F_w}})^{\oplus d}$ as $H(F^+_p) \cong \GL_n(F_w)$-representations for some positive integer $d = d(U^p)$. 
\end{conj}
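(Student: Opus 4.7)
The plan is to adopt the now-standard Taylor--Wiles--Kisin patching strategy to bridge the global space $S(U^p,\F)[\fm]$ with an intrinsic local construction on the $\GL_n(F_w)$ side. The overall architecture has three layers: construct a candidate $\mathrm{LLC}(\rbar|_{G_{F_w}})$ from local data, patch $S(U^p,\F)[\fm]$ to obtain a large module $M_\infty$ carrying commuting actions of $\GL_n(F_w)$ and a local deformation ring, and finally identify the two by tracking enough invariants that the $\GL_n(F_w)$-structure is forced.

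First I would build the candidate $\mathrm{LLC}(\rbar|_{G_K})$ for tame generic $\rbar|_{G_K}$ in the spirit of Breuil--Pa\v{s}k\=unas and Herzig: its $\GL_n(\cO_K)$-socle is prescribed by the Serre weight conjectures, and its higher layers should be assembled from principal series or parabolically induced pieces whose internal structure is dictated by the combinatorial data of $\rbar|_{G_K}$. For $n = 3$ one expects these candidates to be generated as $\GL_n(F_w)$-representations by their $K_1$-invariants, so that Conjecture \ref{conj:LLC} at this truncation reduces to identifying the $K_1$-invariant subspace of $S(U^p,\F)[\fm]$ with an explicit local recipe in terms of $\rbar|_{G_{F_w}}$.

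Next I would globalize $\rbar$ to a suitable automorphic $\rho$ and carry out the Taylor--Wiles--Kisin patching of $S(U^p,\F)[\fm]$, producing a module $M_\infty$ over a power series ring $R_\infty$ containing the local framed deformation ring $\bar R$ attached to $\rbar|_{G_{F_w}}$ at appropriate (multi)types. The classical points of $\Spec \bar R[1/p]$ would compute classical automorphic forms through crystalline/semistable local-global compatibility, while the mod $p$ fibre recovers (a power of) the module we want to describe. I would then analyse $M_\infty$ using a local model for the multitype potentially crystalline deformation rings, matching Serre weights, Jacquet modules, and $K_1$-fixed vectors on both sides to pin down $M_\infty$, and hence $S(U^p,\F)[\fm]$, up to the multiplicity $d = d(U^p)$.

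The hard part, and the reason the full conjecture remains open for $n \ge 3$, is twofold: there is no intrinsic definition of $\mathrm{LLC}$ on the $\GL_n(F_w)$ side beyond partial invariants, and the patched module $M_\infty$ is not in general free over its scheme-theoretic support in $\bar R$, so the usual ``$M_\infty$ is Cohen--Macaulay of the expected rank'' heuristic fails. Both obstacles would be addressed by the methods announced in the abstract, namely the local model theory for multitype deformation rings (which organises the local geometry finely enough to read off Serre weights and $K_1$-invariants from $\bar R$) and the new techniques for patched modules that are not free over their support (which extract the precise $\GL_n(F_w)$-structure from a non-free $M_\infty$). Without these ingredients the patching machinery yields only qualitative information; with them, one can hope to match the $K_1$-invariants predicted by the local recipe and thereby confirm Conjecture \ref{conj:LLC} at first principal congruence level for $n = 3$.
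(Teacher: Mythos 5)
You have not been asked to prove a theorem here: the statement labelled \texttt{conj:LLC} is stated in the paper as a \emph{conjecture}, and the paper neither proves it nor claims to. What the paper actually establishes (Theorem \ref{thm:intro:main}/Theorem \ref{main:glob:app}) is a strictly weaker result: for $n = 3$, $\rbar$ tame and sufficiently generic at $p$, $U^p$ minimal, and $\rbar$ satisfying a Taylor--Wiles hypothesis, the $\GL_3(k_w)$-representation $\pi(\rbar)^{U(p)}$ is uniquely determined by $\rbar|_{G_{F_w}}$. This controls only the $K_1$-invariants, not the entire smooth admissible $\GL_n(F_w)$-representation $S(U^p,\F)[\fm]$, and it gives no intrinsic local definition of $\mathrm{LLC}$, no injectivity statement, and nothing outside the tame generic range.

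Your outline is, as far as it goes, a fair sketch of the paper's actual strategy for its main theorem: compute Serre weight multiplicities of $\pi(\rbar)^{U(p)}$ via Taylor--Wiles--Kisin patching and the local model theory for multitype deformation rings, construct a candidate $D_{\mathrm{m}}$ inside $\pi(\rbar)^{U(p)}$, and show $\pi(\rbar)^{U(p)}$ is forced by a maximality property. But you then slide from this to the conjecture as if the gap were merely technical. It is not. Even granting the entire machinery, what you would obtain is a description of the $K_1$-invariants in terms of $\rbar|_{G_{F_w}}$ under strong hypotheses; to get the conjecture you would additionally need (i) a local construction of a full finite-length smooth admissible representation $\mathrm{LLC}(\rhobar)$ for \emph{every} $\rhobar: G_K \to \GL_n(\F)$, (ii) a proof that $\pi(\rbar)$ is generated by its $U(p)$-invariants (for $n > 2$ this is not known, and the paper cites it only as a theorem of Hu--Wang and BHHMS for $n = 2$), and (iii) injectivity of $\mathrm{LLC}$. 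None of these is addressed in your proposal, and they are precisely what remains open. Your closing sentences essentially concede this (``one can hope to match the $K_1$-invariants\ldots and thereby confirm\ldots at first principal congruence level''), which is a confirmation of the paper's theorem, not of the conjecture. As written, the proposal is therefore not a proof of the stated conjecture but a restatement, in the conditional, of the paper's plan for its $K_1$-invariant theorem.
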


\noindent 
This injection should be the atom with which one builds a deformation theoretic $p$-adic local Langlands correspondence that mediates Langlands reciprocity in $p$-adic families. 
Such an injection has been constructed for $n=2$ and $K = \Q_p$, but the situation appears to be substantially more complicated if $n>2$ or $K \neq \Qp$. 
One reason is that there does not seem to be a simple classification of irreducible smooth admissible $\GL_n(K)$-representations, not to mention a (conjectural) local characterization of $\mathrm{LLC}$ or a simple (conjectural) description of the image of $\mathrm{LLC}$. 
Given these difficulties, it is natural to study the space $S(U^p,\F)[\fm]$ directly. 

At present, the strongest evidence towards Conjecture \ref{conj:LLC} is a description of the constituents of the $\GL_n(\cO_{F_w})$-socle of $S(U^p,\F)[\fm]$ when $\rbar$ is tamely ramified and sufficiently generic at $p$ \cite{MLM}. 
When $n=2$, there is stronger evidence---the invariants of $\pi(\rbar) \defeq S(U^p,\F)[\fm]$ under the pro-$p$ Iwahori and first principal congruence subgroups can be described in terms of $\rhobar|_{G_{F_w}}$ \cite{EGS,HW,LMS,Le} when $\rhobar|_{G_{F_w}}$ is sufficiently generic, $\rbar$ satisfies a Taylor--Wiles hypothesis, and $U^p$ is ``minimal'' (in particular, this implies $d=1$). 
There are a number of reasons to single out these two compact open subgroups, even for general $n$. 
First, they are pro-$p$ subgroups so that their invariants are necessarily nonzero. 
Second, their normalizers jointly generate the group $\GL_n(F_w)$ which is a central feature in the theory of coefficient systems on buildings \cite{SS,Paskunas,BP}. 
Finally, it is expected that the invariants under the first principal subgroup $U(p) \defeq \ker(\GL_n(\cO_{F_w})\ra \GL_n(k_w))$ generate $\pi(\rbar)$ (this is known for $n=2$ and $\rhobar$ sufficiently generic by \cite[Theorem 1.6]{HW2} and \cite[Theorem 1.3.8]{BHHMS2}). 
Far less is known when $n>2$, and current evidence suggests that the situation is very complicated. 
When $n=3$ and the level is ``minimal", the authors and Levin described the invariants of $\pi(\rbar)$ under the pro-$p$ Iwahori subgroup in terms of $\rhobar|_{G_{F_w}}$ when this representation is sufficiently generic \cite{LLLM2,GL3Wild}. 
In this paper, we build on these results to describe the invariants of $\pi(\rbar)$ under the first principal congruence subgroup $U(p)$ when $n=3$ and $\rbar$ is sufficiently generic and tamely ramified at $p$. 

\begin{thm}[Theorem \ref{main:glob:app}]\label{thm:intro:main}
Suppose that $n=3$. 
Moreover, suppose that 
\begin{itemize}
\item $\rbar$ satisfies a Taylor--Wiles hypothesis; 
\item $\rbar$ is tamely ramified and sufficiently generic at $p$; and 
\item $U^p$ is ``minimal". 
\end{itemize}
Then the $\GL_3(k_w)$-representation $\pi(\rbar)^{U(p)}$ is uniquely determined by $\rbar|_{G_{F_w}}$. 
\end{thm}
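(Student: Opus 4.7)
The plan is to reduce the problem to a local computation via Kisin's patching method, then compute the resulting patched module on a carefully chosen family of $\GL_3(\cO_{F_w})$-representations, using the local model theory for multitype deformation rings announced in the abstract.

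First, I would apply Taylor--Wiles--Kisin patching to $S(U^p,\F)[\fm]$ to construct a finitely generated $R_\infty$-module $M_\infty$ equipped with a commuting $\GL_3(\cO_{F_w})$-action, where $R_\infty$ is a power series ring over the local framed deformation ring at $w$ tensored with the unramified deformation rings at the other places of $\cP$. The Taylor--Wiles and minimality hypotheses ensure that $\pi(\rbar)^{U(p)}$ is recovered, as a $\GL_3(k_w)$-representation, from $(M_\infty/\fm_\infty)^{U(p)}$. Thus the problem reduces to reconstructing this $\GL_3(k_w)$-module from $\rbar|_{G_{F_w}}$ alone.

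Second, since $\GL_3(k_w)$ is finite, $(M_\infty/\fm_\infty)^{U(p)}$ is pinned down by its Jordan--H\"older constituents together with all extension classes between them. The strategy is: (i) for each Serre weight $\sigma$ in the explicit set predicted by \cite{MLM}, compute $M_\infty(\sigma)$ as a module over the appropriate component of the tame deformation ring, extracting its multiplicity in $\pi(\rbar)^{U(p)}$; (ii) for each tame principal series type $\tau$ containing several such $\sigma$'s, compute $M_\infty$ on an $\cO$-lattice in $\tau$ and use the resulting filtration to read off the extension classes between Serre weights; (iii) glue these computations, using the description of pro-$p$ Iwahori invariants from \cite{LLLM2, GL3Wild} as the ``skeleton'' onto which the higher extension structure is grafted. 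Each of these computations should depend only on $\rbar|_{G_{F_w}}$, since the tame deformation rings and their local models do.

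The main obstacle, and the source of the new technical tools, is that for $n=3$ the patched modules $M_\infty(\sigma)$ and $M_\infty(\tau)$ are in general \emph{not} free over their scheme-theoretic support. In the $n=2$ setting, freeness (cyclicity) lets one read off multiplicities directly from generic ranks, but for $n=3$ this fails and ranks alone do not determine the isomorphism class of $M_\infty(\sigma)$. Overcoming this requires the local model theory for multitype deformation rings: explicit affine charts at intersections of several tame components allow one to track the support of each $M_\infty(\sigma)$ across these intersections and control the failure of freeness uniformly. The hardest step will be the extension class computation in (ii), where one must replace rank-counting by a refined analysis of Fitting ideals, or equivalently of the coherent sheaf associated to $M_\infty$ on the multitype local model. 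Once this analysis is in place, assembling all the local data reconstructs $\pi(\rbar)^{U(p)}$ from $\rbar|_{G_{F_w}}$, as required.
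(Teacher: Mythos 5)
Your proposal correctly identifies the global-to-local reduction via Taylor--Wiles--Kisin patching, the role of local models for multitype deformation rings, and the central obstruction that patched modules are not free over their scheme-theoretic support for $n=3$. Step (i), computing multiplicities $\dim_{\F}\Hom_{\GL_3(k_w)}(P_\sigma,\pi(\rbar)^{U(p)})$ for $\sigma \in W^?(\rhobar)$, is indeed the first major step of the paper's proof (Theorem \ref{thm:mingen}).

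However, there is a genuine gap in steps (ii)--(iii). You assert that $(M_\infty/\fm_\infty)^{U(p)}$ is ``pinned down by its Jordan--H\"older constituents together with all extension classes between them,'' and propose to reconstruct it by computing those extension classes and gluing. This is not true, and the failure is precisely the new phenomenon for $n=3$ that the paper highlights: there is a \emph{positive-dimensional moduli} of $\GL_3(k_w)$-representations sharing the same socle, the same Jordan--H\"older multiset, and the same pairwise extension data as $\pi(\rbar)^{U(p)}$. (This is tied to the fact that, unlike when $n=2$, constituents of $\soc\,\pi(\rbar)^{U(p)}$ reappear beyond the socle.) Multiplicities and $\Ext^1$-groups are discrete invariants and cannot single out a specific point of this continuous moduli. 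Consequently your proposed ``gluing onto the pro-$p$ Iwahori skeleton'' has no mechanism for choosing the correct point, and a proof along those lines would stall exactly there.

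The paper circumvents this by an argument of a different shape: it constructs an explicit subrepresentation $D_{\mathrm{m}}(\rhobar)$ of $\pi(\rbar)^{U(p)}$ (built from carefully chosen lattices in Deligne--Lusztig representations, with structure constants $\kappa_{\min}(a,b,c)$ read off from the local model in \S\ref{subsub:surgeries}), shows by a patched computation that $D_{\mathrm{m}}$ injects into $\pi(\rbar)^{U(p)}$ with the same socle and the same $W^?$-multiplicities, and then characterizes $\pi(\rbar)^{U(p)}$ as the \emph{unique maximal} extension $D_0(\rhobar) \supset D_{\mathrm{m}}(\rhobar)$ inside $\bigoplus_{\sigma\in W^?(\rhobar)} P_\sigma$ whose quotient by $D_{\mathrm{m}}$ has no modular Jordan--H\"older factors (Theorem \ref{thm:K1}). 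The ``coordinates'' on the moduli that you would need to pin down the isomorphism class are supplied by these $\kappa$-constants together with the surgery maps on ideal sheaves over the local model, not by Fitting ideals or $\Ext$-computations. So your proposal captures the machinery and the obstacle but not the decisive idea of the uniqueness-of-maximal-extension characterization that makes the locality statement go through.
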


\begin{rmk}
\begin{enumerate}
\item In fact, we show that $\pi(\rbar)^{U(p)}$ is uniquely determined by the restriction $\rbar|_{I_{F_w}}$ to the inertial subgroup. 
We also explicitly determine the semisimplification of $\pi(\rbar)^{U(p)}$ (see Theorem \ref{thm:multiplicity}). 
\item The assumption that $p$ is inert in $F^+$ can be replaced by an assumption that $p$ is unramified in $F^+$. 
\item It seems likely that one could remove the hypothesis that $\rbar$ is tamely ramified at $p$ by breaking the problem into cases based on the set of extremal weights \cite{GL3Wild,OBW}.
\item In stark contrast to $n=2$, many constituents of $\soc\,\pi(\rbar)^{U(p)}$ reappear beyond the socle.
Theorem \ref{main:glob:app} thus has the flavor of characterizing a specific point in a (continuous) moduli of representations with the same discrete invariants.
This does not seem to have been previously anticipated in the literature, even though the appearance of constituents of $\soc\,\pi(\rbar)^{U(p)}$ beyond the socle was expected by F.~Herzig (private communication).
\end{enumerate}
\end{rmk}

\cite{BHHMS2} gives a number of refinements of Conjecture \ref{conj:LLC} and proves some of these when $n=2$ in part by using results on invariants of $\pi(\rbar)$. 
We expect that Theorem \ref{thm:intro:main} will have similar applications, and we hope to return to this in future work. 

\subsection{Methods}

The proof of Theorem \ref{thm:intro:main} can be broken into three steps. 
Let $n=3$ and $U^p$ and $\rbar$ be as in Theorem \ref{thm:intro:main}. 
Let $\rhobar \defeq \rbar|_{G_{F_w}}$. 
It was shown in \cite{LLLM2} that the $\GL_3(\cO_{F_w})$-socle of $\pi(\rbar)$ is $\oplus_{\sigma \in W^?(\rhobar)} \sigma$ where $W^?(\rhobar)$ is an explicit set of irreducible $\GL_3(\cO_{F_w})$-representations over $\F$ (equivalently irreducible $\GL_3(k_w)$-representations) predicted in \cite{herzig-duke,GHS}. 
First one calculates the dimension of $\Hom_{\GL_3(k_w)}(P_\sigma,\pi(\rbar)^{U(p)})$ for each $\sigma \in W^?(\rhobar)$ where $P_\sigma$ denotes a $\GL_3(k_w)$-projective cover of $\sigma$. 
This gives the multiplicity of each $\sigma \in W^?(\rhobar)$ as a Jordan--H\"older factor of $\pi(\rbar)^{U(p)}$. 
Next, we construct a certain $\GL_3(k_w)$-representation $D_{\mathrm{m}}$ depending on $\rbar|_{I_{F_w}}$ such that $\JH(D_{\mathrm{m}}) = W^?(\rhobar)$, $D_{\mathrm{m}}$ injects into $\pi(\rbar)^{U(p)}$, and both $\pi(\rbar)^{U(p)}$ and $D_{\mathrm{m}}$ contain each $\sigma \in W^?(\rhobar)$ as a Jordan--H\"older factor with the same multiplicity. 
Finally, a simple argument shows that $\pi(\rbar)^{U(p)}$ is the unique maximal representation containing $D_{\mathrm{m}}$ such that they have the same $\GL_3(k_w)$-socle and contain each $\sigma \in W^?(\rhobar)$ as a Jordan--H\"older factor with the same multiplicity. 

The most difficult step is calculating the dimensions of the multiplicity spaces $\Hom_{\GL_3(k_w)}(P_\sigma,\pi(\rbar)^{U(p)})$ for $\sigma \in W^?(\rhobar)$. 
The basic idea, going back to Taylor and Wiles, is to replace the dual of $\pi(\rbar)^{U(p)}$ with a space $M_\infty$ (we caution that this is slightly different than the meaning of $M_\infty$ later in the text) obtained by taking truncated Witt vectors as coefficients, adding auxiliary level (at Taylor--Wiles places), and then taking a noncanonical limit. 
This object, and more generally its multiplicity spaces (the ``patched modules''), are maximal Cohen--Macaulay modules over local Galois deformation spaces with $p$-adic Hodge theory conditions. While these objects are very non-canonical, taking fiber at the closed point recovers their original versions. Thus, dimensions of mod $p$ multiplicity spaces can be computed as minimal number of generators of patched modules.
Diamond and Fujiwara observed that when a patched module is supported over a local deformation space that happens to be regular, then it is free by theorems of Serre and Auslander--Buchsbaum. This shows a mod $p$ multiplicity agrees with a characteristic $0$ multiplicity, which are often known by automorphic methods. 
In \cite{EGS}, Emerton, Gee and Savitt pioneered methods for studying mod $p$ multiplicity questions when the relevant deformation spaces are not regular by taking advantage of the fact that $M_\infty$ is a projective $W(\F)[\GL_3(k_w)]$-module i.e.~$M_\infty(-)\defeq \Hom_{\GL_3(k_w)}(M_\infty,(-)^\vee)^\vee$ defines an exact functor where $(-)^\vee$ denotes Pontrjagin duality. This implies: 
\begin{enumerate}
\item (Nakayama gluing) If $V\subset W$ such that $M_\infty(V) \subset \fm M_\infty(W)$, then $M_\infty(W)$ and $M_\infty(W/V)$ have the same minimal number of generators. 
\item (Fiber product gluing) A quasi-isomorphism $V \ra P^\bullet$ of complexes of $W(\F)[\GL_3(k_w)]$-modules gives a quasi-isomorphism $M_\infty(V) \ra M_\infty(P^\bullet)$. 
\end{enumerate}
By combining these two properties, one has a potential strategy to compute the minimal number of generators of complicated patched modules from simpler ones. 
We will first illustrate the strategy for $n=2$ and then describe what changes for $n=3$. 

Suppose that $n=2$, that $\rhobar$ is tame and sufficiently generic, and that $\sigma \in W^?(\rhobar)$. 
Let $\ovl{P}_\sigma\defeq P_\sigma/\rad^2(P_\sigma)$ where $\rad^\bullet$ denotes the radical filtration. Then $\ovl{P}_\sigma$ sits in a short exact sequence 
\[
0 \ra \ovl{P}_\sigma \ra \bigoplus_i Q_i \ra (\bigoplus_i \sigma)/\Delta\sigma \ra 0,
\]
where $Q_i$ denote the different length two quotients of $\ovl{P}_\sigma$ and $\Delta\sigma \subset \bigoplus_i \sigma$ is the diagonally embedded copy. Using the Diamond--Fujiwara trick, one shows $M_\infty(Q_i)$ is cyclic. This allows us to compute $M_\infty(Q_i)$ and check that $M_\infty(\ovl{P}_\sigma)$ is also cyclic via fiber product gluing.
Finally, one shows that $M_\infty(P_\sigma/\rad^m P_\sigma)$ is cyclic for all $m$ inductively using Nakayama gluing, the crucial ingredient being that $M_\infty(\rad^{m-1}P_\sigma /\rad^{m+1} P_\sigma)$ and $M_\infty(\rad^{m-1}P_\sigma/\rad^m P_\sigma)$ have the same minimal number of generators. This last fact follows from the cyclicity of $M_\infty(\ovl{P}_\kappa)$ for \emph{all} $\kappa \in W^?(\rhobar) \cap \JH(\rad^{m-1}P_\sigma/\rad^m P_\sigma)$, and the following ``covering'' property of $\rad^{m-1}P_\sigma/\rad^m P_\sigma$ with respect to $W^?(\rhobar)$:
\begin{property}\label{property:covering}
The cokernel of the evaluation map
\[\bigoplus_{\kappa \in W^?(\rhobar) \cap \JH(\rad^{m-1}P_\sigma/\rad^m P_\sigma)} P_\kappa\otimes \Hom_{\GL_3(k_w)}(P_\kappa,\rad^{m-1}P_\sigma /\rad^{m+1} P_\sigma) \ra \rad^{m-1}P_\sigma /\rad^{m+1} P_\sigma\]
has no Jordan-Holder factors in $W^?(\rhobar)$.
\end{property}

There are two major obstacles to carrying out an analogous strategy when $n>2$. 
First, one would like to show an analogue of Property \ref{property:covering}. 
Unfortunately, the structure of projective $\F[\GL_n(k_w)]$-modules is poorly understood. 
While it is known that generic projective indecomposable $\F[\GL_n(k_w)]$-modules $P_\sigma$ are the restrictions of certain tilting modules for (a product of) $\GL_{n/\F}$, describing the submodule structure of such tilting modules appears to be hopeless in general. 
Fortunately, when $n=3$, tilting modules for $\GL_{3/\F}$ are fairly well-understood (in part because $\Ext^1$ groups for simple modules have dimension at most one) \cite{BDM}, from which we are able to deduce Property \ref{property:covering} (Proposition \ref{prop:W?-cover}). 
Much of the first part of \S \ref{sec:rt} is devoted to these considerations. 

The second obstacle is that $M_\infty(P_\sigma/\rad^2(P_\sigma))$ is often not cyclic, which prevents us from Nakayama gluing as when $n=2$. 
The remedy is to instead consider larger quotients of $P_\sigma$. 
We first elaborate when $F^+ = \Q$ as the general case involves substantial complication in a different direction. %
Here, the highest weight of a Serre weight is in one of two $p$-restricted alcoves, an upper alcove and a lower alcove. 
Moreover, the radical layers of a projective indecomposable $\F[\GL_3(\F_p)]$-module contain Serre weights in a single $p$-alcove of alternating type. 
One can compute $M_\infty(P_\sigma/\rad^2(P_\sigma))$ for $\sigma\in W^?(\rhobar)$ using fiber product gluing and see that it is not cyclic exactly when $\sigma$ is in the lower alcove, which precludes the verbatim inductive argument in the $n=2$ case. 
Fortunately, it turns out that $M_\infty(P_\sigma/\rad^3(P_\sigma))$ is cyclic when $\sigma$ is in the upper alcove. 
Then a modified inductive argument using Property \ref{property:covering} and Nakayama gluing along upper alcove layers shows that $M_\infty(P_\sigma)$ and $M_\infty(P_\sigma/\rad^2(P_\sigma))$ are minimally generated by the same number of elements. 

To show that $M_\infty(P_\sigma/\rad^3(P_\sigma))$ is cyclic, we decompose it as an iterated fiber product. 
Since some Jordan--H\"older factors appear with multiplicity greater than one, $P_\sigma/\rad^3(P_\sigma)$ cannot be presented in terms of the mod $p$ reductions of a single irreducible $\GL_3(\F_p)$-representation. 
Instead, we use fiber products of lattices in several different Deligne--Lusztig representations. 
In parallel, $M_\infty(P_\sigma/\rad^3(P_\sigma))$ is not supported on a single tame type Galois deformation space. 
This forces us to work in some highly singular ``multitype" deformation rings, which first appeared in print in \cite{DanWild} and were later used in \cite{BHHMS}, but were first developed in the course of this project.
We develop a local model theory for such multitype deformation rings building on \cite{MLM,GL3Wild}, cf.~\S \ref{subsec:MTdef}. 
With this in hand, we deduce the desired cyclicity from the transversality of certain scheme theoretic intersections in these models.

The $F^+ \neq \Q$ case brings many further complications. 
Now, our arguments require fiber product gluing situations involving noncyclic patched modules, which makes it hard to control the end result. However, we give new criteria for the minimal number of generators of a fiber product to be well-behaved in terms of the transversality of (only) infinitesimal properties of its factors. These criteria are essential because while the local models for single tame type deformation rings have tensor product structures over embeddings $F_w \into W(\F)[1/p]$, the local models for multitype stacks do not seem to have such product structures. A key observation here is that the product structure persists infinitesimally, yielding the sought-after transversality statements.

Finally, we come to the construction of $D_{\mathrm{m}}$, which is the minimal subrepresentation of $\pi(\rbar)^{U(p)}$ containing all Jordan--H\"older factors in $W^?(\rhobar)$. 
When $n=2$, cyclicity of certain patched modules implies that $D_{\mathrm{m}} \cong \oplus_{\sigma \in W^?(\rhobar)} \sigma$---in particular it is determined uniquely from multiplicity information. 
When $n=3$, there is a moduli of representations with the same $\F[\GL_3(k_w)]$-socle and multiset of Jordan--H\"older factors as $D_{\mathrm{m}}$, but only $D_{\mathrm{m}}$ injects into $\pi(\rbar)^{U(p)}$. 
To single out this point, we give ``coordinates" on this moduli space using categories of lattices in various Deligne--Lusztig representations. 
Then we compute $M_\infty(D)$ for $\F[\GL_3(k_w)]$-modules $D$ in (a part of) this moduli space and characterize $M_\infty(D_{\mathrm{m}})$ among them. 

\begin{rmk} Unlike when $n=2$, our approach falls short of giving a purely local characterization of $M_\infty(P_{\sigma})$ in general, since there is a non-trivial moduli of non-cyclic Cohen--Macaulay modules with prescribed support.  
In principle, one could attempt such a characterization by presenting indecomposable projective $W(\F)[\GL_3(k_w)]$-modules $\tld{P}_\sigma$ in terms of sums of lattices in Deligne--Lusztig representations and use the fact that patched modules for such lattices were computed in \cite{LLLM2}. 
However, it seems challenging to find explicit presentations and worse, the non-cyclicity of some patched lattices makes it unclear how to compute the effect of patching on the \emph{maps} that arise in such a presentation.
\end{rmk}
\subsection{Outline}
\S \ref{sec:prelim} is a brief summary of background on tame inertial types, Deligne--Lusztig representations, the inertial local Langlands, and Serre weights largely following \cite{LLLM2,GL3Wild}. 
\S \ref{sec:multitype} and \S \ref{sec:rt} collect a number of technical results needed for the proof of the main result which spans \S \ref{sec:patch} and \S \ref{sec:locality}. 
We defer several tedious ideal intersection computations to Appendix \ref{appendix:IC}, which can be ignored by a reader willing to take these computations on faith.
We urge the reader skip \S \ref{sec:multitype} and \S \ref{sec:rt} on a first reading, only referring back as needed. 
\S \ref{subsec:MTdef} develops a local model theory for some multitype deformation rings building on \cite{MLM,GL3Wild}. 
\S \ref{sec:ca} collects a number of commutative algebra lemmas. 
\S \ref{subsec:idealrelations} contains technical computations with deformation rings used in \S \ref{sec:patch} and \ref{sec:locality}. 
\S \ref{sec:rt} establishes results in modular and integral representation theory building on \cite{BDM,LLLM2} needed to prove the main result. 
\S \ref{sec:patch} computes multiplicities of Serre weights in an axiomatic context by combining results in \S \ref{sec:multitype} and \ref{sec:rt}. 
\S \ref{sec:locality} proves Theorem \ref{thm:intro:main} first in an axiomatic context and then in a global context. 

\indent
\paragraph{\emph{Acknowledgements}}
This work has been developed over several years starting in the fall of 2016. 
We apologize for the long delay in writing up this.
The authors thank the various institutions where they have been hosted while working on this project (University of Toronto, Institute for Advanced Studies, Universit\'e de Montpellier, Universit\`a degli Studi di Padova, the Hausdorff Institute of Mathematics).
B.LH. acknowledges support from the National Science Foundation under grant Nos.~DMS-1128155, DMS-1802037, DMS-2302619 and the Alfred P. Sloan Foundation.
D.L. was supported by the National Science Foundation under agreements Nos.~DMS-1128155, DMS-1703182, and DMS-2302623, an AMS-Simons travel grant, and a start-up grant from Purdue University.
S.M. is member of the Institut Universitaire de France and acknowledges its support.

\subsection{Notation}
\label{sec:notation}

For any given field $K$ we fix once and for all a separable closure $\ovl{K}$ and define $G_K \defeq \Gal(\ovl{K}/K)$. 
If $K$ is a nonarchimedean local field, we let $I_K \subset G_K$ denote the inertial subgroup.
We fix a prime $p$.
Fix an algebraic closure $\ovl{\Q}_p$ of $\Q_p$. Let $\ovl{\F}_p$ denote its residue field. 
Let $\F\subset\ovl{\F}_p$ be a finite subfield which we will assume is sufficiently large for our purposes. %
Let $\cO$ be $W(\F)$ and $E$ be the fraction field of $\cO$. %
Then we have a natural embedding $E \subset \ovl{\Q}_p$.  

Let $G\defeq{\GL_3}_{/\Z}$, $B\subset G$ the subgroup of upper triangular matrices, $T \subset B$ the split torus of diagonal matrices and $Z \subset T$ the center of $G$.  
Let $\Phi^{+} \subset \Phi$ denote the subset of positive roots in the set of roots for $(G, B, T)$. 
Let $X^*(T)$ be the group of characters of $T$ which we identify with $\Z^3$ in the standard way.

We write $W$ (resp.~$W_a$, resp.~$\tld{W}$) for the Weyl group (resp.~the affine Weyl group, resp.~the extended affine Weyl group) of $G$.
If $\Lambda_R \subset X^*(T)$ denotes the root lattice for $G$ we then have
\[
W_a = \Lambda_R \rtimes W, \quad \tld{W} = X^*(T) \rtimes W
\]
and use the notation $t_{\nu} \in \tld{W}$ to denote the image of $\nu \in X^*(T)$. 
Let $\eta=(2,1,0)\in X^*(T)$.
We define the \emph{$p$-dot action} by $t_\lambda w \cdot \mu = p\lambda+ w (\mu+\eta) - \eta$.
Let $w_0$ denote the longest element in $W$ and define $\tld{w}_h\defeq w_0 t_{-\eta}$.

Let $\langle \ ,\,\rangle$ denote the duality pairing on $X^*(T)\times X_*(T)$.
A weight $\lambda\in X^*(T)$ is \emph{dominant} if $0\leq \langle\lambda,\alpha^\vee\rangle$  for all simple root $\alpha\in \Phi$.
Set $X^0(T)$ to be the subgroup consisting $\lambda\in X^*(T)$ such that $\langle\lambda,\alpha^\vee\rangle=0$ for all $\alpha\in \Phi$, and $X_1(T)$ to be the set of $\lambda\in X^*(T)$ such that $0\leq \langle\lambda,\alpha^\vee\rangle< p$ for all simple root $\alpha\in \Phi$.

A $p$-alcove is a connected component of  
\[
X^*(T)\otimes_{\Z}\R\ \setminus\ \Big(\bigcup_{(\alpha,n)}\{\lambda\in X^*(T)\otimes_{\Z}\R\ :\ \langle\lambda+\eta,\alpha^\vee\rangle=np\}\Big)
\]
where $(\alpha,n)$ runs over $\Phi^+\times \Z$.
A $p$-alcove $C$ is $p$-restricted (resp.~dominant) if $0<\langle\lambda+\eta,\alpha^\vee\rangle<p$ (resp.~$0<\langle\lambda+\eta,\alpha^\vee\rangle$) for all simple roots $\alpha\in \Phi$ and $\lambda\in C$.
The group $W_a$ acts simply transitively on the set of alcoves and throughout this paper the dot action of $\tld{W}$ on the alcoves will always be the $p$-dot action. 
We let $C_0 \subset X^*({T})\otimes_{\Z}\R$ denote the dominant base alcove (i.e.~the alcove defined by $0<\langle\lambda+\eta,\alpha^\vee\rangle<p$ for all $\alpha\in \Phi^+$), and set
\begin{equation*}
\tld{W}^+\defeq\{\tld{w}\in \tld{W}:\tld{w}\cdot C_0 \textrm{ is dominant}\},\quad
\tld{W}^+_1\defeq\{\tld{w}\in \tld{{W}}^+:\tld{w}\cdot {C}_0 \textrm{ is } p\textrm{-restricted}\}.
\end{equation*}
We sometimes refer to $C_0$ as the \emph{lower alcove} and $\tld{w}_h\cdot C_0$ as the \emph{upper alcove}.
Given $N\in\N$ we say that $\lambda\in X^*(T)$ is \emph{$N$-deep in alcove $C_0$} if $N<\langle\lambda+\eta,\alpha^\vee\rangle< p-N$ for all $\alpha\in \Phi^+$.

Let $\cO_p$ be a finite \'etale $\Z_p$-algebra and fix an isomorphism $\cO_p\cong\prod\limits_{v\in S_p} \cO_{v}$ where $S_p$ is a finite set and $\cO_{v}$ is the ring of integers of a finite unramified extension $F^+_{v}$ of $\Q_p$.
We define $G_0 \defeq \Res_{\cO_p/\Z_p} G_{/\cO_p}$, and similarly $T_0\subset B_0\subset G_0$. 
We assume that $\cO$ contains the image of any ring homomorphism $\cO_p \ra \ovl{\Z}_p$ and write $\cJ\defeq \Hom_{\Zp}(\cO_p,\cO)$.
We define $\un{G} \defeq (G_0)_{/\cO}$, and similarly $\un{T}\subset \un{B}\subset \un{G}$.
We use underlined notations for the objects introduced above for $G$ and now relative to $\un{G}$ (hence $\un{\Phi}^+ \subset \un{\Phi}$, $\un{W}$, $\un{W}_a$, $\tld{\un{W}}$, $\tld{\un{W}}^+$, $\tld{\un{W}}^+_1$, $\un{C}_0$ and the like) and we have a notion of $N$-deepness in alcove $\un{C}_0$ for elements of $X^*(\un{T})$.
The natural isomorphism $\un{G} \cong G_{/\cO}^{\cJ}$ induces compatible isomorphisms $X^*(\un{T}) = X^*(T)^{\cJ}$ and the like.  
Given an element $j\in\cJ$, we use a subscript notation to denote $j$-components obtained from the isomorphism $\un{G}\cong G_{/\cO}^{\cJ}$ (e.g.~given $\tld{w}\in \tld{\un{W}}$ we write $\tld{w}_j$ to denote its $j$-th component via the induced isomorphism $\tld{\un{W}}\cong \tld{W}^{\cJ}$).

For sake of readability, we abuse notation and still write $w_0$ to denote the longest element in $\un{W}$, and $\eta\in X^*(\un{T})$ for the element $\sum_{j\in\cJ}(2,1,0)_j$.
The meaning of $w_0$, $\eta$ and $\tld{w}_h\defeq w_0t_{-\eta}$ should be clear from the context.

The absolute Frobenius automorphism on $\cO_p/p$ lifts canonically to an automorphism $\varphi$ of $\cO_p$. We define an automorphism $\pi$ of  $X^*(\un{T})$ by $\pi(\lambda)_\sigma = \lambda_{\sigma \circ \varphi^{-1}}$ for all $\lambda\in X^*(\un{T})$ and $\sigma: \cO_p \ra \cO$.
We similarly define an automorphism $\pi$ of $\un{W}$ and $\tld{\un{W}}$.

When $S_p=\{v\}$ is a singleton we simplify notation and let $K\defeq F^+_p$, $f\defeq [K:\Qp]$, ring of integers $\cO_K$, residue field $k$. 
Let $W(k)$ be ring of Witt vectors of $k$, which is also the ring of integers of $K$.  
We denote the arithmetic Frobenius automorphism on $W(k)$ by $\phz$ (it acts as raising to $p$-th power on the residue field).

Recall that we fixed a separable closure $\ovl{K}$ of $K$.
We choose $\pi \in \ovl{K}$ such that $\pi^{p^f-1} = -p$ and let $\omega_K : G_K \ra \cO_K^\times$ be the character defined by $g(\pi) = \omega_K(g) \pi$, which is independent of the choice of $\pi$. 
We fix an embedding $\sigma_0: K \into E$ and define $\sigma_j = \sigma_0 \circ \phz^{-j}$, which identifies $\cJ = \Hom(k, \F) = \Hom_{\Qp}(K, E)$ with $\Z/f \Z$. 
We write $\omega_f:G_K \ra \cO^\times$ for the character $\sigma_0 \circ \omega_K$.

Let $\eps$ denote the $p$-adic cyclotomic character. 
We normalize the definitions of labelled Hodge--Tate weights so that $\eps$ has Hodge--Tate weight $\{1\}$ for every embedding $K\into E$ (this convention is opposite of that of \cite{EGstacks,CEGGPS} and agrees with that of \cite{GHS}), and the functor from potentially semistable representation to Weil--Deligne representations is \emph{covariant}.

A potentially semistable representation $\rho:G_K \ra \GL_3(E)$ has type $(\mu, \tau)$ if $\rho$ has labeled Hodge--Tate weights $\mu\in X^*(\un{T})$ (note that this differs from the conventions of \cite{GHS} via a shift by $\eta$) and the restriction to $I_K$ of the Weil-Deligne representation attached to $\rho$ is isomorphic to $\tau$.

Let $\Gamma$ be a group.
If $V$ is a finite length $\Gamma$-representation, we let $\JH(V)$ be the (finite) set of Jordan--H\"older factors of $V$.
If $V^\circ$ is a finite $\cO$-module with a $\Gamma$-action, we write $\ovl{V}^\circ$ for the $\Gamma$-representation $V^\circ\otimes_{\cO}\F$ over $\F$.
\section{Preliminaries}
\label{sec:prelim}
\subsection{Tame inertial types, inertial local Langlands, Serre weights}
\label{subsec:Prelim}

Unless otherwise stated, we assume throughout this section that $S_p=\{v\}$. 
We write $\cO_p=\cO_K$ (the ring of integers of a finite unramified extension $K$ of $\Qp$ of degree $f$) and let $G_0 = \Res_{\cO_K/\Z_p} G_{/\cO_K}$.
We drop subscripts $v$ from notation and we identify $\cJ=\Hom_{\Qp}(K,E)$ with $\Z/f\Z$ via $\sigma_{j} \defeq \sigma_0 \circ \phz^{-j} \mapsto j$.

\subsubsection{Tame inertial types, Deligne--Lusztig representations and inertial local Langlands}

\label{subsubsec:TIT}
An inertial type (for $K$, over $E$) is the $\GL_3(E)$-conjugacy class of an homomorphism $\tau:I_K\ra\GL_3(E)$ with open kernel and which extends to an homomorphism $W_K\rightarrow \GL_3(E)$.
We will identify a tame inertial type with a fixed choice of a representative in its $\GL_3(E)$-conjugacy class, and say that the type is \emph{tame} if the homomorphism $\tau$ factors through the tame quotient of $I_K$ (this notion is independent of the choice of the representative in the conjugacy class).
Given $s\in\un{W}$, $\mu\in X^*(\un{T})\cap \un{C}_0$ and $j'\in\{0,\dots,6f-1\}$ we set $\boldsymbol{\alpha}'_{j'}\defeq s_{6f-1}^{-1}s_{6f-2}^{-1}\dots s_{6f-j'}^{-1}(\mu_{6f-j'}+\eta_{6f-j'})\in X^*(T)$, where the subscripts are taken modulo $f$, and let $\mathbf{a}^{\prime(j')}$ be $\sum_{i=0}^{6f-1}\boldsymbol{\alpha}'_{-j+i}p^i\in X^*(T)$.
We thus define the tame inertial type $\tau(s,\mu+\eta)$ to be $(\omega_{6f})^{\mathbf{a}^{\prime(0)}}$, and say that $(s,\mu)$ is a \emph{lowest alcove presentation} for $\tau(s,\mu+\eta)$.
Given $N\in\N$ we say that a tame inertial type $\tau$ is $N$-generic if there exists a pair $(s,\mu)$ as above such that $\tau\cong \tau(s,\mu+\eta)$ and $\mu$ is $N$-deep in alcove $\un{C}_0$.
In this case, we say that $\tau$ has an $N$-generic lowest alcove presentation $(s,\mu)$ with associated element $\tld{w}(\tau)\defeq t_{\mu+\eta}s\in\tld{\un{W}}$.

Replacing $E$ with $\F$ in the preceding paragraph we have the notion of inertial $\F$-type, tame inertial $\F$-type (typically denoted with the overlined notation $\taubar$), lowest alcove presentation and associated element $\tld{w}(\taubar)$ for a tame inertial $\F$-type $\taubar$, and $N$-genericity for $N\in\N$.

Given a pair $(s,\mu-\eta)\in \un{W}\times X^*(\un{T})\cap\un{C}_0$ we can attach to it a virtual $G_0(\Fp)$-representation $R_s(\mu)$ over $E$ by \cite[Definition 9.2.2]{GHS} and the paragraph above \emph{loc.~cit}., where in \emph{loc.~cit}.~the representation $R_s(\mu)$ is denoted by $R(s,\mu)$.
If $\mu-\eta$ is moreover $1$-deep in $\un{C}_0$ then $R_s(\mu)$ is an irreducible representation of $G_0(\Fp)$ and the pair $(s,\mu)$ is said to be a lowest alcove presentation of $R_{s}(\mu)$.
Finally, given $N\in\N$ we say that a Deligne--Lusztig representation $R$ is $N$-generic if there exist $(s,\mu-\eta)\in \un{W}\times X^*(\un{T})\cap\un{C}_0$ such that $\mu-\eta$ is $N$-deep in $\un{C}_0$ and $R\cong R_s(\mu)$, in which case $(s,\mu-\eta)$ is a lowest alcove presentation for $R$ with corresponding element $\tld{w}(R)\defeq t_\mu s\in\tld{\un{W}}$.

By \cite[Theorem 3.7]{CEGGPS} we can attach to a tame inertial type $\tau:I_K\ra\GL_3(E)$ an irreducible $G_0(\Zp)$-representation $\sigma(\tau)$ over $E$, satisfying results towards the inertial local Langlands correspondence. 
In this paper we take $\sigma(\tau)$  to be the inflation of  $R_s(\mu+\eta)$ (\cite[Corollary 2.3.5]{LLL}).

If $\mu\in\un{C}_0$ is $1$-deep and  $\tau\cong \tau(s,\mu+\eta)$ then let $\sigma(\tau)$ be the inflation of  $R_s(\mu+\eta)$ to $G_0(\Zp)$.
This representation $\sigma(\tau)$ satisfies the properties described in \cite[Theorem 3.7]{CEGGPS}, see \cite[Corollary 2.3.5]{LLL} (this is a form of inertial local Langlands).

\subsubsection{Serre weights}
\label{subsubsec:SW}
A \emph{Serre weight} for $G_0(\Fp)$ is (an isomorphism class of) an absolutely irreducible $G_0(\Fp)$-representation over $\F$.
Any Serre weight is the restriction to $G_0(\Fp)$ of an irreducible algebraic representation of $G_0$ of highest weight in $X_1(\un{T})$, and given $\lambda\in X_1(\un{T})$ we write $F(\lambda)$ for the Serre weight associated to the irreducible algebraic representation $L(\lambda)$ of $G_0$ over $\Fp$.
The assignment $\lambda\mapsto F(\lambda)$ gives a bijection between $X_1(\un{T})/(p-\pi)X^0(\un{T})$ and the set of Serre weight for $G_0(\Fp)$.

Let $\cA$ denote the set of $p$-restricted alcoves in $X^*(\un{T})\otimes_\Z\R$.
As in \cite{GL3Wild,LLLM2} it will be more convenient to parametrize (regular) Serre weights by a subset of $\un{\Lambda}_W\times\cA$.
Specifically let $\lambda\in X^*(\un{T})\cap \un{C_0}+\eta$ and define $\un{\Lambda}_W^{\lambda}$ to be the subset of $X^*(\un{T})/X^0(\un{T})$ whose elements $\ovl{\omega}$ satisfy $\omega+\lambda-\eta\in \un{C}_0$ for some (equivalently, for all) lift $\omega\in X^*(\un{T})$ of $\ovl{\omega}$.
Then \emph{loc.~cit}.~produces an injection $\Trns_\lambda:\un{\Lambda}_W^{\lambda}\times\cA\into X_1(\un{T})/(p-\pi)X^0(\un{T})$ whose image consists of regular Serre weights with central character $(\lambda-\eta)|_{\un{Z}}$ modulo $(p-\pi)X^*(\un{Z})$ (see \emph{loc.~cit}.~Proposition 2.1.3 and 2.1.4.).
The reason this parametrization is preferred is that it decouples contributions of different elements of $\cJ$ in the representation theory of $G_0$.

\subsubsection{Combinatorics of types and weights}
\label{subsubsec:combinatorics_TandW}

We identify $\cA$ with $\{0,1\}^{\cJ}$ where $0$ and $1$ stand for the lower and upper alcove, respectively.
The standard action of $\tld{\un{W}}$ on $X^*(\un{T})$ descends to an action on $X^*(\un{T})/X^0(\un{T})$ hence on $X^*(\un{T})/X^0(\un{T})\times\cA$ by letting $\tld{\un{W}}$ act trivially on $\cA$.
Let $\eps_1$ (resp.~$\eps_2$) denote the image of $(1,0,0)\in X^*(T)$ (resp.~$(0,0,-1)\in X^*(T)$) in $X^*(T)/X^0(T)$ and set
\[
\Sigma_0\defeq \begin{Bmatrix} (\eps_1+ \eps_2, 0), (\eps_1 - \eps_2, 0), (\eps_2 - \eps_1, 0) \\
 (0, 1) , (\eps_1, 1), (\eps_2, 1) \\
 (0, 0), (\eps_1, 0), (\eps_2, 0) 
 \end{Bmatrix}
\]
and $\Sigma\defeq \Sigma_0^{\cJ}\subset X^*(\un{T})/X^0(\un{T})\times\cA$.
We define $(\omega,a),(\nu,b)\in\Sigma_0$ to be \emph{adjacent} if  $\omega-\nu\in\{0,\pm\eps_1,\,\pm\eps_2,\,\pm\eps_1-\eps_2\}$ and $a\neq b$.
With this notion of adjacency $\Sigma_0$ is a connected graph with a distance function $d$.

Assume that $\lambda-\eta$ is $0$-deep in $\un{C}_0$ and let $(s,\mu-\eta)\in \un{W}\times X^*(\un{T})\cap\un{C}_0$ be a pair such that $\mu-\eta$ is $2$-deep and $\mu+\eta-\lambda \in\un{\Lambda}_R$.
Then the set set $\JH(R_s(\mu))$ is given by $F(\Trns_\lambda(t_{\mu-\eta}s(\Sigma))$, see \cite[Proposition 2.1.1]{GL3Wild}.

Moreover, we \emph{define} $W^?(\taubar(s,\mu+\eta))$ to be the set of Serre weights $F\Big(\mathfrak{Tr}_{\lambda}\Big(t_{{\mu+\eta-\lambda}}s\big(r(\Sigma)\big)\Big)\Big)$, where $r(\Sigma)$ is defined by swapping the digits of $a\in\{0,1\}^{\cJ}$ in the elements $(\eps,a)\in\Sigma$.

\subsubsection{$L$-groups and $L$-parameters}
\label{subsubsec:L_parameters}

In this subsection we let $S_p$ have arbitrary finite cardinality.

Let $F^+_p$ be $\cO_p[1/p]$ so that $F^+_p\cong\prod\limits_{v \in S_p} F^+_{v}$ where $F^+_{v} \defeq \cO_{v}[1/p]$ for each $v \in S_p$.
Let 
\[
\un{G}^\vee_{/\Z} \defeq \prod_{F^+_p \ra E} G^\vee_{/\Z} 
\]
be the dual group of $\un{G}$ so that the Langlands dual group of $G_0$ is $^L \un{G}_{/\Z} \defeq \un{G}^\vee\rtimes \Gal(E/\Q_p)$ where $\Gal(E/\Q_p)$ acts on the set of homomorphisms $F^+_p \ra E$ by post-composition.

An \emph{$L$-homomorphism} (over $E$) is a continuous homomorphism $\rho:G_{\Qp}\ra{}^L\un{G}(E)$ which is compatible with the projection to $\Gal(E/\Qp)$.
The $\un{G}^\vee(E)$-conjugacy class of an $L$-homomorphism is called $L$-parameter.
An inertial $L$-parameter is a $\un{G}^\vee(E)$-conjugacy class of a homomorphism $\tau:I_{\Qp}\ra\un{G}^\vee(E)$ with open kernel, and which admits an extension to an $L$-homomorphism.
An (inertial) $L$-parameter is \emph{tame} if some (equivalently, any) representative in its equivalence class factors through the tame quotient of $I_{\Qp}$.
Fixing isomorphisms $\ovl{F^+_v}\stackrel{\sim}{\ra}\ovl{\Q}_p$ for all $v\in S_p$, we have a bijection between $L$-parameters (resp.~tame inertial $L$-parameters) and collections $(\rho_v)_{v\in S_p}$ (resp.~$(\tau_v)_{v\in S_p}$) where $\rho_v:G_{F^+_v}\ra\GL_3(E)$ is a continuous Galois representation (resp.~$\tau_v:I_{F^+_v}\ra\GL_3(E)$ is a tame inertial type for $F^+_v$) for all $v\in S_p$.

We have similar notions when $E$ is replaced by $\F$.
Abusing terminology, we dentify (tame inertial) $L$-parameters with a fixed choice of a representative in its class.
Nothing in what follows will depend on this choice.

The definitions and results of \S \ref{subsubsec:TIT}--\ref{subsubsec:combinatorics_TandW} generalize  for tame inertial $L$-parameters and $L$-homomorphism.
In particular, given a tame inertial $L$-parameter $\tau$ corresponding to the collection $(\tau_v)_{v\in S_p}$, we let $\sigma(\tau)$ be $\otimes_{v\in S_p} \sigma(\tau_v)$ (an irreducible smooth $G_0(\Z_p)$-representation over $E$).

\section{Geometry of multi-type deformation rings}
\label{sec:multitype}

\subsection{Multi-type deformation rings}
\label{subsec:MTdef}
The goal of this section is to introduce --and give the first tools to analyze-- deformation rings for $\rhobar:G_{K}\rightarrow \GL_3(\F)$, with $p$-adic Hodge theory conditions defined by a finite set of tame inertial types.

Let $\rhobar:G_{K}\rightarrow \GL_3(\F)$ be a continuous Galois representation, and let $R_{\rhobar}^\Box$ denote the universal lifting ring for $\rhobar$. 

If $\tau$ is an inertial type for $K$, let $R^{\leqeta,\tau}_{\rhobar}$ (resp.~$R^{\eta,\tau}_{\rhobar}$) be the reduced quotient of $R_{\rhobar}^\Box$ parametrizing potentially crystalline representations of type $\tau$ and Hodge--Tate weights $\leq (2,1,0)$ (resp.~equal to $(2,1,0)$). %
\begin{defn}
\label{defn:multi-type}
Let $T\defeq\{\tau:I_K\ra\GL_3(E)\}$ be a finite set of inertial types for $K$. 
We define $R^{\leqeta,T}_{\rhobar}$ to be the image of $R_{\rhobar}^\Box\to \prod_T R^{\leqeta,\tau}_{\rhobar}$. We say that $R^{\leqeta,T}_{\rhobar}$ is the $(\leqeta,T)$ \emph{multi-type deformation ring}. We similarly define $R^{\eta,T}_{\rhobar}$ by replacing ``$\leqeta$'' with $\eta$ everywhere.
\end{defn}

From now on we assume $\rhobar:G_{K}\rightarrow \GL_3(\F)$ is tame, and that $\rhobar|_{I_K}=\ovl{\tau}(s,\mu)$ where $\mu$ is $6$-deep in alcove $\un{C}_0$. Set $\tld{w}(\rhobar)\defeq t_{\mu+\eta}s$.

\subsubsection{Presentations of single type deformation rings} 
\label{subsub:single}
Suppose $T=\{\tau\}$ consists of a single tame inertial type. In this case, the local model theory of \cite{GL3Wild}, \cite{MLM} produces an explicit presentation of $R^{\leqeta,\tau}$ which we now recall.
In order for $R^{\leqeta,\tau}$ to be non-zero, $\tau$ must admit a lowest alcove presentation such that
\[\tld{w}^*(\rhobar,\tau)\defeq (\tld{w}(\tau)^{-1}\tld{w}(\rhobar))^*\]
belongs to $\Adm^{\vee}(\eta)$ by \cite[Corollary 5.5.8]{MLM} (in particular, this gives $\tau$ a $4$-generic lowest alcove presentation). We abbreviate $\tld{z}=(z_jt_{\nu_j})_{j\in\cJ}\defeq \tld{w}^*(\rhobar,\tau)$ for the remainder of this subsection.

We recall the following objects from \cite{GL3Wild},\cite{MLM}: 
\begin{itemize}
\item The Emerton--Gee stack $\cX^{\leqeta,\tau}$ parametrizing potentially crystalline representations with Hodge--Tate weights $\leqeta$ and inertial type $\tau$ (see \cite[\S 3.2]{GL3Wild}).
\item The moduli stack $Y^{\leqeta,\tau}$ of Breuil-Kisin modules for $K_\infty\defeq \bigcup_{n\in\N}K(\sqrt[p^n]{-p})$ 
with tame descent data of type $\tau$ and elementary divisors $\leqeta$. It admits an open substack $Y^{\leqeta,\tau}(\tld{z})$ 
(see \cite[Proposition 3.1.1]{GL3Wild}).
\item We also have the stack $\Phi\text{-}\Mod^{\text{\'et},3}_{K}$ parametrizing rank $3$ \'{e}tale $\phz$ modules over $K_\infty$ (see \cite[\S 5.4.1]{MLM}).
\item (cf.~\cite[\S 3.1]{GL3Wild}) $M(\tld{z})\defeq\prod_{j\in\cJ} M_j(\tld{z}_j)$ the affine scheme over $\cO$ parametrizing $\cJ$-tuples of matrices $3\times 3$ matrices $A\defeq (A^{(j)})_{j\in\cJ}$ whose entries are polynomials in $v$ with the following constraints for all $j\in\cJ$:
\begin{itemize}
\item $A^{(j)}_{ik}$ is divisible by $v$ for $i>k$.
\item $\deg A^{(j)}_{ik}\leq \nu_{j,k}-\delta_{i<z_j(k)}$, with equality when $i=z_j(k)$.
\item The leading coefficient of $A^{(j)}_{z_j(k)k}$ is a unit.
\item $\det A^{(j)}$, which is cubic with unit top coefficient, is a unit times $(v+p)^3$.
\end{itemize}
We furthermore have the closed subscheme $\tld{U}(\leqeta,\tld{z})\subset M(\tld{z})$ obtained as the $\cO$-flat closure of the locus where for all $j\in\cJ$ the $2\times 2$ minors of $A^{(j)}$ are divisible by $(v+p)$ (i.e.~they vanish when setting $v=-p$).
\end{itemize}
The relationship between these objects is summarized in the following diagram with cartesian squares \cite[Theorem 3.2.2]{GL3Wild}:
\begin{equation}
\label{eq:basic:diag}
\xymatrix{
\tld{U}(\tld{z},\leqeta)^{\wedge_p}\ar[r]& Y^{\leqeta,\tau}(\tld{z})\ar@{^{(}->}[r]& Y^{\leqeta,\tau}\ar@{^{(}->}[r] &\Phi\text{-}\Mod^{\text{\'et},3}_{K}\\
\tld{\cX}^{\leqeta,\tau}(\tld{z})
\ar[r]\ar@{^{(}->}[u]\ar@{}[ur]|\Box& \cX^{\leqeta,\tau}(\tld{z})\ar@{^{(}->}[r]\ar@{^{(}->}[u]\ar@{}[ur]|\Box&  \cX^{\leqeta,\tau}
\ar@{^{(}->}[u]\ar@{^{(}->}[ur]}
\end{equation}
whose relevant features for us are:
\begin{itemize}
\item The top left horizontal arrow identifies
\[Y^{\leq\eta,\tau}(\tld{z})=[\tld{U}(\tld{z},\leqeta)^{\wedge_p}/_\tau T^{\vee,\cJ}]\]
where the action of $(t_j)\in T^{\vee,\cJ}$ is the $\tau$-twisted shifted conjugation action defined by:
\[A^{(j)}\mapsto t_j A^{(j)} \sigma^{-1}_j(t_{j-1})\]
where $(\sigma,\kappa)$ is the lowest alcove presentation of $\tau$ determined at the beginning of \S \ref{subsub:single}.
\item $\tld{\cX}^{\leqeta,\tau}(\tld{z})$ identifies with an explicit closed, $T^{\vee,\cJ}$-stable formal subscheme $\tld{U}(\tld{z},\leqeta,\nabla_{\tau,\infty})$ which is obtained as the $p$-saturation of a natural ``monodromy condition'' on the Breuil--Kisin module living over $\tld{U}(\tld{z},\leqeta)^{\wedge_p}$ cf.~\cite[Definition 7.1.2 \& Corollary 7.1.5]{MLM}.  We also recall that there is a closed immersion $\tld{U}_{\mathrm{reg}}(\tld{z},\leqeta,\nabla_{\tau,\infty})\into \tld{U}(\tld{z},\leqeta,\nabla_{\tau,\infty})$ \cite[Theorem 7.3.2]{MLM} which correspond to the part $\cX^{\eta,\tau}(\tld{z})\into \cX^{\leqeta,\tau}(\tld{z})$ with Hodge-Tate weights exactly $\eta$.
\item The edges of the triangle are closed immersions.
\item The composite of the top horizontal arrow assigns to $A=(A^{(j)})$ the free \'{e}tale $\phz$-module with matrix of Frobenius given by $A\tld{w}^*(\tau)$.
\end{itemize}
Our chosen tame $\rhobar$ gives a point in $\cX^{\leqeta,\tau}(\tld{z})(\F)$ and a point $\ovl{\fM}_{\tau}\in Y^{\leqeta,\tau}(\F)$, whose lifts to $\tld{U}(\tld{z},\leqeta)(\F)$ has the form $\ovl{A}=\ovl{D}_\tau\tld{z}$ for $\ovl{D}_\tau\in T^{\vee,\cJ}(\F)$ well-defined up to $\tau$-shifted conjugation. Then $R^{\leqeta,\tau}_{\rhobar}$ is a versal ring to $\cX^{\leqeta,\tau}$ at $\rhobar$, thus diagram \eqref{eq:basic:diag} identifies $R^{\leqeta,\tau}_{\rhobar}$, up to formal variables, with the completion of the explicit (formal) scheme $\tld{U}(\tld{z},\leqeta,\nabla_{\tau,\infty})$ at $\ovl{A}$.

\subsubsection{Presentations of multi-type deformation rings}
\label{subsub:gpd}
We now let $T$ be a finite set of tame inertial types endowed with lowest alcove presentations such that $R^{\leqeta,\tau}_{\rhobar}\neq 0$ for all $\tau\in T$, thus we get $\tld{w}^*(\rhobar,\tau)\in  \Adm^{\vee}(\eta)$ for each $\tau\in T$. For each $j\in\cJ$ let $T^{(j)}\defeq\{\tld{w}^*(\rhobar,\tau)_j, \tau\in T\}$.
Throughout this paper, we restrict ourselves to $T$ satisfying the following:
\begin{hypothesis}\label{hypothesis:T}For each $j\in \cJ$, either:
\begin{enumerate}[label=(\Roman*)]
\item
\label{it:prop:T:1}
$T^{(j)}\subseteq\{t_{\un{1}},\alpha\beta t_{\un{1}},\beta\alpha t_{\un{1}},\alpha\beta\alpha t_{\un{1}},{\alpha\beta\alpha\gamma t_{\un{1}}}\}$; or
\item
\label{it:prop:T:1'}
 $T^{(j)}\subseteq\{t_{w_0(\eta)},  t_{w_0(\eta)}\alpha,  t_{w_0(\eta)}\beta, t_{w_0(\eta)}w_0\}$; or
\item 
\label{it:prop:T:2}
$T^{(j)}$ is a singleton {which does not fall in either of the cases above}, $\tld{w}^*(\rhobar,\tau)_j\in\Adm^\vee(\eta_j)$ and $\ell(\tld{w}(\rhobar,\tau)_j)\geq 2$.
\end{enumerate}
\end{hypothesis}
The basic idea to probe $R^{\leqeta,T}_{\rhobar}$ is to glue diagrams \eqref{eq:basic:diag} along $\Phi\text{-}\Mod^{\text{\'et},3}_{K}$ as $\tau$ varies over $T$. To this end, we consider the following:
\begin{defn}
\begin{enumerate}
\item Let $M^T\defeq \prod_{j\in\cJ} M^{T,(j)}$ be the affine scheme such that
\begin{itemize}
\item For $T^{(j)}$ as in \ref{it:prop:T:1}, $M^{T,\,(j)}$ consists of
\[
\begin{pmatrix}
d_{11}^{*(j)}(v+p)+c^{(j)}_{11}{+\frac{e_{11}^{(j)}}{v}}&c^{(j)}_{12}&c^{(j)}_{13}\\
d^{(j)}_{21}(v+p)+c^{(j)}_{21}&d^{*(j)}_{22}(v+p)+c^{(j)}_{22}&c^{(j)}_{23}\\
d^{(j)}_{31}(v+p)+c^{(j)}_{31}&d^{(j)}_{32}(v+p)+c^{(j)}_{32}
&d_{33}^{*(j)}(v+p)+c^{(j)}_{33}
\end{pmatrix}
\]
with determinant $(v+p)^3d^{*(j)}_{11}d^{*(j)}_{22}d_{33}^{*(j)}$ and $d^{*(j)}_{ii}$ invertible. 
\item For $T^{(j)}$ as in \ref{it:prop:T:1'}, $M^{T,\,(j)}$ consists of
\[
\begin{pmatrix}
d_{11}^{*(j)}&c^{(j)}_{12}&c_{13}^{(j)}(v+p)+e^{(j)}_{13}\\
d^{(j)}_{21}&d^{*(j)}_{22}(v+p)+c_{22}^{(j)}&c^{(j)}_{23}(v+p)+e^{(j)}_{23}\\
d_{31}^{(j)}&d^{(j)}_{32}(v+p)+c^{(j)}_{32}
&d_{33}^{*(j)}(v+p)^2+c_{33}^{(j)}(v+p)+e_{33}^{(j)}
\end{pmatrix}
\]
with determinant $(v+p)^3d^{*(j)}_{11}d^{*(j)}_{22}d_{33}^{*(j)}$ and $d^{*(j)}_{ii}$ invertible.
\item For $T^{(j)}$ as in \ref{it:prop:T:2}, $M^{T,\,(j)}=M_j(\tld{w}^*(\rhobar,\tau)_j)$ as in \S \ref{subsub:single}.
\end{itemize} 
\item Define the element $\tld{w}^{*, T}(\rhobar)\in\tld{W}^{\cJ}$ as 
\[
\tld{w}^{*, T}(\rhobar)_j=
\begin{cases}
t_{-\un{1}}\tld{w}^{*}(\rhobar)_j& \text{ if $T^{(j)}$ is as in item \ref{it:prop:T:1}}\\
t_{-w_0(\eta)}\tld{w}^{*}(\rhobar)_j& \text{ if $T^{(j)}$ is as in item \ref{it:prop:T:1'}}
\\
\tld{w}^{*}(\tau)_j& \text{else}.
\end{cases}
\]

\end{enumerate}
\end{defn}
The significance of the above definition is explained by:

\begin{prop}\label{prop:factor_mono}
For each $\tau\in T$, we have the diagram 
\begin{equation}
\label{eq:morp}
\xymatrix{
\tld{U}(\tld{w}^*(\tau,\rhobar),\leqeta)^{\wedge_p}\ar@{^{(}->}^-{r_{\tld{w}^*(\tau)}}[r]\ar^-{f.s.}[d]
&(M^{T})^{\wedge_p}\tld{w}^{*, T}(\rhobar)
\ar^-{}[r]\ar^{f.s.}[d]&
\Phi\text{-}\Mod^{\textnormal{{\'et}},3}_{K}\\
[\tld{U}(\tld{w}^*(\tau,\rhobar),\leqeta)^{\wedge_p}/_\tau T^{\vee,\cJ}]
\ar@{^{(}->}^-{r_{\tld{w}^*(\tau)}}[r]
&
\left[(M^{T})^{\wedge_p}\tld{w}^{*, T}(\rhobar)/T^{\vee,\cJ}\textnormal{{-sh.cnj}}\right]\ar[ur] }
\end{equation}
where 
\begin{itemize}
\item The vertical arrows are quotient maps by $T^{\vee,\cJ}$.
\item The top right arrow assigns to the tuple $(A^{(j)}\tld{w}^{*,T}(\rhobar)_j)_{j\in\cJ}$ the corresponding free \'{e}tale $\phz$-module with corresponding matrix of Frobenius.
\item The hooked horizontal arrows are induced by right multiplication by $\tld{w}^*(\tau)$ on the family of matrices. They are closed immersions, equivariant for the $\tau$-shifted conjugation action on the source and shifted conjugation action on the target.
\item The diagonal arrow is a monomorphism.
\end{itemize}
\end{prop}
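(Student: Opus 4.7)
The plan is to verify the four bulleted claims in turn, organized case by case according to Hypothesis \ref{hypothesis:T}. The technical heart of the argument lies in the first claim (the closed immersion), which comes down to an explicit matrix computation that is uniform in $\tau$ once the shape of $M^{T,(j)}$ is allowed to depend on $T^{(j)}$.

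For the closed immersion $r_{\tld{w}^*(\tau)}$, the map on points is right multiplication by $\tld{w}^*(\tau)$: given $A \in \tld{U}(\tld{w}^*(\tau,\rhobar), \leqeta)$, one must check that $A\tld{w}^*(\tau)$ has the form $B\cdot \tld{w}^{*,T}(\rhobar)$ with $B \in M^T$. In Case \ref{it:prop:T:2}, where $T^{(j)}$ is a singleton and $\tld{w}^{*,T}(\rhobar)_j = \tld{w}^*(\tau)_j$, one has $M^{T,(j)} = M_j(\tld{w}^*(\tau,\rhobar)_j)$, and the claim is tautological. In Cases \ref{it:prop:T:1} and \ref{it:prop:T:1'}, the shape of $M^{T,(j)}$ has been calibrated to simultaneously contain the image of $\tld{U}$ for \emph{every} element of $T^{(j)}$; I would enumerate the admissible elements of $T^{(j)}$ listed in the hypothesis and check element by element that the degree/divisibility bounds defining $\tld{U}_j$ translate, after right multiplication by $\tld{w}^*(\tau)_j\tld{w}^{*,T}(\rhobar)_j^{-1}$, into the prescribed matrix pattern of $M^{T,(j)}$. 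The resulting inclusion is then a closed immersion since it is cut out by vanishing of the finitely many polynomial coefficients parametrizing $M^{T,(j)}$ that are extraneous to the image of a given $\tld{U}$.

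Next, for the torus equivariance, the $\tau$-shifted conjugation $A^{(j)}\mapsto t_j A^{(j)}\sigma_j^{-1}(t_{j-1})$ transforms under right multiplication by $\tld{w}^*(\tau)_j$ into
\[
t_j \bigl(A^{(j)}\tld{w}^*(\tau)_j\bigr)\bigl(\tld{w}^*(\tau)_j^{-1}\sigma_j^{-1}(t_{j-1})\tld{w}^*(\tau)_j\bigr);
\]
a short computation using the explicit form of $\tld{w}^*(\tau)_j$ reduces the inner conjugate to $t_{j-1}$, recovering the ordinary shifted conjugation action on the target. The top right arrow is then well defined because the determinant prescriptions on $M^{T,(j)}$ force $\det(A^{(j)}\tld{w}^{*,T}(\rhobar)_j)$ to be a unit in $\cO_{K_\infty}[1/v]$, so the matrix defines a rank-$3$ \'{e}tale $\phi$-module; commutativity of the outer triangle is immediate because both composites send $A \in \tld{U}$ to the \'{e}tale $\phi$-module with Frobenius matrix $A\tld{w}^*(\tau)$.

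Finally, the monomorphism claim for the diagonal reduces to showing that every isomorphism of \'{e}tale $\phi$-modules between $A\tld{w}^{*,T}(\rhobar)$ and $A'\tld{w}^{*,T}(\rhobar)$ is given by shifted conjugation by an element of $T^{\vee,\cJ}$. This rigidity follows from the leading-coefficient structure in the definition of $M^{T,(j)}$: the prescribed unit diagonal entries $d^{*(j)}_{ii}$ in Cases \ref{it:prop:T:1}--\ref{it:prop:T:1'}, or the leading entries along the permutation $z_j$ in Case \ref{it:prop:T:2}, force any intertwining change-of-basis matrix to be diagonal. The main technical obstacle I expect is the uniform case analysis for the closed immersion in Case \ref{it:prop:T:1'}, where the matrix shape of $M^{T,(j)}$ carries both a degree-$2$ entry in $v+p$ and extra entries $e^{(j)}_{13}, e^{(j)}_{23}$: verifying that these extra parameters are exactly what is needed to accommodate every admissible $\tld{w}^*(\tau)_j \in T^{(j)}$---no more and no less---is the essential combinatorial bookkeeping, and it is what justifies the specific choice of $\tld{w}^{*,T}(\rhobar)_j$ as a translate of $\tld{w}^{*}(\rhobar)_j$ by $t_{-w_0(\eta)}$ (respectively $t_{-\un{1}}$ in Case \ref{it:prop:T:1}).
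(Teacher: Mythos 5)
The paper dispatches this proposition in one line: the first three items ``follow from the definitions,'' and the monomorphism claim ``follows from the proof of [MLM, Proposition 5.4.4].'' Your treatment of the first three items is essentially the same approach spelled out in more detail (case-by-case matrix inspection to verify that right multiplication by $\tld{w}^*(\tau)\tld{w}^{*,T}(\rhobar)^{-1}$ lands $\tld{U}$ inside $M^T$, closed-immersion by vanishing of extraneous coefficients, determinant condition giving an \'etale $\phi$-module, and a conjugation computation for equivariance), so there is no real divergence there.

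Where you do diverge from the paper is the monomorphism claim, and there your sketch has a genuine gap. You reduce correctly to showing that any isomorphism of \'etale $\phi$-modules between $A\tld{w}^{*,T}(\rhobar)$ and $A'\tld{w}^{*,T}(\rhobar)$ is given by shifted conjugation by a unique element of $T^{\vee,\cJ}$, but then assert this rigidity ``follows from the leading-coefficient structure'' of $M^{T,(j)}$. The problem is that an intertwiner between two \'etale $\phi$-modules is a matrix $U \in \GL_3$ over a Laurent-series ring (in $v$), not over the polynomial or power-series ring in which the entries of $M^{T,(j)}$ live. The leading-coefficient conditions on $M^{T,(j)}$ constrain the $A^{(j)}$, not $U$; they say nothing a priori about negative-degree terms or off-diagonal entries of $U$. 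Extracting from the functional equation $U^{(j)}A^{(j)}\tld{w}^{*,T}(\rhobar)_j = A'^{(j)}\tld{w}^{*,T}(\rhobar)_j\,\sigma_j^{-1}(U^{(j-1)})$ that $U$ is forced to be constant diagonal requires the genericity and elementary-divisor bookkeeping carried out in [MLM, Proposition 5.4.4] (roughly: bounding the poles of $U$ via the elementary divisors, then pinning down its shape via the affine Bruhat/Iwahori decomposition). The paper cites that proposition precisely because this step is not a formal consequence of the defining conditions on $M^T$; as written your argument skips it. Everything else in your proposal is fine.
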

\begin{rmk}\label{rmk:Noetherian approximation} By \cite[Theorem 5.4.20]{EGimage}, $\Phi\text{-}\Mod^{\textnormal{{\'et}},3}_{K}$ can be written as $\mathrm{colim}_h\Phi\text{-}\Mod^{\leq h,3}_{K}$ of Noetherian $p$-adic formal substacks, where $\Phi\text{-}\Mod^{\leq h,3}_{K}$ is the moduli stack of \'{e}tale $\phz$-modules with height $\leq h$ with respect to the choice of polynomial $F(v)=v(v+p)$. Then in \eqref{eq:basic:diag}, we may replace $\Phi\text{-}\Mod^{\textnormal{{\'et}},3}_{K}$ by $\Phi\text{-}\Mod^{\leq h,3}_{K}$ so that all the objects involved are now $p$-adic formal algebraic stacks, and notions such as scheme theoretic images behave as expected.
\end{rmk}
\begin{proof}
The last item follows from the proof of \cite[Proposition 5.4.4]{MLM}, while all the other items follow from the definitions.
\end{proof}
Recall from \S \ref{subsub:single} the closed subscheme $\tld{U}(\tld{w}^*(\tau,\rhobar),\leqeta,\nabla_{\tau,\infty})\subset \tld{U}
(\tld{w}^*(\tau,\rhobar),\leqeta)^{\wedge_p}$ characterized by 
$\cX^{\leqeta,\tau}=[\tld{U}(\tld{w}^*(\tau,\rhobar),\leqeta,\nabla_{\tau,\infty})/_\tau T^{\vee,\cJ}]$.
\begin{defn} 
\label{def:M_infty}
Let $M^{T,\nabla_\infty}\subset (M^T)^{\wedge_p}$ so that $M^{T,\nabla_\infty}\tld{w}^{*, T}(\rhobar)$ is the scheme theoretic image of 
\[\coprod_{\tau\in T} r_{\tld{w}^*(\tau)}:\coprod_{\tau\in T} \tld{U}(\tld{w}^*(\tau,\rhobar),\leqeta,\nabla_{\tau,\infty})\to M^T\tld{w}^{*, T}(\rhobar).\]
\end{defn}
We now turn to formal completions. Our given $\rhobar$ gives rise to a point $\cX^{\leqeta,\tau}(\F)$ for each $\tau\in T$, whose image $\rhobar_\infty\defeq\rhobar|_{G_{K_\infty}}$ in $\Mod^{\textnormal{{\'et}},3}_{K}(\F)$ is independent of $\tau$. By Proposition \ref{prop:factor_mono}, $\rhobar_\infty$ arises from a point $\ovl{A}\in M^T(\F)$ which is unique up to the $T^{\vee,\cJ}(\F)$ action. We fix the choice of such a point $\ovl{x}_{\rhobar}$. We note that the semisimplicity of $\rhobar$ is equivalent to the fact that $\ovl{A}\tld{w}^{*,T}(\rhobar)=\ovl{D}\tld{w}^*(\rhobar)$ for $\ovl{D}\in T^{\vee,\cJ}(\F)$, so that its pre-image under $r_{\tld{w}^*(\tau)}$ is $\ovl{D}\tld{w}^*(\rhobar,\tau)$. In particular the entries of $\ovl{D}$ correspond to the numbers $\ovl{d}_{ii}^{*(j)}\in \F$.

In the following Proposition, we adopt the convention that if $X$ is a stack and $x\in X(\F)$ then $X_x$ stands for the formal completion of $X$ at $x$, i.e.~the restriction of $X$ to Artinian test rings:
\begin{prop} \label{prop:image}
Let $\cX^{\leqeta,T}\defeq\bigcup_{\tau\in T}\cX^{\leqeta,\tau}$ be the scheme theoretic union of the $\cX^{\leqeta,\tau}$ for $\tau\in T$ inside $\Phi\text{-}\Mod^{\textnormal{{\'et}},3}_{K}$. Then 
\begin{itemize}
\item $R^{\leqeta,T}_{\rhobar}$ is a versal ring of $\cX^{\leqeta,T}$ at $\rhobar_\infty$;
\item $\cX^{\leqeta,T}_{\rhobar_\infty}=[M^{T,\nabla_\infty}_{\ovl{x}_{\rhobar}}\tld{w}^{*, T}(\rhobar)/T_{1}^{\vee,\cJ}\textnormal{{-sh.cnj}}]$ as subfunctors of $\Phi\text{-}\Mod^{\textnormal{{\'et}},3}_{K,\rhobar_\infty}$.
\end{itemize}
\end{prop}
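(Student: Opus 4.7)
The plan is to deduce both assertions from the single-type local model theory of \S\ref{subsub:single} combined with the monomorphism from Proposition \ref{prop:factor_mono}. Throughout, I would work inside a Noetherian truncation $\Phi\text{-}\Mod^{\leq h,3}_K$ (Remark \ref{rmk:Noetherian approximation}) with $h$ large enough that $\cX^{\leqeta,\tau}\subset \Phi\text{-}\Mod^{\leq h,3}_K$ for every $\tau\in T$; this ensures scheme-theoretic images and unions are well-behaved and commute with formal completion at $\rhobar_\infty$.

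For the versality statement, I would argue as follows. For each $\tau\in T$ the single-type theory gives that $R^{\leqeta,\tau}_{\rhobar}$ is versal for $\cX^{\leqeta,\tau}$ at $\rhobar$, equivalently at $\rhobar_\infty$ via the closed immersion $\cX^{\leqeta,\tau}\hookrightarrow \Phi\text{-}\Mod^{\leq h,3}_K$. By Definition \ref{defn:multi-type}, $\Spf R^{\leqeta,T}_{\rhobar}$ is (up to the framing variables) the scheme-theoretic image of $\coprod_{\tau} \Spf R^{\leqeta,\tau}_{\rhobar}\to \Spf R^{\Box}_{\rhobar}$; since this operation commutes with formal completion in the Noetherian setting, $\Spf R^{\leqeta,T}_{\rhobar}$ is a versal ring for $\cX^{\leqeta,T}=\bigcup_{\tau}\cX^{\leqeta,\tau}$ at $\rhobar_\infty$.

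For the explicit description, I would apply Proposition \ref{prop:factor_mono} simultaneously for all $\tau\in T$. The diagonal arrow there is a monomorphism and the $r_{\tld{w}^*(\tau)}$ are $T^{\vee,\cJ}$-equivariant closed immersions. Consequently, the formal completion $\cX^{\leqeta,T}_{\rhobar_\infty}$ identifies with the formal completion, at the image of $\ovl{x}_{\rhobar}$, of the scheme-theoretic image of $\coprod_{\tau}[\tld{U}(\tld{w}^*(\tau,\rhobar),\leqeta,\nabla_{\tau,\infty})/_\tau T^{\vee,\cJ}]$ inside $[M^T\tld{w}^{*, T}(\rhobar)/T^{\vee,\cJ}\textnormal{-sh.cnj}]$. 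Because the quotient map is a torus torsor and therefore faithfully flat, the image can be computed after pullback to $M^T\tld{w}^{*, T}(\rhobar)$, where by Definition \ref{def:M_infty} it is precisely $M^{T,\nabla_\infty}\tld{w}^{*, T}(\rhobar)$. Passing to the formal completion at $\ovl{x}_{\rhobar}$, the $T^{\vee,\cJ}$-orbit of $\ovl{x}_{\rhobar}$ collapses to the single point $\rhobar_\infty$ of the quotient stack, so that only the infinitesimal subgroup $T_1^{\vee,\cJ}$ (the formal completion of $T^{\vee,\cJ}$ at the identity) contributes to the formal groupoid, yielding the asserted identification $\cX^{\leqeta,T}_{\rhobar_\infty}=[M^{T,\nabla_\infty}_{\ovl{x}_{\rhobar}}\tld{w}^{*, T}(\rhobar)/T_{1}^{\vee,\cJ}\textnormal{-sh.cnj}]$.

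The main technical obstacle is ensuring that scheme-theoretic images and unions commute with formal completion at $\rhobar_\infty$; without the Noetherian truncation $\Phi\text{-}\Mod^{\leq h,3}_K$ of Remark \ref{rmk:Noetherian approximation} this is not even well-posed. Once this framework is in place, the two bullets reduce to formal manipulations with quotient stacks by smooth group schemes (in particular with the passage from the global torus action to the infinitesimal one upon completion), and to the monomorphism property of the diagonal in Proposition \ref{prop:factor_mono}.
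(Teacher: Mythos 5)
There is a real gap in your argument for the first bullet. You correctly reduce to comparing the formal completion of the scheme-theoretic union $\bigcup_{\tau}\cX^{\leqeta,\tau}$ at $\rhobar_\infty$ with a scheme-theoretic image of $\coprod_\tau \Spf R^{\leqeta,\tau}_{\rhobar}$. However, the ambient formal neighborhood here is $(\Phi\text{-}\Mod^{\leq h,3}_K)_{\rhobar_\infty}$, whose versal ring is the $G_{K_\infty}$-lifting ring $R^{\Box}_{\rhobar_\infty}$, not $R^{\Box}_{\rhobar}$. So what the "commutation with completion" step actually produces is the image of $R^{\Box}_{\rhobar_\infty}\to\prod_{\tau} R^{\leqeta,\tau}_{\rhobar}$, whereas $R^{\leqeta,T}_{\rhobar}$ is by definition the image of $R^{\Box}_{\rhobar}\to\prod_{\tau} R^{\leqeta,\tau}_{\rhobar}$. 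To identify these two images you must know that restriction to $G_{K_\infty}$ induces a surjection $R^{\Box}_{\rhobar_\infty}\twoheadrightarrow R^{\Box}_{\rhobar}$ (this is \cite[Proposition 3.12]{LLLM}). Your proposal never invokes this, and without it the first bullet does not follow; the Noetherian approximation alone does not bridge the gap between the two ambient deformation spaces.

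Your argument for the second bullet is a genuinely different and serviceable route: you compute the scheme-theoretic image after passing to the faithfully flat torsor $M^T\tld{w}^{*,T}(\rhobar)\to [M^T\tld{w}^{*,T}(\rhobar)/T^{\vee,\cJ}\text{-sh.cnj}]$, identify it with $M^{T,\nabla_\infty}\tld{w}^{*,T}(\rhobar)$, and then complete, observing that only $T_1^{\vee,\cJ}$ survives in the formal groupoid. The paper instead pulls everything back along $\Spf R^{\Box}_{\rhobar_\infty}\to \Phi\text{-}\Mod^{\text{\'et},3}_K$ and reads off the answer from the chain $R^{\Box}_{\rhobar_\infty}\onto R'\onto R^{\leqeta,\tau}_{\rhobar}$ together with the definition of the image ring $R$, thereby reusing the first item's bookkeeping rather than the torsor argument. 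Both work, but note that your second-bullet argument still needs the monomorphism property of the diagonal in Proposition \ref{prop:factor_mono} to guarantee that the formal completion of the quotient stack agrees with that of the closed substack of $\Phi\text{-}\Mod^{\text{\'et},3}_K$ it parametrizes; and if you route the identification with $R^{\leqeta,T}_{\rhobar}$ through the first bullet, the missing surjectivity reappears. Adding the citation of \cite[Proposition 3.12]{LLLM} (surjectivity of $R^{\Box}_{\rhobar_\infty}\onto R^{\Box}_{\rhobar}$) would close the gap.
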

\begin{proof} 
As in \cite[\S 3.6]{EGstacks}, the $G_{K_\infty}$ lifting ring $R^{\Box}_{\rhobar_\infty}$ is a versal ring for $\Phi\text{-}\Mod^{\textnormal{{\'et}},3}_{K}$ at $\rhobar_\infty$. 
After pulling back to $\Spf\, R^{\Box}_{\rhobar_\infty}$, $\cX^{\leqeta,\tau}$ becomes $\Spf\, R^{\leqeta,\tau}_{\rhobar}$, and hence $\cX^{\leqeta,T}$ becomes the formal spectrum of $\mathrm{im}(R^{\Box}_{\rhobar_\infty}\to \prod_{\tau\in T} R^{\leqeta,\tau}_{\rhobar})$. However by \cite[Proposition 3.12]{LLLM}, restriction to $G_{K_\infty}$ induces a surjection $R^\Box_{\rhobar_\infty}\onto R^{\Box}_{\rhobar}$, hence this image ring coincides with $R^{\leqeta,T}_{\rhobar}$. This gives the first item.

For the second item, by Proposition \ref{prop:factor_mono}, after pulling back to $\Spf\, R^{\Box}_{\rhobar_\infty}$, $[M^{T}\tld{w}^{*, T}(\rhobar)/T^{\vee,\cJ}\text{{-sh.cnj}}]$, $[M^{T,\nabla_\infty}\tld{w}^{*, T}(\rhobar)/T^{\vee,\cJ}\text{{-sh.cnj}}]$ becomes $\Spf\, R'$, $\Spf\, R$, where $R^{\Box}_{\rhobar_\infty}\onto R'\onto R^{\leqeta,\tau}_{\rhobar}$ for each $\tau\in T$, and by definition $R$ is the image of the natural map $R'\to \prod_{\tau\in T} R^{\leqeta,\tau}_{\rhobar}$. 
But this shows $R=R^{\leqeta,T}_{\rhobar}$.
\end{proof}

\begin{prop}
\label{prop:big_diagram}
Recall that $T$ satisfies Hypothesis \ref{hypothesis:T} and $\rhobar$ is tame, $6$-generic. Then there is an isomorphism
\[R^{\eta,T}_{\rhobar}[\![x_1,\cdots, x_{3f}]\!]\cong \tld{S}/\tld{I}_{T,\nabla_\infty}[\![\{y_i\}_{1\leq i\leq 9}]\!]  \]
where
\begin{itemize}
\item
$\tld{S}\defeq \widehat{\otimes}_{j\in\cJ}\tld{S}^{(j)}$, where $\tld{S}^{(j)}$ is the ring in Tables \ref{Table_Ideals}, \ref{Table_Ideals_1} ,  \ref{Table_Ideals_2} in cases \ref{it:prop:T:1}-\ref{it:prop:T:1'}, and Table \cite[Table 1]{GL3Wild} in case \ref{it:prop:T:2}. 
\item For each $\tau\in T$, $\tld{I}_{\tau,\nabla_\infty}\defeq\sum_{j\in\cJ}\tld{I}^{(j)}_{\tau,\nabla_\infty}$ where for each $j\in\cJ$ the ideal $\tld{I}^{(j)}_{\tau,\nabla_\infty}\subseteq \tld{S}$ has the form described in Tables \ref{Table_Ideals}, \ref{Table_Ideals_1} and \ref{Table_Ideals_2} if $T^{(j)}$ is as in item \ref{it:prop:T:1}--\ref{it:prop:T:1'}, and are described in \cite[Table 2]{GL3Wild}\footnote{and moreover adding a term $O(p^{N-1})$ to each generator of the  ideals $\tld{I}^{(j)}_{\tau,\nabla_\infty}$ of \cite[Table 2]{GL3Wild}} if $T^{(j)}$ is as in item \ref{it:prop:T:2}.
\item $\tld{I}_{T,\nabla_\infty}\defeq \bigcap_{\tau\in T} \tld{I}_{\tau,\nabla_\infty}$.
\end{itemize}
\end{prop}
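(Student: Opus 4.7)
The plan is to deduce the result from Proposition \ref{prop:image}, combined with an explicit computation of the complete local ring of $M^{T,\nabla_\infty}$ at $\ovl{x}_\rhobar$, first factored across the components $j\in\cJ$ using $M^T=\prod_j M^{T,(j)}$ and then ``unioned'' across the types $\tau\in T$. The proposition gives $\cX^{\leqeta,T}_{\rhobar_\infty}$ as the stack quotient of $M^{T,\nabla_\infty}_{\ovl{x}_\rhobar}\tld{w}^{*,T}(\rhobar)$ by $T^{\vee,\cJ}_1$, and passing to the regular locus via the closed immersion $\tld{U}_{\mathrm{reg}}(\tld{z},\leqeta,\nabla_{\tau,\infty})\into \tld{U}(\tld{z},\leqeta,\nabla_{\tau,\infty})$ of \cite[Theorem 7.3.2]{MLM} refines this to a statement about $\cX^{\eta,T}$. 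Then the versality of $R^{\eta,T}_\rhobar$ for $\cX^{\eta,T}$ at $\rhobar$, together with the $T^{\vee,\cJ}$-torsor structure of $M^{T,\nabla_\infty}$ over its stack quotient and the $\GL_3$-framing torsor from the stack of $G_K$-representations to the lifting ring, absorbs the discrepancy into the $3f$ formal variables $x_i$ on the left and the $9$ formal variables $y_i$ on the right ($9=\dim\GL_3$ and $3f=\dim T^{\vee,\cJ}$).

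The next step is to compute the local ring $\tld{S}^{(j)}$ at the $j$-th component of $\ovl{x}_\rhobar$ in the product decomposition. In case \ref{it:prop:T:2} the ambient scheme $M^{T,(j)}$ equals the single-type $M_j(\tld{w}^*(\rhobar,\tau)_j)$, so the answer is imported verbatim from \cite[Table 1]{GL3Wild} (up to the harmless $O(p^{N-1})$ error on the monodromy generators, which reflects a reindexing of the monodromy series). In cases \ref{it:prop:T:1} and \ref{it:prop:T:1'} one completes the explicit matrix scheme presented in Definition 3.1.1 at the Frobenius matrix $\ovl{D}_\tau \tld{w}(\rhobar)\tld{w}^{*,T}(\rhobar)^{-1}$; combined with the determinant relation $\det A^{(j)}=(v+p)^3\prod_i d^{*(j)}_{ii}$, this reduces to elementary power-series bookkeeping producing the rings $\tld{S}^{(j)}$ listed in Tables \ref{Table_Ideals}, \ref{Table_Ideals_1} and \ref{Table_Ideals_2}.

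For each $\tau\in T$ and each $j\in\cJ$, the ideal $\tld{I}^{(j)}_{\tau,\nabla_\infty}$ is obtained by pulling back the defining equations of $\tld{U}(\tld{w}^*(\tau,\rhobar),\leqeta,\nabla_{\tau,\infty})$ along the right-multiplication closed immersion $r_{\tld{w}^*(\tau)}$ of Proposition \ref{prop:factor_mono}; both the elementary divisor conditions (i.e.~divisibility by $(v+p)$ of $2\times 2$ minors) and the monodromy conditions in \cite[Definition 7.1.2]{MLM} translate, after completion, into the equations recorded in the tables. The product structure $M^T=\prod_j M^{T,(j)}$ gives $\tld{I}_{\tau,\nabla_\infty}=\sum_j \tld{I}^{(j)}_{\tau,\nabla_\infty}$, and since $M^{T,\nabla_\infty}$ is by Definition \ref{def:M_infty} the scheme-theoretic union of the $\tld{U}(\tld{w}^*(\tau,\rhobar),\leqeta,\nabla_{\tau,\infty})$ over $\tau\in T$ (intersected with the regular locus to obtain the $\eta$-version), its defining ideal is the intersection $\tld{I}_{T,\nabla_\infty}=\bigcap_\tau \tld{I}_{\tau,\nabla_\infty}$, yielding the asserted isomorphism.

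The main obstacle is not the formal passage between framing and stack coordinates but the explicit determination of $\tld{S}^{(j)}$ and $\tld{I}^{(j)}_{\tau,\nabla_\infty}$ in cases \ref{it:prop:T:1}--\ref{it:prop:T:1'}: unlike the single-type situation of \cite{GL3Wild}, the ambient matrix scheme $M^{T,(j)}$ must accommodate simultaneously several admissible $\tld{z}_j$ of different lengths (and a shift by $t_{-\un 1}$ or $t_{-w_0(\eta)}$ in $\tld{w}^{*,T}(\rhobar)_j$), so the explicit shape of the monodromy generators and especially the computation of the intersection $\bigcap_\tau\tld{I}_{\tau,\nabla_\infty}$ require the $6$-generic hypothesis on $\rhobar$ and the structural constraints of Hypothesis \ref{hypothesis:T}. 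These tedious ideal-theoretic intersection computations are deferred to Appendix \ref{appendix:IC}.
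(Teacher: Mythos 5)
Your approach tracks the paper's: reduce to Proposition \ref{prop:image}, compute $\tld{S}^{(j)}$ by completing the explicit matrix scheme at the point coming from $\rhobar$, identify $\tld{I}^{(j)}_{\tau,\nabla_\infty}$ by pulling the defining conditions back along $r_{\tld{w}^*(\tau)}$, and read off the presentation after passing to the $\eta$ locus and absorbing torsor directions into formal variables.

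There is one genuine gap. You assert ``The product structure $M^T=\prod_j M^{T,(j)}$ gives $\tld{I}_{\tau,\nabla_\infty}=\sum_j\tld{I}^{(j)}_{\tau,\nabla_\infty}$'' as if this were immediate. It is not: the closed formal subscheme $\tld{U}(\tld{w}^*(\rhobar,\tau),\leqeta,\nabla_{\tau,\infty})$ is by construction the \emph{$p$-saturation} of the naive monodromy (and elementary-divisor) locus inside $\tld{U}(\tld{w}^*(\rhobar,\tau),\leqeta)^{\wedge_p}$, so after completing at $\ovl{x}_\rhobar$ and pulling back via $r_{\tld{w}^*(\tau)}$, what you obtain a priori is only that the ideal cutting out $\tld{U}_{\mathrm{reg}}$ is the $p$-saturation of $\sum_j\tld{I}^{(j)}_{\tau,\nabla_\infty}$, where the summands are the explicit generators from the tables. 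The content that makes the presentation correct as stated is that the natural surjection $\tld{S}/\sum_j\tld{I}^{(j)}_{\tau,\nabla_\infty}\onto\tld{S}/\tld{I}_{\nabla_{\tau,\infty}}$ is already an isomorphism (equivalently, the sum is $p$-saturated), which the paper deduces by citing the analysis in \cite[\S 3.6.1]{LLLM2}. Without this step, the displayed isomorphism is at best true up to a potential $p$-power-torsion discrepancy, which would matter for all of the downstream Tor and intersection computations. A secondary, smaller inaccuracy: you claim the intersection $\bigcap_\tau\tld{I}_{\tau,\nabla_\infty}$ requires the tedious computations of Appendix \ref{appendix:IC}; but in this proposition the intersection is simply a definition of $\tld{I}_{T,\nabla_\infty}$ (the scheme-theoretic union of Definition \ref{def:M_infty} translates to an intersection of ideals by generalities). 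The appendix computations serve the later Lemmas \ref{lem:intsc}, \ref{lemma:ideal3types}, \ref{lemma:ideal4types}, etc., not Proposition \ref{prop:big_diagram}.
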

\begin{warning}We emphasize that:
\begin{itemize} 
\item
the $O(p^{N-4})$-tails appearing in the $\tld{I}^{(j)}_{\tau,\nabla_\infty}$-rows of tables \ref{Table_Ideals}, \ref{Table_Ideals_1}, \ref{Table_Ideals_2} \emph{involve variables in all embedding};
\item the structure constants $b_{\tau,1},b_{\tau,2},b_{\tau,3}\in \Zp$ appearing in the $\tld{I}^{(j)}_{\tau,\nabla_\infty}$-rows of tables \ref{Table_Ideals}, \ref{Table_Ideals_1}, \ref{Table_Ideals_2} \emph{depend on the whole $f$-tuple $\tld{w}(\rhobar,\tau)$}.
\end{itemize}
In particular $\tld{I}^{(j)}_{\tau,\nabla_\infty}$ is \emph{not} an ideal of $\tld{S}^{(j)}$, and $\tld{I}^{(j)}_{\tau,\nabla_\infty}+O(p^{N-4})$ \emph{does not} depend only on $\tld{w}(\rhobar,\tau)_j$ in general.
\end{warning}
\begin{proof} This follows from Proposition \ref{prop:image}, and the following observations:
\begin{itemize}
\item $\tld{S}$ is the formal power series ring on the coefficients of the entries of the matrices that $M^T$ parametrizes (where the variables corresponding to unit entries are characterized by $d^*_{ii}=[\ovl{d}^*_{ii}](1+x^*_{ii})$), and thus $M^T_{\ovl{x}_{\rhobar}}$ is the locus in $\Spf\tld{S}$ where the determinant conditions on the matrices are imposed.
\item By \cite[Theorem 3.2.2]{GL3Wild} and the discussion in \cite[\S 3.6.1]{LLLM2}, quotiening by $\tld{I}_{\tau,\nabla_\infty}$ corresponds to $\tld{U}_{\mathrm{reg}}(\tld{w}^*(\rhobar,\tau),\leqeta,\nabla_{\tau_\infty})$ via the closed immersion $r_{\tld{w}^*(\tau)}$ in diagram \eqref{eq:morp}. Note that a priori we only know $\tld{I}_{\tau,\nabla_\infty}$ is the $p$-saturation of $\sum_{j\in\cJ}\tld{I}^{(j)}_{\tau,\nabla_\infty}$. However by \cite[\S 3.6.1]{LLLM2}, the first bullet point at page 54, the natural surjection $\tld{S}/\sum_{j\in\cJ}\tld{I}^{(j)}_{\tau,\nabla_\infty}\onto \tld{S}/\tld{I}_{\nabla_{\tau,\infty}}$ is an isomorphism, and hence we don't need to $p$-saturate. 
\end{itemize}
\end{proof}
\begin{rmk} (Compatibility under shrinking $T$)
\label{rmk:compatibility:multitype}
Suppose $T'\subset T$ satisfy Hypothesis \ref{hypothesis:T}. 
Then we have a commutative diagram
\[
\xymatrix{
\Spf\, R_{\rhobar}^{\leqeta,T'}\ar@/_2.0pc/@{^{(}->}[dd]\ar@{^{(}->}[d]\ar[r]&
{\text{\footnotesize{$\left[M^{T',\nabla_{\infty}}\cdot\tld{w}^{*,T'}(\rhobar)/T^{\vee,\cJ}\text{{-sh.cnj}}\right]$}}}\ar@/^6.0pc/@{^{(}->}[dd]\ar@{^{(}->}[d]
\\
\Spf\, R_{\rhobar}^{\leqeta,T}\ar@{^{(}->}[d]\ar[r]&
{\text{\footnotesize{$\left[M^{T,\nabla_{\infty}}\cdot\tld{w}^{*,T}(\rhobar)/T^{\vee,\cJ}\text{{-sh.cnj}}\right]$}}}\ar@{^{(}->}[d]
\\
\Spf\, R^{\Box}_{\rhobar_\infty}\ar[r]&
\Phi\text{-}\Mod^{\textnormal{{\'et}},3}_{K}
}
\]
with formally smooth horizontal arrows. It follows that the isomorphism in Proposition \ref{prop:big_diagram} for $T'\subset T$ can be chosen to be compatible with the inclusion of ideals $\tld{I}_{T,\nabla_\infty}\subset \tld{I}_{T',\nabla_\infty}$.
\end{rmk}
We also record the following result, which will be later used to analyze $\tld{I}_{T,\nabla_\infty}$:
\begin{prop}\label{prop:ideal2type}
\label{prop:ideals_coprimes}
Let $\tld{I}_\tau=\sum_{j\in\cJ}\tld{I}^{(j)}_{\tau}$ where for each $j\in\cJ$ the ideal $\tld{I}_\tau^{(j)}\subseteq \tld{S}^{(j)}$  is described in Tables \ref{Table_Ideals}, \ref{Table_Ideals_1}, \ref{Table_Ideals_2}.
Then
\begin{enumerate}
\item $\tld{I}_\tau$ correspond to the closed immersion $r_{\tld{w}^*(\tau)}:\tld{U}(\tld{w}^*(\rhobar,\tau),\leqeta)\into M^T\tld{w}^{*, T}(\rhobar)$.
\item If $\tld{w}(\rhobar,\tau)_j\neq \tld{w}(\rhobar,\tau')_j$ then
\[
p^2\in \tld{I}^{(j)}_{\tau}+\tld{I}^{(j)}_{\tau'}.
\]
Moreover if $\ell\big(\tld{w}(\rhobar,\tau)_j)\geq 2$, $\ell(\tld{w}(\rhobar,\tau')_j\big)\geq 2$ and $\{\tld{w}(\rhobar,\tau)_j,\tld{w}(\rhobar,\tau')_j\}\neq \{\alpha\beta\alpha\gamma t_{\un{1}},\alpha\beta\alpha t_{\un{1}}\}$ we have 
\[
p\in \tld{I}^{(j)}_{\tau}+\tld{I}^{(j)}_{\tau'}.
\]
\end{enumerate}
\end{prop}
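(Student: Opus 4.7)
The strategy is a direct computation on the matrix schemes parametrized by $M^{T,(j)}$ (for part (1)), followed by a case-by-case analysis of the explicit generators tabulated in Tables~\ref{Table_Ideals}, \ref{Table_Ideals_1}, \ref{Table_Ideals_2} (for part (2)). Both parts are essentially combinatorial/computational given the setup already in place.

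For part (1), the closed immersion $r_{\tld{w}^*(\tau)}$ is induced by the right-multiplication change of coordinates $A\mapsto A\cdot\tld{w}^*(\tau)\cdot(\tld{w}^{*,T}(\rhobar))^{-1}$, which matches the two descriptions of the matrix of Frobenius of the underlying \'etale $\phz$-module (compare the top arrows of~\eqref{eq:basic:diag} and~\eqref{eq:morp}). The plan is to take a matrix $B\in M_j(\tld{w}^*(\rhobar,\tau)_j)$ in its normal form from \S\ref{subsub:single} (a polynomial matrix in $v$ with specified degree and vanishing constraints), multiply on the right by $\tld{w}^*(\tau)_j\cdot(\tld{w}^{*,T}(\rhobar)_j)^{-1}$, and equate the result entry-by-entry with the normal form of a matrix in $M^{T,(j)}$ given in the definition of $M^T$. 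The conditions on the $\tld{S}^{(j)}$-variables that make this identification possible are precisely the generators of $\tld{I}^{(j)}_\tau$ recorded in the tables. This is a finite enumeration across the five (resp.~four) elements in cases~\ref{it:prop:T:1}--\ref{it:prop:T:1'}, and reduces to the analogous check already appearing in~\cite{GL3Wild} in case~\ref{it:prop:T:2}.

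For part (2), fix $j\in\cJ$ and a pair of distinct elements $\tld{w}(\rhobar,\tau)_j,\tld{w}(\rhobar,\tau')_j$. The generators of $\tld{I}^{(j)}_\tau$ and $\tld{I}^{(j)}_{\tau'}$ from the tables fall, modulo $p$, into two kinds: linear relations setting specific matrix-entry variables to zero, and quadratic (or higher) relations identifying products of variables with explicit structure constants in $\Zp$. The plan is to form the sum ideal $\tld{I}^{(j)}_\tau+\tld{I}^{(j)}_{\tau'}$ and exhibit an explicit $\tld{S}^{(j)}$-combination of generators that produces a nonzero constant in $\Zp$. Because the two types differ, the combined constraints force a common subexpression (a product of entries, or a single entry) to simultaneously satisfy two congruences with different right-hand sides, from which their difference lies in the sum ideal. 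One then reads off from the tables that this difference is a unit multiple of $p^2$ in general, and a unit multiple of $p$ under the stronger length hypothesis (excluding the exceptional pair).

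The main obstacle is the sheer volume of case-checking: up to $\binom{5}{2}=10$ pairs in case~\ref{it:prop:T:1} and $\binom{4}{2}=6$ in case~\ref{it:prop:T:1'}, each requiring careful tracking of which variables appear and which structure constants $b_{\tau,1},b_{\tau,2},b_{\tau,3}$ intervene. The genuinely subtle point is the exceptional pair $\{\alpha\beta\alpha\gamma t_{\un{1}},\alpha\beta\alpha t_{\un{1}}\}$, where the two admissibility classes are so close (they differ essentially by a single reflection $\gamma$) that the corresponding constraints agree modulo $p^2$ rather than only modulo $p$; identifying precisely where this agreement comes from, and why it genuinely obstructs producing $p$ from the ideal sum, is the hardest conceptual verification. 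The explicit enumeration is deferred to Appendix~\ref{appendix:IC}.
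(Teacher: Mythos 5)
Your overall plan is the one the paper uses: part (1) is a by-inspection statement about what the tabulated generators cut out (the paper phrases it even more tersely, simply noting that the top block of generators enforces the degree-bound conditions and the bottom block the elementary-divisor conditions on $\tld{U}(\tld{w}^*(\rhobar,\tau),\leqeta)$, which is the same thing your explicit right-multiplication matching would produce); part (2) is a finite case-by-case computation, a representative instance given in the text and the rest deferred to \S\ref{appendix:multi:sp:fiber}. Two small calibration remarks.

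First, your description of the mechanism for part (2)---``a common subexpression simultaneously satisfies two congruences, and the difference is a unit times $p$ or $p^2$''---is accurate for the easy pairs (e.g.\ $\{\alpha\beta\alpha\gamma t_{\un{1}},\alpha\beta\alpha t_{\un{1}}\}$, where $e_{11}$ and $e_{11}-p^2d_{11}^*$ immediately give $p^2d_{11}^*$), but it undersells the harder ones. The sample case $\tld{w}^*(\rhobar,\tau)_j=\alpha\beta t_{\un{1}}$, $\tld{w}^*(\rhobar,\tau')_j=t_{\un{1}}$ requires chaining several quadratic relations from both ideals through a cubic generator before a unit multiple of $p^2$ emerges; it is not a one-step congruence comparison. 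So you should expect nontrivial algebra in each of your $10+6$ cases, not a uniform two-line argument.

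Second, and more substantively: the proposition makes no negative claim about the exceptional pair. It only refrains from asserting $p\in\tld{I}^{(j)}_{\tau}+\tld{I}^{(j)}_{\tau'}$ when $\{\tld{w}(\rhobar,\tau)_j,\tld{w}(\rhobar,\tau')_j\}=\{\alpha\beta\alpha\gamma t_{\un{1}},\alpha\beta\alpha t_{\un{1}}\}$. Your plan to ``identify why it genuinely obstructs producing $p$ from the ideal sum'' is proposing to prove that $p$ is \emph{not} in the sum, which the statement does not require, and the paper never attempts. You would be wasting effort---and possibly attempting something false or at least much harder to certify---by trying to verify that non-membership. The right reading of the exceptional case is merely that the $p^2$-argument (via $e_{11}$) goes through, while the arguments that yield $p$ in the other length-$\geq 2$ pairs do not apply, so no $p$-membership is claimed.
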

\begin{proof}
The first item is due to the fact the top group of generators of $\tld{I}^{(j)}_{\tau}$ (appearing in the corresponding row of Table \ref{Table_Ideals}, \ref{Table_Ideals_1}, \ref{Table_Ideals_2}) cut out the degree bound conditions, while the bottom group cut out the elementary divisor conditions of $\tld{U}(\tld{w}^*(\rhobar,\tau),\leqeta)$.

The second item follows from inspecting the tables. We give a sample computation for the case $\tld{w}^*(\rhobar,\tau)_j=\alpha\beta t_{\un{1}}$, $\tld{w}^*(\rhobar,\tau')_j=t_{\un{1}}$ (for other cases see \S \ref{appendix:multi:sp:fiber}). 
In the ring $\tld{S}^{(j)}/(\tld{I}^{(j)}_{\tau}+\tld{I}^{(j)}_{\tau'})$ we have:
\[c_{12}d_{21}c_{33}=c_{12}c_{23}d_{31}=\frac{c_{13}d_{32}c_{23}d_{31}}{d_{33}^*}=-pc_{13}d_{31}d_{22}^*=-pd_{22}^*d_{33}^*\]
Using this, and the relations $c_{22}=-pd_{22}^*$, $c_{33}=-pd_{33}^*$, the last generator in $\tld{I}^{(j)}_{t_{\un{1}}}$ becomes
\[c_{12}d_{21}c_{33} - c_{11}c_{22}d_{33}^* - c_{11}d_{22}^*c_{33} - d_{11}^*c_{22}c_{33} - p(c_{11}d_{22}^*d_{33}^*  +d_{11}^*c_{22}d_{33}^* +d_{11}^*d_{22}^*c_{33})=2p^2d_{11}^*d_{22}^*d_{33}^*.\]
\end{proof}

\subsection{Commutative algebra}\label{sec:ca}

In this section, we collect various commutative algebra results that we require in later sections. 

\begin{lemma}\label{lemma:disjointass}
Let $R$ be a Noetherian ring and $M$ a finitely generated $R$-module. 
Suppose that we have an exact sequence 
\[
0 \ra L \ra M \ra N \ra 0
\]
of $R$-modules such that $\supp(L)$ and $\mathrm{Ass}(N)$ are disjoint. 
Then $L$ is the kernel of the natural map
\[
\lambda: M \ra \bigoplus_{\fp \in \mathrm{Ass}(N)} M_{\fp}. 
\]
\end{lemma}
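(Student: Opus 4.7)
The plan is to prove both inclusions $L\subseteq \ker\lambda$ and $\ker\lambda\subseteq L$ separately, using standard properties of localization and associated primes.

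For the inclusion $L\subseteq \ker\lambda$, I will use that for any $\fp\in \mathrm{Ass}(N)$, disjointness gives $\fp\notin \supp(L)$, hence $L_\fp=0$. Therefore the composite $L\hookrightarrow M \to M_\fp$ is zero for every $\fp\in\mathrm{Ass}(N)$, and so $L$ lies in $\ker\lambda$.

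For the reverse inclusion, I would take $m\in \ker\lambda$ and consider its image $\bar m\in N$, aiming to show $\bar m=0$. By the characterization of the kernel of localization, for each $\fp\in \mathrm{Ass}(N)$ there exists $s_\fp\in R\setminus \fp$ with $s_\fp m=0$ in $M$; in particular $s_\fp\in \mathrm{Ann}_R(\bar m)$. Assume for contradiction that $\bar m\neq 0$. Then $R\bar m$ is a nonzero finitely generated submodule of $N$ (here we use that $R$ is Noetherian), so $\mathrm{Ass}(R\bar m)$ is nonempty and contained in $\mathrm{Ass}(N)$. Pick $\fp_0\in \mathrm{Ass}(R\bar m)$. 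Since every associated prime of $R\bar m$ contains $\mathrm{Ann}_R(\bar m)$, we deduce $s_{\fp_0}\in \mathrm{Ann}_R(\bar m)\subseteq \fp_0$, contradicting the choice $s_{\fp_0}\notin \fp_0$. Hence $\bar m=0$, i.e.\ $m\in L$.

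There is no serious obstacle: the argument is a routine exercise in associated primes, and the key point is simply the containment $\mathrm{Ass}(R\bar m)\subseteq \mathrm{Ass}(N)$ combined with the fact that every associated prime of a cyclic module $R\bar m$ contains the annihilator of the generator. The Noetherian hypothesis ensures both that $R\bar m$ has an associated prime whenever it is nonzero, and that kernels of localization are detected by single annihilating elements.
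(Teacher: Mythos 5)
Your proof is correct and follows the same strategy as the paper's: both directions hinge on showing that if $m\in\ker\lambda$ then its image $\bar m\in N$ vanishes, using that $N_\fp$ detects nonzero elements for $\fp\in\mathrm{Ass}(N)$. The only difference is that the paper cites the injectivity of $N\to\bigoplus_{\fp\in\mathrm{Ass}(N)}N_\fp$ from the Stacks Project, whereas you reprove it inline via the associated-prime argument for the cyclic module $R\bar m$; this makes your write-up self-contained but is not a genuinely different approach.
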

\begin{proof}
Let $L'$ be $\ker\lambda$. 
Then $L \subset L'$ since $L_{\fp} = 0$ for all $\fp \in \mathrm{Ass}(N)$ by assumption. 
Now let $a \in L'$ and denote by $\ovl{a}$ the image in $N$. 
For any $\fp \in \mathrm{Ass}(N)$, the image of $a$ in $M_{\fp}$ is $0$, and so the image of $\ovl{a}$ in $N_{\fp}$ is $0$. 
We conclude from \cite[\href{https://stacks.math.columbia.edu/tag/0311}{Tag 0311}]{stacks-project} that $\ovl{a} = 0$ or equivalently that $a \in L$. 
Thus, $L = L'$. 
\end{proof}

Throughout the rest of this section $(R,\fm)$ denotes a local ring with residue field $\F$. 
(In later applications, $\F$ is as in \S \ref{sec:notation}, though we do not require this here.) 

\begin{cor}\label{cor:disjointass}
Let $R$ be a Noetherian local ring and $M$ a finitely generated $R$-module. 
Suppose that we have an exact sequence 
\[
0 \ra L \ra M \ra N \ra 0
\]
of $R$-modules where $N$ is a finitely generated maximal Cohen--Macaulay $R$-module and $\supp(L)\cap \supp(N)$ contains no minimal primes of $R$. 
Then the image of $L$ in $M$ is the kernel of the natural map
\[
M \ra \bigoplus_{\fp \in \supp(N) \textrm{ minimal}} M_{\fp}. 
\]
\end{cor}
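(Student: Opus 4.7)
The plan is to deduce this corollary directly from Lemma \ref{lemma:disjointass} applied to the same short exact sequence, after verifying its hypothesis and matching the two indexing sets for the direct sum. First, I would invoke that a finitely generated Cohen--Macaulay module over a Noetherian local ring has no embedded associated primes, so $\mathrm{Ass}(N)$ coincides with the set of minimal primes of $\supp(N)$ (cf.~Matsumura, \emph{Commutative Ring Theory}, Theorem 17.3). This immediately shows that the direct sum $\bigoplus_{\fp \in \supp(N) \text{ minimal}} M_{\fp}$ appearing in the corollary agrees with $\bigoplus_{\fp \in \mathrm{Ass}(N)} M_{\fp}$ from the lemma.

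Next, I would verify the disjointness hypothesis $\supp(L) \cap \mathrm{Ass}(N) = \emptyset$. For any $\fp \in \mathrm{Ass}(N)$, the maximal Cohen--Macaulay assumption yields $\dim(R/\fp) = \dim_R N = \dim R$, which forces $\fp$ to be a minimal prime of $R$: otherwise a strict descent $\fq \subsetneq \fp$ would, together with a maximal chain from $\fp$ to $\fm$, produce a chain of primes of length exceeding $\dim R$. Thus every $\fp \in \mathrm{Ass}(N) \subset \supp(N)$ is a minimal prime of $R$, and the hypothesis of the corollary — that $\supp(L) \cap \supp(N)$ contains no minimal prime of $R$ — implies $\fp \notin \supp(L)$. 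Consequently $\supp(L)$ and $\mathrm{Ass}(N)$ are disjoint.

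With these two observations in hand, Lemma \ref{lemma:disjointass} applies verbatim and identifies $L$ (viewed as its image in $M$) with the kernel of the localization map to $\bigoplus_{\fp \in \mathrm{Ass}(N)} M_{\fp}$, which by the first paragraph is exactly $\bigoplus_{\fp \in \supp(N) \text{ minimal}} M_{\fp}$. I do not anticipate any genuine obstacle: the content is entirely in converting the hypothesis ``no minimal prime of $R$ lies in $\supp(L) \cap \supp(N)$'' into the hypothesis of the preceding lemma via the two standard features of maximal Cohen--Macaulay modules (absence of embedded primes and equality of the Krull dimensions $\dim_R N = \dim R$).
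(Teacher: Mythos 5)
Your proposal is correct and follows the same approach as the paper: reduce to Lemma~\ref{lemma:disjointass} by noting that a Cohen--Macaulay module has no embedded primes, so $\mathrm{Ass}(N)$ coincides with the minimal primes of $\supp(N)$. You are slightly more explicit than the paper in checking, via the dimension count $\dim R/\fp = \dim N = \dim R$ for $\fp \in \mathrm{Ass}(N)$, that these primes are actually minimal primes of $R$ (which is what makes the disjointness hypothesis applicable) — the paper asserts this in one line citing Matsumura, but your spelled-out argument is exactly the intended one.
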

\begin{proof}
By \cite[Theorem 17.3(i)]{matsumura}, $N$ has no embedded primes so that $\mathrm{Ass}(N)$ is precisely the set of minimal primes of $R$ in $\supp(N)$. 
Thus $\mathrm{Ass}(N)$ and $\supp(L)$ are disjoint by assumption. 
The result then follows from Lemma \ref{lemma:disjointass}. 
\end{proof}

\begin{lemma}\label{lemma:CAfusion}
Let $R$ be a local ring with residue field $\F$. 
Let $I,J\subset K \subset R$ be ideals with $K$ finitely generated. 
Let $M$ be the kernel of the map
\[
R/I \oplus R/J \ra R/K
\]
which is the difference of the natural surjections. 
Then the following are equivalent:
\begin{enumerate}
\item \label{item:cyclic} $M$ is a cyclic $R$-module; 
\item \label{item:idealsum} $I+J = K$; and
\item \label{item:torsum} the induced map $\Tor^R_1(\F,R/I)\oplus \Tor^R_1(\F,R/J) \ra \Tor^R_1(\F,R/K)$ is surjective. 
\end{enumerate}
If these equivalent conditions hold then:
\begin{enumerate}[label=(\roman*)]
\item \label{item:idealintersect} $M \cong R/I\cap J$; and
\item \label{item:torintersect} the image of $\Tor^R_1(\F,M) \ra \Tor^R_1(\F,R/K)$ induced by the composition $M \ra R/I \ra R/K$ is the intersection of the images of $\Tor^R_1(\F,R/I) \ra \Tor^R_1(\F,R/K)$ and $\Tor^R_1(\F,R/J) \ra \Tor^R_1(\F,R/K)$. 
\end{enumerate}
\end{lemma}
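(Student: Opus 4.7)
The plan is to derive everything from the single Mayer--Vietoris-type short exact sequence
\[
0 \to M \to R/I \oplus R/J \xrightarrow{\delta} R/K \to 0,
\]
where $\delta$ is the surjective difference map (surjectivity is clear since $(k \bmod I, 0)$ maps to $k \bmod K$). Applying $\F\otimes_R -$ to this sequence and reading off the $\Tor$ long exact sequence will yield the equivalence \ref{item:cyclic}$\Leftrightarrow$\ref{item:torsum}; some elementary image computations will yield \ref{item:idealsum}$\Leftrightarrow$\ref{item:torsum} as well as \ref{item:idealintersect} and \ref{item:torintersect}.

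For \ref{item:cyclic}$\Leftrightarrow$\ref{item:torsum}, the tail of the Tor long exact sequence reads
\[
\Tor_1^R(\F,R/I)\oplus\Tor_1^R(\F,R/J)\to \Tor_1^R(\F,R/K) \to \F\otimes_R M \to \F\oplus\F \xrightarrow{\bar\delta} \F \to 0,
\]
with $\bar\delta(a,b)=a-b$ of rank one, so the image of $\F\otimes_R M\to\F\oplus\F$ (the diagonal copy of $\F$) has dimension one. Combining this with exactness yields the key dimension count
\[
\dim_\F M/\fm M \;=\; 1 + \dim_\F\bigl(\mathrm{coker}(\Tor_1^R(\F,R/I)\oplus\Tor_1^R(\F,R/J)\to \Tor_1^R(\F,R/K))\bigr),
\]
so by Nakayama $M$ is cyclic exactly when the cokernel vanishes, i.e.~condition \ref{item:torsum}.

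For \ref{item:idealsum}$\Leftrightarrow$\ref{item:torsum} I would unwind the $\Tor$ groups by observing that whenever $N\subseteq\fm$ one has $\Tor_1^R(\F,R/N) = N/\fm N$, and the map to $K/\fm K$ induced by $R/N\twoheadrightarrow R/K$ is the one induced by inclusion, whose image is $(N+\fm K)/\fm K$. Thus the image of the map in \ref{item:torsum} is $(I+J+\fm K)/\fm K$, and surjectivity translates to $K = I+J+\fm K$; since $K/(I+J)$ is finitely generated, Nakayama converts this to $K=I+J$. For \ref{item:idealintersect}, comparing the classical Mayer--Vietoris sequence for $(I,J)$ with the defining sequence for $M$, through the quotient $R/(I+J)\twoheadrightarrow R/K$ with kernel $K/(I+J)$, yields
\[
0 \to R/(I\cap J) \to M \to K/(I+J) \to 0,
\]
which under the assumption $K=I+J$ collapses to the claimed isomorphism.

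Finally, for \ref{item:torintersect}, using $M\cong R/(I\cap J)$ the image of $\Tor_1^R(\F,M)=(I\cap J)/\fm(I\cap J)$ in $K/\fm K$ is $((I\cap J)+\fm K)/\fm K$, and the intersection of the individual images computed above equals $\bigl((I+\fm K)\cap(J+\fm K)\bigr)/\fm K$. Thus \ref{item:torintersect} boils down to the ideal identity $(I\cap J)+\fm K=(I+\fm K)\cap(J+\fm K)$, where only $\supseteq$ is nontrivial. This is the one step I expect to require genuine care: given $x=i+a=j+b$ with $i\in I$, $j\in J$, $a,b\in\fm K$, I would use the hypothesis $K=I+J$ (hence $\fm K=\fm I+\fm J$) to write $b-a=u+v$ with $u\in\fm I$, $v\in\fm J$, and then check that $y\defeq i-u=j+v\in I\cap J$ gives $x=y+(u+a)\in (I\cap J)+\fm K$. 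Once this substitution is in hand, all pieces of the lemma are assembled.
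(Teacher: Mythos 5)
Your proof is correct. The overall framework (start from the short exact sequence $0 \to M \to R/I \oplus R/J \to R/K \to 0$, apply $\Tor_1^R(\F,-)$, and invoke Nakayama and the identification $\Tor_1^R(\F,R/N)\cong N/\fm N$) is the same as the paper's, but you organize the implications differently and your argument for \ref{item:torintersect} is genuinely different.

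The paper proves \ref{item:idealsum}$\Rightarrow$\ref{item:cyclic},\ref{item:idealintersect} first (by comparing with the Mayer--Vietoris sequence, exactly as you do), then \ref{item:idealsum}$\Leftrightarrow$\ref{item:torsum} (by Nakayama, as you do), and finally \ref{item:cyclic}$\Rightarrow$\ref{item:torsum},\ref{item:torintersect} by observing that cyclicity forces the right exactness of the $\Tor$-sequence at $M\otimes_R\F$ and then just reading off the image of $\Tor_1(\F,M)$ as the fiber product inside $\Tor_1(\F,R/I)\oplus\Tor_1(\F,R/J)$. You instead prove \ref{item:cyclic}$\Leftrightarrow$\ref{item:torsum} directly via the dimension count $\dim_\F M/\fm M = 1 + \dim_\F\mathrm{coker}$ (which is in effect a re-derivation of the paper's Lemma~\ref{lemma:glue:lower} in the two-ideal case), and for \ref{item:torintersect} you translate everything to the concrete ideal identity $(I\cap J)+\fm K = (I+\fm K)\cap(J+\fm K)$, which you then prove by an explicit elementwise manipulation using $\fm K = \fm I + \fm J$. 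Both routes are valid; the paper's argument for \ref{item:torintersect} is shorter and more formal (no ideal bookkeeping), while yours is more hands-on and makes the mechanism transparent. The one place I'd flag for care in a write-up is the implicit assumption that $I,J,K$ are proper ideals (equivalently $K\subset\fm$), which is needed for $\Tor_1^R(\F,R/N)\cong N/\fm N$ and for $R/N\otimes_R\F\cong\F$; the paper's proof has the same implicit assumption, and it is harmless in all applications, but it is worth noting if the argument is meant to be self-contained.
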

\begin{proof}
We first show that \eqref{item:idealsum} implies \eqref{item:cyclic} and \ref{item:idealintersect}. 
Suppose that $I+J = K$. Then the natural map $\psi: R/(I\cap J) \ra R/I\oplus R/J$ is injective with cokernel $R/(I+J)$ using that $R/I\oplus R/J = (0\oplus R/J) + \mathrm{im}(\psi)$. 
This identifies $M$ with the cyclic $R$-module $R/(I\cap J)$. 

The inclusion $I+J \subset K$ is an equality if and only the map $I\oplus J \ra K$ obtained by taking the difference is a surjection. 
By Nakayama's lemma, this is equivalent to the surjectivity of the induced map $I \otimes_R \F \oplus J \otimes_R \F \ra K \otimes_R \F$. 
As there is a functorial identification of $\Tor^R_1(\F,R/L) \cong L\otimes_R \F$ for ideals $L \subset R$, we see that \eqref{item:idealsum} and \eqref{item:torsum} are equivalent. 

Finally we show that \eqref{item:cyclic} implies \eqref{item:torsum} and \ref{item:torintersect}. 
If $M$ is a cyclic $R$-module, then the sequence 
\[
0 \ra M\otimes_R \F \ra (R/I)\otimes_R \F \oplus (R/J)\otimes_R \F \ra (R/K)\otimes_R \F \ra 0
\]
is exact. 
The $\Tor^R$-long exact sequence gives the exact sequence 
\[
\Tor^R_1(\F,M) \ra \Tor^R_1(\F,R/I)\oplus \Tor^R_1(\F,R/J) \ra \Tor^R_1(\F,R/K) \ra 0
\]
where the second map is the difference of the natural maps $\Tor^R_1(\F,R/I) \ra \Tor^R_1(\F,R/K)$ and $\Tor^R_1(\F,R/J) \ra \Tor^R_1(\F,R/K)$. 
This gives \eqref{item:torsum}. 
As the image of $\Tor^R_1(\F,M)$ in $\Tor^R_1(\F,R/I)\oplus \Tor^R_1(\F,R/J)$ consists of pairs $(a,b)$ such that the images of $a$ and $b$ in $\Tor^R_1(\F,R/K)$ coincide, \ref{item:torintersect} follows. 
\end{proof}

\begin{lemma}\label{lemma:CAfinalglue}
Let $R$ be a local ring with residue field $\F$. 
Let $1\leq n$ and $I_i\subset K$ be ideals of $R$ for $i=1,\dots,n$. 
Let $M$ be a finitely generated $R$-module with a fixed surjection to $R/K$. 
Let $N$ be the kernel of the map 
\[
M\oplus\oplus_{i=1}^n R/I_i  \ra (R/K)^{\oplus (n+1)}/\Delta(R/K)
\]
induced by the sum of the natural maps $R/I_i \onto R/K$ and the fixed map $M \onto R/K$, and where $\Delta(R/K)$ denotes the diagonally embedded copy of $R/K$. 

Write $V_i$ $($resp.~$W$$)$ for the image of the induced map
\[
\Tor^R_1(\F,R/I_i)\ra\Tor^R_1(\F,R/K)
\]
for $1\leq i\leq n$ $($resp.~$\Tor^R_1(\F,M)\ra\Tor^R_1(\F,R/K)$$)$. 
Assume that for all $1\leq j\leq n$, 
\[
V_j+W\cap \cap_{i\neq j} V_i = \Tor^R_1(\F,R/K).
\]
Then the projection map $N \ra M$ induces an isomorphism $N\otimes_R \F \ra M\otimes_R \F$. 
\end{lemma}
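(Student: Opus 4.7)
The plan is to apply $-\otimes_R \F$ to the defining short exact sequence
\[0 \to N \to M \oplus \bigoplus_{i=1}^n R/I_i \to Q \to 0, \qquad Q \defeq (R/K)^{\oplus(n+1)}/\Delta(R/K),\]
and analyze the tail of the resulting long exact sequence. Using the isomorphism $Q \xrightarrow{\sim} (R/K)^n$ given by $(y_0,y_1,\ldots,y_n) \mapsto (y_i - y_0)_{i=1}^n$, the rightmost terms become
\[\Tor_1^R(\F,Q) \xrightarrow{\delta} N \otimes_R \F \xrightarrow{\beta} M\otimes_R \F \oplus \F^n \xrightarrow{\gamma} \F^n \to 0,\]
where $\gamma(v,(y_i))=(y_i-\phi(v))_i$ and $\phi: M\otimes_R \F \to \F$ is the reduction of the fixed surjection $M\onto R/K$. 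The image of $\beta$, which equals $\ker\gamma$, consists of tuples $(v,\phi(v),\dots,\phi(v))$ and hence projects isomorphically onto $M\otimes_R \F$ via the first coordinate. So the map $N\otimes_R\F \to M\otimes_R\F$ is automatically surjective, and its kernel equals $\mathrm{im}(\delta)$; it thus suffices to show $\delta=0$, equivalently that the preceding map
\[\alpha : \Tor_1^R(\F,M)\oplus\bigoplus_{i=1}^n \Tor_1^R(\F,R/I_i) \longrightarrow \Tor_1^R(\F,Q)\]
is surjective.

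To compute $\mathrm{im}(\alpha)$, I set $T \defeq \Tor_1^R(\F,R/K)$. The $\Tor$ long exact sequence attached to $0\to \Delta(R/K)\to (R/K)^{n+1} \to Q\to 0$, together with the injectivity of $\Delta(R/K)\otimes\F\into (R/K)^{n+1}\otimes\F$, gives a natural identification $\Tor_1^R(\F,Q)\cong T^n$ compatible with the one above. Tracing $\Tor_1^R(\F,M)\to\Tor_1^R(\F,R/K)\to\Tor_1^R(\F,Q)$ and $\Tor_1^R(\F,R/I_i)\to\Tor_1^R(\F,R/K)\to\Tor_1^R(\F,Q)$ through these identifications yields
\[\mathrm{im}(\alpha) = \{(b_j-c)_{j=1}^n \in T^n : c\in W,\ b_j\in V_j\},\]
and surjectivity of $\alpha$ is thus equivalent to the surjectivity of the map $W\to \prod_{j=1}^n T/V_j$ induced by reduction.

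The final step is to deduce this latter surjectivity from the hypothesis by a Chinese-remainder-style argument: given $(a_j)_j\in T^n$, for each $j$ the hypothesis produces a decomposition $a_j=v_j+c_j$ with $v_j\in V_j$ and $c_j\in W\cap\bigcap_{i\neq j}V_i$; setting $c\defeq\sum_j c_j\in W$ and using that $c_i\in V_j$ whenever $i\neq j$ gives $c\equiv c_j\equiv a_j\pmod{V_j}$, which provides the required preimage in $W$.

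I expect the main technical step to be the bookkeeping in the second paragraph, where one must carefully track signs through the two successive isomorphisms to identify $\mathrm{im}(\alpha)$ intrinsically in terms of the subspaces $W$ and $V_j$. Once this identification is in place, both the reduction of the target statement to $\delta=0$ and the final combinatorial argument are short formal manipulations.
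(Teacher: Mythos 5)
Your proposal is correct and takes essentially the same approach as the paper: both apply $-\otimes_R\F$ to the defining short exact sequence, reduce the statement to surjectivity of $\Tor_1^R(\F, M\oplus\oplus_i R/I_i)\to\Tor_1^R(\F,Q)$, identify the target as $T^{n+1}/\Delta(T)$ (you transport this to $T^n$ via the first coordinate, which is only a cosmetic change), and then verify surjectivity using the same decomposition $a_j=b_j+c_j$ with $b_j\in V_j$ and $c_j\in W\cap\cap_{i\neq j}V_i$, with the correcting term supplied through the $M$-summand. The only difference is bookkeeping: the paper adjusts each generator directly by a diagonal element $c_j$, whereas you sum the $c_j$ into a single element $c\in W$ realizing a Chinese-remainder-style lift — these are the same calculation packaged slightly differently.
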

\begin{proof}
From the $\Tor$-exact sequence and using that $\oplus_{i=1}^n R/I_i \ra (R/K)^{\oplus (n+1)}/\Delta(R/K)$ is an isomorphism after applying $-\otimes_R \F$, it suffices to show that the map 
\begin{equation}\label{eqn:tor}
\Tor^R_1(\F,M\oplus \oplus_{i=1}^n R/I_i) \ra \Tor^R_1(\F,R/K)^{\oplus (n+1)}/\Delta(\Tor^R_1(\F,R/K))
\end{equation}
is surjective. 
Writing an element of $\Tor^R_1(\F,R/K)^{\oplus (n+1)}$ as $(x_i)_{i=0}^n$, it suffices to show that for any $0<j\leq n$ and $a_j \in \Tor^R_1(\F,R/K)$ and setting $a_i = 0$ for $i\neq j$, $(a_i)_{i=0}^n+\Delta(\Tor^R_1(\F,R/K))$ is in the image of \eqref{eqn:tor}. 
By assumption, we can write $a_j = b_j+c_j$ where $b_j \in V_j$ and $c_j \in W\cap \cap_{i\neq j} V_i$. 
Then $(a_i - c_j)_{i=0}^n \in (a_i)_{i=0}^n+\Delta(\Tor^R_1(\F,R/K))$ and $(a_i - c_j)_{i=0}^n$ is in the image of the map 
\[
\Tor^R_1(\F,\oplus_{i=1}^n R/I_i \oplus M) \ra \Tor^R_1(\F,(R/K)^{\oplus (n+1)}).
\]
\end{proof}

In a similar fashion using the Tor-exact sequence we obtain
\begin{lemma}\label{lemma:glue:lower}
Let $R$ be a local ring with residue field $\F$. 
Let $1\leq n$ and $I_i\subset K$ be ideals of $R$ for $i=1,\dots,n$. 
Let $N$ be the kernel of the map 
\[
\oplus_{i=1}^n R/I_i  \ra (R/K)^{\oplus n}/\Delta(R/K)
\]
induced by the sum of the natural maps $R/I_i \onto R/K$, where $\Delta(R/K)$ denotes the diagonally embedded copy of $R/K$. 

Then
\[
\dim_{\F}(N\otimes_R\F)=1+\dim_{\F}\Bigg(\coker\bigg(\oplus_{i=1}^n\Tor_1^R(\F,R/I_i)\ra \Tor_1^R(\F,(R/K)^{\oplus n}/\Delta(R/K))\bigg)\Bigg).
\]
\end{lemma}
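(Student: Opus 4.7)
The plan is to derive the formula by applying the Tor long exact sequence to a suitable short exact sequence, in direct analogy with the proof of Lemma \ref{lemma:CAfinalglue}.

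\textbf{Step 1: Short exact sequence.} First I would verify that since $I_i\subset K$, each $R/I_i\onto R/K$ is surjective, hence so is $\oplus_{i=1}^n R/I_i\onto(R/K)^{\oplus n}\onto(R/K)^{\oplus n}/\Delta(R/K)$. Thus by definition of $N$ we obtain a short exact sequence
\[
0\ra N\ra \bigoplus_{i=1}^n R/I_i\ra (R/K)^{\oplus n}/\Delta(R/K)\ra 0.
\]

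\textbf{Step 2: Compute the relevant $\otimes_R\F$ and $\Tor_1^R(\F,-)$ on the quotient.} Applying $-\otimes_R\F$ to the short exact sequence $0\ra R/K\xrightarrow{\Delta}(R/K)^{\oplus n}\ra (R/K)^{\oplus n}/\Delta(R/K)\ra 0$ and using the associated Tor long exact sequence, the diagonal map $\F=(R/K)\otimes_R\F\ra \F^n=(R/K)^{\oplus n}\otimes_R\F$ is injective and the induced map $\Tor_1^R(\F,R/K)\ra \Tor_1^R(\F,(R/K)^{\oplus n})$ is also the diagonal, hence injective. In particular $((R/K)^{\oplus n}/\Delta(R/K))\otimes_R\F\cong\F^{n-1}$.

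\textbf{Step 3: Tor long exact sequence for the defining sequence of $N$.} Applying $-\otimes_R\F$ to the sequence in Step 1 yields the exact sequence
\[
\bigoplus_{i=1}^n\Tor_1^R(\F,R/I_i)\xrightarrow{\phi}\Tor_1^R(\F,(R/K)^{\oplus n}/\Delta(R/K))\ra N\otimes_R\F\ra \F^n\xrightarrow{\psi}\F^{n-1}\ra 0,
\]
where $\psi$ is (up to the identifications from Step 2) the quotient map $\F^n\onto\F^n/\Delta(\F)$, whose kernel is the $1$-dimensional diagonal $\F\subset\F^n$.

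\textbf{Step 4: Conclude.} The short exact sequence
\[
0\ra \coker(\phi)\ra N\otimes_R\F\ra \ker(\psi)\ra 0
\]
extracted from Step 3 gives $\dim_\F(N\otimes_R\F)=\dim_\F\coker(\phi)+1$, which is the desired equality.

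There is no substantive obstacle: every step is formal once Step 1 is in place and one checks that the canonical maps $(R/K)\to (R/K)^{\oplus n}$ and $\Tor_1^R(\F,R/K)\to \Tor_1^R(\F,(R/K)^{\oplus n})$ induced by the diagonal are injective, which is immediate by composing with the projections onto a single factor.
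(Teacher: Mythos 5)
Your proof is correct and follows essentially the same approach as the paper, which states only that the lemma is obtained ``in a similar fashion using the Tor-exact sequence.'' Your careful identification of the connecting pieces — that $\ker(\psi)$ is the $1$-dimensional diagonal $\F\subset\F^n$ and that this accounts exactly for the extra $+1$ — fills in the details the paper leaves implicit.
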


\begin{lemma}[``Distortion'' Lemma]
\label{lem:distortion}
Let $(R,\fm)$ be a local $\cO$-algebra with residue field $\F$ with $\cO$ and $\F$ as in \S \ref{sec:notation}.
Let $k\geq 2$ and $\{I_1,\dots,I_k\}$ be $p$-saturated ideals of $R$.

Let $f\in R$ and assume that:
\begin{enumerate}
\item for each $\ell=1,\dots,k$ there exists $\eps_\ell\in \mathfrak{m}$ such that $f+p\eps_\ell\in I_\ell$
\item
\label{hyp:distorsion:2}   we have
\[p\in \sum_{\ell=1}^k \bigcap_{i\neq \ell}I_{i}.
\]
\end{enumerate}
Then there exist $a_\ell\in \bigcap_{i\neq \ell}I_{i}$ for all $\ell=1,\dots,k$ such that 
\begin{itemize}
\item
$f+\sum_{\ell}a_{\ell}\eps_\ell\in \bigcap_{i=1}^kI_i$
\item
$\sum_{\ell}a_{\ell}\eps_\ell\in \mathfrak{m}\Big(\bigcap_{i=1}^k (p)+I_i\Big)$.
\end{itemize}
\end{lemma}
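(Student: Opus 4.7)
\medskip
\noindent\textbf{Proof proposal.} The plan is to turn hypothesis \ref{hyp:distorsion:2} into an \emph{explicit} partition of unity for $p$ and use it to cancel the obstructions $p\eps_\ell$ one $\ell$ at a time, then observe that the resulting correction automatically lies inside $\mathfrak{m}\bigl(\bigcap_i((p)+I_i)\bigr)$.

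First I would use hypothesis \ref{hyp:distorsion:2} to choose, once and for all, elements $a_\ell\in\bigcap_{i\neq\ell}I_i$ for $\ell=1,\dots,k$ with
\[
\sum_{\ell=1}^k a_\ell \;=\; p.
\]
These will be the coefficients in the statement. The first bullet is then checked ideal by ideal: fixing $j\in\{1,\dots,k\}$, I rewrite
\[
f+\sum_{\ell=1}^k a_\ell\eps_\ell \;=\; (f+p\eps_j)\;+\;(a_j-p)\eps_j\;+\;\sum_{\ell\neq j}a_\ell\eps_\ell.
\]
The first summand lies in $I_j$ by hypothesis (1). For $\ell\neq j$ we have $a_\ell\in\bigcap_{i\neq\ell}I_i\subseteq I_j$, so the third summand lies in $I_j$. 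Finally, $a_j-p=-\sum_{\ell\neq j}a_\ell$ and each $a_\ell$ with $\ell\neq j$ belongs to $I_j$, so $a_j-p\in I_j$ and the middle term is in $I_j$ as well. Running $j$ over $\{1,\dots,k\}$ yields the first bullet.

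For the second bullet, the key observation is that the very same identity $a_j-p=-\sum_{\ell\neq j}a_\ell\in I_j$ shows $a_j\in (p)+I_j$. Combined with $a_j\in\bigcap_{i\neq j}I_i\subseteq\bigcap_{i\neq j}\bigl((p)+I_i\bigr)$, this gives
\[
a_\ell\;\in\;\bigcap_{i=1}^k\bigl((p)+I_i\bigr)\qquad\text{for every }\ell.
\]
Since $\eps_\ell\in\mathfrak{m}$, each summand $a_\ell\eps_\ell$ lies in $\mathfrak{m}\bigl(\bigcap_{i=1}^k((p)+I_i)\bigr)$, and so does the sum. This completes the sketch.

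The argument is short and mechanical once the right $a_\ell$'s are produced, so there is no genuine obstacle; the only conceptual point is the choice to normalize the decomposition of $p$ provided by \ref{hyp:distorsion:2} to exactly $\sum_\ell a_\ell=p$, which is what makes $a_j\equiv -\sum_{\ell\neq j}a_\ell\pmod p$ force $a_j$ into $I_j$ modulo $p$ and simultaneously makes $(a_j-p)\eps_j$ land in $I_j$. Note that the $p$-saturation hypothesis on the $I_i$ is not used in this argument; it presumably serves only to fix the class of ideals under consideration in the surrounding applications.
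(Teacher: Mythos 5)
Your proof is correct, but it takes a genuinely different (and in fact more economical) route than the paper. The paper's argument, after fixing $a_\ell$ with $\sum_\ell a_\ell = p$, multiplies the target element by $p$ and exploits the telescoping identity
\[
p\Bigl(f+\sum_\ell a_\ell\eps_\ell\Bigr)=\sum_\ell a_\ell\bigl(f+p\eps_\ell\bigr),
\]
observes the right-hand side lies in $\bigcap_i I_i$ because each summand $a_\ell(f+p\eps_\ell)$ does, and then \emph{divides by $p$} using the hypothesis that the $I_i$, hence their intersection, are $p$-saturated. You instead verify membership in each $I_j$ directly via the rearrangement $f+\sum_\ell a_\ell\eps_\ell = (f+p\eps_j)+(a_j-p)\eps_j+\sum_{\ell\neq j}a_\ell\eps_\ell$, noting $a_j-p=-\sum_{\ell\neq j}a_\ell\in I_j$. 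The two proofs are of comparable length, but yours never invokes $p$-saturation; as you correctly observe, that hypothesis is unnecessary for the lemma to hold, and its role in the paper's proof is only to permit the ``multiply by $p$, then cancel'' shortcut. Your second bullet is handled identically to the paper's. So the proposal is sound and mildly sharpens the statement by dropping a hypothesis (though one should keep $p$-saturation in the statement since the lemma is only ever applied in that setting).
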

\begin{proof}
By \eqref{hyp:distorsion:2} we can find  $a_\ell\in \bigcap_{i\neq \ell}I_{i}$ such that $p=\sum_\ell a_\ell$.
Then $p(f+\sum_{\ell}a_{\ell}\eps_\ell)=\sum_\ell a_\ell(f+p\eps_\ell)\in \cap_{i=1}^kI_i$ hence $f+\sum_{\ell}a_{\ell}\eps_\ell\in \cap_{i=1}^kI_i$ (because $\cap_{i=1}^kI_i$ is $p$-saturated).

Furthermore, for each $\ell$, $a_\ell\in \bigcap_{i=1}^k\big((p)+I_i\big)$, hence the second item follows.
\end{proof}

\begin{lemma}\label{lemma:unitlift}
Let $R$ and $S$ be complete local Noetherian $\F$-algebras.
Let $I$ be a proper ideal of $R$ and $M$ be a finitely generated $S$-module.
Then for any $\psi \in \Aut_{R\widehat{\otimes} S}((R/I) \widehat{\otimes} M)$, there exists $\tld{\psi} \in \Aut_{R\widehat{\otimes} S}(R\widehat{\otimes} M)$ such that the diagram
\[
\xymatrix{
R\widehat{\otimes} M \ar^{\tld{\psi}}[r]\ar@{->>}[d]& R\widehat{\otimes} M\ar@{->>}[d]\\
(R/I) \widehat{\otimes} M\ar^{\psi}[r]& 
(R/I) \widehat{\otimes} M
}
\]
commutes where the vertical maps are the natural projections.
(All completed tensor products are taken over $\F$.)
\end{lemma}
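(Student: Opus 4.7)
\emph{Proof plan.} I would recast the lifting problem via tensor--hom adjunction and exploit that $M$ is an $S$-module while the ideal $I$ lives only on the $R$-side.

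Set $A\defeq R\widehat\otimes S$ and $N\defeq R\widehat\otimes M$. Since $M$ is finitely generated over the complete local Noetherian ring $S$, one first identifies $N$ with $A\otimes_S M$ as $A$-modules (the ordinary tensor product is already $\fm_A$-adically complete), and then identifies $(R/I)\widehat\otimes M$ with $N/IN$. Via tensor--hom adjunction, there are natural bijections $\End_A(N)=\Hom_S(M,N)$ and $\End_A(N/IN)=\Hom_S(M,N/IN)$, under which reduction modulo $I$ on $A$-endomorphisms corresponds to post-composition with $\pi\colon N\twoheadrightarrow N/IN$. Thus it suffices to produce, for every $g\in\Hom_S(M,N/IN)$, an $S$-linear lift $\tld g\in\Hom_S(M,N)$ with $\pi\circ\tld g=g$, and then to verify that the adjoint $A$-endomorphism is invertible when $g$ is adjoint to an automorphism.

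The central step is the construction of an $S$-linear section $\tld\sigma\colon N/IN\to N$ of $\pi$. Since $R$ and $R/I$ are $\F$-vector spaces, the projection $R\twoheadrightarrow R/I$ splits as $\F$-vector spaces, and one can moreover choose a section $\sigma\colon R/I\to R$ continuous for the $\fm_R$-adic topology by lifting graded $\F$-bases of $R/I$ compatibly with the filtration induced by the powers of $\fm_R$. Completing the tensor product of $\sigma$ with $\mathrm{id}_M$ then yields $\tld\sigma$. The key observation is that $\tld\sigma$ is automatically $S$-linear: on both $N$ and $N/IN$ the $S$-action is carried entirely by the $M$-factor through $1\otimes S\subseteq A$, whereas $\sigma$ modifies only the $R$-factor.

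With this section in hand, given $\psi\in\Aut_A(N/IN)$, take $g(m)\defeq\psi(1\otimes m)\colon M\to N/IN$, set $\tld g\defeq\tld\sigma\circ g$, and let $\tld\psi\in\End_A(N)$ be the adjoint $A$-linear endomorphism. By construction $\pi\circ\tld\psi=\psi\circ\pi$, so the required square commutes. For invertibility, $\tld\psi\bmod IA$ equals $\psi$, which is surjective, and $IA\subseteq\fm_A$ since $I\subseteq\fm_R$; Nakayama's lemma applied to the finitely generated $A$-module $N$ forces $\tld\psi$ itself to be surjective, and a surjective endomorphism of a finitely generated module over the Noetherian ring $A$ is automatically an isomorphism. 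The only mildly delicate point in the whole argument is the continuity of the section $\sigma$, which is a standard graded-basis construction; everything else is a formal consequence of adjunction and the inclusion $I\subseteq\fm_A$.
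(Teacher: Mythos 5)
Your proof is correct, but it travels a different road from the paper's. The paper first establishes the natural isomorphism $\Hom_{R\widehat\otimes S}(R\widehat\otimes M, R\widehat\otimes -) \cong R\widehat\otimes \Hom_S(M,-)$ on finitely generated $S$-modules (by left-exactness in $M$ and checking on $M=S$), which immediately gives $\End_{R\widehat\otimes S}(R\widehat\otimes M) \cong R\widehat\otimes \End_S(M)$; surjectivity of the reduction-mod-$I$ map on endomorphism rings is then nothing more than applying $(-)\widehat\otimes\End_S(M)$ to the surjection $R\twoheadrightarrow R/I$, so the lift of $\psi$ to an endomorphism is automatic and no splitting of $R\to R/I$ is needed. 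You instead construct an $\F$-linear continuous section $\sigma\colon R/I\to R$, tensor with $\mathrm{id}_M$ to get an $S$-linear section $\tld\sigma$ of $N\twoheadrightarrow N/IN$, and use tensor--hom adjunction to turn $\tld\sigma\circ\psi|_{1\otimes M}$ into the desired $A$-endomorphism. Both arguments then finish identically with Nakayama. Your route is more hands-on and makes the lift explicit, but the section construction is the one place where care is required: you should make precise that $\sigma$ can be chosen compatibly with the $\fm_R$-adic filtrations (so that $\sigma\widehat\otimes\mathrm{id}_M$ makes sense on the completed tensor products) — the graded-basis device works, but as written it is asserted rather than carried out. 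The paper's formulation sidesteps this topological bookkeeping entirely and also yields the stronger structural fact $\End_{R\widehat\otimes S}(R\widehat\otimes M)\cong R\widehat\otimes\End_S(M)$, which is why it is preferred there, though your approach is perfectly sound once the section step is fleshed out.
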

\begin{proof}
Observe that $\Hom_{R\widehat{\otimes} S}(R\widehat{\otimes}M,R\widehat{\otimes}-)\cong R\widehat{\otimes}\Hom_{S}(M,-)$ as   functors on finitely generated $S$-modules, since they are left exact in $M$ and are isomorphic when $M=S$.
Hence $\End_{R\widehat{\otimes} S}(R\widehat{\otimes} M) \cong R \widehat{\otimes} \End_S(M)$ surjects onto $\End_{(R/I)\widehat{\otimes} S}((R/I) \widehat{\otimes} M) \cong (R/I) \widehat{\otimes} \End_S(M)$.

Thus any element $a \in \Aut_{R\widehat{\otimes} S}((R/I) \widehat{\otimes} M)$ can be lifted to an element $\widetilde{a} \in \End_{R\widehat{\otimes} S}(R\widehat{\otimes} M)$.
We claim that any such lift is in fact in $\Aut_{R\widehat{\otimes} S}(R\widehat{\otimes} M)$.
Consider the left ideal $\End_{R\widehat{\otimes} S}(R\widehat{\otimes} M) \cdot \widetilde{a}$.
Since this ideal surjects onto $\End_{R\widehat{\otimes} S}((R/I) \widehat{\otimes} M)$, we have that 
\[\End_{R\widehat{\otimes} S}(R\widehat{\otimes} M) \cdot \widetilde{a}+I\widehat{\otimes} S \cdot \End_{R\widehat{\otimes} S}(R\widehat{\otimes} M) = \End_{R\widehat{\otimes} S}(R\widehat{\otimes} M).\]
Since every term in this equation is a finitely generated left $R\widehat{\otimes} S$-module, Nakayama's lemma implies that $\End_{R\widehat{\otimes} S}(R\widehat{\otimes} M) \cdot \widetilde{a} = \End_{R\widehat{\otimes} S}(R\widehat{\otimes} M)$ so that $\widetilde{a} \in \Aut_{R\widehat{\otimes} S}(R\widehat{\otimes} M)$.
\end{proof}

\subsection{Special fiber}
\label{sub:SF}

Recall from \ref{subsec:MTdef} that $(s,\mu)$ is a fixed lowest alcove presentation for $\rhobar$ and we assume from now on that $\mu$ is $N\geq 6$-deep.
We write $S$, $S^{(j)}$, $I_{T,\nabla_\infty}$ etc.~for the mod $p$ reduction of $\tld{S}$, $\tld{S}^{(j)}$, $\tld{I}_{T,\nabla_\infty}$ etc.

Let $\tau\in T$. Then by \cite[Theorem 3.3.2]{GL3Wild}, the minimal primes of $\ovl{R}^{\eta,\tau}_{\rhobar}$ are in bijection with the Serre weights $\sigma\in W^?(\rhobar)\cap \JH(\ovl{\sigma}(\tau))$. Since the underlying topological space of $\Spec \ovl{R}^{\eta,T}_{\rhobar}$ and $\bigcup_{\tau\in T}\Spec \ovl{R}^{\eta,\tau}_{\rhobar}$ coincide, and $\Spec({S}/{I}_{T,\nabla_{\infty}})$ is a formally smooth modification of $\Spec \ovl{R}^{\eta,T}_{\rhobar}$ by Proposition \ref{prop:big_diagram}, we learn that the minimal primes of ${S}/{I}_{T,\nabla_{\infty}}$ are in bijection with $W^?(\rhobar)\cap \bigcup_{\tau\in T} \JH(\ovl{\sigma}(\tau))$. We denote by $\ovl{\fP}_{\sigma}$ the miminal prime of ${S}/{I}_{T,\nabla_{\infty}}$ corresponding to $\sigma$.

We now give an approximation of $\ovl{R}^{\eta,T}_{\rhobar}$.
Let $\tld{I}^{(j)}_{\tau,\nabla_{\textnormal{alg}}}$ be the ideal of $\tld{S}^{(j)}$ generated by the elements listed in in row $\tld{I}^{(j)}_{\tau,\nabla_{\infty}}$ of Tables \ref{Table_Ideals}, \ref{Table_Ideals_1}, \ref{Table_Ideals_2} \emph{without their $O(p^{N-4})$-tails} if $T^{(j)}$ is as in item \ref{it:prop:T:1}--\ref{it:prop:T:1'} (resp.~the ideal appearing in \cite[Table 2]{GL3Wild} row $\tld{z}_j$ if $T^{(j)}=\{\tld{z}_j\}$ is as in item \ref{it:prop:T:2}), and write $\tld{I}_{T,\nabla_{\textnormal{alg}}}$ for the ideal $\bigg(\bigcap_{\tau\in T}\Big(\sum_{j\in\cJ}\tld{I}^{(j)}_{\tau,\nabla_{\textnormal{alg}}}\tld{S}\Big)\bigg)$ of $\tld{S}$.
Write $I_{T,\nabla_{\textnormal{alg}}}$ for the image of $\tld{I}_{T,\nabla_{\textnormal{alg}}}$ in ${S}$, and similarly define $I^{(j)}_{\tau,\nabla_{\textnormal{alg}}}$ for $\tau\in T$, $j\in \cJ$.

\begin{prop}
\label{prop:special_fiber}
Fix $j\in\cJ$ and assume $T$ has the following form: 
\begin{enumerate}
\item 
\label{hyp:special_fiber:1}
$\#T^{(j')}=1$ if $j'\neq j$;
\item 
\label{hyp:special_fiber:2}
either $T^{(j)}\subseteq \{\alpha\beta\alpha t_{\un{1}},\beta\alpha t_{\un{1}},\alpha\beta t_{\un{1}}, t_{\un{1}}\}$ or $T^{(j)}\subseteq\{t_{w_0(\eta)}, \alpha t_{w_0(\eta)}, \beta t_{w_0(\eta)}, w_0t_{w_0(\eta)}\}$.
\end{enumerate}
Assume that $\mu$ is $N>{10}$ deep in $\un{C}_0$.
Then we have a surjection
\begin{equation}
\label{eq:iso:SPF}
{S}/ {I}_{T,\nabla_{\textnormal{alg}}}\onto {S}/{I}_{T,\nabla_{\infty}}.
\end{equation}
For each $\sigma\in W^{?}(\rhobar)\cap \bigcup_{\tau\in T}\JH(\ovl{\sigma}(\tau))$, $\ovl{\fP}_{\sigma}$ pulls back  to the prime ideal $\sum_{j=0}^{f-1}{\fP}^{(j)}_{(\eps_j,a_j)} {S}$ of ${S}$ where:
\begin{itemize}
\item 
$(\eps,a)=((\eps_j,a_j))_{j\in\cJ}\in r(\Sigma)$ is such that $\sigma=F(\mathfrak{Tr}_{\mu+2\eta}(s(\eps,a)))$;
\item the ideal ${\fP}^{(j)}_{(\eps_j,a_j)}$ is the prime ideal of ${S}^{(j)}$ described in Table \ref{Table:components} (resp.~Table \ref{Table:components:F2}) if $T^{(j)}\subseteq \{\alpha\beta\alpha t_{\un{1}},\beta\alpha t_{\un{1}},\alpha\beta t_{\un{1}}, t_{\un{1}}\}$ (resp.~if $T^{(j)}\subseteq\{t_{w_0(\eta)}, \alpha t_{w_0(\eta)}, \beta t_{w_0(\eta)}, w_0t_{w_0(\eta)}\}$).
\end{itemize}
\end{prop}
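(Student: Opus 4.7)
The plan is to address the surjection \eqref{eq:iso:SPF} and the identification of the minimal primes separately, in each case working with the explicit ideal presentations of \S \ref{subsec:MTdef} and the tables tabulated there.

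For the surjection I would show $I_{T,\nabla_\textnormal{alg}} \subseteq I_{T,\nabla_\infty}$ in $S$ via the distortion lemma (Lemma \ref{lem:distortion}). Modulo $p$ each single-type ideal $\tld{I}^{(j')}_{\tau,\nabla_\infty}$ coincides with its ``alg'' counterpart, since the $O(p^{N-4})$-tails vanish under $N > 10$; the only obstruction to the desired inclusion is that the intersection over $\tau \in T$ does not in general commute with mod-$p$ reduction. Given any $f \in \tld{I}_{T,\nabla_\textnormal{alg}}$, the relation between ``alg'' and ``$\infty$'' generators produces, for each $\tau \in T$, an element $\eps_\tau \in \fm$ (namely $-p^{N-5}$ times the appropriate tail, which lies in $\fm$ because $N>10$) with $f + p\eps_\tau \in \tld{I}_{\tau,\nabla_\infty}$. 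Proposition \ref{prop:ideals_coprimes}(2), combined with the restricted form of $T^{(j)}$ in Hypothesis \ref{hypothesis:T}, provides $p \in \sum_\tau \bigcap_{\tau' \neq \tau}\tld{I}_{\tau',\nabla_\infty}$ (or in borderline cases only $p^2$, but this can be absorbed by iterating the argument, exploiting the generous $p$-divisibility of the tails). The lemma then yields corrections $\sum_\tau a_\tau\eps_\tau \in p\tld{S}$ whose addition moves $f$ into $\tld{I}_{T,\nabla_\infty}$, so $f$ maps into $I_{T,\nabla_\infty}$ in $S$.

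For the minimal primes I would exploit the tensor factorization $S = \widehat{\otimes}_{j' \in \cJ} S^{(j')}$. At each $j' \neq j$ the hypothesis that $T^{(j')}$ is a singleton reduces the component to the single-type special-fiber analysis of \cite[\S 3]{GL3Wild}, which produces the prime $\fP^{(j')}_{(\eps_{j'},a_{j'})}$ of $S^{(j')}/I^{(j')}_{\tau,\nabla_\textnormal{alg}}$ for each admissible $(\eps_{j'},a_{j'}) \in r(\Sigma_0)$ (the element $s$ and the requirement $\sigma = F(\mathfrak{Tr}_{\mu+2\eta}(s(\eps,a)))$ pinning down the label). At the distinguished embedding $j$ one must prime-decompose $\bigcap_{\tau \in T^{(j)}} I^{(j)}_{\tau,\nabla_\textnormal{alg}}$ by direct inspection of Tables \ref{Table_Ideals}, \ref{Table_Ideals_1}, \ref{Table_Ideals_2} and verify that the resulting primes coincide with those listed in Tables \ref{Table:components}, \ref{Table:components:F2}, compatibly with the Serre-weight bijection on $r(\Sigma)$.

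This multi-type special-fiber computation at the distinguished embedding is the principal obstacle: Hypothesis \ref{hypothesis:T}\ref{it:prop:T:1}--\ref{it:prop:T:1'} permits $|T^{(j)}|$ to be as large as five, so the required system of ideal intersections is combinatorially rich, and one must check primality, containment of $I^{(j)}_{T}$, and joint exhaustion of the minimal primes for each of the configurations appearing in the tables. The detailed ideal-intersection verifications underlying both the surjection and the component identification will be deferred to Appendix \ref{appendix:IC}.
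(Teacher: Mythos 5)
Your treatment of the surjection \eqref{eq:iso:SPF} has a gap at the point where you apply the Distortion Lemma~\ref{lem:distortion}. Hypothesis \eqref{hyp:distorsion:2} of that lemma requires
\[
p\in \sum_{\tau\in T} \bigcap_{\tau'\neq \tau}\tld{I}_{\tau',\nabla_\infty},
\]
but Proposition~\ref{prop:ideals_coprimes}(2) establishes only \emph{pairwise} coprimality: $p$ or $p^2$ lies in $\tld{I}^{(j)}_{\tau}+\tld{I}^{(j)}_{\tau'}$ for each pair. For $\#T\geq 3$ the statement $p\in \sum_\tau\bigcap_{\tau'\neq\tau}I_{\tau'}$ is genuinely stronger than pairwise coprimality and does not follow from it formally — indeed the paper has to prove precisely this kind of statement by an explicit computation in Lemma~\ref{lemma:p3type} for the one configuration where it is needed later. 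Since Proposition~\ref{prop:special_fiber} allows $\#T^{(j)}$ up to four, invoking Proposition~\ref{prop:ideals_coprimes} to discharge the Distortion-Lemma hypothesis is not justified, and your remark that the $p^2$ case ``can be absorbed by iterating'' is too vague to repair this.

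The paper's actual argument avoids the Distortion Lemma entirely. From pairwise coprimality one gets the elementary inclusion $p^{6}\bigl(\bigcap_{\tau}\tld{I}_{\tau,\nabla_{\textnormal{alg}}}^{(j)}\bigr)\subseteq \prod_{\tau}\tld{I}_{\tau,\nabla_{\textnormal{alg}}}^{(j)}$ (if $a\in I+J$ then $a\,(I\cap J)\subseteq IJ$, iterated); then for $f\in\bigcap_\tau\tld{I}_{\tau,\nabla_{\textnormal{alg}}}$ one writes $p^{6}f$ as an $\tld{S}$-linear combination of products $\prod_\tau g_{\tau,i,\textnormal{alg}}^{(j)}$, substitutes $g_{\tau,i,\infty}^{(j)}$ for $g_{\tau,i,\textnormal{alg}}^{(j)}$ term by term (incurring an $O(p^{N-4})$ error), and finally uses that $\bigcap_\tau\tld{I}_{\tau,\nabla_\infty}$ is $p$-saturated to divide by $p^6$. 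This is a different and more elementary route than yours, and it is the one you should take: the product-of-ideals observation replaces the missing coprimality-of-intersections hypothesis. Your treatment of the minimal-prime identification (single-type analysis at $j'\neq j$, explicit intersection and primality checks at the distinguished $j$, deferred to the appendix) is consistent with the paper's approach, which cites \cite[Theorem 3.3.2]{GL3Wild} for the labeling.
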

\begin{proof}
Letting $\tld{I}_{\tau,\nabla_{\textnormal{alg}}}\defeq \sum_j \tld{I}_{\tau,\nabla_{\textnormal{alg}}}^{(j)}\tld{S}$, the existence of the surjection \eqref{eq:iso:SPF} is equivalent to the inclusion
\[
\bigcap_{\tau\in T}\tld{I}_{\tau,\nabla_{\textnormal{alg}}}+(p)\subseteq\bigcap_{\tau\in T}\tld{I}_{\tau,\nabla_{\infty}}+(p)
\]
For each $j\in \cJ$ and $\tau\in T$ let $\big\{g_{\tau,i,\textnormal{alg}}^{(j)}\big\}_i$ be the set of generators of $\tld{I}_{\tau,\nabla_{\textnormal{alg}}}^{(j)}$ listed in row $\tld{w}^*(\rhobar,\tau)_j$ of Tables \ref{Table_Ideals}, \ref{Table_Ideals_1}, \ref{Table_Ideals_2} without their $\cO(p^{N-4})$-tails, so that we can write
$g_{\tld{z}_j,i,\textnormal{alg}}^{(j)}=g_{\tau,i,\infty}^{(j)}-\cO(p^{N-4})$, where $\cO(p^{N-4})$ is an element of $p^{N-4}\tld{S}$ (depending  on $\tau$) and $\big\{g_{\tau,i,\infty}^{(j)}\big\}_{i}\subset \tld{I}_{\tau,\nabla_{\infty}}$.

By Proposition \ref{prop:ideals_coprimes} and the fact that $\tld{I}_{\tau,\nabla_{\textnormal{alg}}}^{(j)}\subseteq \tld{S}^{(j)}$ for any $j\in\cJ$ we have
\begin{equation}
\label{eq:inc:crucial}
p^{6}\bigg(\bigcap_{\tau\in T}\tld{I}_{\tau,\nabla_{\textnormal{alg}}}\bigg)=
p^{6}\bigg(\sum_{j\in\cJ} \bigg(\bigcap_{\tau\in T}\tld{I}_{\tau,\nabla_{\textnormal{alg}}}^{(j)}\bigg)\tld{S}\bigg)\subseteq\sum_{j\in\cJ} \bigg(\prod_{\tau\in T}\tld{I}_{\tau,\nabla_{\textnormal{alg}}}^{(j)}\bigg)\tld{S}.
\end{equation}
Given $f\in \bigcap_{\tau\in T}\tld{I}_{\tau,\nabla_{\textnormal{alg}}}$ we can write $p^{6}f$ as a $\tld{S}$-linear combination of multiples of $\prod_{\tau\in T} g_{\tau,i,\textnormal{alg}}^{(j)}$. 
Setting $\tld{f}_\infty$ to be the same linear combination as $p^6f$ but replacing $\prod_{\tau\in T} g_{\tau,i,\textnormal{alg}}^{(j)}$ by $\prod_{\tau\in T} g_{\tau,i,\infty}^{(j)}$, we thus have 
\[
p^{6}f+\cO(p^{N-4})=\tld{f}_\infty\in \bigcap_{\tau\in T} \tld{I}_{\tau,\nabla_{\infty}}.
\]
yielding $p^{6}(f+\cO(p^{N-4-6}))\in \bigcap_{\tau\in T} \tld{I}_{\tau,\nabla_{\infty}}$ by the assumption on $N$. 
As the intersection of the $p$-saturated ideals $\tld{I}_{\tau,\nabla_{\infty}}$ is again $p$-saturated we conclude that $f+\cO(p^{N-4-6})\in \bigcap_{\tau\in T} \tld{I}_{\tau,\nabla_{\infty}}$.
The labeling and the explicit equations for the ideals ${\fP}_{(\eps_j,a_j)}^{(j)}$ follow from in \cite[Theorem 3.3.2]{GL3Wild}.
\end{proof}

\begin{rmk}
\label{rmk:iso:nose}
Fix $\tau\in T$.
The statement of Proposition \ref{prop:special_fiber} can be improved, replacing condition \eqref{hyp:special_fiber:1} by \begin{enumerate}
\item[(1')] if $j'\neq j$ then
either $T^{(j')}\subseteq \{\alpha\beta\alpha\gamma t_{\un{1}},\alpha\beta\alpha t_{\un{1}},\beta\alpha t_{\un{1}},\alpha\beta t_{\un{1}}, t_{\un{1}}\}$ or $T^{(j')}\subseteq\{t_{w_0(\eta)}, t_{w_0(\eta)}\alpha ,  t_{w_0(\eta)}\beta, t_{w_0(\eta)}w_0\}$ 
\end{enumerate}
and with the following more precise condition on $N$:
\begin{equation}
\label{eq:improved_bound_1}
N-4>\max_{j\in\cJ}\Big\{\big(\#\{\tau'\neq \tau\mid\ell\big(\tld{w}^*(\rhobar,\tau)_j^{-1} \tld{w}(\rhobar,\tau')_j\big)\leq 1\}\big)+2\big(\#\{\tau'\neq \tau\mid\ell\big(\tld{w}^*(\rhobar,\tau)_j^{-1} \tld{w}(\rhobar,\tau')_j\big)> 1\}\big)\Big\}.
\end{equation}
This is because in the proof of Proposition \ref{prop:special_fiber} the inclusion \eqref{eq:inc:crucial} and the reasoning following it still holds true when replacing $6$ by the right hand side of \eqref{eq:improved_bound_1}.

Moreover, under the stronger assumption
\begin{equation}
\label{eq:improved_bound_2}
N-4>\Big(\#\big\{\tau'\neq \tau\mid \max_j\ell\big(\tld{w}^*(\rhobar,\tau)_j^{-1} \tld{w}(\rhobar,\tau')_j\big)\leq 1\big\}\Big)+2\Big(\#\big\{\tau'\neq \tau\mid \max_j\ell\big(\tld{w}^*(\rhobar,\tau)_j^{-1} \tld{w}(\rhobar,\tau')_j\big)>1\big\}\Big).
\end{equation}
 the surjection \eqref{eq:iso:SPF} is actually an isomorphism.
Indeed, again Proposition  \ref{prop:ideals_coprimes} gives the inclusion $p^{(\text{RHS of \eqref
{eq:improved_bound_2}})}\bigg(\bigcap_{\tau\in T}\tld{I}_{\tau,\nabla_{\infty}}\bigg)\subseteq \prod_{\tau\in T}\tld{I}_{\tau,\nabla_{\infty}}$ and the argument of Proposition \ref{prop:special_fiber} can be now performed by reversing the roles of $\tld{I}_{\tau,\textnormal{alg}}$ and $g_{\tau,i,\textnormal{alg}}^{(j)}$ with $\tld{I}_{\tau,\infty}$ and $g_{\tau,i,\infty}^{(j)}$, and again replacing $6$ by the RHS of \eqref{eq:improved_bound_2}.

In particular whenever $N-4$ satisfies condition \eqref{eq:improved_bound_1} we have a commutative diagram
\begin{equation}
\label{eq:T=1}
\xymatrix{
{S}/ {I}_{T,\nabla_{\textnormal{alg}}}\ar@{->>}[r]\ar@{->>}[d]& {S}/ {I}_{T,\nabla_{\infty}}\ar@{->>}[d]\\
{S}/ {I}_{\tau,\nabla_{\textnormal{alg}}}\ar^{\sim}[r]& 
{S}/{I}_{\tau,\nabla_\infty}
}
\end{equation}
where horizontal arrow is an isomorphism (by applying the previous paragraph to the case $\#T=1$) and the vertical maps are the canonical surjections.
\end{rmk}

\subsection{Ideal relations in multi-type deformation rings}
\label{subsec:idealrelations}

Let $\rhobar:G_{K}\rightarrow \GL_3(\F)$ be a continuous semisimple Galois representation together with a lowest alcove presentation $(s,\mu)$ where $\mu$ is $N>10$-deep.
In this section we record facts about $R^{\eta,T}_{\rhobar}$ that we will need later.

Fix $j\in\cJ$. Recall the running assumption that $T$ satisfies Hypothesis \ref{hypothesis:T}. We now assume additionally that
\begin{enumerate}[label=(\Roman*)]\setcounter{enumi}{3}
\item\label{it:prop:T:4} 
{$\alpha\beta\alpha\gamma t_{\un{1}}\notin T^{(j)}$;}
\item\label{it:prop:T:3} 
For $j'\neq j$, $\#T^{(j')}=1$ and $\tld{w}^*(\rhobar,\tau)_{j'}\in\{\alpha\beta\alpha t_{\un{1}},\,\beta\gamma\beta t_{\un{1}},\,\gamma\alpha\gamma t_{\un{1}},\,\}$.
\end{enumerate}
{By \ref{it:prop:T:4} we can thus replace the ring $\tld{S}^{(j)}$ appearing in Tables \ref{Table_Ideals}, \ref{Table_Ideals_1} by $\tld{S}^{(j)}/(e_{11})$, and omit the variable $e_{11}$ in the computations of these sections.}
Throughout this section let $(a,b,c)\in\Fp^3$ be the mod $p$ reduction of $-(s_{j}^{-1}(\mu_j+\eta))$.

\subsubsection{Analysis of ${S}^{(j)}/{I}^{(j)}_{T,\nabla_{\textnormal{alg}}}$.}\label{subsec:specialfiber}

In what follows, we assume that $T^{(j)}$ is as in item \ref{it:prop:T:1}.
Given $b_j\in\{B,F_s,E_s,F_o,E_o\}$ we define $I_j^{b_j}\subset {S}^{(j)}$ to be the intersection ${\fP}^{(j)}_{(0,0)}\cap {\fP}^{(j)}_{(\omega,a)}$, where $(\omega,a)=(0,1)$, $(\eps_1,0)$, $(\eps_2,0)$, $ (\eps_2-\eps_1,1)$, $ (\eps_1-\eps_2,1)$ respectively if $b_j=B$, $F_s$, $E_s$, $F_o$, $E_o$.
\begin{lemma}
\label{lem:intsc}
Let $b_j\in\{B,E_o,F_o,E_s,F_s\}$.
Then the ideal $I_j^{b_j}$ is given by:
\begin{enumerate}[noitemsep]
\item
\label{eq:lem:intsc:1}
$(c_{33},c_{32},c_{31},c_{23},c_{22},c_{21},c_{13}d_{32}-c_{12}d_{33}^*,c_{13}d_{31}-c_{11}d_{33}^*,c_{12}d_{31}-c_{11}d_{32},(b-c)c_{12}d_{21}-(a-c)c_{11}d_{22}^*))$ 
\newline 
\indent if $b_j=B$;
\item
\label{eq:lem:intsc:2}
$c_{33},c_{32},c_{31},c_{21},c_{11},c_{23}d_{31},c_{22}d_{31},c_{13}d_{31},c_{12}d_{31},c_{13}c_{22}-c_{12}c_{23},c_{13}d_{21}-c_{23}d_{11}^*,c_{12}d_{21}-c_{22}d_{11}^*,(a-c-1)c_{23}d_{32}-(a-b-1)c_{22}d_{33}^*,(a-c-1)c_{13}d_{32}-(a-b-1)c_{12}d_{33}^*$
\newline
\indent if $b_j=F_s$;
\item
\label{eq:lem:intsc:3}
$(c_{32},c_{31},c_{22},c_{21},c_{12},c_{11},c_{23}d_{32}-c_{33}d_{33}^*,c_{23}d_{31}-d_{21}c_{33},(a-b)c_{13}d_{31}+(b-c-1)c_{33}d_{11}^*,(a-b)c_{13}d_{21}+(b-c-1)c_{23}d_{11}^*,c_{13}d_{21}d_{32}-c_{13}d_{31}d_{22}^*)$
\newline
\indent
if $b_j=E_s$;
\item
\label{eq:lem:intsc:4}
$(c_{33},c_{32},c_{31},c_{22},c_{13},c_{12},c_{11},c_{23}d_{32},c_{21}d_{32},(b-c-1)c_{23}d_{31}+(a-b+1)c_{21}d_{33}^*)$
\newline
\indent
if $b_j=F_o$;
\item
\label{eq:lem:intsc:5}
$(c_{33},c_{31},c_{23},c_{22},c_{21},c_{13},c_{11},d_{21}c_{32},c_{12}d_{21},(a-c)c_{12}d_{31}+(-b+c-1)c_{32}d_{11}^*)$
\newline
\indent
if $b_j=E_o$.
\end{enumerate}
\end{lemma}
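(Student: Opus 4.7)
The plan is to use Proposition \ref{prop:special_fiber} (or the referenced Table \ref{Table:components}), which gives explicit generators for each of the primes $\fP^{(j)}_{(\omega,a)}$ appearing in the intersection. Since we are computing $\fP^{(j)}_{(0,0)} \cap \fP^{(j)}_{(\omega,a)}$ for the five specific choices of $(\omega,a)$ indexed by $b_j \in \{B, F_s, E_s, F_o, E_o\}$, this is in principle a finite computation in the polynomial ring ${S}^{(j)}$.

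The first step is the easy inclusion. For each listed generator $g$ of $I_j^{b_j}$ one verifies directly that $g \in \fP^{(j)}_{(0,0)}$ and $g \in \fP^{(j)}_{(\omega,a)}$. This amounts to checking that each polynomial relation expressing $g$ as a combination of the generators of each prime can be read off from Table \ref{Table:components}. The terms involving the structure constants $a,b,c$ (e.g.~$(b-c)c_{12}d_{21}-(a-c)c_{11}d_{22}^*$ in case \eqref{eq:lem:intsc:1}) arise precisely because the two primes impose linear relations with different coefficients on the same monomials, and one has to choose the combination which vanishes modulo both. The genericity assumption on $\mu$ (hence on $(a,b,c)$) ensures that the relevant scalars are nonzero.

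The second step, and the main obstacle, is the reverse inclusion $\fP^{(j)}_{(0,0)} \cap \fP^{(j)}_{(\omega,a)} \subseteq I_j^{b_j}$. The plan is to choose a monomial order on ${S}^{(j)}$ adapted to the explicit generators of the two primes and to show that the listed generators form a (generating set of a) Gröbner basis for the intersection. Concretely, one first reduces any element $f \in \fP^{(j)}_{(0,0)}$ to a canonical normal form modulo $\fP^{(j)}_{(0,0)}$, then imposes the condition $f \in \fP^{(j)}_{(\omega,a)}$ and identifies which combinations of leading monomials must vanish. The case $b_j = B$ is the simplest since both primes define linear subspaces (both $(0,0)$ and $(0,1)$ correspond to ``smooth'' components in the combinatorics), so only the final relation $(b-c)c_{12}d_{21} - (a-c)c_{11}d_{22}^*$ is nonlinear; the cases $F_s, E_s$ are harder because the component $(\eps_1, 0)$ or $(\eps_2, 0)$ is itself nonlinearly defined, producing the quadratic generators such as $c_{13}c_{22} - c_{12}c_{23}$ and $c_{13}d_{21}d_{32} - c_{13}d_{31}d_{22}^*$. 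The cases $F_o, E_o$ involve the upper-alcove components $(\eps_2 - \eps_1, 1)$, $(\eps_1 - \eps_2, 1)$ where the relevant prime enforces products such as $c_{23}d_{32} = 0$ and $c_{12}d_{21} = 0$, forcing the bilinear shapes in the final generators.

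Because these calculations are mechanical but voluminous, I would organize them case-by-case, following exactly the template of Proposition \ref{prop:special_fiber}'s proof and the tables there. Given the author's remark that such ``tedious ideal intersection computations'' are deferred to Appendix \ref{appendix:IC}, the natural outcome is to defer the five sub-computations to that appendix and keep only the setup here. The crucial checks to perform at the end are that (i) the listed generators indeed lie in both primes (already done above), and (ii) the quotient ${S}^{(j)}/I_j^{b_j}$ has the expected dimension, namely the same as ${S}^{(j)}/\fP^{(j)}_{(0,0)}$ plus the codimension-zero contribution of ${S}^{(j)}/\fP^{(j)}_{(\omega,a)}$; this Cohen--Macaulay/dimension count is what rules out spurious elements in the intersection and confirms that no further generators are needed.
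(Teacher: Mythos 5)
Your two-step outline (easy inclusion, then rule out extra elements) matches the paper's structure, but your mechanism for the second step is not quite right, and the argument as stated has a gap. The paper does not attempt a Gr\"obner basis for the intersection. Instead it proceeds as follows: after verifying the listed generators lie in both primes (giving a surjection ${S}^{(j)}/I^{\prime,b_j}_j \onto {S}^{(j)}/I^{b_j}_j$ from the candidate quotient to the true intersection quotient), it computes the candidate quotient explicitly --- for $b_j=B$ one gets $\F[\![c_{13},d_{21},d_{31},d_{32},x_{11}^*,x_{22}^*,x_{33}^*]\!]/(c_{13}((a-c)d_{31}d_{22}^*-(b-c)d_{32}d_{21}))$ --- and then checks that this ring is reduced, equidimensional of dimension six, and has exactly two minimal primes, the same data as the target. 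The conclusion then comes from \cite[Lemma 3.6.11]{LLLM2}, which says a surjection between rings with matching such invariants is an isomorphism.

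The gap in your proposal is precisely in the closing sentence: a ``Cohen--Macaulay/dimension count'' by itself does not rule out spurious elements of the kernel. A surjection of equidimensional rings of the same dimension can fail to be injective (e.g.\ $\F[x]/(x^2)\onto \F[x]/(x)$), so dimension alone is not enough; you need to also verify that the candidate quotient is reduced and has the right number of minimal components, which is the real content. Alternatively, the Gr\"obner basis route you gesture at would in principle work, but it is a genuinely heavier computation than what the paper does, and you never commit to the monomial order or the normal-form calculation for the harder cases ($F_s,E_s,F_o,E_o$). Also, a small factual slip: you describe $\fP^{(j)}_{(0,1)}$ as linear, but Table \ref{Table:components} shows it is cut out by several bilinear relations; only $\fP^{(j)}_{(0,0)}$ is a coordinate subspace. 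So the $b_j=B$ case is the simplest but not for the reason you give. To fix your argument, replace the final dimension count with an appeal to the isomorphism criterion after checking reducedness and minimal-prime count of each candidate quotient; this is exactly what the paper does.
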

\begin{proof}
See \S \ref{appendix:multi:sp:fiber}.
\end{proof}

For $b_j\in\{B,F_s,E_s,E_o,F_o\}$ let $M^{b_j}$ be the ${S}^{(j)}$-module ${S}^{(j)}/I^{b_j}_j$.
Let $M^{\emptyset}$ be the ${S}^{(j)}$-module ${S}^{(j)}/{\fP}^{(j)}_{(0,0)}$.
For $a_j=\{B,F_s,E_s,E_o,F_o\}$ (resp.~$a_j=\{B,F_s,E_s\}$) let $M^{a_j}$ be the kernel of the natural surjective map
\begin{align*}
\oplus_{b_j\in a_j}M^{b_j}\ra (M^{\emptyset})^{\oplus\, \# a_j}/\Delta(M^{\emptyset})\ra 0
\end{align*}
where $\Delta(M^{\emptyset})$ denotes the diagonally embedded copy of $M^{\emptyset}$ in $(M^{\emptyset})^{\oplus\, \# a_j}$.
\begin{prop}\label{prop:lowermult}
For either $a_j=\{B,F_s,E_s,E_o,F_o\}$ or $a_j=\{B,F_s,E_s\}$ we have
\begin{equation*}
\label{eq:dim:lowerpatched}
\dim_{\F}\big(M^{a_j}\otimes_{{S}^{(j)}}{S}^{(j)}/\fm_{{S}^{(j)}}\big)=3.
\end{equation*}
\end{prop}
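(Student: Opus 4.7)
The plan is to apply Lemma \ref{lemma:glue:lower} with $R = S^{(j)}$, $K = \fP^{(j)}_{(0,0)}$, and the ideals $\{I^{b_j}_j\}_{b_j \in a_j}$ described in Lemma \ref{lem:intsc}. This reduces the problem to showing
\[
\dim_\F \coker \phi \;=\; 2, \qquad \phi \colon \bigoplus_{b_j \in a_j} I^{b_j}_j/\fm I^{b_j}_j \;\longrightarrow\; \Tor^{S^{(j)}}_1\big(\F,\, (M^{\emptyset})^{\oplus n}/\Delta M^{\emptyset}\big),
\]
where $n = \# a_j$ and $\phi$ is induced by the inclusions $I^{b_j}_j \subseteq K$.

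Since $M^{\emptyset} = S^{(j)}/K$, the target identifies with $(K/\fm K)^{\oplus (n-1)}$. Writing $V_{b_j} \defeq (I^{b_j}_j + \fm K)/\fm K$ for the image of $I^{b_j}_j/\fm I^{b_j}_j$ in $K/\fm K$, the cokernel identifies with $(K/\fm K)^{\oplus n} / (\Delta + \bigoplus_{b_j} V_{b_j})$, so that by inclusion-exclusion
\[
\dim_\F \coker \phi \;=\; (n-1)\dim_\F(K/\fm K) \;-\; \sum_{b_j \in a_j}\dim_\F V_{b_j} \;+\; \dim_\F \bigcap_{b_j \in a_j} V_{b_j}.
\]

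The remaining work is to compute these three quantities explicitly. The prime $K$ is read off from Table \ref{Table:components} (resp.~\ref{Table:components:F2}) in case \ref{it:prop:T:1} (resp.~\ref{it:prop:T:1'}), and I then reduce each bilinear generator of $I^{b_j}_j$ from Lemma \ref{lem:intsc} modulo $\fm K$, using that $d_{ij} \in \fm$ for $i > j$ while $d^*_{ii}$ are units. The $N > 10$-deepness of $\mu$ guarantees that the scalars $(a-b)$, $(b-c)$, $(a-c)$, $(a-b-1)$, $(a-c-1)$, $(b-c-1)$, etc.~appearing in these generators are nonzero in $\F$, so that each bilinear relation contributes an expected extra dimension to $V_{b_j}$. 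A direct linear-algebra count then verifies $\dim \coker \phi = 2$ in both cases $a_j = \{B, F_s, E_s\}$ and $a_j = \{B, F_s, E_s, E_o, F_o\}$; in particular one checks that enlarging the three-element set by $E_o$ and $F_o$ does not change the cokernel dimension, which translates into the equality $\dim V_{E_o} + \dim V_{F_o} = 2\dim_\F(K/\fm K) + \dim \bigcap_3 V_{b_j} - \dim \bigcap_5 V_{b_j}$.

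The main obstacle is not conceptual but combinatorial: carefully tracking how each bilinear generator of $I^{b_j}_j$ in Lemma \ref{lem:intsc} reduces modulo $\fm K$, and how these reductions span inside $K/\fm K$. This is a mechanical computation that, following the pattern of the rest of this section, naturally belongs in Appendix \ref{appendix:IC}.
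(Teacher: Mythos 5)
Your proposal is correct and takes essentially the same approach as the paper: you apply Lemma \ref{lemma:glue:lower} with $K = \fP^{(j)}_{(0,0)}$ and the ideals from Lemma \ref{lem:intsc}, identify the relevant $\Tor_1$ with $K/\fm K$ (which has basis $\{c_{ik}\}_{1\le i,k\le 3}$ once $e_{11}$ is dropped per the running assumption in \S \ref{subsec:idealrelations}), and conclude by linear algebra; the paper organizes the final count by first passing to the cokernel of $\oplus V_{b_j}\to (K/\fm K)^{\oplus n}$ and then modding out by $\Delta$, while you package it as an inclusion-exclusion formula, but these are the same computation. Two small slips: the parenthetical reference to Table \ref{Table:components:F2} / case \ref{it:prop:T:1'} is out of place, since \S \ref{subsec:specialfiber} explicitly assumes $T^{(j)}$ is as in case \ref{it:prop:T:1}; and the signs in your ``in particular'' identity are flipped — it should read $\dim V_{E_o} + \dim V_{F_o} = 2\dim_\F(K/\fm K) + \dim \bigcap_5 V_{b_j} - \dim \bigcap_3 V_{b_j}$ (with the actual values $16 = 18 + 6 - 8$), though this does not affect the main argument.
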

\begin{proof}
By Lemma \ref{lemma:glue:lower} it is enough to check that the cokernel of the natural map
\begin{equation}
\label{eq:dim:lowerpatched}
\oplus_{b_j\in a_j}\Tor_1^{{S}^{(j)}}(\F,M^{b_j})\ra \Tor_1^{{S}^{(j)}}(\F,(M^\emptyset)^{\oplus \# a_j}/\Delta(M^\emptyset))
\end{equation}
is two dimensional.
Note that given an ideal $I\subseteq {S}^{(j)}$ we have $\Tor_1^{{S}^{(j)}}(\F,{S}^{(j)}/I)\cong I/(\fm_{S^{(j)}}\cdot I)$, which allows us to write elements of $\Tor_1^{{S}^{(j)}}(\F,{S}^{(j)}/I)$ in terms of generators of $I$.
In particular we see that $\Tor_1^{{S}^{(j)}}(\F,M^\emptyset)$ has a basis consisting of the image of the elements $c_{ik}$ for $1\leq i,k\leq 3$.
An immediate check on the generators of $I_j^{b_j}$ (described in Lemma \ref{lem:intsc}) shows that the image of the natural map $\Tor_1^{{S}^{(j)}}(\F,M^{b_j})\ra \Tor_1^{{S}^{(j)}}(\F,M^\emptyset)$ has the following description according to $b_j$:
\begin{enumerate}
\item 
\label{eq:ind:S:1}
if $b_j\in\{B,F_s,E_s\}$ it is the subspace of $\Tor_1^{{S}^{(j)}}(\F,M^\emptyset)$ generated by $c_{ik}$, $1\leq i,k\leq 3$, $(ik)\neq (13)$;
\item 
\label{eq:ind:S:2}
if $b_j=F_o$ it is the subspace of $\Tor_1^{{S}^{(j)}}(\F,M^\emptyset)$ generated by $c_{ik}$ for $1\leq i,k\leq 3$, $(ik)\neq (23)$; and
\item 
\label{eq:ind:S:3}
if $b_j=E_o$ it is the subspace of $\Tor_1^{{S}^{(j)}}(\F,M^\emptyset)$ generated by $c_{ik}$ for $1\leq i,k\leq 3$, $(ik)\neq (12)$.
\end{enumerate}
We deduce that the cokernel of the natural map
\begin{equation}
\label{eq:dim:lowerpatched:1}
\oplus_{b_j\in a_j}\Tor_1^{{S}^{(j)}}(\F, M^{b_j})\ra \Tor_1^{{S}^{(j)}}(\F,(M^\emptyset)^{\# a_j})
\end{equation}
has dimension $\# a_j$, with basis given by (the image of) the elements $\{c_{13}^{b_j}\}_{b_j\in\{B,F_s,E_s,\}}$, and $c_{23}^{F_o}$, $c_{12}^{E_o}$ if $\{E_o,F_o\}\subset a_j$ (the superscripts on the $c_{ik}$ denote which copy of $\Tor_1^{{S}^{(j)}}(\F,M^\emptyset)\subseteq  \Tor_1^{{S}^{(j)}}(\F,(M^\emptyset)^{\oplus\# a_j})$ the element $c_{ik}$ lives in).
On the other hand the image of $\Delta(\Tor_1^{{S}^{(j)}}(\F,M^\emptyset))$ in the cokernel of \eqref{eq:dim:lowerpatched:1} is generated by $c_{13}^{B}+c_{13}^{E_s}+c_{13}^{F_s}$, $c_{23}^{F_o}$, $c_{12}^{E_o}$ if $\# a_j=5$ and by $c_{13}^{B}+c_{13}^{E_s}+c_{13}^{F_s}$
 if $\# a_j=3$.
In both cases we conclude that the cokernel of \eqref{eq:dim:lowerpatched} has dimension 2.
\end{proof}

\subsubsection{Surgery for $T^{(j)}=\{t_{\un{1}}\}$.}
\label{subsub:surgeries}

Suppose that $T^{(j)}=\{t_{\un{1}}\}$ for all $j\in \cJ$. 
We write $\tau$ for the unique element in $T$. 
We now fix $j\in \cJ$. 
We abbreviate $\tld{R} \defeq \tld{S}^{(j)}/\tld{I}^{(j)}_{\tau,\nabla_{\textnormal{alg}}}$, and $R\defeq \tld{R}/(p)$ so that a presentation for $\tld{R}$ is given in row $t_{\un{1}}$ of Table \ref{Table_Ideals_1}.
The ring $\tld{R}$ is normal and Cohen--Macaulay by \cite[Corollary 8.9]{LLLM}.
Let $\mathfrak{j}: U \into \Spec \tld{R}$ be the complement of the vanishing locus of
\[
\prod_{(\omega,a) \neq (\nu,b) \in \Sigma_0} {\fP}^{(j)}_{(\omega,a)}+{\fP}^{(j)}_{(\nu,b)}. 
\]
Then $U$ is a regular scheme by the proof of \cite[Lemma 5.2.1]{LLLM2}. 
For a coherent reflexive (i.e.~coherent, $S_2$, and torsion-free) $\tld{R}$-module $\tld{M}$ and an effective divisor $D = \sum_{(\omega,a) \in \Sigma_0} n_{(\omega,a)} [{\fP}^{(j)}_{(\omega,a)}]$ of $U$ supported in the special fiber (if ${\fP}^{(j)}_{(\omega,a)} = R$, then take $[{\fP}^{(j)}_{(\omega,a)}] = 0$), define 
\begin{equation}
\label{eqn:divisor:def}
\tld{M}(-D) \defeq \mathfrak{j}_* \mathfrak{j}^* \prod_{(\omega,a) \in \Sigma_0} ({\fP}^{(j)}_{(\omega,a)})^{n_{(\omega,a)}} \tld{M}. 
\end{equation}
Since $\mathfrak{j}^* \prod_{(\omega,a) \in \Sigma_0} ({\fP}^{(j)}_{(\omega,a)})^{n_{(\omega,a)}}$ is locally free on $U$, $\mathfrak{j}^* \prod_{(\omega,a) \in \Sigma_0} ({\fP}^{(j)}_{(\omega,a)})^{n_{(\omega,a)}} \tld{M}$ is coherent and reflexive so that $\tld{M}(-D) = \mathfrak{j}_*\mathfrak{j}^* \tld{M}(-D)$ is its unique (up to isomorphism) coherent and reflexive extension by \cite[\href{https://stacks.math.columbia.edu/tag/0EBJ}{Tag 0EBJ}]{stacks-project}. 

\begin{lemma}\label{lemma:reducedcycle}
If $D = \sum_{(\omega,a) \in \Sigma_0} n_{(\omega,a)} [{\fP}^{(j)}_{(\omega,a)}]$ with all $0\leq n_{(\omega,a)}\leq 1$, then 
\[
\tld{R}(-D) = \cap_{\substack{(\omega,a)\in \Sigma_0 \\ n_{(\omega,a)}= 1}} {\fP}^{(j)}_{(\omega,a)}.
\]
\end{lemma}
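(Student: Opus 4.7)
The plan is to identify $I := \bigcap_{n_{(\omega,a)}=1} {\fP}^{(j)}_{(\omega,a)}$ with $\tld{R}(-D) = \mathfrak{j}_*\mathfrak{j}^* \prod_{n_{(\omega,a)}=1} {\fP}^{(j)}_{(\omega,a)}$ by first verifying that the two coherent sheaves agree on $U$, and then verifying that $I$ itself already equals $\mathfrak{j}_*\mathfrak{j}^* I$. Combined, these give $I = \mathfrak{j}_*\mathfrak{j}^* I = \mathfrak{j}_*\mathfrak{j}^* \prod_{n_{(\omega,a)}=1} {\fP}^{(j)}_{(\omega,a)} = \tld{R}(-D)$.

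For the agreement on $U$: by the very definition of $U$, for every pair $(\omega,a) \neq (\nu,b)$ in $\Sigma_0$ the ideal ${\fP}^{(j)}_{(\omega,a)}+{\fP}^{(j)}_{(\nu,b)}$ restricts to the unit ideal on $U$. Iterated Chinese Remainder then forces $\mathfrak{j}^* I = \mathfrak{j}^* \prod_{n_{(\omega,a)}=1} {\fP}^{(j)}_{(\omega,a)}$, since the intersection and product of a finite family of pairwise coprime ideals coincide.

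For the identity $I = \mathfrak{j}_*\mathfrak{j}^* I$: the complement of $U$ in $\Spec \tld{R}$ has codimension $\geq 2$, since $U$ is regular and $\tld{R}$ is normal. Hence it suffices to verify that $\mathrm{depth}_{\tld{R}_\fq}(I_\fq) \geq 2$ for every prime $\fq$ of $\tld{R}$ with $\mathrm{ht}(\fq)\geq 2$. If $\fq \not\supseteq I$ then $I_\fq = \tld{R}_\fq$, which has depth $\geq 2$ by Cohen-Macaulayness of $\tld{R}$. If $\fq \supseteq I$, the depth lemma applied to the short exact sequence $0 \to I_\fq \to \tld{R}_\fq \to (\tld{R}/I)_\fq \to 0$ reduces the claim to showing $\mathrm{depth}_{\tld{R}_\fq}((\tld{R}/I)_\fq) \geq 1$, equivalently that $\fq$ is not an associated prime of $\tld{R}/I$. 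This follows because the diagonal embedding $\tld{R}/I \hookrightarrow \bigoplus_{n_{(\omega,a)}=1}\tld{R}/{\fP}^{(j)}_{(\omega,a)}$ forces $\mathrm{Ass}(\tld{R}/I) \subseteq \{{\fP}^{(j)}_{(\omega,a)} : n_{(\omega,a)}=1\}$, and each ${\fP}^{(j)}_{(\omega,a)}$ has height $1$ in $\tld{R}$ (being minimal over $(p)$ in the normal local domain $\tld{R}$), so it cannot coincide with $\fq$.

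This is a fairly standard normal-ring argument and I do not foresee a substantial obstacle; the only point requiring minor care is to combine the Cohen-Macaulayness of $\tld{R}$ with the control on associated primes of $\tld{R}/I$ coming from the product inclusion.
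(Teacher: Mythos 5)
Your proof is correct but proceeds by a genuinely different route from the paper. The paper shows that $\tld{R}/\tld{R}(-D)$ satisfies $S_1$ (being a quotient of the $S_2$ module $\tld{R}$ by the $S_2$ module $\tld{R}(-D)$) and $R_0$, hence is reduced by Serre's criterion; this makes $\tld{R}(-D)$ an intersection of primes, which are then matched with the $\fP^{(j)}_{(\omega,a)}$ by restricting to $U$. You instead work with the candidate ideal $I = \bigcap_{n_{(\omega,a)}=1}\fP^{(j)}_{(\omega,a)}$ directly: you show $\mathfrak{j}^*I = \mathfrak{j}^*\prod\fP^{(j)}_{(\omega,a)}$ by Chinese Remainder on $U$, and then verify that $I$ is reflexive (so $I = \mathfrak{j}_*\mathfrak{j}^*I$) by checking depth $\geq 2$ at all primes of height $\geq 2$. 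This is a hands-on verification of the $S_2$ property for $I$ that the paper obtains for $\tld{R}(-D)$ by construction, plus uniqueness of reflexive extensions. Both are sound; yours is more elementary and explicitly localizes the depth computation.

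One justification should be adjusted. You claim the complement of $U$ has codimension $\geq 2$ ``since $U$ is regular and $\tld{R}$ is normal,'' but regularity of $U$ together with $R_1$ for $\tld{R}$ would only give this if $U$ were the full regular locus, which is not how $U$ is defined. The correct and more elementary reason is intrinsic to the definition: the $\fP^{(j)}_{(\omega,a)}$ are pairwise distinct height-$1$ primes of the normal Noetherian local domain $\tld{R}$ (they are minimal over the nonzerodivisor $p$), so any prime containing $\fP^{(j)}_{(\omega,a)}+\fP^{(j)}_{(\nu,b)}$ for $(\omega,a)\neq(\nu,b)$ strictly contains a height-$1$ prime and hence has height $\geq 2$. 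Since $\Spec\tld{R}\setminus U$ is the union of the $V\big(\fP^{(j)}_{(\omega,a)}+\fP^{(j)}_{(\nu,b)}\big)$, it has codimension $\geq 2$. With this fixed, your argument is complete.
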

\begin{proof}
Since $\tld{R}/\tld{R}(-D)$ is $S_1$ (as $\tld{R}$ and $\tld{R}(-D)$ are both $S_2$) and $R_0$, it is reduced. 
Thus $\tld{R}(-D)$ is the intersection of prime ideals. 
Since $\mathfrak{j}^* \tld{R}(-D) = \mathfrak{j}^* \cap_{\substack{(\omega,a)\in \Sigma_0 \\ n_{(\omega,a)}= 1}} {\fP}^{(j)}_{(\omega,a)}$, the result follows. 
\end{proof}

Let $\tld{M}^{(j)}_{(0,1)}$ be a free $\tld{R}$-module of rank one. 
Recall from \S \ref{subsubsec:combinatorics_TandW} that $\Sigma_0$ is a connected graph endowed with a distance function which we denote $d$.
For $(\omega,a) \in \Sigma_0$, we define  
\begin{equation}\label{eqn:divisor}
\tld{M}^{(j)}_{(\omega,a)} = \tld{M}^{(j)}_{(0,1)}\Big(-\sum_{(\nu,b) \in \Sigma_0} \frac{1}{2}\Big(d((\omega,a),(0,1)) +d((\omega,a),(\nu,b))- d((0,1),(\nu,b))\Big)[{\fP}^{(j)}_{(\nu,b)}]\Big). 
\end{equation}

\begin{lemma}\label{lemma:cartier}
We have $\tld{M}^{(j)}_{(\eps_1,1)} = (-c_{23}d_{32}+c_{22}d_{33}^*+c_{33}d_{22}^*+pd_{22}^*d_{33}^*)\tld{M}^{(j)}_{(0,1)}$ and $\tld{M}^{(j)}_{(\eps_2,1)} = (c_{33}+pd_{33}^*)\tld{M}^{(j)}_{(0,1)}$, where we have omitted $(j)$-superscripts in the variables. 
\end{lemma}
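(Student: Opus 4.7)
The overall strategy is to reduce to a comparison of Weil divisors on the regular open $U \subset \Spec \tld{R}$. Since $\tld{R}$ is normal Cohen--Macaulay and $\tld{M}^{(j)}_{(0,1)}$ is free of rank one, each $\tld{M}^{(j)}_{(\omega,a)}$ is a rank-one reflexive $\tld{R}$-submodule of $\tld{M}^{(j)}_{(0,1)}$, hence corresponds to an ideal of $\tld{R}$. By the uniqueness of reflexive extensions of locally free sheaves from $U$ to $\Spec\tld{R}$, the lemma is equivalent to the equality of Cartier divisors on $U$: for each $(\omega,a)\in\{(\eps_1,1),(\eps_2,1)\}$ and each $(\nu,b)\in\Sigma_0$, the order of vanishing along $\fP^{(j)}_{(\nu,b)}$ of the claimed generator should match the coefficient $n_{(\nu,b)}$ prescribed by \eqref{eqn:divisor}.

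The combinatorial side is straightforward. Both $(\eps_1,1)$ and $(\eps_2,1)$ lie in the upper alcove of $\Sigma_0$, so any path to $(0,1)$ must alternate through the lower alcove and hence has even length; a direct check of the adjacency relations of \S\ref{subsubsec:combinatorics_TandW} gives $d((\eps_\ell,1),(0,1))=2$ for $\ell\in\{1,2\}$. The remaining distances $d((\eps_\ell,1),(\nu,b))$ and $d((0,1),(\nu,b))$ are determined similarly by inspection of $\Sigma_0$, and plugging into \eqref{eqn:divisor} produces the explicit effective divisors $D_{(\eps_1,1)}$ and $D_{(\eps_2,1)}$.

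The geometric side requires recognizing the claimed generators as natural polynomial invariants of the matrix $A^{(j)}$ parametrized by $\tld{S}^{(j)}$: the element $c_{33}+pd_{33}^*$ is the $(3,3)$-entry $d_{33}^*(v+p)+c_{33}$ evaluated at $v=0$, while $-c_{23}d_{32}+c_{22}d_{33}^*+c_{33}d_{22}^*+pd_{22}^*d_{33}^*$ is the quotient by $(v+p)$ of the principal $2\times 2$ minor of $A^{(j)}$ in rows and columns $\{2,3\}$, evaluated at $v=0$ (the minor is divisible by $(v+p)$ in $\tld{R}[v]$ because the elementary-divisor relations in $\tld{I}^{(j)}_{\tau,\nabla_{\textnormal{alg}}}$ force it to vanish at $v=-p$). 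With these interpretations in hand, one computes the order of vanishing of each generator along $\fP^{(j)}_{(\nu,b)}$ in the discrete valuation ring $\tld{R}_{\fP^{(j)}_{(\nu,b)}}$ directly from the defining equations of Table \ref{Table:components}.

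The main obstacle lies in the minor-type generator for $(\eps_1,1)$ along components where the distance formula predicts multiplicity at least two (those sitting on several geodesics between $(0,1)$ and $(\eps_1,1)$, such as the component $\fP^{(j)}_{(\eps_2-\eps_1,0)}$): on such a component each of the individual monomials $c_{23}d_{32}$, $c_{22}d_{33}^*$, $c_{33}d_{22}^*$, $pd_{22}^*d_{33}^*$ carries strictly positive valuation, and one must verify that the full linear combination vanishes to the predicted order rather than to a strictly larger order produced by additional cancellation. This case analysis, performed against the explicit presentations of the relevant primes, is where the bulk of the verification will lie.
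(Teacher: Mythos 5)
Your overall strategy matches the paper's: reduce to a divisor comparison on the regular locus $U$ via uniqueness of reflexive extensions, compute the coefficients of $D_{(\eps_\ell,1)}$ from the combinatorics of \eqref{eqn:divisor}, and compare lengths at the minimal primes of $R$. Your identification of the two claimed generators as the $(3,3)$-entry of $A^{(j)}$ at $v=0$ and the $(2,3)\times(2,3)$ minor divided by $(v+p)$ and evaluated at $v=0$ is correct, and is a genuine geometric insight the paper does not spell out.

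However, the plan to compute $v_{\fP}(c_{33}+pd_{33}^*)$ ``directly from the defining equations of Table \ref{Table:components}'' has a genuine gap: that table describes only the special fiber $R=\tld{R}/p$. When $c_{33}\in\fP$ you know $v_\fP(c_{33}+pd_{33}^*)\geq 1$, but whether the valuation is exactly $1$ or larger depends on the residue of $c_{33}/p+d_{33}^*$ modulo $\fP\tld{R}_\fP$, which is $p$-adic lifting data not contained in the mod-$p$ presentations. You explicitly flag this as ``the main obstacle'' and say ``the bulk of the verification will lie'' there, but offer no mechanism to handle it; as stated, this is not a routine verification but the crux of the lemma. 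The paper resolves it by combining the two relations you already isolated (the determinant relation $\prod_i(c_{ii}+pd_{ii}^*)=p^3d_{11}^*d_{22}^*d_{33}^*$ and the top-left $2\times 2$ minor relation $c_{11}c_{22}=-pc_{12}d_{21}$) into the identity \eqref{eqn:U2},
\[
(c_{33}+pd_{33}^*)\bigl(-c_{12}d_{21}+c_{11}d_{22}^*+c_{22}d_{11}^*+pd_{11}^*d_{22}^*\bigr) = p^2d_{11}^*d_{22}^*d_{33}^*.
\]
Because $v_\fP(p)=1$ (the special fiber is reduced) and the right side is $p^2$ times a unit, this forces $v_\fP(c_{33}+pd_{33}^*)+v_\fP(-c_{12}d_{21}+c_{11}d_{22}^*+c_{22}d_{11}^*+pd_{11}^*d_{22}^*)=2$, which reduces the valuation computation to checking whether $-c_{12}d_{21}+c_{11}d_{22}^*+c_{22}d_{11}^*\in\fP$ --- a purely mod-$p$ membership question that Table \ref{Table:components} \emph{does} answer. (Concretely, the paper then reads off the lengths via the two-step filtration $\tld{R}_\fP\supset(c_{33},p)\tld{R}_\fP\supset(c_{33}+pd_{33}^*)\tld{R}_\fP$, each step contributing $0$ or $1$.) This factorization trick is the missing key step in your sketch; without it or an equivalent device, the valuation computation you defer is not actually accessible from the data you've allowed yourself.
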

\begin{proof}
We will show that $M_1 \defeq \tld{M}^{(j)}_{(\eps_2,1)}$ and $M_2 \defeq (c_{33}+pd_{33}^*)\tld{M}^{(j)}_{(0,1)}$ are equal as the proof of the other claim is similar. 
Let $M$ be $\tld{M}^{(j)}_{(0,1)}$. 
As $M_2$ is free of rank one, it is coherent and reflexive.
By \cite[\href{https://stacks.math.columbia.edu/tag/0EBJ}{Tag 0EBJ}]{stacks-project}, it suffices to show that the locally free sheaves $\mathfrak{j}^* M_1$ and $\mathfrak{j}^*M_2$ of rank $1$ on $U$ are equal, or that $M/M_1$ and $M/M_2$ have the same length after localization at the minimal primes of $R$. 
First, for a minimal prime $\fP$ of $\Spec R$, $M_{\fP}/(c_{33},p)M_{\fP}$ has length $0$ or $1$ with length equal to $1$ if and only if $c_{33} \in \fP$ since the special fiber is reduced. 
We see that the length is $0$ if and only $\fP = \fP^{(j)}_{(\eps_2,1)}$. 

Second, we claim that
\begin{equation}\label{eqn:U2}
(c_{33}+pd_{33}^*)(-c_{12}d_{21}+c_{11}d_{22}^*+c_{22}d_{11}^*+pd_{11}^*d_{22}^*) = p^2d_{11}^*d_{22}^*d_{33}^*
\end{equation}
so that $p$ annihilates $(c_{33},p)M_{\fP}/(c_{33}+pd_{33}^*)M_{\fP}$. 
Indeed, since the determinant of $M^{T,(j)}$ 
equals $(v+p)^3d_{11}^*d_{22}^*d_{33}^*$, specializing at $v=0$ gives 
\[
\prod_{i=1}^3 (c_{ii}+pd_{ii}^*) = p^3d_{11}^*d_{22}^*d_{33}^*. 
\]
Moreover, the top-left $2 \times 2$ minor is divisible by $v+p$ and the $(2,1)$-entry is divisible by $v$ so that specializing at $v = -p$ gives $c_{11}c_{22} = -pc_{12}d_{21}$. 
Combining these and dividing both sides by $p$ gives the claim. 

By the claim, $(c_{33},p)M_{\fP}/(c_{33}+pd_{33}^*)M_{\fP}$ has length $0$ or $1$ with length equal to $0$ if and only if $\frac{p}{c_{33}+pd_{33}^*} \in \tld{R}_{\fP}$ or equivalently, by \eqref{eqn:U2}, 
\[
-c_{12}d_{21}+c_{11}d_{22}^*+c_{22}d_{11}^* \in \fP. 
\]
We see from  Table \ref{Table:components} that the length is $1$ if and only if $\fP = \fP^{(j)}_{(0,1)}$. 

It is now easy to check the desired length statement from \eqref{eqn:divisor}. 
\end{proof}

We define a \emph{path} to be a sequence of elements $\gamma = (\gamma_k)_{k\geq 1}^{\ell(\gamma)}$ in $\{(0,0), (\eps_1,0), (\eps_2,0),(0,1), (\eps_1,1), (\eps_2,1)\}$ of length $\ell(\gamma)=2$ or $3$ such that 
\begin{itemize}
\item the $\gamma_k$ are distinct; 
\item $\gamma_k$ and $\gamma_{k+1}$ are adjacent for $1\leq k \leq \ell(\gamma)-1$; and
\item $\gamma_1 = (\eps_1,1)$ or $(\eps_2,1)$. 
\end{itemize}
For paths $\beta$ and $\gamma$, we write $\beta \leq \gamma$ if $\ell(\beta) \leq \ell(\gamma)$ and $\beta_k = \gamma_k$ for $1\leq k \leq \ell(\beta)$.   
For a path $\gamma$, we define subsets $\Sigma_\gamma \subset \Sigma_0$ as follows: 
\begin{itemize}
\item If $\ell(\gamma)=2$, then $\Sigma_\gamma = \{\gamma_2,(0,1),(\eps_1,1),(\eps_2,1)\} \setminus \{\gamma_1\}$. 
\item If $\ell(\gamma)=3$, then $\Sigma_\gamma = \{\gamma_3\}$. 
\end{itemize}
Given a path $\gamma$, we define $M_\gamma^{(j)}$ to be $\tld{M}_{\gamma_1}^{(j)}(-D_\gamma)/p\tld{M}_{\gamma_1}^{(j)}$ where 
\[
D_\gamma \defeq \sum_{(\eps,a)\notin \Sigma_\gamma}[{\fP}^{(j)}_{(\eps,a)}]. 
\]
Then by Lemmas \ref{lemma:reducedcycle} and \ref{lemma:cartier}, $M_\gamma^{(j)}$ is naturally identified with $I_\gamma (\tld{M}_{\gamma_1}^{(j)}\otimes_{\cO} \F)$ where we define $I_\gamma$ to be the image of $\bigcap_{(\eps,a)\notin \Sigma_\gamma}{\fP}^{(j)}_{(\eps,a)}$ in $R$.
Note that $I_\gamma\supseteq I_\beta$ if $\beta\geq \gamma$.
Moreover, note that different paths of length three can give rise to the same ideal $I_\gamma$ (e.g.~$I_\gamma$  depends only on $\gamma_3$ and not on $\gamma_2$). 
In fact, if $\ell(\beta) = 3 = \ell(\gamma)$ and $\beta_3 = \gamma_3$, then $M_\beta^{(j)}$ and $M_\gamma^{(j)}$ are naturally identified. 
This follows easily from the definition \eqref{eqn:divisor:def} if $\beta_3 = \gamma_3 \neq (0,1)$. 
If $\beta_3 = \gamma_3 = (0,1)$, then $M_\beta^{(j)}$ and $M_\gamma^{(j)}$ are both identified with $p^2 \tld{M}^{(j)}_{(0,1)}/p^2 \tld{M}^{(j)}_{(0,1)}(-[{\fP}^{(j)}_{(0,1)}])$. 
Indeed, there are inclusions $p^2 \tld{M}^{(j)}_{(0,1)} \subset \tld{M}^{(j)}_{(\eps_i,1)}$ but $p^2 \tld{M}^{(j)}_{(0,1)} \not\subset p\tld{M}^{(j)}_{(\eps_i,1)}$ for $i = 1$ and $2$ by \eqref{eqn:divisor}. 
Then for $\alpha = \beta$ and $\gamma$, we claim that the natural map $p^2 \tld{M}^{(j)}_{(0,1)} \ra M_\alpha^{(j)}$ is surjective and induces an isomorphism $p^2 \tld{M}^{(j)}_{(0,1)}/p^2 \tld{M}^{(j)}_{(0,1)}(-[{\fP}^{(j)}_{(0,1)}]) \cong M_\alpha^{(j)}$. 
We explain for $\alpha_1 = (\eps_1,1)$, the other case being similar. 
The surjectivity follows from the fact that $p^2 \tld{M}^{(j)}_{(0,1)} = (c_{11} + pd_{11}^*)\tld{M}^{(j)}_{(\eps_1,1)}$ and Lemma \ref{lemma:id:intersect} below. 
The kernel of this surjection is determined by the fact that the image of the map is isomorphic to $R/{\fP}^{(j)}_{(0,1)}$. 

The computations for Lemma \ref{lemma:id:intersect}, similar to those of \cite[\S 3.6.3]{LLLM2}, are recorded in \S  \ref{subsub:surgery}.

\begin{lemma}
\label{lemma:id:intersect}
Let $\gamma$ be a path.
Then $I_\gamma$ is minimally generated by $4-\ell(\gamma)$ elements, and a minimal set of generators for $I_\gamma$ is given by:
\begin{align*}
&({c}_{22}\frac{{d}_{11}^*}{{d}_{22}^*},\,{d_{31}d_{22}^*-d_{21}d_{32}})&&\text{if $\gamma=\big((\eps_2,1),(0,0)\big)$;}\\
&({c}_{22}\frac{{d}_{11}^*}{{d}_{22}^*},{c}_{12})&&\text{if $\gamma=\big((\eps_2,1),(\eps_2,0)\big)$;}\\
&({c}_{22}\frac{{d}_{11}^*}{{d}_{22}^*}, {(a-b)c_{13}d_{21}+(b-c-1)c_{23}d_{11}^*})&&\text{if $\gamma=\big((\eps_2,1),(\eps_1,0)\big)$;}\\
&({c}_{33}\frac{{d}_{11}^*}{{d}_{33}^*},\,{d}_{31})&&\text{if $\gamma=\big((\eps_1,1),(0,0)\big)$;}\\
&({c}_{33}\frac{{d}_{11}^*}{{d}_{33}^*},{(c+1-a)c_{13}d_{32}+(a-b-1)c_{12}d_{33}^*})&&\text{if $\gamma=\big((\eps_1,1), (\eps_2,0)\big)$;}\\
&({c}_{33}\frac{{d}_{11}^*}{{d}_{33}^*}, {c_{13}d_{21} - c_{23}d_{11}^*})&&\text{if $\gamma=\big((\eps_1,1),(\eps_1,0)\big)$;}\\
&({c}_{11})&&\text{if $\ell(\gamma)=3, \gamma_3=(0,1)$;}
\\
&({c}_{22})&&\text{if $\ell(\gamma)=3, \gamma_3=(\eps_1,1)$;}
\\
&({c}_{33})&&\text{if $\ell(\gamma)=3, \gamma_3=(\eps_2,1)$.}
\end{align*}
\end{lemma}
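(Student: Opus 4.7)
The plan is to interpret $I_\gamma$ as an honest intersection of the prime ideals $\fP^{(j)}_{(\eps,a)}$ in $R = \tld{R}/(p)$ and then compute that intersection case by case using the explicit equations from Table \ref{Table:components}. By the identification $M_\gamma^{(j)} \cong I_\gamma\big(\tld{M}_{\gamma_1}^{(j)}\otimes_\cO \F\big)$ established immediately before the lemma, the task reduces to determining a minimal generating set for the ideal
\[
I_\gamma \;=\; \overline{\bigcap\nolimits_{(\eps,a)\notin \Sigma_\gamma} \fP^{(j)}_{(\eps,a)}} \quad \text{in } R.
\]
Since $\tld{R}$ is normal (hence reduced) and $D_\gamma$ has all coefficients equal to $1$, Lemma \ref{lemma:reducedcycle} guarantees that this intersection coincides with its naive set-theoretic counterpart, so no saturation tricks are needed.

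For the length-$3$ paths, $\Sigma_\gamma = \{\gamma_3\}$ and $I_\gamma$ is the intersection of all minimal primes of $R$ except $\fP^{(j)}_{\gamma_3}$. I would verify by direct inspection of Table \ref{Table:components} that, with $i \in \{1,2,3\}$ determined by $\gamma_3 \in \{(0,1),(\eps_1,1),(\eps_2,1)\}$, the diagonal variable $c_{ii}$ lies in every $\fP^{(j)}_{(\eps,a)}$ with $(\eps,a) \neq \gamma_3$ but is nonzero modulo $\fP^{(j)}_{\gamma_3}$. To conclude that $(c_{ii})$ already exhausts the intersection, I would check by a codimension count that $R/\big((c_{ii}) + \fP^{(j)}_{\gamma_3}\big)$ has dimension zero, forcing equality of the two ideals since the quotient $R/\fP^{(j)}_{\gamma_3}$ is a one-dimensional local domain. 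Minimality is automatic since $I_\gamma$ is then nonzero and principal.

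For the length-$2$ paths, $|\Sigma_\gamma|=3$ and one must enumerate the six subcases $(\gamma_1,\gamma_2)$ with $\gamma_1\in\{(\eps_1,1),(\eps_2,1)\}$ and $\gamma_2\in\{(0,0),(\eps_1,0),(\eps_2,0)\}$. In each subcase, the first listed generator $c_{ii}\,d_{11}^*/d_{ii}^*$ is the re-scaling of the length-$3$ generator needed to land also in the three additional primes indexed by $\Sigma_\gamma\setminus\{\gamma_1\}$; the second generator reflects the local equations at $\fP^{(j)}_{\gamma_2}$, with the coefficients involving $(a,b,c)\in\F_p^3$ dictated by the monodromy rows in Tables \ref{Table_Ideals}, \ref{Table_Ideals_1}. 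Containment of both proposed generators in each of the six primes is immediate from Table \ref{Table:components}. The converse inclusion is a systematic elimination argument in the quotient ring, relegated to \S \ref{subsub:surgery} in the spirit of \cite[\S 3.6.3]{LLLM2}; the parameters $(a,b,c)$ enter through coefficient ratios that must remain units, which is guaranteed by the $6$-deepness of $\mu$. Minimality of the count of generators follows from Nakayama once one observes that the two proposed generators are linearly independent modulo $\fm_R I_\gamma$ against the monomial basis in $(c_{ik}, d_{ij}, d_{ii}^*)$.

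The main obstacle is the converse inclusion in each of the six length-$2$ subcases: these are non-regular Cohen--Macaulay intersections where the relations depend nontrivially on the parameters $(a,b,c)$, and so the calculation does not factor nicely into case-independent pieces. Because the bookkeeping is heavy but individually routine, it is natural to defer the computations to the appendix \S \ref{subsub:surgery} and present the lemma's proof as an appeal to those computations together with the conceptual reductions above.
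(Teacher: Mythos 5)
Your framing---reducing to an honest intersection via Lemma \ref{lemma:reducedcycle}, verifying the forward containments from Table \ref{Table:components}, and then running casework---matches the paper's starting point, but there are concrete gaps in the parts that carry the actual content.

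In the length-$3$ case your dimension count is wrong. The primes $\fP^{(j)}_{(\omega,a)}$ are the \emph{minimal} primes of the equidimensional ring $R$ of dimension $6$, so $R/\fP^{(j)}_{\gamma_3}$ has dimension $6$, not $1$, and $R/\bigl((c_{ii})+\fP^{(j)}_{\gamma_3}\bigr)$ has dimension $5$, not $0$; the conclusion ``forcing equality of the two ideals since the quotient $\dots$ is a one-dimensional local domain'' does not follow. What the paper actually does is compute $S^{(j)}/\bigl((c_{ii})+I^{(j)}_{\tau,\nabla_{\textnormal{alg}}}\bigr)$ explicitly, prove it is reduced (squarefree initial ideal via a Gr\"obner basis), equidimensional of dimension $6$, with exactly five minimal primes, and then invoke \cite[Lemma 3.6.11]{LLLM2} to promote the evident surjection onto $R/I_\gamma$ to an isomorphism. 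Reducedness, equidimensionality, and matching of the number of minimal primes are the actual criteria, and none of these appear in your sketch.

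For the length-$2$ cases the converse inclusion is the substance of the lemma, and your proposal defers it to ``a systematic elimination argument $\dots$ relegated to \S \ref{subsub:surgery}''---but the lemma's own proof lives precisely there, so this is circular and supplies no argument. The paper's method is the same \cite[Lemma 3.6.11]{LLLM2} criterion applied to a presentation of $S^{(j)}/(J+I^{(j)}_{\tau,\nabla_{\textnormal{alg}}})$, but with an extra wrinkle: the containment step naturally yields \emph{three} generators (say $c_{22}$, $c_{11}$, and the quadratic one), and a separate computation using the monodromy rows and the determinant relation in Table \ref{Table_Ideals_1} is then needed to show $c_{11}$ is redundant. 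Your minimality argument (``linearly independent $\dots$ against the monomial basis'') likewise undersells the point: the paper rules out a nontrivial $\F$-linear relation modulo $\fm_R I_\gamma$ by observing that such a relation would force $I_\gamma=(c_{22})=I_\beta$ for the length-$3$ path $\beta$, which is impossible because the two ideals lie under different numbers of minimal primes---this is not a monomial-basis inspection. Finally, the paper exploits the order-three automorphism $c_{ij}\mapsto c_{(132)(i),(132)(j)}$, $a\mapsto c+1$, $b\mapsto a$, $c\mapsto b$ of $S^{(j)}/I^{(j)}_{\tau,\nabla_{\textnormal{alg}}}$ to cut the nine cases down to four, whereas you enumerate all of them.
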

If $\gamma$ is a path of length $3$, $\gamma >\beta$, and $\kappa_\gamma\in\F^\times$, we define the map 
\begin{align}
\label{map:surgeries}
{\iota}_{\kappa_\gamma}^{\gamma}:M^{(j)}_\gamma &\longrightarrow M^{(j)}_{\beta}
\end{align}
to be $\kappa_\gamma$ times the natural inclusion. 
If one chooses a generator of $\tld{M}^{(j)}_{\gamma_1} \otimes_{\cO}\F$, then \eqref{map:surgeries} is identified with the map 
\begin{align}
\label{map:idealsurgeries}
{\iota}_{\kappa_\gamma}^{\gamma}: {I}_\gamma&\longrightarrow {I}_{\beta}
\end{align}
defined in an analogous way. 
For $\kappa=(\kappa_\gamma)_{\gamma, \ell(\gamma)=3}\in (\F^\times)^{12}$, we get a collection of maps ${\iota}_{\kappa_\gamma}^{\gamma}$ which induce a map 
\[
\iota_\kappa: \bigoplus_{\gamma,\ell(\gamma)=3}M^{(j)}_\gamma \ra \bigoplus_{\beta,\ell(\beta)=2} M^{(j)}_\beta. 
\]
We let $(\bigoplus_{\gamma,\ell(\gamma)=3}M^{(j)}_\gamma)^0$ be the subspace cut out by the conditions
\[
\sum_{\substack{\gamma,\ell(\gamma)=3,\\\gamma_3=(\omega,1)}}a_{\gamma}=0
\]
for $\omega\in\{0,\eps_1,\eps_2\}$ (recall that the $M_\gamma$ with $\ell(\gamma)=3$ and $\gamma_3=(\omega,1)$ are identified). 
We define $\iota_\kappa^0$ to be the restriction of $\iota_\kappa$ to $(\bigoplus_{\gamma,\ell(\gamma)=3}M^{(j)}_\gamma)^0$, and let $M_\kappa$ be the cokernel of $\iota_\kappa^0$. 

In order to to compute $M_\kappa$ in certain instances, we choose generators for $\tld{M}^{(j)}_{(\eps_i,1)}$. 
Fix a generator $m\in \tld{M}^{(j)}_{(0,1)}$ and generators $(-c_{23}d_{32}+c_{22}d_{33}^*+c_{33}d_{22}^*+pd_{22}^*d_{33}^*)m$ and $\frac{b-a}{b-c}d_{22}^*(c_{33}+pd_{33}^*)m$ of $\tld{M}^{(j)}_{(\eps_1,1)}$ and $\tld{M}^{(j)}_{(\eps_2,1)}$, respectively. 
Using these generators, we get identifications of $M^{(j)}_\gamma = I_\gamma$ such that if $\ell(\gamma') = 3 = \ell(\gamma)$ and $\gamma'_3 = \gamma_3$, then the identification of $M^{(j)}_{\gamma'}$ and $M^{(j)}_\gamma$ is compatible with the equality $I_{\gamma'} = I_\gamma$. 
(When $\gamma_3 = (0,1)$, then $I_\gamma = (c_{11},p)\tld{R}/p\tld{R}$. One checks from Table \ref{Table_Ideals_1} that 
\begin{equation*}
\tiny{\frac{b-a}{b-c}c_{11}d_{22}^* = -c_{12}d_{21}+c_{11}d_{22}^*+c_{22}d_{11}^*}
\end{equation*}
in $R$ so that modulo $p(c_{33}+pd_{33}^*)\tld{M}_{(0,1)}^{(j)}$, we have
\begin{align*}
(c_{11}+pd_{11}^*)\frac{b-a}{b-c}d_{22}^*(c_{33}+pd_{33}^*)m &\equiv p^2 d_{11}^*d_{22}^*d_{33}^* m\\
&= (c_{11}+pd_{11}^*)(-c_{23}d_{32}+c_{22}d_{33}^*+c_{33}d_{22}^*+pd_{22}^*d_{33}^*)m
\end{align*} 
and the equality is an analogue of \eqref{eqn:U2}.)
Then $M_\kappa$ is isomorphic to the cokernel of the map 
\begin{align}
\label{eq:map:surgery}
\iota^0_\kappa:\Bigg(\bigoplus_{\gamma,\ell(\gamma)=3}{I}_\gamma\Bigg)^0\ra \bigoplus_{\beta, \ell(\beta)=2}{I}_\beta
\end{align}
where the notation $(-)^0$ denotes the subspace cut out by the conditions
\[
\sum_{\substack{\gamma,\ell(\gamma)=3,\\\gamma_3=(\omega,1)}}a_{\gamma}=0
\]
for $\omega\in\{0,\eps_1,\eps_2\}$.

In what follows $\kappa=(\kappa_\gamma)_{\gamma, \ell(\gamma)=3}\in (\F^\times)^{12}$ is a tuple such that $\kappa_\gamma=1$ if $\gamma_2 = (0,0)$ or $\gamma_3 = (0,1)$.
We abbreviate $\lambda_1\defeq -\kappa_{((\eps_2,1),(\eps_2,0),(\eps_1,1))}$, $\lambda_2\defeq-\kappa_{((\eps_2,1),(\eps_1,0),(\eps_1,1))}$, $\lambda_3\defeq -\kappa_{((\eps_1,1),(\eps_2,0),(\eps_2,1))}$, $\lambda_4\defeq -\kappa_{((\eps_1,1),(\eps_1,0),(\eps_2,1))}$.
(There are 12 paths of length $3$ and 8 of these paths have the property that $\gamma_2 = (0,0)$ or $\gamma_3 = (0,1)$.) 
We now analyze the module $M_\kappa$ and certain maps  $I_\gamma \ra M_\kappa$ (Proposition \ref{prop:surgery} and Corollary \ref{cor:surgery}). 
If $M$ is a finitely generated $S^{(j)}$-module and $m\in M$ we use overlined notations $\ovl{M}$ and $\ovl{m}$ for $M\otimes_{S^{(j)}}S^{(j)}/\fm_{{S}^{(j)}}$ and the image of $m$ in $\ovl{M}$, respectively.
Similar notation apply for $S^{(j)}$-linear maps between finitely generated $S^{(j)}$-modules.

We define the following $\F$-basis on $\bigoplus_{\gamma,\ell(\gamma)=3}\ovl{I}_\gamma$:
\begin{equation*}
\un{e}\defeq\Bigg(
\underbrace{\ovl{c}_{11},\quad\ovl{c}_{22}}_{\hspace{-.3cm}\underset{\substack{\gamma>\beta,\\\beta=((\eps_2,1),(0,0)),}}{\bigoplus}\hspace{-.7cm}\ovl{I}_\gamma},\quad\ 
\underbrace{\ovl{c}_{11},\quad\ovl{c}_{22}}_{\hspace{-.3cm}\underset{\substack{\gamma>\beta,\\\beta=((\eps_2,1),(\eps_2,0)),}}{\bigoplus}\hspace{-.7cm}\ovl{I}_\gamma},\quad\ 
\underbrace{\ovl{c}_{11},\quad\ovl{c}_{22}}_{\hspace{-.3cm}\underset{\substack{\gamma>\beta,\\\beta=((\eps_2,1),(\eps_1,0)),}}{\bigoplus}\hspace{-.7cm}\ovl{I}_\gamma},\quad\ 
\underbrace{\ovl{c}_{11},\quad\ovl{c}_{33}}_{\hspace{-.3cm}\underset{\substack{\gamma>\beta,\\\beta=((\eps_1,1),(0,0)),}}{\bigoplus}\hspace{-.7cm}\ovl{I}_\gamma},\quad\ 
\underbrace{\ovl{c}_{11},\quad\ovl{c}_{33}}_{\hspace{-.3cm}\underset{\substack{\gamma>\beta,\\\beta=((\eps_1,1),(\eps_2,0)),}}{\bigoplus}\hspace{-.7cm}\ovl{I}_\gamma},\quad\ 
\underbrace{\ovl{c}_{11},\quad\ovl{c}_{33}}_{\hspace{-.3cm}\underset{\substack{\gamma>\beta,\\\beta=((\eps_1,1),(\eps_1,0))}}{\bigoplus}\hspace{-.7cm}\ovl{I}_\gamma}
\Bigg)
\end{equation*}
and, on  $\Bigg(\bigoplus_{\gamma,\ell(\gamma)=3}{I}_\gamma\Bigg)^0$, the basis
\[
\un{e}^0\defeq\un{e}\cdot\tiny{ \begin{pmatrix}
0&0&1&1&1&1&1&0&0\\
1&1&0&0&0&0&0&0&0\\
0&0&-1&0&0&0&0&0&0\\
-1&0&0&0&0&0&0&0&0\\
0&0&0&-1&0&0&0&0&0\\
0&-1&0&0&0&0&0&0&0\\
0&0&0&0&-1&0&0&0&0\\
0&0&0&0&0&0&0&1&1\\
0&0&0&0&0&-1&0&0&0\\
0&0&0&0&0&0&0&-1&0\\
0&0&0&0&0&0&-1&0&0\\
0&0&0&0&0&0&0&0&-1\\
\end{pmatrix}}
\]
Finally, on $\bigoplus_{\beta, \ell(\beta)=2}\ovl{I}_\beta$ we consider the basis $\un{f}$ deduced from Lemma \ref{lemma:id:intersect}.

\begin{prop}
\label{prop:surgery}
Using the bases $\un{e}^0$ and $\un{f}$ described above, the matrix $A_\kappa$ associated to $\ovl{\iota}^0_\kappa$ is given by:
\begin{align*}
\hspace{-.7cm}\tiny{\begin{pmatrix}
1&1&\frac{(b-c)(a-b-1)}{(a-b)(c+1-a)}&\frac{(b-c)(a-b-1)}{(a-b)(c+1-a)}&\frac{(b-c)(a-b-1)}{(a-b)(c+1-a)}&\frac{(b-c)(a-b-1)}{(a-b)(c+1-a)}&\frac{(b-c)(a-b-1)}{(a-b)(c+1-a)}&0&0\\
0&0&0&0&0&0&0&0&0\\
\lambda_1&0&\frac{(b-c)}{(a-c)}&0&0&0&0&0&0\\
0&0&0&0&0&0&0&0&0\\
0&\lambda_2&0&\frac{(a-b-1)(b-c)}{(b-a)(a-c)}&&0&0&0&0\\
0&0&0&0&0&0&0&0&0\\
0&0&0&0&\frac{(b-c-1)}{(b-a)}&0&0&1&1\\
0&0&0&0&0&0&0&0&0\\
0&0&0&0&0&\frac{(b-c-1)(c-a)}{(b-a)(a-c-1)}&0&\lambda_3&0\\
0&0&0&0&0&0&0&0&0\\
0&0&0&0&0&0&\frac{(a-c-1)(b-c)}{(b-a)(a-c)}&0&\lambda_4\\
0&0&0&0&0&0&0&0&0
\end{pmatrix}}
\end{align*}
\end{prop}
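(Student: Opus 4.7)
The strategy is to assemble $A_\kappa$ column by column, passing through the intermediate basis $\un{e}$. The map $\iota_\kappa$ is block-diagonal with respect to the decomposition $\bigoplus_\beta \bigl(\bigoplus_{\gamma>\beta}M_\gamma^{(j)}\bigr)\to M_\beta^{(j)}$, so I would first analyze each $\beta$-block separately, and only at the end precompose with the explicit $12\times 9$ matrix expressing $\un{e}^0$ in terms of $\un{e}$.

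For the blockwise step, the key observation is that for any length-$3$ extension $\gamma>\beta$ one has $\gamma_1=\beta_1$, so the generators of $\tld M_{\gamma_1}^{(j)}$ and $\tld M_{\beta_1}^{(j)}$ fixed before the statement coincide. Under these identifications, $\iota_{\kappa_\gamma}^\gamma:M_\gamma^{(j)}\to M_\beta^{(j)}$ is literally multiplication by $\kappa_\gamma$ composed with the inclusion $I_\gamma\hookrightarrow I_\beta$ of ideals in $R$. Using Lemma~\ref{lemma:id:intersect}, each $I_\gamma$ with $\ell(\gamma)=3$ is principal on one of $c_{11},c_{22},c_{33}$ (determined by $\gamma_3$), and each $I_\beta$ is presented by the two explicit generators listed there. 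The computation of the block thus reduces to, for each of the twelve pairs $(\beta,\gamma)$, expressing the distinguished generator of $I_\gamma$ as an explicit $R$-linear combination of the two generators of $I_\beta$ and reading off its residue modulo $\fm_R\cdot I_\beta$. The scalar coefficients that arise are precisely the entries of the block; the factors of the form $\tfrac{b-c}{a-c}$, $\tfrac{(b-c)(a-b-1)}{(a-b)(c+1-a)}$, etc., come from solving these identities using the defining relations of $R$ (i.e.\ the mod-$p$ reduction of the ideal displayed in row $t_{\un{1}}$ of Table~\ref{Table_Ideals_1}), together with the determinant condition $(c_{11}+pd_{11}^*)(c_{22}+pd_{22}^*)(c_{33}+pd_{33}^*)=p^3d_{11}^*d_{22}^*d_{33}^*$ and the $2\times 2$ minor vanishing recorded in Table~\ref{Table:components}. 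The normalization $\kappa_\gamma=1$ whenever $\gamma_2=(0,0)$ or $\gamma_3=(0,1)$ explains why those columns have no $\lambda_i$-factor, while the remaining four freedoms give rise to $\lambda_1,\dots,\lambda_4$.

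Once each block is known, I would compose with the given $12\times 9$ inclusion matrix $\un{e}^0=\un{e}\cdot M$. Each column of $M$ is a signed difference of basis vectors of $\un{e}$ corresponding to length-$3$ paths sharing the same endpoint $\gamma_3\in\{(0,1),(\eps_1,1),(\eps_2,1)\}$, so the columns of $A_\kappa$ are signed sums of (at most) two block-columns. A direct check using the zero-sum pattern of $M$ and the block entries produces the displayed matrix, with the signs inside $\lambda_1,\dots,\lambda_4$ tracking the choice of sign in $M$.

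The main obstacle is the blockwise identification of $c_{ii}\in I_\gamma$ inside $I_\beta/\fm_R I_\beta$ in terms of the generators of Lemma~\ref{lemma:id:intersect}. This is a genuine computation in a non-regular quotient of $S^{(j)}$, and each of the twelve cases uses the relations of $R$ in a slightly different way (in particular, the minor-vanishing consequences such as $c_{11}c_{22}\equiv -pc_{12}d_{21}$ used in the proof of Lemma~\ref{lemma:cartier} reappear in several of the columns). These twelve explicit identities, and the verification that the listed scalar coefficients are correct, are the content of \S\ref{subsub:surgery}; everything else is bookkeeping.
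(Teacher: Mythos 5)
Your proposal is correct and takes essentially the same approach as the paper: first read off the $12\times 12$ matrix of $\ovl{\iota}_\kappa$ in the bases $\un{e},\un{f}$ (which is block-diagonal over the six length-two paths $\beta$, with entries $\kappa_\gamma$ times the coefficients expressing the single generator of $I_\gamma$ in terms of the two generators of $I_\beta$), and then compose with the explicit $12\times 9$ matrix defining $\un{e}^0$. The only thing I would add for precision is that the ring-theoretic identities you describe are exactly Lemma~\ref{lemma:id:relations} (proved in \S\ref{subsub:surgery} via the monodromy equations of row $t_{\un{1}}$ of Table~\ref{Table_Ideals_1}, not Table~\ref{Table:components}), and that only the six columns with $\gamma_3=(0,1)$ require a nontrivial congruence — indeed only three up to the $(132)$-symmetry noted at the start of \S\ref{subsub:surgery} — while the other six entries are just $\kappa_\gamma$ because the fixed generators of $\tld{M}^{(j)}_{(\eps_i,1)}$ were chosen precisely to make the induced identifications $M^{(j)}_\gamma = I_\gamma$ compatible with the first generator of $I_\beta$ in Lemma~\ref{lemma:id:intersect}.
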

\begin{proof}
By Lemmas \ref{lemma:id:intersect} and \ref{lemma:id:relations}, the following congruences hold in  ${S}^{(j)}$:
\begin{align*}
&c_{11}\equiv -\frac{(b-c)(1-a+b)}{(a-b)(1-a+c)}\frac{d_{11}^*}{d_{22}^*}c_{22} &\textnormal{ modulo } \fm_{{S}^{(j)}}\cdot I_\beta&&&\textnormal{ if $\beta=(\eps_2,1),(0,0)$};
\\
&c_{11}\equiv -\frac{(b-c)}{(a-c)}\frac{d_{11}^*}{d_{22}^*}c_{22}& \textnormal{ modulo }\fm_{{S}^{(j)}}\cdot I_\beta&&&\textnormal{ if $\beta=(\eps_2,1),(\eps_2,0)$};
\\
&c_{11}\equiv -\frac{(b-c)(1-a+b)}{(a-c)(a-b)}\frac{d_{11}^*}{d_{22}^*}c_{22}&\textnormal{ modulo }\fm_{{S}^{(j)}}\cdot I_\beta&&&\textnormal{ if $\beta=(\eps_2,1),(\eps_1,0)$};\\
&c_{11}\equiv \frac{(b-c-1)}{(a-b)}\frac{d_{11}^*}{d_{33}^*}c_{33}& \textnormal{ modulo } \fm_{{S}^{(j)}}\cdot I_\beta &&&\textnormal{ if $\beta=(\eps_1,1),(0,0)$};
\\
&c_{11}\equiv \frac{(b-c-1)(c-a)}{(a-b)(a-c-1)}\frac{d_{11}^*}{d_{33}^*}c_{33}& \textnormal{ modulo } \fm_{{S}^{(j)}}\cdot I_\beta &&&\textnormal{ if $\beta=(\eps_1,1),(\eps_2,0)$};
\\
&c_{11}\equiv \frac{(b-c)(a-c-1)}{(a-c)(a-b)}\frac{d_{11}^*}{d_{33}^*}c_{33}& \textnormal{ modulo }\fm_{{S}^{(j)}}\cdot I_\beta &&&\textnormal{ if $\beta=(\eps_1,1),(\eps_1,0)$}.
\end{align*}
Thus, the matrix associated to $\ovl{\iota}_\kappa$, in the bases $\un{e}$, $\un{f}$ is 
\begin{equation*}
\hspace{-2cm}\tiny{\begin{pmatrix}
\frac{(b-c)(a-b-1)}{(a-b)(c+1-a)}&1&0&0&0&0&0&0&0&0&0&0\\
0&0&0&0&0&0&0&0&0&0&0&0\\
0&0&\frac{(c-b)}{(a-c)}&-\lambda_1&0&0&0&0&0&0&0&0\\
0&0&0&0&0&0&0&0&0&0&0&0\\
0&0&0&0&\frac{(a-b-1)(b-c)}{(a-b)(a-c)}&-\lambda_2&0&0&0&0&0&0\\
0&0&0&0&0&0&0&0&0&0&0&0\\
0&0&0&0&0&0&\frac{(b-c-1)}{(a-b)}&1&0&0&0&0\\
0&0&0&0&0&0&0&0&0&0&0&0\\
0&0&0&0&0&0&0&0&\frac{(b-c-1)(c-a)}{(a-b)(a-c-1)}&-\lambda_3&0&0\\
0&0&0&0&0&0&0&0&0&0&0&0\\
0&0&0&0&0&0&0&0&0&0&\frac{(a-c-1)(b-c)}{(a-b)(a-c)}&-\lambda_4\\
0&0&0&0&0&0&0&0&0&0&0&0
\end{pmatrix}}
\end{equation*}
and the conclusion follows by the definition of $\un{e}^0$.
\end{proof}

The following result will be used in \S \ref{sec:locality}:
\begin{cor}
\label{cor:surgery}
Assume that $\kappa=\kappa_{\textnormal{min}}(a,b,c)\in (\F^\times)^{12}$ is such that $\kappa_\gamma=1$ if $\gamma_2 = (0,0)$ or $\gamma_3 = (0,1)$, and 
\begin{align*}
&\left(
-\lambda_1,-\lambda_2,-\lambda_3,-\lambda_4
\right)=\\
&\qquad=\left(\frac{(a-b)(a-c-1)}{(a-b-1)(a-c)},\frac{a-c-1}{a-c},-\frac{(a-c)}{(a-c-1)},-\frac{(b-c)(a-c-1)}{(b-c-1)(a-c)}\right).
\end{align*}
Then for any two paths $\gamma>\beta$ the composite
\[
\ovl{I}_\gamma\ra \ovl{I}_{\beta}\ra \ovl{M}^{(j)}_{\kappa}
\]
is nonzero.
\end{cor}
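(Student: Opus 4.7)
The plan is to deduce the nonvanishing from Proposition~\ref{prop:surgery} by an $R$-module level analysis of the cokernel of $\iota^0_\kappa$, finer than what a pure $\F$-linear-algebra computation on the matrix $A_\kappa$ can detect. The composite $\ovl I_\gamma\to\ovl I_\beta\to\ovl M^{(j)}_\kappa$ sends the generator of $\ovl I_\gamma$ (one of $c_{11},c_{22},c_{33}$, depending on $\gamma_3$, by Lemma~\ref{lemma:id:intersect}) to its image scaled by $\kappa_\gamma$ inside $\ovl I_\beta\subset\bigoplus_{\beta'}\ovl I_{\beta'}$, then projects to $\ovl M^{(j)}_\kappa=\bigoplus_{\beta'}\ovl I_{\beta'}/\operatorname{image}(\iota^0_\kappa)$. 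Writing $w_\gamma$ for the resulting vector in $\bigoplus_{\beta'}\ovl I_{\beta'}$, the statement reduces to showing $w_\gamma\notin \operatorname{image}(\iota^0_\kappa)$.

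Suppose for contradiction that $w_\gamma=\iota^0_\kappa(x)$ for some $x\in(\bigoplus_{\gamma'}\ovl I_{\gamma'})^0$. The key structural point is that each $\ovl I_{\gamma'}$ is principal with generator of a fixed ``type'' $c_{ii}$ (with $i$ determined by $\gamma'_3$), and the three sum-to-zero conditions defining $(-)^0$ couple only those $\gamma'$ sharing a common $\gamma'_3$. I would use the $\beta'$-component equations $\iota^0_\kappa(x)_{\beta'}=0$ for $\beta'\neq\beta$ together with the sum conditions to propagate the ``source'' at $\gamma$ through all the other $\gamma'$ with $\gamma'_3=\gamma_3$, and from there (via the non-trivial $\lambda_i$'s) to the remaining two families. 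In the spirit of the computations of \S\ref{subsub:surgery} and \cite[\S~3.6.3]{LLLM2}, this chain of constraints forces a divisibility relation of the form $c_{ii}\cdot r=c_{jj}\cdot s$ with $i\neq j$ inside $R=\tld S^{(j)}/(\tld I^{(j)}_{\tau,\nabla_{\mathrm{alg}}},p)$, contradicting the explicit presentation of $R$ recorded in Tables~\ref{Table_Ideals} and~\ref{Table_Ideals_1}.

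The values $(\lambda_1,\ldots,\lambda_4)$ defining $\kappa_{\mathrm{min}}$ are precisely those that make the companion Nakayama-level obstruction vanish, via the algebraic identities $\lambda_1 L=\frac{b-c}{a-c}$ and its three analogues (where $L=\frac{(b-c)(a-b-1)}{(a-b)(c+1-a)}$ is the repeating entry in row $1$ of $A_\kappa$); indeed these identities are exactly what make the six nonzero rows of $A_\kappa$ linearly independent while still allowing maximal overlap between row~$1$ and the others. This is why the Corollary becomes a sharp statement for this particular $\kappa$: the $\F$-linear image of $\iota^0_\kappa$ is maximal, yet the $R$-module image is still obstructed. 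The main difficulty is the combinatorial overhead of the twelve pairs $(\gamma,\beta)$; this is mitigated by the symmetry under the involution $\gamma_1=(\eps_1,1)\leftrightarrow(\eps_2,1)$, under which the four $\lambda_i$ pair up and exchange, and within each half by the parallel treatment across the three possible values of $\gamma_3$.
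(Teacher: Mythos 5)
The proposal is built on a misconception that makes the entire strategy unworkable, so I want to flag it clearly.

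You assert that the vanishing/nonvanishing in question is ``finer than what a pure $\F$-linear-algebra computation on the matrix $A_\kappa$ can detect,'' and accordingly propose an ``$R$-module level'' analysis ending in a divisibility contradiction in $R$. But the objects in the statement are already all $\F$-vector spaces: $\ovl{I}_\gamma$, $\ovl{I}_\beta$ are one- and two-dimensional $\F$-spaces, and $\ovl{M}^{(j)}_\kappa = M_\kappa\otimes_{S^{(j)}}\F$ is, by right exactness of $-\otimes\F$, exactly $\operatorname{coker}(\ovl{\iota}^0_\kappa) = \operatorname{coker}(A_\kappa)$. So the composite $\ovl{I}_\gamma\to\ovl{I}_\beta\to\ovl{M}^{(j)}_\kappa$ is nonzero if and only if the vector $\un{e}_i$ (the class of the generator of $\ovl{I}_\gamma$ viewed in $\bigoplus_{\beta'}\ovl{I}_{\beta'}$, after the congruences of Lemma \ref{lemma:id:intersect} identify it, up to a unit, with the ``linear'' basis vector of $\ovl{I}_\beta$) does \emph{not} lie in the column space of $A_\kappa$. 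That is a pure $\F$-linear question, and it is precisely the question the paper answers. There is no residual $R$-module structure to exploit once you have tensored with $\F$, and the promised chain leading to a relation $c_{ii}\cdot r = c_{jj}\cdot s$ in $R$ has no route back to a statement about $\F$-linear independence in $\operatorname{coker}(A_\kappa)$.

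Your characterization of what $\kappa_{\min}$ accomplishes also points the wrong way. You write that the identities defining $\kappa_{\min}$ ``make the six nonzero rows of $A_\kappa$ linearly independent while still allowing maximal overlap,'' but if the six nonzero rows were independent the rank of $A_\kappa$ would be $6$, its column space would equal the entire $6$-dimensional span of the odd basis vectors, every $\un{e}_i$ would lie in the image, and \emph{all} the composites of the Corollary would vanish --- the opposite of what is being proved. The correct picture is the reverse: $\kappa_{\min}$ is chosen precisely so that, after the row reduction carried out in equation \eqref{eq:system:final}, the top row of the reduced matrix becomes identically zero, dropping the rank to $5$ and making the image a hyperplane inside the $6$-dimensional span. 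The Corollary then follows because the normal vector of that hyperplane --- read off the reduced right-hand side as $(1,\,-\lambda_1^{-1},\,-\lambda_2^{-1},\,-a_{15}/a_{45},\,(a_{15}/a_{45})\lambda_3^{-1},\,(a_{15}/a_{45})\lambda_4^{-1})$ --- has all coordinates nonzero, so the hyperplane contains none of the basis vectors $\un{e}_i$. This is entirely a rank-$5$-versus-rank-$6$ phenomenon inside $\F^6$, computed straight from $A_\kappa$; there is no hidden sharpness that linear algebra misses, and your proposal as written would not close.
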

\begin{proof}
By Proposition \ref{prop:surgery} and Lemma \ref{lemma:id:intersect} it is equivalent to show that for any $i\in\{1,3,5,7,9,11\}$ the linear system $A_\kappa\un{X}=\un{e}_i$ has no solutions, where $\un{e}_i=(\dots,0,1,0\dots)$ has a unique non-zero entry at position $i$.
Equivalently, we show that for any $i=1,\dots,6$ the linear system 
\begin{equation*}
\tiny{\begin{pmatrix}
1&1&\frac{(b-c)(a-b-1)}{(a-b)(c+1-a)}&\frac{(b-c)(a-b-1)}{(a-b)(c+1-a)}&\frac{(b-c)(a-b-1)}{(a-b)(c+1-a)}&\frac{(b-c)(a-b-1)}{(a-b)(c+1-a)}&\frac{(b-c)(a-b-1)}{(a-b)(c+1-a)}&0&0\\
\lambda_1&0&\frac{(b-c)}{(a-c)}&0&0&0&0&0&0\\
0&\lambda_2&0&\frac{(a-b-1)(b-c)}{(b-a)(a-c)}&&0&0&0&0\\
0&0&0&0&\frac{(b-c-1)}{(b-a)}&0&0&1&1\\
0&0&0&0&0&\frac{(b-c-1)(c-a)}{(b-a)(a-c-1)}&0&\lambda_3&0\\
0&0&0&0&0&0&\frac{(a-c-1)(b-c)}{(b-a)(a-c)}&0&\lambda_4
\end{pmatrix}}\un{X}=\un{e}_i
\end{equation*}
has no solutions, where again $\un{e}_i=(\dots,0,1,0\dots)$ has the unique nonzero entry at position $i$.
Writing $\un{e}_i= (e_{i,j})_{1\leq j\leq 6}$, and letting $a_{ij}$ denote the relevant entries of the previous matrix, the linear system above is equivalent after row reduction to 
\begin{align}
\label{eq:system:final}
&\tiny{\begin{pmatrix}
0&0&a_{13}-\lambda_1^{-1}a_{23}&a_{14}-\lambda_2^{-1}a_{34}&0&a_{16}+\frac{a_{15}a_{56}}    {\lambda_3a_{45}}&a_{17}+\frac{a_{15}a_{67}}{\lambda_4a_{45}}&0&0\\
\lambda_1&0&a_{23}&0&0&0&0&0&0\\
0&\lambda_2&0&a_{34}&&0&0&0&0\\
0&0&0&0&a_{45}&0&0&1&1&\\
0&0&0&0&0&a_{56}&0&\lambda_3&0\\
0&0&0&0&0&0&a_{57}&0&\lambda_4
\end{pmatrix}}\un{X}=\\
&\qquad=
\tiny{\begin{pmatrix}
e_{i,1}-\lambda_1^{-1}e_{i,2}-\lambda_2^{-1}e_{i,3}-\frac{a_{15}}{a_{45}}(e_{i,4}-\lambda_3^{-1}e_{i,5}-\lambda_4^{-1}e_{i,6})\\
e_{i,2}\\
e_{i,3}\\
e_{i,4}\\
e_{i,5}\\
e_{i,6}
\end{pmatrix}}
\nonumber
\end{align}
By definition of $\kappa_{\textnormal{min}}(a,b,c)$, the first row of the matrix in the RHS of \eqref{eq:system:final} is zero, which implies that the linear system \eqref{eq:system:final} has no solution since $e_{i,j}\neq 0$ exactly when $j=i$, and $\lambda_i\neq 0$ for $i=1,2,3,4$.
\end{proof}

We conclude with a result on the support cycle of $M_{\kappa}$.
We employ the terminology and notations of \cite[\S 2.2]{EG-perspective}, in particular Definition 2.2.5 from \emph{loc.~cit}.~where we take $\cX$ to be $\Spec (R)$ (so that $d=6$ in the notation of \emph{loc.~cit}.~).
\begin{cor}
\label{cor:cycle:Mkappa}
For any $\kappa=(\kappa_\gamma)_{\gamma, \ell(\gamma)=3}\in (\F^\times)^{12}$ such that $\kappa_\gamma=1$ if $\gamma_2 = (0,0)$ or $\gamma_3 = (0,1)$ we have 
\[
Z_d(M_{\kappa})=\sum_{\omega\in\{0,\eps_1,\eps_2\}}\fP_{(\omega,1)}+2\sum_{\omega\in\{0,\eps_1,\eps_2\}}\fP_{(\omega,0)}.
\]
\end{cor}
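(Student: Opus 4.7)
The plan is to compute the coefficient of each minimal prime $\fP=\fP^{(j)}_{(\omega,a)}$ in $Z_d(M_\kappa)$ by localizing the defining right-exact sequence
\[
\Big(\bigoplus_{\gamma,\,\ell(\gamma)=3} M^{(j)}_\gamma\Big)^0 \xrightarrow{\iota_\kappa^0} \bigoplus_{\beta,\,\ell(\beta)=2} M^{(j)}_\beta \longrightarrow M_\kappa \longrightarrow 0
\]
at $\fP$ and counting dimensions. Since $\tld{R}$ is normal and Cohen--Macaulay, $R=\tld{R}/(p)$ is generically reduced, so each $R_\fP$ is a field and the localizations are finite-dimensional $R_\fP$-vector spaces.

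First I would record the supports using Lemma \ref{lemma:id:intersect}: $M_\beta^{(j)}\cong I_\beta$ is supported precisely on the primes indexed by $\Sigma_\beta=\{\beta_2,(0,1),(\eps_1,1),(\eps_2,1)\}\setminus\{\beta_1\}$, each with length one, while for $\ell(\gamma)=3$ the module $M_\gamma^{(j)}$ is supported only at $\fP_{\gamma_3}$, again with length one. This immediately implies that the three primes $\fP_{(\omega,0)}$ with $\omega\in\{\eps_1+\eps_2,\eps_1-\eps_2,\eps_2-\eps_1\}$ meet neither the source nor the target of $\iota_\kappa^0$, hence contribute $0$ to $Z_d(M_\kappa)$. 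At each prime $\fP_{(\omega,0)}$ with $\omega\in\{0,\eps_1,\eps_2\}$, the source localizes to zero (every $\gamma_3$ is at level $1$), so the multiplicity equals the number of length-$2$ paths $\beta$ with $\beta_2=(\omega,0)$; there are exactly two such, corresponding to $\beta_1\in\{(\eps_1,1),(\eps_2,1)\}$. This accounts for the second sum in the statement.

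The main step is the three level-$1$ primes $\fP=\fP_{(\omega,1)}$ with $\omega\in\{0,\eps_1,\eps_2\}$. Let $N_\omega$ denote the number of length-$3$ paths with $\gamma_3=(\omega,1)$, so $N_0=6$ and $N_{\eps_1}=N_{\eps_2}=3$. Then both $(\bigoplus_\gamma M^{(j)}_\gamma)_\fP$ and $(\bigoplus_\beta M^{(j)}_\beta)_\fP$ have $R_\fP$-dimension $N_\omega$ (the relevant $\beta$'s being those with $\beta_1\neq(\omega,1)$), while the zero-sum condition cuts the source down to $(A^0)_\fP$ of dimension $N_\omega-1$. My plan is to argue $(\iota_\kappa)_\fP$ is injective, which forces $\dim(M_\kappa)_\fP=1$. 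Injectivity will follow from observing that each length-$3$ path $\gamma$ has a unique length-$2$ prefix $\beta=(\gamma_1,\gamma_2)$, that distinct $\gamma$'s sharing $\gamma_3=(\omega,1)$ have distinct $(\gamma_1,\gamma_2)$, and that the corresponding scalar is the unit $\kappa_\gamma\in\F^\times$. Thus, after reordering, the matrix of $(\iota_\kappa)_\fP$ is the identity up to unit rescalings, and the restriction to any subspace, in particular $(A^0)_\fP$, remains injective.

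The only thing that needs verification is that for $\gamma>\beta$ the natural inclusion $M^{(j)}_\gamma\hookrightarrow M^{(j)}_\beta$ localizes to an isomorphism of one-dimensional $R_\fP$-vector spaces at $\fP=\fP_{\gamma_3}$. Using the common trivialization of $\tld{M}^{(j)}_{\gamma_1}\otimes_\cO\F$, this inclusion is identified with $I_\gamma\hookrightarrow I_\beta$; since $I_\gamma\not\subset\fP_{\gamma_3}$ and $I_\beta\supset I_\gamma$, both ideals localize to $R_\fP$ and the inclusion is an equality. I do not foresee a serious obstacle beyond this verification.
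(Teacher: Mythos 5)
Your proposal is correct, and it takes a genuinely different route from the paper's. The paper first records $Z_d(I_\gamma)=\sum_{(\omega,a)\in\Sigma_\gamma}\fP_{(\omega,a)}$ for every path, then invokes additivity of cycles in short exact sequences (\cite[Lemma 2.2.7]{EG-perspective}) together with the injectivity of $\iota^0_\kappa$; the cycle of the zero-sum subspace $(\oplus_{\gamma,\ell(\gamma)=3} I_\gamma)^0$ is extracted from the auxiliary exact sequence
\[
0\longrightarrow \Big(\bigoplus_{\gamma,\ell(\gamma)=3}I_\gamma\Big)^0\longrightarrow \bigoplus_{\gamma,\ell(\gamma)=3}I_\gamma\longrightarrow \bigoplus_{\gamma\in F}I_\gamma\longrightarrow 0
\]
for a specific three-element set $F$, and then everything is assembled globally. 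You instead localize the presentation $(\oplus_\gamma M_\gamma)^0\to\oplus_\beta M_\beta\to M_\kappa\to 0$ at each minimal prime $\fP$ and count $R_\fP$-dimensions, with the key observation that at $\fP_{(\omega,1)}$ the surviving $\gamma$'s have pairwise distinct length-two prefixes and map by unit multiples of an isomorphism, so $(\iota_\kappa)_\fP$ is an isomorphism and restricting to the one-codimensional zero-sum subspace drops the rank by exactly one. Both arguments use the same underlying facts — the supports of $I_\gamma$, $I_\beta$ and injectivity of $\iota^0_\kappa$ — but yours makes the injectivity transparent prime by prime and avoids the auxiliary exact sequence, at the cost of a little more case bookkeeping (distinguishing $\omega=0$ with six surviving paths from $\omega=\eps_1,\eps_2$ with three each); the paper's version is shorter once the cycle formalism is in place and doesn't require checking compatibility of trivializations at each prime.
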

\begin{proof}
Let $\gamma$ be a path. 
By definition of $I_\gamma$, the minimal prime ideals of $R$ in the support of $I_\gamma$ are exactly $\fP_{(\omega,a)}$ for $(\omega,a)\in \Sigma_\gamma$.
Hence, for any path $\gamma$ we have $Z_d(I_\gamma)=\sum_{(\omega,a)\in \Sigma_\gamma}\fP_{(\omega,a)}$.%

The conclusion now follows from the additivity of cycles in short exact sequences (\cite[Lemma 2.2.7]{EG-perspective}), the fact that ${M}_{\kappa}$ is the cokernel of the injective map $\iota^0_\kappa$ and noting that we have an exact sequence
\[
0\ra \Bigg(\bigoplus_{\gamma,\ell(\gamma)=3}{I}_\gamma\Bigg)^0\ra \bigoplus_{\gamma,\ell(\gamma)=3}{I}_\gamma\ra \oplus_{\gamma\in F}{I}_\gamma\ra0
\]
with $F\defeq \Big\{((\eps_2,1),(0,0),(\eps_1,1)), ((\eps_1,1),(0,0),(\eps_2,1)), ((\eps_2,1),(0,0),(0,1))\Big\}$.
\end{proof}

\subsubsection{Analysis for $T^{(j)}=\{w_0 t_{\un{1}}, \alpha\beta t_{\un{1}}, \beta\alpha t_{\un{1}}\}$}
\label{sec:ideal3types}
We now have $\# T=3$ and write $\tau_{w_0}, \tau_{\alpha\beta}, \tau_{\beta\alpha}$ for the elements of $T$ distinguished by the conditions $\tld{w}^*(\rhobar,\tau_{w_0})_j=\alpha\beta\alpha t_{\un{1}}$, $\tld{w}^*(\rhobar,\tau_{\alpha\beta})_j=\alpha\beta t_{\un{1}}$ and $\tld{w}^*(\rhobar,\tau_{\beta\alpha})_j=\beta\alpha t_{\un{1}}$ respectively.

Note that  the canonical surjections $\tld{S}/\big( \tld{I}_{\tau_{\ast},\nabla_{\infty}}\cap \tld{I}_{\tau_{w_0},\nabla_{\infty}}\big)\onto \tld{S}/\tld{I}_{\tau_{w_0},\nabla_{\infty}}$ induce canonical maps
\begin{equation}
\label{eq:map:tor1}
\Tor_1^{{S}}(\F,(\tld{S}/\big( \tld{I}_{\tau_{\ast},\nabla_{\infty}}\cap \tld{I}_{\tau_{w_0},\nabla_{\infty}}\big)) \otimes \F)
\rightarrow
\Tor_1^{{S}}(\F,(\tld{S}/ \tld{I}_{\tau_{w_0},\nabla_{\infty}}) \otimes \F)
\end{equation}
for $\ast\in\{\alpha\beta,\beta\alpha\}$.
\begin{lemma}\label{lemma:ideal3types}
The union of the images of $\Tor^{{S}}_1(\F,(\tld{S}/\big( \tld{I}_{\tau_{\alpha\beta},\nabla_{\infty}}\cap \tld{I}_{\tau_{w_0},\nabla_{\infty}}\big)) \otimes \F)$ and $\Tor_1^{{S}}(\F,(\tld{S}/\big( \tld{I}_{\tau_{w_0},\nabla_{\infty}}\cap \tld{I}_{\tau_{\beta\alpha},\nabla_{\infty}}\big)) \otimes \F)$ in $\Tor^{{S}}_1(\F,(\tld{S}/\tld{I}_{\tau_{w_0},\nabla_{\infty}}) \otimes \F)$ is spanning. 
\end{lemma}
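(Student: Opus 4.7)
For an ideal $J$ of the local ring $(S,\fm_S)$ one has a natural identification $\Tor_1^S(\F,S/J)\cong J/\fm_S J$, functorial in inclusions of ideals. Setting $J_*\defeq$ image in $S$ of $\tld{I}_{\tau_*,\nabla_\infty}\cap \tld{I}_{\tau_{w_0},\nabla_\infty}$ (for $*\in\{\alpha\beta,\beta\alpha\}$) and $I\defeq$ image of $\tld{I}_{\tau_{w_0},\nabla_\infty}$ in $S$, the conclusion of the lemma is equivalent to $J_{\alpha\beta}+J_{\beta\alpha}+\fm_S I=I$, which by Nakayama's lemma reduces to the ideal equality
\[
J_{\alpha\beta}+J_{\beta\alpha}=I\quad\text{in }S.
\]

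The strategy is to split this into two pieces. First is the \emph{local lifting}: for each $*\in\{\alpha\beta,\beta\alpha\}$,
\[
J_*+\fm_S I \;=\; (I\cap I_{\tau_*,\nabla_\infty})+\fm_S I.
\]
The non-trivial inclusion will follow from the distortion Lemma \ref{lem:distortion} applied with $k=2$ and $(I_1,I_2)=(\tld{I}_{\tau_{w_0},\nabla_\infty},\tld{I}_{\tau_*,\nabla_\infty})$. The coprimality $p\in I_1+I_2$ required by hypothesis (2) of that lemma is Proposition \ref{prop:ideals_coprimes}(2), applicable because $\tld{w}^*(\rhobar,\tau_{w_0})_j=\alpha\beta\alpha t_{\un 1}$ and $\tld{w}^*(\rhobar,\tau_*)_j\in\{\alpha\beta t_{\un 1},\beta\alpha t_{\un 1}\}$ both have length $\geq 2$ and form no excluded pair. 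For hypothesis (1), given $\bar g\in I\cap I_{\tau_*,\nabla_\infty}$ one chooses lifts $f\in\tld{I}_{\tau_{w_0},\nabla_\infty}$ and $h\in\tld{I}_{\tau_*,\nabla_\infty}$ with $\bar f=\bar h=\bar g$; the required $\eps\in\fm_{\tld S}$ with $f+p\eps\in \tld{I}_{\tau_*,\nabla_\infty}$ is $\eps=(h-f)/p$, and its membership in $\fm_{\tld S}$ can be read off the explicit generators of Tables \ref{Table_Ideals}, \ref{Table_Ideals_1}: matching generators across the two types agree up to their $O(p^{N-4})$ monodromy tails, hence (since $N\geq 6$) differ by an element of $p\fm_{\tld S}$.

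Granting the local lifting, the identity $J_{\alpha\beta}+J_{\beta\alpha}=I$ is equivalent to the purely \emph{special-fiber decomposition}
\[
I \;=\; (I\cap I_{\tau_{\alpha\beta},\nabla_\infty})+(I\cap I_{\tau_{\beta\alpha},\nabla_\infty})\quad\text{in }S.
\]
This is the main obstacle. The plan is to verify it by direct inspection of the generators of $\tld{I}^{(j)}_{\tau,\nabla_\infty}$ for $\tau\in T$ recorded in Tables \ref{Table_Ideals}, \ref{Table_Ideals_1}: the hypothesis $\#T^{(j')}=1$ for $j'\neq j$ (item \ref{it:prop:T:3} of \S \ref{subsec:idealrelations}) ensures that the three ideals coincide at every embedding $j'\neq j$, reducing the verification to a finite case-by-case check within the single factor $\tld S^{(j)}$, of the same flavor as the coprimality statement Proposition \ref{prop:ideals_coprimes}(2) and the ideal-intersection calculations collected in Appendix \ref{appendix:IC}.
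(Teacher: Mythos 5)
Your overall strategy — reformulate the spanning condition as an ideal sum $J_{\alpha\beta}+J_{\beta\alpha}+\fm_S I = I$, split the verification embedding-by-embedding, use the distortion lemma for $j'\neq j$, and reduce the special embedding $j$ to a table inspection — matches the paper's approach. But there are two concrete problems.

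First, the "local lifting" step as you state it, $J_* + \fm_S I = (I\cap I_{\tau_*,\nabla_\infty}) + \fm_S I$ for arbitrary $\bar g\in I\cap I_{\tau_*,\nabla_\infty}$, is not provable by the distortion lemma and is likely false. The distortion lemma requires the discrepancy $\eps=(h-f)/p$ to lie in $\fm_{\tld S}$, but for a general $\bar g$ this can fail. Concretely, Proposition~\ref{prop:ideals_coprimes} gives $p=u+v$ with $u\in\tld I_{\tau_{w_0},\nabla_\infty}$ and $v\in\tld I_{\tau_*,\nabla_\infty}$; for $\bar g=\bar u=-\bar v$ the lifts $f=u$ and $h=-v$ satisfy $(h-f)/p=-1\notin\fm_{\tld S}$, so hypothesis~(1) of Lemma~\ref{lem:distortion} is not verifiable. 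The paper avoids this by applying the distortion lemma only to the explicit generators $f$ of $I^{(j')}_{\tau_{w_0},\nabla_{\textnormal{alg}}}$ for $j'\neq j$; for those the matching of generators across types modulo $p\fm_{S^{(j')}}$ (using tameness of $\rhobar$ and $b^{(j')}_{\tau_*}\equiv b^{(j')}_{\tau_{w_0}}\bmod p$) makes $\eps\in\fm$ checkable. Since $\Tor_1^S(\F,S/I)\cong\bigoplus_{j'}I^{(j')}/(\fm_{S^{(j')}}I^{(j')})$ is generated by classes of these generators, that is all that is needed; your stronger identity (a) is both unnecessary and not what the distortion lemma delivers.

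Second, you skipped the reduction from $\nabla_\infty$ to $\nabla_{\textnormal{alg}}$ via the diagram \eqref{eq:T=1}. This is not cosmetic: the warning after Proposition~\ref{prop:big_diagram} records that the $O(p^{N-4})$ tails in $\tld I^{(j)}_{\tau,\nabla_\infty}$ involve variables in \emph{all} embeddings and that the structure constants depend on the whole $f$-tuple, so the $\nabla_\infty$ ideals do \emph{not} literally coincide at $j'\neq j$ even when $\#T^{(j')}=1$. It is only after replacing $\nabla_\infty$ by $\nabla_{\textnormal{alg}}$ (licit by \eqref{eq:T=1}) and reducing mod $p$ that the ideals decompose as sums over embeddings and the per-embedding splitting of $\Tor_1$ that your argument and the paper's both rely on becomes available. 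So your phrase "the hypothesis $\#T^{(j')}=1$ for $j'\neq j$ ensures that the three ideals coincide at every embedding $j'\neq j$" is false for $\nabla_\infty$; it is true only after passing to $\nabla_{\textnormal{alg}}$ mod $p$, which is exactly the first step of the paper's proof.
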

\begin{proof}
By the first row of diagram \eqref{eq:T=1} it is enough to prove the statement with $\nabla_\infty$ replaced by $\nabla_{\textnormal{alg}}$.
We have
\[
\Tor_1^{{S}}(\F,(\tld{S}/\tld{I}_{\tau_{w_0},\nabla_{\infty}}) \otimes \F)\cong \Tor^{{S}}_1(\F,{S}/{I}_{\tau_{w_0},\nabla_{\textnormal{alg}}})\cong  \bigoplus_j {I}^{(j)}_{\tau_{w_0},\nabla_{\textnormal{alg}}}/({I}^{(j)}_{\tau_{w_0},\nabla_{\textnormal{alg}}}\cdot \fm_{{S}^{(j)}}).
\]
where the first equality is from the second row of diagram \eqref{eq:T=1}, and the second equality follows from the fact that ${I}_{\tau_{w_0},\nabla_{\textnormal{alg}}}=\sum_{j'\in\cJ}{I}^{(j)}_{\tau_{w_0},\nabla_{\textnormal{alg}}}{S}$ and ${I}^{(j')}_{\tau_{w_0},\nabla_{\textnormal{alg}}}\subseteq {S}^{(j')}$ for all $j'\in\cJ$.
Let $j'\neq j$ and $\ast\in\{\alpha\beta,\beta\alpha\}$.
We thus have $\tld{w}(\rhobar,\tau_{\ast})_{j'}=\tld{w}(\rhobar,\tau_{w_0})_{j'}$ and the elements listed in column 4, row $\tld{w}(\rhobar,\tau_{w_0})_{j'}$ of Table \ref{Table_Ideals} are independent of $\tau$ modulo $p\cdot \fm_{S^{(j')}}$, as ,  $b^{(j')}_{\tau_{\ast}} \equiv b^{(j')}_{\tau_{w_0}}$ modulo $p$ and $\rhobar$ is tame (so that $\fm_{S^{(j')}}\supseteq(c^{(j')}_{ik}, 1\leq i,k\leq 3, d^{(j')}_{21}, d^{(j')}_{31}, d^{(j')}_{32})$).

Hence applying repeatedly Lemma \ref{lem:distortion} with $k=2$, $I_1=\tld{I}_{\tau_{\ast},\nabla_{\infty}}$, $I_2=\tld{I}_{\tau_{w_0},\nabla_{\infty}}$ and $f$ an element of $\tld{I}^{(j')}_{\tau_{w_0},\nabla_{\textnormal{alg}}}$ with $j'\neq j$, we conclude that the image of the map \eqref{eq:map:tor1} contains $\sum_{j'\neq j} {I}^{(j')}_{\tau_{w_0},\nabla_{\textnormal{alg}}}/({I}^{(j')}_{\tau_{w_0},\nabla_{\textnormal{alg}}}\cdot \fm_{{S}^{(j')}})$.
Thus the desired statement will follow once we prove that the union of the images of 
\[
\Tor_1^{{S}}(\F,(\tld{S}^{(j)}/\big( \tld{I}^{(j)}_{\tau_{\ast},\nabla_{\textnormal{alg}}}\cap \tld{I}^{(j)}_{\tau_{w_0},\nabla_{\textnormal{alg}}}\big)) \otimes \F)
\rightarrow
\Tor_1^{{S}}(\F,(\tld{S}/ \tld{I}^{(j)}_{\tau_{w_0},\nabla_{\infty}}) \otimes \F)
\]
for $\ast\in\{\alpha\beta,\beta\alpha\}$ is spanning in $\Tor_1(\F,(\tld{S}^{(j)}/ \tld{I}^{(j)}_{\tau_{w_0},\nabla_{\infty}}) \otimes \F)$.
This follows from an inspection of Table \ref{Table_Ideals_mod_p} (see \S \ref{subsubsec:Tor:cmpt} for details).
\end{proof}

\begin{lemma}\label{lemma:p3type}
We have $p \in  \tld{I}^{(j)}_{\tau_{\alpha\beta}}\cap \tld{I}^{(j)}_{\tau_{w_0}} +  \tld{I}^{(j)}_{\tau_{\beta\alpha}}\cap \tld{I}^{(j)}_{\tau_{w_0}} +  \tld{I}^{(j)}_{\tau_{\alpha\beta}}\cap \tld{I}^{(j)}_{\tau_{\beta\alpha}}$.

In particular, $p \in  \tld{I}_{\tau_{\alpha\beta},\nabla_{\infty}}\cap \tld{I}_{\tau_{w_0},\nabla_{\infty}} +  \tld{I}_{\tau_{\beta\alpha},\nabla_{\infty}}\cap \tld{I}_{\tau_{w_0},\nabla_{\infty}} +  \tld{I}_{\tau_{\alpha\beta},\nabla_{\infty}}\cap \tld{I}_{\tau_{\beta\alpha},\nabla_{\infty}}$. 
\end{lemma}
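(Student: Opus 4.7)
Write $J\defeq \tld{I}^{(j)}_{\tau_{\alpha\beta}}\cap \tld{I}^{(j)}_{\tau_{w_0}} +  \tld{I}^{(j)}_{\tau_{\beta\alpha}}\cap \tld{I}^{(j)}_{\tau_{w_0}} +  \tld{I}^{(j)}_{\tau_{\alpha\beta}}\cap \tld{I}^{(j)}_{\tau_{\beta\alpha}}$ for the right-hand side of the claim. The three elements $\tld{w}^*(\rhobar,\tau_{w_0})_j=\alpha\beta\alpha t_{\un{1}}$, $\tld{w}^*(\rhobar,\tau_{\alpha\beta})_j=\alpha\beta t_{\un{1}}$, $\tld{w}^*(\rhobar,\tau_{\beta\alpha})_j=\beta\alpha t_{\un{1}}$ all have length at least $2$ and no pair equals the exceptional pair $\{\alpha\beta\alpha\gamma t_{\un{1}},\alpha\beta\alpha t_{\un{1}}\}$, so Proposition \ref{prop:ideals_coprimes}(2) applies to each of the three pairs and yields $p\in \tld{I}^{(j)}_{\tau_\ast}+\tld{I}^{(j)}_{\tau_{\ast'}}$ for every distinct pair $\{\ast,\ast'\}\subset\{\alpha\beta,\beta\alpha,w_0\}$.

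Combining this with the trivial inclusion $\tld{I}^{(j)}_{\tau_\ast}\cdot \tld{I}^{(j)}_{\tau_{\ast'}}\subseteq \tld{I}^{(j)}_{\tau_\ast}\cap \tld{I}^{(j)}_{\tau_{\ast'}}\subseteq J$, one sees that in the quotient $R\defeq \tld{S}^{(j)}/J$ the images of the three ideals multiply pairwise to zero. Writing $p=u+v$ with $u\in \tld{I}^{(j)}_{\tau_\ast}$, $v\in \tld{I}^{(j)}_{\tau_{\ast'}}$ and multiplying by any $x\in \tld{I}^{(j)}_{\tau_{\ast''}}$ (for the third type $\ast''$) gives $p\cdot x=0$ in $R$. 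Taking $x$ to be the two summands of any decomposition of $p$ yields $p^2=0$ in $R$, i.e.~$p^2\in J$.

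The main obstacle is upgrading $p^2\in J$ to $p\in J$, since $J$ is not $a$ $priori$ $p$-saturated and the abstract pairwise-coprimality argument above gives only $p^2$. For this I would rely on the explicit generators of $\tld{I}^{(j)}_{\tau_{w_0}}$, $\tld{I}^{(j)}_{\tau_{\alpha\beta}}$, $\tld{I}^{(j)}_{\tau_{\beta\alpha}}$ listed in Tables \ref{Table_Ideals} and \ref{Table_Ideals_1}. The length-three type $\alpha\beta\alpha t_{\un{1}}$ contributes a generator of $\tld{I}^{(j)}_{\tau_{w_0}}$ that, after using the other relations, produces an expression $p\cdot(\text{unit})$; meanwhile the length-two ideals $\tld{I}^{(j)}_{\tau_{\alpha\beta}}$, $\tld{I}^{(j)}_{\tau_{\beta\alpha}}$ each pin down several of the $c_{ik}$ as scalar multiples of $p\cdot d^{*}_{ii}$ plus cross-terms whose products lie in $\tld{I}^{(j)}_{\tau_{\alpha\beta}}\cdot \tld{I}^{(j)}_{\tau_{\beta\alpha}}\subseteq J$. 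Substituting these identifications into the chosen generator of $\tld{I}^{(j)}_{\tau_{w_0}}$, in the spirit of the sample computation at the end of the proof of Proposition \ref{prop:ideals_coprimes}(2) (which produced $2p^2 d^{*}_{11}d^{*}_{22}d^{*}_{33}$ out of a single such generator) but now tracking which cross-terms land in each of the three pairwise intersections composing $J$, should produce an explicit element of $J$ equal to $p$ times a unit; dividing by this unit then gives $p\in J$. This calculation is conceptually the same as, but bookkeeping-heavier than, the one for $\#T=2$, and is naturally deferred to the appendix on ideal intersections.

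The ``in particular'' statement is immediate, since $\tld{I}^{(j)}_{\tau_\ast}\subseteq \tld{I}_{\tau_\ast,\nabla_\infty}$ by definition of the monodromy saturation, so the already-established membership in $J$ transfers directly to the corresponding sum of pairwise intersections of $\tld{I}_{\tau_\ast,\nabla_\infty}$.
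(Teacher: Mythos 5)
You correctly note that the pairwise coprimalities of Proposition \ref{prop:ideals_coprimes}(2) yield $p\cdot\tld{I}^{(j)}_{\tau_{\ast}}\subseteq J$ for each of the three types and hence $p^2\in J$, and you correctly flag that this is not enough, since $J$ is a \emph{sum} of $p$-saturated ideals and need not itself be $p$-saturated. But that observation is therefore a dead end, and the part of your argument that would actually establish $p\in J$ is only a plan: you write that substituting table relations into a generator of $\tld{I}^{(j)}_{\tau_{w_0}}$ ``should produce an explicit element of $J$ equal to $p$ times a unit,'' without exhibiting it. That identity \emph{is} the content of the lemma; without it you have not proved anything beyond the (insufficient) $p^2\in J$.

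For comparison, the paper's proof consists precisely of that explicit identity, and it is short. From Tables \ref{Table_Ideals}, \ref{Table_Ideals_1} one reads off $c_{22}\in \tld{I}^{(j)}_{\tau_{\beta\alpha}}\cap\tld{I}^{(j)}_{\tau_{w_0}}$ and $c_{11}d_{33}^*-d_{31}c_{13}-pd_{11}^*d_{33}^*\in\tld{I}^{(j)}_{\tau_{\alpha\beta}}\cap\tld{I}^{(j)}_{\tau_{w_0}}$. Combining the first and last generator in row $\alpha\beta t_{\un{1}}$ gives $c_{11}d_{22}^*d_{33}^*-d_{31}c_{13}d_{22}^*-c_{22}d_{11}^*d_{33}^*\in\tld{I}^{(j)}_{\tau_{\alpha\beta}}$; since $c_{22}$ and $c_{11}d_{33}^*-d_{31}c_{13}$ both lie in $\tld{I}^{(j)}_{\tau_{\beta\alpha}}$, this same element also lies in $\tld{I}^{(j)}_{\tau_{\beta\alpha}}$, hence in $\tld{I}^{(j)}_{\tau_{\alpha\beta}}\cap\tld{I}^{(j)}_{\tau_{\beta\alpha}}$. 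Then
\[
(c_{11}d_{22}^*d_{33}^*-d_{31}c_{13}d_{22}^*-c_{22}d_{11}^*d_{33}^*)+d_{11}^*d_{33}^*\,c_{22}-d_{22}^*(c_{11}d_{33}^*-d_{31}c_{13}-pd_{11}^*d_{33}^*)=pd_{11}^*d_{22}^*d_{33}^*,
\]
so $p$ times the unit $d_{11}^*d_{22}^*d_{33}^*$ lies in $J$. Your plan points in the right direction (and is correct in spirit about which tables to consult), but as written it defers exactly the computation the lemma requires, so the gap is genuine. The ``in particular'' step you do dispatch correctly, but it only carries once the first assertion is actually proved.
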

\begin{proof}
From Table \ref{Table_Ideals} we have $c_{22}\in \tld{I}^{(j)}_{\tau_{\beta\alpha}}\cap \tld{I}^{(j)}_{\tau_{w_0}}$ and $c_{11}d_{33}^*-d_{31}c_{13}-pd_{11}^*d_{33}^*\in \tld{I}^{(j)}_{\tau_{\alpha\beta}}\cap \tld{I}^{(j)}_{\tau_{w_0}}$.
Moreover, using the first and last equation in row $(\alpha\beta t_{\un{1}}, \tld{I}_{\tau}^{(j)})$ of Table \ref{Table_Ideals} we obtain $c_{11}d_{22}^*d_{33}^*-d_{31}c_{13}d_{22}^*-c_{22}d_{11}^*d_{33}^*\in \tld{I}^{(j)}_{\tau_{\alpha\beta}}$ and hence  $c_{11}d_{22}^*d_{33}^*-d_{31}c_{13}d_{22}^*-c_{22}d_{11}^*d_{33}^*\in \tld{I}^{(j)}_{\tau_{\alpha\beta}}\cap \tld{I}^{(j)}_{\tau_{\beta\alpha}}$ since $c_{22}, c_{11}d_{33}^*-d_{31}c_{13}$ are both in $\tld{I}^{(j)}_{\tau_{\beta\alpha}}$.
Thus, 
\begin{align*}
pd_{11}^*d_{22}^*d_{33}^*&=
(c_{11}d_{22}^*d_{33}^*-d_{31}c_{13}d_{22}^*-c_{22}d_{11}^*d_{33}^*)+c_{22}d_{11}^*d_{33}^*-d_{22}^*(c_{11}d_{33}^*-d_{31}c_{13}-pd_{11}^*d_{33}^*)\\
&\in  \tld{I}^{(j)}_{\tau_{\alpha\beta}}\cap \tld{I}^{(j)}_{\tau_{\beta\alpha}} +  \tld{I}^{(j)}_{\tau_{\beta\alpha}}\cap \tld{I}^{(j)}_{\tau_{w_0}}+ \tld{I}^{(j)}_{\tau_{\alpha\beta}}\cap \tld{I}^{(j)}_{\tau_{w_0}}
\end{align*}
which implies the statement as $d_{11}^*d_{22}^*d_{33}^*$ is a unit in $\tld{S}^{(j)}$.
\end{proof}
\begin{rmk}
Even though $p\notin \tld{I}_{\tau_{\alpha\beta}} +  \tld{I}_{\tau_{\beta\alpha}}$, the situation changes after imposing monodromy, i.e. ~we do have $p \in  \tld{I}_{\tau_{\alpha\beta},\nabla_{\infty}} +  \tld{I}_{\tau_{\beta\alpha},\nabla_{\infty}}$.
To see this, for $\tau\in\{\tau_{\alpha\beta},\tau_{\beta\alpha}\}$ write $\textnormal{Mon}_{\tau,1}$ for the first element in row $\textnormal{Mon}_{\tau}$ of Table \ref{Table_Ideals}.
Then $\textnormal{Mon}_{\tau_{\beta\alpha},1}-\textnormal{Mon}_{\tau_{\alpha\beta},1}\in \tld{I}^{(j)}_{\tau_{\alpha\beta},\nabla_{\infty}} +  \tld{I}^{(j)}_{\tau_{\beta\alpha},\nabla_{\infty}}$ and, by inspection of the equations $\textnormal{Mon}_{\tau_{\alpha\beta},1},\textnormal{Mon}_{\tau_{\beta\alpha},1}$ and using that $(a,b,c)\defeq (b_{\tau,1},b_{\tau,2},b_{\tau,3}) \mod{p}$ is independent of $\tau\in\{\tau_{\alpha\beta},\tau_{\beta\alpha}\}$, we deduce
\begin{align*}
p\left((b-c)d_{11}^*d_{22}^*+pd_{11}^*d_{22}^*+xd_{21}c_{21}+yc_{11}d_{22}^*+O(p^{N-5})
\right)
\end{align*}
for some $x,y\in\Z_p$.
The factor in parenthesis is a unit in $\tld{S}^{(j)}$ since $p,d_{21}c_{21}, c_{11}\in \fm_{\tld{S}^{(j)}}$, $N>5$ and $b-c\not\equiv 0$ modulo $p$ and hence $p \in \tld{I}^{(j)}_{\tau_{\alpha\beta},\nabla_{\infty}} +  \tld{I}^{(j)}_{\tau_{\beta\alpha},\nabla_{\infty}}$.
\end{rmk}

\subsubsection{Analysis for $T^{(j)}=\{t_{w_0(\eta)}, t_{w_0(\eta)}\alpha ,  t_{w_0(\eta)}\beta\}$}
\label{sec:ideal3types:prime}
The analysis is similar to that of \S \ref{sec:ideal3types}, replacing $w_0t_{\un{1}}, \alpha\beta t_{\un{1}}$ and $\beta\alpha t_{\un{1}}$ by $t_{w_0(\eta)}$, $t_{w_0(\eta)}\alpha$ and $t_{w_0(\eta)}\beta$ respectively.
A proof analogous to that of Lemma \ref{lemma:ideal3types}, using now Table \ref{Table_Ideals_mod_p_F2} instead of Table \ref{Table_Ideals_mod_p}, gives us the following result:
\begin{lemma}\label{lemma:ideal3types:prime}
The union of the images of $\Tor^{{S}}_1(\F,(\tld{S}/\big( \tld{I}_{\tau_{t_{w_0(\eta)}\alpha },\nabla_{\infty}}\cap \tld{I}_{\tau_{t_{w_0(\eta)}},\nabla_{\infty}}\big)) \otimes \F)$ and of $\Tor_1^{{S}}(\F,(\tld{S}/\big( \tld{I}_{\tau_{t_{w_0(\eta)}},\nabla_{\infty}}\cap \tld{I}_{\tau_{t_{w_0(\eta)}\beta },\nabla_{\infty}}\big)) \otimes \F)$ in $\Tor^{{S}}_1(\F,(\tld{S}/\tld{I}_{\tau_{t_{w_0(\eta)}},\nabla_{\infty}}) \otimes \F)$ is spanning. 
\end{lemma}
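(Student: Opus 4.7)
The plan is to imitate, step by step, the proof of Lemma \ref{lemma:ideal3types}, substituting the three relevant types, with $t_{w_0(\eta)}$ playing the role of $w_0t_{\un{1}}$ and $t_{w_0(\eta)}\alpha$, $t_{w_0(\eta)}\beta$ playing the roles of $\alpha\beta t_{\un{1}}$, $\beta\alpha t_{\un{1}}$. Set $\tau_0\defeq \tau_{t_{w_0(\eta)}}$ for brevity. First I would invoke the first row of diagram \eqref{eq:T=1} to replace $\nabla_\infty$ by $\nabla_{\textnormal{alg}}$ throughout, since the surjection $S/I_{\tau,\nabla_{\textnormal{alg}}}\onto S/I_{\tau,\nabla_\infty}$ is an isomorphism when $\#T=1$ under the hypotheses in force. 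Using that $I_{\tau,\nabla_{\textnormal{alg}}}=\sum_{j'\in\cJ} I^{(j')}_{\tau,\nabla_{\textnormal{alg}}} S$ with $I^{(j')}_{\tau,\nabla_{\textnormal{alg}}}\subseteq S^{(j')}$, we obtain a canonical decomposition
\[
\Tor_1^S(\F,(\tld{S}/\tld{I}_{\tau_0,\nabla_\infty})\otimes\F)\ \cong\ \bigoplus_{j'\in\cJ} I^{(j')}_{\tau_0,\nabla_{\textnormal{alg}}}/(I^{(j')}_{\tau_0,\nabla_{\textnormal{alg}}}\cdot \fm_{S^{(j')}})
\]
and it suffices to hit each summand.

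For the summands at embeddings $j'\neq j$, I would repeat the distortion argument from Lemma \ref{lemma:ideal3types}: since $T^{(j')}$ is a singleton and $\tld{w}(\rhobar,\tau_*)_{j'}=\tld{w}(\rhobar,\tau_0)_{j'}$ for $\ast\in\{t_{w_0(\eta)}\alpha,t_{w_0(\eta)}\beta\}$, the generators in row $\tld{w}(\rhobar,\tau_0)_{j'}$ of Table \ref{Table_Ideals} only differ in structure constants $b^{(j')}_{\tau,\bullet}\in\Zp$ which, because $\rhobar$ is tame and $\tau_*,\tau_0$ have the same inertial image, coincide modulo $p$, hence modulo $p\cdot\fm_{S^{(j')}}$. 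Applying Lemma \ref{lem:distortion} with $k=2$, $I_1=\tld{I}_{\tau_*,\nabla_\infty}$, $I_2=\tld{I}_{\tau_0,\nabla_\infty}$ and $f$ running through generators of $\tld{I}^{(j')}_{\tau_0,\nabla_{\textnormal{alg}}}$ (noting the $p$-coprimality hypothesis for $k=2$ is automatic by Proposition \ref{prop:ideals_coprimes} since $\tld{w}(\rhobar,\tau_*)_j\neq \tld{w}(\rhobar,\tau_0)_j$), I would obtain that the image of the map \eqref{eq:map:tor1} already contains $\bigoplus_{j'\neq j}I^{(j')}_{\tau_0,\nabla_{\textnormal{alg}}}/(I^{(j')}_{\tau_0,\nabla_{\textnormal{alg}}}\cdot\fm_{S^{(j')}})$.

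It then remains to show that, at the distinguished embedding $j$, the union of the images of
\[
\Tor_1^{S^{(j)}}\!\bigl(\F,\bigl(\tld{S}^{(j)}/(\tld{I}^{(j)}_{\tau_*,\nabla_{\textnormal{alg}}}\cap \tld{I}^{(j)}_{\tau_0,\nabla_{\textnormal{alg}}})\bigr)\otimes\F\bigr)\ \longrightarrow\ \Tor_1^{S^{(j)}}\!\bigl(\F,(\tld{S}^{(j)}/\tld{I}^{(j)}_{\tau_0,\nabla_{\textnormal{alg}}})\otimes\F\bigr)
\]
for $\ast\in\{t_{w_0(\eta)}\alpha,t_{w_0(\eta)}\beta\}$ is spanning. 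This reduces to a direct combinatorial inspection of Table \ref{Table_Ideals_mod_p_F2}: one writes down a minimal generating set of $\tld{I}^{(j)}_{\tau_0,\nabla_{\textnormal{alg}}}\otimes\F$ from the $t_{w_0(\eta)}$-row of the table and checks that every generator lifts to an element of either $\tld{I}^{(j)}_{\tau_{t_{w_0(\eta)}\alpha},\nabla_{\textnormal{alg}}}\cap \tld{I}^{(j)}_{\tau_0,\nabla_{\textnormal{alg}}}$ or $\tld{I}^{(j)}_{\tau_0,\nabla_{\textnormal{alg}}}\cap \tld{I}^{(j)}_{\tau_{t_{w_0(\eta)}\beta},\nabla_{\textnormal{alg}}}$ modulo $\fm_{S^{(j)}}$. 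This is the one bookkeeping step where the present case differs from Lemma \ref{lemma:ideal3types}; the main obstacle, and essentially the only nontrivial content of the lemma, is to verify that the entries of Table \ref{Table_Ideals_mod_p_F2} are set up so that this pointwise lifting works, exactly as in the analogous verification against Table \ref{Table_Ideals_mod_p} (which is deferred to \S \ref{subsubsec:Tor:cmpt}). I would accordingly defer the case-by-case check to the same appendix.
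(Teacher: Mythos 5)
Your proposal is correct and follows the same route the paper takes for Lemma \ref{lemma:ideal3types:prime}: reduce to $\nabla_{\textnormal{alg}}$ via diagram \eqref{eq:T=1}, decompose $\Tor_1$ over embeddings, dispose of the $j'\neq j$ summands by the distortion argument, and reduce the remaining check at embedding $j$ to an inspection of Table \ref{Table_Ideals_mod_p_F2}, with the bookkeeping deferred to \S \ref{subsubsec:Tor:cmpt} exactly as the paper does. One small slip in your phrasing for $j'\neq j$: $\tau_*$ and $\tau_0$ are distinct types and do not have the same inertial image --- the structure constants $b^{(j')}_{\tau}$ agree modulo $p$ because the $j'$-components of $\tld{w}(\rhobar,\cdot)$ coincide, with tameness of $\rhobar$ used only to place the relevant variables $c^{(j')}_{ik},d^{(j')}_{21},d^{(j')}_{31},d^{(j')}_{32}$ inside $\fm_{S^{(j')}}$.
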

(Note that, for consistency with \ref{sec:ideal3types}, we should replace $\eta$ by $(1,0,-1)$ in the statement of Lemma \ref{lemma:ideal3types:prime}; we used $\eta$ instead for ease of notation.)

\subsubsection{Analysis for $T^{(j)}=\{w_0 t_{\un{1}}, \alpha\beta t_{\un{1}}, \beta\alpha t_{\un{1}}, t_{\un{1}}\}$}
\label{subsub:4types}
We now have $\# T=4$ and write $\tau_{w_0}, \tau_{\alpha\beta}, \tau_{\beta\alpha}, \tau_{\Id}$ for the elements of $T$ distinguished by the conditions $\tld{w}^*(\rhobar,\tau_{w_0})_j=w_0t_{\un{1}}$, $\tld{w}^*(\rhobar,\tau_{\alpha\beta})_j=\alpha\beta t_{\un{1}}$, $\tld{w}^*(\rhobar,\tau_{\beta\alpha})_j=\beta\alpha t_{\un{1}}$ and $\tld{w}^*(\rhobar,\tau_{\beta\alpha})_j= t_{\un{1}}$ respectively.

Recall that $\tld{I}_{T,\nabla_\infty}=\tld{I}_{\tau_{\alpha\beta},\nabla_{\infty}}\cap \tld{I}_{\tau_{w_0},\nabla_{\infty}}\cap \tld{I}_{\tau_{\beta\alpha},\nabla_{\infty}}\cap \tld{I}_{\tau_{\Id},\nabla_{\infty}}$.
Define ${I}^{(j)}_{\Lambda}$ to be the intersection ${\fP}_{(0,1)}^{(j)}\cap{\fP}_{(0,0)}^{(j)}\cap{\fP}^{(j)}_{(\eps_1,0)}\cap{\fP}^{(j)}_{(\eps_1,0)}$ in ${S}^{(j)}/{I}^{(j)}_{\tau_{\Id},\nabla_{\mathrm{alg}}}$, and let $\tld{I}_\Lambda$ be the pullback in $\tld{S}$ of the ideal $\sum_{j'\in\cJ,j'\neq j}{I}^{(j')}_{\tau_{\Id},\nabla_{\mathrm{alg}}}{S}+{I}^{(j)}_{\Lambda}{S}\subseteq {S}$ via $\tld{S}/\tld{I}_{T,\nabla_\infty}\onto{S}/{I}_{\tau_{\Id},\nabla_{\infty}}\cong  {S}/{I}_{\tau_{\Id},\nabla_{\mathrm{alg}}}$, where the isomorphism follows from the bottom line of  \eqref{eq:T=1}.
(Note that in the setting of this subsection the ideal ${I}^{(j')}_{\tau,\nabla_{\mathrm{alg}}}$ is independent of $\tau\in T$ when $j'\neq j$, in particular in the definition of $\tld{I}_\Lambda$ we can replace $\sum_{j'\in\cJ,j'\neq j}{I}^{(j')}_{\tau_{\Id},\nabla_{\mathrm{alg}}}{S}$ with $\sum_{j'\in\cJ,j'\neq j}{I}^{(j')}_{\tau,\nabla_{\mathrm{alg}}}{S}$ for any choice of $\tau\in T$.)
The proof of the following Lemma is analogous to that of Lemma \ref{lem:intsc} (see also \S \ref{appendix:multi:sp:fiber}).

\begin{lemma}
\label{lem:broom:other}
Under the current assumption we have
\[
I_\Lambda^{(j)}=(c_{22},\,c_{33},\,d_{32}c_{23}, c_{13}d_{31}-c_{11}d_{33}^*).
\]
\end{lemma}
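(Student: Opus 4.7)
The plan is to proceed by direct computation, mirroring the strategy used for Lemma \ref{lem:intsc} (whose proof is deferred to Appendix \ref{appendix:multi:sp:fiber}). Write $R^{(j)}\defeq S^{(j)}/I^{(j)}_{\tau_{\Id},\nabla_{\mathrm{alg}}}$. The four primes $\fP^{(j)}_{(0,1)},\fP^{(j)}_{(0,0)},\fP^{(j)}_{(\eps_1,0)},\fP^{(j)}_{(\eps_2,0)}$ whose intersection defines $I^{(j)}_\Lambda$ are all the ``$t_{\un{1}}$-primes'' listed in Table \ref{Table:components}, so the first step is simply to read off explicit generators for each of them inside $R^{(j)}$, together with a presentation of $R^{(j)}$ itself (from row $t_{\un{1}}$ of Table \ref{Table_Ideals_1}, after omitting the $\cO(p^{N-4})$-tails and the variable $e_{11}$).

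Next I would verify the containment $(c_{22},c_{33},d_{32}c_{23},c_{13}d_{31}-c_{11}d^*_{33})\subseteq I^{(j)}_\Lambda$ generator by generator. Two of them are already handled by Lemma \ref{lem:intsc}\eqref{eq:lem:intsc:1}, which shows $c_{22},c_{33}\in \fP^{(j)}_{(0,1)}\cap\fP^{(j)}_{(0,0)}$; it only remains to check by inspection of Table \ref{Table:components} that $c_{22},c_{33}\in\fP^{(j)}_{(\eps_1,0)}\cap\fP^{(j)}_{(\eps_2,0)}$, which is immediate since these primes contain all the $c_{ii}$ relevant to the diagonal. For $d_{32}c_{23}$ and $c_{13}d_{31}-c_{11}d^*_{33}$ one argues the same way: each factor vanishes in the two ``lower alcove'' primes not already contained in the $c_{ii}$'s, and the combinations are chosen precisely to cut out the remaining primes.

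The substantive step, and the main obstacle, is the reverse inclusion. The strategy is to start from the explicit description of the two-fold intersection $\fP^{(j)}_{(0,1)}\cap\fP^{(j)}_{(0,0)}$ given in Lemma \ref{lem:intsc}\eqref{eq:lem:intsc:1} and then impose the further conditions of membership in $\fP^{(j)}_{(\eps_1,0)}\cap\fP^{(j)}_{(\eps_2,0)}$. Among the ten generators listed in \eqref{eq:lem:intsc:1}, the elements $c_{23},c_{21},c_{31},c_{32}$ each become redundant modulo $(c_{22},c_{33},d_{32}c_{23},c_{13}d_{31}-c_{11}d^*_{33})$ together with the determinant and elementary-divisor relations encoded in $I^{(j)}_{\tau_{\Id},\nabla_{\mathrm{alg}}}$ and the monodromy relations, and similarly the three ``$2\times2$ minor'' generators $c_{13}d_{32}-c_{12}d^*_{33}$, $c_{13}d_{31}-c_{11}d^*_{33}$, $c_{12}d_{31}-c_{11}d_{32}$ can be reduced. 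The last mixed generator $(b-c)c_{12}d_{21}-(a-c)c_{11}d^*_{22}$ is the one that forces the use of the extra primes $\fP^{(j)}_{(\eps_1,0)}$ and $\fP^{(j)}_{(\eps_2,0)}$, which contain $c_{12}$ and eliminate it from the ideal. Performing these reductions carefully — using a monomial order adapted to the shape of Table \ref{Table_Ideals_1} — yields the claimed minimal set of generators.

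The hard part will be the combinatorial bookkeeping in the third step, because $R^{(j)}$ is neither regular nor a complete intersection, so one cannot invoke a purely formal argument; instead one must track how each generator of the two-fold intersection reduces modulo the two extra primes, using the nontrivial identities in $I^{(j)}_{\tau_{\Id},\nabla_{\mathrm{alg}}}$. This is the same type of computation carried out in \S \ref{appendix:multi:sp:fiber} for Lemma \ref{lem:intsc}, and I would relegate it there as well.
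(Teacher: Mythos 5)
Your opening statement is right: the correct strategy is to mirror the proof of Lemma~\ref{lem:intsc}. But the method you then describe for the reverse inclusion is not that strategy, and as written it has a genuine gap. You propose to compute $I^{(j)}_\Lambda = \bigl(\fP^{(j)}_{(0,1)}\cap\fP^{(j)}_{(0,0)}\bigr)\cap\bigl(\fP^{(j)}_{(\eps_1,0)}\cap\fP^{(j)}_{(\eps_2,0)}\bigr)$ by starting from the generators of the two-fold intersection in Lemma~\ref{lem:intsc}\eqref{eq:lem:intsc:1} and then ``imposing membership'' in the other two primes, reducing each generator modulo $J\defeq(c_{22},c_{33},d_{32}c_{23},c_{13}d_{31}-c_{11}d_{33}^*)$ and the relations in $S^{(j)}/I^{(j)}_{\tau_{\Id},\nabla_{\textnormal{alg}}}$. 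This is not how intersections of ideals work. An element of $A\cap B$ is an $S^{(j)}$-linear combination $\sum f_i a_i$ of generators $a_i$ of $A$ subject to the extra (nonlinear, syzygy-type) constraint that the combination lies in $B$; it is not controlled by which individual generators $a_i$, reduced in whatever way, lie in $B$ or in $J$. Showing that $c_{23},c_{21},c_{31},c_{32}$ ``become redundant modulo $J$'' (several of which are already eliminated variables in the convention of \S\ref{subsub:surgery} in any case) gives no control on the elements of $A\cap B$, so the reverse inclusion is not established.

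The proof the paper actually uses --- both for Lemma~\ref{lem:intsc} and for the dual statement Lemma~\ref{lem:broom:other:prime}, whose proof appears in \S\ref{appendix:multi:sp:fiber} --- sidesteps direct intersection computation entirely. One verifies the easy inclusion $J\subseteq I^{(j)}_\Lambda$ (by inspection of Table~\ref{Table:components}, exactly as you begin), obtaining a surjection $S^{(j)}/J \onto S^{(j)}/I^{(j)}_\Lambda$; one then computes the quotient $S^{(j)}/J$ explicitly using the presentation of $I^{(j)}_{\tau_{\Id},\nabla_{\textnormal{alg}}}$ in Table~\ref{Table_Ideals_1}, and checks directly that it is reduced, equidimensional of dimension $6$, and has exactly four minimal primes (one for each of the primes being intersected). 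Since $S^{(j)}/I^{(j)}_\Lambda$ has the same dimension and the same number of components, \cite[Lemma~3.6.11]{LLLM2} forces the surjection to be an isomorphism, which is the reverse inclusion. This structural argument replaces the Gr\"obner-type bookkeeping you anticipate with a single reducedness/equidimensionality check, and it is the part of the argument you would need to supply to have a proof.
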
 

We have $\tld{I}^{(j)}_{\tau_{\beta\alpha},\nabla_\infty}\subseteq{I}^{(j)}_{\tau_{\beta\alpha},\nabla_{\textnormal{alg}}}\subseteq {\fP}_{(0,1)}^{(j)}\cap{\fP}_{(0,0)}^{(j)}\cap{\fP}^{(j)}_{(\eps_1,0)}$ by Proposition \ref{prop:special_fiber} and \cite[Table 3]{LLLM2}, and similarly $\tld{I}^{(j)}_{\tau_{\alpha\beta},\nabla_\infty}\subseteq{I}^{(j)}_{\tau_{\alpha\beta},\nabla_{\textnormal{alg}}}\subseteq {\fP}_{(0,1)}^{(j)}\cap{\fP}_{(0,0)}^{(j)}\cap{\fP}^{(j)}_{(\eps_2,0)}$.
Hence we have canonical surjections $\tld{S}/\big( \tld{I}_{\tau_{\alpha\beta},\nabla_{\infty}}\cap \tld{I}_{\tau_{w_0},\nabla_{\infty}},p\big)\cap (\tld{I}_{\tau_{w_0},\nabla_{\infty}}\cap \tld{I}_{\tau_{\beta\alpha},\nabla_{\infty}},p)\onto \tld{S}/\tld{I}_\Lambda$ and $\tld{S}/(\tld{I}_{\tau_{\Id},\nabla_{\infty}},p)\onto \tld{S}/\tld{I}_\Lambda$ which induce canonical maps
\begin{align*}
\label{eq:map:tor:1-4T}
\Tor^{{S}}_1(\F,\tld{S}/\big( \tld{I}_{\tau_{\alpha\beta},\nabla_{\infty}}\cap \tld{I}_{\tau_{w_0},\nabla_{\infty}},p\big)\cap (\tld{I}_{\tau_{w_0},\nabla_{\infty}}\cap \tld{I}_{\tau_{\beta\alpha},\nabla_{\infty}},p))
&\rightarrow
\Tor_1^{{S}}(\F,(\tld{S}/ \tld{I}_\Lambda) \otimes \F)
\end{align*}
and
\begin{align*}
\Tor^{{S}}_1(\F,(\tld{S}/I_{\tau_{\Id},\nabla_{\infty}}) \otimes \F)
&\rightarrow
\Tor_1^{{S}}(\F,(\tld{S}/ \tld{I}_\Lambda) \otimes \F).
\end{align*}

\begin{lemma}\label{lemma:ideal4types}
The union of the images of $\Tor^{{S}}_1(\F,(\tld{S}/\tld{I}_{\tau_{\Id},\nabla_{\infty}}) \otimes \F)$ and $\Tor^{{S}}_1(\F,\tld{S}/\big( \tld{I}_{\tau_{\alpha\beta},\nabla_{\infty}}\cap \tld{I}_{\tau_{w_0},\nabla_{\infty}},p\big)\cap (\tld{I}_{\tau_{w_0},\nabla_{\infty}}\cap \tld{I}_{\tau_{\beta\alpha},\nabla_{\infty}},p))$ in $\Tor_1^{{S}}(\F,(\tld{S}/\tld{I}_\Lambda)\otimes\F)$ is spanning. 
\end{lemma}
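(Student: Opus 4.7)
My plan is to adapt, almost verbatim, the proof strategy of Lemma~\ref{lemma:ideal3types}. First, I rewrite the target as
\[
\Tor_1^{S}(\F,(\tld{S}/\tld{I}_\Lambda)\otimes \F)\ \cong\ \tld{I}_\Lambda/(\fm_{\tld{S}}\cdot\tld{I}_\Lambda),
\]
which by the definition of $\tld{I}_\Lambda$ decomposes as a direct sum of contributions from $j'\neq j$ (where the local ideal is $I^{(j')}_{\tau_{\Id},\nabla_{\mathrm{alg}}}$, independent of $\tau\in T$ by assumption~\ref{it:prop:T:3}) and from the distinguished embedding $j$ (where the local ideal is $I_\Lambda^{(j)}$). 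For each $j'\neq j$ and each generator $g$ of $I^{(j')}_{\tau_{\Id},\nabla_{\mathrm{alg}}}$, the four ideals $\tld{I}^{(j')}_{\tau,\nabla_\infty}$ coincide modulo $p\cdot\fm_{\tld{S}^{(j')}}$ (using that the structure constants $b_{\tau,i}$ agree mod $p$ and $\rhobar$ is tame), so applying Lemma~\ref{lem:distortion} with $k=2$ and $I_1,I_2$ chosen among the four $\tld{I}_{\tau,\nabla_\infty}$ shows that the images of pairwise intersections already hit every such $g$. This reduces the lemma to showing that the two images span the $j$-th summand $I_\Lambda^{(j)}/(\fm_{S^{(j)}}\cdot I_\Lambda^{(j)})$.

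By Lemma~\ref{lem:broom:other}, this $\F$-vector space is (at most) four-dimensional, spanned by the classes of $c_{22}$, $c_{33}$, $d_{32}c_{23}$ and $c_{13}d_{31}-c_{11}d_{33}^*$. For the first summand (from $\tld{I}_{\tau_{\Id},\nabla_\infty}$), I would use the isomorphism \eqref{eq:T=1} to pass to $I^{(j)}_{\tau_{\Id},\nabla_{\mathrm{alg}}}$ and read off from row $t_{\un{1}}$ of Table~\ref{Table_Ideals_1} that $c_{22}$ and $c_{33}$ already occur among its generators; their classes therefore lie in the first image. For the second summand, the key is that each of the ideals $I^{(j)}_{\tau_{\alpha\beta},\nabla_{\mathrm{alg}}}\cap I^{(j)}_{\tau_{w_0},\nabla_{\mathrm{alg}}}$ and $I^{(j)}_{\tau_{w_0},\nabla_{\mathrm{alg}}}\cap I^{(j)}_{\tau_{\beta\alpha},\nabla_{\mathrm{alg}}}$ picks up the ``extra'' component $\fP_{(\eps_2,0)}^{(j)}$, resp.\ $\fP_{(\eps_1,0)}^{(j)}$, beyond the shared $\fP_{(0,0)}^{(j)}\cap\fP_{(0,1)}^{(j)}$; an explicit intersection computation in the style of Lemma~\ref{lem:intsc} and of \S\ref{appendix:multi:sp:fiber} should produce $d_{32}c_{23}$ and $c_{13}d_{31}-c_{11}d_{33}^*$ as generators of the respective intersections (up to elements in $\fm\cdot I_\Lambda^{(j)}$). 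Combined with the $c_{22},c_{33}$ contribution from the first source, this gives spanning.

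The main obstacle is the routine but tedious computation of explicit generators of the relevant ideal intersections, together with verification that no scalar appearing in the process collapses (this is where the $N$-deepness/$4$-genericity of $\mu$ is used, so the structure constants $(a,b,c)\in\F^3$ satisfy enough genericity for the critical coefficients to be nonzero). As in the parallel treatment of Lemma~\ref{lemma:ideal3types} and \S\ref{sec:ideal3types:prime}, I would carry out these calculations in Appendix~\ref{appendix:IC}, recording them in a table analogous to Table~\ref{Table_Ideals_mod_p} so that the proof proper reduces to a bookkeeping check.
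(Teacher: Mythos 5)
Your overall framework — reducing to the embedding $j$, computing $\Tor_1$ groups via generators of the relevant ideals, and targeting the four generators $c_{22}$, $c_{33}$, $d_{32}c_{23}$, $c_{13}d_{31}-c_{11}d_{33}^*$ of $I^{(j)}_\Lambda$ from Lemma~\ref{lem:broom:other} — matches the paper's strategy. However, there is a concrete error in your bookkeeping that would make the argument fail.

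You claim that ``$c_{22}$ and $c_{33}$ already occur among the generators'' of $I^{(j)}_{\tau_{\Id},\nabla_{\mathrm{alg}}}$, citing row $t_{\un{1}}$ of Table~\ref{Table_Ideals_1}. Reading that row, the generators are $e_{11}$, $c_{21}+pd_{21}$, $c_{31}+pd_{31}$, $c_{32}+pd_{32}$, a list of determinantal relations such as $c_{13}c_{22}-c_{12}c_{23}$ and $pc_{12}d_{21}+c_{11}c_{22}$ (which reduce mod $p$ to elements in $\fm\cdot S^{(j)}$, not to $c_{22}$), and the three monodromy equations. Neither $c_{22}$ nor $c_{33}$ appears as a standalone generator there; perhaps you were thinking of the row $\alpha\beta\alpha t_{\un{1}}$, where $c_{22}$ and $c_{33}+pd_{33}^*$ do occur. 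So the image of the $\tau_{\Id}$-map does \emph{not} contain the classes of $c_{22}$ or $c_{33}$ outright; it only contains the mod-$p$ reductions of the monodromy equations, namely $(a-c-1)(c_{23}d_{32}-c_{33}d_{22}^*)-(a-b-1)c_{22}d_{33}^*$ and $(a-b)(c_{13}d_{31}-c_{11}d_{33}^*)-(b-c-1)c_{33}d_{11}^*$. Your attribution for the second summand is also off: the pairwise-intersection ideal (last row of Table~\ref{Table_Ideals_mod_p}) contains $c_{33}$ and $c_{23}d_{32}-c_{22}d_{33}^*$, not $d_{32}c_{23}$ and $c_{13}d_{31}-c_{11}d_{33}^*$ as you predicted.

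The missing idea is that neither image alone contains $c_{22}$ or $c_{13}d_{31}-c_{11}d_{33}^*$; these are only obtained by combining the two images and invoking genericity. Concretely, once you know $c_{33}$ and $c_{23}d_{32}-c_{22}d_{33}^*$ are in the span (from the intersection ideal), the monodromy equation $(a-b)(c_{13}d_{31}-c_{11}d_{33}^*)-(b-c-1)c_{33}d_{11}^*$ gives $(a-b)(c_{13}d_{31}-c_{11}d_{33}^*)$ in the span, and since $a\neq b$ this yields $c_{13}d_{31}-c_{11}d_{33}^*$; similarly, subtracting $(a-c-1)(c_{23}d_{32}-c_{22}d_{33}^*)$ and $(a-c-1)c_{33}d_{22}^*$ from the other monodromy equation isolates $(b-c)c_{22}d_{33}^*$, and $b\neq c$ gives $c_{22}$. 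Your proposal treats the two images as independently producing the four target elements, which hides exactly this cancellation and would stall at the step where you try to exhibit $c_{22}$ in either image on its own.
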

\begin{proof}
The proof is very similar to that of  Lemma \ref{lemma:ideal3types}.
As in \emph{loc.~cit}.~it suffices to prove the statement with $\nabla_{\infty}$ replaced by $\nabla_{\textnormal{alg}}$ everywhere. 
We have
\[
\Tor^{{S}}_1(\F,\tld{S}/\tld{I}_{\tau_{\Id},\nabla_{\textnormal{alg}}}) \otimes \F)= \bigoplus_j {I}^{(j)}_{\tau_{\Id},\nabla_{\textnormal{alg}}}/({I}^{(j)}_{\tau_{\Id},\nabla_{\textnormal{alg}}}\cdot \fm_{{S}^{(j)}})
\]
since ${I}_{\tau_{\Id},\nabla_{\textnormal{alg}}}=\sum_{j'\in\cJ}{I}^{(j')}_{\tau_{\Id},\nabla_{\textnormal{alg}}}{S}$ with ${I}^{(j')}_{\tau_{\Id},\nabla_{\textnormal{alg}}}\subseteq {S}^{(j')}$ for all $j'\in\cJ$, and we have a similar decompositions
\[
\Tor^{{S}}_1(\F,(\tld{S}/\tld{I}_{\Lambda})\otimes\F)={I}^{(j)}_{\Lambda}/({I}^{(j)}_{\tau_{\Id},\nabla_{\textnormal{alg}}}\cdot \fm_{{S}^{(j)}})\oplus \bigoplus_{j'\neq j} {I}^{(j')}_{\tau_{\Id},\nabla_{\textnormal{alg}}}/({I}^{(j')}_{\tau_{\Id},\nabla_{\textnormal{alg}}}\cdot \fm_{{S}^{(j')}})
\]
and for $\Tor_1^{{S}}(\F,S/\big( \tld{I}_{\tau_{\alpha\beta},\nabla_{\textnormal{alg}}}\cap \tld{I}_{\tau_{w_0},\nabla_{\infty}},p\big)\cap (\tld{I}_{\tau_{w_0},\nabla_{\textnormal{alg}}}\cap \tld{I}_{\tau_{\beta\alpha},\nabla_{\infty}},p))$.

Hence the desired statement will follow once we prove that union of the images of 
\[
\Tor_1^{{S}^{(j)}}(\F,{S}^{(j)}/\big( \tld{I}^{(j)}_{\tau_{\alpha\beta},\nabla_{\textnormal{alg}}}\cap \tld{I}^{(j)}_{\tau_{w_0},\nabla_{\infty}},p\big)\cap (\tld{I}^{(j)}_{\tau_{w_0},\nabla_{\textnormal{alg}}}\cap \tld{I}^{(j)}_{\tau_{\beta\alpha},\nabla_{\infty}},p))
\rightarrow
\Tor^{{S}^{(j)}}_1(\F,{S}^{(j)}/ {I}^{(j)}_{\Lambda})
\]
and
\[
\Tor_1^{{S}^{(j)}}(\F,{S}^{(j)}/\big( \tld{I}^{(j)}_{\tau_{\Id},\nabla_{\textnormal{alg}}},p))
\rightarrow
\Tor^{{S}^{(j)}}_1(\F,{S}^{(j)}/ {I}^{(j)}_{\Lambda})
\]
 is spanning.
This follows from the last row in Table \ref{Table_Ideals_mod_p}  (see \S \ref{subsubsec:Tor:cmpt} for details).
\end{proof}

\subsubsection{Analysis for $T^{(j)}=\{t_{w_0(\eta)}, t_{w_0(\eta)}\alpha ,  t_{w_0(\eta)}\beta, t_{w_0(\eta)}w_0\}$}
\label{subsub:4types:prime}
The analysis is similar to that of \S \ref{subsub:4types}, replacing $
w_0 t_{\un{1}}$, $\alpha\beta t_{\un{1}}$, $\beta\alpha t_{\un{1}}$ and $t_{\un{1}}$ by $t_{w_0(\eta)}$, $t_{w_0(\eta)}\alpha$, $t_{w_0(\eta)}\beta$ and $t_{w_0(\eta)}w_0$ respectively, and  ${\fP}^{(j)}_{(0,0)}\cap{\fP}^{(j)}_{(0,1)}$ by ${\fP}^{(j)}_{(\eps_1+\eps_2,1)}$.
The following Lemma is proved in \S \ref{appendix:multi:sp:fiber}.

\begin{lemma}
\label{lem:broom:other:prime}
In the current assumptions we have:
\[
{I}^{(j)}_{\Lambda}=(c_{22},c_{33},c_{32},e_{33},e_{23}, d_{31},(a-b)c_{12}c_{23}-(a-c)e_{13}d_{22}^*,d_{21}d_{32},c_{23}d_{32},d_{21}c_{12}).
\]
\end{lemma}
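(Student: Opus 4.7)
The approach is to compute ${I}^{(j)}_{\Lambda}$ as the explicit intersection ${\fP}^{(j)}_{(\eps_1+\eps_2,1)}\cap{\fP}^{(j)}_{(\eps_1,0)}\cap{\fP}^{(j)}_{(\eps_2,0)}$ inside ${S}^{(j)}/{I}^{(j)}_{\tau_{t_{w_0(\eta)}w_0},\nabla_{\mathrm{alg}}}$ by direct calculation, using the presentation of $S^{(j)}$ from Table \ref{Table_Ideals_2} together with the explicit generators of the primes ${\fP}^{(j)}_{(\omega,a)}$ recorded in Table \ref{Table:components:F2}. This mirrors Lemma \ref{lem:intsc} and Lemma \ref{lem:broom:other}; the details would naturally be deferred to Appendix \S \ref{appendix:multi:sp:fiber}.

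First I would verify the easy inclusion: each of the ten listed generators lies in each of the three primes. This is a routine check once the primes are written out, noting that the coefficients $(a-b),(a-c)$ appearing in the mixed generator $(a-b)c_{12}c_{23}-(a-c)e_{13}d_{22}^*$ are units by the $N$-genericity of $\rhobar$, so this element is well-defined up to unit rescaling and visibly vanishes in each of the three residue domains.

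For the reverse inclusion, write $J$ for the ideal generated by the ten listed elements. It suffices to show that $S^{(j)}/J$ is reduced and that its minimal primes are exactly ${\fP}^{(j)}_{(\eps_1+\eps_2,1)}$, ${\fP}^{(j)}_{(\eps_1,0)}$, ${\fP}^{(j)}_{(\eps_2,0)}$. Setting the six linear generators $c_{22},c_{33},c_{32},e_{33},e_{23},d_{31}$ to zero eliminates the corresponding variables (via the determinant relation from Table \ref{Table_Ideals_2}, which reduces substantially in this quotient) and leaves a ring in which only the monomial relations $d_{21}d_{32},\,c_{23}d_{32},\,d_{21}c_{12}$ and the mixed relation need to be analyzed. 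A case analysis according to which of $\{c_{12},c_{23},d_{21},d_{32}\}$ are forced to vanish in each irreducible component separates $\Spec(S^{(j)}/J)$ into exactly three branches, which I would identify with the three expected primes via Table \ref{Table:components:F2}.

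The main obstacle will be the bookkeeping in this final case analysis, specifically the interaction of the mixed relation with the three monomial relations. In each branch two of the four variables $c_{12},c_{23},d_{21},d_{32}$ vanish, and the mixed relation then either forces $e_{13}=0$ or collapses to a linear relation between $c_{12}c_{23}$ and $e_{13}d_{22}^*$; matching these outcomes to the three primes in Table \ref{Table:components:F2} is where the genericity hypothesis $a\not\equiv b,c\pmod p$ is essential, ensuring that $(a-b),(a-c)\in\F^\times$ so the three branches separate cleanly and no spurious components appear.
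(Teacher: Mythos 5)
Your approach matches the paper's proof in Appendix \S\ref{appendix:multi:sp:fiber}: verify the easy inclusion of the ten listed generators into each of the three primes from Table \ref{Table:components:F2}, then show $S^{(j)}/J$ (where $J$ is the ideal they generate) is reduced, equidimensional of dimension six, with exactly three minimal primes, and conclude by \cite[Lemma 3.6.11]{LLLM2}. One simplification you should make: the determinant condition plays no role, and since $(a-c)d_{22}^*$ is a unit the mixed generator eliminates $e_{13}$ globally, so that $S^{(j)}/J\cong\F[\![c_{12},d_{21},d_{32},c_{13},c_{23},x_{11}^*,x_{22}^*,x_{33}^*]\!]/(d_{21}d_{32},c_{23}d_{32},d_{21}c_{12})$ and the three branches of the squarefree monomial ideal are immediate, without any branch-by-branch case analysis on $e_{13}$.
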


A proof analogous to that of Lemma \ref{lemma:ideal4types}, using now Table \ref{Table_Ideals_mod_p_F2} instead of Table \ref{Table_Ideals_mod_p}, yields the following:
\begin{lemma}\label{lemma:ideal4types:prime}
The union of the image of $\Tor^{{S}}_1(\F,(\tld{S}/\tld{I}_{\tau_{t_{w_0(\eta)}w_0},\nabla_{\infty}}) \otimes \F)$ and of $\Tor^{{S}}_1(\F,\tld{S}/\big( \tld{I}_{\tau_{t_{w_0(\eta)}\alpha },\nabla_{\infty}}\cap \tld{I}_{\tau_{t_{w_0(\eta)}},\nabla_{\infty}},p\big)\cap (\tld{I}_{\tau_{t_{w_0(\eta)}},\nabla_{\infty}}\cap \tld{I}_{\tau_{t_{w_0(\eta)}\beta },\nabla_{\infty}},p))$ in $\Tor_1^{{S}}(\F,(\tld{S}/\tld{I}_\Lambda)\otimes\F)$ is spanning. 
\end{lemma}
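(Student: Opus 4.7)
The plan is to mirror the proof of Lemma \ref{lemma:ideal4types} verbatim, replacing the combinatorial data attached to $\{w_0t_{\un{1}},\alpha\beta t_{\un{1}},\beta\alpha t_{\un{1}},t_{\un{1}}\}$ by that of $\{t_{w_0(\eta)}, t_{w_0(\eta)}\alpha , t_{w_0(\eta)}\beta, t_{w_0(\eta)}w_0\}$. In particular, the role of Lemma \ref{lem:broom:other} is played by Lemma \ref{lem:broom:other:prime}, and Table \ref{Table_Ideals_mod_p} is replaced throughout by Table \ref{Table_Ideals_mod_p_F2}.

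First, using the commutative diagram \eqref{eq:T=1}, it suffices to prove the analogous statement with $\nabla_\infty$ replaced everywhere by $\nabla_{\mathrm{alg}}$, since by Remark \ref{rmk:iso:nose} the bottom row of \eqref{eq:T=1} is an isomorphism, and the ideals $\tld{I}_{\tau,\nabla_{\mathrm{alg}}}^{(j')}$ only depend on $\tld{w}^*(\rhobar,\tau)_{j'}$. Next, since $\tld{w}^*(\rhobar,\tau)_{j'}$ is independent of $\tau\in T$ for $j'\neq j$, the identifications
\[
\Tor_1^{{S}}(\F,({S}/{I}_{\tau_{t_{w_0(\eta)}w_0},\nabla_{\mathrm{alg}}}))\cong \bigoplus_{j'\in\cJ}{I}^{(j')}_{\tau_{t_{w_0(\eta)}w_0},\nabla_{\mathrm{alg}}}/\big({I}^{(j')}_{\tau_{t_{w_0(\eta)}w_0},\nabla_{\mathrm{alg}}}\cdot\fm_{{S}^{(j')}}\big)
\]
and
\[
\Tor_1^{{S}}(\F,{S}/{I}_{\Lambda})\cong {I}^{(j)}_{\Lambda}/\big({I}^{(j)}_{\tau_{t_{w_0(\eta)}w_0},\nabla_{\mathrm{alg}}}\cdot \fm_{{S}^{(j)}}\big)\oplus \bigoplus_{j'\neq j}{I}^{(j')}_{\tau_{t_{w_0(\eta)}w_0},\nabla_{\mathrm{alg}}}/\big({I}^{(j')}_{\tau_{t_{w_0(\eta)}w_0},\nabla_{\mathrm{alg}}}\cdot \fm_{{S}^{(j')}}\big)
\]
show that the contribution from embeddings $j'\neq j$ is automatically covered by the image of $\Tor^{{S}}_1(\F,{S}/\tld{I}_{\tau_{t_{w_0(\eta)}w_0},\nabla_{\infty}})$ alone. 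Hence it suffices to analyze the $j$-th embedding factor, i.e.~to check that the images of the natural maps
\[
{I}^{(j)}_{\tau_{t_{w_0(\eta)}w_0},\nabla_{\mathrm{alg}}}/\big({I}^{(j)}_{\tau_{t_{w_0(\eta)}w_0},\nabla_{\mathrm{alg}}}\cdot \fm_{S^{(j)}}\big) \longrightarrow {I}^{(j)}_{\Lambda}/\big({I}^{(j)}_{\tau_{t_{w_0(\eta)}w_0},\nabla_{\mathrm{alg}}}\cdot \fm_{S^{(j)}}\big)
\]
and
\[
\big(\tld{I}^{(j)}_{\tau_{t_{w_0(\eta)}\alpha},\nabla_{\mathrm{alg}}}\cap \tld{I}^{(j)}_{\tau_{t_{w_0(\eta)}},\nabla_{\mathrm{alg}}},p\big)\cap \big(\tld{I}^{(j)}_{\tau_{t_{w_0(\eta)}},\nabla_{\mathrm{alg}}}\cap \tld{I}^{(j)}_{\tau_{t_{w_0(\eta)}\beta},\nabla_{\mathrm{alg}}},p\big)\longrightarrow {I}^{(j)}_{\Lambda}/\big({I}^{(j)}_{\tau_{t_{w_0(\eta)}w_0},\nabla_{\mathrm{alg}}}\cdot \fm_{S^{(j)}}\big)
\]
jointly span the target, after reducing modulo $p$.

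By Lemma \ref{lem:broom:other:prime}, the target is generated (over $\F$) by the images of the ten generators
\[
c_{22},\ c_{33},\ c_{32},\ e_{33},\ e_{23},\ d_{31},\ (a-b)c_{12}c_{23}-(a-c)e_{13}d_{22}^*,\ d_{21}d_{32},\ c_{23}d_{32},\ d_{21}c_{12}.
\]
The last step is then to inspect Table \ref{Table_Ideals_mod_p_F2} and exhibit each of these generators as an $\F$-linear combination, modulo $\fm_{S^{(j)}}\cdot I^{(j)}_{\tau_{t_{w_0(\eta)}w_0},\nabla_{\mathrm{alg}}}$, of images arising from generators of one of the ideals $\tld{I}^{(j)}_{\tau_{t_{w_0(\eta)}w_0},\nabla_{\mathrm{alg}}}$, $\tld{I}^{(j)}_{\tau_{t_{w_0(\eta)}\alpha},\nabla_{\mathrm{alg}}}\cap \tld{I}^{(j)}_{\tau_{t_{w_0(\eta)}},\nabla_{\mathrm{alg}}}+(p)$, or $\tld{I}^{(j)}_{\tau_{t_{w_0(\eta)}},\nabla_{\mathrm{alg}}}\cap \tld{I}^{(j)}_{\tau_{t_{w_0(\eta)}\beta},\nabla_{\mathrm{alg}}}+(p)$. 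This is routine but tedious — it is the combinatorial heart of the argument and the part where one actually uses the structure of the set $\{t_{w_0(\eta)}, t_{w_0(\eta)}\alpha , t_{w_0(\eta)}\beta, t_{w_0(\eta)}w_0\}$ as opposed to its $t_{\un{1}}$-counterpart. As for the proof of Lemma \ref{lemma:ideal4types}, the details of this bookkeeping are delegated to the appendix \S \ref{subsubsec:Tor:cmpt}. The main obstacle, and the only non-formal point, is precisely this final table inspection; everything else is a direct transcription of the proof of Lemma \ref{lemma:ideal4types}.
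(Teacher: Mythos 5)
Your proposal is correct and follows the paper's own route: reduce to $\nabla_{\mathrm{alg}}$ via diagram \eqref{eq:T=1}, decompose the $\Tor_1$ groups into embedding factors, observe that the $j'\neq j$ contributions are automatically covered, and verify the remaining $j$-th factor by inspecting Table \ref{Table_Ideals_mod_p_F2} against the generators from Lemma \ref{lem:broom:other:prime}, deferring the bookkeeping to \S\ref{subsubsec:Tor:cmpt}. This is exactly what the paper means by ``a proof analogous to that of Lemma \ref{lemma:ideal4types}, using now Table \ref{Table_Ideals_mod_p_F2}''; the only slip is that you write $\tld{I}^{(j')}_{\tau,\nabla_{\mathrm{alg}}}$ where it is really the mod-$p$ reduction $I^{(j')}_{\tau,\nabla_{\mathrm{alg}}}$ that is $\tau$-independent for $j'\neq j$, but this does not affect the argument.
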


\section{Representations of $\mathrm{GL}_3$}
\label{sec:rt}

\subsection{Some tilting modules for $\GL_3$}\label{sec:modrep}

We label some alcoves for $\GL_3$ as in Figure \ref{AlcoveLabels}.
\begin{figure}[htb] 
\centering
\includegraphics[scale=.3]{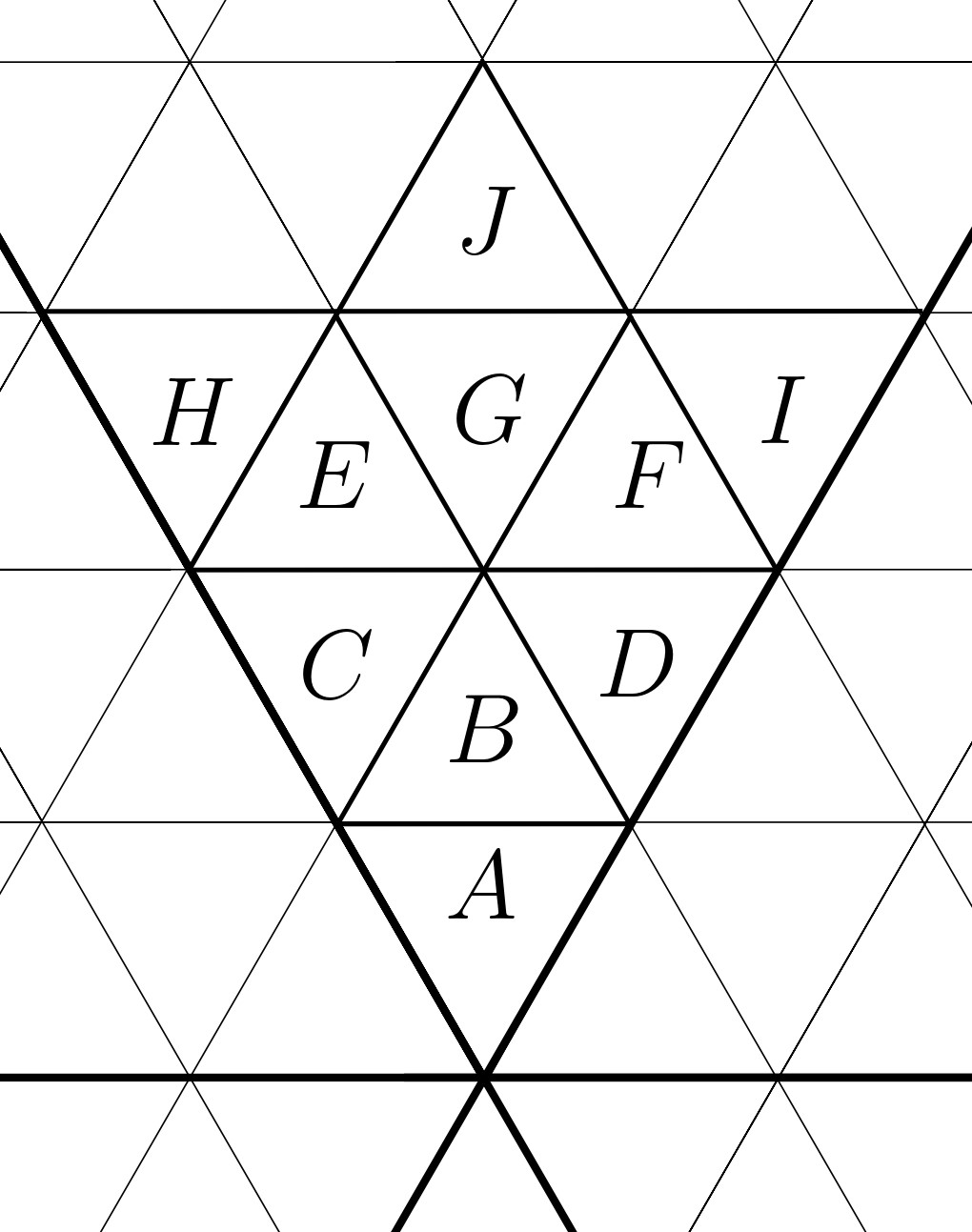}
\caption{Labelling of alcoves for $\GL_3$}
\label{AlcoveLabels}
\end{figure}
For a dominant alcove $X$, let $\tld{w}_X\in W_a$ denote the corresponding element of the affine Weyl group. 
We write $\tld{w}_X = t_{\omega_X}\tld{w}^0_X$ for some $\tld{w}^0_X \in \tld{W}_1^+$ and dominant $\omega_X \in X^*(T)$ which are unique up to $X^0(T)$. 
If $\lambda\in X^*(T)$ is $0$-deep in an alcove, $\lambda_X$ denotes the unique weight in alcove $X$ that is linked to $\lambda$ (cf.~\cite[II.6.5]{RAGS}). 
Then $\lambda_X=\lambda_X^0+p\omega_X$ with $\lambda_X^0\in X_1(T)$ and $\omega_X$ as above. 
Moreover, if $\lambda\in C_0$ then $\lambda_X=\tld{w}_X\cdot \lambda = \tld{w}^0_X\cdot \lambda+p\omega_X$ so that $\lambda_X^0 = \tld{w}^0_X\cdot \lambda$. 

Let $\F$ be a finite extension of $\F_p$. 
For a dominant $\lambda \in X^*(T)$, recall from \cite[\S II.2.4]{RAGS} the simple $\GL_{3/\F}$-representation $L(\lambda)$ with highest weight $\lambda$. 
In this section we write $\Ext^1$ for $\Ext^1_{\GL_{3/\F}}$.
From \cite[Theorem 4.2.3(i)]{yehia} we have:

\begin{prop}\label{prop:ext}
For any $\lambda, \mu \in X^+(T)$ we have $\dim_{\F} \Ext^1(L(\lambda),L(\mu)) \leq 1$.  
\end{prop}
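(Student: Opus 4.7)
The plan is to reduce to the case of $p$-restricted weights in generic position and then do an explicit rank computation using the small rank of $\GL_3$. First I would invoke the linkage principle \cite[II.6.17]{RAGS}, which reduces to the case where $\lambda$ and $\mu$ lie in a single $W_a$-orbit under the $p$-dot action; otherwise $\Ext^1(L(\lambda),L(\mu))=0$ and there is nothing to prove.

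Next I would use Steinberg's tensor product theorem together with the Cline--Parshall--Scott--van der Kallen spectral sequence (or more simply the explicit injection arising from writing $\lambda=\lambda_0+p\lambda_1$, $\mu=\mu_0+p\mu_1$ with $\lambda_0,\mu_0\in X_1(T)$) to break up the Ext. Specifically, one gets a splitting of the computation into a piece where $\lambda_0=\mu_0$ (and one then needs $\Ext^1$ between Frobenius twists, which by the CPSvdK identification reduces to the same question for $\lambda_1,\mu_1$), and a piece where the restricted parts differ (which reduces to $\Hom$'s combined with $\Ext^1(L(\lambda_0),L(\mu_0))$ between $p$-restricted simples). Iterating the reduction on the twist part, the bound for general weights follows from the bound for $p$-restricted weights, provided one carefully tracks that the relevant Hom spaces between simples are at most one-dimensional (which they are: $\Hom(L(\lambda),L(\mu))\neq 0$ forces $\lambda=\mu$).

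For $p$-restricted $\lambda_0,\mu_0$ in the same linkage class, I would then apply translation functors to shove both weights deep into a single $p$-restricted alcove configuration, using that translation to and from walls is exact and preserves $\Ext^1$ between simples whose highest weights remain regular. This is the only place genericity plays a role, and it reduces the computation to weights of the form $\tld{w}\cdot \lambda$ with $\lambda$ deep in $C_0$ and $\tld{w}\in\tld W_1^+$ in a small finite set. At this point I would read off the answer from the structure of the Weyl modules $V(\mu_X)$ for $\GL_3$: each such Weyl module has simple socle and radical layers that are multiplicity-free (the latter being special to rank $2$), so by dualizing and using $\Ext^1(L(\lambda),L(\mu)) \hookrightarrow [V(\mu):L(\lambda)]_{\mathrm{rad}/\mathrm{rad}^2}$ (via the standard pairing of $\Ext^1$ with the first radical layer of the Weyl module, after distinguishing whether $\lambda<\mu$ or $\mu<\lambda$ and using Weyl/dual Weyl modules accordingly), the dimension is at most $1$.

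The main obstacle I anticipate is bookkeeping: one must check that no case in the Steinberg reduction inflates the dimension of $\Ext^1$ (e.g.\ when $\lambda_0=\mu_0$ and there are extensions coming from both the restricted and twisted factors simultaneously --- the spectral sequence has only one surviving term in each filtration degree for dimensional reasons, but ruling out contributions from both factors at once requires knowing that $\Hom(L(\lambda_0),L(\mu_0))=0$ forces the twisted Ext to be the only source, and vice versa). The rank-$2$ accident that makes the final alcove computation one-dimensional (rather than merely finite) is really the content of the theorem; for $\GL_n$ with $n\geq 4$ it is known to fail, so the proof genuinely uses the geometry of the $\GL_3$ affine Weyl group.
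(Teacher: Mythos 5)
The paper does not give a proof of Proposition~\ref{prop:ext}: it is stated as a direct citation of \cite[Theorem 4.2.3(i)]{yehia}. Your sketch is therefore an independent reconstruction, and its overall architecture (linkage, Steinberg reduction via the LHS spectral sequence for $G_1\triangleleft G$, translation functors, then an explicit rank-$2$ computation) is the standard route and the right one. However, as written it has two gaps, one of which you correctly flag but do not close.

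The serious gap is the Steinberg reduction in the case $\lambda_0 = \mu_0$ (and $\lambda_1 \neq \mu_1$). The five-term exact sequence gives an extension of a subspace of $E_2^{1,0} = \Ext^1_G(L(\lambda_1),L(\mu_1))$ by a subspace of $E_2^{0,1} = \Hom_{G}\bigl(L(\lambda_1),\, \Ext^1_{G_1}(L(\lambda_0),L(\lambda_0))^{[-1]}\otimes L(\mu_1)\bigr)$. Even granting by induction that $E_2^{1,0}\leq 1$, you must still rule out $E_2^{1,0}$ and $E_2^{0,1}$ being simultaneously nonzero (or show the differential to $E_2^{2,0}$ is injective on the relevant piece). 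Your suggested resolution, ``$\Hom(L(\lambda_0),L(\mu_0))=0$ forces the twisted $\Ext$ to be the only source,'' does not apply precisely in this case, since $\lambda_0=\mu_0$ makes that $\Hom$ nonzero. What is actually needed is a computation of the $G/G_1$-module $\Ext^1_{G_1}(L(\lambda_0),L(\lambda_0))$ for $p$-restricted $\lambda_0$ and $\GL_3$, and a check that its composition factors are positioned so that the two terms cannot both fire. This is the genuine content of \cite{yehia} and you would need to supply it.

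A secondary issue: the proposition is stated for \emph{all} $\lambda,\mu\in X^+(T)$, with no genericity or regularity hypothesis, yet your translation-functor step pushes the weights into a deep alcove configuration and only preserves the $\Ext^1$ for weights that remain regular throughout. Weights on walls (singular weights) and weights close to $0$ when $p$ is small are not covered by this reduction and must be treated separately, either by a direct argument in the singular blocks or by a careful boundary analysis. The student's ``deep alcove'' endgame also needs to justify the multiplicity-freeness of $\mathrm{rad}\,V(\mu)/\mathrm{rad}^2 V(\mu)$ for all the Weyl modules that occur, not just the generic ones; for $A_2$ this is true but is itself a nontrivial fact requiring either Jantzen's sum formula analysis or reference to the literature.
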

\noindent Proposition \ref{prop:ext} implies that if a nonsplit extension of two simple $\GL_{3/\F}$-modules exists, then it is unique up to isomorphism. 
Recall from \cite[\S II.6.17]{RAGS} that $\Ext^1(L(\lambda),L(\mu)) = 0$ unless $\lambda$ and $\mu$ are linked i.e.~$\mu = \tld{w}\cdot \lambda$ for some $\tld{w} \in W_a$. 

For $\lambda \in X^*_1(T)$, recall from \cite[Theorem 4.2.1]{LLLM2} the $\GL_{3/\F}$-representation $Q_1(\lambda)$ (see also \cite[\S II.11.3 and \S II.11.11]{RAGS}). 
As the following results explain, the module $Q_1(\lambda)$ acts like an injective and projective module in the full subcategory of modules whose Jordan--H\"older factors have $p$-bounded highest weight. 

\begin{prop} \label{prop:Q1inj}
If $\mu \in X^+(T)$ is $p$-bounded, i.e.~$\langle \mu, \alpha^\vee\rangle < 4p$ for all roots $\alpha$, then $\Ext^1(L(\mu),Q_1(\lambda)) = 0$ and $\Ext^1(Q_1(\lambda),L(\mu)) = 0$. 
\end{prop}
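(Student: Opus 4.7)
The plan is to exploit the key property that $Q_1(\lambda)$ is, by construction, $G_1$-injective (being the $G$-lift of the injective hull of $\hat L(\lambda)$ in the category of $G_1T$-modules, cf.~\cite[II.11]{RAGS} and \cite[Theorem 4.2.1]{LLLM2}). Then in the Lyndon--Hochschild--Serre spectral sequence
\[
E_2^{i,j}=\Ext^i_{G/G_1}\!\bigl(\F,\,\Ext^j_{G_1}(L(\mu),Q_1(\lambda))\bigr)\Rightarrow \Ext^{i+j}_G(L(\mu),Q_1(\lambda))
\]
all terms with $j\geq 1$ vanish, and the first claim reduces to showing
\[
\Ext^1_{G/G_1}\!\bigl(\F,\,\Hom_{G_1}(L(\mu),Q_1(\lambda))\bigr)=0.
\]

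Next I would expand via Steinberg's tensor product theorem, writing $L(\mu)\cong L(\mu^0)\otimes L(\mu^1)^{[1]}$ with $\mu^0\in X_1(T)$, $\mu^1\in X^+(T)$. Since $G_1$ acts trivially on $L(\mu^1)^{[1]}$,
\[
\Hom_{G_1}(L(\mu),Q_1(\lambda))\cong L(\mu^1)^{[1],*}\otimes \Hom_{G_1}(L(\mu^0),Q_1(\lambda)).
\]
The $G_1T$-socle of $\hat Q_1(\lambda)$ equals $\hat L(\lambda)$, so this $\Hom$ vanishes unless $\mu^0=\lambda$; when $\mu^0=\lambda$ its $T$-weight set is an explicit bounded subset of $pX^*(T)$ read off from the $G_1T$-Loewy structure of $\hat Q_1(\lambda)$ supplied by \cite[Theorem 4.2.1]{LLLM2}. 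Identifying $G/G_1$ with $G$ via Frobenius untwist then reduces the problem to showing $\Ext^1_G(L(\mu^1),M)=0$ for the explicit $G$-module $M$ so obtained.

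The decisive input is the $p$-boundedness hypothesis: $\langle\mu,\alpha^\vee\rangle<4p$ combined with the restricted bound $\langle\mu^0,\alpha^\vee\rangle<2p$ at the highest root of $\GL_3$ forces $\langle\mu^1,\alpha^\vee\rangle<4$. Together with the $\GL_3$ linkage principle (which, via Proposition~\ref{prop:ext}, controls which $L(\nu)$ can extend $L(\mu^1)$) and the explicit list of $G_1T$-composition factors of $\hat Q_1(\lambda)$, one checks by a direct case analysis that no weight $\nu$ appearing in $M$ is $W_a$-linked to $\mu^1$; hence $\Ext^1_G(L(\mu^1),M)=0$ and the first vanishing follows.

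For the second vanishing, I would apply duality via $Q_1(\lambda)^*\cong Q_1(-w_0\lambda)$ (\cite[II.11]{RAGS}) and $L(\mu)^*\cong L(-w_0\mu)$; since $p$-boundedness is invariant under $\mu\mapsto -w_0\mu$, the isomorphism $\Ext^1(A,B)\cong\Ext^1(B^*,A^*)$ reduces $\Ext^1(Q_1(\lambda),L(\mu))=0$ to the first vanishing. The main obstacle will be the third paragraph: executing the linkage/weight bookkeeping cleanly, which requires the $\GL_3$-specific control of both sides---the explicit $G_1T$-structure of $Q_1(\lambda)$ together with the strong $\Ext^1$-dimension bound from Proposition~\ref{prop:ext}---so that the short list of $W_a$-linked candidates can be ruled out under the given $p$-bound.
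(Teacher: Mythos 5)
Note that the paper's own proof is a one-line deferral: the first vanishing is attributed to the proof of~\cite[(4.8)]{LLLM2} and the second to duality, so your argument is necessarily a reconstruction of the cited input rather than a reproof along the paper's lines. The scaffold you propose---$G_1$-injectivity of $Q_1(\lambda)$, degeneration of the Lyndon--Hochschild--Serre spectral sequence, Steinberg factorisation of $L(\mu)$, a linkage argument driven by the $p$-boundedness hypothesis, and the second claim via duality---is the standard route to such a statement, and your duality step matches what the paper does verbatim.

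There is, however, a misconception in your second paragraph. You assert that, when $\mu^0 = \lambda$, the $T$-weight set of $\Hom_{G_1}(L(\mu^0), Q_1(\lambda))$ is ``an explicit bounded subset of $pX^*(T)$ read off from the $G_1T$-Loewy structure.'' This is not correct: $Q_1(\lambda)|_{G_1}$ is the indecomposable $G_1$-injective hull of $L(\lambda)|_{G_1}$, with simple $G_1$-socle (cf.\ \cite[II.11.3]{RAGS}), so $\Hom_{G_1}(L(\mu^0), Q_1(\lambda))$ is at most one-dimensional and concentrated in a single character of $T/T_1$. You appear to have conflated this Hom-space with the Cartan invariant $[\hat Q_1(\lambda):\hat L(\lambda)]_{G_1T}$, which is what the Loewy layers actually record. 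The error is not fatal---it just means your $M$ is $L(\mu^1)^{[1],*}$ up to a central twist, so the advertised ``direct case analysis'' collapses to the single check that $\mu^1 \in W_a\cdot 0$ together with $\langle\mu^1,\alpha^\vee\rangle<4$ for all $\alpha$ forces $\mu^1=0$, whence $\Ext^1_G(L(0),L(0))=0$---but it shows the linkage bookkeeping you describe has not actually been carried out, and it mislocates an input: the restriction on which simples can extend $L(\mu^1)$ comes from the linkage principle \cite[II.6.17]{RAGS}, not from Proposition~\ref{prop:ext}, which only bounds the dimension of such an $\Ext^1$ by one.
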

\begin{proof}
The second vanishing statement follows from the first by duality and the first vanishing statement follows from the proof of \cite[(4.8)]{LLLM2}. 
\end{proof}

By d\'evissage, Proposition \ref{prop:Q1inj} yields the following. 

\begin{cor}\label{cor:Q1inj}
If $M$ is a (finite length) module with only $p$-bounded Jordan--H\"older factors, then $\Ext^1(M,Q_1(\lambda)) = 0$ and $\Ext^1(Q_1(\lambda),M) = 0$. 
\end{cor}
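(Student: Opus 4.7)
The plan is a straightforward dévissage, inducting on the length of $M$. The base case, where $M$ has length one, is exactly Proposition \ref{prop:Q1inj} since then $M \cong L(\mu)$ for some $p$-bounded dominant weight $\mu$.

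For the inductive step, pick a simple submodule $L(\mu) \hookrightarrow M$ (whose highest weight $\mu$ is automatically $p$-bounded, since $L(\mu) \in \JH(M)$), and form the short exact sequence
\[
0 \to L(\mu) \to M \to M' \to 0,
\]
where $M'$ has strictly smaller length and its Jordan--H\"older factors are a subset of those of $M$, hence all $p$-bounded. Applying $\Hom(-,Q_1(\lambda))$ yields the long exact sequence
\[
\Ext^1(M',Q_1(\lambda)) \to \Ext^1(M,Q_1(\lambda)) \to \Ext^1(L(\mu),Q_1(\lambda)),
\]
whose outer terms vanish by the inductive hypothesis and by Proposition \ref{prop:Q1inj}, respectively, forcing the middle term to vanish. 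Applying $\Hom(Q_1(\lambda),-)$ to the same sequence gives the symmetric long exact sequence
\[
\Ext^1(Q_1(\lambda),L(\mu)) \to \Ext^1(Q_1(\lambda),M) \to \Ext^1(Q_1(\lambda),M'),
\]
whose outer terms again vanish for the same reasons, giving the second assertion.

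There is no real obstacle here: the only nontrivial input is Proposition \ref{prop:Q1inj}, which handles the simple case, and the length filtration of a finite length module reduces the general statement to it. (One could equally well induct on a composition series from the top rather than the bottom; the argument is symmetric in view of the two vanishings in Proposition \ref{prop:Q1inj}.)
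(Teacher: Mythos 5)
Your proof is correct and is exactly the dévissage the paper invokes: the paper simply states "By dévissage, Proposition \ref{prop:Q1inj} yields the following," and your induction on length with the two long exact sequences is the standard way to spell that out. No gap and no divergence from the paper's intended argument.
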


For $\lambda \in X^+(\un{T})$, we define $V(\lambda)$ as in \cite[II.2.13(1)]{RAGS}, and write $W(\lambda)$ for its dual.
By \cite[II.2.14(1)]{RAGS} $V(\lambda)$ (resp.~$W(\lambda)$) has ireducible cosocle (resp.~socle) isomorphic to $L(\lambda)$, and we call it the Weyl module (resp.~dual Weyl module) associated to $\lambda$.
A Weyl (resp.~dual Weyl) filtration on a $\GL_{3/\F}$-module $M$ is an exhaustive filtration whose graded pieces are direct sum of Weyl (resp.~dual Weyl) modules
(cf.~\cite[II.4.19]{RAGS}).

The following Proposition is a reformulation of \cite[Theorem B(b)]{BDM}.
\begin{prop}[\cite{BDM}]
\label{prop:filtrations}
\begin{enumerate}
\item The module $Q_1(\lambda)$ is rigid with Loewy layers given by the rows in Figures \ref{fig:weyl:QB}, \ref{fig:weyl:QA} (a row with alcove labels $X_1, \cdots, X_n$ denotes a direct sum with multiplicity of the simple $\GL_{3/\F}$-representations $L(\lambda_{X_1}),\cdots,L(\lambda_{X_n})$). 
\item The module $Q_1(\lambda)$ has a Weyl (resp.~dual Weyl) filtration such that $\textnormal{gr}(Q_1(\lambda))$ is a direct sum of multiplicity free Weyl modules (resp.~dual Weyl modules) with Jordan--H\"older factors given by the connected components of the graphs in Figures \ref{fig:weyl:QB} (resp.~\ref{fig:weyl:QA}). 
\item The socle and cosocle filtrations on these Weyl modules coincide with the filtrations induced from the Loewy filtration on $Q_1(\lambda)$ (up to shift). 
\item Each edge in Figure \ref{fig:weyl:QB} (resp.~\ref{fig:weyl:QA}) indicates the existence of a subquotient of a Weyl module (resp.~dual Weyl module) in $\textnormal{gr}(Q_1(\lambda))$, which is a nonsplit extension (unique up to isomorphism by Proposition \ref{prop:ext}) of the indicated simple modules. 
\end{enumerate}
\end{prop}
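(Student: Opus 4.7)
The plan is to deduce all four items from the detailed analysis of tilting modules for $\GL_{3/\F}$ carried out in \cite{BDM}. First, I would identify $Q_1(\lambda)$ (up to restriction/extension procedures between $G_1T$- and $G$-modules) with the appropriate indecomposable tilting module $T(\nu)$, via the classical correspondence between $G_1T$-injective hulls and tilting modules with highest weight $\nu = 2(p-1)\rho + w_0\lambda_0$ where $\lambda_0 \in X_1(T)$ is the $p$-restricted part of $\lambda$. This identification automatically endows $Q_1(\lambda)$ with both a Weyl and a dual Weyl filtration, since tilting modules carry them by definition, giving the first half of item (2).

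Next, I would invoke the main results of \cite{BDM} describing the submodule lattice and Loewy structure of tilting modules for $\GL_3$, specialized to the (generic) $\lambda$ in our setting. In this regime the linkage principle together with the alcove picture in Figure \ref{AlcoveLabels} pins down which simples $L(\mu)$ appear, and \cite{BDM} provides their multiplicities as Jordan--H\"older factors. The explicit layering in item (1) is then what \cite{BDM} computes directly. Rigidity follows once the dimensions of the radical layers are known, because Proposition \ref{prop:ext} (dimensions of $\Ext^1$'s between simples are $\leq 1$) forces the radical and socle filtrations to coincide: any simple in layer $i$ of the radical filtration must sit in layer $i$ of the socle filtration as well, since a larger jump would create a nonsplit extension contradicting the $\Ext^1$ bound.

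For items (2)--(4), I would argue that the Weyl filtration on $Q_1(\lambda)$ can be chosen to refine the radical filtration, so that each associated graded piece is a direct sum of Weyl modules whose Jordan--H\"older factors are those appearing in the corresponding connected component of Figure \ref{fig:weyl:QB}. Because each such Weyl summand is multiplicity-free, Proposition \ref{prop:ext} determines its internal submodule structure from its character: each edge in the figure corresponds to a pair of linked weights with a length-one Weyl-group relation, and hence to the (unique up to iso) nonsplit extension of the two simples, settling item (4). Comparing the filtration on the Weyl summand induced by the Loewy filtration of $Q_1(\lambda)$ with its socle filtration then gives item (3); this is essentially forced, because both filtrations exhaust the composition series in the same order dictated by Figure \ref{fig:weyl:QB}. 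The dual Weyl analogue and Figure \ref{fig:weyl:QA} follow by applying the same argument to the contravariant dual of $Q_1(\lambda)$, using that $Q_1(\lambda)$ is self-dual up to a twist by $-w_0$.

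The main obstacle is purely combinatorial bookkeeping: one must verify that the Weyl (resp.~dual Weyl) summands extracted from \cite{BDM} correspond precisely to the connected components of Figure \ref{fig:weyl:QB} (resp.~\ref{fig:weyl:QA}), and that the edges correctly record adjacency in the Bruhat-type partial order governing nonsplit extensions. Once the alcove combinatorics of Figure \ref{AlcoveLabels} is matched against the linkage classes intersecting the support of $Q_1(\lambda)$, this verification is mechanical but case-heavy, running over all pairs of alcoves $X, Y$ appearing in the support.
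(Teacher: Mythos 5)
The paper offers no proof of Proposition~\ref{prop:filtrations}; it is stated as a direct reformulation of \cite[Theorem~B(b)]{BDM}, so the burden is simply to check that the pictures in Figures~\ref{fig:weyl:QB}, \ref{fig:weyl:QA} match BDM's description of the tilting module. Most of your outline is sensible bookkeeping along those lines. However, your derivation of rigidity contains a genuine gap.

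You claim that rigidity of $Q_1(\lambda)$ ``follows once the dimensions of the radical layers are known, because Proposition~\ref{prop:ext} \dots forces the radical and socle filtrations to coincide: any simple in layer $i$ of the radical filtration must sit in layer $i$ of the socle filtration as well, since a larger jump would create a nonsplit extension contradicting the $\Ext^1$ bound.'' This inference is not valid. The bound $\dim_{\F}\Ext^1(L(\mu),L(\nu)) \leq 1$ constrains the \emph{multiplicity} of a given nonsplit extension between two fixed simples, but it places no constraint whatsoever on how the radical and socle series of a finite-length module are shaped relative to each other. Having a simple land in radical layer $i$ and socle layer $j$ with $i+j$ different from the Loewy length minus one does not produce an extra nonsplit extension between a pair of simples; it merely means the module is not rigid. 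There are abundant examples of algebras with all $\Ext^1$ spaces at most one-dimensional whose modules are nonrigid (e.g.~string algebras with modules built from ``pullback plus overlap'' gluings). Rigidity of $Q_1(\lambda)$ (equivalently, of the corresponding indecomposable tilting module) is a nontrivial theorem --- it is the content of Andersen--Kaneda's rigidity result, which the paper cites elsewhere as \cite{andersen-kaneda-rigidity}, and it is folded into \cite[Theorem~B]{BDM}, on which Proposition~\ref{prop:filtrations} is entirely parasitic. You should cite rigidity from \cite{BDM} outright rather than attempt to rederive it from the $\Ext^1$ bound; the rederivation as written does not work.

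The remainder of your sketch --- identifying $Q_1(\lambda)$ with the tilting module $T(2(p-1)\rho + w_0\lambda_0)$, reading off the Weyl and dual Weyl filtrations from BDM, refining the Weyl filtration against the radical filtration, matching the edges of the figures with length-one nonsplit extensions via Proposition~\ref{prop:ext}, and using contravariant self-duality (up to the $-w_0$ twist) to handle the dual Weyl case --- is consistent with the intent of the citation, though the paper itself does not carry out this verification and simply defers to \cite{BDM}. The comment that matching Figures~\ref{fig:weyl:QB}, \ref{fig:weyl:QA} to BDM's layered presentation is ``mechanical but case-heavy'' is accurate; that verification is what underlies the paper's use of the phrase ``reformulation.''
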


\begin{figure}[htb]
\caption{Weyl and dual Weyl filtrations for $Q_1(\lambda)$, case $\lambda\in B$}
\label{fig:weyl:QB}
  \centering
\adjustbox{max width=\textwidth}{
\begin{tabular}{| c | c |}
\hline
\hline
&\\
{\includegraphics[scale=.4]{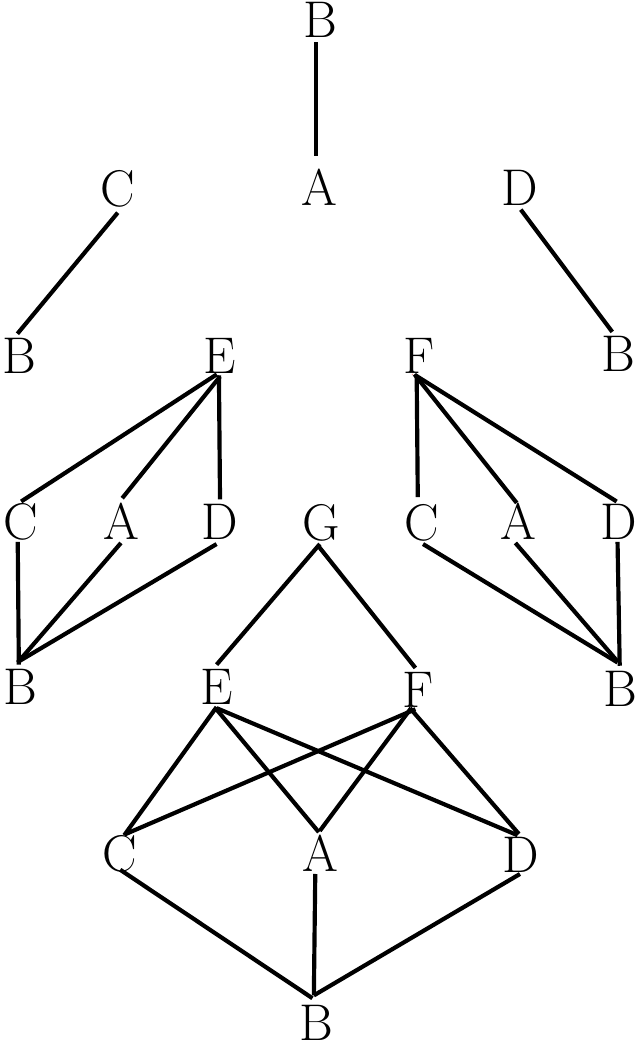}}
  &
  {\includegraphics[scale=0.4]{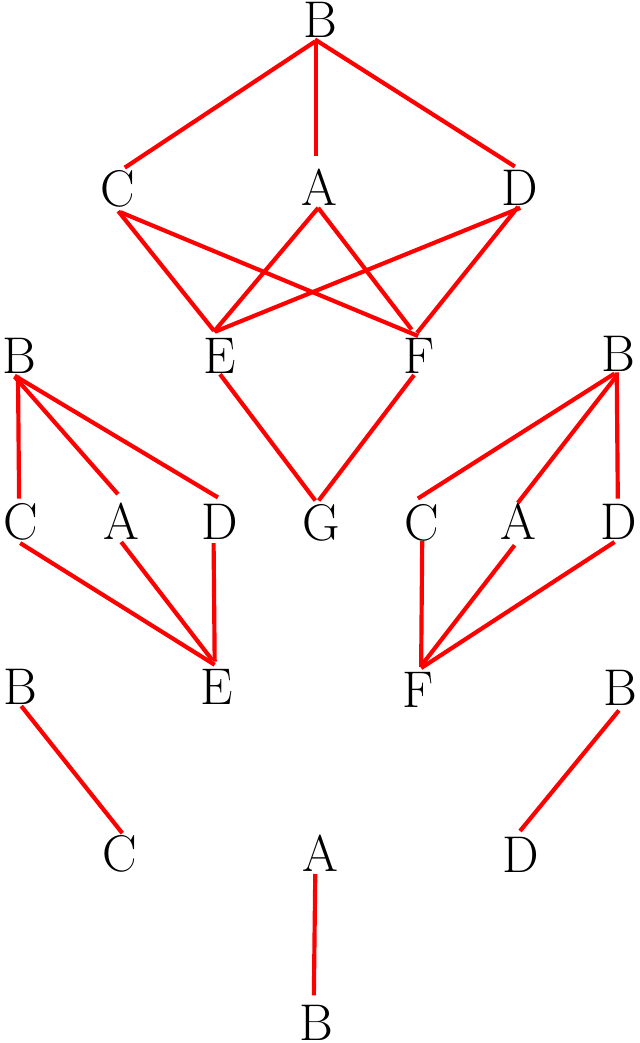}}\\
  &\\
    &\\
    \hline\hline
 \end{tabular}}
\end{figure}

\begin{figure}[htb]
\caption{Weyl and dual Weyl filtrations for $Q_1(\lambda)$, case $\lambda\in A$}
\label{fig:weyl:QA}
  \centering
\adjustbox{max width=\textwidth}{
\begin{tabular}{| c | c |}
\hline
\hline
&\\
{\includegraphics[scale=.4]{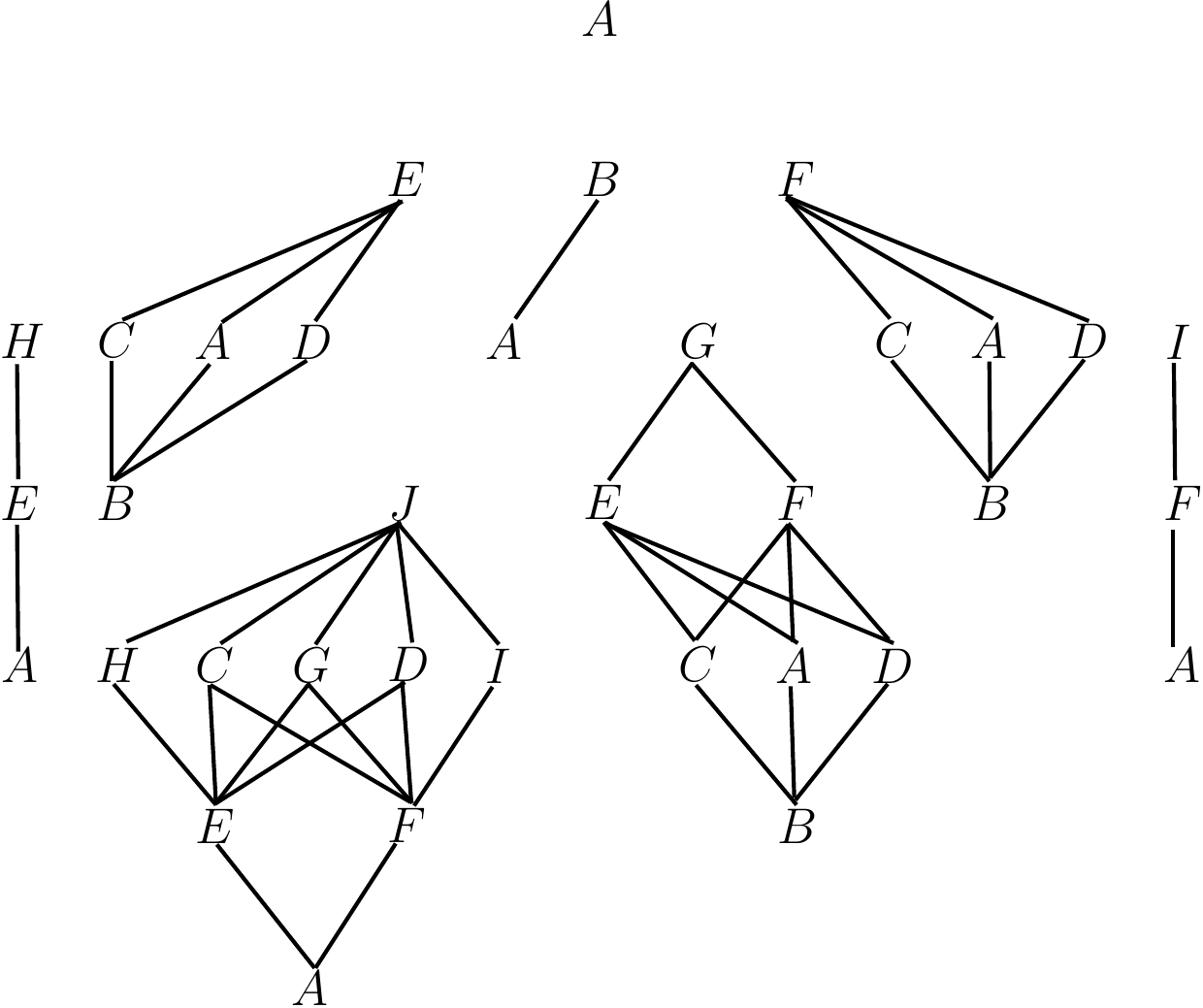}}
  &
  {\includegraphics[scale=0.4]{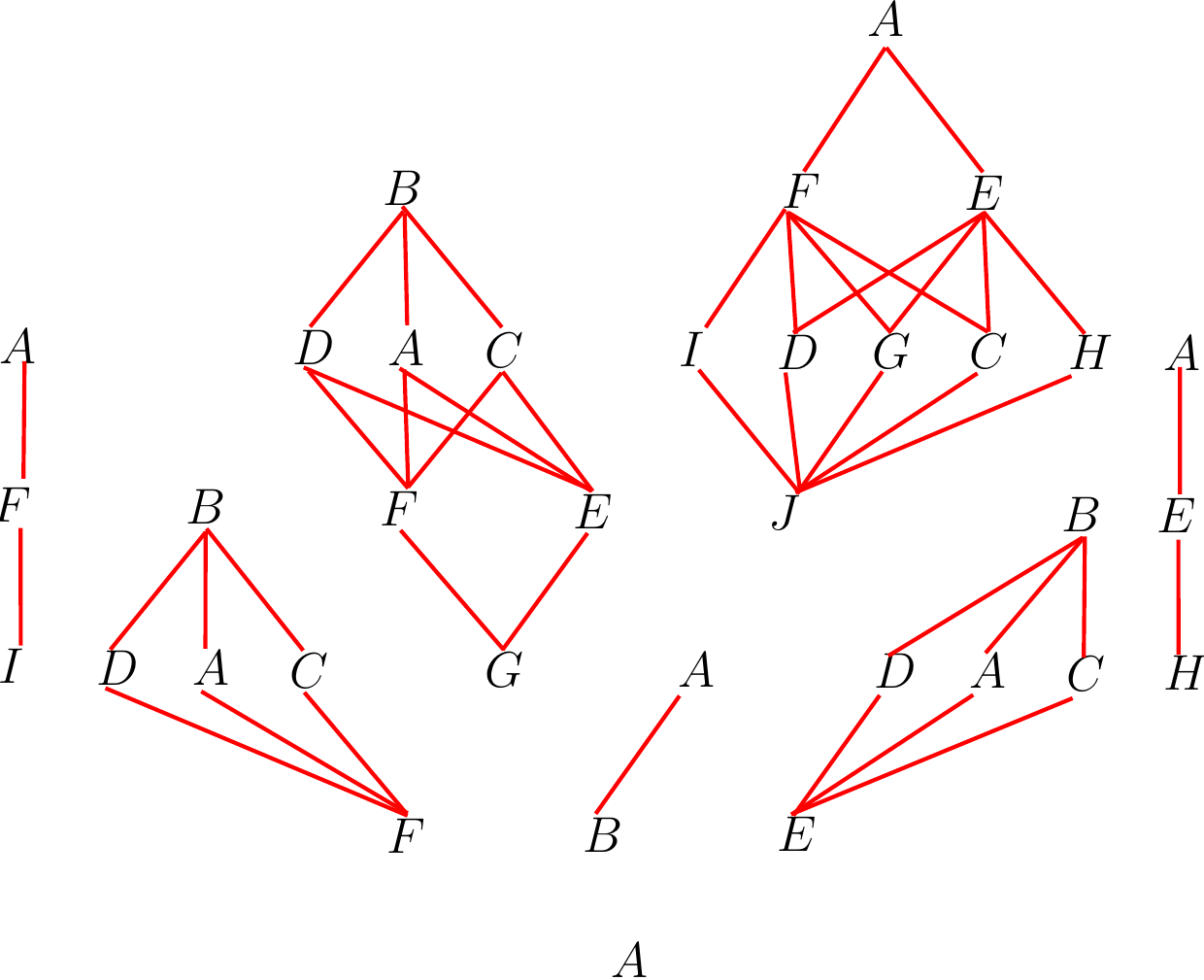}}\\
  &\\
    &\\
    \hline\hline
 \end{tabular}}
\end{figure}

Let $\rad^\bullet Q_1(\lambda)$ denote the (decreasing) radical filtration.
We let $Q_1(\lambda)^{\widehat{k}}$ denote $Q_1(\lambda)/\rad^k Q_1(\lambda)$. 
Informally speaking, $Q_1(\lambda)^{\widehat{k}}$ is the maximal quotient of $Q_1(\lambda)$ obtained by removing the $k$-th layer.

\begin{prop}[Translation principle]\label{prop:TP}
Let $\lambda \in X^*_1(T)$ be $m$-deep in its alcove, and $\eps \in X^*(T)$ such that $\langle \eps,\alpha^\vee\rangle\leq m$ for all $\alpha\in \Phi$.
Then 
\[
L(\eps)\otimes\rad^k Q_1(\lambda) \cong \bigoplus_{\nu \in L(\eps)}\rad^k Q_1(\lambda+\nu)
\]
(where $\nu \in L(\eps)$ means  that $\nu\in X^*({T})$ is in the weight space of $L(\eps)$, counted with multiplicity). 
\end{prop}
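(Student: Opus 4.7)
\smallskip

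The plan is to reduce the statement to the classical translation principle via the structural description of $Q_1(\lambda)$ given in Proposition \ref{prop:filtrations}, using the $m$-deepness hypothesis to guarantee that every weight moving under translation remains in a regular dominant alcove.

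First I would establish the block-level decomposition
\[
L(\eps) \otimes Q_1(\lambda) \;\cong\; \bigoplus_{\nu \in L(\eps)} Q_1(\lambda+\nu).
\]
By Proposition \ref{prop:filtrations}(2), $Q_1(\lambda)$ admits a Weyl filtration whose graded pieces are multiplicity-free sums of Weyl modules $V(\mu)$ with $\mu = \lambda_X$ for $X$ an alcove close to that of $\lambda$. Since $\lambda$ is $m$-deep and $\langle\eps,\alpha^\vee\rangle\leq m$ for all $\alpha\in\Phi$, every such $\mu$ remains sufficiently deep that $\mu+\nu$ is again dominant regular for every weight $\nu$ of $L(\eps)$; the classical translation principle (\cite[II.7.15]{RAGS}) then gives $L(\eps)\otimes V(\mu)\cong \bigoplus_\nu V(\mu+\nu)$, and dually for $W(\mu)$. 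These isomorphisms patch into a Weyl filtration on $L(\eps)\otimes Q_1(\lambda)$. Separating according to linkage class of $\lambda+\nu$ decomposes $L(\eps)\otimes Q_1(\lambda)$ into block summands $M_\nu$, and each $M_\nu$ shares the socle and Ext-vanishing properties (Corollary \ref{cor:Q1inj}) that, combined with the rigidity in Proposition \ref{prop:filtrations}(1), characterize $Q_1(\lambda+\nu)$ uniquely.

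Next I would upgrade this to compatibility with the radical filtration. Exactness of $L(\eps)\otimes -$ gives an inclusion
\[
L(\eps)\otimes \rad^k Q_1(\lambda) \;\subseteq\; \rad^k\bigl(L(\eps)\otimes Q_1(\lambda)\bigr) \;\cong\; \bigoplus_\nu \rad^k Q_1(\lambda+\nu),
\]
where the second isomorphism uses the block decomposition and the fact that radicals commute with finite direct sums. To prove equality I would compare Jordan--H\"older content on each side: applying the classical translation principle to simples, $L(\eps)\otimes L(\lambda_X)\cong \bigoplus_\nu L((\lambda+\nu)_X)$ for every alcove $X$ appearing in the Loewy filtration of $Q_1(\lambda)$ (Proposition \ref{prop:filtrations}(1)), which again is allowed by the depth condition. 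Thus the Loewy layers of $Q_1(\lambda)$ translate uniformly, forcing $L(\eps)\otimes \rad^k Q_1(\lambda)$ to have the same length as $\bigoplus_\nu \rad^k Q_1(\lambda+\nu)$, so the inclusion is an equality.

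The main obstacle is verifying that the per-factor translation isomorphisms for the Weyl filtration of $Q_1(\lambda)$ assemble into a canonical global isomorphism $L(\eps)\otimes Q_1(\lambda)\cong \bigoplus_\nu Q_1(\lambda+\nu)$ matching the block decomposition, and that the resulting identification sends $\rad^k$ on the left to $\rad^k$ on the right. Both steps hinge on the rigidity of $Q_1(\lambda+\nu)$: rigidity pins down each block summand up to unique isomorphism and makes the radical filtration an invariant of the Jordan--H\"older multiset layer-by-layer, so matching Jordan--H\"older content (a consequence of the $m$-deepness bound) suffices to conclude.
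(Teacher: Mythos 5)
Your outline reconstructs from scratch what the paper's one-line proof simply cites: the proof notes that $L(\eps)\otimes L(\lambda)$ is semisimple (\cite[Proposition 6.4]{HumphreysBook}) and invokes \cite[Lemma 4.6]{andersen-kaneda-rigidity} together with Proposition \ref{prop:filtrations} by d\'evissage. Your ingredients---the Weyl filtration of $Q_1(\lambda)$, the classical translation formula, the block decomposition of $L(\eps)\otimes Q_1(\lambda)$, and rigidity---are the right ones, and taken together they essentially reprove that Andersen--Kaneda lemma.

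The gap is the inclusion you attribute to exactness. Exactness of $L(\eps)\otimes -$ only tells you that $L(\eps)\otimes\rad^k Q_1(\lambda)$ is a submodule of $L(\eps)\otimes Q_1(\lambda)$; it does not place that submodule inside $\rad^k$ of the tensor product, since in general tensoring with a nonsemisimple module can move elements to shallower Loewy layers. To establish the inclusion you must use the depth hypothesis again: a map $L(\eps)\otimes Q_1(\lambda)\to B$ with $B$ of Loewy length $\leq k$ corresponds by adjunction to a map $Q_1(\lambda)\to L(\eps)^*\otimes B$, and since $L(\eps)^*\otimes(\text{simple})$ is semisimple for the relevant simples (by $m$-deepness, noting $-w_0\eps$ satisfies the same bound as $\eps$), $L(\eps)^*\otimes B$ still has Loewy length $\leq k$, so that map kills $\rad^k Q_1(\lambda)$ and hence the original map kills $L(\eps)\otimes\rad^k Q_1(\lambda)$. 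This is exactly what \cite[Lemma 4.6]{andersen-kaneda-rigidity} packages. Once the inclusion is in place, a cleaner finish than your Jordan--H\"older count is available: the layers of $L(\eps)\otimes\rad^\bullet Q_1(\lambda)$ are semisimple, so it is a Loewy filtration of the rigid module $\bigoplus_\nu Q_1(\lambda+\nu)$ of the same Loewy length, and any such Loewy filtration of a rigid module coincides with the radical filtration. As written, however, the inclusion is unjustified and the argument does not close.
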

\begin{proof}
Noting that $L(\eps)\otimes L(\lambda)$ is semisimple by the assumption on $\eps$ (\cite[Proposition 6.4]{HumphreysBook}), the statement follows by d\'evissage from \cite[Lemma 4.6]{andersen-kaneda-rigidity} and Proposition \ref{prop:filtrations}.
\end{proof}

\subsubsection*{Restriction to rational points}\label{subsec:ratpts}

Recall from \S \ref{sec:notation} the algebraic group $G_0$ and the natural isomorphism $\un{G}_{/\F}\defeq G_0\times_{\Zp}\F\cong \prod_{j \in \cJ} \GL_{3/\F}$.
Let $\rG$ be the finite group $G_0(\F_p)$.

The following proposition expresses the Jordan--H\"older factors of $L(\mu)|_{\rG}$, for suitable $\mu\in X^*(\un{T})$, in terms of the extension graph.
\begin{prop}
\label{prop:alg_rep_ext_graph}
Let $m,t$ be nonnegative integers such that $4t\leq m$ and $\lambda \in A^\cJ$ be $m$-deep. Assume that for each $j\in\cJ$, $X_j$ is a dominant alcove such that $\max_{\mu\in X_j,\alpha\in \Phi} \langle \mu,\alpha^{\vee}\rangle<4pt$.
Then 
\[
\otimes_{j\in \cJ} L(\tld{w}_{X_j}\cdot \lambda_j)|_{\rG} \cong \bigoplus_{\omega \in L(\sum_{j\in \cJ} \omega_{X_j})} \Trns_{\lambda+\eta}(\pi\omega,\pi(\tld{w}_{X_j}^0(A))_{j\in\cJ}). 
\]
\end{prop}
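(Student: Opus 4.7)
The plan is to combine Steinberg's tensor product theorem with a comparison of the algebraic Frobenius twist to the $\pi$-shift upon restriction to $\rG$, and then to invoke the translation principle in order to decompose the resulting tensor product as a sum of Serre weights indexed via the extension graph.

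Writing $\tld{w}_{X_j}=t_{\omega_{X_j}}\tld{w}^0_{X_j}$ with $\tld{w}^0_{X_j}\in\tld{W}^+_1$, Steinberg's theorem yields
\[
L(\tld{w}_{X_j}\cdot\lambda_j)\cong L(\tld{w}^0_{X_j}\cdot\lambda_j)\otimes L(\omega_{X_j})^{(1)}
\]
for each $j$, since $\tld{w}^0_{X_j}\cdot\lambda_j$ is $p$-restricted. Taking the tensor product over $\cJ$ identifies the representation of interest with $L(\mu_0)\otimes L(\omega)^{(1)}$ as a $\un{G}$-representation, where $\mu_0\defeq(\tld{w}^0_{X_j}\cdot\lambda_j)_{j\in\cJ}\in X_1(\un{T})$ and $\omega\defeq(\omega_{X_j})_{j\in\cJ}\in X_1(\un{T})$ (the latter using the size bound $\langle\mu,\alpha^\vee\rangle<4pt$ and $4t<p$). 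Since $\rG$ embeds into $\un{G}(\F)$ through $(\sigma_j)_{j\in\cJ}$ and the absolute Frobenius of $\F/\F_p$ sends $\sigma_j$ to $\sigma_{j-1}$, one has the congruence $p\omega\equiv\pi\omega\pmod{(p-\pi)X^*(\un{T})}$ of characters of $\un{T}(\F_p)$, which yields
\[
\bigotimes_{j\in\cJ} L(\tld{w}_{X_j}\cdot\lambda_j)\Big|_\rG \cong \big(L(\mu_0)\otimes L(\pi\omega)\big)\Big|_\rG.
\]

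The crucial step is then to show that $L(\mu_0)\otimes L(\pi\omega)$ is semisimple as a $\un{G}$-representation, with composition factors $L(\mu_0+\nu)$ indexed by the weights $\nu$ of $L(\pi\omega)$ counted with multiplicity. Since such $\nu$ pair with coroots by at most $4t\leq m$ and $\lambda$ is $m$-deep in $A^\cJ$, every $\mu_0+\nu$ stays in $X_1(\un{T})$ and in the alcove $\pi(\tld{w}^0_{X_j}(A))_{j\in\cJ}$; the absence of alcove crossings rules out nontrivial linkage among distinct translates, whence semisimplicity follows along the lines of the proof of Proposition~\ref{prop:TP} using \cite[Lemma~4.6]{andersen-kaneda-rigidity} applied embedding-by-embedding. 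Each $L(\mu_0+\nu)|_\rG$ is then the Serre weight $F(\mu_0+\nu)$, which by the definition of $\Trns_{\lambda+\eta}$ recorded in \S\ref{subsubsec:SW} equals $\Trns_{\lambda+\eta}(\ovl{\nu},\pi(\tld{w}^0_{X_j}(A))_{j\in\cJ})$ with $\ovl{\nu}\in\un{\Lambda}_W^{\lambda+\eta}$ the class of $\nu$; reparametrizing the sum $\nu\in L(\pi\omega)$ as $\omega'\in L(\sum_j\omega_{X_j})$ via the $\pi$-permutation on weight multisets completes the argument.

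The main obstacle is the semisimplicity of $L(\mu_0)\otimes L(\pi\omega)$: it is exactly here that the depth hypothesis $4t\leq m$ is essential, since it forces all translates $\mu_0+\nu$ to lie in a single $p$-alcove and therefore to be pairwise unlinked.
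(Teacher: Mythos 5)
Your argument is correct and is essentially the same approach as the paper's: the paper deduces the decomposition in one step by citing \cite[Lemma 4.2.4(1)]{LLLM2} (and then applies the definition of $\Trns_{\lambda+\eta}$), whereas you have unpacked that cited lemma into its Steinberg factorization, Frobenius-twist-to-$\pi$-shift identification on $\rG$, and translation-principle semisimplicity. One small note: the semisimplicity of $L(\mu_0)\otimes L(\pi\omega)$ in the proof of Proposition~\ref{prop:TP} is attributed to \cite[Proposition 6.4]{HumphreysBook} rather than to \cite[Lemma~4.6]{andersen-kaneda-rigidity} (the latter concerns rigidity of the relevant filtrations), but this does not affect the substance.
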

\begin{proof}
By \cite[Lemma 4.2.4(1)]{LLLM2} (see also \cite[proof of Lemma 4.2.5]{LLLM2}) we have
\[
\otimes_{j\in \cJ} L(\tld{w}_{X_j}\cdot \lambda_j)|_{\rG} \cong
\bigoplus_{\omega\in L(\sum_j\omega_{X_j})} F\Big(\pi\omega+\sum_{j\in\cJ}\tld{w}^0_{X_j}\cdot \lambda_j \Big)
\]
and the conclusion follows directly from the definition of $\Trns_{\lambda+\eta}$ (see \cite[equation (2.3)]{GL3Wild}).
\end{proof}

\begin{figure}[htb] 
\centering
\begin{minipage}{0.45\textwidth}
\centering
\includegraphics[scale=.3]{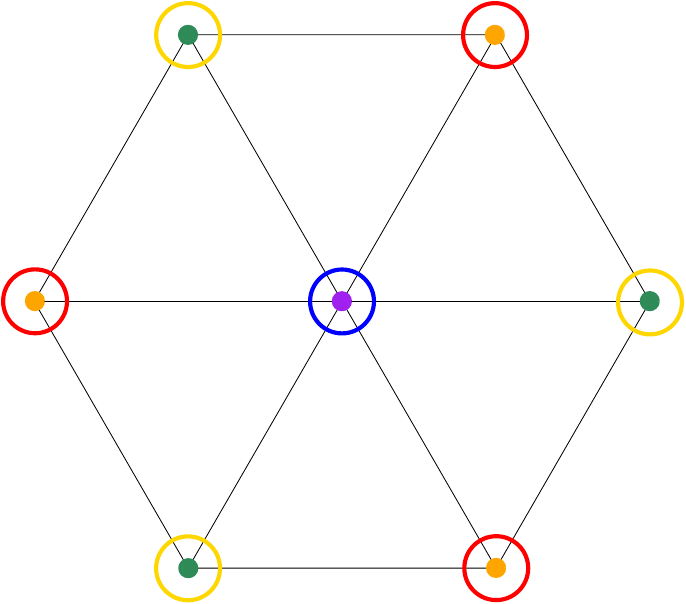}
\label{fig-res:1}
\end{minipage}\hfill
\begin{minipage}{0.45\textwidth}
\centering
\includegraphics[scale=.3]{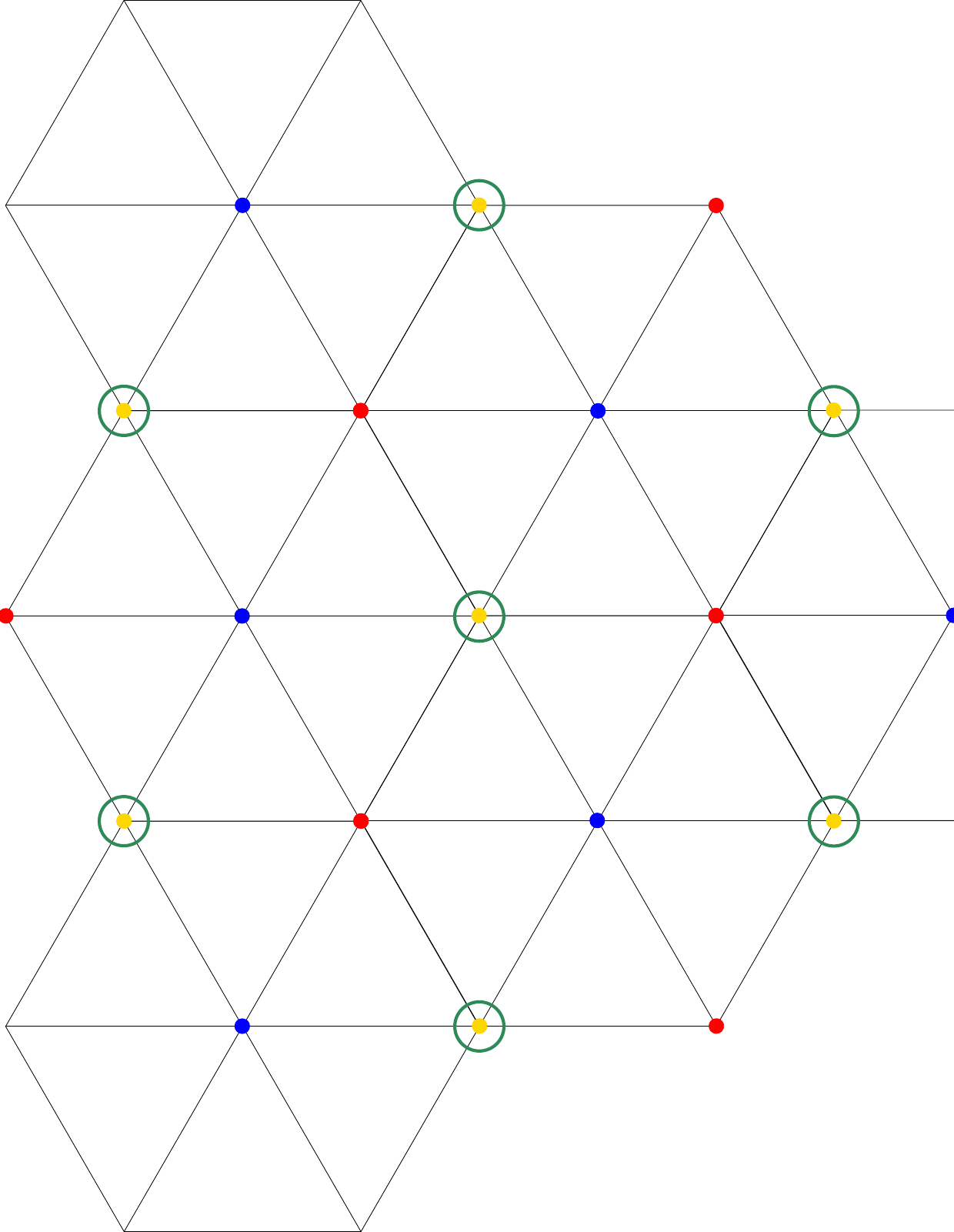}
\label{fig-res:2}
\end{minipage}
\caption{In this picture we represent the elements in the $\pi(j)$-th coordinate of $(\omega, (\tld{w}^0_{X_j}(A))_{j\in\cJ})$ for $\omega\in L(\sum_{j\in \cJ} \omega_{X_j})$, where 
$X_j\in\{A,B,C,D,E,F,G,J,I,H\}$.
(Informally speaking, these elements pictures the $\pi(j)$-th component of the restriction of  $L(\sum_{j\in \cJ} \omega_{X_j})$ to $\rG$.)
On the left we consider the case where $X_j\in\{A,B,C,D,E,F\}$.
The gold (resp.~red, resp.~blue) circles represent the case $X_j=E$ (resp.~$X_j=F$, resp.~$X_j=B$).
The green (resp.~orange, resp.~purple) dots represent the case $X_j=C$ (resp.~$X_j=D$, resp.~$X_j=A$).
On the right we consider the case where $X_j\in\{G,J,I,H\}$.
The green circles represent the case where $X_j=J$.
The gold (resp.~red, resp.~blue) dots represent the case $X_j=G$ (resp.~$X_j=H$, resp.~$X_j=I$).
}
\end{figure}

\subsection{$\rG$-projective covers and modular Serre weights} \label{subsec:Gproj}

For $\lambda \in X_1(\un{T})$, let $Q_1(\lambda)$ be the $\un{G}_{/\F}$-module $\otimes_{j\in \cJ} Q_1(\lambda_j)$. 
For $a = (a_j)_j \in \Z_{\geq 0}^\cJ$ and $\lambda \in X_1(\un{T})$, let $\rad^a Q_1(\lambda)$ {(resp.~$Q_1(\lambda)^{\widehat{a}}$)} be the tensor product $\otimes_{j\in \cJ} \rad^{a_j} Q_1(\lambda_j)$ (resp.~$\otimes_{j\in \cJ} Q_1(\lambda_j)^{\widehat{a_j}}$). 
We also let $\rad^{>a} Q_1(\lambda)$ be $\sum_{b\geq a, b\neq a} \rad^b Q_1(\lambda)$, where $\geq$ denotes the product partial order on $\Z_{\geq 0}^\cJ$, and let $\gr^a Q_1(\lambda)$ be $\rad^a Q_1(\lambda)/\rad^{>a} Q_1(\lambda)$. 
Note that $\rad^{>a} Q_1(\lambda)$ is $\rad (\rad^a Q_1(\lambda))$. 
For a tuple $a = (a_j)_{j\in \cJ}$ with $0\leq a_j \leq 7$, let $|a| = \sum_j a_j$.
If $n\in \N$ we define
\[
\rad^n Q_1(\lambda)\defeq \sum_{\substack{a\in \Z^{\cJ}_{\geq 0}\\ |a|=n}}\rad^a Q_1(\lambda).
\]
In fact, $\{\rad^n Q_1(\lambda)\}_{n\in\Z_{\geq 0}}$ coincides with the radical filtration for the $\F[\un{G}_{/\F}]$-module $Q_1(\lambda)$, though we will not use this. 

If $\sigma$ is a Serre weight, then let $\tld{P}_\sigma$ and $P_\sigma$ denote a $\cO[\rG]$-projective cover and $\F[\rG]$-projective cover of $\sigma$, respectively. 
Then $P_\sigma \cong \tld{P}_\sigma \otimes_{\cO}\F$. 
Recall that $\F[\rG]$ is a Frobenius algebra, so that $P_\sigma$ is isomorphic to the injective envelope of $\sigma$. 

\begin{prop}
\label{prop:Q1proj}
Assume that $p\geq 5$.
If $\lambda \in X_1(\un{T})$ is $1$-deep, then the projective $\rG$-module $P_{F(\lambda)}$ is isomorphic to $Q_1(\lambda)|_{\rG}$. 
{In particular, if $\lambda$ is $4$-deep, the Jordan--H\"older factors of $P_{F(\lambda)}$ are described by Proposition \ref{prop:alg_rep_ext_graph} (with $X_j\in\{A,B,C,D,E,F,G\}$ if $\lambda_j$ is in alcove $B$ and $X_j\in\{A,B,C,D,E,F,G,H,I,J\}$ if $\lambda_j$ is in alcove $A$).}
\end{prop}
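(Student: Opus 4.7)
The plan is to reduce the first assertion to the single-embedding case $\cJ=\{0\}$ via Steinberg's tensor product theorem, and then to deduce the explicit Jordan--H\"older description by combining Propositions \ref{prop:filtrations} and \ref{prop:alg_rep_ext_graph}.

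When $\cJ=\{0\}$ (so $\un{G}_{/\F}=\GL_{3/\F}$ and $\rG=\GL_3(\F_p)$), the claim is essentially the defining property of $Q_1(\lambda)$: since $\lambda\in X_1(T)$ is $1$-deep and $p\geq 5$, it is $p$-regular, and \cite[Theorem 4.2.1]{LLLM2} (cf.~\cite[II.11.11]{RAGS}, using the standard comparison between $G_1$-modules and $\F[\GL_3(\F_p)]$-modules on the $p$-regular block) identifies $Q_1(\lambda)|_{\GL_3(\F_p)}$ with $P_{F(\lambda)}$, the projective cover of its simple head $F(\lambda)=L(\lambda)|_{\GL_3(\F_p)}$.

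For general $\cJ$, I would use the identification $\un{G}_{/\F}\cong\prod_{j\in\cJ}\GL_{3/\F}$ from \S \ref{sec:notation}, under which the inclusion $\rG=G_0(\F_p)\cong \GL_3(k)\hookrightarrow \un{G}(\F)$ is the twisted diagonal $g\mapsto (\sigma_j(g))_{j\in\cJ}$. Consequently
\[
Q_1(\lambda)|_\rG\;\cong\;\bigotimes_{j\in\cJ} Q_1(\lambda_j)^{[\sigma_j]}
\]
as $\F[\rG]$-modules, where $V^{[\sigma_j]}$ denotes the pullback of a $\GL_3(\F)$-module $V$ along $\sigma_j:\GL_3(k)\to \GL_3(\F)$. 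Steinberg's tensor product theorem for the finite group $\GL_3(k)$ identifies $\bigotimes_j L(\lambda_j)^{[\sigma_j]}|_\rG$ with the Serre weight $F(\lambda)$, and the analogous factorization $P_{F(\lambda)}\cong \bigotimes_j P_{F(\lambda_j)}^{[\sigma_j]}$ of its projective cover---combined with the single-embedding case applied to each factor---forces $Q_1(\lambda)|_\rG\cong P_{F(\lambda)}$.

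For the ``In particular'' assertion, suppose $\lambda$ is $4$-deep. By Proposition \ref{prop:filtrations}, the Jordan--H\"older factors of each $Q_1(\lambda_j)$ are of the form $L(\tld{w}_{X_j}\cdot \lambda_j)$ with $X_j\in\{A,B,C,D,E,F,G\}$ (if $\lambda_j\in B$) or $X_j\in\{A,\ldots,J\}$ (if $\lambda_j\in A$); tensoring over $\cJ$ yields the Jordan--H\"older factors $\bigotimes_j L(\tld{w}_{X_j}\cdot \lambda_j)$ of $Q_1(\lambda)$. By inspection of Figure \ref{AlcoveLabels}, every such alcove $X_j$ satisfies $\max_{\mu\in X_j,\alpha\in\Phi}\langle \mu,\alpha^\vee\rangle<4p$ for $p\geq 5$, so Proposition \ref{prop:alg_rep_ext_graph} applies with $t=1$ (and $m\geq 4$), translating each such factor into a direct sum of Serre weights indexed by the extension graph. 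The main obstacle is purely bookkeeping: tracking the Frobenius twists $\sigma_j$ when reconciling the product decomposition $\un{G}(\F)\cong\prod_j\GL_3(\F)$ with the Steinberg tensor product theorem for $\GL_3(k)$. No conceptual novelty is required beyond the cited results.
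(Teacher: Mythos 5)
Your proposal is correct and takes essentially the same approach as the paper: the paper simply cites \cite[Theorem 4.2.1]{LLLM2} for the first assertion and notes the $p$-boundedness of the alcoves in Figure \ref{AlcoveLabels} for the second, whereas you unpack the proof of the cited theorem (single-embedding case plus Steinberg factorization) and spell out the application of Propositions \ref{prop:filtrations} and \ref{prop:alg_rep_ext_graph}. The content is the same; you have just made explicit what the paper takes as a black box.
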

\begin{proof}
This is a particular case of \cite[Theorem 4.2.1]{LLLM2}.
{As for the last statement, we note that the alcoves appearing in Figure \ref{AlcoveLabels} are all $p$-bounded.}
\end{proof}

Let $\lambda \in X_1(\un{T})$ be $1$-deep and $\sigma = F(\lambda)$. 
For $a = (a_j)_j \in \Z_{\geq 0}^\cJ$ and $n\in\Z_{\geq 0}$, let $\rad^a P_\sigma$, $\rad^{>a}P_\sigma$, $P^{\widehat{a}}_\sigma$, $\gr^a P_\sigma$ and $\rad^nP_\sigma$ be the subquotients of $P_\sigma$ corresponding to the restrictions $\rad^a Q_1(\lambda)|_{\rG}$, $\rad^{>a}Q_1(\lambda)|_{\rG}$, $Q_1(\lambda)^{\widehat{a}}|_{\rG}$, $\gr^a Q_1(\lambda)|_{\rG}$ and $\rad^n Q_1(\lambda)|_{\rG}$ under a fixed choice of isomorphism in Proposition \ref{prop:Q1proj} (the isomorphism classes of the subquotients do not depend on this choice of the isomorphism). 
Note that if $\sigma$ is $8$-deep then $\{\rad^n P_\sigma\}_{n\in\N}$ coincides with the radical filtration for the $\F[\rG]$-module $P_\sigma$ by \cite[Corollary 4.2.3]{LLLM2} and the analogous discussion for $Q_1(\lambda)$ before Proposition \ref{prop:Q1proj}. 
This description of the radical filtration of $P_\sigma$ also follows from the next result. 

We first introduce some notation.
If $a = (a_j)_{j\in \cJ}$ and $b = (b_j)_{j\in \cJ}$ are tuples with $a_j,b_j\in \Z$ for all $j\in \cJ$, let $a\pm b$ be the tuple $(a_j\pm b_j)_{j\in \cJ}$. 
For an integer $n$ and $i\in \cJ$, let $n_i\in \Z^{\cJ}$ be the tuple with $n_{i,j} = n\delta_{i=j}$. 

\begin{prop}\label{prop:cosoclefilter}
{Assume that $\sigma$ is $8$-deep.}
Let $S \subset \{a\in \Z_{\geq 0}^{\cJ} \mid |a| = n\}$ be a nonempty subset. 
Then 
\[
\rad \sum_{a\in S} \rad^a P_\sigma = \sum_{a\in S} \rad^{>a} P_\sigma. 
\]
\end{prop}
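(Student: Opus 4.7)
The plan is to prove both inclusions using the identification $P_\sigma \cong Q_1(\lambda)|_\rG \cong \bigotimes_j Q_1(\lambda_j)|_\rG$ from Proposition \ref{prop:Q1proj}, together with the rigidity of each $Q_1(\lambda_j)$ established in Proposition \ref{prop:filtrations}. The central technical input is the direct-sum decomposition
\begin{equation*}
\rad^n P_\sigma/\rad^{n+1} P_\sigma \;=\; \bigoplus_{|a|=n} \gr^a P_\sigma
\end{equation*}
as $\F[\rG]$-modules, with each summand $\gr^a P_\sigma$ semisimple. This decomposition is forced by the tensor product form of the filtration: on the $\un{G}_{/\F}$-side, the associated graded of the tensor product of rigid filtrations is the tensor product of the associated gradeds, so $\gr^a Q_1(\lambda) = \bigotimes_j \gr^{a_j} Q_1(\lambda_j)$; and Proposition \ref{prop:alg_rep_ext_graph} (applicable because $\sigma$ is $8$-deep) ensures each such tensor product restricts to a semisimple $\F[\rG]$-module.

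For the inclusion $\rad \sum_{a\in S} \rad^a P_\sigma \subseteq \sum_{a\in S}\rad^{>a}P_\sigma$, the direct-sum decomposition immediately yields $N_S \cap \rad^{n+1}P_\sigma = N_S'$ for $N_S \defeq \sum_{a\in S} \rad^a P_\sigma$, $N_S' \defeq \sum_{a\in S}\rad^{>a}P_\sigma$: indeed, if $x = \sum_{a\in S} x_a \in N_S$ maps to $0$ in $\bigoplus_{|a|=n}\gr^a P_\sigma$, then each component $x_a$ must map to $0$ in the corresponding summand $\gr^a P_\sigma$ by disjointness of the direct summands, so $x_a \in \rad^{>a}P_\sigma$. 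Consequently $N_S/N_S'$ embeds into the semisimple $\rad^n P_\sigma/\rad^{n+1}P_\sigma$ and is therefore itself semisimple, giving $\rad N_S \subseteq N_S'$.

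For the reverse inclusion $N_S' \subseteq \rad N_S$, using the general identity $\rad N_S = J N_S = \sum_{a\in S} J\rad^a P_\sigma$ (where $J$ is the Jacobson radical of $\F[\rG]$), it suffices to establish the single-index claim $\rad(\rad^a P_\sigma) = \rad^{>a}P_\sigma$ for each $a$. To this end, I would consider the filtration $\{R_a^{(k)}\}_k$ of $R_a\defeq \rad^a P_\sigma$ given by $R_a^{(k)}\defeq \sum_{|c|=k, c\geq 0}\rad^{a+c}P_\sigma$, whose graded pieces are again direct sums of tensor products of semisimples hence semisimple. Iterating the direct-sum identity at each layer $\rad^{n+k}P_\sigma/\rad^{n+k+1}P_\sigma$ gives the identification $R_a^{(k)} = R_a \cap \rad^{|a|+k}P_\sigma$. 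Together with \cite[Corollary 4.2.3]{LLLM2}, which asserts that $\{\rad^n P_\sigma\}_n$ is the genuine $\F[\rG]$-radical filtration, this forces the strictness of the $R_a^{(k)}$ filtration to agree with the length of the $\F[\rG]$-Loewy filtration of $R_a$; any semisimple-graded filtration of this extremal length must coincide with the radical filtration, yielding $\rad R_a = R_a^{(1)} = \rad^{>a}P_\sigma$. The main obstacle is precisely this last step, comparing the $\F[\rG]$-Loewy length of the submodule $R_a$ with the purely combinatorial length of the tensor filtration; the strictness and semisimplicity furnished by \cite[Corollary 4.2.3]{LLLM2}, for which the hypothesis that $\sigma$ is $8$-deep is essential, is what makes this comparison possible.
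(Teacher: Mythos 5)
Your argument correctly establishes the easy inclusion $\rad\sum_{a\in S}\rad^a P_\sigma \subseteq \sum_{a\in S}\rad^{>a}P_\sigma$, and the reduction of the reverse inclusion to the single-index claim $\rad(\rad^a P_\sigma) = \rad^{>a}P_\sigma$ via $\rad(M+N)=\rad M + \rad N$ is valid and actually streamlines the exposition compared to the paper's direct treatment. However, the final step has a genuine gap.

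The issue is the assertion that any semisimple-graded filtration of extremal length must coincide with the radical filtration. This is false: take $M = L_1 \oplus E$ where $E$ is a nonsplit extension of $L_2$ by $L_3$; then $F^0 = M \supset F^1 = L_1 \oplus L_3 \supset 0$ has semisimple graded pieces and length equal to the Loewy length $2$, yet $\rad M = L_3 \neq F^1$. What you have actually produced, by intersecting $\rad^a P_\sigma$ with the radical filtration of the rigid module $P_\sigma$, is the \emph{socle} filtration of $R_a = \rad^a P_\sigma$ (since $\soc_j R_a = R_a \cap \soc_j P_\sigma = R_a \cap \rad^{\ell-j}P_\sigma$ using rigidity of $P_\sigma$). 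So the content of your final step is precisely the rigidity of the submodule $R_a$, which does not follow from the rigidity of $P_\sigma$ and is not a formal consequence of the numerics.

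The paper instead proceeds by reverse induction on $|b|$ to show directly that $\rad^b P_\sigma \subseteq \rad(\rad^a P_\sigma)$ for every $b > a$. The key ingredient is an intermediate computation of the radical of the two-layer subquotient $\rad^b P_\sigma/\rad^{>b+1_i}P_\sigma$, which restricts from a $\un{G}_{/\F}$-module that is a tensor product with exactly one factor of Loewy length two and the rest semisimple; \cite[Lemma 4.2.2]{LLLM2} controls the radical of such restrictions. Feeding this into the inductive step shows that the image of $\rad^b P_\sigma$ in $\cosoc\,\rad^a P_\sigma$ vanishes. Some input of this kind about the $\rG$-module structure of low-Loewy-length tensor subquotients seems unavoidable; the purely numerical argument from socle and Loewy lengths does not suffice.
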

\begin{proof}
Since $\sum_{a\in S} \rad^a P_\sigma/\sum_{a\in S} \rad^{>a} P_\sigma \cong \oplus_{a\in S} \gr^a P_\sigma$, $\rad \sum_{a\in S} \rad^a P_\sigma\subset \sum_{a\in S} \rad^{>a} P_\sigma$. 

To show the reverse inclusion, we need an intermediate result. 
Let $\sigma = F(\lambda)$, $b\in \Z_{\geq 0}^{\cJ}$, and $i\in \cJ$. 
Then 
\begin{equation} \label{eqn:twolayer}
\rad^b P_\sigma/ \rad^{>b+1_i} P_\sigma 
\end{equation}
is the restriction to $\rG$ of 
\[
\rad^{b_i}Q_1(\lambda_i)/\rad^{b_i+2}Q_1(\lambda_i) \otimes \otimes_{j\neq i} \gr^{b_j} Q_1(\lambda). 
\]
By Proposition \ref{prop:filtrations} and \cite[Lemma 4.2.2]{LLLM2}, the radical of \eqref{eqn:twolayer} is $\rad^{>b} P_\sigma/\rad^{>b+1_i} P_\sigma$. 

We now show the reverse inclusion. 
If $a\in S$ and $b>a$, we will show that $\rad^b P_\sigma \subset \rad\, \rad^a P_\sigma$ by reverse induction on $|b|$. 
If $|b| > 6\#\cJ$, then $\rad^b P_\sigma = 0$ and we are done. 
Suppose now that $|b| \leq 6\#\cJ$. 
Since $b>a$, $b-1_i \geq a$ for some $i \in \cJ$. 
Then the kernel of the map $\rad^a P_\sigma\ra \cosoc \,\rad^a P_\sigma$ contains $ \rad^{>b}P_\sigma \subset \rad\,  \rad^a P_\sigma$ by the inductive hypothesis. 

Thus the kernel of the induced map $\rad^{b-1_i} P_\sigma \ra \cosoc \,\rad^a P_\sigma$ factors through $\rad^{b-1_i} P_\sigma/ \rad^{>b} P_\sigma$. 
By the above paragraph, $\rad^b P_\sigma$ is in the kernel of the map $\rad^{b-1_i} P_\sigma \ra \cosoc\, \rad^a P_\sigma$ which gives the desired inclusion. 
\end{proof}

\begin{cor}\label{cor:uniquewedge}
{Assume that $\sigma$ is $8$-deep.}
Let $a = (a_j)_{j\in \cJ}$ with $a_j = 2$ for all $j\in \cJ$. 
Suppose that $\Lambda$ is a multiplicity free $\F[\rG]$-module with cosocle isomorphic to $\sigma$ such that $\JH(\Lambda) \subset \JH(P^{\widehat{a}}_\sigma)$. 
Then $\Lambda$ is isomorphic to a unique quotient of $P^{\widehat{a}}_\sigma$. 
\end{cor}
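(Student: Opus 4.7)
The plan is to prove existence and uniqueness of the desired quotient separately. Existence will reduce, via projectivity of $P_\sigma$, to showing that a surjection $\psi : P_\sigma \twoheadrightarrow \Lambda$ annihilates $\rad^a P_\sigma$, which will follow from a key disjointness of Jordan--H\"older factors. Uniqueness will follow from the multiplicity-freeness of $\Lambda$ via a standard Hom-space computation.

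For existence, let $\psi : P_\sigma \twoheadrightarrow \Lambda$ be a surjection (which exists because $P_\sigma$ is the projective cover of $\cosoc \Lambda = \sigma$) and set $M \defeq \psi(\rad^a P_\sigma)$. We must show $M = 0$. Since $|a|\geq 1$ we have $\rad^a P_\sigma \subseteq \rad P_\sigma$, and the identity $\psi(\rad P_\sigma)=\rad\Lambda$ gives $M \subseteq \rad \Lambda$. Assuming $M \neq 0$, its cosocle is nonzero and semisimple. Proposition \ref{prop:cosoclefilter} applied to the singleton $S=\{a\}$ identifies $\cosoc(\rad^a P_\sigma)=\gr^a P_\sigma$, hence $\JH(\cosoc M)\subseteq \JH(\gr^a P_\sigma)$. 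On the other hand, multiplicity-freeness of $\Lambda$ combined with $\cosoc \Lambda = \sigma$ forces $\sigma \notin \JH(\rad\Lambda)$, so that $\JH(\cosoc M)\subseteq \JH(\rad\Lambda)\subseteq \JH(\Lambda)\setminus\{\sigma\}\subseteq \JH(P^{\widehat{a}}_\sigma)\setminus\{\sigma\}$.

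The contradiction will come from the key disjointness claim
\[
\JH(\gr^a P_\sigma) \cap \JH(P^{\widehat{a}}_\sigma) \subseteq \{\sigma\}
\]
as sets of isomorphism classes of simple $\rG$-modules, since this together with the above forces $\JH(\cosoc M)=\emptyset$. To prove the claim, note that the JH factors of both sides, viewed as $\un{G}_{/\F}$-modules, are tensor products $\bigotimes_j L(\tld{w}_{X_j}\cdot\lambda_j)$ where each $X_j$ lies in the second Loewy layer of $Q_1(\lambda_j)$ (for $\gr^a P_\sigma$) or in the zeroth or first Loewy layer (for $P^{\widehat{a}}_\sigma$), by Proposition \ref{prop:filtrations}. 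By Proposition \ref{prop:alg_rep_ext_graph}, restriction to $\rG$ of such a tensor product decomposes into Serre weights parametrized by a weight orbit and the alcove-tuple $\pi(\tld{w}^0_{X_j}(A))_j \in \cA^{\cJ}$, via the injective map $\Trns_{\lambda+\eta}$. Since $\tld{W}_1^+ = \{1,\tld{w}_h\}$ for $\GL_3$ and the second Loewy layer of $Q_1(\lambda_j)$ consists of alcoves with the same $\tld{w}^0_{X_j}$ as the zeroth layer (cf.~Figures \ref{fig:weyl:QB}, \ref{fig:weyl:QA}), every Serre weight in $\JH(\gr^a P_\sigma)$ has alcove-tuple equal to that of $\sigma$. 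Within $\JH(P^{\widehat{a}}_\sigma)$, this alcove-tuple is only achieved by the all-zeroth-layer contribution $\bigotimes_j L(\lambda_j)|_\rG = \sigma$, because a first-Loewy-layer factor in any coordinate $j$ forces $\tld{w}^0_{X_j}=\tld{w}_h$ and changes the corresponding alcove-component. Injectivity of $\Trns_{\lambda+\eta}$ then forces any common Serre weight to equal $\sigma$.

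For uniqueness, projectivity of $P_\sigma$ gives $\dim_\F \Hom_\rG(P_\sigma, \Lambda) = [\Lambda:\sigma] = 1$ by multiplicity-freeness, so any two surjections $P^{\widehat{a}}_\sigma \twoheadrightarrow \Lambda$ pull back to scalar multiples of each other in $\Hom_\rG(P_\sigma, \Lambda)$ and thus share a common kernel. The main obstacle is the key disjointness claim above, whose proof depends on the parity structure of the radical filtration of $Q_1(\lambda_j)$ described in Proposition \ref{prop:filtrations} and on the injectivity of $\Trns_{\lambda+\eta}$, both valid under the $8$-deepness hypothesis on $\sigma$.
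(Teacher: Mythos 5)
There is a genuine gap in the existence part, stemming from a misidentification of the kernel of $P_\sigma \onto P^{\widehat{a}}_\sigma$. You set $M = \psi(\rad^a P_\sigma)$ with $a = (2,\dots,2)$ and declare "we must show $M=0$." But $\rad^a P_\sigma$ here means $\bigotimes_{j\in\cJ}\rad^2 Q_1(\lambda_j)|_{\rG}$ (that is the definition given in \S\ref{subsec:Gproj}), while the actual kernel of $P_\sigma \to P^{\widehat{a}}_\sigma = \bigotimes_j Q_1(\lambda_j)/\rad^2 Q_1(\lambda_j)|_{\rG}$ is the standard kernel of a tensor product of surjections, namely
\[
\sum_{i\in\cJ} \rad^{2_i} P_\sigma \;=\; \sum_{i\in\cJ}\Big(\rad^2 Q_1(\lambda_i)\otimes \bigotimes_{j\neq i} Q_1(\lambda_j)\Big)\Big|_{\rG}.
\]
For $\#\cJ > 1$ this strictly contains $\rad^a P_\sigma$, so $\psi(\rad^a P_\sigma)=0$ does not imply that $\psi$ factors through $P^{\widehat{a}}_\sigma$. (Your argument as written is only complete when $\#\cJ=1$.) Relatedly, the disjointness claim you prove, $\JH(\gr^a P_\sigma)\cap\JH(P^{\widehat{a}}_\sigma)\subseteq\{\sigma\}$ with $a=(2,\dots,2)$, is strictly weaker than what is needed.

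The fix is essentially the paper's proof: argue separately for each $i\in\cJ$ that $\psi(\rad^{2_i}P_\sigma)=0$. For each such $i$, Proposition \ref{prop:cosoclefilter} applied to $S=\{2_i\}$ gives $\cosoc(\rad^{2_i}P_\sigma)=\gr^{2_i}P_\sigma$, and your alcove-type argument (applied to the tuple $2_i$ rather than $(2,\dots,2)$) yields $\JH(\gr^{2_i}P_\sigma)\cap\JH(P^{\widehat{a}}_\sigma)\subseteq\{\sigma\}$, since $\gr^{2_i}P_\sigma$ has layer $2$ in coordinate $i$ and layer $0$ elsewhere, so it has the same alcove-tuple as $\sigma$, whereas any nontrivial JH factor of $P^{\widehat{a}}_\sigma$ uses a layer-$1$ factor in some coordinate and has a different alcove-tuple. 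Combined with $\sigma\notin\JH(\rad\Lambda)$ by multiplicity-freeness, this forces $\psi(\rad^{2_i}P_\sigma)=0$. Your uniqueness argument via $\dim_\F\Hom_{\rG}(P_\sigma,\Lambda)=1$ is fine and is equivalent to the paper's appeal to multiplicity-freeness.
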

\begin{proof}
Let $\Lambda$ be as in the statement of the corollary. 
Since $\cosoc\, \Lambda \cong \sigma$, we can and do fix a surjection $P_\sigma \onto \Lambda$. 
We claim that this map factors through $P^{\widehat{a}}_\sigma$. 
Suppose otherwise. 
Then the restriction $\rad^{2_i} P_\sigma \ra \Lambda$ is nonzero for some $i\in \cJ$. 
By Proposition \ref{prop:cosoclefilter}, this implies that $\JH(\gr^{2_i} P_\sigma) \cap \JH(\rad\, \Lambda)\neq \emptyset$. 
This contradicts the assumption that $\Lambda$ is a multiplicity free $\F[\rG]$-module and $\JH(\Lambda) \subset \JH(P^{\widehat{a}}_\sigma)$. 
The uniqueness of the quotient follows from the fact that $P^{\widehat{a}}_\sigma$ is multiplicity free. 
\end{proof}

The following proposition shows that maps between projective envelopes are compatible with the filtrations $\rad^a P_{F(\lambda)}$. 

\begin{prop}\label{prop:filtrationcompatible}
Let $\sigma$ and $\kappa$ be $8$-deep Serre weights and $P_\kappa \ra \rad^a P_\sigma$ be a map whose image is not contained in $\rad^{>a}P_\sigma$. 
Then the image of $\rad^b P_\kappa$ is contained in $\rad^{a+b} P_\sigma$ but not in $\rad^{>a+b}P_\sigma$. 
\end{prop}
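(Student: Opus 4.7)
The plan is to reduce to a statement about $\un{G}_{/\F}$-morphisms of $Q_1$ modules and then exploit the tensor product structure. First, using the identifications $P_\sigma \cong Q_1(\lambda)|_\rG$ and $P_\kappa \cong Q_1(\mu)|_\rG$ from Proposition \ref{prop:Q1proj} (with $\sigma = F(\lambda)$, $\kappa = F(\mu)$), I would show that the $\rG$-morphism $f$ lifts uniquely to a $\un{G}_{/\F}$-morphism $\tld f: Q_1(\mu) \to Q_1(\lambda)$ whose image lies in $\rad^a Q_1(\lambda) = \bigotimes_j \rad^{a_j} Q_1(\lambda_j)$ but not in $\rad^{>a} Q_1(\lambda)$. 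Under the $8$-deep genericity hypothesis, this lifting follows from the coincidence of $\rG$- and $\un{G}_{/\F}$-Hom spaces between these generic $Q_1$ modules, along the lines of arguments in \cite[\S 4]{LLLM2}.

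Next, I would use the decomposition
\[
\Hom_{\un{G}_{/\F}}(Q_1(\mu),Q_1(\lambda)) \;=\; \bigotimes_{j \in \cJ} \Hom_{\GL_{3/\F}}(Q_1(\mu_j),Q_1(\lambda_j)),
\]
which holds because the $j$-th $\GL_3$-factor of $\un{G}_{/\F}$ acts nontrivially only on the $j$-th tensorand. Writing $\tld f = \sum_k \bigotimes_j f_k^{(j)}$, the hypothesis that $\tld f(Q_1(\mu))$ has nonzero image in $\gr^a Q_1(\lambda) = \bigotimes_j \gr^{a_j} Q_1(\lambda_j)$ forces, for at least one index $k$, each $f_k^{(j)}(Q_1(\mu_j))$ to have nonzero image in $\gr^{a_j} Q_1(\lambda_j)$. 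This reduces the proposition to the single-factor statement: if $g: Q_1(\mu') \to Q_1(\lambda')$ is a $\GL_{3/\F}$-morphism with image in $\rad^{a'} Q_1(\lambda')$ but not in $\rad^{a'+1} Q_1(\lambda')$, then $g(\rad^{b'} Q_1(\mu')) \subset \rad^{a'+b'} Q_1(\lambda')$ but not in $\rad^{a'+b'+1} Q_1(\lambda')$. The containment is routine: iterating $g(\rad(M)) \subset \rad(g(M))$, together with the single-factor identity $\rad(\rad^c Q_1(\lambda')) = \rad^{c+1} Q_1(\lambda')$ (analogue of Proposition \ref{prop:cosoclefilter}), gives $g(\rad^{b'} Q_1(\mu')) \subset \rad^{a'+b'} Q_1(\lambda')$.

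The non-containment in the single-factor reduction is the main obstacle. Here I would argue that the image of $g$ is a quotient of $Q_1(\mu')$ whose cosocle is a specific copy of $L(\mu')$ in the $a'$-th Loewy layer of $Q_1(\lambda')$; the induced radical filtration on the image therefore coincides with the filtration by Loewy layers of $Q_1(\lambda')$ shifted by $a'$. Verifying that the image of $\rad^{b'} Q_1(\mu')$ lands in, but not below, the $(a'+b')$-th layer of $Q_1(\lambda')$ requires a case analysis using the rigidity of $Q_1(\lambda')$ (Proposition \ref{prop:filtrations}) and the explicit graphs in Figures \ref{fig:weyl:QB}--\ref{fig:weyl:QA}; the multiplicity-freeness of the Weyl and dual Weyl filtrations of $Q_1(\lambda')$ ensures that the layer structure of the sub-$Q_1$ generated by the chosen cosocle copy of $L(\mu')$ matches that of $Q_1(\mu')$ itself. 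Assembling these single-factor non-containments across $j \in \cJ$ via the tensor decomposition of $\tld f$ yields that $\tld f(\rad^b Q_1(\mu)) \not\subset \rad^{>a+b} Q_1(\lambda)$, and restricting back to $\rG$ gives the desired conclusion for $f$.
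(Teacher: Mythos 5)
Your first step already contains a fatal gap. You claim that the $\rG$-morphism $f : P_\kappa \to \rad^a P_\sigma$ lifts (uniquely) to a $\un{G}_{/\F}$-morphism $\tld f : Q_1(\mu) \to Q_1(\lambda)$, citing ``the coincidence of $\rG$- and $\un{G}_{/\F}$-Hom spaces between these generic $Q_1$ modules.'' That coincidence fails. The space $\Hom_\rG(P_\kappa, P_\sigma)$ has dimension equal to the multiplicity $[P_\sigma : \kappa]$, while $\Hom_{\un{G}_{/\F}}(Q_1(\mu),Q_1(\lambda))$ has dimension equal to the $\un{G}_{/\F}$-multiplicity $[Q_1(\lambda) : L(\mu)]$. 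By Propositions \ref{prop:alg_rep_ext_graph} and \ref{prop:Q1proj}, the irreducible constituents $L(\tld{w}_X \cdot \lambda)$ of $Q_1(\lambda)$ are not restricted weights: upon restriction to $\rG$ each decomposes as a direct sum of Serre weights indexed by the weights of $L(\omega_X)$, and these Serre weights generically have highest weights in \emph{different linkage classes} from $\lambda$. If $\kappa = F(\mu)$ is such a constituent with $\mu$ not linked to $\lambda$, then $\Hom_{\un{G}_{/\F}}(Q_1(\mu),Q_1(\lambda)) = 0$ while $\Hom_\rG(P_\kappa, P_\sigma) \neq 0$. So the lift $\tld f$ does not exist, and the subsequent reduction to the single-factor tensor decomposition $\bigotimes_j \Hom_{\GL_{3/\F}}(Q_1(\mu_j),Q_1(\lambda_j))$ is unavailable. (This failure is not a technicality; it is precisely the phenomenon that makes computing mod~$p$ multiplicities hard.) Note also that, even granting a lift, $\rG$ is $\GL_3(\F_{p^f})$, not a product over $\cJ$, so you could not decompose $\Hom_\rG$ factorwise without the lift.

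The paper's proof avoids lifting entirely and instead inducts on $b$ in the product partial order, working purely with $\rG$-modules. The inductive hypothesis that the image of $\rad^b P_\kappa$ is in $\rad^{a+b}P_\sigma$ but not in $\rad^{>a+b}P_\sigma$ forces the alcove types of the constituents of $\gr^b P_\kappa$ and $\gr^{a+b}P_\sigma$ to agree at each embedding. Then ``alcove considerations at embedding $i$'' show that $\JH(\gr^{b+1_i} P_\kappa)$ and $\JH(\rad^{a+b}P_\sigma / \rad^{a+b+1_i}P_\sigma)$ are disjoint, which, together with a Nakayama-type argument on the radical filtration, gives the containment. Non-containment is then obtained by observing (via \cite[Lemma 4.2.2]{LLLM2}) that the induced map $\gr^{b+1_i}P_\kappa \to \gr^{a+b+1_i}P_\sigma$ is the socle of a two-layer map which is nonzero by the inductive hypothesis, and would have to factor through an impossible alcove jump if it vanished. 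If you want to salvage your plan, you would need to replace the lifting step by an argument entirely at the level of $\rG$-modules; the alcove bookkeeping that the paper uses is exactly the substitute for the tensor factorization you are reaching for.
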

\begin{proof}
We proceed by induction with respect to the partial ordering on $b$. 
The case $b_j = 0$ for all $j\in \cJ$ is clear. 
Suppose that the image of $\rad^b P_\kappa$ is contained in $\rad^{a+b} P_\sigma$ but not in $\rad^{>a+b}P_\sigma$. 
Let $i\in\cJ$.
Thus, the alcoves of all of the constituents of $\gr^b P_\kappa$ and $\gr^{a+b}P_\sigma$ coincide.
This implies that $\JH(\gr^{b+1_i} P_\kappa)$ and $\JH(\rad^{a+b} P_\sigma/\rad^{a+b+1_i} P_\sigma)$ are disjoint by alcove considerations in embedding $i$.
Hence the map $\gr^{b+1_i}P_\kappa\ra \rad^{a+b} P_\sigma/(\rad^{a+b+1_i} P_\sigma+\textnormal{Im}(\rad^{>b+1_i}P_\kappa))$ is zero. 
Thus the image of $\rad^{b+1_i}P_\kappa$ and $\rad^{>b+1_i}P_\kappa$ in $\rad^{a+b} P_\sigma/\rad^{a+b+1_i} P_\sigma$ coincide.
By Nakayama's lemma these images are zero so that the image of $\rad^{b+1_i}P_\kappa$ is contained in $\rad^{a+b+1_i} P_\sigma$.

It suffices to show that the induced map $\varphi: \gr^{b+1_i} P_\kappa \ra \gr^{a+b+1_i} P_\sigma$ is nonzero. 
In fact, by \cite[Lemma 4.2.2]{LLLM2} this map is the socle of the the map 
\begin{equation}\label{eqn:2steps}
\rad^b P_\kappa/(\rad^{>b+1_i}P_\kappa + \sum_{j\neq i} \rad^{b+1_j}P_\kappa) \ra \rad^{a+b} P_\sigma/(\rad^{>a+b+1_i}P_\sigma + \sum_{j\neq i} \rad^{a+b+1_j}P_\sigma), 
\end{equation}
which is nonzero by the inductive hypothesis. 
If $\varphi=0$, then \eqref{eqn:2steps} factors through a map $\gr^b P_\kappa \ra \gr^{a+b+1_i} P_\sigma$ which must be $0$ by alcove considerations at embedding $i$. 
This is a contradiction. 
\end{proof}

\subsection{The covering property}\label{sec:covering}

For $\lambda \in X_1(\un{T})$, let $A(\lambda)$ be the set $\{j\in \cJ \mid \lambda_j \in A\}$. 
If $\sigma \cong F(\lambda)$, then we define $A(\sigma)$ to be $A(\lambda)$. 

Fix a tame $L$-parameter $\rhobar$ with a lowest alcove presentation $(s,\mu)$ for it. 
The main result of this section is the following result which plays a key role in \S \ref{subsec:proj_and_K1}. 

\begin{prop}\label{prop:W?-cover}
Let $\sigma \in W^?(\rhobar)$ {be $8$-deep.}
Fix $a \in \Z_{\geq 0}^\cJ$ and let $N \subset \rad^a P_\sigma$ be a submodule such that the cokernel of the induced map $N \ra \gr^a P_\sigma$ has no Jordan--H\"older factors in $W(\rhobar)$. 
Then no Jordan--H\"older factors of $\rad^a P_\sigma/N$ are in $W(\rhobar)$. 
\end{prop}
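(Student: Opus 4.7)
The plan is to reduce Proposition \ref{prop:W?-cover} to a combinatorial ``covering'' assertion for the radical filtration of $P_\sigma$, and then to verify this assertion by invoking the Loewy structure of $Q_1(\lambda_j)$ recorded in Figures \ref{fig:weyl:QB} and \ref{fig:weyl:QA} together with the combinatorial description of $W(\rhobar)$ via $r(\Sigma)$ in \S \ref{subsubsec:combinatorics_TandW}.

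First I would reformulate the conclusion in filtered form. For each $b\in \Z_{\geq 0}^{\cJ}$ with $b\geq a$, consider the statement
\[
(\star_b)\colon\ \text{the cokernel of } N\cap \rad^b P_\sigma\ \longrightarrow\ \gr^b P_\sigma\ \text{has no Jordan--H\"older factor in } W(\rhobar).
\]
The base case $(\star_a)$ is the hypothesis, since $N\subset \rad^a P_\sigma$. A filtration argument based on Proposition \ref{prop:cosoclefilter} (which identifies the radical of $\rad^b P_\sigma$ as $\rad^{>b} P_\sigma$) shows that $(\star_b)$ for all $b\geq a$ implies that $N$ contains every $W(\rhobar)$-factor of $\rad^a P_\sigma$, which is the desired conclusion. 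Thus I would carry out the induction $(\star_{b-1_i})\Rightarrow (\star_b)$ along the product partial order on $\Z_{\geq 0}^{\cJ}$.

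The inductive step hinges on the following \emph{covering lemma}: for every $\kappa \in W(\rhobar)\cap \JH(\gr^b P_\sigma)$ with $b>a$, there exist an index $i\in \cJ$, a Serre weight $\kappa'\in W(\rhobar)\cap \JH(\gr^{b-1_i} P_\sigma)$, and a map $\varphi\colon P_{\kappa'}\to P_\sigma$ whose image lies in $\rad^{b-1_i}P_\sigma$ but not in $\rad^{>b-1_i}P_\sigma$, and such that the induced map $\gr^{1_i}P_{\kappa'}\to\gr^b P_\sigma$ from Proposition \ref{prop:filtrationcompatible} hits the given copy of $\kappa$. Granting this, $(\star_{b-1_i})$ and the exactness of $\Hom_{\F[\rG]}(P_{\kappa'},-)$ produce a lift $\tilde{\varphi}\colon P_{\kappa'}\to N\cap \rad^{b-1_i}P_\sigma$ of the cosocle map of $\varphi$; by Proposition \ref{prop:filtrationcompatible}, $\tilde{\varphi}$ and $\varphi$ induce maps on $\gr^{1_i}P_{\kappa'}$ whose images in $\gr^b P_\sigma$ coincide up to a nonzero scalar, so the chosen copy of $\kappa$ lies in $\tilde{\varphi}(\rad^{1_i}P_{\kappa'})\subset N\cap \rad^b P_\sigma$, establishing $(\star_b)$.

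The main obstacle is the covering lemma itself, and this is where the structure theory of \S \ref{sec:modrep} and the combinatorics of $W(\rhobar)$ do the real work. By the tensor factorization $P_\sigma\cong \bigotimes_{j\in \cJ} Q_1(\lambda_j)|_{\rG}$ from Proposition \ref{prop:Q1proj}, the translation principle (Proposition \ref{prop:TP}), and Proposition \ref{prop:alg_rep_ext_graph}, the Jordan--H\"older factors of $\gr^b P_\sigma$ are parametrized by pairs $(\omega,(X_j)_{j\in\cJ})$ with $X_j$ a dominant alcove at ``distance $b_j$'' in the Loewy diagram of $Q_1(\lambda_j)$ and $\omega$ a weight of $L(\sum_j \omega_{X_j})$. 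The description of $W(\rhobar)$ via $r(\Sigma)$ in \S \ref{subsubsec:combinatorics_TandW} identifies the $W(\rhobar)$-factors of $P_\sigma$ with those $(\omega,(X_j)_{j\in\cJ})$ for which the alcove-bits of $(X_j)$ match a specific flip pattern relative to $\sigma$. The lemma then reduces to a one-embedding statement: given any $W(\rhobar)$-vertex of $\gr^{b}P_\sigma$, find an index $i$ along which the alcove-flip pattern has a neighbor at distance $b-1_i$ still in $W(\rhobar)$, and an edge in Figure \ref{fig:weyl:QB} or \ref{fig:weyl:QA} (realized by a Weyl or dual Weyl subquotient of $Q_1(\lambda_i)$ via Proposition \ref{prop:filtrations}) linking the two layers. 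I expect the hardest step to be this explicit case analysis over the ten possible alcove configurations appearing in Figure \ref{AlcoveLabels} and over the single vs.\ double-alcove cases for $\lambda_i\in A$ vs.\ $\lambda_i\in B$; the $8$-deep assumption on $\sigma$ is what permits all the translation/restriction arguments to remain in the $p$-bounded range where Propositions \ref{prop:cosoclefilter}, \ref{prop:TP}, and \ref{prop:filtrationcompatible} apply.
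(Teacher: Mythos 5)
Your high-level strategy — reduce the conclusion to a layer-by-layer covering property and induct along the radical filtration using the Loewy structure of $Q_1(\lambda)$, the translation principle, and the factorization over $\cJ$ — matches the paper's framework (which reduces to the two-layer Lemma \ref{lemma:W?-cover}). But the single lemma you label ``covering lemma'' and propose to reduce to a ``one-embedding statement'' is too weak as stated, and this is where the genuine content of the paper's argument lives. Your formulation asks, for each modular $\kappa \in \JH(\gr^b P_\sigma)$, for a \emph{single} $\kappa'\in W(\rhobar)\cap\JH(\gr^{b-1_i}P_\sigma)$ and a \emph{single} map $\varphi\colon P_{\kappa'}\to\rad^{b-1_i}P_\sigma$ such that $\gr^{1_i}P_{\kappa'}\to\gr^b P_\sigma$ hits the given copy of $\kappa$. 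When the relevant $\kappa$-isotypic subspace of $\gr^b P_\sigma$ is two-dimensional — precisely the entries $(A,E)\times 2$, $(A,F)\times 2$, $(A,B)\times 2$, $(B,C)\times 2$, $(B,D)\times 2$, $(E,A)\times 2$ in Tables \ref{table:ext_Q1:B}, \ref{table:ext_Q1:A} — a single $\varphi$ contributes at most a line (since $\kappa$ is multiplicity one in $\gr^{1_i}P_{\kappa'}$), and you would need to verify that two different choices contribute \emph{linearly independent} lines. This does not reduce to ``an edge in Figure \ref{fig:weyl:QB} or \ref{fig:weyl:QA}''. The paper handles it by a genuine inductive filtration $F^k\gr^b Q_1(\mu)$ inside the two-layer module, each quotient $F^k/F^{k-1}$ encased in an explicit Weyl/dual Weyl subquotient $M_k$, together with Propositions \ref{prop:mult2} and \ref{prop:doubleadj} (submodule must meet two adjacent weights; a duality argument) and Lemma \ref{lemma:directsum} (a direct-sum decomposition in the $\{G,J\}$ case). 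The rigid length-three extension $(C\oplus G, J)$ in Table \ref{table:ext_Q1:A} is yet another case, requiring the separate Proposition \ref{prop:genJ}; your framework of length-two edges does not cover it at all.

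There is also a smaller gap in your ``scalar uniqueness'' step: Proposition \ref{prop:filtrationcompatible} controls the radical layer in which the image of $\rad^{b}P_{\kappa'}$ lands, but does not assert that two lifts $\varphi,\tilde\varphi$ with proportional cosocle-compositions induce proportional maps $\gr^{1_i}P_{\kappa'}\to\gr^b P_\sigma$. The argument one would actually need is that the difference $\varphi - c\tilde\varphi$ lands in $\rad^{>b-1_i}P_\sigma$ and, by an alcove-parity argument, its restriction to $\rad^{1_i}P_{\kappa'}$ maps into $\rad^{>b}P_\sigma$; this is plausible, but it is not the content of Proposition \ref{prop:filtrationcompatible}, and in the multiplicity $\geq 2$ case ``the cosocle-compositions agree'' must itself be arranged carefully (the two lifts could hit different copies of $\kappa'$ in $\gr^{b-1_i}P_\sigma$). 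In short, the proposal has the right skeleton but leaves out precisely the structural facts about tilting modules — Propositions \ref{prop:weylext}, \ref{prop:mult2}, \ref{prop:doubleadj}, \ref{prop:genJ}, Lemma \ref{lemma:directsum} — that make the covering property true, and characterizing that as ``an explicit case analysis over the ten alcove configurations'' understates what has to be proved.
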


The proof of Proposition \ref{prop:W?-cover} follows inductively from the following lemma. 

\begin{lemma}\label{lemma:W?-cover}
Let $\sigma \in W^?(\rhobar)$ {be $8$-deep.} 
Fix $a,b \in \Z_{\geq 0}^\cJ$ such that for some $i\in \cJ$, $b_i = a_i +1$ and $b_j = a_j$ for all $j \neq i$. 
Let $N \subset \rad^a P_\sigma/\rad^{>b} P_\sigma$ be a submodule such that the cokernel of the induced map $N \ra \gr^a P_\sigma$ has no Jordan--H\"older factors in $W(\rhobar)$. 
Then no Jordan--H\"older factors of $(\rad^a P_\sigma/\rad^{>b} P_\sigma)/N$ are in $W(\rhobar)$. 
\end{lemma}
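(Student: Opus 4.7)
Write $\sigma = F(\lambda)$ with $\lambda \in X_1(\un{T})$, and let $i \in \cJ$ be the embedding with $b_i = a_i + 1$. The proof of Proposition \ref{prop:cosoclefilter} identifies $\rad^a P_\sigma/\rad^{>b}P_\sigma$ with the restriction to $\rG$ of the $\un{G}_{/\F}$-module
\[
V \defeq \bigl(\rad^{a_i}Q_1(\lambda_i)/\rad^{a_i+2}Q_1(\lambda_i)\bigr) \otimes \bigotimes_{j\neq i}\gr^{a_j}Q_1(\lambda_j),
\]
which sits in a short exact sequence $0 \to \gr^b Q_1(\lambda) \to V \to \gr^a Q_1(\lambda) \to 0$. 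Because each tensor factor of $\gr^b P_\sigma$ is a single Loewy layer of $Q_1(\lambda_j)$, it is a direct sum of simple $\GL_{3/\F}$-modules by Proposition \ref{prop:filtrations}(1), so $\gr^b P_\sigma$ is a semisimple $\rG$-module after restriction (via Proposition \ref{prop:alg_rep_ext_graph}). Hence the conclusion is equivalent to showing that for every $\kappa \in W(\rhobar) \cap \JH(\gr^b P_\sigma)$, the submodule $N$ contains a copy of $\kappa$.

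I would then decompose $V$ using the (dual) Weyl filtration of Proposition \ref{prop:filtrations}(2)-(3) applied at the $i$-th embedding. In the two-Loewy-layer window $[\rad^{a_i}, \rad^{a_i+2}]$, each graded piece of the filtration is either a simple module or a length-two (dual) Weyl module arising from an edge in Figure \ref{fig:weyl:QB} or Figure \ref{fig:weyl:QA}. Tensoring with the semisimple module $\bigotimes_{j\neq i}\gr^{a_j}Q_1(\lambda_j)$ produces a filtration of $V$ whose graded pieces have the form $V' \otimes \bigotimes_{j \neq i} L(\tld{w}_{X_j}\cdot \lambda_j)$, where $V'$ is either a simple or a non-split two-step (dual) Weyl module at embedding $i$. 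Restricting each such two-step graded piece to $\rG$ via Proposition \ref{prop:alg_rep_ext_graph} decomposes it as a direct sum of non-split $\rG$-extensions of adjacent Serre weights, parametrized by $\omega \in L(\sum_j \omega_{X_j})$.

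The heart of the argument is the following \emph{pairing property}: for every $\kappa \in W(\rhobar) \cap \JH(\gr^b P_\sigma)$, one of these two-step graded pieces of $V|_{\rG}$ contains a non-split extension whose cosocle involves some $\kappa' \in W(\rhobar) \cap \JH(\gr^a P_\sigma)$ and whose socle contains $\kappa$. Granting this, the hypothesis that the cokernel of $N \to \gr^a P_\sigma$ contains no $W(\rhobar)$ factor forces $N$ to contain a preimage of $\kappa'$, and the non-splitness of the extension forces any such preimage to contain $\kappa$ in its socle. Thus $\kappa \subset N$ as desired.

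The main obstacle will be the combinatorial verification of the pairing property. This reduces to matching (i) the edges of the algebraic extension graphs of Figures \ref{fig:weyl:QB}--\ref{fig:weyl:QA} at the $i$-th embedding, with (ii) the adjacency relation on $\Sigma_0$ introduced in \S \ref{subsubsec:combinatorics_TandW}, under the parametrization $W^?(\rhobar) = F(\mathfrak{Tr}_\lambda(t_{\mu+\eta-\lambda} s(r(\Sigma))))$ via the swap $r$. Concretely, one checks that each $i$-th alcove-flip move---an $\eps$-type adjacency in $\Sigma_0$ with differing alcove bits---is compatible with the algebraic edges appearing in the Weyl filtration of $V$, and that the swap $r$ preserves incidence under these moves, so that membership in $W^?(\rhobar)$ is preserved along each relevant edge. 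The $8$-deepness hypothesis ensures that Propositions \ref{prop:alg_rep_ext_graph} and \ref{prop:TP} apply without degeneration, reducing everything to a finite case check over the alcove configurations $X_j$ allowed for $P_\sigma$.
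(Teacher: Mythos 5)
Your reduction to showing $\kappa\subset N$ for every $\kappa\in W^?(\rhobar)\cap\JH(\gr^b P_\sigma)$ is sound, and so is the identification of $\rad^a P_\sigma/\rad^{>b}P_\sigma$ with the restriction of the two-layer algebraic module $V$ with its Weyl and dual Weyl filtrations at embedding $i$. However, the ``pairing property'' as you state it is both too optimistic and its key reduction step is false, and this is precisely where the actual work in the paper lives.

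The claim that each two-step graded piece of $V$ restricts over $\rG$ to a direct sum of non-split length-two $\rG$-extensions is not true in general. The paper's Lemma \ref{lemma:directsum} establishes this decomposition \emph{only} for the alcove pair $\{G,J\}$, and Proposition \ref{prop:weylext} says that the composite $P_{\sigma^1}\to M|_{\rG}\to P_{\sigma^0}$ is nonzero if and only if $\Ext^1_{\F[\rG]}(\sigma^1,\sigma^0)\neq 0$; when $\sigma^0$ and $\sigma^1$ are not adjacent in the extension graph, the ``pair'' you want to use does not exist. More seriously, Propositions \ref{prop:mult2} and \ref{prop:doubleadj} show that for the alcove pairs $(G,E)$, $(G,F)$, $(E,G)$, $(F,G)$ the locking mechanism is qualitatively different: one modular cosocle factor $\kappa'$ adjacent to $\kappa$ does \emph{not} force $\kappa\subset N$; you need \emph{two} distinct cosocle weights, and conversely one cosocle factor in $N$ forces \emph{two} socle factors, not one. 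There is also the genuinely length-three configuration $(C\oplus G,J)$ handled by Proposition \ref{prop:genJ}, where multiplicity-two weights in the cosocle enter. None of these phenomena are captured by the single-edge pairing you propose. Your final paragraph defers the ``combinatorial verification'' to a straightforward edge-matching between the algebraic extension graph and $\Sigma_0$-adjacency, but the case analysis in Proposition \ref{prop:extcover} shows this is not a bijective pairing: in the $X^1_i\in\{E,F\}$, $X^0_i\in\{C,I\}$ case the modular weight in $\gr^a$ linked to $\kappa$ can fail to be adjacent to it, and one has to argue that the \emph{linked} (rather than adjacent) weight in $\JH(\gr^a)$ is still modular and that Proposition \ref{prop:weylext} still bites.

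In short, the missing content is Propositions \ref{prop:mult2}, \ref{prop:doubleadj}, \ref{prop:weylext}, \ref{prop:genJ} and Lemma \ref{lemma:directsum}, which together constitute the actual proof that each filtration piece $M_k$ kills $\sigma^0$. Without these, the deduction ``a preimage of $\kappa'$ in $N$ forces $\kappa\subset N$'' does not go through, because the element of $N$ projecting onto $\kappa'$ need not lie in the filtration piece carrying the non-split extension, and in the multiplicity-two and non-adjacent cases it provably does not suffice.
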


\begin{proof}[Proof of Proposition \ref{prop:W?-cover}]
We proceed by induction. 
For convenience, for any submodule $M \subset \rad^a P_\sigma$, we write $\ovl{M}$ for the image of $M$ in $\rad^a P_\sigma/N$. 
Similarly, for submodules $M'\subset M \subset \rad^a P_\sigma$, we write $\ovl{M/M'}$ for $\ovl{M}/\ovl{M'}$.
Proposition \ref{prop:W?-cover} holds for $a$ sufficiently large. 
Let $a \in \Z_{\geq 0}^\cJ$ and suppose that Proposition \ref{prop:W?-cover} holds for any $b \in \Z_{\geq 0}^\cJ$ as in Lemma \ref{lemma:W?-cover}. 
Suppose that $N$ is as in Proposition \ref{prop:W?-cover}. 
By the exact sequence
\[
0\ra \sum_{\substack{b>a\\ \text{$b$ as in L.~\ref{lemma:W?-cover}}}}\ovl{\rad^b P_\sigma} \ra \ovl{\rad^a P_\sigma} \ra \ovl{\gr^a P_\sigma} \ra 0, 
\]
it suffices to show that for any $b \in \Z_{\geq 0}^\cJ$ as in Lemma \ref{lemma:W?-cover}, $\ovl{\rad^b P_\sigma}$ contains no Jordan--H\"older factors in $W^?(\rhobar)$. 

Let $b \in \Z_{\geq 0}^\cJ$ be as in the above exact sequence, i.e., as in Lemma \ref{lemma:W?-cover}. 
By Lemma \ref{lemma:W?-cover}, %
$\ovl{\rad^a P_\sigma/\rad^{>b} P_\sigma}$ contains no Jordan--H\"older factors in $W^?(\rhobar)$. 
Thus $\ovl{\gr^b P_\sigma} \subset \ovl{\rad^a P_\sigma/\rad^{>b} P_\sigma}$ contains no Jordan--H\"older factors in $W^?(\rhobar)$. 
By the inductive hypothesis, $\ovl{\rad^b P_\sigma}$ contains no Jordan--H\"older factors in $W^?(\rhobar)$. 
\end{proof}

The proof of Lemma \ref{lemma:W?-cover} requires a series of results. 
Let $\lambda = (\lambda_j)_j \in X_1(\un{T})$ be {$8$-deep} in alcove $A^{\cJ}$. 
If $X = (X_j)_j$ is a dominant alcove, then let $\lambda_X \defeq \sum_j \lambda_{j,X_j}$ (using notation from \S \ref{sec:modrep}).
In particular we have $\lambda_{j,X_j}=\tld{w}_{X_j}\cdot \lambda_j=\lambda_{j,X_j}^0+p\omega_{X_j}$ where $\lambda_{j,X_j}^0\defeq \tld{w}^0_{X_j}\cdot \lambda_j\in X_1^*(T)$.
For the remainder of this section, $M = M_i \otimes \otimes_{j\neq i} L(\lambda_{j,X_j})$ where $X_j \in \{A,B,C,D,E,F,G,H,I,J\}$ for all $j\neq i$ and $M_i$ is a rigid module with Loewy length two. 
Then $M$ is rigid of Loewy length two and we let $M^1$ and $M^0$ be the cosocle and socle, respectively. 
Unless otherwise stated, $M_i$ is a nonsplit extension of $L(\lambda_{i,X^1_i})$ by $L(\lambda_{i,X^0_i})$. 
We will require the following lemmas. 

\begin{lemma}\label{lemma:alcoveconsider}
Let $\sigma \in \JH(M^1|_{\rG})$. 
Let $N \subset N' \subset M|_{\rG}$ be submodules such that the cokernel of the natural map $N \ra \cosoc\, N'$ does not contain $\sigma$ as a Jordan--H\"older factor. 
Then $N'/N$ does not contain $\sigma$ as a Jordan--H\"older factor. 
\end{lemma}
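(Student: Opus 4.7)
The plan is to exploit the Loewy length $2$ hypothesis on $M$ in order to separate the analysis into a cosocle piece (controlled by the hypothesis) and a radical piece (controlled by alcove combinatorics), and then to contradict the simultaneous occurrence of $\sigma$ in $\JH(M^1|_{\rG})$ and $\JH(M^0|_{\rG})$.

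First, because $M$ has Loewy length $2$, its radical equals $M^0$; in particular $\rad N' = N' \cap M^0$ for any submodule $N' \subset M|_{\rG}$. Consider the short exact sequence
\[
0 \;\longrightarrow\; \rad N'/(N\cap \rad N') \;\longrightarrow\; N'/N \;\longrightarrow\; N'/(N+\rad N') \;\longrightarrow\; 0.
\]
The right-hand term is canonically identified with $\cosoc N'/\mathrm{im}(N\to \cosoc N')$, which by hypothesis does not contain $\sigma$ as a Jordan--H\"older factor. If $\sigma$ were a Jordan--H\"older factor of $N'/N$, it would therefore have to appear in $\rad N'/(N\cap \rad N')$, which is a subquotient of $\rad N' \subset M^0$. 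Hence we would obtain $\sigma \in \JH(M^0|_{\rG}) \cap \JH(M^1|_{\rG})$.

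The final step is to derive a contradiction from the two memberships via Proposition~\ref{prop:alg_rep_ext_graph}. By construction $M^1 = L(\lambda_{X^1})$ and $M^0 = L(\lambda_{X^0})$, where $X^1$ and $X^0$ are the tuples of dominant alcoves that agree with $(X_j)_{j \neq i}$ away from $i$ and take the values $X^1_i$, $X^0_i$ at position $i$. Proposition~\ref{prop:alg_rep_ext_graph} describes $\JH(L(\lambda_{X^b})|_{\rG})$ ($b=0,1$) in terms of the graph $\Sigma$: the alcove component at embedding $\pi(i)$ of every Jordan--H\"older factor equals $\tld{w}^0_{X^b_i}(A)$. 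Thus the intersection $\JH(M^0|_{\rG}) \cap \JH(M^1|_{\rG})$ is nonempty only if $\tld{w}^0_{X^1_i}(A) = \tld{w}^0_{X^0_i}(A)$.

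The main (and essentially only) obstacle is to rule out this alcove coincidence. Since $M_i$ is a \emph{nonsplit} extension of $L(\lambda_{i,X^1_i})$ by $L(\lambda_{i,X^0_i})$, Proposition~\ref{prop:ext} together with the linkage principle forces $X^1_i$ and $X^0_i$ to be distinct dominant alcoves whose highest weights are linked across a single wall; inspecting the extension edges recorded in Figures~\ref{fig:weyl:QB} and~\ref{fig:weyl:QA} (or equivalently the fact that $\tld{w}^0_{X^1_i}$ and $\tld{w}^0_{X^0_i}$ lie in $\tld{W}_1^+$ and differ by a simple reflection through a $p$-restricted wall) shows that $\tld{w}^0_{X^1_i}(A)$ and $\tld{w}^0_{X^0_i}(A)$ are distinct $p$-restricted alcoves. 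This contradiction completes the argument.
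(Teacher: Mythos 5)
Your argument is the same as the paper's: the short exact sequence you write down (with kernel a subquotient of $\rad N'\subset M^0|_{\rG}$ and cokernel $\coker(N\to\cosoc N')$) is exactly the paper's $0\to N''\to N'/N\to\coker\varphi\to 0$, and the key point that $\sigma\notin\JH(M^0|_{\rG})$ is what the paper calls ``alcove considerations.'' Two small inaccuracies, neither of which affects the conclusion: the assertion $\rad N'=N'\cap M^0$ is false in general (take $N'$ a simple submodule of $M^0$), but you only ever use $\rad N'\subset M^0$, which always holds; and the claim that $X^1_i,X^0_i$ are ``linked across a single wall'' (equivalently that $\tld w^0_{X^1_i}$ and $\tld w^0_{X^0_i}$ differ by a simple reflection) is also not literally true for all the pairs appearing in Tables~\ref{table:ext_Q1:B}, \ref{table:ext_Q1:A} (e.g.~$(E,C)$, $(E,G)$). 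What \emph{is} true, and what the figures encode, is that adjacent Loewy layers of $Q_1(\lambda_i)$ alternate between the two $p$-restricted alcoves, so $\tld w^0_{X^1_i}(A)\neq\tld w^0_{X^0_i}(A)$; stating that directly is cleaner than passing through the single-wall claim.
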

\begin{proof}
If $\varphi: N \ra \cosoc\, N'$ is the natural map, there is an exact sequence 
\[
0 \ra N'' \ra N'/N \ra \coker \varphi \ra 0 
\]
for some subquotient $N''$ of $M^0|_{\rG}$. 
Since $\sigma\notin \JH(M^0|_{\rG})$ by alcove considerations, and $\sigma\notin \JH(\coker \varphi)$ by assumption, $\sigma \notin \JH(N'/N)$. 
\end{proof}

\begin{lemma}
\label{lem:reduction_pres}
There exist a finite set $S$, $6$-deep weights $\lambda^k\in X_1(\un{T})$ in alcove $A^{\cJ}$ for each $k\in S$ with $\lambda^k_{\pi(i)} = \lambda_{\pi(i)}$, and alcoves $Y_j \in \{A,B\}$ for each $j\neq i$ such that with $M_{k,i}$ a nonsplit extension of $L(\lambda^k_{i,X^1_i})$ by $L(\lambda^k_{i,X^0_i})$ and $M_k=M_{k,i} \otimes \otimes_{j\neq i} L(\lambda^k_{j,Y_j})$ for each $k\in S$, $M|_{\rG}\cong\oplus_{k\in S}{M_k}|_{\rG}$. 
\end{lemma}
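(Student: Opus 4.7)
The plan is to prove the decomposition by induction on $n \defeq \#\{j\neq i : X_j \notin \{A,B\}\}$, using Steinberg's tensor product theorem to simplify one ``bad'' alcove at a time. The base case $n=0$ is trivial (take $S$ a singleton with $\lambda^{*}=\lambda$, $Y_j=X_j$, $M_{*}=M$). For the inductive step, fix $j_0\neq i$ with $X_{j_0}\notin\{A,B\}$ and set $Y_{j_0} \defeq \tld{w}^0_{X_{j_0}}(A)$. Since $\tld{w}^0_{X_{j_0}}\in\tld{\un{W}}_1^+$ and the $p$-restricted dominant alcoves for $\GL_3$ are exactly $A$ and $B$, we have $Y_{j_0}\in\{A,B\}$. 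Steinberg's theorem then yields
\[
L(\lambda_{j_0,X_{j_0}}) \cong L(\lambda_{j_0,Y_{j_0}}) \otimes L(\omega_{X_{j_0}})^{(1)},
\]
where $(1)$ denotes the Frobenius twist, so $M \cong M''\otimes L(\omega_{X_{j_0}})^{(1)}$ with $M''$ obtained from $M$ by replacing $L(\lambda_{j_0, X_{j_0}})$ with $L(\lambda_{j_0, Y_{j_0}})$.

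On restriction to $\rG$, the character identity $p\mu\equiv \pi\mu$ on the $\F_p$-points of $\un{T}$ gives $L(\omega_{X_{j_0}})^{(1)}|_\rG \cong L(\pi\omega_{X_{j_0}})|_\rG$, a simple $\rG$-module whose highest weight is concentrated in embedding $\pi(j_0)$. Hence $M|_\rG \cong M''|_\rG \otimes L(\pi\omega_{X_{j_0}})|_\rG$. The weight $\pi\omega_{X_{j_0}}$ is small (its coordinates are bounded by the fundamental weights, and in particular satisfy $\langle\pi\omega_{X_{j_0}},\alpha^\vee\rangle\leq 2$ for all roots $\alpha$), and $\lambda$ is $8$-deep, so the translation principle (Proposition \ref{prop:TP}) implies that each tensor product $L(\lambda_{i,X^a_i}\text{-factor})\otimes L(\pi\omega_{X_{j_0}})$ is semisimple, decomposing as $\bigoplus_\nu L(\text{shifted weight})$ indexed by weights $\nu$ of $L(\pi\omega_{X_{j_0}})$, with the shift localized in embedding $\pi(j_0)$. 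Because $\dim\Ext^1$ between linked simples is at most $1$ (Proposition \ref{prop:ext}), the extension class of $M''$ distributes as a collection of nonsplit extension classes across these summands, giving $M|_\rG \cong \bigoplus_\xi M_\xi|_\rG$ where each $M_\xi$ is a $2$-step extension of the claimed form. Since $j_0\neq i$ forces $\pi(j_0)\neq\pi(i)$, the weight in embedding $\pi(i)$ is left untouched, preserving $\lambda^\xi_{\pi(i)}=\lambda_{\pi(i)}$.

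Applying the inductive hypothesis to each summand $M_\xi$ (which has one fewer bad alcove) gives the desired decomposition. The main obstacle is verifying that the nonsplit extension class of $M''$ really does distribute into nonsplit classes on each summand of the tensor-decomposition; this relies on coupling the multiplicity-one $\Ext^1$ statement (Proposition \ref{prop:ext}) with the semisimplicity of the relevant tensor products of simples under the depth hypothesis. The cumulative weight shifts over the induction are bounded by $\sum_{j\neq i}|\omega_{X_j}|$, a small constant depending only on Figure \ref{AlcoveLabels}, so the depth drops from $8$ to at least $6$, and each $\lambda^k$ stays inside $A^\cJ$.
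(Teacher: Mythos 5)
Your proof is correct and follows the same route as the cited reference (the proof of Proposition 4.2.10 in \cite{LLLM2}): Steinberg's tensor product theorem to strip off the Frobenius-twisted factor, the identification $L(\omega)^{(1)}|_\rG\cong L(\pi\omega)|_\rG$, and the translation principle to decompose the resulting tensor product into translates of $M''$ in the various blocks.

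One small clarification on the step you flag as the ``main obstacle.'' The cleanest way to see that each block component $N_\nu$ of $M''\otimes L(\pi\omega_{X_{j_0}})$ is a non-split extension is to note that it is the image of $M''$ under a translation functor between regular weights lying in the \emph{same} alcove (the depth hypothesis on $\mu^0$ and the smallness of $\nu$ ensure this), and such a translation functor is an equivalence of blocks, hence preserves non-splitness. Alternatively, if some $N_\nu$ were split then $L(\mu^0+\nu)$ would be a quotient of $M''\otimes L(\eps)$, so $\Hom(M'',L(\mu^0+\nu)\otimes L(-w_0\eps))\neq 0$, forcing $\mu^1-\mu^0$ to lie in the set of weights of $L(\eps)\otimes L(-w_0\eps)$, which contradicts the fact that $\mu^0$ and $\mu^1$ lie in different alcoves deep in the alcove geometry. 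Proposition \ref{prop:ext} is what makes ``the'' non-split extension well-defined (so that the conclusion of the lemma is well-posed), but the non-splitness of each summand is more directly a consequence of the alcove-preserving translation-equivalence rather than the $\Ext^1$ dimension bound itself. Your depth bookkeeping is right: each embedding $\pi(j_0)$ is shifted at most once, by a weight of $L(\omega_{X_{j_0}})$ with $|\langle\nu,\alpha^\vee\rangle|\leq 2$ for all $\alpha\in\Phi$, so $8$-deep drops to at least $6$-deep.
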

\begin{proof}
See the proof of \cite[Proposition 4.2.10]{LLLM2}.
\end{proof}

\begin{prop}\label{prop:weylext}
Suppose that $X^1_i,X^0_i\in \{A,B,C,D,E,F,G\}$ or 
\[
(X^1_i,X^0_i) \in \{(C,J), (D,J), (J,C), (J,D), (E,H), (F,I), (H,E), (I,F)\}.
\]
Let $\sigma^n$ be a Jordan--H\"older factor in $M^n|_{\rG}$ with multiplicity one for $n = 0$ and $1$ and fix nonzero maps (unique up to scalar) $P_{\sigma^1} \ra M|_{\rG}$ and $M|_{\rG}\ra P_{\sigma^0}$.
Then the composition $P_{\sigma^1} \ra M|_{\rG}\ra P_{\sigma^0}$ is nonzero if and only if $\Ext^1_{\F[\rG]}(\sigma^1,\sigma^0) \neq 0$.
\end{prop}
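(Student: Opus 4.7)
The proof centers on identifying the composition $\psi\circ\varphi$ with the image of the extension class of $M|_\rG$ in the appropriate Yoneda Ext group, and then using the structure of $Q_1(\lambda)$ to read off non-triviality of this image from the Serre weight extension graph.

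For the forward direction, I would start from the short exact sequence $0 \to M^0|_\rG \to M|_\rG \to M^1|_\rG \to 0$ with class $[M|_\rG] \in \Ext^1_{\F[\rG]}(M^1|_\rG, M^0|_\rG)$. Functoriality along $\sigma^1 \hookrightarrow M^1|_\rG$ and $M^0|_\rG \twoheadrightarrow \sigma^0$ (each a direct summand thanks to semisimplicity of $M^n|_\rG$ and the multiplicity-one hypothesis) gives a projection onto the $(\sigma^1,\sigma^0)$-component $[M|_\rG]_{\sigma^1,\sigma^0} \in \Ext^1_{\F[\rG]}(\sigma^1,\sigma^0)$. Using projectivity of $P_{\sigma^1}$ (for the left exactness of $\Hom(P_{\sigma^1},-)$) and injectivity of $P_{\sigma^0}$ (since $\F[\rG]$ is Frobenius), one checks that $\psi\circ\varphi$ corresponds precisely to $[M|_\rG]_{\sigma^1,\sigma^0}$ under the canonical map $\Ext^1_{\F[\rG]}(\sigma^1,\sigma^0) \hookrightarrow \Hom_{\F[\rG]}(P_{\sigma^1},P_{\sigma^0})$ sending an extension class to the associated map of projective covers. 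In particular $\psi\circ\varphi \neq 0$ forces $\Ext^1_{\F[\rG]}(\sigma^1,\sigma^0) \neq 0$, which yields the forward implication (equivalently, the image of $\psi\circ\varphi$ in $P_{\sigma^0}$ has cosocle $\sigma^1$ and socle $\sigma^0$; when $\sigma^1 \neq \sigma^0$, Lemma \ref{lemma:alcoveconsider} applied to the top two radical layers produces a nonsplit extension).

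For the reverse direction, I would first apply Lemma \ref{lem:reduction_pres} to reduce to the case $X_j \in \{A,B\}$ for all $j \neq i$. Under the hypothesis on $(X^1_i,X^0_i)$, inspection of the edges in Figures \ref{fig:weyl:QB} and \ref{fig:weyl:QA} shows that $M_i$ occurs as a subquotient of a Weyl or dual Weyl module appearing in the filtration of $Q_1(\lambda_i)$ (Proposition \ref{prop:filtrations}). Tensoring with $\otimes_{j\neq i} L(\lambda_{j,X_j})$, the module $M$ becomes a subquotient of the analogous Weyl (or dual Weyl) submodule of $Q_1(\lambda)$, so by Proposition \ref{prop:Q1proj} we realize $M|_\rG$ as a subquotient of $P_{F(\lambda)}$. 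Within such a Weyl subquotient, Proposition \ref{prop:alg_rep_ext_graph} converts each edge of the graphs in Figures \ref{fig:weyl:QB}--\ref{fig:weyl:QA} into a concrete $\Ext^1_{\F[\rG]}$-adjacency between Serre weights in adjacent Loewy layers, and the pair $(\sigma^1,\sigma^0)$ under consideration gives such an adjacency precisely when $\Ext^1_{\F[\rG]}(\sigma^1,\sigma^0) \neq 0$. Tracking through the identification, this adjacency is realized by a length-two subquotient of $M|_\rG$, i.e.\ $[M|_\rG]_{\sigma^1,\sigma^0} \neq 0$, which by the first paragraph gives $\psi\circ\varphi \neq 0$.

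The main obstacle is the last step of the reverse direction: verifying that every $\Ext^1$-adjacency between $\JH(M^1|_\rG)$ and $\JH(M^0|_\rG)$ provided by $\Ext^1_{\F[\rG]}(\sigma^1,\sigma^0)\neq 0$ actually \emph{persists} in the Weyl/dual Weyl subquotient $M|_\rG$, and is not absorbed into other extensions present in the ambient $P_{F(\lambda)}$. I expect this to reduce to a case-by-case check across the eleven allowed configurations of $(X^1_i, X^0_i)$, matching each edge in Figures \ref{fig:weyl:QB}--\ref{fig:weyl:QA} with the corresponding Weyl/dual Weyl subquotient containing $M$, and using alcove/combinatorial arguments (in the spirit of Lemma \ref{lemma:alcoveconsider}) to isolate the relevant length-two piece; a direct model is furnished by the proof of \cite[Proposition 4.2.10]{LLLM2} once the correct Weyl subquotient has been identified.
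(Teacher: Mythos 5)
Your forward direction is fine and essentially standard: since $M|_\rG$ has Loewy length two, if the composition $\psi\circ\varphi$ is nonzero its image has simple cosocle $\sigma^1$, simple socle $\sigma^0$, and length two, hence exhibits a nonsplit extension class in $\Ext^1_{\F[\rG]}(\sigma^1,\sigma^0)$ (provided $\sigma^1\not\cong\sigma^0$, which holds here by alcove considerations at the distinguished embedding). The paper simply folds this into its citation, so your explication is a legitimate complement.

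The genuine gap is the reverse direction, and it is larger than you estimate. You correctly reduce to the case $X_j\in\{A,B\}$ for $j\neq i$ via Lemma \ref{lem:reduction_pres} and correctly locate $M$ inside a Weyl or dual Weyl subquotient of $Q_1(\lambda)$. But the step you flag as the ``main obstacle'' --- that an $\Ext^1_{\F[\rG]}$-adjacency between $\JH(M^1|_\rG)$ and $\JH(M^0|_\rG)$ is actually realized inside $M|_\rG$ rather than being lost under restriction --- is the entire content of the proposition, and your plan to ``match each edge with the corresponding Weyl subquotient'' does not by itself produce it. The paper's proof handles this by invoking the explicit arguments of \cite[Propositions 4.2.10, 4.2.12]{LLLM2}: the wholly $\{A,\dots,G\}$ configurations come from 4.2.10; after dualizing it suffices to treat $(J,C),(J,D),(H,E),(I,F)$; the first two are modeled on the $(G,E),(G,F)$ cases of 4.2.12; and crucially, the new cases $(H,E),(I,F)$ require modifying the final step of that argument by replacing the relevant tensor factor with $L(2\omega_{X_i^1})$, identified as the submodule of $L(\omega_{X_i^1})\otimes L(\omega_{X_i^1})$ on which the factor-swap involution acts by $+1$. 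This symmetric-square input is the genuinely new technical ingredient and is absent from your proposal. Without it, ``a case-by-case check in the spirit of 4.2.10'' does not close the argument for the $(H,E)$ and $(I,F)$ configurations, which are precisely the cases that go beyond the prior literature.
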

\begin{proof}
This follows from the proofs of \cite[Propositions 4.2.10 and 4.2.12]{LLLM2}. 
Indeed, if $X^1_i,X^0_i\in \{A,B,C,D,E,F,G\}$, then the result follows from \cite[Proposition 4.2.10]{LLLM2}. 
We will consider the cases $(X^1_i,X^0_i) \in \{(J,C), (J,D), (H,E), (I,F)\}$. 
The remaining cases follow by duality. 
One reduces to the cases where $X_j\in\{A,B\}$ for $j\neq i$ by Lemma \ref{lem:reduction_pres} (see the proof of \cite[Proposition 4.2.10]{LLLM2}).
The cases $(X^1_i,X^0_i)=(J,C)$ or $(J,D)$ are similar to the cases $(G,E)$ and $(G,F)$ covered in \cite[Proposition 4.2.12]{LLLM2}. 
The cases $(X^1_i,X^0_i)$ is $(H,E)$ or $(I,F)$ are also similar to the cases just mentioned. 
Indeed, in the last step of the proof of \cite[Proposition 4.2.12]{LLLM2} one uses instead that $L(2\omega_{X_i^1})$ is isomorphic to the submodule of $L(\omega_{X_i^1})\otimes L(\omega_{X_i^1})$ on which the involution given by switching tensor factors acts by $1$. 
\end{proof}

\begin{lemma}\label{lemma:directsum}
If $\{X^1_i,X^0_i\} = \{G,J\}$, then $M|_{\rG}$ is a direct sum of indecomposable length two modules.
\end{lemma}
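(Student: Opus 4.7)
} My plan is to decompose $M_i$ as a tensor product of a restricted-weight extension with a Frobenius-twisted module, and then use Steinberg's tensor product theorem together with the restriction formula to exhibit $M|_{\rG}$ as a direct sum of length-two pieces. First, I would invoke Lemma \ref{lem:reduction_pres} to reduce to the case where $X_j \in \{A,B\}$ for every $j \neq i$, so that the factors away from the embedding $i$ restrict to manifestly semisimple $\rG$-modules via Proposition \ref{prop:alg_rep_ext_graph}.

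The central geometric claim is that, in the labeling of Figure \ref{AlcoveLabels}, the alcoves $G$ and $J$ satisfy $\omega_G = \omega_J$ (only the restricted part $\tld{w}_X^0$ differs). This should be verified directly from the definitions $\tld{w}_X = t_{\omega_X}\tld{w}_X^0$. Granting this equality, Steinberg's tensor product theorem gives
\[
L(\lambda_{i,X_i^\epsilon}) \,\cong\, L(\tld{w}_{X_i^\epsilon}^0 \cdot \lambda_i) \otimes L(\omega_G)^{[1]} \qquad (\epsilon = 0,1),
\]
and the K\"unneth formula for extensions between Steinberg factorizations (using that $L(\omega_G)$ is simple) implies that the one-dimensional space $\Ext^1_{\GL_3}(L(\lambda_{i,X_i^1}),L(\lambda_{i,X_i^0}))$ comes by tensoring with $L(\omega_G)^{[1]}$ from the one-dimensional space $\Ext^1_{\GL_3}(L(\tld{w}_{X_i^1}^0 \cdot \lambda_i),L(\tld{w}_{X_i^0}^0 \cdot \lambda_i))$. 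By uniqueness of non-split extensions (Proposition \ref{prop:ext}), $M_i \cong M_i^0 \otimes L(\omega_G)^{[1]}$ for a unique non-split extension $M_i^0$ of restricted-weight simples.

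Restricting to $\rG$, Frobenius acts trivially on $\F_p$-points, so $M_i|_{\rG} \cong M_i^0|_{\rG} \otimes L(\omega_G)|_{\rG}$. Combining with the restrictions of the tensor factors at $j \neq i$, Proposition \ref{prop:alg_rep_ext_graph} (using the genericity of $\lambda$) decomposes $L(\omega_G) \otimes \otimes_{j\neq i} L(\lambda_{j,X_j})$ restricted to $\rG$ as a multiplicity-free direct sum $\bigoplus_k F_k$ of distinct Serre weights. This yields $M|_{\rG} \cong \bigoplus_k (M_i^0|_{\rG} \otimes F_k)$, and each summand is an indecomposable length-two $\rG$-module: its two constituents are distinct Serre weights (since $\tld{w}_{X_i^0}^0 \cdot \lambda_i$ and $\tld{w}_{X_i^1}^0 \cdot \lambda_i$ lie in different restricted alcoves), and non-splitness is inherited from $M_i \subset Q_1(\lambda_i)$ via Proposition \ref{prop:Q1proj}, which identifies non-split $\GL_3$-extensions inside $Q_1$ with non-split $\rG$-extensions inside the corresponding projective envelope.

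The principal obstacle is the alcove-geometric verification that $\omega_G = \omega_J$; this is the structural reason the pair $\{G,J\}$ is exceptional and does not fit the framework of Proposition \ref{prop:weylext}. If this identity holds in the chosen labeling, the rest of the argument is essentially formal. A secondary technical point is confirming that the tensor decomposition of $L(\omega_G)|_{\rG} \otimes \otimes_{j\neq i} L(\lambda_{j,X_j})|_{\rG}$ is multiplicity-free, which follows from the extension-graph description and the deepness of $\lambda$.
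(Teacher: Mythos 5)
Your approach is genuinely different from the paper's, which works entirely at the level of the finite group $\rG$: it decomposes the cosocle $M^1|_{\rG} = \oplus_k \sigma_k$, lifts each $\sigma_k$ via a projective cover to a submodule $N_k \subset M|_{\rG}$, and shows each $N_k$ has length exactly $2$ --- at least $2$ because $\sigma_k\notin\JH(\soc M|_{\rG})$ by \cite[Lemma 4.2.2]{LLLM2}, and at most $2$ because by \cite[Lemma 4.2.6]{LLLM2} there is a \emph{unique} $\sigma'_k\in\JH(M^0|_{\rG})$ with $\Ext^1_{\F[\rG]}(\sigma_k,\sigma'_k)\ne 0$, and that $\Ext^1$ is one-dimensional. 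Then $\oplus_k N_k\to M|_{\rG}$ is a surjection between modules of the same length, hence an isomorphism. Your Steinberg-theoretic idea (that $G$ and $J$ are the same translate of the two restricted alcoves by $p\omega_G$) is a nice structural explanation of why $\{G,J\}$ is the exceptional pair, and it avoids citing \cite[Lemma 4.2.6]{LLLM2}, but it replaces a clean citation with technology the paper never uses in this argument.

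There is, however, a genuine gap in your final step. Even granting $\omega_G=\omega_J$ and the factorization $M_i\cong M_i^0\otimes L(\omega_G)^{[1]}$ (itself not immediate: you need the natural map $\Ext^1_{\GL_3}(L(\nu),L(\nu'))\to\Ext^1_{\GL_3}(L(\nu)\otimes L(\omega_G)^{[1]},L(\nu')\otimes L(\omega_G)^{[1]})$ to be nonzero, which requires the five-term LHS sequence for the Frobenius kernel, not just ``K\"unneth''), the conclusion that each $M_i^0|_{\rG}\otimes F_k$ is an \emph{indecomposable length-two} $\rG$-module is false as stated. The module $M_i^0|_{\rG}$ has two constituents, and $F_k$ is a simple Serre weight; their tensor is the restriction of an algebraic tensor whose constituents, by the translation principle (and the deepness hypothesis), break up into several simple summands. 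So $M_i^0|_{\rG}\otimes F_k$ generically has length $2m$ with $m>1$, and decomposing \emph{that} into length-two indecomposables is precisely what the lemma asserts and requires the one-to-one $\Ext^1$ argument you are trying to avoid. (You have also conflated the Frobenius twist with the identity on $\rG$-points: in this Weil-restricted setting Frobenius shifts the embedding index by $\pi$, so ``$L(\omega_G)|_{\rG}$'' sits at embedding $\pi(i)$, not $i$; this affects where the $\omega_G$ piece lands in $F_k$ and hence whether the claimed tensor decomposition is even correctly organized.) Without this last step being repaired, the proposal does not prove the lemma.
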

\begin{proof}
Decompose $M^1|_{\rG} = \oplus_k \sigma_k$ (we identify $\sigma_k$ with a submodule of $M^1|_{\rG}$ even though various $\sigma_k$ may be isomorphic). 
Let $N_k \subset M|_{\rG}$ be the image of a map $P_{\sigma_k} \ra M|_{\rG}$ lifting a projective cover $P_{\sigma_k} \onto \sigma_k\subset M^1|_{\rG}$ of $\sigma_k$. 
For each $k$, the length of $N_k$ is at least two since $\sigma_k \notin \JH(\soc\, M|_{\rG})$ by \cite[Lemma 4.2.2]{LLLM2}.
Moreover, since there is a unique $\sigma'_k\in \JH(M^0|_{\rG})$ such that $\Ext^1_{\F[\rG]}(\sigma_k,\sigma'_k)$ is nonzero and $\dim_{\F} \Ext^1_{\F[\rG]}(\sigma_k,\sigma'_k) = 1$ by \cite[Lemma 4.2.6]{LLLM2}, the length of $N_k$ is exactly two. 
The natural map $\oplus_k N_k \ra M|_{\rG}$ is a surjective map between objects of the same length, and is thus an isomorphism.
\end{proof}

We now let $M^A$ and $M^B$ be $L(\lambda_{i,A}) \otimes \otimes_{j\neq i} L(\lambda_{j,X_j})$ and $L(\lambda_{i,B}) \otimes \otimes_{j\neq i} L(\lambda_{j,X_j})$, respectively.
Fix $\sigma \in \JH(M^A|_{\rG})$.

\begin{prop} \label{prop:mult2}
Suppose $(X^1_i,X^0_i) = (G,E)$ or $(G,F)$.
If $N$ is a submodule of $M|_{\rG}$ with $\sigma\in \JH(N)$, then $N$ contains at least two weights in $M^0|_{\rG}$ adjacent to $\sigma$.
\end{prop}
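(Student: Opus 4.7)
The plan is to exhibit two distinct adjacent weights via explicit projections to indecomposable injectives, using Proposition \ref{prop:weylext}. First I would observe that because $\sigma \in \JH(M^A|_{\rG})$, the $i$-th alcove component of $\sigma$ is $A$, whereas an inspection of the alcoves arising in the restriction of $L(\lambda_{i,X^0_i})$ to $\rG$ via Proposition \ref{prop:alg_rep_ext_graph} (with $X^0_i \in \{E,F\}$) shows that alcove $A$ does \emph{not} appear in the $i$-th component of any factor of $M^0|_{\rG}$. Consequently $\sigma \notin \JH(M^0|_{\rG})$, and any copy of $\sigma$ in $N \subset M|_{\rG}$ lifts to the cosocle. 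I would therefore fix a map $\varphi: P_\sigma \to N \hookrightarrow M|_{\rG}$ whose image $P \subset N$ realizes the distinguished $\sigma$-factor of $\cosoc\, N$, and reduce to showing that $P$ has at least two $\JH$-factors lying in $M^0|_{\rG}$.

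Next, I would apply Lemma \ref{lem:reduction_pres} to decompose $M|_{\rG} \cong \bigoplus_k M_k|_{\rG}$ with each $M_k$ having $X_j \in \{A,B\}$ for all $j \neq i$ (and $X^1_i, X^0_i$ unchanged). Projecting $\varphi$ onto the summand carrying the prescribed copy of $\sigma$ in the cosocle, I may assume that $X_j \in \{A,B\}$ for all $j \neq i$. Under this assumption, both $M^1|_{\rG}$ and $M^0|_{\rG}$ are multiplicity free, as can be seen from the weight decomposition of $L(\omega_G)\otimes\bigotimes_{j\neq i} L(\omega_{X_j})$ (resp.~$L(\omega_{X^0_i})\otimes\bigotimes_{j\neq i} L(\omega_{X_j})$) combined with Proposition \ref{prop:alg_rep_ext_graph}. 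Thus Proposition \ref{prop:weylext} applies and tells us that for each $\sigma^0 \in \JH(M^0|_{\rG})$ with $\Ext^1_{\F[\rG]}(\sigma, \sigma^0) \neq 0$, the composition $P_\sigma \to M|_{\rG} \to P_{\sigma^0}$ is nonzero, forcing $\sigma^0 \in \JH(P)$.

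It then remains to verify that the extension graph has at least two such neighbors $\sigma^0$ of $\sigma$ inside $\JH(M^0|_{\rG})$. Working in the combinatorics of \S \ref{subsubsec:combinatorics_TandW} and using Proposition \ref{prop:alg_rep_ext_graph}, the factors $\sigma^0$ of $M^0|_{\rG}$ are parametrized (after fixing $X^0_i \in \{E,F\}$ and $X_j \in \{A,B\}$ for $j\neq i$) by weights in $L(\omega_{X^0_i} + \sum_{j\neq i}\omega_{X_j})$ placed at alcove $\tld{w}^0_{X^0_i}(A)$ in embedding $i$ and at $\tld{w}^0_{X_j}(A)$ for $j\neq i$; an explicit comparison with the analogous description for $\sigma$ (with alcove $A$ at embedding $i$) identifies the weights $\sigma^0$ whose parameters differ from those of $\sigma$ by a root adjacency of the extension graph. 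A direct inspection in each of the two cases $(X^1_i,X^0_i) = (G,E)$ and $(G,F)$ exhibits (at least) two such neighbors, completing the proof.

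The main obstacle is the last combinatorial verification, which is specific to the pairs $(G,E)$ and $(G,F)$ and does not hold for arbitrary $(X^1_i, X^0_i)$: the Lemma \ref{lemma:directsum} phenomenon for $(G,J)$ shows that in general one can only guarantee a single adjacent weight (this is precisely why the proposition restricts to the listed pairs). The key point will be that the weights of $L(\omega_{X^0_i})$ in direction $i$, for $X^0_i \in \{E,F\}$, contain more than one weight adjacent to the corner weight (image of $\omega_A=0$), in contrast to what happens for $X^0_i = J$.
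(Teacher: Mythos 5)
Your proof contains a genuine gap at the step where you assert that ``both $M^1|_{\rG}$ and $M^0|_{\rG}$ are multiplicity free.'' This is false for $M^1|_{\rG}$. With $X^1_i = G$, one has $\omega_G = \omega_{X^0_i} - w_0\omega_{X^0_i}$, and the representation $L(\omega_G)$ has a \emph{two-dimensional} $0$-weight space (it is the adjoint-type representation). Since $\sigma \in \JH(M^A|_{\rG})$ corresponds to the weight $0$ of $L(\omega_G)$ in embedding $i$ (via Proposition \ref{prop:alg_rep_ext_graph}), $\sigma$ occurs in $M^1|_{\rG}$ with multiplicity $2$, not $1$. This is exactly the same phenomenon flagged in the proof of Lemma \ref{lemma:directsum}, where ``various $\sigma_k$ may be isomorphic.'' Consequently, the multiplicity-one hypothesis of Proposition \ref{prop:weylext} fails for $\sigma^1 = \sigma$, and you cannot invoke it: there is no canonical, unique-up-to-scalar map $P_\sigma \to M|_{\rG}$, and the space $\Hom_{\rG}(P_\sigma, M|_{\rG})$ is $2$-dimensional. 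A different choice of $\varphi$ in this $2$-dimensional space could a priori have a smaller socle image, and nothing in your argument rules this out.

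Worse, this two-dimensionality is not an incidental obstruction to your route — it is the crux of the paper's proof. The paper argues by contradiction: if $N$ contained only one adjacent weight, then the chosen $\varphi$ would have $\Im(\varphi) \cap \soc(M|_{\rG})$ simple, and (via the argument of \cite[Proposition 4.2.12]{LLLM2}) this would produce a nonzero element of the $0$-weight space of $L(\omega_{X^0_i} - w_0\omega_{X^0_i})$ whose image in $L(\omega_{X^0_i}) \otimes L(-w_0\omega_{X^0_i})$ is a pure tensor of weight eigenvectors. Lemma \ref{lemma:2adj} forbids exactly this, and its proof relies precisely on the $0$-weight space of $L(\omega - w_0\omega)$ being $2$-dimensional and Weyl-invariant while the pure-tensor orbit spans a $3$-dimensional space. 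Your proposal would need to be replaced wholesale by this pure-tensor argument; the multiplicity-free/weylext route is not available here. (Your reduction to $X_j \in \{A,B\}$ via Lemma \ref{lem:reduction_pres} is correct and matches the paper, as is the observation that $\sigma \notin \JH(M^0|_{\rG})$.)
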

\begin{proof}
Let $N$ be as in the statement.
The projection of $N$ to some ${M_{k}}|_{\rG}$ in Lemma \ref{lem:reduction_pres} contains $\sigma$ as a Jordan--H\"older factor, and so we reduce to the case where $X_j\in\{A,B\}$ for all $j\neq i$.
If $N$ contains only one weight in $M^0|_{\rG}$ adjacent to $\sigma$, then there is a map
$\varphi: P_\sigma\ra N\subset M|_{\rG}$ such that $\textnormal{Im}(\varphi)\cap \soc(M|_{\rG})$ is simple.
Considering the composition of $\varphi$ with the injective envelope of $M|_{\rG}$, the argument of  \cite[Proposition 4.2.12]{LLLM2} implies that there is a nonzero element of the $0$-weight space of $L(\omega_{X_i^0}-w_0\omega_{X_i^0})$ whose image in $L(\omega_{X_i^0}) \otimes L(-w_0\omega_{X_i^0})$ is a pure tensor of weight eigenvectors.
This contradicts Lemma \ref{lemma:2adj}.
\end{proof}

\begin{lemma}\label{lemma:2adj}
Let $\omega \in X^*(T)$ be a fundamental weight. 
Under the inclusion $L(\omega-w_0\omega) \subset L(\omega) \otimes L(-w_0\omega)$, any element in the $0$ weight space of $L(\omega-w_0\omega)$ is not a pure tensor of weight eigenvectors in $L(\omega) \otimes L(-w_0\omega)$.
\end{lemma}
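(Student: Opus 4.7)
The plan is to reduce to an explicit calculation in the standard representation of $\GL_3$. First, I would observe that, up to the central character, $\GL_3$ has two fundamental weights $\omega_1=(1,0,0)$ and $\omega_2=(1,1,0)$, and in both cases a direct computation gives $\omega-w_0\omega=(1,0,-1)$, the highest root. Since $p\geq 5$ is assumed throughout \S\ref{sec:rt} (cf.~Proposition \ref{prop:Q1proj}), I would identify $L((1,0,-1))$ with the trace-zero submodule $\mathfrak{sl}(V)\subset\End(V)$, where $V=L(\omega_1)$ is the standard representation.

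Next, I would handle the two cases separately. For $\omega=\omega_1$, the canonical $\GL_3$-equivariant isomorphism $V\otimes V^*\cong\End(V)$ sends the weight-eigenvector basis vector $e_i\otimes e_j^*$ (of weight $\varepsilon_i-\varepsilon_j$) to the elementary matrix $E_{ij}$. For $\omega=\omega_2$, the analogous wedge-contraction isomorphism $\wedge^2 V\otimes\wedge^2 V^*\cong V^*\otimes V$ sends $(e_i\wedge e_j)\otimes(e_k^*\wedge e_l^*)$ to $\pm e_m^*\otimes e_n$ with $\{m\}=\{1,2,3\}\setminus\{i,j\}$ and likewise for $n$, so again pure weight-eigenvector tensors go to scalar multiples of elementary matrices. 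In either case, the weight-$0$ pure tensors of weight eigenvectors correspond to scalar multiples $\lambda\cdot E_{ii}$ of diagonal elementary matrices.

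The conclusion is then immediate: such a matrix has trace $\lambda$, so it lies in $\mathfrak{sl}(V)\cong L(\omega-w_0\omega)$ if and only if $\lambda=0$, forcing the element to vanish.

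The only potentially delicate step is the identification of pure weight-eigenvector tensors across the wedge-contraction isomorphism in the case $\omega=\omega_2$; however, since both sides admit canonical weight bases which the isomorphism permutes up to sign, this is a routine combinatorial check and no real obstacle.
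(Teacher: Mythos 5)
Your proof is correct, and it takes a genuinely different route from the paper's. You reduce the lemma to an explicit matrix computation: after identifying $L(\omega)\otimes L(-w_0\omega)$ with $\End(V)$ (directly as $V\otimes V^*$ for $\omega=\omega_1$, and via the wedge-contraction isomorphism $\wedge^2 V\otimes\wedge^2 V^*\cong V^*\otimes V$ for $\omega=\omega_2$) and $L(\omega-w_0\omega)$ with $\mathfrak{sl}_3$ (legitimate since $p\geq 5$, so $\mathfrak{sl}_3$ is simple and $\mathfrak{gl}_3=\mathfrak{sl}_3\oplus\F$, and the copy of $L(\omega-w_0\omega)$ in $\End(V)$ is unique up to scalar), the weight-zero pure tensors of weight eigenvectors become scalar multiples of diagonal matrix units $E_{ii}$, which have nonzero trace unless they vanish. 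The paper argues instead by counting dimensions and using Weyl-group stability: the zero weight space of $L(\omega-w_0\omega)$ is $2$-dimensional and stable under the natural $W$-action, whereas the $W$-orbit of any nonzero weight-zero pure tensor of weight eigenvectors spans the full $3$-dimensional zero weight space of $L(\omega)\otimes L(-w_0\omega)$---a contradiction. Both arguments rest on the same structural facts, but the paper's dimension-and-orbit count is a bit more conceptual and would extend verbatim to $\GL_n$ (where the relevant zero weight spaces have dimensions $n-1$ and $n$), while yours makes the actual obstruction---the $\GL_3$-invariant trace form on $\End(V)$---fully explicit; either is adequate here.
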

\begin{proof}
The ($2$-dimensional) $0$-weight space of $L(\omega-w_0\omega)$ is stable under the Weyl group symmetry.
However, the Weyl group orbit of a weight $0$ pure tensor of (nonzero) weight eigenvectors in $L(\omega) \otimes L(-w_0\omega)$ spans the entire ($3$-dimensional) $0$-weight space of $L(\omega) \otimes L(-w_0\omega)$.
\end{proof}

\begin{prop} \label{prop:doubleadj}
Suppose that $(X^1_i,X^0_i) = (E,G)$ or $(F,G)$.
If $N$ is a submodule of $M|_{\rG}$ containing two distinct Serre weights adjacent to $\sigma$ as Jordan--H\"older factors, then $N$ contains the $\sigma$-isotypic part of $M^0|_{\rG}$.
\end{prop}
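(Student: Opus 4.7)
The plan is to mirror the strategy used in Propositions \ref{prop:weylext} and \ref{prop:mult2}. First, by Lemma \ref{lem:reduction_pres}, I reduce to the case where $X_j \in \{A,B\}$ for all $j \neq i$, so that $M|_{\rG}$ has an explicit tensor-product description whose extension structure is controlled by Propositions \ref{prop:filtrations} and \ref{prop:weylext}; in particular, the latter proposition covers both alcove pairs $(E,G)$ and $(F,G)$ relevant here.

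For each adjacent Serre weight $\kappa_j \in \JH(N)$, I lift the Jordan--H\"older inclusion $\kappa_j \hookrightarrow N$ to a map $\varphi_j : P_{\kappa_j} \to N \hookrightarrow M|_{\rG}$, and write $I_j \subseteq N$ for the image of $\varphi_j$. By Proposition \ref{prop:weylext}, together with the adjacency hypothesis $\mathrm{Ext}^1_{\F[\rG]}(\kappa_j,\sigma)\neq 0$, the intersection $I_j \cap \Sigma$ with the $\sigma$-isotypic part $\Sigma \subseteq M^0|_{\rG}$ is nonzero: indeed, if it were zero then composing $\varphi_j$ with any projection to an injective hull of $\sigma$ supported on $\Sigma$ would yield a zero composite, contradicting the nonvanishing criterion of Proposition \ref{prop:weylext}. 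When $\sigma$ appears in $M^0|_{\rG}$ with multiplicity one, $\Sigma$ is simple and we immediately obtain $\Sigma \subset I_j \subset N$. In general, it suffices to show that $(I_1 \cap \Sigma) + (I_2 \cap \Sigma) = \Sigma$.

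The main obstacle is this last step: showing that the $\sigma$-isotypic contributions produced by the distinct adjacent weights $\kappa_1, \kappa_2$ span \emph{all} of $\Sigma$. I plan to carry this out through a weight-space computation dual to Lemma \ref{lemma:2adj}. Under the decomposition of Proposition \ref{prop:alg_rep_ext_graph}, the $\sigma$-isotypic component of $M^0|_{\rG}$ is parametrized by the $0$-weight space of $L(\omega - w_0\omega)$ for the fundamental weight $\omega$ controlling the $i$-th embedding contribution; this space is two-dimensional. The contributions from $\kappa_1, \kappa_2$ correspond to distinct pure-tensor projections in $L(\omega) \otimes L(-w_0 \omega)$, and by the same $W$-symmetry argument that drives Lemma \ref{lemma:2adj} (now read in the opposite direction: two distinct pure tensors with linearly independent weight data cannot both lie on a single line), these projections span the full $0$-weight space. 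The case distinction $(E,G)$ versus $(F,G)$ is expected to be treated uniformly in this framework, with the two cases differing only in the identification of the relevant fundamental weight.
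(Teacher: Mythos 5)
Your approach is genuinely different from the paper's, but it has a real gap that would take substantial work to repair. The paper's proof is a two-line duality argument: let $N'$ be the kernel of $(M|_{\rG})^* \onto N^*$; then $N' \subset (M|_{\rG})^*$ sits in the alcove configuration $(G,E)$ or $(G,F)$, so Proposition \ref{prop:mult2} applies to $N'$ directly. If $\sigma^* \in \JH(N')$ then $N'$ would contain two weights adjacent to $\sigma^*$; since $M^1|_{\rG}$ is multiplicity free, those would then be absent from $N^*$, contradicting the hypothesis. The student's direct approach never invokes duality and therefore cannot reuse Proposition \ref{prop:mult2} as a black box.

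The more serious problem is your reliance on Proposition \ref{prop:weylext} in the intermediate step. That proposition explicitly requires $\sigma^0$ to appear with multiplicity one in $M^0|_{\rG}$; the nonzero maps $P_{\sigma^1}\to M|_{\rG}$ and $M|_{\rG}\to P_{\sigma^0}$ are declared unique up to scalar, which is needed just to make its statement well posed. In the setting of Proposition \ref{prop:doubleadj}, $X^0_i = G$ and $\sigma \in \JH(M^A|_{\rG})$, so $\sigma$ has multiplicity two in $M^0|_{\rG}$. Consequently Proposition \ref{prop:weylext} cannot be cited for $\sigma^0=\sigma$, and your claim that $I_j \cap \Sigma \neq 0$ does not follow from it. This is the same multiplicity phenomenon that forced the paper to prove Proposition \ref{prop:mult2} separately rather than deduce it from Proposition \ref{prop:weylext}.

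Your fourth step is also not a proof. You correctly identify that the content should be a $0$-weight-space computation in $L(\omega)\otimes L(-w_0\omega)$ analogous to Lemma \ref{lemma:2adj}, but you do not show that the two one-dimensional contributions $I_1\cap\Sigma$ and $I_2\cap\Sigma$ are distinct lines in the two-dimensional $\sigma$-isotypic space. "Two distinct pure tensors with linearly independent weight data cannot both lie on a single line" concerns the three-dimensional $0$-weight space of $L(\omega)\otimes L(-w_0\omega)$, not the lines in $\Sigma$ these produce after projecting to the two-dimensional $0$-weight space of $L(\omega - w_0\omega)$; nothing you say rules out the projections coinciding. The paper avoids these computations entirely: the dualization converts "$N$ contains two adjacent weights" into the hypothesis of Proposition \ref{prop:mult2}, whose proof already encapsulates the necessary $W$-symmetry argument via Lemma \ref{lemma:2adj}, and Proposition \ref{prop:doubleadj} falls out formally.
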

\begin{proof}
Suppose that $N$ is as above. 
Define $N'$ be the kernel of natural surjection $(M|_{\rG})^* \surj N^*$, where $(-)^*$ denotes the contragredient representation.
If $\sigma^*\in \JH(N')$, then two of the weights adjacent to $\sigma^*$ are Jordan--H\"older factors of $N'$ by Proposition \ref{prop:mult2}, and thus not of $N^*$  since $M^1|_{\rG}$ is a multiplicity free representation.
This contradicts the assumption that $N$ contains two distinct Serre weights adjacent to $\sigma$ as Jordan--H\"older factors.
Thus $\sigma^*\notin \JH(N')$ and $N$ contains the $\sigma$-isotypic part of $M^0|_{\rG}$.
\end{proof}

Recall our standing assumption that $\lambda$ is {8-deep} in alcove $A^{\cJ}$.

\begin{prop}\label{prop:extcover}
Suppose that either
\begin{enumerate}
\item \label{item:AB} at least one of $X^1_i$ or $X^0_i$ is in $\{A,B\}$; 
\item \label{item:DE} $(X^1_i,X^0_i) = (C,F)$ or $(D,E)$; or 
\item \label{item:EF} $X^1_i = E$ or $F$. 
\end{enumerate}
Assume further that $F(\lambda)$ is linked to a modular Serre weight, or equivalently, that $F(\sum_j \lambda_{j,B}) \in W^?(\rhobar)$. 
If $X^1_i = A$, assume further that $F(\lambda_{i,A} + \sum_{j\neq i} \lambda_{j,B}) \in W^?(\rhobar)$. 
Suppose that $N$ is a submodule of $M|_{\rG}$ such that the cokernel of the projection of $N$ onto $M^1|_{\rG}$ contains no Serre weights in $W^?(\rhobar)$.
Then $M|_{\rG}/N$ contains no Jordan--H\"older factors in $W^?(\rhobar)$. 
\end{prop}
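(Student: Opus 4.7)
Write $N^1 \defeq \mathrm{Im}(N\to M^1|_{\rG})$ and $N^0\defeq N\cap M^0|_{\rG}$, so that there is a short exact sequence
\[
0\to M^0|_{\rG}/N^0 \to M|_{\rG}/N \to M^1|_{\rG}/N^1 \to 0.
\]
By hypothesis $M^1|_{\rG}/N^1$ has no Jordan--H\"older factor in $W^?(\rhobar)$, so it suffices to prove the same for $M^0|_{\rG}/N^0$; equivalently, that for each $\sigma^0\in W^?(\rhobar)\cap \JH(M^0|_{\rG})$ the full $\sigma^0$-isotypic part of $M^0|_{\rG}$ is contained in $N^0$.

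Fix such a $\sigma^0$. First I would combine Proposition \ref{prop:alg_rep_ext_graph} with the explicit description of $W^?(\rhobar)$ via $r(\Sigma)$ from \S \ref{subsubsec:combinatorics_TandW} to catalog, in each of the cases \eqref{item:AB}--\eqref{item:EF}, the Serre weights $\sigma^1\in W^?(\rhobar)\cap \JH(M^1|_{\rG})$ adjacent to $\sigma^0$ in the sense that $\Ext^1_{\F[\rG]}(\sigma^1,\sigma^0)\neq 0$. The two additional hypotheses (that $F(\sum_j \lambda_{j,B})\in W^?(\rhobar)$ and, when $X^1_i=A$, that $F(\lambda_{i,A}+\sum_{j\neq i}\lambda_{j,B})\in W^?(\rhobar)$) are precisely what is needed to ensure a sufficient supply of such $\sigma^1$, by forcing $W^?(\rhobar)$ to contain the expected weights in the cosocle layer of $M|_{\rG}$.

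Next, for any such adjacent $\sigma^1$, since $M^1|_{\rG}/N^1$ contains no $W^?(\rhobar)$-factor, the $\sigma^1$-isotypic part of $M^1|_{\rG}$ is contained in $N^1$, so any map $P_{\sigma^1}\to M^1|_{\rG}$ lifts to a map $\varphi\colon P_{\sigma^1}\to N\subseteq M|_{\rG}$. In each of cases \eqref{item:AB}--\eqref{item:EF} the pair $(X^1_i,X^0_i)$ falls within the scope of Proposition \ref{prop:weylext} (after applying duality if necessary), so the composite $P_{\sigma^1}\to M|_{\rG}\to P_{\sigma^0}$ is non-zero. Hence $\varphi(P_{\sigma^1})$ has non-trivial image in the $\sigma^0$-isotypic part of $M^0|_{\rG}$, contributing at least one copy of $\sigma^0$ to $N^0$.

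By varying $\sigma^1$ over all adjacent $W^?(\rhobar)$-factors of $M^1|_{\rG}$ one aims to capture the entire $\sigma^0$-isotypic part of $M^0|_{\rG}$ inside $N^0$. The main obstacle, and what will drive the case-by-case analysis, is the multiplicity issue when $\sigma^0$ occurs with multiplicity greater than one in $M^0|_{\rG}$: a single adjacent $\sigma^1$ only guarantees one copy of $\sigma^0$ in $N^0$, so one must extract independent $\sigma^0$-contributions from distinct $\sigma^1$'s. This is exactly where Propositions \ref{prop:mult2} and \ref{prop:doubleadj} (and Lemma \ref{lemma:directsum} in the split $\{G,J\}$ situations) enter, and the bulk of the work will lie in verifying the combinatorics of the adjacencies case-by-case, using crucially that the hypotheses of the proposition rule out the pathological configurations such as $(X^1_i,X^0_i)=(G,E)$ or $(G,F)$ where the cover could otherwise fail.
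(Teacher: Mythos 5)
Your overall strategy is the one the paper actually uses: for each $\sigma^0 \in W^?(\rhobar)\cap \JH(M^0|_{\rG})$ you lift a map $P_{\sigma^1}\to M^1|_{\rG}$ (possible because the $\sigma^1$-isotypic part lies in $N^1$) to $P_{\sigma^1}\to N$, and you use Proposition \ref{prop:weylext} and Proposition \ref{prop:doubleadj} to show the image captures the $\sigma^0$-isotypic part of $M^0|_{\rG}$. However, two points of substance are missing or slightly off.

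First, you skip the reduction that the paper performs at the outset: using Lemma \ref{lem:reduction_pres} to decompose $M|_{\rG}\cong\oplus_k M_k|_{\rG}$ with all $X_j\in\{A,B\}$ for $j\neq i$, and Lemma \ref{lemma:alcoveconsider} to check the hypothesis on the cosocle descends to each summand. Without this reduction the set $\JH(M^1|_{\rG})\cap W^?(\rhobar)$ is not as clean, and the combinatorial verification in your penultimate paragraph becomes much more involved. The specific facts the paper needs — e.g. in case \eqref{item:EF} with $X^0_i = C$ or $I$, that when $\sigma^0$ and $\sigma^1$ are not adjacent the weight in $\JH(M^1|_{\rG})$ linked to $\sigma^0$ is also modular; or in case \eqref{item:DE} that there is a unique adjacent modular $\sigma^1$ — are visible precisely after the reduction, and your plan never actually establishes them. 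You acknowledge the combinatorics remains, but this is the genuine mathematical content of the proposition and not a routine verification.

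Second, you cite Lemma \ref{lemma:directsum} and the $\{G,J\}$ cases, but those pairs are excluded from the hypotheses of this proposition (they belong to Proposition \ref{prop:genJ}); the only ``multiplicity'' configuration that arises under the hypotheses here is $(X^1_i,X^0_i)=(E,G)$ or $(F,G)$, which is handled by Proposition \ref{prop:doubleadj} alone (Proposition \ref{prop:mult2} is only used internally in its proof). You also do not single out case \eqref{item:AB}, where the paper observes that $N$ automatically contains all of $M^0|_{\rG}$ with no adjacency argument needed at all — this is the simplest case and deserves separate treatment rather than being folded into the general machinery.
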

\begin{proof}
In this proof we repeatedly use the description of $W^?(\rhobar)$ in terms of the extension graph given in \S \ref{subsubsec:combinatorics_TandW}.
With $M|_{\rG} \cong \oplus_{k \in S}M_k|_{\rG}$ as in Lemma \ref{lem:reduction_pres}, $M|_{\rG}/N$ is a quotient of $\oplus_{k \in S} M_k|_{\rG}/(M_k|_{\rG} \cap N)$.  
It suffices to show the following claim: for each $k\in S$, $M_k|_{\rG}/(M_k|_{\rG} \cap N)$ contains no Jordan--H\"older factors in $W^?(\rhobar)$. 
Note that $M_k$ has simple cosocle as a $\un{G}_{/\F}$-module, whose ($6$-deep) highest weight is linked to the weight $\lambda^k\in A^{\cJ}$ appearing in Lemma \ref{lem:reduction_pres}.
Note that if $\lambda=\Trns_{\mu+\eta}(\nu,\un{0})$ then $\lambda^k=\Trns_{\mu+\eta}(\nu+\omega,\un{0})$ for some $\omega \in L(\sum_{j\neq i}\pi\omega_{X_j})$ by Proposition \ref{prop:alg_rep_ext_graph}.
Thus if $F(\lambda^k)$ is not linked to a modular weight then $(\nu_{\pi(j)}+\omega_{\pi(j)},1)$ is not in $s_{\pi(j)}(r(\Sigma_0))$ for some $j\neq i$.
Then $\JH(M_k|_{\rG}) \cap W^?(\rhobar)$ is empty, and the claim holds. 
Assume now that $F(\lambda^k)$ is linked to a modular weight. 
We will show that the remaining hypotheses of the proposition hold for $M_k|_{\rG} \cap N \subset M_k|_{\rG}$ so that we reduce to the case where $X_j\in \{A,B\}$ for $j\neq i$. 
Clearly, the enumerated condition that holds for $M$ holds for any $M_k$. 
It remains to show that the cokernel of the natural map $M_k|_{\rG} \cap N \ra M_k^1|_{\rG} \defeq \cosoc\, M_k|_{\rG}$ contains no Jordan--H\"older factors in $W^?(\rhobar)$. 
Let $\sigma \in \JH(M_k^1|_{\rG}) \cap W^?(\rhobar)$. 
By Lemma \ref{lemma:alcoveconsider} taking $N' = M|_{\rG}$, $\sigma\notin \JH(M|_{\rG}/N)$. 
Thus, in fact, $\sigma\notin \JH(M_k|_{\rG}/(M_k|_{\rG} \cap N))$. 

We now establish the proposition assuming that $X_j\in \{A,B\}$ for $j\neq i$. 
We assume that $F(\lambda_{i,B}+\sum_{j\neq i} \lambda_{j,X_j}) \in W^?(\rhobar)$ since otherwise $\JH(M|_{\rG}) \cap W^?(\rhobar) = \emptyset$, and the proposition follows immediately. 
Our analysis now breaks into the three cases for $X^0_i$ and $X^1_i$ described in the enumerated conditions of the proposition.
If $X^1_i$ or $X^0_i$ is $A$ or $B$, then it is easy to see that $N$ necessarily contains $M^0|_{\rG}$, and the proposition follows. 
(Note that if $X^1_i=A$, then $F(\lambda_{i,A}+\sum_{j\neq i} \lambda_{j,X_j}) \in W^?(\rhobar)$.) 

We next consider case where \eqref{item:DE} holds. 
We assume that $(X^1_i,X^0_i) = (C,F)$ as the case $(D,E)$ is symmetric. 
Suppose $\sigma^0 \subset M^0|_{\rG}$ is simple and in $W^?(\rhobar)$. 
Then one checks that there is a unique $\sigma^1\in \JH(M^1|_{\rG}) \cap W^?(\rhobar)$ and moreover that $\sigma^1$ is adjacent to $\sigma^0$. 
Then $\sigma^0\subset N$ by Proposition \ref{prop:weylext}. 

Finally, we consider the case where \eqref{item:EF} holds. 
We assume that $X^1_i = E$ as the case $X^1_i = F$ is symmetric. 
Then $X_i^0$ is $A,C,D,G,H,$ or $I$. 
The case where $X_i^0=A$ was already considered in \eqref{item:AB}. 
Let $\sigma^0 \subset M^0|_{\rG}$ be simple and in $W^?(\rhobar)$. 
Since $F(\lambda_{i,B}+\sum_{j\neq i} \lambda_{j,X_j}) \in W^?(\rhobar)$ by assumption, it is easy to check that there is $\sigma^1\in \JH(M^1|_{\rG}) \cap W^?(\rhobar)$. 
Suppose first that $X_i^0$ is $D$ or $H$. 
Then $\sigma^0$ and $\sigma^1$ must be adjacent. (The contrapositive is easier to see: If $\sigma^0 = \Trns_{\mu+\eta}(s_j(\omega^0),a^0)$ and $\sigma^1 = \Trns_{\mu+\eta}(s_j(\omega^1),a^1)$ are not adjacent, then $\omega^0_i - \omega^1_i$ is an element of $\Lambda_R$ using that $(\omega^0_j,a^0_j) =(\omega^1_j,a^1_j)$ for all $j\neq i$. This implies that $\omega_E-\omega_X \in \Lambda_R$ for $X = D$ or $H$ which is false.) 
The result then follows from Proposition \ref{prop:weylext}.
Suppose next that $X_i^0$ is $C$ or $I$.
If $\sigma^0$ and $\sigma^1$ are not adjacent, then the weight in $\JH(M^1|_{\rG})$ linked to $\sigma^0$ is also modular and the result follows again from Proposition \ref{prop:weylext} (see Figure \ref{fig-linked}). 
\begin{figure}[htb] 
\centering
\includegraphics[scale=.4]{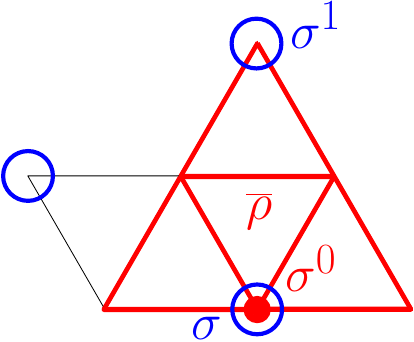}
\caption{$\sigma^0$ and $\sigma^1$ are nonadjacent weights in $W^?(\rhobar)$. The blue circles represent $\JH(M^1|_{\rG})$. 
The weight $\sigma$ linked to $\sigma^0$ is in $\JH(M^1|_{\rG}) \cap W^?(\rhobar)$. }
\label{fig-linked}
\end{figure}

Suppose finally that $X_i^0$ is $G$.
If $\sigma^0 \in \JH(M^A|_{\rG})$ (resp.~$\JH(M^0|_{\rG}) \setminus \JH(M^A|_{\rG})$), then $\sigma^0$ is adjacent to exactly two modular weights (resp.~one modular weight) of $\JH(M^1|_{\rG})$.
The result then follows from Proposition \ref{prop:doubleadj} (resp.~Proposition \ref{prop:weylext}).
\end{proof}

\begin{prop}\label{prop:genJ}
Let $M_i$ be a (unique up to isomorphism) rigid nonsplit extension of $L(\lambda_{C_i}) \oplus L(\lambda_{G_i})$ by $L(\lambda_{J_i})$.
Assume further that $F(\lambda)$ is linked to a modular Serre weight $F(\lambda')$ with $\lambda_i'= \lambda_i$.
If $N$ is a submodule of $M|_{\rG}$ such that the cokernel of the projection of $N$ onto $M^1|_{\rG}$ contains no modular Serre weights, then $M|_{\rG}/N$ contains no modular Serre weights (as Jordan--H\"older factors).
\end{prop}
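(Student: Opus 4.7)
The plan is to extend the framework of Proposition \ref{prop:extcover} to the Y-shaped $M_i$. Exploiting the rigidity of $M_i$, let $M^C_i, M^G_i \subset M_i$ be the unique length-two subextensions of $L(\lambda_{C_i})$, resp.~$L(\lambda_{G_i})$, by $L(\lambda_{J_i})$; both exist and satisfy $M^C_i \cap M^G_i = L(\lambda_{J_i})$. As a preliminary reduction, I would use an analog of Lemma \ref{lem:reduction_pres} (based on the Weyl/dual Weyl filtration structure from Proposition \ref{prop:filtrations}) to decompose $M|_{\rG}$ into direct summands of the same shape but with $X_j \in \{A,B\}$ for all $j \ne i$; the linked-to-modular hypothesis on $F(\lambda)$ is preserved by each summand via the extension-graph description recalled in \S\ref{subsubsec:combinatorics_TandW}. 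Assume henceforth $X_j \in \{A,B\}$ for $j \ne i$.

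Fix a simple modular $\sigma^0 \subset M^0|_{\rG}$; the goal becomes to show $\sigma^0 \subset N$. The extension-graph analysis produces at least one modular $\sigma^1 \in \JH(M^1|_{\rG})$ adjacent to $\sigma^0$, lying either in the $C$-component $L(\lambda_{C_i}) \otimes \otimes_{j \ne i}L(\lambda_{j,X_j})|_{\rG}$ or the analogous $G$-component. In the $C$-case, I would set $M^C \defeq M^C_i \otimes \otimes_{j \ne i}L(\lambda_{j,X_j})$; since $M|_{\rG}/M^C|_{\rG}\cong L(\lambda_{G_i})\otimes \otimes_{j \ne i}L(\lambda_{j,X_j})|_{\rG}$ shares no Jordan--H\"older factors with $M^C|_{\rG}$, any preimage $\tilde{\sigma}^1$ of $\sigma^1$ in $N$ automatically lies in $N \cap M^C$. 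The submodule $\langle \tilde{\sigma}^1 \rangle \subset M^C|_{\rG}$ has cosocle $\sigma^1$, and Proposition \ref{prop:weylext} applied to the pair $(C,J)$ (using $\Ext^1_{\F[\rG]}(\sigma^1,\sigma^0) \ne 0$ from adjacency) shows that the composition $P_{\sigma^1} \twoheadrightarrow \langle \tilde{\sigma}^1 \rangle \hookrightarrow M^C|_{\rG} \to P_{\sigma^0}$ is nonzero, forcing $\sigma^0 \in \JH(\langle \tilde{\sigma}^1\rangle)$ and hence $\sigma^0 \subset N$.

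In the $G$-case, set $M^G \defeq M^G_i \otimes \otimes_{j \ne i} L(\lambda_{j,X_j})$ and invoke Lemma \ref{lemma:directsum}, which expresses $M^G|_{\rG}$ as a direct sum of length-two indecomposable modules $L_k$ pairing each cosocle $\sigma_k \in \JH(M^1|_{\rG})$ with its unique adjacent socle weight $\sigma'_k \in \JH(M^0|_{\rG})$. The hypothesis on the cokernel of $N\to M^1|_{\rG}$ forces each $L_k$ with modular cosocle into $N$, since the only nontrivial proper submodule of each $L_k$ is its socle. Using the extension-graph description together with the sub-case assumption that the modular weights in $\JH(M^1|_{\rG})$ adjacent to $\sigma^0$ are all in the $G$-component, one checks that each copy of $\sigma^0$ in $M^G|_\rG$ occurs as the socle of an $L_k$ whose cosocle is modular. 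The hard part will be extending Lemma \ref{lem:reduction_pres} to the Y-shape, and verifying in the $G$-case via the extension graph that every length-two summand $L_k$ containing $\sigma^0$ in its socle has its partner $\sigma_k$ also modular; this ultimately ought to follow from the chain of adjacencies in the graph $\Sigma_0$ combined with the $\lambda'_i = \lambda_i$ hypothesis.
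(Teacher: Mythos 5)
Your proposal correctly identifies the two key tools — Proposition \ref{prop:weylext} for the $(C,J)$ leg and Lemma \ref{lemma:directsum} for the $(G,J)$ leg — and the idea of passing to the length-two submodules $M^C_i,M^G_i$ is the right instinct. However, the case split you propose is not the correct one, and this produces a genuine gap.

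You dichotomize on whether the adjacent modular weight $\sigma^1$ lies in the $C$- or $G$-component of $M^1|_{\rG}$. These cases are not disjoint: if $\sigma^1\in\JH(M^A|_{\rG})$ (where $M^A=L(\lambda_{i,A})\otimes\otimes_{j\ne i}L(\lambda_{j,X_j})$), it appears in \emph{both} components. The correct dichotomy, which the paper uses, is on whether $\sigma^0\in\JH(M^B|_{\rG})$ — equivalently, whether $\sigma^0$ has multiplicity $2$ or $1$ in $M^0|_{\rG}=L(\lambda_{J_i})\otimes\otimes_{j\ne i}L(\lambda_{j,X_j})|_\rG$. This matters for two reasons. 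First, your $C$-case argument invokes Proposition \ref{prop:weylext}, but that proposition explicitly requires $\sigma^0$ to appear with multiplicity one in $M^0|_{\rG}$, so it fails exactly in the subcase $\sigma^0\in\JH(M^B|_{\rG})$. Second, when the multiplicity is $2$ you must show \emph{both} copies of $\sigma^0$ lie in $N$, which showing ``$\sigma^0\subset N$'' does not do.

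In the paper's multiplicity-two case, the hypothesis $\lambda_i'=\lambda_i$ is used precisely to conclude that the adjacent weight $\sigma^1\in\JH(M^A|_{\rG})$ is modular; one then observes that $\sigma^1$ appears with multiplicity two in $M^1|_{\rG}$, and Lemma \ref{lemma:directsum} (applied to $M^G|_{\rG}$) delivers $\sigma^0$ in $N$ with multiplicity two. Your sketch gestures at the $\lambda_i'=\lambda_i$ hypothesis but never pins down this mechanism; the crucial ``appears with multiplicity two in $M^1|_{\rG}$'' step is absent. You also flag ``extending Lemma \ref{lem:reduction_pres} to the Y-shape'' as a worry, but this is unnecessary: the length-two lemmas are applied to the length-two subquotients $M^C,M^G$ of $M$, not to $M$ itself, so Lemma \ref{lem:reduction_pres} (proved for Loewy length two) applies as stated within the proofs of Proposition \ref{prop:weylext} and Lemma \ref{lemma:directsum}, and no new reduction at the level of the Y-shaped $M$ is needed. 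Finally, the auxiliary claim in your $C$-case that $M|_{\rG}/M^C|_{\rG}$ ``shares no Jordan--H\"older factors with $M^C|_{\rG}$'' is false (both contain the constituents of $M^A|_{\rG}$); the conclusion you want (that a lift of the $C$-copy of $\sigma^1$ lies in $M^C|_{\rG}$) holds for a simpler reason, namely that the lift maps to zero in $M^1|_{\rG}/(M^C|_{\rG}/M^0|_{\rG})$.
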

\begin{proof}
Suppose that $\sigma^0$ is a weight in $W^?(\rhobar) \cap (\JH(M^0|_{\rG}) \setminus \JH(M^B|_{\rG}))$. 
Then, using that $F(\lambda)$ is linked to a modular Serre weight, $L(\lambda_{C_i})\otimes \otimes_{j\neq i} L(\lambda_{j,X_j})|_{\rG}$ contains a modular weight adjacent to $\sigma^0$, 
and $N$ contains $\sigma^0$ by Proposition \ref{prop:weylext} with $(X^1_i,X^0_i) = (C,J)$.
If $\sigma^0$ is a weight in $W^?(\rhobar) \cap \JH(M^B|_{\rG})$, then the Jordan--H\"older factor of $M^A|_{\rG}$ adjacent to $\sigma^0$ is modular (by the assumption that $\lambda_i' = \lambda_i$) and appears with multiplicity two in $M^1|_{\rG}$. 
Then $\sigma^0$ appears in $N$ with multiplicity two by Lemma \ref{lemma:directsum}. 
\end{proof}

\begin{proof}[Proof of Lemma \ref{lemma:W?-cover}]
Let $\sigma$, $a$, $b$, $i$, and $N$ be as in Lemma \ref{lemma:W?-cover}. 
For convenience, for any submodule $M \subset P_\sigma/\rad^{>b}P_\sigma$, we write $\ovl{M}$ for the image of $M$ in $(P_\sigma/\rad^{>b}P_\sigma)/N$. 
Similarly, if $M'\subset M \subset P_\sigma/\rad^{>b}P_\sigma$ are submodules, then we write $\ovl{M/M'}$ for $\ovl{M}/\ovl{M'}$. 
Let $\sigma$ be $F(\mu)$ so that $\rad^a P_\sigma/\rad^{>b} P_\sigma$ is isomorphic to $\rad^a Q_1(\mu)/\rad^{>b} Q_1(\mu)|_{\rG}$ where
\begin{equation}\label{eqn:2layer}
\rad^a Q_1(\mu)/\rad^{>b} Q_1(\mu) \cong \rad^{a_i} Q_1(\mu_i)/\rad^{a_i+2} Q_1(\mu_i) \otimes \otimes_{j\neq i} \gr^{a_j} Q_1(\mu_j). 
\end{equation}
We will show that $\JH(\ovl{\rad^a P_\sigma/\rad^{>b}P_\sigma}) \cap W^?(\rhobar)$ is empty. 
Since $\JH(\ovl{\gr^a P_\sigma}) \cap W^?(\rhobar)$ is empty by assumption, it suffices to show that $\JH(\ovl{\gr^b P_\sigma}) \cap W^?(\rhobar)$ is empty. 
Let $\sigma^0$ be in $\JH(\gr^b P_\sigma)\cap W^?(\rhobar)$. 
We claim that there is a finite, separated, and exhaustive increasing filtration $F^k \gr^b Q_1(\mu)$ on $\gr^b Q_1(\mu)$ and submodules 
\begin{equation} \label{eqn:Mk}
M_k \subset (\rad^a Q_1(\mu)/\rad^{>b} Q_1(\mu))/F^{k-1} \gr^b Q_1(\mu) 
\end{equation}
for each $k$ such that 
\begin{enumerate}
\item \label{item:filtercover} $F^k \gr^b Q_1(\mu)/F^{k-1} \gr^b Q_1(\mu) \subset M_k$; and
\item \label{item:edge} $\sigma^0 \notin \JH(\ovl{M_k|_{\rG}})$.  
\end{enumerate} 
Admitting this claim, we can %
inductively show for all $k$ that $\sigma^0 \notin \JH(\ovl{F^k \gr^b Q_1(\mu)|_{\rG}})$. 
For $k$ sufficiently small, $F^k \gr^b Q_1(\mu) = 0$ and we are done. 
If $\sigma^0 \notin \JH(\ovl{F^{k-1} \gr^b Q_1(\mu)|_{\rG}})$ for some $k$, then $\sigma^0 \notin \JH(\ovl{F^k \gr^b Q_1(\mu)|_{\rG}})$ since 
\[
\ovl{F^k \gr^b Q_1(\mu)/F^{k-1} \gr^b Q_1(\mu)|_{\rG}} \subset \ovl{M_k|_{\rG}}
\]
by \eqref{item:filtercover} and $\sigma^0 \notin \JH(\ovl{M_k|_{\rG}})$ by \eqref{item:edge}. 

It suffices to prove the claim. 
We set $F^0 \gr^b Q_1(\mu) = M_0 = 0$ and $F^1 \gr^b Q_1(\mu) = M_1$ to be the largest submodule $M \subset \gr^b Q_1(\mu)$ such $\sigma^0\notin \JH(M|_{\rG})$. 
Now, by Proposition \ref{prop:filtrations}, we can recursively find $M_k$ as in \eqref{eqn:Mk} to be a subquotient of a graded piece in either the Weyl filtration or the dual Weyl filtration of the form 
\[
M_k = M_{i,k} \otimes \otimes_{j\neq i} L(\lambda_{j,X_j}) 
\]
where $\lambda$ is the unique element in $A^{\cJ} \subset X^*(\un{T})$ linked to $\mu$ and 
\begin{itemize}
\item $\soc\, M_{i,k} \cong L(\lambda_{i,X_i^0})$ and $\cosoc\, M_{k,i} \cong L(\lambda_{i,X_i^1})$ with $(X_i^1,X_i^0)$ in the appropriate column of Table \ref{table:ext_Q1:B} or \ref{table:ext_Q1:A}; or 
\item $a_i=2$, $\mu_i \in A$, $\soc\, M_{i,k} \cong L(\lambda_{i,J})$, and $\cosoc\, M_{k,i} \cong L(\lambda_{i,C}) \oplus L(\lambda_{i,G})$ (as indicated by $(C\oplus G,J)$ in Table \ref{table:ext_Q1:A}). 
\end{itemize}
We define $F^k \gr^b Q_1(\mu)$ to be the preimage of $\soc\, M_k$. 
Then \eqref{item:filtercover} holds by construction, and it suffices to check \eqref{item:edge} for each $M_k$. 
This is clear for $k=0$ and $1$ and follows from Propositions \ref{prop:extcover} and \ref{prop:genJ} for $k>1$. 
\begin{table}[htb]
\footnotesize
\caption{\textbf{Subquotients of $\rad^{b_i-1}Q_1(\lambda_i)/\rad^{b_i+1}Q_1(\mu_i)$, $\mu_i \in B$}}
\label{table:ext_Q1:B}
\centering
\adjustbox{max width=\textwidth}{
\begin{tabular}{| c | c | c | c | c | c| c |}
\hline
$a_i$&0&1&2&3&4&5
\\
\hline
&&&&&&\\
Weyl&&$(C,B),\ (D,B)$&$\begin{aligned} 
&(E,C),\ (E,A),\ (E,D)\\
&(F,C),\ (F,A),\ (F,D)
\end{aligned}$&$(C,B)\times 2$&$(E,A)$&$(C,B)$\\
&&&&&&\\
\hline
&&&&&&\\
Dual Weyl&$(B,C),\ (B,A),\ (B,D)$&$(D,E),\ (C,F)$& $(E,G)$&$(D,E),\ (C,F)$&$(B,C),\ (B,D)$&
\\
&&&&&&\\
\hline
\end{tabular}
}
\caption*{This table records length two subquotients of $\rad^{b_i-1}Q_1(\lambda_i)/\rad^{b_i+1}Q_1(\mu_i)$ for $\mu_i\in B$ which are subquotients of either a Weyl or dual Weyl module in the associated graded of the Weyl or dual Weyl filtration. 
The notation $(X^1,X^0)$ denotes a nonsplit extension of $X^1$ by $X^0$ (unique up to isomorphism).}\end{table}
\begin{table}[htb]
\footnotesize
\caption{\textbf{Subquotients of $\rad^{b_i-1}Q_1(\lambda_i)/\rad^{b_i+1}Q_1(\mu_i)$, $\mu_i \in A$}}
\label{table:ext_Q1:A}
\centering
\adjustbox{max width=\textwidth}{
\begin{tabular}{| c | c | c | c | c | c| c |}
\hline
$a_i$&0&1&2&3&4&5
\\
\hline
&&&&&&\\
Weyl&&$(E,A),\ (B,A),\ (F,A)$&$(A,B)\times 2$&$(E,A)\times 2,\ (F,A)$&&$(E,A)$\\
&&&&&&\\
\hline
&&&&&&\\
Dual Weyl&$(A,E),\ (A,F)$&$\begin{aligned} 
&(B,D),\ (B,C),\ (F,D),\ (E,C)\\
&(F,I),\ (F,G),\ (E,H)
\end{aligned}$& $\begin{aligned} 
&(A,E)\times 2,\ (A,F)\times 2\\
&(C\oplus G,J)
\end{aligned}$&$\begin{aligned} 
&(F,I),\ (F,G),\ (E,H)&\\
&(B,D)\times 2,\ (B,C)\times 2
\end{aligned}$&$(A,F),\ (A,B),\ (A,E)$&
\\
&&&&&&\\
\hline
&&&&&&\\
Other&$(A,B)$&&&&&
\\
&&&&&&\\
\hline
\end{tabular}
}
\caption*{This table records subquotients of $\rad^{b_i-1}Q_1(\lambda_i)/\rad^{b_i+1}Q_1(\mu_i)$ for $\mu_i\in A$ as in Table \ref{table:ext_Q1:B}. The nonsplit extension $(A,B)$ in the last row is the quotient $Q_1(\lambda_i)^B$ though it is not a quotient of either the quotient Weyl or dual Weyl module. $(C\oplus G,J)$ denotes the length $3$ rigid nonsplit extension (unique up to isomorphism) of the direct sum of $C$ and $G$ by $J$. }
\end{table}
\end{proof}

\subsection{Presentations of some quotients of $P_\sigma$}

In this section, we introduce some quotients of $P_\sigma$ and give presentations which will be used in \S \ref{subsec:proj_and_K1}. 

Let $a = (a_j)_{j\in \cJ}$ be a tuple such that $a_j$ is a subset of $\{B,E_o,E_s,F_o,F_s\}$ for $j\in A(\sigma)$ (which we write simply as a string in any order) and $a_j = \widehat{1}$ or $\widehat{2}$ if $j\notin A(\sigma)$. 
Let $b = (b_j)_{j\in \cJ}$ be a tuple such that $b_j = \widehat{2}$ if $j\in A(\sigma)$ and $b_j = a_j$ otherwise. 
We now define a quotient $P_\sigma^a$ of $P_\sigma^b$. 

Write $\sigma=F(\lambda)$ {for a $4$-deep weight $\lambda\in X_1(\un{T})$} so that $P_\sigma^b$ is isomorphic to $Q_1(\lambda)^b|_{\rG}$ {and Proposition \ref{prop:Q1proj} applies.} 
For each $i\in A(\sigma)$, $Q_1(\lambda)^b$ has a submodule 
\begin{align*}
\gr^1 Q_1(\lambda_i) \otimes \otimes_{j\neq i} Q_1(\lambda_j)^{b_j} &\cong (L(\lambda_{i,B}) \oplus L(\lambda_{i,E}) \oplus L(\lambda_{i,F})) \otimes \otimes_{j\neq i} Q_1(\lambda_j)^{b_j} \\
&\cong (L(\lambda^0_{i,B}) \otimes L(p\omega_{i,B}) \oplus L(\lambda^0_{i,E}) \otimes L(p\omega_{i,E})\oplus L(\lambda^0_{i,F})\otimes L(p\omega_{i,F})) \otimes \otimes_{j\neq i} Q_1(\lambda_j)^{b_j} 
\end{align*}
(recall from \S \ref{sec:modrep} that $\lambda_{i,X}=\lambda_{i,X}^0+p\omega_{i,X}$  with $\lambda^0_{i,X} \in X_1(T)$ for $X = B,E,F$). 
Since 
\begin{align*}
&(L(\lambda^0_{i,B}) \otimes L(p\omega_{i,B}) \oplus L(\lambda^0_{i,E}) \otimes L(p\omega_{i,E})\oplus L(\lambda^0_{i,F})\otimes L(p\omega_{i,F})) \otimes \otimes_{j\neq i} Q_1(\lambda_j)^{b_j}|_{\rG} \\
\cong &(L(\lambda^0_{i,B}) \otimes L(\pi\omega_{i,B}) \oplus L(\lambda^0_{i,E}) \otimes L(\pi\omega_{i,E})\oplus L(\lambda^0_{i,F})\otimes L(\pi\omega_{i,F})) \otimes \otimes_{j\neq i} Q_1(\lambda_j)^{b_j}|_{\rG},
\end{align*}
for each $X \in \{B,E,F\}$ and torus weight $\eps_{i,X}$ in $L(\omega_{i,X})$ the translation principle (Proposition \ref{prop:TP}) produces submodules 
\begin{equation}\label{eqn:submodules1}
L(\lambda^0_{i,X}) \otimes Q_1(\lambda_{\pi(i)} + \pi \eps_{i,X})^{b_{\pi(i)}} \otimes \otimes_{j\neq i,\pi(i)} Q_1(\lambda_j)^{b_j}|_{\rG}
\end{equation}
of $P_\sigma^b$ if $\pi(i) \neq i$ and 
\begin{equation}\label{eqn:submodules2}
L(\lambda^0_{i,X}+ \eps_{i,X}) \otimes \otimes_{j\neq i} Q_1(\lambda_j)^{b_j}|_{\rG}
\end{equation}
if $\pi(i) =i$. 
For each $j\in A(\sigma)$, let $X_j \in \{B,E,F\}$, and let 
\[
\lambda^0 = \sum_{j\notin A(\sigma)} \lambda_j + \sum_{j\in A(\sigma)} \lambda^0_{j,X_j}. 
\] 
For $j\in A(\sigma)$, if $X_j = B$ (resp.~$E$ and $F$), let $c_j = B$ (resp.~$c_j \in \{E_o,E_s\}$  and $c_j \in \{F_o,F_s\}$). 
Then for each $j\in A(\sigma)$, there is a unique torus weight $\eps_{j,c_j}$ in $L(\omega_{j,X_j})$ such that if $c_j = E_o$ or $F_o$ (resp.~ $E_s$ or $F_s$), then $F(\lambda^0+\sum_{j\in A(\sigma)} \pi\eps_{j,c_j})$ is obvious (resp.~shadow) in embedding $\pi(j)$. 
Finally, we let $P_\sigma^a$ be the quotient of $P_\sigma^b$ by the sum of the submodules in \eqref{eqn:submodules1} or \eqref{eqn:submodules2} with $i\in A(\sigma)$ and $\eps_i$ a torus weight in $L(\omega_{i,X})$ for $X = B$, $E$, or $F$ but not in $\{\eps_{i,c_i} \mid c_i \in a_i\}$. 
Informally speaking, $P_\sigma^a$ is the non-zero quotient of $P_\sigma^b$ containing precisely $\sigma$ and the Jordan--H\"older factors labelled by the elements in $a_j$ for $j\in A(\sigma)$ %
(see Figure \ref{table:P_sigma^a}). 

\begin{lemma}\label{lemma:wedgeglue}
Let $\cC$ be an abelian category. 
Suppose that we have an exact sequence 
\[
0 \ra \oplus_{i\in I} N_i \ra M \ra N \ra 0
\]
in $\cC$ where $I$ is a finite set. 
Letting $M_i$ be the cokernel of the map $\oplus_{j\in I, j\neq i} N_j \ra M$ for each $i\in I$, there is an exact sequence 
\[
0 \ra M \ra \oplus_{i\in I} M_i \ra \oplus_{i\in I} N/\Delta(N) \ra 0
\]
where the second map is (the composition of the diagonal map and) the sum of the natural projections, the third map is induced by the sum of the natural quotient maps, and $\Delta$ denotes the diagonally embedded copy of $N$ in $\oplus_{i\in I} N$. 
\end{lemma}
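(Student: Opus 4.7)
My plan is to prove the claim by direct diagram chase after first setting up the basic structure of the cokernels $M_i$. The key preliminary observation is that for each $i\in I$, the composition $N_i \hookrightarrow \oplus_j N_j \hookrightarrow M \twoheadrightarrow M_i$ is a monomorphism (its kernel is $N_i \cap \sum_{j\neq i} N_j$, which vanishes since $\sum_j N_j$ is an internal direct sum), and the induced map $M_i/N_i \to N$ is an isomorphism. Thus each $M_i$ fits in a short exact sequence
\[
0 \to N_i \to M_i \xrightarrow{\pi_i} N \to 0,
\]
and the composition $M \to M_i \xrightarrow{\pi_i} N$ is independent of $i$, equalling the given quotient $M \twoheadrightarrow N$.

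Next I would verify injectivity of the diagonal-projection map $\phi\colon M \to \oplus_{i\in I} M_i$: its kernel is $\bigcap_i \sum_{j\neq i} N_j$, which is again zero since $\sum_j N_j \subset M$ is an internal direct sum (any element lying in all the partial sums $\sum_{j\neq i} N_j$ must have trivial $N_i$-component for every $i$). Surjectivity of $\psi\colon \oplus_i M_i \to \oplus_i N/\Delta(N)$ is immediate because each $\pi_i$ is already surjective, so $\oplus_i M_i \to \oplus_i N$ is surjective, and composing with the quotient by $\Delta(N)$ preserves this. The composition $\psi\circ \phi$ is zero by the previous paragraph, since the image of $m\in M$ in $\oplus_i N$ is the constant tuple $(\bar m,\dots,\bar m) \in \Delta(N)$.

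The only nontrivial step is exactness in the middle. Given $(m_i)_i \in \ker\psi$, the tuple $(\pi_i(m_i))$ lies in $\Delta(N)$, so all $\pi_i(m_i)$ equal a common element $n\in N$. Lifting $n$ to some $m\in M$ and setting $(m_i') \defeq \phi(m)$, one has $\pi_i(m_i - m_i') = 0$ for every $i$, so $m_i - m_i' \in N_i \subset M_i$. Lifting these differences through the embedding $N_i \hookrightarrow M$ to elements $n_i \in N_i$ and observing that for $j \neq i$ the image of $n_i$ in $M_j$ is zero (since $n_i \in \sum_{k \neq j} N_k$), one computes $\phi(m + \sum_i n_i) = (m_i'+ n_i)_i = (m_i)_i$, proving $(m_i) \in \im \phi$.

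I do not expect any real obstacle; the one place requiring care is the bookkeeping in the last step, where the fact that each correction $n_i$ only affects the $i$-th component of $\phi$ is what makes the lift succeed. The argument works verbatim in any abelian category $\cC$ since it only uses finite direct sums, kernels, cokernels, and elementary diagram-chasing (which is justified in general by embedding finitely many objects into an abelian category of modules via Freyd--Mitchell, or by interpreting the chase as a sequence of universal-property statements).
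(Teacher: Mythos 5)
Your proof is correct. The paper proves the lemma in one line by exhibiting a map of short exact sequences (identity on $\oplus_i N_i$, the map $\phi$ on $M$, the diagonal $\Delta$ on $N$) and invoking the snake lemma; your argument is essentially that same snake-lemma chase written out by hand, after first establishing the same two facts the paper's diagram needs (the exactness of $0\ra N_i\ra M_i\ra N\ra 0$, and that $M\ra M_i\ra N$ is independent of $i$). The advantage of the paper's formulation is brevity; the advantage of yours is that it makes explicit where the corrections $n_i$ live and why each $n_i$ affects only the $i$-th component of $\phi$, which is exactly the content of the connecting map being trivial here. Your appeal to Freyd--Mitchell (restricted to a small full abelian subcategory containing the finitely many objects in play) to justify element chasing in a general abelian category is standard and unobjectionable, though it would be cleaner to simply phrase the final paragraph as the snake lemma computation $\ker\phi\cong\ker(\mathrm{id})=0$ and $\coker\phi\cong\coker\Delta$, which needs no embedding theorem.
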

\begin{proof}
There is a commutative diagram 
\[
\begin{tikzcd}
0 \arrow[r] \arrow[d]
& \oplus_{i\in I} N_i \arrow[r] \arrow[d]
& M \arrow[r] \arrow[d]
& N \arrow[r] \arrow[d]
& 0 \arrow[d] \\
0 \arrow[r]
& \oplus_{i\in I} N_i \arrow[r]
& \oplus_{i\in I} M_i \arrow[r]
& \oplus_{i\in I} N \arrow[r]
& 0
\end{tikzcd}
\]
where the first row is as in the statement of the lemma, the second row is the direct sum of the exact sequences $0 \ra N_i \ra M_i \ra N \ra 0$, the second vertical map is the identity, the third vertical map is the the composition of the diagonal map and the sum of the natural projections, and the fourth vertical map is the diagonal map. 
The snake lemma gives the desired exact sequence. 
\end{proof}

The following results are special cases of Lemma \ref{lemma:wedgeglue}. 
As usual we assume that $\sigma$ is $8$-deep so that \cite[Lemma 4.2.2]{LLLM2} applies.

\begin{prop}\label{prop:lowerglue}
{Assume that $\sigma$ is $8$-deep.}
Let $a = (a_j)_{j\in \cJ}$ be a tuple such that $a_j$ is a subset of $\{B,E_o,E_s,F_o,F_s\}$ for $j\in A(\sigma)$ and either $a_j = \widehat{1}$ for all $j\notin A(\sigma)$ or $a_j = \widehat{2}$ for all $j\notin A(\sigma)$.
Fix $i \in A(\sigma)$. 
For $X \subset a_i$, let $a_X = (a_{X,j})_{j\in \cJ}$ be the tuple such that $a_{X,j} = a_j$ if $j\neq i$ and $a_{X,i} = X$. 
Then there is an exact sequence
\[
0 \ra P_\sigma^a \ra \oplus_{X\in a_i} P_\sigma^{a_X} \ra (\oplus_{X\in a_i} P_\sigma^{a_\emptyset})/\Delta(P_\sigma^{a_\emptyset})\ra 0
\]
where $\Delta(P_\sigma^{a_\emptyset})$ denotes the diagonally embedded copy of $P_\sigma^{a_\emptyset}$ in $\oplus_{X\in a_i}P_\sigma^{a_\emptyset}$ and the second and third maps are the sums of the natural surjections. 
\end{prop}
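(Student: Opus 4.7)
The plan is to deduce this from the abstract categorical Lemma \ref{lemma:wedgeglue} applied to the quotient map $P_\sigma^a \twoheadrightarrow P_\sigma^{a_\emptyset}$, taking as the direct summand decomposition of its kernel the one induced by the translation principle (Proposition \ref{prop:TP}) in the $i$-embedding. Concretely, I take $M = P_\sigma^a$, $N = P_\sigma^{a_\emptyset}$, and for each $X \in a_i$ let $N_X \subset P_\sigma^a$ be the image of the submodule \eqref{eqn:submodules1} (if $\pi(i)\neq i$) or \eqref{eqn:submodules2} (if $\pi(i)=i$) attached to the torus weight $\eps_{i,X}$ corresponding to $X$.

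First I would recall that $P_\sigma^b \cong Q_1(\lambda)^b|_{\rG}$ via Proposition \ref{prop:Q1proj}, and that the submodule $\gr^1 Q_1(\lambda_i)\otimes\otimes_{j\neq i}Q_1(\lambda_j)^{b_j}$ of $Q_1(\lambda)^b$ decomposes, via Steinberg and the translation principle applied in the $\pi(i)$-factor, as the direct sum over pairs $(X',\eps)$ with $X'\in\{B,E,F\}$ and $\eps$ a torus weight of $L(\omega_{i,X'})$ of the summands \eqref{eqn:submodules1}/\eqref{eqn:submodules2}. By the very definition of $P_\sigma^a$, passing from $P_\sigma^b$ to $P_\sigma^a$ kills exactly those summands whose torus weight is not one of the $\eps_{j,c_j}$ for $j\in A(\sigma)$, $c_j\in a_j$; in particular the kernel of $P_\sigma^a\twoheadrightarrow P_\sigma^{a_\emptyset}$ is the image in $P_\sigma^a$ of the sum $\sum_{X\in a_i}N_X^{(b)}$ where $N_X^{(b)}\subset P_\sigma^b$ is the summand at embedding $i$ corresponding to $\eps_{i,X}$.

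Next I would verify the direct sum claim $\sum_{X\in a_i}N_X = \bigoplus_{X\in a_i}N_X$ inside $P_\sigma^a$. The summands $N_X^{(b)}$ sit in direct sum inside $P_\sigma^b$ by the translation principle at embedding $i$ and $\pi(i)$. The additional relations imposed when passing from $P_\sigma^b$ to $P_\sigma^a$ all come from submodules of the same type at embeddings $j'\in A(\sigma)\setminus\{i\}$ and $\pi(j')$, that is from tensor factors disjoint from the $(i,\pi(i))$-factors along which the $N_X^{(b)}$ are distinguished. Hence these relations act diagonally on the direct sum and $\bigoplus_{X\in a_i}N_X^{(b)}$ descends to a direct sum in $P_\sigma^a$. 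This gives the short exact sequence $0 \to \bigoplus_{X\in a_i}N_X \to P_\sigma^a \to P_\sigma^{a_\emptyset}\to 0$, and by the same construction the quotient $P_\sigma^a/\bigoplus_{Y\in a_i,\,Y\neq X}N_Y$ coincides with $P_\sigma^{a_X}$. Lemma \ref{lemma:wedgeglue} then produces the desired exact sequence.

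The main obstacle is the compatibility in the second paragraph above: to ensure that the translation-principle decomposition of the sub $\gr^1 Q_1(\lambda_i)\otimes\bigotimes_{j\neq i}Q_1(\lambda_j)^{b_j}$ is preserved under the (much coarser) quotient that defines $P_\sigma^a$, one must check that the relations killed at each $j'\neq i$ involve only the $Q_1(\lambda_{\pi(j')})$-factor and are therefore transverse to the decomposition indexing set at embedding $i$. This is automatic from the explicit form of \eqref{eqn:submodules1}/\eqref{eqn:submodules2} — the summand labels live in the $L(\omega_{i,X'})$-isotypic pieces of the $\pi(i)$-factor, while the quotients at $j'\neq i$ act on the $\pi(j')$-factor — but making it rigorous requires bookkeeping of the various Frobenius twists via Steinberg, especially in the edge case where $\pi$ has short cycles, i.e.~when $\pi(i)=i$ or $\pi(i)=\pi(j')$ for some $j'\in A(\sigma)\setminus\{i\}$.
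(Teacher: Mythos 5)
Your proof is correct and follows the same overall strategy as the paper: apply Lemma \ref{lemma:wedgeglue} to the surjection $P_\sigma^a \twoheadrightarrow P_\sigma^{a_\emptyset}$ once its kernel is exhibited as a direct sum $\bigoplus_{X\in a_i}N_X$ whose partial cokernels are the $P_\sigma^{a_X}$. Where you differ is in how that decomposition is verified. The paper sets $N_X\defeq\ker(P_\sigma^{a_X}\twoheadrightarrow P_\sigma^{a_\emptyset})$ and $L\defeq\ker(P_\sigma^a\twoheadrightarrow P_\sigma^{a_\emptyset})$, observes that the natural map $L\to\bigoplus_X N_X$ (whose components $L\twoheadrightarrow N_X$ are visibly surjective) is itself surjective because the Jordan--H\"older sets of the $N_X$ are pairwise disjoint by extension--graph considerations at embedding $i$, and then concludes it is an isomorphism by a length count. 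That argument is carried out entirely on the quotient side and never needs to know how the translation--principle decomposition of $\gr^1Q_1(\lambda_i)\otimes\bigotimes_{j\ne i}Q_1(\lambda_j)^{b_j}$ interacts with the relations imposed at other embeddings; in particular the short--cycle bookkeeping you flag as the main obstacle (the cases $\pi(i)=i$ or $\pi^{-1}(i)\in A(\sigma)\setminus\{i\}$, where the submodule killed at $j'=\pi^{-1}(i)$ also involves the $i$-factor) simply never arises. Your concrete $N_X$, namely the images in $P_\sigma^a$ of the submodules \eqref{eqn:submodules1}/\eqref{eqn:submodules2}, do agree a posteriori with the abstract kernels, so the approach can be pushed through, but the direct--sum claim is cleaner to check on the target side (via disjointness and length) than on the source side (via tracking the splitting through the quotient $P_\sigma^b\twoheadrightarrow P_\sigma^a$).
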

\begin{proof}
For $X \in a_i$, let $N_X$ be $\ker(P_\sigma^{a_X} \ra P_\sigma^{a_\emptyset})$ and $L$ be $\ker(P_\sigma^a \ra P_\sigma^{a_\emptyset})$. 
Then there are surjections $L \ra N_X$, and the natural map $L \ra \oplus_{X \in a_i} N_X$ is surjective because the Jordan--H\"older factors of $N_X$ are pairwise disjoint by extension graph considerations at embedding $i$. 
This map $L \ra \oplus_{X \in a_i} N_X$ is then an isomorphism by length considerations. 
The result now follows from Lemma \ref{lemma:wedgeglue} setting $M = P_\sigma^a$ and $N=P_\sigma^{a_\emptyset}$. \end{proof}

Let  
\begin{equation}\label{equation:Pbar}
\ovl{P}_\sigma \defeq P_\sigma/\big(\sum_a \rad^a P_\sigma + \sum_b \rad^b P_\sigma\big)
\end{equation}
where $a$ varies over tuples with 
\[
a_i = \begin{cases}
  2  & \text{ if } i\in A(\sigma) \\
  3 & \text{ if } i\notin A(\sigma)
\end{cases}
\]
for some $i\in \cJ$ and $a_j = 0$ for $j\neq i$ and $b$ varies over tuples with $b_i = b_{i'} = 1$ for some $i \notin A(\sigma)$ and $i' \in\cJ$ distinct from $i$, and $b_j = 0$ for $j\neq i,i'$ (the set of $b$ is empty if $\cJ = A(\sigma)$ or $\#\cJ = 1$). 
The following proposition gives another characterization of $\ovl{P}_\sigma$. 

\begin{prop}\label{prop:finalglue}
{Assume that $\sigma$ is $8$-deep.}
There is an exact sequence 
\begin{equation}\label{eqn:finalglue}
0 \ra \ovl{P}_\sigma \ra \oplus_c P_\sigma^c \ra (\oplus_c \sigma)/\Delta(\sigma) \ra 0
\end{equation}
where $c$ runs over tuples $(c_j)_{j\in \cJ}$ with $c_i = \widehat{3}$ for some $i \notin A(\sigma)$ and $c_j = \widehat{1}$ for all $j \neq i$ and the tuple $(c_j)_{j\in \cJ}$ with $j = \widehat{2}$ for all $j\in A(\sigma)$ and $j = \widehat{1}$ for all $j \notin A(\sigma)$, $\Delta(\sigma) \subset \oplus_c \sigma$ denotes the diagonally embedded copy, and the maps are the natural projections. 
\end{prop}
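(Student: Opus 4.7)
The plan is to apply Lemma \ref{lemma:wedgeglue} with $M = \ovl{P}_\sigma$, $N = \sigma = \cosoc\,\ovl{P}_\sigma$, and the following submodules $N_c \subset \rad\,\ovl{P}_\sigma$ indexed by the tuples $c$ in the statement. Writing $e_j \in \Z_{\geq 0}^{\cJ}$ for the standard basis vector, I let $N_c$ be the image in $\ovl{P}_\sigma$ of $\sum_{j\in A(\sigma)} \rad^{e_j} P_\sigma$ when $c = ((\widehat{2})_{A(\sigma)}, (\widehat{1})_{\text{rest}})$, and the image of $\rad^{e_i} P_\sigma$ when $c$ has $\widehat{3}$ at a single position $i \notin A(\sigma)$ and $\widehat{1}$ elsewhere. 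Once we establish $\rad\,\ovl{P}_\sigma = \bigoplus_c N_c$ and $\ovl{P}_\sigma / \bigoplus_{c'\neq c} N_{c'} \cong P_\sigma^c$, Lemma \ref{lemma:wedgeglue} yields \eqref{eqn:finalglue}: the composition $\ovl{P}_\sigma \twoheadrightarrow P_\sigma^c \twoheadrightarrow \sigma$ is the cosocle projection of $\ovl{P}_\sigma$ for every $c$, so the maps produced by the lemma coincide with the natural projections in the statement.

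First I would verify that each projection $\ovl{P}_\sigma \twoheadrightarrow P_\sigma^c$ is well-defined, which reduces to a combinatorial inclusion of the kernels of the two surjections from $P_\sigma$. Unwinding \eqref{equation:Pbar} and the definition of $P_\sigma^c$, every generator $\rad^a P_\sigma$ appearing in the kernel of $P_\sigma \twoheadrightarrow \ovl{P}_\sigma$ is contained in some $\rad^{a_j e_j} P_\sigma$ appearing in the kernel of $P_\sigma \twoheadrightarrow P_\sigma^c$, by a direct case check on $c$.

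The heart of the argument is the two structural claims about the $N_c$. I would analyze the multi-indexed filtration $\{\rad^a P_\sigma\}_{a \in \Z_{\geq 0}^\cJ}$ on $\ovl{P}_\sigma$, on each $P_\sigma^c$, and on each $N_c$. By Proposition \ref{prop:cosoclefilter}, the associated graded of each of these induced filtrations is canonically $\bigoplus_a \gr^a P_\sigma$ summed over the tuples $a$ ``surviving'' in the respective module. The combinatorial dictionary, obtained by direct inspection of the definitions, is that each nonzero surviving tuple $a$ of $\ovl{P}_\sigma$ is surviving in exactly one $P_\sigma^c$: tuples supported entirely in $A(\sigma)$ with each $a_j \leq 1$ are captured only by $c_0 = ((\widehat{2})_{A(\sigma)}, (\widehat{1})_{\text{rest}})$, while tuples supported at a single $i \notin A(\sigma)$ with $a_i \leq 2$ are captured only by the corresponding $c_i$; every remaining nonzero tuple fails to survive in $\ovl{P}_\sigma$ because it dominates one of the generators in \eqref{equation:Pbar}.

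From this dictionary both structural claims follow. The graded pieces of distinct $N_c, N_{c'}$ are supported on disjoint sets of tuples, so $N_c \cap N_{c'}$ has zero associated graded and hence vanishes as a submodule of the finite-length module $\ovl{P}_\sigma$; the $N_c$ together cover all nonzero surviving tuples of $\ovl{P}_\sigma$, giving $\rad\,\ovl{P}_\sigma = \bigoplus_c N_c$. Likewise, $\ovl{P}_\sigma / \bigoplus_{c'\neq c} N_{c'}$ has the same associated graded, and in particular the same length, as $P_\sigma^c$, forcing the natural surjection between them to be an isomorphism. The main obstacle is the combinatorial dictionary above, which amounts to unpacking the definitions; with it in hand, the rest is formal.
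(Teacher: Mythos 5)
Your proof takes essentially the same route as the paper: both apply Lemma \ref{lemma:wedgeglue} with the same indexing set $\{c\}$, both reduce to identifying $L := \ker(\ovl{P}_\sigma\to\sigma)$ with $\bigoplus_c N_c$ where $N_c = \ker(P_\sigma^c\to\sigma)$, and both then read off the exact sequence. The paper's verification of this identification is more economical---it shows $L\to\bigoplus_c N_c$ is surjective by checking on cosocles (the $\JH(\cosoc\,N_c)$ are pairwise disjoint by alcove considerations, via \cite[Lemma 4.2.2]{LLLM2}) and then concludes by length---whereas you construct the corresponding submodules of $\ovl{P}_\sigma$ explicitly and track the full multi-indexed associated graded. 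One small wrinkle in your write-up: the two facts you record, namely $N_c\cap N_{c'}=0$ for $c\ne c'$ and $\sum_c N_c = \rad\,\ovl{P}_\sigma$, do not by themselves give $\rad\,\ovl{P}_\sigma = \bigoplus_c N_c$ (pairwise disjointness of more than two submodules does not imply an internal direct sum); you need the length count (which you already invoke in the next step to identify the quotients with $P_\sigma^c$), or the paper's cosocle argument.
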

\begin{proof}
For a tuple $c$ as in \eqref{eqn:finalglue}, let $N_c$ be $\ker(P_\sigma^c \onto \sigma)$, and let $L$ be $\ker(\ovl{P}_\sigma \ra \sigma)$. 
Then we have natural surjections $L \ra N_c$. 
We claim that the natural map $L \ra \oplus_c N_c$ is surjective. 
It suffices to show that it is surjective after taking cosocles. 
This surjectivity follows from the fact that $\cosoc\, L \ra \cosoc\, N_c$ is surjective for each $c$ and that the sets $\JH(\cosoc\, N_c)$ are pairwise disjoint by alcove considerations (as usual, we use \cite[Lemma 4.2.2]{LLLM2}).
Finally, the map $L \ra \oplus_c N_c$ is an isomorphism by length considerations. 
The result now follows from Lemma \ref{lemma:wedgeglue} setting $M = \ovl{P}_\sigma$ and $N=\sigma$. 
\end{proof}

\subsection{Lattices in direct sums of Deligne--Lusztig representations}\label{sec:DL}

Let $\sigma$ be an $8$-deep Serre weight. 
We now study certain quotients of $P_\sigma$ arising from reductions of lattices in direct sums of Deligne--Lusztig representations. 
Recall that $\tld{P}_\sigma \ra \sigma$ is a $\cO[\rG]$-projective cover of $\sigma$. 
Since the multiplicity of an irreducible $E[\rG]$-module $R$ in $\tld{P}_\sigma \otimes_{\cO} E$ is the multiplicity of $\sigma$ in $\ovl{R}$, we have that 
\[
\tld{P}_\sigma \otimes_{\cO} E \cong \oplus_{\sigma\in \JH(\ovl{R})} R
\]
where $R$ runs over Deligne--Lusztig representations. 

Let $T$ be a set of Deligne--Lusztig representations whose reduction contains $\sigma$ as a Jordan--H\"older factor. 
Let $\tld{P}_\sigma^T$ be the quotient of $\tld{P}_\sigma$ which is isomorphic to the image of the composition 
\[
\tld{P}_\sigma \into \tld{P}_\sigma\otimes_{\cO}E \cong \oplus_{\sigma\in \JH(\ovl{R})} R \onto \oplus_{R\in T} R. 
\] 
The isomorphism class of $\tld{P}_\sigma^T$ does not depend on the choice of isomorphism above. 
Let $P_\sigma^T \defeq \tld{P}_\sigma^T  \otimes_{\cO} \F$. 
Then we have natural surjections 
\begin{equation}\label{eqn:projcoverT}
P_\sigma\onto P_\sigma^T \onto P_\sigma^S
\end{equation}
for any subset $S \subset T$. 
If $T$ is the set of all Deligne--Lusztig representations whose reduction contains $\sigma$ as a Jordan--H\"older factor, then $\tld{P}_\sigma^T \cong \tld{P}_\sigma$. 

We now study $P_\sigma^T$ for specific subsets $T$. 
We begin by labelling certain Deligne--Lusztig representations. 
Suppose that $A(\sigma) \neq \cJ$, and fix $i\in \cJ \setminus A(\sigma)$.
Identify $S_3$ with the $i$-th component of $\un{W}$. 
Let $(s,\mu)$ be a lowest alcove presentation for a Deligne--Lusztig representation such that $\sigma \in \JH(\ovl{R}_s(\mu+\eta))$ is an outer weight. 
For $w\in S_3 \subset \un{W}$, let $R_w$ be the Deligne--Lusztig representations $R_{sw}(\mu+\eta)$. 

\begin{lemma}\label{lemma:sigma1}
Fix $i' \neq i$ in $\cJ$. 
Define $a = (a_j)_{j\in \cJ}$ by $a_j = 0$ for all $j\neq i'$ and $a_{i'} = 1$. 
Then the images of the maps 
\begin{equation}\label{eqn:radsigma}
\rad^a P_\sigma \ra P_\sigma^{\{R_{\alpha\beta},R_{w_0}\}} \quad \textrm{and} \quad \rad^aP_\sigma \ra P_\sigma^{\{R_{w_0},R_{\beta\alpha}\}}
\end{equation} 
induced by \eqref{eqn:projcoverT} do not contain $\sigma$ as a Jordan--H\"older factor. 
\end{lemma}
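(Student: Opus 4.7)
The plan is to show that every Jordan--H\"older factor isomorphic to $\sigma$ appearing in $P_\sigma^T$ originates from a radical layer $\gr^b P_\sigma$ of $P_\sigma$ with $b_{i'}=0$. Once this is established, since $\rad^a P_\sigma$ is contained in $\sum_{b:\,b_{i'}\geq 1}\rad^b P_\sigma$, its image in $P_\sigma^T$ will miss every $\sigma$-isotypic subquotient, yielding the claim.

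The crucial structural observation is that the elements $\alpha\beta$, $w_0$, and $\beta\alpha$ all lie in $S_3$, which has been identified with the $i$-th factor of $\un{W}$. Consequently, for any $w\in\{e,\,\alpha\beta,\,w_0,\,\beta\alpha\}$, the Deligne--Lusztig representation $R_{sw}(\mu+\eta)$ shares with $R_e=R_s(\mu+\eta)$ the same embedding-$i'$ structure: by the extension graph description of JH factors in \S\ref{subsubsec:combinatorics_TandW}, the $i'$-coordinate of each JH factor of $\ovl{R}_{sw}(\mu+\eta)$ is $s_{i'}(\omega_{i'},a_{i'})$ for some $(\omega_{i'},a_{i'})\in\Sigma_0$, independent of $w\in S_3$. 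In particular, each copy of $\sigma$ appearing in some $\ovl{R}_w$ corresponds to the same $i'$-coordinate as in $\ovl{R}_e$, namely that of $\sigma$ itself. As a first step, I would compute the multiplicity $[P_\sigma^T:\sigma]=\sum_{R\in T}[\ovl{R}:\sigma]$ via the extension graph: since $\sigma$ is outer in $\ovl{R}_e$, a direct combinatorial check shows that the appropriate entries of $\Sigma_0$ also appear in the JH-factor sets of $\ovl{R}_{\alpha\beta}$, $\ovl{R}_{w_0}$, $\ovl{R}_{\beta\alpha}$ with multiplicity one, so one copy of $\sigma$ sits in $\cosoc P_\sigma^T$ while any additional copy lies in $\rad P_\sigma^T$.

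The main step is then to trace the radical-layer origin of each copy. Combining the identification $\tld{P}_\sigma^T\otimes_\cO E\cong\bigoplus_{R\in T}R$, the filtration compatibility of the natural surjection $P_\sigma\onto P_\sigma^T$ provided by Proposition \ref{prop:filtrationcompatible}, and the fact that the $R_w$'s differ from $R_e$ only at embedding $i$, one sees that any non-cosocle copy of $\sigma$ in $P_\sigma^T$ can only be hit by radical layers $\gr^b P_\sigma$ whose radical direction is concentrated at embedding $i$, hence with $b_{i'}=0$. The main obstacle in this plan is formalizing this ``origin tracking'' rigorously: it requires combining the extension graph combinatorics with the description of reductions of lattices in Deligne--Lusztig representations developed in \cite{LLLM2}, in order to confirm that the embedding-$i'$ radical direction is genuinely ``invisible'' in $P_\sigma^T$. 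Once done, the conclusion that $\rad^a P_\sigma$ (with $a_{i'}=1$) maps into a submodule of $P_\sigma^T$ containing no $\sigma$-subquotient is immediate.
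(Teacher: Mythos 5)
Your preliminary reduction is sound: if every $\sigma$-subquotient of $P_\sigma^S$ originates from a graded piece $\gr^b P_\sigma^S$ with $b_{i'}=0$, then since $\rad^a P_\sigma$ maps into $\rad^a P_\sigma^S$ whose graded pieces all have $b_{i'}\geq 1$ (using Proposition \ref{prop:filtrationcompatible}), the image indeed has no $\sigma$-subquotient. The structural observation that the $R_w$ for $w\in S_3\subset \un{W}$ (the $i$-th factor) share the same $i'$-coordinate combinatorics is also correct. However, the pivotal step is genuinely missing, and you flag it yourself: the passage from ``the Deligne--Lusztig representations differ only at embedding $i$'' to ``the non-cosocle copies of $\sigma$ in $P_\sigma^S$ live in layers with $b_{i'}=0$'' does not follow from the stated observations. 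The lattice $\tld{P}_\sigma^S$ in $R_{\alpha\beta}\oplus R_{w_0}$ is \emph{not} a single-type lattice, so the product-over-$\cJ$ structure from \cite{LLLM2} (which underlies your intuition) does not apply to it directly; controlling where the second copy of $\sigma$ sits in the radical filtration requires an actual argument about which submodule maps are nonzero, not just a count of Jordan--H\"older factors.

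The paper's proof handles precisely this point by a different and more concrete route. Rather than tracking the two copies of $\sigma$, it works one Weyl (or dual Weyl) layer of $Q_1(\lambda_{i'})$ at a time, using Proposition \ref{prop:filtrations} to decompose $\rad^a P_\sigma$ into length-two subquotients $M$ at embedding $i'$, and then Proposition \ref{prop:weylext} (plus the dual of Proposition \ref{prop:mult2}) together with the multiplicity count from Proposition \ref{prop:alg_rep_ext_graph} to detect which of the maps $P_{\sigma^1}\to M|_\rG\to P_{\sigma^0}$ are killed upon passage to $P_\sigma^S$. The conclusion obtained is in fact stronger than what you aim for: the kernel of $\rad^a P_\sigma\to P_\sigma^S$ absorbs \emph{every} Jordan--H\"older factor of the form $L(\lambda_{i',A})\otimes\otimes_{j\neq i'}L(\nu_j)|_\rG$ (resp.~$L(\lambda_{i',B})\otimes\cdots$ if $i'\notin A(\sigma)$), not merely the $\sigma$-isotypic ones. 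To repair your argument you would need to supply the ``origin tracking'' by essentially the same $\Ext^1$-analysis that the paper performs; the structural observation alone does not close the gap.
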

\begin{proof}
Let $S$ be $\{R_{\alpha\beta},R_{w_0}\}$ or $\{R_{w_0},R_{\beta\alpha}\}$. 
Suppose that $\sigma = F(\lambda)$. 
First suppose that $i' \in A(\sigma)$. 
Using Proposition \ref{prop:alg_rep_ext_graph} we see that for any $\nu \in X_1(\un{T})$, $P_\sigma^S$ contains no Jordan--H\"older factors of $L(\lambda_{i',X}) \otimes \otimes_{j\neq i'} L(\nu_j)|_{\rG}$ if $X = B$ and at most one Jordan--H\"older factor if $X = E$ or $F$. 
Then using the Weyl filtration in Proposition \ref{prop:filtrations} and Proposition \ref{prop:weylext} with $(X_{i'}^1,X_{i'}^0) = (B,A), (E,A), (F,A)$, and the dual Weyl filtration in Proposition \ref{prop:filtrations} and the dual of Proposition \ref{prop:mult2}, we see that the kernel of \eqref{eqn:radsigma} contains all Jordan--H\"older factors of the form $L(\lambda_{i',A}) \otimes \otimes_{j\neq i'} L(\nu_j)|_{\rG}$. 

Next suppose that $i' \notin A(\sigma)$. 
Again from Proposition \ref{prop:alg_rep_ext_graph} we see that for any $\nu \in X_1(\un{T})$, $P_\sigma^S$ contains at most two Jordan--H\"older factors of $L(\lambda_{i',X}) \otimes \otimes_{j\neq i'} L(\nu_j)|_{\rG}$ if $X = C$ or $D$. 
Then using the Weyl filtration in Proposition \ref{prop:filtrations} and Proposition \ref{prop:weylext} with $(X_{i'}^1,X_{i'}^0) = (C,B), (D,B)$, we see that the kernel of \eqref{eqn:radsigma} contains all Jordan--H\"older factors of the form $L(\lambda_{i',B}) \otimes \otimes_{j\neq i'} L(\nu_j)|_{\rG}$. 
\end{proof}

\begin{lemma}\label{lemma:sigma2}
Define $a = (a_j)_{j\in \cJ}$ by $a_j = 0$ for all $j\neq i$ and $a_i = 4$. 
The images of the maps 
\begin{equation}\label{eqn:radsigma2}
\rad^a P_\sigma \ra P_\sigma^{\{R_{\alpha\beta},R_{w_0}\}} \quad \textrm{and} \quad \rad^aP_\sigma \ra P_\sigma^{\{R_{w_0},R_{\beta\alpha}\}}
\end{equation} 
induced by \eqref{eqn:projcoverT} do not contain $\sigma$ as a Jordan--H\"older factor. 
\end{lemma}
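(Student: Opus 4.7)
The plan is to adapt the proof of Lemma \ref{lemma:sigma1} to the case where the radical filtration is concentrated entirely at embedding $i$. Writing $\sigma = F(\lambda)$ with $\lambda_i \in B$ (which holds since $i \notin A(\sigma)$), Proposition \ref{prop:Q1proj} identifies $P_\sigma \cong Q_1(\lambda)|_\rG$, so that
\[
\rad^{4 \cdot 1_i} P_\sigma \cong \rad^4 Q_1(\lambda_i) \otimes \bigotimes_{j\neq i} Q_1(\lambda_j)\Big|_\rG.
\]
By Proposition \ref{prop:filtrations} together with columns $a_i = 4$ and $5$ of Table \ref{table:ext_Q1:B}, the Weyl and dual Weyl filtrations of $\rad^4 Q_1(\lambda_i)$ have graded pieces that reduce to length-two subquotients $(X^1_i, X^0_i)$ from the list $\{(E,A),\ (C,B)\times 2,\ (B,C),\ (B,D)\}$.

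First I would follow the reduction of Lemma \ref{lemma:sigma1} (via Lemma \ref{lem:reduction_pres}) to the problem of showing, for each such subquotient $M_i$ and each dominant tuple $\nu = (\nu_j)_{j\neq i}$, that the image of $M_i \otimes \bigotimes_{j\neq i} L(\nu_j)|_\rG$ in $P_\sigma^S$ contains no Jordan--H\"older factor isomorphic to $\sigma$, where $S = \{R_{\alpha\beta}, R_{w_0}\}$ or $\{R_{w_0}, R_{\beta\alpha}\}$. The $(E,A)$ case is then immediate: since the $i$-th slot of the highest weight of $\sigma$ lies in alcove $B$, Proposition \ref{prop:alg_rep_ext_graph} rules out $\sigma \in \JH(L(\lambda_{i,X}) \otimes \bigotimes_{j\neq i} L(\nu_j)|_\rG)$ for $X \in \{E, A\}$.

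Next, the remaining cases $(C,B), (B,C), (B,D)$ are handled by Proposition \ref{prop:weylext}. Using the outerness of $\sigma$ at embedding $i$ together with the extension-graph description of $\JH(\ovl{R}_{w_0}), \JH(\ovl{R}_{\alpha\beta}), \JH(\ovl{R}_{\beta\alpha})$ from \S \ref{subsubsec:combinatorics_TandW}, one verifies (via Proposition \ref{prop:alg_rep_ext_graph}) that $P_\sigma^S$ contains at most one Jordan--H\"older factor in each of $L(\lambda_{i,B}) \otimes \bigotimes_{j\neq i} L(\nu_j)|_\rG$, $L(\lambda_{i,C}) \otimes \bigotimes_{j\neq i} L(\nu_j)|_\rG$, and $L(\lambda_{i,D}) \otimes \bigotimes_{j\neq i} L(\nu_j)|_\rG$, and that these singletons are linked exactly as required by the hypothesis of Proposition \ref{prop:weylext} with $(X^1_i,X^0_i) \in \{(C,B),(B,C),(B,D)\}$. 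Applying the proposition then shows that the induced maps kill the $\sigma$-isotypic component in the image, completing the argument.

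The main obstacle is the combinatorial bookkeeping in the last paragraph: to invoke Proposition \ref{prop:weylext} one must check, for each pair of alcoves and each $\nu$, that the outerness of $\sigma$ at slot $i$ (which forces the $i$-th coordinate of $\sigma$ under the $r(\Sigma)$-parametrization to be extremal) propagates through the twisted $\ovl{R}_{sw}$-structure for $w \in \{e, \alpha\beta, w_0, \beta\alpha\}$ in such a way that the modular weight and its adjacent neighbor occur with multiplicity exactly one in $P_\sigma^S$; this is essentially a translation of the analogous step in the proof of Lemma \ref{lemma:sigma1}, now applied in the embedding where the filtration is taken rather than in an auxiliary embedding.
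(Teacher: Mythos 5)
Your proposal attempts to run the same machinery (Weyl/dual Weyl filtrations from Table~\ref{table:ext_Q1:B}, Propositions \ref{prop:filtrations} and \ref{prop:weylext}) as the paper does, but there are concrete gaps.

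First, a bookkeeping slip: the length-two subquotients for $\rad^4 Q_1(\lambda_i)$ are read off from columns $a_i = 4$ and $5$ of Table~\ref{table:ext_Q1:B}, which give $(E,A)$, $(B,C)$, $(B,D)$ (at $a_i=4$) and $(C,B)$ (at $a_i=5$). The ``$(C,B)\times 2$'' you list actually occurs at $a_i = 3$ and is not relevant here. More importantly, the paper's own proof proceeds with the triple $(X_i^1, X_i^0) = (E,A), (F,A), (A,B)$, which differs from your choice $(C,B), (B,C), (B,D)$; the paper's chain goes $E\to A\to B$ and $F\to A\to B$, starting from the fact that $P_\sigma^S$ contains at most two of the (typically three) Jordan--H\"older factors of $L(\lambda_{i,E})\otimes\cdots|_{\rG}$ (and likewise for $F$), so that the ``missing'' factor can be propagated downward to $A$ and then to $B$ via Proposition~\ref{prop:weylext}. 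Your approach attempts to start instead from the observation that $P_\sigma^S$ contains at most one JH factor of $L(\lambda_{i,B})$, $L(\lambda_{i,C})$, $L(\lambda_{i,D})$; the $B$ count is trivially $1$ (the restriction is simple) and provides no leverage, and the $C$, $D$ counts are not verified and are unlikely to both be $1$ (cf.\ the proof of Lemma~\ref{lemma:sigma1}, where the analogous count for $C$, $D$ is ``at most two'').

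Second, the final step is asserted but not carried out. Proposition~\ref{prop:weylext} is a criterion (``the composition $P_{\sigma^1}\to M|_{\rG}\to P_{\sigma^0}$ is nonzero iff $\mathrm{Ext}^1(\sigma^1,\sigma^0)\neq 0$''); it does not by itself ``kill'' anything. To make it kill $\sigma$ in the image, one must exhibit a JH factor $\kappa$ of the cosocle $M^1|_{\rG}$ that is \emph{not} a constituent of $P_\sigma^S$ and whose $\Ext^1$ with $\sigma$ is nonzero, and then show the restriction $M|_{\rG}\to P_\sigma^S$ necessarily factors through a quotient of $M|_{\rG}$ killing $\sigma$; this is exactly what the paper's counting statement about $E$ and $F$ provides and what your setup does not. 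Your own last paragraph acknowledges exactly this unresolved bookkeeping, so the argument as written does not close.
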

\begin{proof}
The proof is similar to that of Lemma \ref{lemma:sigma1}. 
For any $\nu \in X_1(\un{T})$ and $S$ equal to $\{R_{\alpha\beta},R_{w_0}\}$ or $\{R_{w_0},R_{\beta\alpha}\}$, $P_\sigma^S$ contains at most two Jordan--H\"older factors of $L(\lambda_{i,X}) \otimes \otimes_{j\neq i} L(\nu_j)|_{\rG}$ if $X = E$ or $F$. 
Then using the Weyl filtration in Proposition \ref{prop:filtrations} and Proposition \ref{prop:weylext} with $(X_{i}^1,X_{i}^0) = (E,A), (F,A),(A,B)$, we see that the kernel of \eqref{eqn:radsigma2} contains all Jordan--H\"older factors of the form $L(\lambda_{i,B}) \otimes \otimes_{j\neq i} L(\nu_j)|_{\rG}$. 
\end{proof}

\begin{lemma}\label{lemma:notsigma}
Define $a = (a_j)_{j\in \cJ}$ by $a_j = 0$ for all $j\neq i$ and $a_i = 2$. 
Let $P$ be a quotient of $P_\sigma$ and let $M$ be the kernel of the map $\rad^a P_\sigma \ra P$. 
Recall that if $\sigma=F(\lambda)$, then $\gr^a P_\sigma$ is isomorphic to 
\begin{align}
\gr^a P_\sigma &\cong \gr^2 Q_1(\lambda_i) \otimes \otimes_{j\neq i} \gr^0 Q_1(\lambda_j)|_{\rG} \\
&\cong \sigma^{\oplus 2} \oplus (L(\lambda_{i,E}) \otimes \otimes_{j\neq i} L(\lambda_j))|_{\rG}  \oplus (L(\lambda_{i,F}) \otimes \otimes_{j\neq i} L(\lambda_j))|_{\rG}  
\end{align}
so that there is a projection map 
\begin{equation} \label{eqn:notsigma}
\gr^a P_\sigma \ra (L(\lambda_{i,E}) \otimes \otimes_{j\neq i} L(\lambda_j))|_{\rG}  \oplus (L(\lambda_{i,F}) \otimes \otimes_{j\neq i} L(\lambda_j))|_{\rG} . 
\end{equation}
If $\kappa$ is a Jordan--H\"older factor of the image of $M$ under \eqref{eqn:notsigma}, then $\kappa$ is not a Jordan--H\"older factor of $P$. 
\end{lemma}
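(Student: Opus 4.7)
The proof reduces to a multiplicity comparison $m_\kappa(K) = m_\kappa(P_\sigma)$ where $K := \ker(P_\sigma \twoheadrightarrow P)$, from which $m_\kappa(P) = 0$ follows immediately. I would argue via the rigidity of $P_\sigma \cong Q_1(\lambda)|_{\rG}$ (Proposition \ref{prop:Q1proj}) together with explicit inspection of the Loewy structure of $Q_1(\lambda_i)$.

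First, parametrize $\kappa = F(\Trns_{\lambda+\eta}(\pi\omega_0, \pi X_0))$ where $X_{0,i} \in \{E, F\}$ and $X_{0,j} = A$ for $j \neq i$ (uniquely determined by $8$-deepness of $\sigma$). Using Propositions \ref{prop:Q1proj} and \ref{prop:alg_rep_ext_graph}, each copy of $\kappa$ in $P_\sigma$ comes from a simple constituent $L(\lambda_Y)$ of the Loewy filtration of $Q_1(\lambda)$ for which $\pi\tld{w}^0_{Y_i}(A) \in \{E, F\}$. By inspection of Figure \ref{fig:weyl:QB}, all such constituents sit at Loewy radical levels $\geq 2$ at the $i$-th embedding, and hence lie inside $\rad^a P_\sigma$. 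Consequently every copy of $\kappa$ in $K$ lies in $K \cap \rad^a P_\sigma = M$, giving $m_\kappa(K) = m_\kappa(M)$. The hypothesis that $\kappa \in \JH(N)$ (with $N$ the image of $M$ under \eqref{eqn:notsigma}) then provides a copy of $\kappa$ at Loewy level $a$ contained in $M \subset K$.

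The structural heart of the argument is the following rigidity statement, extracted from Proposition \ref{prop:filtrations} and Table \ref{table:ext_Q1:B}: for $\lambda_i \in B$, the additional copies of $L(\lambda_{i,E})$ (resp.~$L(\lambda_{i,F})$) at deeper Loewy levels of $Q_1(\lambda_i)$ occur as socles of Weyl (or dual Weyl) subquotients whose cosocles are linked (via nonsplit extensions) to the layer-$2$ copy of $L(\lambda_{i,E})$ (resp.~$L(\lambda_{i,F})$). Since $Q_1(\lambda_i)$ is rigid, any submodule of $P_\sigma$ whose image in the non-$\sigma$ summand of $\gr^a P_\sigma$ contains a copy of $\kappa$ must necessarily contain all deeper copies of $\kappa$ as well. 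Applying this to $K$ yields $m_\kappa(K) = m_\kappa(P_\sigma)$, so $\kappa \notin \JH(P)$.

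The main obstacle is the rigid linkage claim in the last step: one must verify, using the explicit Weyl and dual Weyl filtrations of Proposition \ref{prop:filtrations} (Figures \ref{fig:weyl:QB} and the corresponding data in Table \ref{table:ext_Q1:B}), that each additional copy of $L(\lambda_{i,E})$ or $L(\lambda_{i,F})$ at Loewy level $>2$ is contained in the submodule of $Q_1(\lambda_i)$ generated by the layer-$2$ copy. This is a careful combinatorial bookkeeping argument analogous to (but distinct from) the covering arguments of \S\ref{sec:covering}, and is the only place where the specific choice $a_i = 2$ (as opposed to larger values) is used.
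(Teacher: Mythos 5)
Your opening reduction is sound and matches the shape of the paper's argument: since $\kappa$ is parametrized with alcove coordinate $E$ or $F$ at embedding $i$, Proposition \ref{prop:alg_rep_ext_graph} shows $\kappa \notin \JH(P_\sigma/\rad^a P_\sigma)$, hence $m_\kappa(K) = m_\kappa(M)$ and $m_\kappa(P_\sigma) = m_\kappa(\rad^a P_\sigma)$, and the lemma is equivalent to $\kappa \notin \JH(\rad^a P_\sigma/M)$.

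The ``rigid linkage'' claim that you place at the heart of the argument is a genuine gap, for several compounding reasons. First, rigidity of $Q_1(\lambda_i)$ (equality of socle and radical filtrations) is a statement about the Loewy filtration only; it does not, by itself, yield a structural constraint of the form ``any submodule hitting the layer-$2$ copy of $L(\lambda_{i,E})$ must contain the deeper copies.'' One would have to exhibit, for each deeper copy, a chain of nonsplit extensions forcing it into the submodule generated by the layer-$2$ copy and $\rad^{>a}$, and such chains need not exist in a rigid module in general. Second, and more seriously, $M$ is an $\F[\rG]$-submodule of $P_\sigma = Q_1(\lambda)|_{\rG}$, not a $\un{G}_{/\F}$-submodule of $Q_1(\lambda_i)$. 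Restriction to $\rG$ decomposes the Loewy layers further and introduces many new $\rG$-submodules (\cite[Lemma 4.2.2]{LLLM2} and Proposition \ref{prop:cosoclefilter} give partial control, but not what you need), so a submodule statement proved for $Q_1(\lambda_i)$ as an algebraic module does not transfer. Third, the Weyl/dual Weyl extension data from Proposition \ref{prop:filtrations} constrains algebraic extensions; to turn this into information about $\rG$-submodules one must pass through Proposition \ref{prop:weylext} and its companions (\ref{prop:mult2}, \ref{prop:doubleadj}, \ref{prop:extcover}, \ref{prop:genJ}), which is precisely the machinery built in \S\ref{sec:covering}. In other words, the ``careful combinatorial bookkeeping'' you defer to at the end is not a variant of the covering argument --- it essentially \emph{is} the covering argument, and it is the bulk of the work.

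The paper short-circuits all of this by choosing a tame $L$-parameter $\rhobar$, using the flexibility afforded by Proposition \ref{prop:alg_rep_ext_graph}, so that $\JH(\gr^a P_\sigma) \cap W^?(\rhobar) = \{\kappa\}$. Because $\kappa$ occurs with multiplicity one in the two non-$\sigma$ summands of $\gr^a P_\sigma$ and lies in the image of $M$ by hypothesis, the cokernel of $M \to \gr^a P_\sigma$ has no $W^?(\rhobar)$-factors for this choice of $\rhobar$. Proposition \ref{prop:W?-cover} then gives $\kappa \notin \JH(\rad^a P_\sigma/M)$ directly. This is the clean way to bring the covering property to bear: one absorbs the combinatorics into the already-established Proposition \ref{prop:W?-cover} rather than re-deriving a special case of it from scratch. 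If you want to complete your route, you should recognize that the missing step is exactly (an instance of) Proposition \ref{prop:W?-cover}, and reduce to it via a choice of $\rhobar$ as above.
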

\begin{proof}
Suppose that $\kappa$ is a Jordan--H\"older factor of the image of $M$ under \eqref{eqn:notsigma}. 
Using Proposition \ref{prop:alg_rep_ext_graph}, there is a $\rhobar$ such that
\[
\JH(\gr^a P_\sigma) \cap W^?(\rhobar) = \{\kappa\}. 
\] 
As the LHS of \eqref{eqn:notsigma} is multiplicity free, we conclude that the cokernel of $M\ra \gr^a P_\sigma$ does not contain any elements of $W^?(\rhobar)$. 
By Proposition \ref{prop:W?-cover}, $\kappa \notin \JH(\rad^a P_\sigma/M)$. 
Since $\kappa \notin \JH(P_\sigma/\rad^a P_\sigma)$ (again using Proposition \ref{prop:alg_rep_ext_graph}), $\kappa$ is not a Jordan--H\"older factor of the image of the surjective map $P_\sigma \ra P$ and thus not in $\JH(P)$. 
\end{proof}

For $n\geq 0$ define 
\[
\Fil^{n_i}P_\sigma=\rad^{n_i}P_\sigma + \sum_{j\neq i} \rad^{1_j}P_\sigma.
\] 
Note that $\Fil^{n_i}P_\sigma/\Fil^{(n+1)_i}P_\sigma$ is $\gr^{n_i}P_\sigma$.
\begin{lemma}\label{lemma:kergr}
The kernel of the map $\Fil^{2_i} P_\sigma \ra P_\sigma^{\{R_{\alpha\beta},R_{w_0}\}} \oplus P_\sigma^{\{R_{w_0},R_{\beta\alpha}\}} \oplus P_\sigma^{R_{\Id}}$ induced by \eqref{eqn:projcoverT} is contained in $\Fil^{3_i}P_\sigma$.
\end{lemma}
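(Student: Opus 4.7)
The plan is to reduce the lemma to an injectivity statement at the level of graded pieces. Since the quotient map $P_\sigma \twoheadrightarrow P_\sigma^T$ is a surjection of $\F[\rG]$-modules, it preserves each radical power and hence induces surjections $\Fil^{k_i} P_\sigma \twoheadrightarrow \Fil^{k_i} P_\sigma^T$ and maps on graded pieces $\gr^{2_i} P_\sigma \to \gr^{2_i} P_\sigma^T$. A quick diagram chase shows that it suffices to prove that the combined map
\[
\gr^{2_i} P_\sigma \;\longrightarrow\; \gr^{2_i} P_\sigma^{\{R_{\alpha\beta},R_{w_0}\}} \oplus \gr^{2_i} P_\sigma^{\{R_{w_0},R_{\beta\alpha}\}} \oplus \gr^{2_i} P_\sigma^{R_{\Id}}
\]
is injective (indeed, if $y \in \Fil^{2_i} P_\sigma$ satisfies $\varphi(y) = 0$, then the image of $y$ in each $\gr^{2_i} P_\sigma^T$ is zero, so by injectivity its image in $\gr^{2_i} P_\sigma$ is zero, i.e.~$y \in \Fil^{3_i} P_\sigma$). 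By Propositions \ref{prop:filtrations} and \ref{prop:Q1proj}, the source is semisimple with decomposition
\[
\gr^{2_i} P_\sigma \;\cong\; \sigma^{\oplus 2} \;\oplus\; L(\lambda_{i,E}) \otimes \bigotimes_{j\neq i} L(\lambda_j)|_{\rG} \;\oplus\; L(\lambda_{i,F}) \otimes \bigotimes_{j\neq i} L(\lambda_j)|_{\rG},
\]
so the injectivity can be checked summand by summand.

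For a non-$\sigma$ Jordan--H\"older factor $\kappa$ of $\gr^{2_i} P_\sigma$, the kernel of $\gr^{2_i} P_\sigma \to \gr^{2_i} P_\sigma^T$ is identified with the image in $\gr^{2_i} P_\sigma$ of $\ker(\rad^{2_i} P_\sigma \to P_\sigma^T)$, and Lemma \ref{lemma:notsigma} (applied to $P = P_\sigma^T$) implies that the image of this kernel in the $L(\lambda_{i,E}) \oplus L(\lambda_{i,F})$-part has no JH factor in $\JH(P_\sigma^T)$. Intersecting over $T \in \{\{R_{\Id}\},\{R_{\alpha\beta},R_{w_0}\},\{R_{w_0},R_{\beta\alpha}\}\}$ and using $\JH(P_\sigma^T) = \bigcup_{R\in T}\JH(\ovl{R})$, injectivity on non-$\sigma$ summands reduces to verifying that every non-$\sigma$ JH factor of $\gr^{2_i} P_\sigma$ lies in $\JH(\ovl{R}_w)$ for at least one $w \in \{\Id,\alpha\beta,\beta\alpha,w_0\}$. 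This I carry out by comparing the Jordan--H\"older factors of $L(\lambda_{i,E})\otimes\bigotimes_{j\neq i}L(\lambda_j)|_{\rG}$ (resp.~with $F$ in place of $E$), computed via Proposition \ref{prop:alg_rep_ext_graph}, with the extension-graph description of $\JH(\ovl{R}_w)$ in \S \ref{subsubsec:combinatorics_TandW}.

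For the two $\sigma$-summands, the contribution from $P_\sigma^{R_{\Id}}$ is trivial: $\sigma$ has multiplicity one in $P_\sigma^{R_{\Id}}$, sitting in the cosocle, so $\sigma \notin \JH(\gr^{2_i} P_\sigma^{R_{\Id}})$. The injection of $\sigma^{\oplus 2}$ must therefore be witnessed by the other two targets. Each of $P_\sigma^{\{R_{\alpha\beta},R_{w_0}\}}$ and $P_\sigma^{\{R_{w_0},R_{\beta\alpha}\}}$ contains $\sigma$ with multiplicity two (one cosocle copy plus one deeper copy). Lemma \ref{lemma:sigma2} excludes this deeper copy from $\rad^{4_i}$, while Lemma \ref{lemma:sigma1} applied at each $j' \neq i$ excludes it from any $\rad^{1_{j'}}$, hence forces it to sit in $\gr^{k_i}$ for some $k \in \{1,2,3\}$. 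A direct comparison between the $\sigma$-multiplicity of each $\gr^{k_i} P_\sigma$ (readable off the structure of $Q_1(\lambda_i)$ in Figure \ref{fig:weyl:QB}) and the known total multiplicity in $P_\sigma^T$ will pin $k = 2$ down in both cases, and will further distinguish the two surviving copies as images of the two distinct $\sigma$-summands of $\gr^{2_i} P_\sigma$.

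The main obstacle is the $\sigma$ case: while Lemmas \ref{lemma:sigma1} and \ref{lemma:sigma2} easily locate the extra $\sigma$ in a ``pure $i$-th embedding'' layer of depth $\le 3$, nailing it to depth exactly $2$ and showing that the two extra $\sigma$-copies (one in each of the two non-trivial targets) span a $2$-dimensional subspace of $\Hom_{\F[\rG]}(\sigma^{\oplus 2},-)$ requires a careful layer count that is sensitive to the precise structure of the Deligne--Lusztig lattices $\tld{P}_\sigma^{\{R_{\alpha\beta},R_{w_0}\}}$ and $\tld{P}_\sigma^{\{R_{w_0},R_{\beta\alpha}\}}$.
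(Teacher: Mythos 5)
Your reduction to graded injectivity is sound, and your handling of the non-$\sigma$ isotypic summands of $\gr^{2_i}P_\sigma$ is essentially the argument of the paper: Lemma \ref{lemma:notsigma} applied to each of the three quotients $P_\sigma^T$, combined with the observation that every Jordan--H\"older factor of the $E$- and $F$-parts of $\gr^{2_i}P_\sigma$ appears in $\JH(\ovl{R}_{\Id}\oplus\ovl{R}_{\alpha\beta}\oplus\ovl{R}_{w_0}\oplus\ovl{R}_{\beta\alpha})$.

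The $\sigma^{\oplus 2}$ summand is where there is a genuine gap, and it is the one your last paragraph hedges on. Lemmas \ref{lemma:sigma1} and \ref{lemma:sigma2} together with alcove considerations do pin the non-cosocle copy of $\sigma$ in $P_\sigma^S$ (for $S=\{R_{\alpha\beta},R_{w_0}\}$ or $\{R_{w_0},R_{\beta\alpha}\}$) to the $\gr^{2_i}$ layer; that part is not the hard step. The difficulty is that this only tells you each of the two targets receives a $1$-dimensional $\sigma$-isotypic space, i.e.\ you get two surjections $\sigma^{\oplus 2}\to\sigma$. Injectivity of $\sigma^{\oplus 2}\to\sigma\oplus\sigma$ is equivalent to these two functionals on $\Hom_{\rG}(\sigma^{\oplus 2},\sigma)\cong\F^2$ being linearly independent, and no amount of multiplicity bookkeeping in the Loewy layers of $Q_1(\lambda_i)$ will decide that: it requires knowing \emph{which} line in $\sigma^{\oplus 2}$ dies in each target. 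The paper resolves exactly this by replacing the abstract $\sigma^{\oplus 2}$ with the canonical decomposition $\soc M_C\oplus\soc M_D$ coming from the Weyl filtration of Proposition \ref{prop:filtrations} (the submodules of $P_\sigma^b=P_\sigma/(\Fil^{3_i}P_\sigma+\text{$E,F$-part})$ that are extensions of the $C$- and $D$-graded pieces by $\sigma$), and then applies Proposition \ref{prop:weylext} with $(X_i^1,X_i^0)=(D,B)$: since one of the two lattices misses some $\JH(L(\lambda_{i,D})\otimes\bigotimes_{j\ne i}L(\lambda_j)|_{\rG})$-factors and the other misses $C$-factors, each lattice kills a distinct one of the two distinguished $\sigma$-lines, forcing independence (and the paper closes with a multiplicity count against the fact that $\sigma$ occurs with multiplicity $2$ in $P_\sigma^S$). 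Without this Weyl-theoretic input, or an explicit substitute for it, your proof does not close.
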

\begin{proof}
Suppose that $\kappa$ is a simple submodule of the projection of the kernel of 
\[
\Fil^{2_i} P_\sigma \ra P_\sigma^{\{R_{\alpha\beta},R_{w_0}\}} \oplus P_\sigma^{\{R_{w_0},R_{\beta\alpha}\}} \oplus P_\sigma^{R_{\Id}} 
\]
to $\Fil^{2_i} P_\sigma/ \Fil^{3_i}P_\sigma\cong\gr^{2_i} P_\sigma$. 

Let $\sigma=F(\lambda)$. 
First suppose that $\kappa$ is a Jordan--H\"older factor of 
\begin{equation}\label{eqn:EF}
(L(\lambda_{i,E}) \otimes \otimes_{j\neq i} L(\lambda_j))|_{\rG}  \oplus (L(\lambda_{i,F}) \otimes \otimes_{j\neq i} L(\lambda_j))|_{\rG}.
\end{equation}
Then Lemma \ref{lemma:notsigma} with $P = P_\sigma^{\{R_{\alpha\beta},R_{w_0}\}}, P_\sigma^{\{R_{w_0},R_{\beta\alpha}\}},$ and $P_\sigma^{R_{\Id}}$ implies that $\kappa$ is not a Jordan--H\"older factor of $P_\sigma^{\{R_{\alpha\beta},R_{w_0}\}} \oplus P_\sigma^{\{R_{w_0},R_{\beta\alpha}\}} \oplus P_\sigma^{R_{\Id}}$. 
However, every Jordan--H\"older factor of \eqref{eqn:EF} is in 
\[
\JH(P_\sigma^{\{R_{\alpha\beta},R_{w_0}\}} \oplus P_\sigma^{\{R_{w_0},R_{\beta\alpha}\}} \oplus P_\sigma^{R_{\Id}}) = \JH(\ovl{R}_{\Id}\oplus\ovl{R}_{\alpha\beta}\oplus\ovl{R}_{w_0}\oplus\ovl{R}_{\beta\alpha}), 
\]
which is a contradiction. 

We conclude that $\kappa$ must isomorphic to $\sigma$. 
Let $P_\sigma^b$ be the quotient of $P_\sigma/\Fil^{3_i}P_\sigma$ by \eqref{eqn:EF}. 
Then there is a natural injection $\kappa \into P_\sigma^b$, and we identify $\kappa$ with its image. 
Recall from the Weyl filtration of Proposition \ref{prop:filtrations} that $P_\sigma^b$ contains the direct sum of two submodules $M_C$ and $M_D$ which are the restrictions to $\rG$ of extensions of $L(\lambda_{i,C}) \otimes \otimes_{j\neq i} L(\lambda_j)$ and $L(\lambda_{i,D}) \otimes \otimes_{j\neq i} L(\lambda_j)$, respectively, by $L(\lambda)$. 
Then $\kappa$ is in the direct sum $M_C\oplus M_D$. 
Suppose without loss of generality that the projection of $\kappa$ to $M_C$ is nonzero. 
One of $P_\sigma^{\{R_{\alpha\beta},R_{w_0}\}}$ and $P_\sigma^{\{R_{w_0},R_{\beta\alpha}\}}$, say $P_\sigma^S$, does not contain all of the Jordan--H\"older factors of $L(\lambda_{i,D}) \otimes \otimes_{j\neq i} L(\lambda_j)|_{\rG}$. 
We conclude from Proposition \ref{prop:weylext} with $(X_{i}^1,X_{i}^0) = (D,B)$ that the image of the map $\Fil^{2_i} P_\sigma \ra P_\sigma^S$ does not contain $\sigma$ as a Jordan--H\"older factor. 
Combined with Lemmas \ref{lemma:sigma1} and \ref{lemma:sigma2}, we see that the image of the surjection $P_\sigma \onto P_\sigma^S$ contains $\sigma$ as a Jordan--H\"older factor with multiplicity at most $1$. 
This contradicts the fact that $P_\sigma^S$  contains $\sigma$ as a Jordan--H\"older factor with multiplicity $2$. 
\end{proof}

\begin{prop}\label{prop:multiDL}
The kernel of the map 
\begin{equation} \label{eqn:4DL}
P_\sigma \ra P_\sigma^{\{R_{\alpha\beta},R_{w_0}\}} \oplus P_\sigma^{\{R_{w_0},R_{\beta\alpha}\}} \oplus P_\sigma^{R_{\Id}} 
\end{equation}
induced by \eqref{eqn:projcoverT} is contained in $\Fil^{3_i}P_\sigma$.
\end{prop}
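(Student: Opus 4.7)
Let $K$ denote the kernel of the map in question. Since Lemma \ref{lemma:kergr} already gives $K\cap \Fil^{2_i}P_\sigma\subset \Fil^{3_i}P_\sigma$, the plan is to upgrade this to $K\subset \Fil^{3_i}P_\sigma$ by establishing the intermediate inclusion $K\subset \Fil^{2_i}P_\sigma$. I will do so by descending through the $i$-filtration in two steps.

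First, each of the three target modules $P_\sigma^{\{R_{\alpha\beta},R_{w_0}\}}$, $P_\sigma^{\{R_{w_0},R_{\beta\alpha}\}}$ and $P_\sigma^{R_{\Id}}$ is a nonzero quotient of $P_\sigma$ and so has simple cosocle $\sigma$; moreover each component of the map $P_\sigma\to(\text{target})$ induces the identity on cosocles. The induced map on cosocles is thus the diagonal embedding $\sigma\hookrightarrow \sigma^{\oplus 3}$, which is injective, and so $K\subset \rad P_\sigma = \Fil^{1_i}P_\sigma$.

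Next, I must show the image of $K\cap \Fil^{1_i}P_\sigma$ in $\gr^{1_i}P_\sigma$ is zero. Using Proposition \ref{prop:Q1proj} and the tensor structure of $Q_1(\lambda)$, this graded piece is identified with $(\gr^1 Q_1(\lambda_i))\otimes \bigotimes_{j\neq i}L(\lambda_j)|_{\rG}$, a semisimple $\rG$-module whose simple summands correspond, via Proposition \ref{prop:alg_rep_ext_graph}, to Serre weights differing from $\sigma$ by a distance-one translation in embedding $i$ into the alcoves $C$ or $D$ (since $\lambda_i\in B$ by the assumption $i\notin A(\sigma)$). Assuming some simple summand $\kappa$ lies in the image of $K$, the Weyl filtration of Proposition \ref{prop:filtrations} (the $a_i=1$ entries of Table \ref{table:ext_Q1:B}) produces a Weyl-module subquotient of $P_\sigma/\Fil^{2_i}P_\sigma$ which is a length-two nonsplit extension containing both $\kappa$ and $\sigma$ among its $\rG$-Jordan--H\"older factors. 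Choosing $T\in\{\{R_{\alpha\beta},R_{w_0}\},\{R_{w_0},R_{\beta\alpha}\},\{R_{\Id}\}\}$ so that $\kappa$ appears as a Jordan--H\"older factor of $\ovl{R}_w$ for some $w$ contributing to $T$ (such a $w$ exists by inspection of $\JH(\ovl{R}_w)$ using Proposition \ref{prop:alg_rep_ext_graph}), Proposition \ref{prop:weylext} applied to the composite $P_\kappa\to P_\sigma/\Fil^{2_i}P_\sigma\to P_\sigma^T$ produces a nonzero contribution on the socle copy of $\sigma$, contradicting the fact that the lift of $\kappa$ lies in $K$.

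The main obstacle is this second step, which requires a case-by-case alcove bookkeeping: for each simple $\kappa$ appearing in $\gr^{1_i}P_\sigma$ (indexed by an alcove in $\{C,D\}$ at embedding $i$ together with a torus weight in the adjacent embeddings via Proposition \ref{prop:alg_rep_ext_graph}), one must locate $\kappa$ in at least one $\ovl{R}_w$ and confirm that the corresponding extension class in $\Ext^1_{\F[\rG]}(\kappa,\sigma)$ is faithfully detected by the projection to the chosen $P_\sigma^T$. The required verifications are entirely analogous to the alcove analysis already carried out in Lemmas \ref{lemma:sigma1}--\ref{lemma:kergr}; combined with Lemma \ref{lemma:kergr}, this finishes the proof.
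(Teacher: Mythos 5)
Your argument has a different overall shape from the paper's, and it leaves a genuine gap. The paper's proof requires \emph{no} analysis of the layer $\gr^{1_i}P_\sigma$ at all, precisely because of a point you miss: by \cite[Lemma 4.2.2]{LLLM2} (using the rigidity of $Q_1(\lambda_i)$), the subquotient $\Fil^{2_i}P_\sigma/\Fil^{3_i}P_\sigma$ is the \emph{socle} of $P_\sigma/\Fil^{3_i}P_\sigma$. Consequently the induced map
\[
P_\sigma/\Fil^{3_i}P_\sigma \longrightarrow (P_\sigma^{\{R_{\alpha\beta},R_{w_0}\}} \oplus P_\sigma^{\{R_{w_0},R_{\beta\alpha}\}} \oplus P_\sigma^{R_{\Id}})/\mathrm{im}(\Fil^{3_i}P_\sigma)
\]
is injective as soon as its restriction to this socle is injective, and the latter is exactly Lemma \ref{lemma:kergr}. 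One concludes $K\subset\Fil^{3_i}P_\sigma$ directly without ever passing through the intermediate filtration steps.

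Your plan instead descends $\Fil^{1_i}\supset\Fil^{2_i}\supset\Fil^{3_i}$ one step at a time. Step 1 (kernel lies in $\rad P_\sigma=\Fil^{1_i}P_\sigma$, via cosocles) is fine. But the critical Step 2, that $K$ maps to zero in $\gr^{1_i}P_\sigma$ — equivalently $K\subset\Fil^{2_i}P_\sigma$ — is not actually proved; it is asserted to be ``entirely analogous to the alcove analysis already carried out in Lemmas \ref{lemma:sigma1}--\ref{lemma:kergr}.'' That is precisely the substance that must be supplied, and in fact there is no separate $\gr^{1_i}$-level lemma in the paper, because the socle trick makes one unnecessary. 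Moreover the sketch as written is oriented the wrong way: the $a_i=1$ Weyl entries $(C,B)$, $(D,B)$ in Table \ref{table:ext_Q1:B} are subquotients with the $\gr^{1_i}$-constituent $\kappa$ in the \emph{cosocle} and $\sigma$ in the \emph{socle}, while an element $a\in K\cap\Fil^{1_i}P_\sigma$ with nonzero image in $\gr^{1_i}P_\sigma$ generates a submodule of $P_\sigma/\Fil^{2_i}P_\sigma$ with $\sigma$ on top and $\kappa$ below — so invoking Proposition \ref{prop:weylext} with these Weyl subquotients does not directly give the nonvanishing you need. The step would require its own careful argument (perhaps via the dual Weyl $a_i=0$ entries or a separate genericity discussion), and is not a corollary of what you cite. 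The paper's route is both shorter and avoids this problem entirely.
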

\begin{proof}
We claim that the map 
\begin{equation}\label{eqn:modFil3}
P_\sigma/\Fil^{3_i} P_\sigma \ra (P_\sigma^{\{R_{\alpha\beta},R_{w_0}\}} \oplus P_\sigma^{\{R_{w_0},R_{\beta\alpha}\}} \oplus P_\sigma^{R_{\Id}})/\mathrm{im}(\Fil^{3_i} P_\sigma)
\end{equation}
is injective, where $\mathrm{im}(\Fil^{3_i} P_\sigma)$ denotes the image of $\Fil^{3_i} P_\sigma$ under \eqref{eqn:4DL}. 
Note that $\Fil^{2_i} P_\sigma/\Fil^{3_i} P_\sigma \subset P_\sigma/\Fil^{3_i} P_\sigma$ is the socle by \cite[Lemma 4.2.2]{LLLM2}. 
It thus suffices to show that the restriction of \eqref{eqn:modFil3} to $\Fil^{2_i} P_\sigma/\Fil^{3_i} P_\sigma$ is injective. 
Suppose that $a+\Fil^{3_i} P_\sigma$ is in the kernel of \eqref{eqn:modFil3} for $a\in \Fil^{2_i} P_\sigma$. 
Then $a-b$ is in the kernel of \eqref{eqn:4DL} for some $b\in \Fil^{3_i} P_\sigma$. 
By Lemma \ref{lemma:kergr}, $a\in \Fil^{3_i} P_\sigma$, and the claim follows. 

Suppose now that $a$ is in the kernel of \eqref{eqn:4DL}. 
Then $a \in \Fil^{3_i} P_\sigma$ by the injectivity of \eqref{eqn:modFil3}. 
\end{proof}

\begin{figure}[htb] 
\centering
\includegraphics[scale=.1]{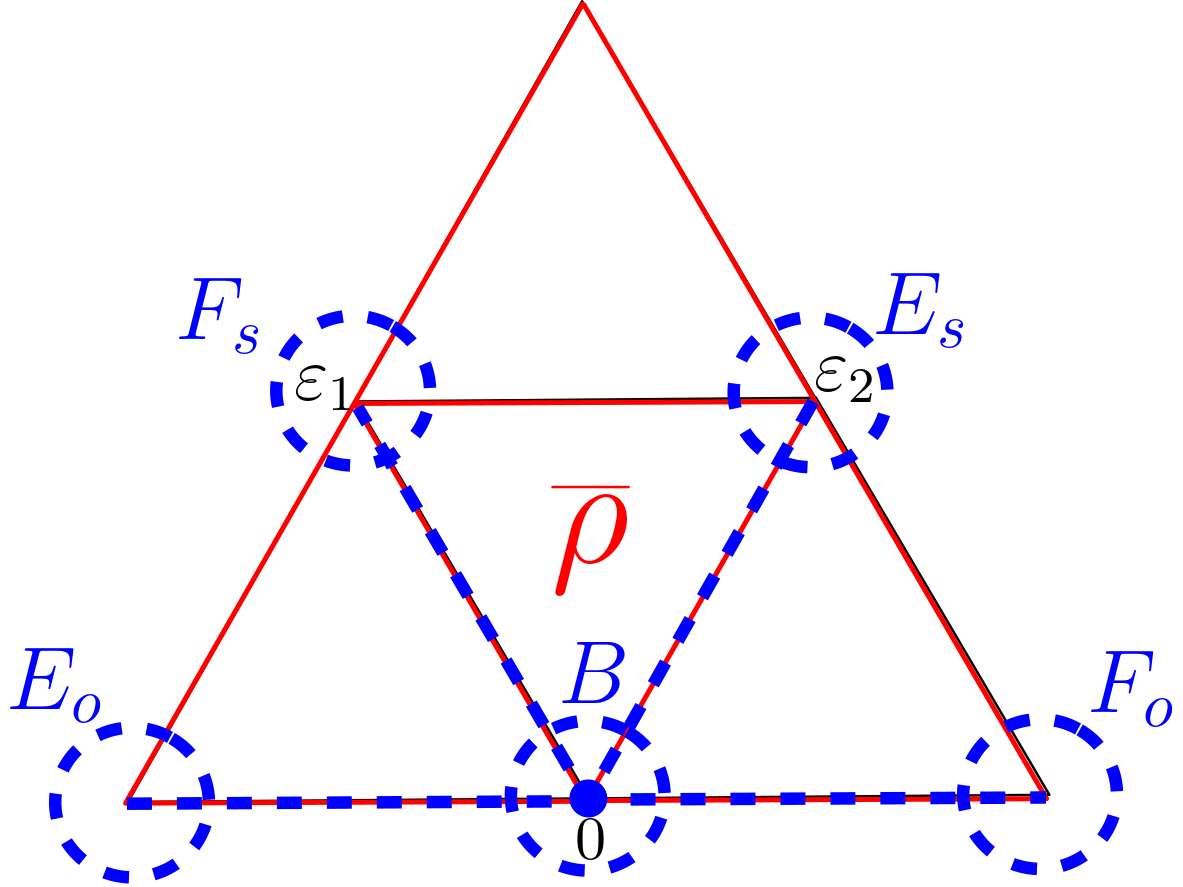}
\caption{
Using the extension graph with origin in $\lambda_{\un{A}}+\eta$ (where $\lambda\in X_1(\un{T})$ is such that $\sigma=F(\lambda)$) we  describe the Jordan--H\"older factors of $P_\sigma^a$ at $j\in\cJ$, for different configurations of $a_j\subset\{B,E_o,E_s,F_o,F_s\}$: given $b\in \{B,E_o,E_s,F_o,F_s\}$ the dashed circle appears if and only if $b\in a_j$. 
}
\label{table:P_sigma^a}
\end{figure}

\section{Patched modules}
\label{sec:patch}

\subsection{Patching axioms}
\label{sub:patch:axioms}
Recall from \S \ref{sec:notation} that $\cO_p=\prod\limits_{v\in S_p} \cO_v$ where $S_p$ is a finite set and for each $v\in S_p$, $\cO_v$ is the ring of integers in a finite unramified extension $F^+_v$ of $\Q_p$.

Let $\rhobar$ be an $\F$-valued $L$-homomorphism and let $(\rhobar_v)_{v \in S_p}$ be the collection of continuous Galois representations corresponding to it (see \S \ref{subsubsec:L_parameters}).
Let $R_\infty\defeq R_{\rhobar} \widehat{\otimes}_{\cO} R^p$ where
\[
R_{\rhobar} \defeq \widehat{\bigotimes}_{v\in S_p,\cO} R_{\rhobar_v}^\square
\]
and $R^p$ is a (nonzero) complete local Noetherian equidimensional flat $\cO$-algebra with residue field $\F$ such that each irreducible component of $\Spec R^p$ and of $\Spec \overline{R}^p$ is geometrically irreducible. 
A finite set $T$ of tame inertial $L$-parameters gives rise to a collection $(T_v)_{v\in S_p}$ of finite sets of tame inertial types and we let $R_\infty(T)\defeq R_\infty \otimes_{R_{\rhobar}^\Box} R_{\rhobar}^{\eta,T}$ 
where
\[
R_{\rhobar}^{\eta,T} \defeq \widehat{\bigotimes}_{v\in S_p,\cO} R_{\rhobar_{v}}^{\eta,T_{v}}.%
\]
Write $X_\infty$, $X_\infty(T)$, and $\ovl{X}_\infty(T)$ for $\Spec R_\infty$, $\Spec R_\infty(T)$, and $\Spec \ovl{R}_\infty(T)$ respectively. 
Finally, let $\Mod(X_\infty)$ be the category of coherent sheaves over $X_\infty$, and $\Rep_{\cO}(\GL_3(\cO_p))$ the category of topological $\cO[\GL_3(\cO_p)]$-modules which are finitely generated over $\cO$. 

\begin{defn}\label{minimalpatching}
A \emph{weak patching functor} for an $L$-homomorphism $\rhobar: G_{\Q_p} \ra$ $^L \un{G}(\F)$ is a nonzero covariant exact functor $M_\infty:\Rep_{\cO}(\GL_3(\cO_p))\ra \Coh(X_{\infty})$ satisfying the following axioms: if $\tau$ is an inertial $L$-parameter and $\sigma^\circ(\tau)$ is an $\cO$-lattice in $\sigma(\tau)$ then
\begin{enumerate}
\item 
\label{it:minimalpatching:1}
$M_\infty(\sigma^\circ(\tau))$ is {either zero or} a maximal Cohen--Macaulay sheaf on $X_\infty(\tau)$; and
\item 
\label{it:minimalpatching:2}
for all $\sigma \in \JH(\ovl{\sigma}^\circ(\tau))$, $M_\infty(\sigma)$ is a maximal Cohen--Macaulay sheaf on $\ovl{X}_\infty(\tau)$ (or is $0$).
\end{enumerate}
(We identify the tame inertial $L$-parameter with the finite set $T=\{\tau\}$ in the above definition.)
A weak patching functor $M_\infty$ is \emph{minimal} if $R^p$ is formally smooth over $\cO$ and whenever $\tau$ is an inertial $L$-parameter, $M_\infty(\sigma^\circ(\tau))[p^{-1}]$, which is locally free over the regular scheme $\Spec R_\infty(\tau)[p^{-1}]$, has rank at most one on each connected component.
\end{defn}
\begin{rmk}
In \S \ref{sub:abstract:locality} we will further assume that $M_\infty$ has the form $\Hom_{\GL_3(\cO_p)}(-,M_{\infty}^\vee)^\vee$ for some pseudocompact $\cO[\![\GL_3(\cO_p)]\!]$-module $M_\infty$.
Note that the $M_\infty$ that arise from Taylor--Wiles patching always have this property.
\end{rmk}

Given a finite set $T$ of tame inertial $L$-parameters we have a surjective homomorphism $R_{\rhobar}^\Box\onto R_{\rhobar}^{\eta,T}$.
In particular, for any $T'\subseteq T$ we have a surjective homomorphism
\[
R_\infty\onto R_\infty(T)  \onto R_\infty(T')
\]
whose kernel (in either $R_\infty$ or $R_\infty(T)$) will be denoted as $I_\infty(T')$.
(This abuse of notation will not create confusion in what follows, since it will always be clear from the context which ring the ideal $I_\infty(T')$ is in.)

\subsection{Minimal number of generators}
\label{subsec:proj_and_K1}

As in \S \ref{subsec:Gproj}, $P_\sigma$ denotes a $\F[\rG]$-projective cover of a Serre weight $\sigma$, and we have fixed $\lambda\in X_1(\un{T})$ such that $\sigma\cong F(\lambda)$. 
We will use notation from \S \ref{subsec:Gproj} in what follows. 
Recall that $A(\sigma) \defeq \{j\in \cJ\mid \lambda_j \in A\}$. 
The following theorem is the main result of the section. 

\begin{thm}\label{thm:mingen}
Let $\rhobar$ be a $11$-generic tame $L$-homomorphism and let $\sigma \in W^?(\rhobar)$.

Then $M_\infty(P_\sigma)$ is minimally generated by $3^{\#A(\sigma)}$ elements. 
\end{thm}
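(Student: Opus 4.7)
The plan is to show $\dim_\F M_\infty(P_\sigma) \otimes_{R_\infty} \F = 3^{\#A(\sigma)}$ in two stages: a Nakayama-type reduction from $P_\sigma$ down to the specific quotient $\ovl{P}_\sigma$ of \eqref{equation:Pbar}, followed by a fiber-product computation of $M_\infty(\ovl{P}_\sigma) \otimes_{R_\infty} \F$ using the exact sequence of Proposition \ref{prop:finalglue}.

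For the reduction, I would induct on multi-indices $a \in \Z_{\geq 0}^\cJ$ (ordered by refinement of the filtration $\{\rad^a P_\sigma\}_a$), proving at each step that $M_\infty(P_\sigma/\rad^{>a} P_\sigma)$ and $M_\infty(P_\sigma/\rad^a P_\sigma)$ have the same minimal number of generators. By exactness of $M_\infty$ and Nakayama's lemma, this reduces to showing that $M_\infty(\gr^a P_\sigma)$ lies inside $\fm_{R_\infty} M_\infty(P_\sigma/\rad^{>a} P_\sigma)$. Since $M_\infty$ vanishes on Serre weights outside $W^?(\rhobar)$, only Jordan--H\"older factors of $\gr^a P_\sigma$ lying in $W^?(\rhobar)$ can contribute; for those factors, Proposition \ref{prop:W?-cover} (the covering property) produces maps from projective covers of modular weights at the previous layer that cover $\gr^a P_\sigma$ up to non-modular cokernel, ensuring the desired Nakayama inclusion. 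The key twist compared to $n=2$ is that the induction must proceed \emph{along upper alcove layers} in embeddings $j \notin A(\sigma)$: the quotient $\ovl{P}_\sigma$ is taken up to depth $2$ at lower alcove embeddings and up to depth $3$ at upper alcove embeddings, because the depth-$2$ patched module fails to be cyclic for lower alcoves. This asymmetry is precisely what \eqref{equation:Pbar} encodes.

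For the direct computation of $M_\infty(\ovl{P}_\sigma) \otimes_{R_\infty} \F$, I would apply $M_\infty$ to the exact sequence
\[
0 \ra \ovl{P}_\sigma \ra \bigoplus_c P_\sigma^c \ra \bigl(\bigoplus_c \sigma\bigr)/\Delta(\sigma) \ra 0
\]
of Proposition \ref{prop:finalglue} and invoke Lemma \ref{lemma:CAfinalglue}. This reduces matters to two sub-tasks: (i) computing $\dim_\F M_\infty(P_\sigma^c) \otimes_{R_\infty} \F$ for each $c$, and (ii) verifying the transversality hypothesis of Lemma \ref{lemma:CAfinalglue} on the images of the $\Tor^{R_\infty}_1(\F, M_\infty(P_\sigma^c))$-groups in $\Tor^{R_\infty}_1(\F, M_\infty(\sigma))$. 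Sub-task (i) is handled by iterated application of Proposition \ref{prop:lowerglue}, ultimately reducing to the base case where $a_j$ is a singleton for every $j \in A(\sigma)$; in this base case $P_\sigma^a$ arises as the mod-$p$ reduction of a lattice in a single Deligne--Lusztig representation, and its patched module is free of rank $1$ over the corresponding (regular) tame type deformation ring by the Diamond--Fujiwara trick. A count of the possible singleton configurations contributes the factor $3$ per embedding in $A(\sigma)$, totaling $3^{\#A(\sigma)}$.

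The principal obstacle is sub-task (ii). While the single-type deformation rings have a convenient tensor-product structure over the embeddings $j \in \cJ$, the multitype deformation rings controlling the non-cyclic $M_\infty(P_\sigma^c)$ do not, so the relevant $\Tor^{R_\infty}_1$ computations do not factor embedding-wise. The remedy is to establish that the product structure nonetheless survives \emph{infinitesimally} at the level of $\Tor_1^{R_\infty}(\F, -)$, reducing the transversality to the embedding-wise statements proven in \S \ref{subsec:idealrelations}---notably Lemmas \ref{lemma:ideal3types}, \ref{lemma:ideal3types:prime}, \ref{lemma:ideal4types}, and \ref{lemma:ideal4types:prime}. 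The distortion surgery of Lemma \ref{lem:distortion} is essential for handling the $O(p^{N-4})$ tails in the multitype ideals which genuinely couple distinct embeddings.
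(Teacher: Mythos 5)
Your proposal captures the overall architecture of the paper's proof accurately: the two-stage split between a Nakayama-style reduction of $M_\infty(P_\sigma)$ to a small explicit quotient (using Proposition \ref{prop:W?-cover} and \cite[Lemma 4.2.2]{LLLM2} to control layers) and a fiber-product computation via Proposition \ref{prop:finalglue} and Lemma \ref{lemma:CAfinalglue}; the asymmetry between depth-$2$ quotients at lower-alcove embeddings and depth-$3$ quotients at upper-alcove embeddings; the use of Proposition \ref{prop:lowerglue} to decompose lower-alcove quotients; and the reliance on the distortion Lemma \ref{lem:distortion} and on the infinitesimal persistence of the product structure for the transversality at the level of $\Tor_1$. (The paper's final step actually reduces to $P_\sigma^a$ for $a$ with $a_j = \widehat{2}$ at $j\in A(\sigma)$ and $a_j=\widehat{1}$ elsewhere, rather than $\ovl{P}_\sigma$, but this is a harmless reorganization since both reductions follow from the same Nakayama claim in the proof of Theorem \ref{thm:mingen}, and $M_\infty(\ovl{P}_\sigma)$ is still needed for Corollary \ref{cor:noness}\eqref{item:noness:Bigbroom}.) There are, however, two genuine issues.

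First, the claim that the singleton base case has patched module ``free of rank $1$ over the corresponding \emph{(regular)} tame type deformation ring by the Diamond--Fujiwara trick'' is wrong. The tame type deformation rings $R_{\rhobar}^{\eta,\tau}$ are \emph{not} regular in general: their special fibers have multiple irreducible components, and when $\tld{w}(\rhobar,\tau)$ has small length the integral models have genuine singularities. The Diamond--Fujiwara argument (Auslander--Buchsbaum over a regular support) applies only to the generic fiber $R_\infty(\tau)[1/p]$, not to the $\cO$-algebra or its mod-$p$ reduction. The cyclicity of $M_\infty(R_s(\mu)^\sigma)$ and of $M_\infty(Q)$ for subquotients $Q$ used in Proposition \ref{prop:bristle} is a substantial theorem, namely \cite[Theorem 5.3.1]{GL3Wild} building on \cite[Theorem 5.1.1]{LLLM2}, proved via local model theory. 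Indeed, the paper's introduction explicitly contrasts the Diamond--Fujiwara observation with the EGS-style methods that are needed precisely because the relevant deformation spaces are not regular; if they were regular, the entire machinery of Sections \ref{sec:multitype} and \ref{sec:rt} would be superfluous.

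Second, your treatment of sub-task (i) is incomplete. The summands $P_\sigma^c$ in Proposition \ref{prop:finalglue} fall into two qualitatively different camps: those with $c_j\in\{\widehat{1},\widehat{2}\}$ at all $j$ (handled by Proposition \ref{prop:lowerglue}, Lemma \ref{lemma:gluelower}, and the singleton base case as you describe), and those with $c_i = \widehat{3}$ at a single $i\notin A(\sigma)$. For the latter, Proposition \ref{prop:lowerglue} does not apply, and the quotient $P_\sigma^c$ cannot be presented from a single Deligne--Lusztig lattice because its Jordan--H\"older factors occur with multiplicity greater than one. The cyclicity of $M_\infty(P_\sigma^c)$ for such $c$ is Proposition \ref{prop:3layer}, whose proof is an entirely separate chain: Proposition \ref{prop:multiDL} shows $P_\sigma\onto P_\sigma^c$ factors through $\ovl{P}_\sigma^{\sigma(T_{\Id,w_0,\alpha\beta,\beta\alpha})}$, and then Lemmas \ref{lemma:2type}, \ref{lemma:3type}, \ref{lemma:4type} establish cyclicity of the latter's patched module by iterated fiber products over the \emph{multitype} deformation rings $R_{\rhobar}^{\eta,T}$ of Section \ref{subsec:MTdef}, using Lemmas \ref{lemma:ideal3types} and \ref{lemma:ideal4types}. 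You invoke these ideal lemmas only for the transversality hypothesis of Lemma \ref{lemma:CAfinalglue}, but they are already doing essential work one step earlier, to produce the cyclic building blocks that make Lemma \ref{lemma:CAfinalglue} applicable at all.
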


One can reduce this theorem to Lemma \ref{lemma:broom} below. 
Recall from \eqref{equation:Pbar} that
\[
\ovl{P}_\sigma \defeq P_\sigma/\big(\sum_a \rad^a P_\sigma + \sum_b \rad^b P_\sigma\big)
\]
where $a$ varies over tuples with 
\[
a_i = \begin{cases}
  2  & \text{ if } i\in A(\sigma) \\
  3 & \text{ if } i\notin A(\sigma)
\end{cases}
\]
for some $i\in \cJ$ and $a_j = 0$ for $j\neq i$ and $b$ varies over tuples with $b_i = b_{i'} = 1$ for some $i \notin A(\sigma)$ and $i' \in\cJ$ distinct from $i$, and $b_j = 0$ for $j\neq i,i'$ (the set of $b$ is empty if $\cJ = A(\sigma)$ or $\#\cJ = 1$). 

\begin{lemma}\label{lemma:broom}
Let $\rhobar$ be a $11$-generic tame $L$-homomorphism and let $\sigma \in W^?(\rhobar)$.
\begin{enumerate}
\item \label{item:broom2} Let $a= (a_j)_{j\in \cJ}$ be a tuple such that $a_j = \widehat{2}$  if $j\in A(\sigma)$ and $a_j = \widehat{1}$ or $\widehat{2}$ if $j\notin A(\sigma)$. 
Then $M_\infty(P_\sigma^a)$ is minimally generated by $3^{\#A(\sigma)}$ elements. 
\item \label{item:bigbroom} $M_\infty(\ovl{P}_\sigma)$ is minimally generated by $3^{\#A(\sigma)}$ elements. 
\end{enumerate}
\end{lemma}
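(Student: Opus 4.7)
The plan is to prove part (1) by iteration of the fiber-product presentation in Proposition \ref{prop:lowerglue}, and to deduce part (2) from part (1) using the fiber-product presentation in Proposition \ref{prop:finalglue}. In both cases I translate exact sequences of $\F[\rG]$-modules into exact sequences of patched modules via the exactness of $M_\infty$, and compute minimal numbers of generators via the commutative algebra of \S \ref{sec:ca} combined with the multitype ideal computations of \S \ref{subsec:specialfiber} and \S \ref{subsec:idealrelations}.

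For part (1), I first establish the base case, namely the quotient $P_\sigma^{a^0}$ with $a^0_j = \{B\}$ for all $j \in A(\sigma)$ and $a^0_j = a_j$ for $j \notin A(\sigma)$. In this case $P_\sigma^{a^0}$ is supported on a single tame type deformation ring, and by the Diamond--Fujiwara argument (combined with regularity of the relevant component of $R^{\eta,\tau}_\infty$) the patched module $M_\infty(P_\sigma^{a^0})$ is cyclic. I then enlarge $a_i$ one embedding at a time: for $i \in A(\sigma)$, Proposition \ref{prop:lowerglue} applied with $a_i = \{B, E_o, E_s, F_o, F_s\}$ yields a short exact sequence whose image under $M_\infty$ realizes $M_\infty(P_\sigma^a)$ as the kernel of a fiber-product map. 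Applying Lemma \ref{lemma:glue:lower}, the minimal number of generators of $M_\infty(P_\sigma^a)$ equals that of $M_\infty(P_\sigma^{a_X})$ (for any singleton $X \in a_i$) plus a correction given by the cokernel of a $\Tor^{R_\infty}_1$-map. The patched analogue of Proposition \ref{prop:lowermult} identifies this correction with twice the generator count at embeddings already processed, yielding a multiplicative factor of $3$ per embedding in $A(\sigma)$, and hence $3^{\#A(\sigma)}$ after iterating.

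For part (2), I apply $M_\infty$ to the exact sequence of Proposition \ref{prop:finalglue}, obtaining
\[
0 \to M_\infty(\ovl{P}_\sigma) \to \bigoplus_c M_\infty(P_\sigma^c) \to M_\infty\big((\oplus_c \sigma)/\Delta(\sigma)\big) \to 0.
\]
The distinguished summand (with $c_j = \widehat{2}$ for $j \in A(\sigma)$ and $c_j = \widehat{1}$ for $j \notin A(\sigma)$) contributes $3^{\#A(\sigma)}$ generators by part (1), while each of the remaining summands (with $c_i = \widehat{3}$ for some $i \notin A(\sigma)$) has a cyclic patched module, since $\sigma$ is in the upper alcove at embedding $i$ and the cyclicity of $M_\infty(P_\sigma/\rad^3 P_\sigma)$ in that setting is established (as sketched in the introduction). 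I then apply Lemma \ref{lemma:CAfinalglue} to show that these extra cyclic summands do not enlarge the minimal generating set of $M_\infty(\ovl{P}_\sigma)$. The required hypothesis, a transversality condition on the images of the $\Tor^{R_\infty}_1$-maps into $M_\infty(\sigma)$, is verified using Lemma \ref{lemma:ideal3types}, Lemma \ref{lemma:ideal3types:prime}, Lemma \ref{lemma:ideal4types}, and Lemma \ref{lemma:ideal4types:prime}, the specific case depending on which branch of Hypothesis \ref{hypothesis:T} the singleton $T^{(i)}$ at embedding $i \notin A(\sigma)$ falls into. The conclusion is that $M_\infty(\ovl{P}_\sigma)$ has the same number of minimal generators as the distinguished summand.

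The hardest step is transferring the transversality hypotheses of Lemmas \ref{lemma:CAfinalglue} and \ref{lemma:glue:lower} from the bare multitype deformation rings to the patched modules. The translation uses exactness and minimality of $M_\infty$ together with the local-model presentation of Proposition \ref{prop:big_diagram}. The $11$-genericity assumption on $\rhobar$ is essential here, as it guarantees that the structure constants $a, b, c \in \F$ appearing in the ideal generators of Tables \ref{Table_Ideals}, \ref{Table_Ideals_1}, \ref{Table_Ideals_2} are all nonzero, so that the ideal intersection and $\Tor$ computations of \S \ref{subsec:idealrelations} carry over to the patched module setting.
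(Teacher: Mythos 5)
Your high-level architecture — reduce part (1) to the product formula in Proposition \ref{prop:lowermult} via fiber-product presentations, and reduce part (2) to part (1) via Proposition \ref{prop:finalglue} and Lemma \ref{lemma:CAfinalglue} — matches the paper's, but several of the steps you treat as established are actually the heart of the difficulty, and one appeal is to a tool that does not apply here.

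First, the base case. You assert that for $a^0_j = \{B\}$ the patched module $M_\infty(P_\sigma^{a^0})$ is cyclic ``by the Diamond--Fujiwara argument (combined with regularity of the relevant component of $R^{\eta,\tau}_\infty$).'' This is not correct: the deformation rings in play are not regular. Indeed $M_j^B = S^{(j)}/(\fP_{(0,0)}^{(j)}\cap\fP_{(0,1)}^{(j)})$ already has two irreducible components, and the special fiber of $R^{\eta,\tau}_{\rhobar}$ generically has many more. The cyclicity of the base case rests on \cite[Theorem 5.3.1]{GL3Wild} (proved via local model theory and explicit monodromy computations), not on Auslander--Buchsbaum/Serre. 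This is precisely the point the introduction makes about why the Diamond--Fujiwara trick is insufficient beyond $n=2$.

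Second, the induction step for part (1). You want Lemma \ref{lemma:glue:lower} to give a $\Tor$-cokernel correction equal to ``twice the generator count at embeddings already processed.'' That correction is computed in Proposition \ref{prop:lowermult} as a calculation in a \emph{single} $S^{(j)}$, but the fiber-product sequence you are gluing lives over the full patched ring $R_\infty'(T)$, so you need to know that the maps in the patched sequence are the extension of scalars of maps on the $S^{(j)}$-factors. This is exactly the content of Lemma \ref{lemma:gluelower}: one must show that the patched fiber-product map factors through a surjection induced by $M_i^{a_{X,i}}\onto M_i^{a_{\emptyset,i}}$ (using support considerations and that Cohen--Macaulay modules have no embedded primes), and then use Lemma \ref{lemma:unitlift} to adjust by an automorphism so the isomorphism is compatible. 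Without this compatibility statement, you cannot transfer the single-embedding $\Tor$ computation to $R_\infty'(T)$, and the product of generator counts over $j\in A(\sigma)$ does not follow.

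Third, for part (2) you correctly identify Lemma \ref{lemma:CAfinalglue} and the right ingredient lemmas, but the verification of the transversality hypothesis $V_j + U\cap\bigcap_{i\neq j} V_i = \Tor_1(\F,M_\infty'(\sigma))$ is the hardest part of the proof and is effectively skipped. It requires, for each $j_0$-component of $\Tor_1$: (a) showing that $U$ captures it via the multiplicity-one cycle of the distinguished summand and Corollary \ref{cor:disjointass}; and (b) showing that $V_j$ for $j\neq j_0$ captures it via the ``distortion'' Lemma \ref{lem:distortion}, Lemma \ref{lemma:p3type}, and the $\Tor$-intersection description of Lemma \ref{lemma:tor}, split into cases depending on whether $j_0\in A(\sigma)$ and on the form of $\tld{w}(\rhobar,\tau^B_{\min})_{j_0}$. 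This case analysis is not a routine transfer; it occupies roughly a page in the paper and is where the $11$-genericity is actually consumed (not merely to guarantee that structure constants are nonzero, but so that the deepness hypotheses of the multitype local model results and of \cite[Theorem 5.3.1, Proposition 5.3.2]{GL3Wild} hold).
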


The following is a consequence of Lemma \ref{lemma:broom}. 

\begin{cor}\label{cor:noness}
Let $\rhobar$ be a $11$-generic tame $L$-homomorphism and let $\sigma \in W^?(\rhobar)$.

Let $a$ be a tuple as in Lemma \ref{lemma:broom}\eqref{item:broom2} and $b$ be a tuple with $b_j > 0$ for some $j \notin A(\sigma)$. 
Then the following hold.
\begin{enumerate}
\item\label{item:noness:broom}
The image of the natural map
\[
M_\infty(\rad^b P_\sigma) \ra M_\infty(P_\sigma^a)
\]
is contained in $\fm M_\infty(P_\sigma^a)$. 
\item \label{item:noness:Bigbroom}
The image of the natural map
\[
M_\infty(\rad^b P_\sigma) \ra M_\infty(\ovl{P}_\sigma)
\]
is contained in $\fm M_\infty(\ovl{P}_\sigma)$. 
\end{enumerate}
\end{cor}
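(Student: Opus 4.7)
Both parts follow the same template, reducing via the exactness of $M_\infty$ and Nakayama's lemma to a statement about minimal numbers of generators. Let $X$ denote $P_\sigma^a$ in part \eqref{item:noness:broom} or $\ovl{P}_\sigma$ in part \eqref{item:noness:Bigbroom}, and let $I\subset X$ be the image of $\rad^bP_\sigma$. Applying the exact functor $M_\infty$ to $0\to I\to X\to X/I\to 0$ yields $0\to M_\infty(I)\to M_\infty(X)\to M_\infty(X/I)\to 0$, and the desired containment $M_\infty(I)\subset\fm M_\infty(X)$ is equivalent to $M_\infty(X)$ and $M_\infty(X/I)$ having the same minimal number of generators. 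By Lemma \ref{lemma:broom}, this number is $3^{\#A(\sigma)}$ for $M_\infty(X)$, and since it can only drop for the quotient $M_\infty(X/I)$, it suffices to exhibit a further surjection $X/I\twoheadrightarrow P_\sigma^{a'}$ for some tuple $a'$ satisfying the hypothesis of Lemma \ref{lemma:broom}\eqref{item:broom2}; then $M_\infty(P_\sigma^{a'})$ also requires $3^{\#A(\sigma)}$ generators, forcing $M_\infty(X/I)$ to require at least that many.

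The key step is to construct this $a'$. Fix $j_0\notin A(\sigma)$ with $b_{j_0}>0$. For part \eqref{item:noness:broom} I will set $a'_{j_0}=\widehat{1}$ and $a'_j=a_j$ for $j\neq j_0$: the case $a_{j_0}=\widehat{1}$ makes $I=0$ outright since $\rad^{b_{j_0}}Q_1(\lambda_{j_0})$ dies in $Q_1(\lambda_{j_0})^{\widehat{1}}$, whereas for $a_{j_0}=\widehat{2}$ the image $I$ is visibly contained in the kernel of the natural projection $P_\sigma^a\twoheadrightarrow P_\sigma^{a'}$ (and the new tuple $a'$ still satisfies the hypothesis of Lemma \ref{lemma:broom}\eqref{item:broom2}). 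For part \eqref{item:noness:Bigbroom} I will set $a'_j=\widehat{2}$ for $j\in A(\sigma)$ and $a'_j=\widehat{1}$ for $j\notin A(\sigma)$, which clearly satisfies the hypothesis of Lemma \ref{lemma:broom}\eqref{item:broom2}, and whose corresponding kernel is $\ker(P_\sigma\twoheadrightarrow P_\sigma^{a'})=\sum_{j\in A(\sigma)}\rad^{2_j}P_\sigma + \sum_{j\notin A(\sigma)}\rad^{1_j}P_\sigma$.

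What remains in part \eqref{item:noness:Bigbroom} is to verify that both the defining relations of $\ovl{P}_\sigma$ and the submodule $\rad^bP_\sigma$ are absorbed by this kernel. This is a coordinate-wise multi-index check: $\rad^{2_i}P_\sigma$ for $i\in A(\sigma)$ is already on the list; $\rad^{3_i}P_\sigma\subset\rad^{1_i}P_\sigma$ for $i\notin A(\sigma)$; the cross-terms $\rad^{1_i+1_{i'}}P_\sigma$ (with $i\notin A(\sigma)$) sit inside $\rad^{1_i}P_\sigma$; and $\rad^bP_\sigma\subset\rad^{1_{j_0}}P_\sigma$ by $b_{j_0}\geq 1$. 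With this bookkeeping in hand, the rest of the argument is purely formal. The entire content of the corollary therefore comes from Lemma \ref{lemma:broom}; once that is available, no further obstacle arises and the proof is a Nakayama-style packaging.
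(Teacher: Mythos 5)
Your argument is correct and follows essentially the same route as the paper: both proofs hinge on exhibiting a surjection from the target module onto some $P_\sigma^{a'}$ of the shape in Lemma \ref{lemma:broom}\eqref{item:broom2} that kills the image of $\rad^b P_\sigma$, and then conclude by comparing the minimal-generator counts supplied by that lemma via Nakayama. The paper uses the single universal tuple $a_{\min}$ (equal to $\widehat{2}$ on $A(\sigma)$ and $\widehat{1}$ off it) for both parts, whereas you shrink $a$ only at the coordinate $j_0$ in part \eqref{item:noness:broom}; this is a cosmetic difference, and your explicit multi-index bookkeeping is exactly what the paper's terse reference to ``dimension considerations'' is compressing.
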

\begin{proof}
We use a similar argument as in the proof of \cite[Lemma 4.5]{DanWild}.
Let $a_{\min}$ be the tuple $(a_{\min,j})_{j\in \cJ}$ with $a_{\min,j} = \widehat{2}$  if $j\in A(\sigma)$ and $a_{\min,j} = \widehat{1}$ if $j\notin A(\sigma)$. 
The main observation is that the images of $\rad^b P_\sigma$ in $P_\sigma^a$ and $\ovl{P}_\sigma$ are contained in the kernels of the natural surjections to $P_\sigma^{a_{\min}}$, respectively. 
Since these surjections are isomorphisms after applying $M_\infty(-)\otimes_{R_\infty} R_\infty/\fm$ by dimension considerations using Lemma \ref{lemma:broom}, the result follows. 
\end{proof}

Before proving Lemma \ref{lemma:broom}, we first use it to prove Theorem \ref{thm:mingen}. 
We first introduce some notation. 
\begin{defn}
\label{defn:Agr}
For a tuple $a = (a_j)_{j\in \cJ}$ with $0\leq a_j \leq 7$, let 
\[
A(\gr^a P_\sigma) = \{j\in \cJ \mid (j\notin A(\sigma) \textrm{ and } 2\nmid a_j) \textrm{ or } (j\in A(\sigma), a_j \geq 2, \textrm{ and } 2\mid a_j) \}.
\]
The definition of $A(\gr^a P_\sigma)$ means that for $j\in \cJ$, $j\in A(\gr^a P_\sigma)$ if and only if $a_j>0$ and for some (equivalently for all) $\kappa\in \JH(\gr^a P_\sigma)$, $j\in A(\kappa)$. 
\end{defn}

\begin{lemma}\label{lemma:noness}
In the setup of Theorem \ref{thm:mingen} suppose that $c = (c_j)_{j\in \cJ}$ with $0\leq c_i \leq 7$ and 
\begin{equation}\label{eqn:noness}
c_i \geq \begin{cases}
  2  & \text{ if } i\in A(\sigma) \\
  1 & \text{ if } i\notin A(\sigma)
\end{cases}
\end{equation}
for some $i\in \cJ$. 
Then the injection
\begin{equation}\label{eqn:EGS}
M_\infty(\gr^c P_\sigma) \into M_\infty\Big(P_\sigma/\big(\rad^{|c|+1}P_\sigma + \sum_{\substack{|c|=|d| \\ \#A(\gr^dP_\sigma) > \#A(\gr^cP_\sigma)}}\rad^d P_\sigma\big)\Big).
\end{equation}
factors through $\fm M_\infty\Big(P_\sigma/\big(\rad^{|c|+1}P_\sigma + \sum_{\substack{|c|=|d| \\ \#A(\gr^dP_\sigma) > \#A(\gr^cP_\sigma)}}\rad^d P_\sigma\big)\Big)$.
\end{lemma}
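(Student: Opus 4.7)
The plan is to combine the covering property (Proposition \ref{prop:W?-cover}) with Corollary \ref{cor:noness} to perform a Nakayama gluing. Let $Q$ denote the target of the inclusion in the statement. I would apply Proposition \ref{prop:W?-cover} with $a=c$ and $N$ the image in $\rad^c P_\sigma$ of the evaluation map
\[
\bigoplus_{\kappa \in \JH(\gr^c P_\sigma)\cap W^?(\rhobar)} P_\kappa \otimes_{\F}\Hom_{\F[\rG]}\!\bigl(P_\kappa,\rad^c P_\sigma\bigr) \longrightarrow \rad^c P_\sigma,
\]
to conclude that $\rad^c P_\sigma/N$ has no Jordan--H\"older factor in $W^?(\rhobar)$. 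Since $M_\infty$ annihilates Serre weights outside $W^?(\rhobar)$ (by the axioms of a weak patching functor combined with weight elimination as in \cite{MLM}), applying the exact functor $M_\infty$ yields a surjection
\[
\bigoplus_\kappa M_\infty(P_\kappa)\otimes_\F \Hom_{\F[\rG]}(P_\kappa,\rad^c P_\sigma) \twoheadrightarrow M_\infty(\gr^c P_\sigma).
\]
Hence it suffices to show, for each such $\kappa$ and each map $\psi:P_\kappa \to \rad^c P_\sigma$ appearing in the sum, that the induced composition $M_\infty(P_\kappa)\to M_\infty(Q)$ has image in $\fm M_\infty(Q)$.

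Next, I would analyze the composition $\phi_\psi: P_\kappa \xrightarrow{\psi} P_\sigma \twoheadrightarrow Q$. By Proposition \ref{prop:filtrationcompatible}, $\psi(\rad^b P_\kappa)\subseteq \rad^{c+b} P_\sigma$ for all $b\in\Z_{\geq 0}^\cJ$, so in particular $\psi(\rad P_\kappa)\subseteq \rad^{|c|+1} P_\sigma \subseteq \ker(P_\sigma \twoheadrightarrow Q)$. Hence $\phi_\psi$ factors as $P_\kappa \twoheadrightarrow \kappa \hookrightarrow Q$, and the claim reduces to: for every resulting inclusion $\kappa \hookrightarrow Q$, the map $M_\infty(\kappa)\to M_\infty(Q)$ has image in $\fm M_\infty(Q)$.

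For the final step I would construct, for each such inclusion, an intermediate quotient $\widetilde{Q}$ factoring $P_\sigma \twoheadrightarrow \widetilde{Q} \twoheadrightarrow Q$, such that (i) the embedding $\kappa \hookrightarrow Q$ lifts to an embedding $\kappa \hookrightarrow \widetilde{Q}$, and (ii) $M_\infty(\kappa) \subseteq \fm M_\infty(\widetilde{Q})$ by a direct application of Corollary \ref{cor:noness}. The $R_\infty$-module surjection $M_\infty(\widetilde{Q}) \twoheadrightarrow M_\infty(Q)$ then sends $\fm M_\infty(\widetilde{Q})$ into $\fm M_\infty(Q)$, giving the desired containment. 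When $i\notin A(\sigma)$ one should take $\widetilde{Q}$ to be built from $\ovl{P}_\sigma$, since $c$ satisfies the hypothesis $c_j>0$ for some $j\notin A(\sigma)$ of Corollary \ref{cor:noness}\eqref{item:noness:Bigbroom} (with $j=i$). The main obstacle is the case $i\in A(\sigma)$ with $c_i\geq 2$, where Corollary \ref{cor:noness} does not apply to $c$ directly. The plan is to exploit the alternating alcove structure of the radical filtration (Proposition \ref{prop:cosoclefilter}) together with the translation principle (Proposition \ref{prop:TP}) to peel off the first, upper-alcove layer $\gr^{1_i} P_\sigma$ at embedding $i$, effectively replacing $\sigma$ by an adjacent Serre weight $\sigma'$ with $i\notin A(\sigma')$ and reducing to the first case; the technical heart is to choose $\widetilde{Q}$ compatibly with the summands $\rad^d P_\sigma$ already killed in $Q$.
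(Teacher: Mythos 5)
Your reduction to showing that, for each $\kappa\in \JH(\gr^c P_\sigma)\cap W^?(\rhobar)$ and each embedding $\kappa\hookrightarrow Q$ coming from a map $\psi:P_\kappa\to\rad^c P_\sigma$, the image of $M_\infty(\kappa)$ in $M_\infty(Q)$ lies in $\fm M_\infty(Q)$, is formally correct (though the invocation of Proposition \ref{prop:W?-cover} at level $c$ is superfluous for this step: $\gr^c P_\sigma$ is semisimple and $M_\infty$ kills Serre weights outside $W^?(\rhobar)$, so the decomposition is immediate). The genuine gap is the final step, and it is not a matter of missing details but of an approach that cannot work as stated.

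You propose producing an intermediate quotient $P_\sigma\twoheadrightarrow\tld{Q}\twoheadrightarrow Q$ with $\tld{Q}$ ``built from $\ovl{P}_\sigma$'' so that Corollary \ref{cor:noness} applies, and you claim the case $i\notin A(\sigma)$ is immediate. But there is no surjection $\ovl{P}_\sigma\twoheadrightarrow Q$ (nor $P_\sigma^a\twoheadrightarrow Q$ for $a$ as in Corollary \ref{cor:noness}) for general $c$: the kernel of $P_\sigma\onto\ovl{P}_\sigma$ contains the pieces $\rad^{2_i}P_\sigma$ for $i\in A(\sigma)$ and $\rad^{1_i+1_{i'}}P_\sigma$ for $i\notin A(\sigma)$, all of which have $|a|\leq 3$, whereas $\ker(P_\sigma\onto Q)=\rad^{|c|+1}P_\sigma+\sum_{|d|=|c|,\ldots}\rad^dP_\sigma$ sits in total degree $\geq |c|$. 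Already for $|c|\geq 2$ these kernels are not comparable, so there is no factorization to exploit and Corollary \ref{cor:noness} does not apply to $Q$ via $\ovl{P}_\sigma$ or $P_\sigma^a$. Thus the ``easy'' case $i\notin A(\sigma)$ is not easy, and the difficulty you flag for $i\in A(\sigma)$ is present in all cases.

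The missing idea, which the paper implements, is to apply Proposition \ref{prop:W?-cover} not at level $c$ but at level $c-2_\ell$ (when $A(\gr^c P_\sigma)=\emptyset$) or $c-1_\ell$ (otherwise), so that the covering weights $\kappa$ sit two (resp.~one) layers above $\gr^c P_\sigma$ inside $Q$. One then picks a projective cover $P=\oplus_\kappa P_\kappa^{\oplus[\ldots]}$ of the covering submodule $N\subseteq\rad^{c-2_\ell}P_\sigma$ and checks—using the specific form of $\ker(P_\sigma\onto Q)$, Proposition \ref{prop:filtrationcompatible}, and alcove considerations—that the composite $P\to N\to Q$ factors through $\oplus_\kappa\ovl{P}_\kappa^{\oplus[\ldots]}$ (resp.~through $\oplus_\kappa(P_\kappa/\rad^2 P_\kappa)^{\oplus[\ldots]}$). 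Because the preimage of $\gr^c P_\sigma$ in $P$ is contained in $\oplus_\kappa\sum_j\rad^{2_j}P_\kappa$ (resp.~$\oplus_\kappa\gr^{1_\ell}P_\kappa$), and because only the indices $j\notin A(\kappa)$ survive in $\ovl{P}_\kappa$, Corollary \ref{cor:noness} for $\ovl{P}_\kappa$ (not $\ovl{P}_\sigma$) gives the containment in $\fm$. The two pivots you miss—descending one more layer so $\kappa$ is not buried in the socle of $Q$, and applying Corollary \ref{cor:noness} relative to the covering $\kappa$ rather than to $\sigma$—are exactly what makes the Nakayama gluing go through.
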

\begin{proof}
First suppose that $A(\gr^cP_\sigma) = \emptyset$. 
Then we claim that $c_\ell \geq 2$ for some $\ell\in \cJ$. 
Indeed, if $c_j < 2$ for all $j\in \cJ$, then \eqref{eqn:noness} implies that for some $j\notin A(\sigma)$, $c_j=1$. 
Then $j\in A(\gr^cP_\sigma)$ which is a contradiction. 

Suppose that $c_\ell\geq 2$. 
Let $I$ be the image of $M_\infty(\rad^{c-2_\ell}P_\sigma)$ in the codomain of \eqref{eqn:EGS}. 
Then $I$ contains (the image under \eqref{eqn:EGS} of) $M_\infty(\gr^c P_\sigma)$.
We will show that $\fm I$ contains $M_\infty(\gr^c P_\sigma)$. 

Let $N \subset \rad^{c-2_\ell}P_\sigma$ be the minimal submodule for which the cokernel of the map $N \ra \gr^{c-2_\ell}P_\sigma$ contains no Jordan--H\"older factors in $W(\rhobar)$. 
Let $M'$ be the image of $M_\infty(N)$ in the codomain of \eqref{eqn:EGS}. 
Then $M' = I$ by Proposition \ref{prop:W?-cover}. 
(Note that $\sigma$ is $9$-deep by \cite[Lemma 4.2.13]{LLLM2}.)
We will show that $M_\infty(\gr^c P_\sigma)\subset \fm M'$. 

Let $P \defeq \oplus_{\kappa \in \JH(\gr^{c-2_\ell} P_\sigma) \cap W(\rhobar)} P_{\kappa}^{\oplus[\gr^{c-2_\ell} P_\sigma:\kappa]}$. 
By projectivity, the natural map $P\ra\gr^{c-2_\ell}P_\sigma$ lifts to a map
$P \ra N$ which we now fix.
Since the cokernel of the composite $P\ra N\into \rad^{c-2_\ell}P_\sigma\onto \gr^{c-2_\ell}P_\sigma$ contains no Jordan--H\"older factors in $W(\rhobar)$, by Proposition \ref{prop:W?-cover} and minimality of $N$  we conclude that the map $P\ra N$ is surjective.
Moreover, the preimage of $\gr^c P_\sigma$ in $P$ under the map
\begin{equation}\label{eqn:projcover}
P \ra N \ra P_\sigma/\big(\rad^{|c|+1}P_\sigma + \sum_{\substack{|d|=|c| \\ \#A(\gr^dP_\sigma) > \#A(\gr^cP_\sigma)}}\rad^d P_\sigma\big)
\end{equation}
is contained in 
\[
\rad^1 P = \oplus_{\kappa \in \JH(\gr^{c-2_\ell}  P_\sigma) \cap W(\rhobar)} \rad^1 P_{\kappa}^{\oplus[\gr^{c-2_\ell} P_\sigma:\kappa]}. 
\] 
Even more, by alcove considerations, the preimage of $\gr^c P_\sigma$ in $P$ is contained in 
\begin{equation}\label{eqn:twosteps}
\oplus_{\kappa \in \JH(\gr^{c-2_\ell} P_\sigma) \cap W(\rhobar)} \sum_{j\in \cJ} \rad^{2_j} P_{\kappa}^{\oplus[\gr^{c-2_\ell} P_\sigma:\kappa]}. 
\end{equation}
It suffices to show that the image of $M_\infty\eqref{eqn:twosteps}$ in $M'$ is contained in $\fm M'$. 

We claim that the map \eqref{eqn:projcover} factors through 
\begin{equation}\label{eqn:Pbar}
\oplus_{\kappa \in \JH(\gr^{c-2_\ell} P_\sigma) \cap W(\rhobar)} \ovl{P}_{\kappa}^{\oplus[\gr^{c-2_\ell} P_\sigma:\kappa]}
\end{equation}
in the notation of \eqref{equation:Pbar}. 
Let $\kappa \in \JH(\gr^{c-2_\ell} P_\sigma) \cap W(\rhobar)$. 
Suppose that $a = (a_j)_{j\in \cJ}$ is a tuple as in the definition of \eqref{equation:Pbar}, i.e., $a = 2_j$ for $j\in A(\kappa)$ or $a = 3_j$ for $j\notin A(\kappa)$. 
If $a = 3_j$ for $j\notin A(\kappa)$, then the image of $\rad^a P_{\kappa}$ in $P_\sigma$ is contained in $\rad^{c-2_\ell+3_j}P_\sigma \subset \rad^{|c|+1}P_\sigma$ by Proposition \ref{prop:filtrationcompatible} (note that $\kappa$ is $9$-deep by \cite[Lemma 4.2.13]{LLLM2}). 
If $a = 2_j$ for $j\in A(\kappa)$, then the image of $\rad^a P_{\kappa}$ in $P_\sigma$ is contained in $\rad^{c-2_\ell+2_j} P_\sigma$. 
Since $|c-2_\ell+2_j| = |c|$ and $j\in A(\rad^{c-2_\ell+2_j} P_\sigma)$, we conclude that $\rad^a P_{\kappa}$ maps to $0$ in the codomain of \eqref{eqn:projcover}. 

Now suppose that $b = (b_j)_{j\in \cJ}$ is a tuple as in the definition of \eqref{equation:Pbar}. 
In particular, $b_i = 1$ for some $i\notin A(\kappa)$. 
Then $A(\gr^{c-2_\ell+b} P_\sigma) \neq \emptyset$ (in fact $i \in A(\gr^{c-2_\ell+b} P_\sigma)$) so that the image of $\rad^b P_{\kappa}$ in $P_\sigma$ is contained in \[
\sum_{\substack{|d|=|c| \\ \#A(\gr^dP_\sigma) > \#A(\gr^cP_\sigma)}}\rad^d P_\sigma. 
\]
This establishes the claim. 

Now for any generic Serre weight $\kappa$ and $i\in A(\kappa)$, the image of $\rad^{2_i} P_{\kappa}$ in $\ovl{P}_{\kappa}$ is $0$. 
Thus the image of \eqref{eqn:twosteps} in \eqref{eqn:Pbar} is equal to the the image of \eqref{eqn:twosteps} where $j$ instead ranges over $j\notin A(\kappa)$. 
Thus the image of $M_\infty\eqref{eqn:twosteps}$ in $M_\infty\eqref{eqn:Pbar}$ is contained in $\fm M_\infty\eqref{eqn:Pbar}$ by Corollary \ref{cor:noness}\eqref{item:noness:Bigbroom}. 
We conclude that the image of $M_\infty\eqref{eqn:twosteps}$ in $M'$ is contained in $\fm M'$. 
This finishes the proof of the lemma when $A(\gr^c P_\sigma) = \emptyset$. 

For $A(\gr^c P_\sigma) \neq \emptyset$, we employ a similar argument using Corollary \ref{cor:noness}\eqref{item:noness:broom} instead of Corollary \ref{cor:noness}\eqref{item:noness:Bigbroom}. 
If $A(\gr^c P_\sigma) \neq \emptyset$, let $\ell \in A(\gr^c P_\sigma)$. 
Then we replace $c-2_\ell$ in the above argument with $c-1_\ell$. 
We otherwise define $I$, $N$, $M'$, and $P \ra N$ as before. 
In this case, the map \eqref{eqn:projcover} factors as 
\begin{equation}\label{eqn:onestep}
P \ra P/\rad^2 P \ra \rad^{|c|-1} P_\sigma/\big(\rad^{|c|+1}P_\sigma + \sum_{\substack{|d|=|c| \\ \#A(\gr^d P_\sigma) > \#A(\gr^c P_\sigma)}}\rad^d P_\sigma\big)
\end{equation}
since the codomain of \eqref{eqn:onestep} has a length two semisimple filtration. 
By alcove considerations, the preimage of $\gr^c P_\sigma$ in $P/\rad^2 P$ under the map in \eqref{eqn:onestep} is contained in 
\begin{equation}\label{eqn:onestepb}
\oplus_{\kappa \in \JH(\gr^{c-1_\ell}  P_\sigma) \cap W(\rhobar)} \gr^{1_\ell} P_{\kappa}^{\oplus[\gr^{c-1_\ell} P_\sigma:\kappa]} 
\end{equation}
(recall that $\ell\in A(c)$ was fixed above). 
As $\ell\notin A(\kappa)$, $M_\infty\eqref{eqn:onestepb}$ is contained in $\fm M_\infty(P/\rad^2 P)$ by Corollary \ref{cor:noness}\eqref{item:noness:broom}, noting that $P_{\kappa}/\rad^2 P_{\kappa}$ is a quotient of $P_{\kappa}^a$ with $a = (\widehat{2})_{j\in \cJ}$. 
\end{proof}

\begin{proof}[Proof of Theorem \ref{thm:mingen}]
First, we claim that for a tuple $c = (c_j)_{j\in \cJ}$ with $0\leq c_j \leq 7$ and 
\[
c_i \geq \begin{cases}
  2  & \text{ if } i\in A(\sigma) \\
  1 & \text{ if } i\notin A(\sigma)
\end{cases}
\]
for some $i\in \cJ$, 
\[
M_\infty(\rad^c P_\sigma) \subset \fm M_\infty(P_\sigma).
\]
Indeed, we can partially order ($\succ$) such tuples $c$ lexicographically using $|\bullet|$ and $\# A(\gr^{\bullet}P_\sigma)$ and proceed by induction. 
If $c_j = 7$ for all $j\in \cJ$, then the result follows from Lemma \ref{lemma:noness} (as the right hand side of \eqref{eqn:EGS} in this case is simply $M_\infty(P_\sigma)$). 
In general,  
\[
M_\infty(\rad^c P_\sigma) \subset \fm M_\infty(P_\sigma) + \sum_{c'\succ c} M_\infty(\rad^{c'} P_\sigma) \subset \fm M_\infty(P_\sigma)
\] 
where the first inclusion follows from Lemma \ref{lemma:noness} and the second inclusion follows by the inductive hypothesis. 

Now let $a = (a_j)_{j\in \cJ}$ be the tuple with 
\[
a_j = \begin{cases}
  \widehat{2}  & \text{ if } j\in A(\sigma) \\
  \widehat{1} & \text{ if } j\notin A(\sigma).
\end{cases}
\]
By the previous paragraph, the map $P_\sigma \ra P_\sigma^a$ induces an isomorphism $M_\infty(P_\sigma)/\fm \ra M_\infty(P_\sigma^a)/\fm$. 
By Lemma \ref{lemma:broom}\eqref{item:broom2}, $M_\infty(P_\sigma^a)/\fm$ is minimally generated by $3^{\# A(\sigma)}$ elements. 
\end{proof}

\subsection{Proof of Lemma \ref{lemma:broom}}
\label{sec:pf:lemma:broom}

We prove Lemma \ref{lemma:broom} using intersection computations in multitype deformation spaces. 
Throughout this section we are in the setup of Lemma \ref{lemma:broom}.
Thus, $\rhobar$ is a $11$-generic tame $L$-homomorphism and $\lambda\in X_1(\un{T})$ is such that $\sigma\defeq F(\lambda) \in W^?(\rhobar)$.
We fix a point $\ovl{x}_{\rhobar}\in M^T(\F)$ corresponding to $\rhobar$ (see \S \ref{subsub:gpd}) and we fix a weak minimal patching functor $M_\infty$ for $\rhobar$.

\subsubsection{Preliminaries}
\label{sec:glue:prel}
We introduce some notation that will be used in the proofs appearing in this section. 
Let $T$ be a finite set of tame inertial types satyisfying Hypothesis \ref{hypothesis:T} and recall from \S \ref{subsub:gpd} that $\rhobar$ gives rise to a point $\ovl{x}_{\rhobar}\in M^T(\F)$ which we now fix.
By Definition \ref{def:M_infty} and \eqref{eq:morp} we have
\begin{equation}
\label{eq:cl:imm}
M^{T,\nabla_\infty}_{\ovl{x}_{\rhobar}}\cdot \tld{w}^{*,T}(\rhobar)\into M^{T}_{\ovl{x}_{\rhobar}}\cdot \tld{w}^{*,T}(\rhobar)\ra{\Phi\text{-}\Mod^{\textnormal{\'et},3}_{K,\rhobar_\infty}}. 
\end{equation}
Recall that $\tld{S} = \widehat{\otimes}_{j\in\cJ,\cO}\tld{S}^{(j)}$ is the formal power series ring on the natural coordinates of $M^T_{\ovl{x}_{\rhobar}}\cdot \tld{w}^{*,T}(\rhobar)$ and hence $M^{T,\nabla_\infty}_{\ovl{x}_{\rhobar}}\cdot \tld{w}^{*,T}(\rhobar)$ corresponds to a quotient $\tld{S}^{\nabla_\infty}$ of $\tld{S}$. 
Pulling back to $\Spf\,R^\Box_{\rhobar_\infty}$ along \eqref{eq:cl:imm} 
gives a formally smooth map $R_{\rhobar}^{\leqeta,T} \ra \tld{S}^{\nabla_\infty,\Box}$, where $\tld{S}^\Box$ is a suitable power series ring over $\tld{S}$ and $\tld{S}^\Box \onto \tld{S}^{\nabla_\infty,\Box}$ is the pullback of $\tld{S} \onto \tld{S}^{\nabla_\infty}$. 

If $V$ is a finite $\cO[\![K]\!]$-module such that the $R^{\Box}_{\rhobar}$-action on $M_\infty(V)$ factors through $R^{\leqeta,T}_{\rhobar}$, then we define 
\begin{equation}\label{eqn:Minfty'}
M_\infty'(V) \defeq M_\infty(V) \widehat{\otimes}_{R^{\leqeta,T}_{\rhobar}} \tld{S}^{\nabla_\infty,\Box}. 
\end{equation}
Let $R'_\infty(T) \defeq R_\infty(T) \widehat{\otimes}_{R^{\leqeta,T}_{\rhobar}} \tld{S}^{\nabla_\infty,\Box}$. 
If $T \subset T'$ are as in Proposition \ref{prop:big_diagram} and the $R_\infty$-action on $M_\infty(V)$ factors through $R_\infty(T)$ then the $\tld{S}^{\nabla_\infty,\Box}$-actions defined with respect to $T$ and $T'$ on $M'_\infty(V)$ are compatible by Remark \ref{rmk:compatibility:multitype}.

As $R'_\infty(T)$ is a formally smooth $\tld{S}^{\nabla_\infty,\Box}$-algebra, we can choose an isomorphism $\ovl{R}_\infty'(T) \cong {S}^{\nabla_\infty,\Box} \widehat{\otimes}_{\F} \cA$ where  ${S}^{\nabla_\infty,\Box}\defeq \tld{S}^{\nabla_\infty,\Box}\otimes_{\cO}\F$ and  $\cA$ denotes a formally smooth $\F$-algebra. 
If $V$ is moreover an $\F$-vector space, then $M_\infty'(V)$ is a module for the formally smooth ${S}$-algebra ${S}^\Box_\infty\defeq{S}^\Box \widehat{\otimes}_{\F} \cA$ where ${S}^\Box\defeq \tld{S}^\Box \otimes_{\cO}\F$. 
When $M_\infty'(V)$ is obtained from an $S$-module by extension of scalars along $S\ra S^\Box_\infty\onto\ovl{R}_\infty'(T)$, calculations in ${S}$ can be substituted for those in ${S}^\Box_\infty$. 
We caution that we have suppressed the dependence on the set $T$ in the notation $\tld{S}$, $\tld{S}^{\Box}$, $S^{\nabla_\infty,\Box}$, $S_\infty^\Box$, etc. 
In practice, the set $T$ will change from from proof to proof. 
When this notation is used, we will indicate which $T$ we take. 

Finally, note that the genericity assumption on $\rhobar$ implies that any $\tau\in T$ is $9$-generic (see for instance \cite[\S 5.5]{MLM}) and any $\kappa\in W^?(\rhobar)$ is $9$-deep (\cite[Lemma 4.2.13]{LLLM2}), so that in particular \cite[Proposition 5.3.2]{GL3Wild} (which improves \cite[Theorem 4.1.9]{LLLM2}) and Proposition \ref{prop:W?-cover} apply.

Recall that we have fixed $\sigma=F(\lambda)\in W^?(\rhobar)$.
Let $\tau$ be a tame inertial type such that $\sigma\in \JH(\ovl{\sigma}(\tau))$ and let $\ell\notin A(\sigma)$.
For $w \in S_3$, let $\tau_{w,\ell}$ be the tame inertial $L$-parameter such that $\tld{w}(\rhobar,\tau_{w,\ell})_j = \tld{w}(\rhobar,\tau)_j$ for all $j \neq \ell$ and $\tld{w}(\rhobar,\tau_{w,\ell})_\ell = ww_0\tld{w}(\rhobar,\tau)_\ell$. 
Given a subset $\Sigma \subset S_3$, let $T_{\Sigma,\ell}$ be the set $\{\tau_{w,\ell} \mid w\in \Sigma\}$ and $\sigma(T_{\Sigma,\ell})$ the set $\{\sigma(\tau_{w,\ell}) \mid w\in \Sigma\}$ of Deligne--Lusztig representations. 
Note that we suppressed the type $\tau$ from the notation of $T_{\Sigma,\ell}$ and $\sigma(T_{\Sigma,\ell})$.

\subsubsection{Cyclic patched modules}\label{subsec:cyclic}

In this subsection we assume that $A(\sigma) \subsetneq \cJ$ and fix $i\notin A(\sigma)$. 
The main result of this subsection is the following proposition. 

\begin{prop}\label{prop:3layer}
Let $a = (a_j)_{j\in \cJ}$ be the tuple with $a_i = \widehat{3}$ and $a_j = \widehat{1}$ for all $j\neq i$. 
Then $M_\infty(P^a_\sigma)$ is a cyclic $R_\infty$-module. 
\end{prop}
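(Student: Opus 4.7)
The plan is to present $P_\sigma^a$ as a fiber product of $\F[\rG]$-modules whose patched modules are cyclic, and then apply the fiber product gluing Lemma~\ref{lemma:CAfinalglue} to deduce cyclicity of $M_\infty(P_\sigma^a)$.

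First, by Proposition~\ref{prop:multiDL}, the natural projection
\[
\varphi : P_\sigma \longrightarrow P_\sigma^{\{R_{\alpha\beta},R_{w_0}\}} \oplus P_\sigma^{\{R_{w_0},R_{\beta\alpha}\}} \oplus P_\sigma^{R_{\Id}}
\]
has kernel contained in $\Fil^{3_i} P_\sigma$. A Jordan--H\"older comparison using Proposition~\ref{prop:alg_rep_ext_graph} will confirm the reverse containment (the summands cannot contain constituents sitting more than two radical degrees beyond $\sigma$ at embedding $i$), so $\varphi$ descends to an injection $P_\sigma^a = P_\sigma/\Fil^{3_i} P_\sigma \hookrightarrow P_\sigma^{\{R_{\alpha\beta},R_{w_0}\}} \oplus P_\sigma^{\{R_{w_0},R_{\beta\alpha}\}} \oplus P_\sigma^{R_{\Id}}$. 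Since all three summands share $\sigma$ as their cosocle, Lemma~\ref{lemma:wedgeglue} presents $P_\sigma^a$ as the kernel of a natural map from this direct sum to $\sigma^{\oplus 3}/\Delta(\sigma)$.

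Next I would establish cyclicity of $M_\infty(P_\sigma^S)$ for each $S \in \{R_{\Id}\}, \{R_{\alpha\beta}, R_{w_0}\}, \{R_{w_0}, R_{\beta\alpha}\}$. For $S=\{R_\Id\}$, the lattice $\tld{P}_\sigma^{R_\Id}$ lies in the irreducible Deligne--Lusztig representation $\sigma(\tau_\Id)$ with cosocle $\sigma$; by minimality of $M_\infty$ and \cite[Proposition 5.3.2]{GL3Wild} (Diamond--Fujiwara on the regular part of $R_\infty(\tau_\Id)$), $M_\infty(P_\sigma^{R_\Id})$ is cyclic. For the two two-type cases, I would apply the same principle to the multi-type deformation ring $R_\infty^{\eta,T'}$ attached to the pair of tame types in question, using the local model presentation of \S~\ref{subsub:gpd} together with the cyclicity of the single-type patched modules involved.

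Finally, apply Lemma~\ref{lemma:CAfinalglue} to the fiber product presentation above. Via the formally smooth local model $S_\infty^\Box$ of \S~\ref{sec:glue:prel} and the presentation in Proposition~\ref{prop:big_diagram}, the Tor-surjectivity hypothesis translates into a spanning condition for $\Tor_1$ in the multi-type deformation ring, which is exactly the content of Lemma~\ref{lemma:ideal4types} (or, in the upper-alcove case, Lemma~\ref{lemma:ideal4types:prime}) applied to $T^{(i)} = \{w_0 t_{\un 1}, \alpha\beta t_{\un 1}, \beta\alpha t_{\un 1}, t_{\un 1}\}$. Combined with the cyclicity of each factor, this yields that $M_\infty(P_\sigma^a)$ is cyclic. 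The main obstacle will be the Tor-spanning hypothesis: it rests on the intricate ideal intersection computations of \S~\ref{subsec:idealrelations}, and one has to keep careful track of the distinction between the algebraic and monodromy-saturated ideals $\tld I_{T,\nabla_{\mathrm{alg}}}$ vs.\ $\tld I_{T,\nabla_\infty}$, a distinction handled by the special fiber analysis in \S~\ref{sub:SF} (in particular Proposition~\ref{prop:special_fiber}).
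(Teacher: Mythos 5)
The key step of your plan — presenting $P_\sigma^a$ as a fiber product of $P_\sigma^{\{R_{\alpha\beta},R_{w_0}\}}$, $P_\sigma^{\{R_{w_0},R_{\beta\alpha}\}}$ and $P_\sigma^{R_{\Id}}$ — does not go through, because the claimed equality $\ker\varphi = \Fil^{3_i}P_\sigma$ is false in general. Proposition~\ref{prop:multiDL} only gives $\ker\varphi \subset \Fil^{3_i}P_\sigma$. The reverse containment fails: the quotient $P_\sigma^{R_\Id}$ has Jordan--H\"older set equal to all of $\JH(\ovl{R}_\Id)$, so for each $j\neq i$ it contains constituents lying in $\gr^{1_j}P_\sigma$ and deeper, hence the image of $\sum_{j\neq i}\rad^{1_j}P_\sigma$ in $P_\sigma^{R_\Id}$ is nonzero whenever $\#\cJ>1$. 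Your ``Jordan--H\"older comparison'' at embedding $i$ overlooks the other embeddings. Consequently $P_\sigma^a$ does not inject into the direct sum, and the fiber-product presentation you want does not exist.

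The one-sided containment is actually all that is needed. Since $\ker\varphi \subset \Fil^{3_i}P_\sigma$, the image $\ovl{P}_\sigma^{\sigma(T_{\Id,w_0,\alpha\beta,\beta\alpha})}$ of $\varphi$ \emph{surjects} onto $P_\sigma^a = P_\sigma/\Fil^{3_i}P_\sigma$; cyclicity passes to quotients, so it suffices to know $M_\infty(\ovl{P}_\sigma^{\sigma(T_{\Id,w_0,\alpha\beta,\beta\alpha})})$ is cyclic, which is Lemma~\ref{lemma:4type}. Note also that the paper establishes that cyclicity not by a single application of the $n$-factor Lemma~\ref{lemma:CAfinalglue}, but by two \emph{nested} applications of the two-factor Lemma~\ref{lemma:CAfusion}: first gluing $\sigma(\tau_{w_0})$, $\sigma(\tau_{\alpha\beta})$, $\sigma(\tau_{\beta\alpha})$ (Lemmas~\ref{lemma:2type},~\ref{lemma:3type} using Lemma~\ref{lemma:ideal3types}), then gluing in $\sigma(\tau_\Id)$ along the common quotient $\Lambda$ (Lemma~\ref{lemma:4type} using Lemma~\ref{lemma:ideal4types}). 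The Tor-spanning statement in Lemma~\ref{lemma:ideal4types} is phrased precisely for this two-object step --- it compares $\tld{I}_{\tau_\Id,\nabla_\infty}$ with the intersection $(\tld{I}_{\tau_{\alpha\beta}}\cap\tld{I}_{\tau_{w_0}},p)\cap(\tld{I}_{\tau_{w_0}}\cap\tld{I}_{\tau_{\beta\alpha}},p)$ inside $\tld{I}_\Lambda$ --- and does not directly supply the three-term spanning hypothesis of Lemma~\ref{lemma:CAfinalglue} that your plan would require.
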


Let $\tau_{\min}$ be the minimal tame inertial $L$-parameter of $\sigma$ with respect to $\rhobar$ in the sense of \cite[Remark 3.5.10]{LLLM2}. 
Up to changing lowest alcove presentations of $\rhobar$, we assume that $t_{-\un{1}}\tld{w}(\rhobar,\tau_{\min})_i = t_{w_0\eta}$ or $w_0$. 
In the remainder of this subsection, given a subset $\Sigma\subset S_3$ and $w\in \Sigma$, define $\tau_w$, $T_\Sigma$ and $\sigma(T_\Sigma)$  as in \S \ref{sec:glue:prel} with respect to $\tau=\tau_{\min}$ and  $\ell=i$, {omitting the subscript $i$ from the notation for readability.}
Let $W$ be the Weyl module over $\F$ with cosocle isomorphic to $\sigma$. 
Then there are surjections $P_\sigma^{\sigma(\tau_w)} \onto W$ for all $w\in S_3$ by \cite[Proposition 5.3.2]{GL3Wild}. 
These surjections are unique up to scalar. 
This induces a unique up to scalar surjection $P_\sigma^{\sigma(T_\Sigma)} \ra W$ for any nonempty $\Sigma\subset S_3$. 

We will repeatedly use the following result.

\begin{prop}\label{lemma:fusion}
Let $\tld{P}_\sigma \onto P_1$ and $\tld{P}_\sigma \onto P_2$ be nonzero surjections. 
Then there is an $\cO[\rG]$-module $C$ and an exact sequence 
\[
\tld{P}_\sigma \ra P_1 \oplus P_2 \ra C \ra 0
\]
where the restriction of the second map to each summand is surjective. 

Moreover, $C$ satisfies the following property. 
If $D$ is an $\cO[\rG]$-module such that $\sigma$ is a Jordan--H\"older factor of $D$ with multiplicity one and there are surjections $P_1 \onto D$ and $P_2 \onto D$, then there is a surjection $C \onto D$. 
\end{prop}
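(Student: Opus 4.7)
The plan is to construct $C$ as the pushout of $\varphi_1\colon \tld{P}_\sigma \onto P_1$ and $\varphi_2\colon \tld{P}_\sigma \onto P_2$ in the category of $\cO[\rG]$-modules, i.e., as the cokernel of the map $\tld{P}_\sigma \to P_1\oplus P_2$, $x\mapsto (\varphi_1(x),-\varphi_2(x))$. This immediately produces the required exact sequence. Surjectivity of each composite $P_i\to P_1\oplus P_2\to C$ follows directly from surjectivity of the \emph{other} $\varphi_{3-i}$: for instance, given $(y_1,y_2)\in P_1\oplus P_2$, picking $x\in \tld{P}_\sigma$ with $\varphi_2(x)=-y_2$ shows that $(y_1,y_2)$ is congruent to an element of $P_1\oplus 0$ modulo the image of $\tld{P}_\sigma$.

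For the moreover clause, the plan is to compute $\Hom_{\cO[\rG]}(\tld{P}_\sigma,D)$ and show that the multiplicity-one hypothesis forces it to be one-dimensional over $\F$. Since $\tld{P}_\sigma$ is a projective $\cO[\rG]$-module, $\Hom_{\cO[\rG]}(\tld{P}_\sigma,-)$ is exact. Each simple $\cO[\rG]$-subquotient of $D$ is automatically an $\F[\rG]$-module (as every maximal ideal of the finite $\cO$-algebra $\cO[\rG]$ contains $p$), and on such a subquotient the functor returns $\F$ if the subquotient is isomorphic to $\sigma$---because $\sigma$ is the unique simple quotient of $\tld{P}_\sigma$---and zero otherwise. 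Running this through a composition series of $D$ and invoking additivity of $\cO$-length in short exact sequences, I conclude that the $\cO$-length of $\Hom_{\cO[\rG]}(\tld{P}_\sigma,D)$ equals the Jordan--H\"older multiplicity of $\sigma$ in $D$. Under our hypothesis this length equals one, so $\Hom_{\cO[\rG]}(\tld{P}_\sigma,D)\cong \F$ as an $\cO$-module.

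Given the surjections $f_i\colon P_i\onto D$, the two compositions $\psi_i \defeq f_i\circ \varphi_i\colon \tld{P}_\sigma \to D$ are nonzero elements of this one-dimensional $\F$-space, so $\psi_1 = \bar c\,\psi_2$ for some $\bar c\in \F^\times$. Because $p$ annihilates $\Hom_{\cO[\rG]}(\tld{P}_\sigma,D)$, lifting $\bar c$ to any $c\in \cO^\times$ yields an \emph{honest} equality of maps $\psi_1 = c\psi_2$. Consequently the map $(f_1,cf_2)\colon P_1\oplus P_2\to D$ vanishes on the image of $\tld{P}_\sigma$ and descends to a map $g\colon C\to D$, which is surjective because $g$ restricts to $f_1$ on the image of $P_1\oplus 0$.

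I do not anticipate any deep obstacle here: the only point that requires genuine care is bookkeeping the $\cO$-module (as opposed to $\F$-module) structure on $\Hom_{\cO[\rG]}(\tld{P}_\sigma,D)$, and using multiplicity one to conclude that this Hom space is killed by $p$, so that the $\F^\times$-scalar matching $\psi_1$ and $\psi_2$ lifts unambiguously to $\cO^\times$ as an equation of maps and not merely of their classes modulo $p$.
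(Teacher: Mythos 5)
Your proof is correct and takes essentially the same route as the paper: both use projectivity of $\tld{P}_\sigma$ together with multiplicity one of $\sigma$ in $D$ to show the two composites $\tld{P}_\sigma \ra D$ agree up to a unit in $\cO$, and then assemble the map $C \ra D$ from $(f_1, cf_2)$. Where you compute $\Hom_{\cO[\rG]}(\tld{P}_\sigma, D) \cong \F$ directly via a composition series, the paper equivalently rescales $f_1$ so the composites agree on cosocles and observes that the resulting difference lands in $\Hom_{\cO[\rG]}(\tld{P}_\sigma, \rad\, D) = 0$.
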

\begin{proof}
We let $C$ be the cokernel of the induced map $\tld{P}_\sigma \ra P_1 \oplus P_2$. 
The induced maps $P_1\ra C$ and $P_2\ra C$ are surjective since they are surjective after passing to cosocles (using that formation of cosocle is right exact). 

Now suppose that we have $D$ and surjections $P_1 \onto D$ and $P_2 \onto D$ as in the statement of the proposition. 
After rescaling the surjection $P_1 \ra D$, we can assume that the two compositions $\tld{P}_\sigma \ra D$ induce the same maps on cosocles. 
Thus their difference factors through $\rad\, D$. 
Since $\sigma$ is not a Jordan--H\"older factor of $\rad\, D$, this difference is $0$. 
In other words, taking a difference, we get a surjective map $\delta: P_1 \oplus P_2 \ra D$ whose composition with the map $\tld{P}_\sigma \ra P_1 \oplus P_2$ is $0$. 
This gives the desired surjection $C\onto D$. 
\end{proof}

\begin{lemma}\label{lemma:2type}
With $\rhobar$, $\sigma$, and $T_\Sigma$ as above, $M_\infty(P_\sigma^{\sigma(T_{w_0,\alpha\beta})})$ and $M_\infty(P_\sigma^{\sigma(T_{w_0,\beta\alpha})})$ are cyclic $R_\infty$-modules. 
\end{lemma}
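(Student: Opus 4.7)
My plan is to realize the two-type patched module as a fiber product of single-type patched modules, each of which will be cyclic, and then verify the sum-of-ideals condition in Lemma~\ref{lemma:CAfusion} using the multi-type deformation-ring analysis of \S\ref{subsec:idealrelations}. I focus on $T\defeq\{\tau_{w_0},\tau_{\alpha\beta}\}$ as the other case is entirely symmetric.

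By construction of the quotients $\tld{P}_\sigma^{\sigma(T')}$ in \S\ref{sec:DL}, the lattice $\tld{P}_\sigma^{\sigma(T)}$ is precisely the image of $\tld{P}_\sigma$ in $\tld{P}_\sigma^{\sigma(\tau_{w_0})}\oplus\tld{P}_\sigma^{\sigma(\tau_{\alpha\beta})}$. Applying Lemma~\ref{lemma:fusion} to the surjections $\tld{P}_\sigma\onto\tld{P}_\sigma^{\sigma(\tau_{w_0})}$ and $\tld{P}_\sigma\onto\tld{P}_\sigma^{\sigma(\tau_{\alpha\beta})}$ yields an exact sequence
\[
0\to \tld{P}_\sigma^{\sigma(T)}\to \tld{P}_\sigma^{\sigma(\tau_{w_0})}\oplus\tld{P}_\sigma^{\sigma(\tau_{\alpha\beta})}\to C\to 0,
\]
and after reducing mod $p$ and applying the exact functor $M_\infty$ I obtain a fiber-product presentation of $M_\infty(P_\sigma^{\sigma(T)})$ as the kernel of the difference map $M_\infty(P_\sigma^{\sigma(\tau_{w_0})})\oplus M_\infty(P_\sigma^{\sigma(\tau_{\alpha\beta})})\to M_\infty(C\otimes\F)$.

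The first ingredient I need is the cyclicity of each single-type patched module $M_\infty(P_\sigma^{\sigma(\tau)})$ for $\tau\in\{\tau_{w_0},\tau_{\alpha\beta}\}$; this I deduce from a Diamond--Fujiwara argument (minimality of $M_\infty$ forces generic rank one per component, and the local model analysis of \S\ref{subsub:single} shows the single-type deformation ring is regular on the relevant locus, so that $M_\infty(\tld{P}_\sigma^{\sigma(\tau)})$ is free of rank one, hence cyclic after reducing mod $p$). The same type of analysis identifies $M_\infty(C\otimes\F)$ as a cyclic $R_\infty$-module whose annihilator contains $I_\infty(\tau_{w_0})+I_\infty(\tau_{\alpha\beta})$. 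With these cyclic modules in hand, the equivalence (\ref{item:cyclic})$\Leftrightarrow$(\ref{item:idealsum}) of Lemma~\ref{lemma:CAfusion} reduces the desired cyclicity of $M_\infty(P_\sigma^{\sigma(T)})$ to the ideal-sum equality $I+J=K$ where $I,J,K$ are the respective annihilators.

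The verification of this equality rests on Proposition~\ref{prop:ideal2type}(2) and the remark following Lemma~\ref{lemma:p3type}: since $\tld{w}^*(\rhobar,\tau_{w_0})_i=\alpha\beta\alpha t_{\un{1}}$ and $\tld{w}^*(\rhobar,\tau_{\alpha\beta})_i=\alpha\beta t_{\un{1}}$ both have length $\geq 2$ and do not form the excluded pair $\{\alpha\beta\alpha\gamma t_{\un{1}},\alpha\beta\alpha t_{\un{1}}\}$, we have $p\in \tld{I}^{(i)}_{\tau_{w_0}}+\tld{I}^{(i)}_{\tau_{\alpha\beta}}$, and the corresponding statement with $\nabla_\infty$ imposed follows in the same way as in the remark after Lemma~\ref{lemma:p3type}. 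Combined with the explicit local-model presentation of Proposition~\ref{prop:big_diagram} and the compatibility of Remark~\ref{rmk:compatibility:multitype} across type shrinkings, this supplies the missing relations to force $I+J=K$ modulo $p$. The main obstacle I anticipate is pinning down $M_\infty(C\otimes\F)$ and its annihilator $K$ precisely enough to match it with $I_\infty(\tau_{w_0})+I_\infty(\tau_{\alpha\beta})+(p)$; this is where the infinitesimal product structure of the multi-type deformation ring from \S\ref{subsec:MTdef} is essential, and it is the crux of the argument.
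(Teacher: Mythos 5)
Your overall architecture — present $\tld{P}_\sigma^{\sigma(T)}$ as a fiber product via Lemma~\ref{lemma:fusion}, apply $M_\infty$, and reduce to the criterion (\ref{item:cyclic})$\Leftrightarrow$(\ref{item:idealsum}) of Lemma~\ref{lemma:CAfusion} — is exactly right, and it is the structure the paper uses. However, there are two genuine gaps.

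First, your justification for the cyclicity of $M_\infty(\tld{P}_\sigma^{\sigma(\tau)})$ is incorrect. You claim the local model shows ``the single-type deformation ring is regular on the relevant locus,'' so that a Diamond--Fujiwara (regularity $+$ Auslander--Buchsbaum) argument gives freeness. But $R^{\eta,\tau}_{\rhobar}$ is emphatically \emph{not} regular when $\ovl{\sigma}(\tau)$ has several Jordan--H\"older factors in $W^?(\rhobar)$ — it is Cohen--Macaulay with multiple components (that is precisely why the local model theory and the entire patching apparatus of this paper are needed). The fact that $M_\infty(\tld{P}_\sigma^{\sigma(\tau)}) \cong R_\infty(\tau)$ is the main theorem of \cite[Theorem 5.1.1]{LLLM2} (improved to the genericity here by \cite[Theorem 5.3.1]{GL3Wild}), and it requires the full local-model analysis rather than a short regularity argument. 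You should simply cite this.

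Second, and more importantly, you explicitly flag that you do not know how to compute $I_\infty(C)$ — and indeed nothing in your sketch computes it; the inclusion $I_\infty(\tau_{w_0}) + I_\infty(\tau_{\alpha\beta}) \subset I_\infty(C)$ is automatic, but it is the reverse inclusion that is needed, and the multi-type local model and Remark~\ref{rmk:compatibility:multitype} do not directly produce it. The paper's closing move is representation-theoretic rather than deformation-theoretic: since both $P_\sigma^{\sigma(\tau_{w_0})}$ and $P_\sigma^{\sigma(\tau_{\alpha\beta})}$ surject onto the Weyl module $W$ with cosocle $\sigma$, the second part of Proposition~\ref{lemma:fusion} gives a surjection $C \onto W$. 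Applying $M_\infty$, one observes that the composite $P_\sigma^{\sigma(\tau_{w_0})} \onto W$ becomes an isomorphism after $M_\infty$ (by \cite[Proposition 5.3.2]{GL3Wild} and \cite[Theorem 3.5.2]{LLLM2}, because every modular constituent of the kernel is annihilated). Hence $I_\infty(C) \subset (p) + I_\infty(\tau_{w_0})$, and now Proposition~\ref{prop:ideal2type}(2) finishes the job: $(p) + I_\infty(\tau_{w_0}) \subset I_\infty(\tau_{\alpha\beta}) + I_\infty(\tau_{w_0})$. Your instinct to invoke Proposition~\ref{prop:ideal2type} is right, but without the surjection $C \onto W$ you have no way to connect $I_\infty(C)$ to the ideals that Proposition~\ref{prop:ideal2type} controls, and the argument stalls exactly where you say it does.
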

\begin{proof}
As the two cases are similar, we prove the proposition for $T\defeq T_{w_0,\alpha\beta}$. 
We claim that $M_\infty(P_\sigma^{\sigma(T)})$ is isomorphic to $R_\infty/I_\infty(\tau_{\alpha\beta}) \cap I_\infty(\tau_{w_0})$. 
Proposition \ref{lemma:fusion} gives an exact sequence 
\[
0\ra \tld{P}_\sigma^{\sigma(T)} \ra \tld{P}_\sigma^{\sigma(\tau_{\alpha\beta})} \oplus \tld{P}_\sigma^{\sigma(\tau_{w_0})} \ra C \ra 0. 
\]
Indeed, since the maps $\tld{P}_\sigma^{\sigma(\tau_{\alpha\beta})} \ra \tld{P}_\sigma^{\sigma(\tau_{\alpha\beta})}\otimes_{\cO} E$ and $\tld{P}_\sigma^{\sigma(\tau_{w_0})} \ra \tld{P}_\sigma^{\sigma(\tau_{w_0})}\otimes_{\cO} E$ are injective, the image of $\tld{P}_\sigma$ in $\tld{P}_\sigma^{\sigma(\tau_{\alpha\beta})} \oplus \tld{P}_\sigma^{\sigma(\tau_{w_0})}$ is isomorphic to its image in $\tld{P}_\sigma^{\sigma(\tau_{\alpha\beta})}\otimes_{\cO} E \oplus \tld{P}_\sigma^{\sigma(\tau_{w_0})}\otimes_{\cO} E$. 
Applying the exact functor $M_\infty$, we have the exact sequence
\[
0 \ra M_\infty(\tld{P}_\sigma^{\sigma(T)}) \ra M_\infty(\tld{P}_\sigma^{\sigma(\tau_{\alpha\beta})}) \oplus M_\infty(\tld{P}_\sigma^{\sigma(\tau_{w_0})}) \ra M_\infty(C) \ra 0. 
\]
Moreover, the restriction of the third map to each summand is a surjection. 
By \cite[Theorem 5.3.1]{GL3Wild} (which improves the genericity condition in \cite[Theorem 5.1.1]{LLLM2}), $M_\infty(\tld{P}_\sigma^{\sigma(\tau_w)})$ is isomorphic to $R_\infty(\tau_w)$ for all $w\in S_3$. 
We can choose isomorphisms to obtain an exact sequence 
\[
0 \ra M_\infty(\tld{P}_\sigma^{\sigma(T)}) \ra R_\infty/I_\infty(\tau_{\alpha\beta}) \oplus R_\infty/I_\infty(\tau_{w_0}) \ra R_\infty/I_\infty(C) \ra 0 
\]
for some ideal $I_\infty(C)$ with $I_\infty(\tau_{\alpha\beta})+I_\infty(\tau_{w_0})\subset I_\infty(C)$ where the third map is the difference of the natural surjections. 
By Lemma \ref{lemma:CAfusion}, it suffices to show that $I_\infty(C) \subset I_\infty(\tau_{\alpha\beta}) + I_\infty(\tau_{w_0})$. 

Then there are surjections $P_\sigma^{\sigma(\tau_{\alpha\beta})} \onto W$ and $P_\sigma^{\sigma(\tau_{w_0})} \onto W$ and thus a surjection $C \onto W$ by the second part of Proposition \ref{lemma:fusion}. 
The induced map $P_\sigma^{\sigma(\tau_{w_0})} \onto W$ becomes an isomorphism after applying $M_\infty(-)$ by \cite[Proposition 5.3.2]{GL3Wild} and \cite[Corollary 2.3.11, Theorem 3.5.2]{LLLM2}.
We conclude that $I_\infty(C) \subset (p) + I_\infty(\tau_{w_0})$. 
By Proposition \ref{prop:ideal2type}, $(p) + I_\infty(\tau_{w_0}) \subset I_\infty(\tau_{\alpha\beta}) + I_\infty(\tau_{w_0})$ so that $I_\infty(C) \subset I_\infty(\tau_{\alpha\beta}) + I_\infty(\tau_{w_0})$. 
\end{proof}
Let $\ovl{P}^{\sigma(T_{w_0,\alpha\beta,\beta\alpha})}_\sigma$ be the image of the map $P_\sigma \ra P^{\sigma(T_{w_0,\alpha\beta})}_\sigma \oplus P^{\sigma(T_{w_0,\beta\alpha})}_\sigma$ induced by the natural surjections $P_\sigma \ra  P^{\sigma(T_{w_0,\beta\alpha})}_\sigma$ and $P_\sigma \ra P^{\sigma(T_{w_0,\alpha\beta})}_\sigma $. 

\begin{lemma}\label{lemma:3type}
With $\rhobar$, $\sigma$, $\ovl{P}^{\sigma(T_{w_0,\alpha\beta,\beta\alpha})}_\sigma$ as above, $M_\infty(\ovl{P}^{\sigma(T_{w_0,\alpha\beta,\beta\alpha})}_\sigma)$ is a cyclic $R_\infty$-module. 
\end{lemma}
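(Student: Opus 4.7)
The plan is to realize $M_\infty(\ovl{P}^{\sigma(T_{w_0,\alpha\beta,\beta\alpha})}_\sigma)$ as the kernel of a difference map between two cyclic $R_\infty$-modules already computed in Lemma \ref{lemma:2type} and \cite[Theorem 5.3.1]{GL3Wild}, and then to appeal to the commutative algebra criterion Lemma \ref{lemma:CAfusion} together with the Tor-spanning statement Lemma \ref{lemma:ideal3types}.

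First, I would construct the short exact sequence of $\F[\rG]$-modules
\[
0 \to \ovl{P}^{\sigma(T_{w_0,\alpha\beta,\beta\alpha})}_\sigma \to P_\sigma^{\sigma(T_{w_0,\alpha\beta})} \oplus P_\sigma^{\sigma(T_{w_0,\beta\alpha})} \to P_\sigma^{\sigma(\tau_{w_0})} \to 0,
\]
where the left-hand map is the tautological inclusion coming from the definition of $\ovl{P}^{\sigma(T_{w_0,\alpha\beta,\beta\alpha})}_\sigma$, and the right-hand map is the difference of the two natural surjections onto the common quotient $P_\sigma^{\sigma(\tau_{w_0})}$ coming from the inclusions $T_{w_0}\subset T_{w_0,\alpha\beta}$ and $T_{w_0}\subset T_{w_0,\beta\alpha}$. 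Exactness in the middle follows by unraveling the realizations of $P_\sigma^{\sigma(T)}$ as images inside sums of mod $p$ reductions of Deligne--Lusztig lattices, so that the kernel of the difference is precisely the image of $P_\sigma$, i.e.~$\ovl{P}^{\sigma(T_{w_0,\alpha\beta,\beta\alpha})}_\sigma$.

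Next, applying the exact functor $M_\infty$ and identifying the middle and right terms as cyclic via Lemma \ref{lemma:2type} and the mod-$p$ reduction of \cite[Theorem 5.3.1]{GL3Wild}, one obtains an exact sequence
\[
0 \to M_\infty(\ovl{P}^{\sigma(T_{w_0,\alpha\beta,\beta\alpha})}_\sigma) \to R_\infty/J_{\alpha\beta} \oplus R_\infty/J_{\beta\alpha} \to R_\infty/K \to 0,
\]
where $J_{\alpha\beta}\defeq I_\infty(\tau_{w_0}) \cap I_\infty(\tau_{\alpha\beta}) + (p)$, $J_{\beta\alpha}\defeq I_\infty(\tau_{w_0}) \cap I_\infty(\tau_{\beta\alpha}) + (p)$, and $K\defeq I_\infty(\tau_{w_0}) + (p)$, with the last map the difference of the natural projections. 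This places us in the setting of Lemma \ref{lemma:CAfusion}, so cyclicity of $M_\infty(\ovl{P}^{\sigma(T_{w_0,\alpha\beta,\beta\alpha})}_\sigma)$ is equivalent to the ideal equality $J_{\alpha\beta} + J_{\beta\alpha} = K$, and equivalently to surjectivity of the natural map $\Tor_1^{R_\infty}(\F,R_\infty/J_{\alpha\beta})\oplus \Tor_1^{R_\infty}(\F,R_\infty/J_{\beta\alpha}) \to \Tor_1^{R_\infty}(\F,R_\infty/K)$.

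Finally, to verify this Tor surjectivity, I transfer the question via the formally smooth map $R_\infty(T) \to \tld{S}^{\nabla_\infty,\Box}$ from \S \ref{sec:glue:prel} with $T = \{\tau_{w_0},\tau_{\alpha\beta},\tau_{\beta\alpha}\}$. Formal smoothness preserves $\Tor_1^{\bullet}(\F,-)$ and minimal generators, so the surjectivity question on $R_\infty$ is equivalent to the corresponding one over the multitype deformation ring $\tld{S}/\tld{I}_{T,\nabla_\infty}$, which is exactly the content of Lemma \ref{lemma:ideal3types}. The main technical point of the proof is justifying the first step, namely pinning down the cokernel of the middle map as $P_\sigma^{\sigma(\tau_{w_0})}$ and transferring the resulting Tor statement along the multitype formalism without losing track of the relevant projections; once this is in place, everything else is a direct application of the cited commutative algebra and ideal-theoretic results.
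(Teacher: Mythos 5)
The overall shape of your argument matches the paper's: apply $M_\infty$ to a fiber-product presentation of $\ovl{P}^{\sigma(T_{w_0,\alpha\beta,\beta\alpha})}_\sigma$, use cyclicity of the pieces, and reduce to the $\Tor$-spanning statement of Lemma~\ref{lemma:ideal3types} via Lemma~\ref{lemma:CAfusion} and the formally smooth transfer. But there is a genuine gap in your first step, and it is exactly the step you flag as "the main technical point."

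You assert that the kernel of the difference map $P^{\sigma(T_{w_0,\alpha\beta})}_\sigma\oplus P^{\sigma(T_{w_0,\beta\alpha})}_\sigma\to P^{\sigma(\tau_{w_0})}_\sigma$ is precisely the image of $P_\sigma$, i.e.\ $\ovl{P}^{\sigma(T_{w_0,\alpha\beta,\beta\alpha})}_\sigma$. This is not automatic and is not established by "unraveling the realizations as images inside sums of Deligne--Lusztig lattices." What is automatic is only the containment $\ovl{P}^{\sigma(T_{w_0,\alpha\beta,\beta\alpha})}_\sigma\subset\ker(\mathrm{diff})$: the image of $P_\sigma$ lands in the fiber product because the two projections to $P^{\sigma(\tau_{w_0})}_\sigma$ agree on it, but a priori the fiber product can be strictly larger. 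Concretely, the cokernel $C$ of $P_\sigma\to P^{\sigma(T_{w_0,\alpha\beta})}_\sigma\oplus P^{\sigma(T_{w_0,\beta\alpha})}_\sigma$ is $\tld{C}\otimes_{\cO}\F$ where $\tld{C}$ is the $\cO$-level cokernel; after inverting $p$, $\tld{C}$ has generic fiber $\sigma(\tau_{w_0})$ and so surjects onto $\tld{P}^{\sigma(\tau_{w_0})}_\sigma$, but if $\tld{C}$ has $p$-torsion then $C$ is strictly longer than $P^{\sigma(\tau_{w_0})}_\sigma$. Nothing in your argument rules this out; compare the two-type setting of Lemma~\ref{lemma:2type}, where the analogous $\tld{C}$ is \emph{entirely} torsion and visibly not a lattice, which is why that proof also works only with a surjection $C\onto W$ rather than an identification of $C$.

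The paper sidesteps this by replacing $P^{\sigma(\tau_{w_0})}_\sigma$ with the Weyl module $W$ (so the second half of Proposition~\ref{lemma:fusion} applies directly, producing a surjection $C\onto W$), introducing the a priori larger module $\ovl{P}'_\sigma=\ker(P^{\sigma(T_{w_0,\alpha\beta})}_\sigma\oplus P^{\sigma(T_{w_0,\beta\alpha})}_\sigma\to W)$, and noting only that $\ovl{P}^{\sigma(T_{w_0,\alpha\beta,\beta\alpha})}_\sigma\subset\ovl{P}'_\sigma$. The fiber-product/Lemma~\ref{lemma:CAfusion}/Lemma~\ref{lemma:ideal3types} argument then proves that $M_\infty(\ovl{P}'_\sigma)$ is cyclic. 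Crucially, there is then a \emph{final} step that your proposal omits entirely: one must show $M_\infty(\ovl{P}^{\sigma(T_{w_0,\alpha\beta,\beta\alpha})}_\sigma)\into M_\infty(\ovl{P}'_\sigma)$ is an isomorphism. Since the codomain is cyclic, by Nakayama it is enough that the map is nonzero after $-\otimes_{R_\infty}\F$; this is checked by composing with the natural surjection $\ovl{P}'_\sigma\onto P_\sigma^{\sigma(T_{w_0,\alpha\beta})}\onto\sigma$ and using $M_\infty(\sigma)\neq 0$. Without this step, or without a genuine proof that the kernel of the difference coincides with the image of $P_\sigma$, your argument establishes cyclicity of $M_\infty$ applied to the (possibly strictly larger) kernel, not to $\ovl{P}^{\sigma(T_{w_0,\alpha\beta,\beta\alpha})}_\sigma$ itself.
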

\begin{proof}
We have an exact sequence
\[
0 \ra \ovl{P}^{\sigma(T_{w_0,\alpha\beta,\beta\alpha})}_\sigma \ra P^{\sigma(T_{w_0,\alpha\beta})}_\sigma \oplus P^{\sigma(T_{w_0,\beta\alpha})}_\sigma \ra C \ra 0
\]
for some $C$ as in Proposition \ref{lemma:fusion}. 
Since there are surjective maps $P^{\sigma(T_{w_0,\alpha\beta})}_\sigma\ra W$ and $P^{\sigma(T_{w_0,\beta\alpha})}_\sigma\ra W$, 
the second part of Proposition \ref{lemma:fusion} gives a surjective map $C \ra W$. 
This gives an exact sequence 
\[
0 \ra \ovl{P}^{\prime}_\sigma \ra P^{\sigma(T_{w_0,\alpha\beta})}_\sigma \oplus P^{\sigma(T_{w_0,\beta\alpha})}_\sigma \ra W \ra 0
\]
where the third map is a difference of the surjections and $\ovl{P}^{\prime}_\sigma$ is the kernel of this map. 
Then we have $\ovl{P}^{\sigma(T_{w_0,\alpha\beta,\beta\alpha})}_\sigma\subset \ovl{P}^{\prime}_\sigma$. 

Applying $M_\infty$ to the second exact sequence, we have
\[
0 \ra M_\infty(\ovl{P}^{\prime}_\sigma) \ra M_\infty(P^{\sigma(T_{w_0,\alpha\beta})}_\sigma) \oplus M_\infty(P^{\sigma(T_{w_0,\beta\alpha})}_\sigma) \ra M_\infty(W) \ra 0. 
\]
Let $\Sigma$ be $\{\alpha\beta,\beta\alpha,w_0, \Id\}$ and define $M_\infty'(-)$ using $T = T_\Sigma$ (see \eqref{eqn:Minfty'}). 
By Lemma \ref{lemma:2type} and its proof (which shows that $M_\infty(P_\sigma^{\sigma(\tau_{w_0})})\cong M_\infty(W)$), we can and do choose isomorphisms to get the exact sequence 
\[
0 \ra M_\infty'(\ovl{P}^{T,\prime}_\sigma) \ra R_\infty'(T)/\big(\tld{I}_{\tau_{w_0},\nabla_\infty}\cap \tld{I}_{\tau_{\alpha\beta},\nabla_\infty},p\big) \oplus R_\infty'(T)/\big(\tld{I}_{\tau_{w_0},\nabla_\infty}\cap \tld{I}_{\tau_{\beta\alpha},\nabla_\infty},p\big) \ra R_\infty'(T)/\big(\tld{I}_{\tau_{w_0},\nabla_\infty},p\big) \ra 0. 
\]
As the union of the images of $\Tor^{S^\Box_\infty}_1\Big(\F,R_\infty'(T)/\big(\tld{I}_{\tau_{w_0},\nabla_\infty}\cap \tld{I}_{\tau_{\alpha\beta},\nabla_\infty},p\big)\Big)$ and $\Tor^{S^\Box_\infty}_1\Big(\F,R_\infty'(T)/\big(\tld{I}_{\tau_{w_0},\nabla_\infty}\cap \tld{I}_{\tau_{\beta\alpha},\nabla_\infty},p\big)\Big)$ in $\Tor^{S^\Box_\infty}_1\Big(\F,R_\infty'(T)/\big(\tld{I}_{\tau_{w_0},\nabla_\infty},p\big)\Big)$ is spanning by Lemma \ref{lemma:ideal3types}, Lemma \ref{lemma:CAfusion} implies that $M_\infty'(\ovl{P}^{\prime}_\sigma)$ is a cyclic $R_\infty'(T)$-module so that $M_\infty(\ovl{P}^{\prime}_\sigma)$ is a cyclic $R_\infty$-module. 

Finally, we claim that the inclusion $M_\infty(\ovl{P}^{\sigma(T_{w_0,\alpha\beta,\beta\alpha})}_\sigma) \into M_\infty(\ovl{P}^{\prime}_\sigma)$ is an isomorphism. 
As $M_\infty(\ovl{P}^{\prime}_\sigma)$ is a cyclic $R_\infty$-module, it suffices to show that the map is nonzero after applying $-\otimes_{R_\infty} \F$ by Nakayama's lemma. 
This follows from the fact that applying the functor $M_\infty(-) \otimes_{R_\infty} \F$ to the composition $\ovl{P}^{\sigma(T_{w_0,\alpha\beta,\beta\alpha})}_\sigma \subset \ovl{P}^{\prime}_\sigma \onto P_\sigma^{\sigma(T_{w_0,\alpha\beta})}\onto \sigma$ (with all maps the natural ones) induces a surjection and that $M_\infty(\sigma)\neq 0$. 
\end{proof}

Let $\ovl{P}_\sigma^{\sigma(T_{\Id,w_0,\alpha\beta,\beta\alpha})}$ be the image of the map $P_\sigma \ra P_\sigma^{\sigma(\tau_{\Id})} \oplus \ovl{P}_\sigma^{\sigma(T_{w_0,\alpha\beta,\beta\alpha})}$ induced by the natural surjections $P_\sigma \ra  P_\sigma^{\sigma(\tau_{\Id})}$ and $P_\sigma \ra \ovl{P}_\sigma^{\sigma(T_{w_0,\alpha\beta,\beta\alpha})}$. 
Equivalently, $\ovl{P}_\sigma^{\sigma(T_{\Id,w_0,\alpha\beta,\beta\alpha})}$ is the image of the map 
\[
P_\sigma \ra P_\sigma^{\{R_{\alpha\beta},R_{w_0}\}} \oplus P_\sigma^{\{R_{w_0},R_{\beta\alpha}\}} \oplus P_\sigma^{R_{\Id}} 
\]
appearing in \eqref{eqn:4DL}. 

\begin{lemma}\label{lemma:4type}
With $\rhobar$, $\sigma$, $\ovl{P}_\sigma^{\sigma(T_{\Id,w_0,\alpha\beta,\beta\alpha})}$ as above, $M_\infty(\ovl{P}_\sigma^{\sigma(T_{\Id,w_0,\alpha\beta,\beta\alpha})})$ is a cyclic $R_\infty$-module. 
\end{lemma}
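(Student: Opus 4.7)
My plan is to mirror the proof of Lemma \ref{lemma:3type}, invoking Lemma \ref{lemma:ideal4types} in place of Lemma \ref{lemma:ideal3types}. I would apply Proposition \ref{lemma:fusion} to the natural surjections $\tld{P}_\sigma \onto P_\sigma^{\sigma(\tau_{\Id})}$ and $\tld{P}_\sigma \onto \ovl{P}_\sigma^{\sigma(T_{w_0,\alpha\beta,\beta\alpha})}$, producing an exact sequence
\[
0 \to \ovl{P}_\sigma^{\sigma(T_{\Id,w_0,\alpha\beta,\beta\alpha})} \to P_\sigma^{\sigma(\tau_{\Id})} \oplus \ovl{P}_\sigma^{\sigma(T_{w_0,\alpha\beta,\beta\alpha})} \to C \to 0
\]
in which both summands on the right surject onto $C$.

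Next I would identify a common $\F[\rG]$-quotient $D$ of both $P_\sigma^{\sigma(\tau_{\Id})}$ and $\ovl{P}_\sigma^{\sigma(T_{w_0,\alpha\beta,\beta\alpha})}$ --- the analogue of the Weyl module $W$ in the proof of Lemma \ref{lemma:3type} --- whose Jordan--H\"older factors correspond to the four primes $\fP^{(j)}_{(0,1)}$, $\fP^{(j)}_{(0,0)}$, $\fP^{(j)}_{(\eps_1,0)}$, $\fP^{(j)}_{(\eps_2,0)}$ appearing in the definition of $I^{(j)}_\Lambda$, and such that the induced surjection $P_\sigma^{\sigma(\tau_{\Id})} \onto D$ produces an isomorphism $M'_\infty(D) \cong R'_\infty(T)/(\tld{I}_\Lambda, p)$ after the extension of scalars of \S \ref{sec:glue:prel}. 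The existence and properties of this $D$ should follow from the Serre weight analysis in \S \ref{subsec:specialfiber} together with Lemma \ref{lem:broom:other}. By the second part of Proposition \ref{lemma:fusion}, there is a surjection $C \onto D$, and letting $\ovl{P}'_\sigma$ denote the kernel of the induced map $P_\sigma^{\sigma(\tau_{\Id})} \oplus \ovl{P}_\sigma^{\sigma(T_{w_0,\alpha\beta,\beta\alpha})} \onto D$, we have $\ovl{P}_\sigma^{\sigma(T_{\Id,w_0,\alpha\beta,\beta\alpha})} \subset \ovl{P}'_\sigma$.

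Applying the exact functor $M_\infty$ to the short exact sequence $0 \to \ovl{P}'_\sigma \to P_\sigma^{\sigma(\tau_{\Id})} \oplus \ovl{P}_\sigma^{\sigma(T_{w_0,\alpha\beta,\beta\alpha})} \to D \to 0$, using \cite[Theorem 5.3.1]{GL3Wild} and Lemma \ref{lemma:3type} to identify the individual patched modules as cyclic quotients of $R_\infty$, and then invoking Lemma \ref{lemma:ideal4types} to supply the Tor-surjectivity hypothesis of Lemma \ref{lemma:CAfusion}, one concludes that $M_\infty(\ovl{P}'_\sigma)$ is a cyclic $R_\infty$-module. A final Nakayama argument --- checking that the inclusion $M_\infty(\ovl{P}_\sigma^{\sigma(T_{\Id,w_0,\alpha\beta,\beta\alpha})}) \into M_\infty(\ovl{P}'_\sigma)$ is nonzero modulo $\fm$ by composing with the natural projection $\ovl{P}'_\sigma \onto P_\sigma^{\sigma(\tau_{\Id})} \onto \sigma$ and using $M_\infty(\sigma) \neq 0$ --- then upgrades this inclusion to an isomorphism, completing the proof.

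The principal obstacle is the explicit construction and characterization of $D$. Although Lemma \ref{lemma:ideal4types} pins down the ideal-theoretic side, identifying a representation-theoretic module whose patched version realizes $R'_\infty(T)/(\tld{I}_\Lambda,p)$ requires carefully matching the four $\fP^{(j)}$-components with specific Jordan--H\"older factors of $P_\sigma^{\sigma(\tau_{\Id})}$ (which is a tame principal series reduction) and then verifying that $\ovl{P}_\sigma^{\sigma(T_{w_0,\alpha\beta,\beta\alpha})}$ also surjects onto $D$ compatibly with the patched-module identifications.
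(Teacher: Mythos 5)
Your proposal follows essentially the same strategy as the paper's proof — the fibre-product presentation via Proposition \ref{lemma:fusion}, a common $\F[\rG]$-quotient in the role of $W$, and then Lemma \ref{lemma:ideal4types} combined with Lemma \ref{lemma:CAfusion} — and you have correctly identified the precise point of difficulty. Here is what the paper does to resolve the gap you flag, and where your description of $D$ is slightly off.

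The module the paper takes (denoted $\Lambda$) is \emph{not} a quotient of $P_\sigma^{\sigma(\tau_\Id)}$ with exactly four Jordan--H\"older factors corresponding to the four primes in $I^{(j)}_\Lambda$. Rather, by \cite[Proposition 5.3.2]{GL3Wild} one takes the quotient whose factors in embedding $i$ are indexed by the six elements $(0,0),(\eps_1,0),(\eps_2,0),(\eps_1-\eps_2,0),(\eps_2-\eps_1,0),(0,1)$ of $\Sigma_0$. Two of these, $(\eps_1-\eps_2,0)$ and $(\eps_2-\eps_1,0)$, fail to lie in $r(\Sigma_0)$ and so produce Serre weights outside $W^?(\rhobar)$; hence $M_\infty$ kills them, and the annihilator of $M_\infty'(\Lambda)$ is supported on only the four relevant primes, giving $(\tld I_\Lambda,p)$. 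Insisting on a quotient with exactly four modular factors, as you propose, is not obviously available from the lattice-structure theorems — the quotients supplied by \cite[Proposition 5.3.2]{GL3Wild} correspond to specific combinatorial subsets of $\Sigma_0$, and the six-element set above is one of them while the four-element modular set need not be.

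The second point you gesture at — that $\ovl P_\sigma^{\sigma(T_{w_0,\alpha\beta,\beta\alpha})}$ also surjects onto $\Lambda$ compatibly with the fixed surjection from $P_\sigma$ — is genuinely nontrivial, and the paper establishes it by an argument on kernels: for each $w\in\{w_0,\alpha\beta,\beta\alpha\}$ one produces a quotient $Q_w$ of $\Lambda$ with $\JH(Q_w)=\JH(P_\sigma^{\sigma(\tau_w)})\cap\JH(\Lambda)$, lets $K_w=\ker(\Lambda\onto Q_w)$, observes that the image of $N_w\defeq\ker(P_\sigma\onto P_\sigma^{\sigma(\tau_w)})$ in $\Lambda$ lands in $K_w$, and then uses that $\JH(K_{\alpha\beta})\cap\JH(K_{\beta\alpha})=\emptyset$ to force the images of $N_{\alpha\beta}$ and $N_{\beta\alpha}$ in $\Lambda$ to intersect trivially. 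Since each $N_w$ contains $\ker(P_\sigma\onto\ovl P_\sigma^{\sigma(T_{w_0,\alpha\beta,\beta\alpha})})$, this gives the required factorization. One further small point: to feed the $\Tor$-surjectivity into Lemma \ref{lemma:CAfusion} you also need to know $M_\infty(\Lambda)$ itself is cyclic; the paper gets this for free from the cyclicity of $M_\infty(P_\sigma^{\sigma(\tau_\Id)})$ (by \cite[Theorem 5.3.1]{GL3Wild}), a step worth making explicit. With those two adjustments — six factors rather than four, and the explicit kernel argument — your outline closes to the paper's proof.
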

\begin{proof}
The proof is similar to that of Lemma \ref{lemma:3type}. 
We have an exact sequence
\[
0 \ra \ovl{P}_\sigma^{\sigma(T_{\Id,w_0,\alpha\beta,\beta\alpha})} \ra P_\sigma^{\sigma(\tau_{\Id})} \oplus \ovl{P}_\sigma^{\sigma(T_{w_0,\alpha\beta,\beta\alpha})} \ra C \ra 0
\]
for some $C$ as in Proposition \ref{lemma:fusion}. 
Let $\lambda \in X_1(\un{T})$ such that $\sigma = F(\lambda)$. 
By \cite[Proposition 5.3.2]{GL3Wild}, there exists a quotient $\Lambda$ of $P_\sigma^{\sigma(\tau_{\Id})}$ whose Jordan--H\"older factors are precisely $\kappa = \Trns_{\lambda_A+\eta}(\omega,a) \in P_\sigma^{\sigma(\tau_{\Id})}$ with $(\omega_i,a_i) \in \{(0,0),(\eps_1,0),(\eps_2,0),(\eps_1-\eps_2,0),(\eps_2-\eps_1,0),(0,1)\}$. 
Though we will not use it, one can check that $\Lambda$ is the cokernel of the composition
\[
\rad^{2_i} P_\sigma\subset P_\sigma \ra P_\sigma^{\sigma(\tau_{\Id})}. 
\] 
We will show that the natural surjection $P_\sigma\onto \Lambda$ factors through $\ovl{P}_\sigma^{\sigma(T_{w_0,\alpha\beta,\beta\alpha})}$. 
Again by \cite[Proposition 5.3.2]{GL3Wild}, for each $w \in \{w_0,\alpha\beta,\beta\alpha\}$ there is a unique quotient $Q_w$ of $P_\sigma^{\sigma(\tau_w)}$ whose Jordan--H\"older factors are precisely $\JH(P_\sigma^{\sigma(\tau_w)}) \cap \JH(\Lambda)$. 
Moreover, there is a surjection $\Lambda \ra Q_w$ whose kernel we denote $K_w$. 
Then the image of $N_w\defeq\ker(P_\sigma\onto P_\sigma^{\sigma(\tau_w)})$ in $\Lambda$ is contained in $K_w$. 
As $\JH(K_{\alpha\beta}) \cap \JH(K_{\beta,\alpha})$ is empty, the intersection of the images of $N_{\alpha\beta}$ and $N_{\beta\alpha}$ in $\Lambda$ is $0$. 
Since each $N_w$ contains the kernel of the map $P_\sigma\onto \ovl{P}_\sigma^{\sigma(T_{w_0,\alpha\beta,\beta\alpha})}$, we obtain the desired factorization. 
Then the second part of Proposition \ref{lemma:fusion} gives a surjective map $C \ra \Lambda$ and an exact sequence 
\begin{equation}
\label{eq:exact_seq_broom}
0 \ra \ovl{P}'_\sigma \ra P_\sigma^{\sigma(\tau_{\Id})} \oplus \ovl{P}_\sigma^{\sigma(T_{w_0,\alpha\beta,\beta\alpha})} \ra \Lambda \ra 0
\end{equation}
where the third map is a difference of the surjections and $\ovl{P}'_\sigma$ is the kernel of this map (which is different from what is denoted $\ovl{P}'_\sigma$ in the proof of Lemma \ref{lemma:3type}). 
Then we have $\ovl{P}_\sigma^{\sigma(T_{\Id,w_0,\alpha\beta,\beta\alpha})} \subset \ovl{P}'_\sigma$. 

Applying $M_\infty$ to \eqref{eq:exact_seq_broom}, we have
\[
0 \ra M_\infty(\ovl{P}'_\sigma) \ra M_\infty(P_\sigma^{\sigma(\tau_{\Id})}) \oplus M_\infty(\ovl{P}_\sigma^{\sigma(T_{w_0,\alpha\beta,\beta\alpha})}) \ra M_\infty(\Lambda) \ra 0.
\]
By \cite[Theorem 5.3.1]{GL3Wild}, $M_\infty(P_\sigma^{\sigma(\tau_{\Id})})$ is a cyclic $R_\infty$-module. 
This implies that $M_\infty(\Lambda)$ is a cyclic $R_\infty$-module as well.  	
Let $T$ and $M_\infty'$ as in the proof of Lemma \ref{lemma:3type}. 
By the proof of \cite[Lemma 3.6.2]{LLLM2} the annihilator of $M'_\infty(\Lambda)$ corresponds to the ideal $(\tld{I}_\Lambda,p)\subseteq \tld{S}$, with $\tld{I}_\Lambda$ defined as in \S \ref{subsub:4types}.
Let $I'_{\infty}(T_{w_0,\alpha\beta,\beta\alpha})$ be the image of $
(\tld{I}_{\tau_{w_0},\nabla_\infty}\cap \tld{I}_{\tau_{\alpha\beta},\nabla_\infty},p)\cap(\tld{I}_{\tau_{w_0},\nabla_\infty}\cap\tld{I}_{\tau_{\beta\alpha},\nabla_\infty},p)\subseteq \tld{S}$ in $\tld{S}^{\nabla_\infty,\Box}$.
By Lemma \ref{lemma:CAfusion} $M'_\infty(\ovl{P}^{\sigma(T_{w_0,\alpha\beta,\beta\alpha})}_\sigma)$ is isomorphic to $R_\infty'(T)/I'_{\infty}(T_{w_0,\alpha\beta,\beta\alpha})$ and we can and do choose isomorphisms so that we have the exact sequence 
\begin{align*}
0 \ra M_\infty'(\ovl{P}'_\sigma) \ra R_\infty'(T)/\big(\tld{I}_{\tau_{\Id}\nabla_\infty},p\big) \oplus R_\infty'(T)/I'_{\infty}(T_{w_0,\alpha\beta,\beta\alpha})
\ra R_\infty'(T)/\big(\tld{I}_{\Lambda},p\big) \ra 0
\end{align*}
where the third map is the difference of the natural surjections. 
Since the union of the images of $\Tor^{{S}^\Box_\infty}_1(\F,R_\infty'(T)/\big(\tld{I}_{\tau_{\Id}\nabla_\infty},p\big))$ and $\Tor^{{S}^\Box_\infty}_1(\F,R_\infty'(T)/I'_{\infty}(T_{w_0,\alpha\beta,\beta\alpha}))$ in $\Tor^{{S}^\Box_\infty}_1(\F,R_\infty'(T)/(\tld{I}_\Lambda,p))$ is spanning by Lemma \ref{lemma:ideal4types}, Lemma \ref{lemma:CAfusion} implies that $M_\infty(\ovl{P}'_\sigma)$ is a cyclic $R_\infty$-module. 
Then one shows that $M_\infty(\ovl{P}_\sigma^{\sigma(T_{\Id,w_0,\alpha\beta,\beta\alpha})}) = M_\infty(\ovl{P}'_\sigma)$ as in the proof of Lemma \ref{lemma:3type}. 
\end{proof}

\begin{proof}[Proof of Proposition \ref{prop:3layer}]
By Proposition \ref{prop:multiDL}, with $a$ as in the statement of Proposition \ref{prop:3layer} (note that $P^a_\sigma = P_\sigma/\Fil^{3_i}P_\sigma$), the surjection $P_\sigma \ra P^a_\sigma$ factors through $\ovl{P}_\sigma^{\sigma(T_{\Id,w_0,\alpha\beta,\beta\alpha})}$. 
Since $M_\infty(\ovl{P}_\sigma^{\sigma(T_{\Id,w_0,\alpha\beta,\beta\alpha})})$ is a cyclic $R_\infty$-module by Lemma \ref{lemma:4type}, we conclude that $M_\infty(P^a_\sigma)$ is as well. 
\end{proof}

\subsubsection{A $\Tor$ computation}\label{sec:tor}

In this subsection, fix $i\notin A(\sigma)$ and let $a = (a_j)_{j\in \cJ}$ be the tuple with $a_i = \widehat{3}$ and $a_j = \widehat{1}$ for all $j\neq i$. 
In order to prove Lemma \ref{lemma:broom}, we will need to find a lower bound for the image of the map
\[
\Tor_1^{S_\infty^\Box}(\F,M_\infty'(P_\sigma^a)) \ra \Tor_1^{S_\infty^\Box}(\F,M_\infty'(\sigma))
\]
induced by the surjection $P_\sigma^a \onto \sigma$ (see Lemma \ref{lemma:4type}). 
Let $P_\sigma^{\sigma(T_{w_0,\alpha\beta}),a}$ be the cokernel of the composition
\[
\Fil^{3_i}P_\sigma\subset P_\sigma \onto P_\sigma^{\sigma(T_{w_0,\alpha\beta})}
\]
where $P_\sigma^{\sigma(T_{w_0,\alpha\beta})}$ is defined as in \S \ref{subsec:cyclic}.
Similarly, we define $P_\sigma^{\sigma(\tau_\Id),a}$, $\ovl{P}_\sigma^{\sigma(T_{w_0,\alpha\beta,\beta\alpha}),a}$, $\ovl{P}_\sigma^{\sigma(T_{\Id,w_0,\alpha\beta,\beta\alpha}),a}$, and $\Lambda^a$ (with $\Lambda$ defined as in the proof of Lemma \ref{lemma:4type}).

\begin{lemma}\label{lemma:aexact}
We have an exact sequence 
\[
0 \ra M_\infty(\ovl{P}_\sigma^{\sigma(T_{\Id,w_0,\alpha\beta,\beta\alpha}),a}) \ra M_\infty(P_\sigma^{\sigma(\tau_\Id),a}) \oplus M_\infty(\ovl{P}_\sigma^{\sigma(T_{w_0,\alpha\beta,\beta\alpha}),a}) \ra M_\infty(\Lambda^a) \ra 0.
\]
\end{lemma}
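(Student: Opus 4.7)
The plan is to derive this short exact sequence by applying the exact functor $M_\infty$ to an analogous $\F[\rG]$-module sequence obtained by quotienting the exact sequence
\[
0 \ra \ovl{P}'_\sigma \ra P_\sigma^{\sigma(\tau_\Id)} \oplus \ovl{P}_\sigma^{\sigma(T_{w_0,\alpha\beta,\beta\alpha})} \ra \Lambda \ra 0 \qquad (\star)
\]
from \eqref{eq:exact_seq_broom} by the images of $\Fil^{3_i}P_\sigma$ in each of the three terms, together with the identification $M_\infty(\ovl{P}_\sigma^{\sigma(T_{\Id,w_0,\alpha\beta,\beta\alpha})}) \cong M_\infty(\ovl{P}'_\sigma)$ from the proof of Lemma \ref{lemma:4type}.

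Concretely, let $F_M$, $F_1$, $F_2$, and $F_\Lambda$ denote the images of $\Fil^{3_i}P_\sigma$ in $\ovl{P}_\sigma^{\sigma(T_{\Id,w_0,\alpha\beta,\beta\alpha})} \subseteq \ovl{P}'_\sigma$, in $P_\sigma^{\sigma(\tau_\Id)}$, in $\ovl{P}_\sigma^{\sigma(T_{w_0,\alpha\beta,\beta\alpha})}$, and in $\Lambda$ respectively. A direct element-chase then shows that the quotient sequence
\[
\ovl{P}'_\sigma/F_M \ra P_\sigma^{\sigma(\tau_\Id),a} \oplus \ovl{P}_\sigma^{\sigma(T_{w_0,\alpha\beta,\beta\alpha}),a} \ra \Lambda^a \ra 0
\]
of $\F[\rG]$-modules is exact at the two rightmost terms, with the potential failure of injectivity of the first map measured by the kernel $(\ovl{P}'_\sigma \cap (F_1 \oplus F_2))/F_M$. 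The identification $M_\infty(\ovl{P}_\sigma^{\sigma(T_{\Id,w_0,\alpha\beta,\beta\alpha}),a}) \cong M_\infty(\ovl{P}'_\sigma/F_M)$ follows by applying the exact functor $M_\infty$ to the snake-style diagram produced by the inclusion $\ovl{P}_\sigma^{\sigma(T_{\Id,w_0,\alpha\beta,\beta\alpha})} \subseteq \ovl{P}'_\sigma$ (both containing $F_M$), using the vanishing $M_\infty(\ovl{P}'_\sigma/\ovl{P}_\sigma^{\sigma(T_{\Id,w_0,\alpha\beta,\beta\alpha})}) = 0$ established in Lemma \ref{lemma:4type}.

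The main obstacle is then verifying that $M_\infty$ also kills the kernel $(\ovl{P}'_\sigma \cap (F_1 \oplus F_2))/F_M$. I plan to split this defect into two parts: the image in $\ovl{P}'_\sigma/\ovl{P}_\sigma^{\sigma(T_{\Id,w_0,\alpha\beta,\beta\alpha})}$, which is automatically killed by $M_\infty$ by the proof of Lemma \ref{lemma:4type}; and the portion contained inside $\ovl{P}_\sigma^{\sigma(T_{\Id,w_0,\alpha\beta,\beta\alpha})}$, whose vanishing after $M_\infty$ should follow from a direct ideal-theoretic computation in $R_\infty(T_{\Id,w_0,\alpha\beta,\beta\alpha})$ following the pattern of Lemma \ref{lemma:ideal4types} and the presentations of \S \ref{subsub:4types}. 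Once both vanishings are in hand, the exact functor $M_\infty$ applied to the displayed sequence yields the desired short exact sequence of $R_\infty$-modules.
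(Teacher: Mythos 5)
Your overall strategy — start from a three-term exact sequence of $\F[\rG]$-modules from Lemma \ref{lemma:4type}, pass to $\Fil^{3_i}$-cokernels, and apply the exact functor $M_\infty$ — is the right shape, but the choice of starting sequence and the way you plan to handle the resulting injectivity defect diverge from the paper and leave a genuine gap.

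\textbf{Choice of sequence.} You begin from \eqref{eq:exact_seq_broom}, $0\ra\ovl{P}'_\sigma\ra P_\sigma^{\sigma(\tau_\Id)}\oplus\ovl{P}_\sigma^{\sigma(T_{w_0,\alpha\beta,\beta\alpha})}\ra\Lambda\ra 0$. The paper instead starts from the sequence $0\ra\ovl{P}_\sigma^{\sigma(T_{\Id,w_0,\alpha\beta,\beta\alpha})}\ra P_\sigma^{\sigma(\tau_\Id)}\oplus\ovl{P}_\sigma^{\sigma(T_{w_0,\alpha\beta,\beta\alpha})}\ra C\ra 0$ coming from Proposition \ref{lemma:fusion}. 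This is not a cosmetic difference. With the $C$-sequence, the left-hand term is exactly $\ovl{P}_\sigma^{\sigma(T_{\Id,w_0,\alpha\beta,\beta\alpha})}$, so its $\Fil^{3_i}$-cokernel is exactly $\ovl{P}_\sigma^{\sigma(T_{\Id,w_0,\alpha\beta,\beta\alpha}),a}$ with no further identification needed. Moreover, because \emph{both} $P_\sigma^{\sigma(\tau_\Id)}\ra C$ and $\ovl{P}_\sigma^{\sigma(T_{w_0,\alpha\beta,\beta\alpha})}\ra C$ are factorizations of the same map $P_\sigma\onto C$, the snake lemma applied to the commutative ladder with top row $0\ra\Fil^{3_i}P_\sigma\ra\Fil^{3_i}P_\sigma\oplus\Fil^{3_i}P_\sigma\ra\Fil^{3_i}P_\sigma\ra 0$ has surjective $\ker(\text{middle})\ra\ker(\text{right})$, so the boundary map vanishes and the cokernel sequence is already short exact as $\F[\rG]$-modules. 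Applying $M_\infty$ then only requires the observation $M_\infty(C^a)\cong M_\infty(\Lambda^a)$, which follows from $M_\infty(C)\cong M_\infty(\Lambda)$.

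\textbf{The gap in your plan.} You correctly identify that the kernel of $\ovl{P}'_\sigma/F_M\ra P_\sigma^{\sigma(\tau_\Id),a}\oplus\ovl{P}_\sigma^{\sigma(T_{w_0,\alpha\beta,\beta\alpha}),a}$ is $(\ovl{P}'_\sigma\cap(F_1\oplus F_2))/F_M$, and your split of this kernel into the piece mapping into $\ovl{P}'_\sigma/\ovl{P}_\sigma^{\sigma(T_{\Id,w_0,\alpha\beta,\beta\alpha})}$ and the piece inside $\ovl{P}_\sigma^{\sigma(T_{\Id,w_0,\alpha\beta,\beta\alpha})}/F_M$ is sound — and the former is killed by $M_\infty$ for exactly the reason you give. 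The problem is the latter. The piece inside $\ovl{P}_\sigma^{\sigma(T_{\Id,w_0,\alpha\beta,\beta\alpha})}/F_M$ is precisely $\ker\bigl(\ovl{P}_\sigma^{\sigma(T_{\Id,w_0,\alpha\beta,\beta\alpha}),a}\ra P_\sigma^{\sigma(\tau_\Id),a}\oplus\ovl{P}_\sigma^{\sigma(T_{w_0,\alpha\beta,\beta\alpha}),a}\bigr)$, and the paper shows this is \emph{zero} as an $\F[\rG]$-module — no patching is required. Your plan to kill it \emph{after} applying $M_\infty$ by a ``direct ideal-theoretic computation'' is not the right tool: the ideal computations in \S\ref{subsec:idealrelations} and \S\ref{subsub:4types} are about $\Tor$'s and scheme-theoretic intersections of prescribed ideals, and using them presupposes you already control the $\F[\rG]$-module whose patched module you are computing; but that module is precisely the kernel you are trying to show is negligible. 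The intended observation is purely representation-theoretic: both components of the middle term of the $C$-sequence surject onto $C$ through $P_\sigma$, which makes the snake lemma's connecting map vanish. Without this — or some equivalent argument that the map $\ovl{P}_\sigma^{\sigma(T_{\Id,w_0,\alpha\beta,\beta\alpha}),a}\ra P_\sigma^{\sigma(\tau_\Id),a}\oplus\ovl{P}_\sigma^{\sigma(T_{w_0,\alpha\beta,\beta\alpha}),a}$ is injective — your proof does not close.
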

\begin{proof}

We have a commutative diagram 
\[
\begin{tikzcd}
0 \arrow[r] 
& \Fil^{3_i}P_\sigma\arrow[r] \arrow[d]
& \Fil^{3_i}P_\sigma\oplus \Fil^{3_i}P_\sigma \arrow[r] \arrow[d]
& \Fil^{3_i}P_\sigma \arrow[r] \arrow[d]
& 0 \\
0 \arrow[r]
& \ovl{P}_\sigma^{\sigma(T_{\Id,w_0,\alpha\beta,\beta\alpha})} \arrow[r]
&P_\sigma^{\sigma(\tau_\Id)} \oplus \ovl{P}_\sigma^{\sigma(T_{w_0,\alpha\beta,\beta\alpha})} \arrow[r]
& C \arrow[r]
& 0
\end{tikzcd}
\]
where the rows are exact, the bottom row is as in Lemma \ref{lemma:4type}, and the top row has nonzero maps given by diagonal and difference maps. 
Since $\ker(\Fil^{3_i}P_\sigma\oplus \Fil^{3_i}P_\sigma)\ra \ker(\Fil^{3_i}P_\sigma)$ is surjective (as both $P_\sigma^{\sigma(\tau_\Id)}\ra C$, $\ovl{P}_\sigma^{\sigma(T_{w_0,\alpha\beta,\beta\alpha})}\ra C$ are factorizations of $P_\sigma\onto C$), the snake lemma furnishes an exact sequence 
\[
0 \ra \ovl{P}_\sigma^{\sigma(T_{\Id,w_0,\alpha\beta,\beta\alpha}),a} \ra P_\sigma^{\sigma(\tau_\Id),a} \oplus \ovl{P}_\sigma^{\sigma(T_{w_0,\alpha\beta,\beta\alpha}),a} \ra C^a \ra 0
\]
where $C^a$ denotes the cokernel of the natural map $\Fil^{3_i}P_\sigma \ra C$. 
The result follows by applying $M_\infty(-)$ and noting that the natural map $M_\infty(C) \ra M_\infty(\Lambda)$ is an isomorphism by the proof of Lemma \ref{lemma:4type}. 
\end{proof}

The following is the main result of the subsection. 

\begin{lemma}\label{lemma:tor}
Recall from \eqref{eqn:Minfty'} the definition of $M_\infty'(-)$ with respect to $T_{\Id,w_0,\alpha\beta,\beta\alpha}$. 
The image of $\Tor_1(\F,M_\infty'(\ovl{P}_\sigma^{\sigma(T_{\Id,w_0,\alpha\beta,\beta\alpha}),a})) \ra \Tor_1(\F,M_\infty'(\Lambda^a))$ is the intersection of the images of the maps
\[
\Tor_1^{S_\infty^\Box}(\F,M_\infty'(\ovl{P}_\sigma^{\sigma(T_{w_0,\alpha\beta,\beta\alpha}),a})) \ra \Tor_1^{S_\infty^\Box}(\F,M_\infty'(\Lambda^a))
\]
and
\[
\Tor_1^{S_\infty^\Box}(\F,M_\infty'(P_\sigma^{\sigma(\tau_\Id),a})) \ra \Tor_1^{S_\infty^\Box}(\F,M_\infty'(\Lambda^a)).
\]
\end{lemma}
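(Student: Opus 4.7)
The plan is to apply Lemma \ref{lemma:CAfusion}\ref{item:torintersect} directly to the short exact sequence produced by Lemma \ref{lemma:aexact}, working with $M_\infty'(-)$ taken with respect to $T = T_{\Id,w_0,\alpha\beta,\beta\alpha}$. Applying this exact functor to the sequence in Lemma \ref{lemma:aexact} yields
\[
0 \to M_\infty'(\ovl{P}_\sigma^{\sigma(T_{\Id,w_0,\alpha\beta,\beta\alpha}),a}) \to M_\infty'(P_\sigma^{\sigma(\tau_\Id),a}) \oplus M_\infty'(\ovl{P}_\sigma^{\sigma(T_{w_0,\alpha\beta,\beta\alpha}),a}) \to M_\infty'(\Lambda^a) \to 0,
\]
where (by the construction in Proposition \ref{lemma:fusion} traced through the proof of Lemma \ref{lemma:aexact}) the second map is the difference of the natural surjections induced by the quotient maps from $\ovl{P}_\sigma^{\sigma(T_{\Id,w_0,\alpha\beta,\beta\alpha}),a}$.

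Next I will verify that the three terms on the right are cyclic over $S_\infty^\Box$, so the sequence has the shape $0 \to N \to R/I \oplus R/J \to R/K \to 0$ required by Lemma \ref{lemma:CAfusion}. The cyclicity of $M_\infty'(P_\sigma^{\sigma(\tau_\Id),a})$ follows because it is a quotient of $M_\infty'(P_\sigma^{\sigma(\tau_\Id)})$, which is cyclic by \cite[Theorem 5.3.1]{GL3Wild}; the cyclicity of $M_\infty'(\ovl{P}_\sigma^{\sigma(T_{w_0,\alpha\beta,\beta\alpha}),a})$ follows from Lemma \ref{lemma:3type} in the same way; and the cyclicity of $M_\infty'(\Lambda^a)$ follows from the identification of the annihilator of $M_\infty'(\Lambda)$ with $(\tld{I}_\Lambda,p)$ established inside the proof of Lemma \ref{lemma:4type}.

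The essential input is the cyclicity of $N \defeq M_\infty'(\ovl{P}_\sigma^{\sigma(T_{\Id,w_0,\alpha\beta,\beta\alpha}),a})$. By Proposition \ref{prop:multiDL} the kernel of the map
\[
P_\sigma \to P_\sigma^{\{R_{\alpha\beta},R_{w_0}\}} \oplus P_\sigma^{\{R_{w_0},R_{\beta\alpha}\}} \oplus P_\sigma^{R_\Id}
\]
is contained in $\Fil^{3_i} P_\sigma$, so the natural surjection $P_\sigma^a = P_\sigma/\Fil^{3_i}P_\sigma \twoheadrightarrow \ovl{P}_\sigma^{\sigma(T_{\Id,w_0,\alpha\beta,\beta\alpha}),a}$ is an isomorphism. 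Consequently $N \cong M_\infty'(P_\sigma^a)$, which is cyclic by Proposition \ref{prop:3layer}. With all four $M_\infty'$-modules now realized as cyclic quotients of $S_\infty^\Box$ by ideals contained in each other in the required way, Lemma \ref{lemma:CAfusion}\ref{item:torintersect} applies and gives exactly the intersection statement of the lemma.

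The main obstacle in this argument has already been resolved in the preceding subsections: it is precisely the cyclicity of $M_\infty'(\ovl{P}_\sigma^{\sigma(T_{\Id,w_0,\alpha\beta,\beta\alpha}),a})$, whose proof rests on the multi-type ideal intersection computations of Lemmas \ref{lemma:ideal3types} and \ref{lemma:ideal4types} together with the combinatorial input of Proposition \ref{prop:multiDL}. Once this is in hand, the remainder of the proof of Lemma \ref{lemma:tor} is a formal application of the $\Tor$-exact sequence encoded in Lemma \ref{lemma:CAfusion}.
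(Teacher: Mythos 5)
Your proof is correct and takes essentially the same route as the paper: the core step is applying Lemma \ref{lemma:CAfusion}\ref{item:torintersect} to the short exact sequence of Lemma \ref{lemma:aexact}, and the key hypothesis to verify is the cyclicity of $M_\infty'(\ovl{P}_\sigma^{\sigma(T_{\Id,w_0,\alpha\beta,\beta\alpha}),a})$. The paper obtains that cyclicity directly by observing that this module is a quotient of the cyclic module $M_\infty'(\ovl{P}_\sigma^{\sigma(T_{\Id,w_0,\alpha\beta,\beta\alpha})})$ (Lemma \ref{lemma:4type}); you instead invoke Proposition \ref{prop:multiDL} to identify $\ovl{P}_\sigma^{\sigma(T_{\Id,w_0,\alpha\beta,\beta\alpha}),a}$ with $P_\sigma^a$ and then cite Proposition \ref{prop:3layer}, which itself rests on exactly the same two inputs, so the two derivations are equivalent. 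Your explicit verification that the middle and right terms are also cyclic (so that the sequence has the required shape $0\to N\to R/I\oplus R/J\to R/K\to 0$ with the difference map) is a necessary detail that the paper leaves implicit, and it is correctly handled.
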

\begin{proof}
The result follows from combining Lemmas \ref{lemma:aexact} and \ref{lemma:CAfusion} using that $M_\infty(\ovl{P}_\sigma^{\sigma(T_{\Id,w_0,\alpha\beta,\beta\alpha}),a})$ is a cyclic $R_\infty$-module by Lemma \ref{lemma:4type} so that $M_\infty'(\ovl{P}_\sigma^{\sigma(T_{\Id,w_0,\alpha\beta,\beta\alpha}),a})$ is a cyclic $S_\infty^\Box$-module. 
\end{proof}

\subsubsection{Noncyclic patched modules}
\label{subsub:non-cyclic}
Recall that at the beginning of \S \ref{sec:pf:lemma:broom} we have fixed $\lambda\in X_1(\un{T})$ such that $\sigma=F(\lambda)\in W^?(\rhobar)$.
Throughout this subsection we let $\tau^{\textnormal{\tiny{$B$}}}_{\min}$ be the minimal tame inertial $L$-parameter of $F(\sum_{j}\lambda_{j,B})$ with respect to $\rhobar$ in the sense of \cite[Remark 3.5.10]{LLLM2}.
Note that, up to changing lowest alcove presentation of $\rhobar$, we can and do assume that $t_{-\un{1}}\tld{w}(\rhobar,\tau^{\textnormal{\tiny{$B$}}}_{\min})_j=t_{w_0\eta}$ or $t_{-\un{1}}\tld{w}(\rhobar,\tau^{\textnormal{\tiny{$B$}}}_{\min})_j=w_0$, for all $j\in\cJ$.

In this subsection, let $T$ be the set of tame inertial types $\tau$ with $\tld{w}(\rhobar,\tau)_j = w_jw_0\tld{w}(\rhobar,\tau^{\textnormal{\tiny{$B$}}}_{\min})_j$ with 
{$w_j \in \{\Id, \alpha\beta,\beta\alpha,w_0, t_{-\un{1}}t_{w_0\eta} \}$ if $t_{-\un{1}}\tld{w}(\rhobar,\tau^{\textnormal{\tiny{$B$}}}_{\min})_j=w_0$, and  $w_j \in \{\Id, \alpha\beta,\beta\alpha,w_0\}$ if $t_{-\un{1}}\tld{w}(\rhobar,\tau^{\textnormal{\tiny{$B$}}}_{\min})_j=t_{w_0\eta}$.
}
With this choice of $T$, we define $M_\infty'(-)$ as in \eqref{eqn:Minfty'} (with the implicit assertion that the $R_\infty$-action on $M_\infty(-)$ factors through $R_\infty(T)$). 
We also let $a$ denote a tuple $(a_j)_{j\in \cJ}$ with $a_j \subset \{B,E_o,F_o,E_s,F_s\}$ if $j\in A(\sigma)$ and $a_j = \widehat{1}$ or $\widehat{2}$ if $j\notin A(\sigma)$. 
For $j \in A(\sigma)$, $b_j$ will denote an element of $\{B,E_o,F_o,E_s,F_s\}$, $I_j^{b_j}$ is as defined in \S \ref{subsec:specialfiber}, and $M_j^{b_j} \defeq S^{(j)}/I_j^{b_j}$. 
From \cite[Lemma 5.3.3]{GL3Wild}, \cite[Lemma 3.6.2]{LLLM2} we deduce that $M_\infty'(\sigma) \cong \ovl{R}_\infty'(T)/(\fP_\sigma)$ where $\fP_\sigma = \sum_j\fP_\sigma^{(j)}S$ for prime ideals $\fP_\sigma^{(j)} \subset S^{(j)}$, and $\fP_\sigma$ is the pullback to $S$ of the ideal $\ovl{\fP}_{\sigma}\subset S/I_{T,\nabla_\infty}$ defined in \S \ref{sub:SF}. 
For $j\notin A(\sigma)$, let $M_j^{\widehat{1}} = S^{(j)}/\fP_\sigma^{(j)}$. 

\begin{prop}
\label{prop:bristle}
\begin{enumerate}
\item \label{item:bristle1} Let $b=(b_j)_{j\in \cJ}$ with $b_j \in \{B,E_o,F_o,E_s,F_s\}$ if $j\in A(\sigma)$ and $b_j = \widehat{1}$ if $j\notin A(\sigma)$. 
Then $M_\infty'(P_\sigma^b) \cong (\widehat{\otimes}_j M_j^{b_j}) \widehat{\otimes}_{\tld{S}} R_\infty'(T)$ with $M_j^{b_j}$ defined as above. 
\item \label{item:bristle2} For $j \notin A(\sigma)$, there are ideals $I_j^{\widehat{2}}$ such that for any $b=(b_j)_{j\in \cJ}$ with $b_j \in \{B,E_o,F_o,E_s,F_s\}$ if $j\in A(\sigma)$ and $b_j = \widehat{2}$ if $j\notin A(\sigma)$, $M_\infty'(P_\sigma^b) \cong (\widehat{\otimes}_j M_j^{b_j}) \widehat{\otimes}_{\tld{S}} R_\infty'(T)$ where $M_j^{b_j} \defeq S^{(j)}/I_j^{\widehat{2}}$ if $j \notin A(\sigma)$. 
\end{enumerate}
\end{prop}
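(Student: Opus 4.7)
The plan is to compute $M_\infty'(P_\sigma^b)$ by realizing $P_\sigma^b$ as the mod $p$ reduction of an explicit quotient of a sum of lattices in Deligne--Lusztig representations and then reading off the annihilator via single-type patching results combined with the multitype local model theory developed in \S \ref{subsec:MTdef}. Since $P_\sigma^b$ has simple cosocle $\sigma$, Nakayama's lemma and the identification $M_\infty'(\sigma) \cong \ovl{R}_\infty'(T)/(\fP_\sigma)$ recalled just before the statement imply that $M_\infty'(P_\sigma^b)$ is a cyclic $R_\infty'(T)$-module; the content of the proposition is therefore to identify its annihilator $J_b\subset R_\infty'(T)$ as the pullback of $\sum_{j\in A(\sigma)} I_j^{b_j} S + \sum_{j\notin A(\sigma)} \fP_\sigma^{(j)}S$.

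To this end, I would realize $P_\sigma^b$ as a quotient of $\tld P_\sigma^{\sigma(T_b)}/p$, where $T_b\subset T$ consists of those tame inertial $L$-parameters whose lowest alcove presentation at each $j\in A(\sigma)$ encodes the edge labelled $b_j$. By \cite[Theorem 5.3.1]{GL3Wild} and Remark \ref{rmk:compatibility:multitype}, $M_\infty'(\tld P_\sigma^{\sigma(T_b)})$ is identified with the image of $R_\infty'(T)\to \prod_{\tau\in T_b}R_\infty'(\tau)$. Combining Proposition \ref{prop:big_diagram} with Proposition \ref{prop:special_fiber}, Lemma \ref{lem:intsc}, and an iterated application of Lemma \ref{lemma:CAfusion} (whose hypotheses are provided by the ideal relations of \S \ref{subsec:idealrelations}) identifies $J_b$ with the claimed sum of ideals; this yields part (1).

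For part (2), the argument is parallel except that the contribution at each $j\notin A(\sigma)$ is no longer just $\fP_\sigma^{(j)}$ but a larger ideal reflecting the additional radical layer at embedding $j$. I would \emph{define} $I_j^{\widehat 2}\subset S^{(j)}$ to be the $S^{(j)}$-factor of the annihilator $J_b$ obtained from this computation, with independence of the remaining coordinates of $b$ forced by the embedding-by-embedding tensor-product structure of $R_\infty'(T)$ afforded by the local model theory of \S \ref{subsec:MTdef}. The main obstacle will be matching the prime-ideal decomposition of the mod $p$ multitype deformation ring (Proposition \ref{prop:special_fiber} and Lemma \ref{lem:intsc}) with the Jordan--H\"older structure of $P_\sigma^b$ coming from the representation theory of \S \ref{subsec:Gproj}; once this is arranged, the subsequent ideal gluing is routine commutative algebra.
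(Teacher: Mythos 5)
Your opening claim that ``since $P_\sigma^b$ has simple cosocle $\sigma$, Nakayama's lemma and $M_\infty'(\sigma)\cong \ovl{R}_\infty'(T)/(\fP_\sigma)$ imply $M_\infty'(P_\sigma^b)$ is cyclic'' does not hold as stated. Exactness of $M_\infty'$ applied to $P_\sigma^b\onto\sigma$ only gives a \emph{surjection} $M_\infty'(P_\sigma^b)\onto M_\infty'(\sigma)$; Nakayama would upgrade this to an isomorphism only if you already knew $M_\infty'(P_\sigma^b)\otimes\F$ was $1$-dimensional, which is precisely the nontrivial content. Indeed $P_\sigma$ itself has simple cosocle $\sigma$, yet $M_\infty(P_\sigma)$ is minimally generated by $3^{\#A(\sigma)}$ elements (Theorem \ref{thm:mingen}). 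So cyclicity is a genuine claim that needs an argument, and this is where the real work of the proposition lies.

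Beyond that, your route diverges from the paper in a way that trades simplicity for extra verification. You propose realizing $P_\sigma^b$ as a quotient of $\tld P_\sigma^{\sigma(T_b)}/p$ for a subset $T_b\subset T$ and then iterating Lemma \ref{lemma:CAfusion} to compute the annihilator. Applying Lemma \ref{lemma:CAfusion} would in fact \emph{produce} cyclicity as a byproduct, but its hypothesis is the ideal-sum condition $I+J=K$, which in this multitype setting is exactly the kind of delicate statement proved only for specific configurations in \S\ref{subsec:idealrelations} (Lemmas \ref{lemma:ideal3types}, \ref{lemma:ideal4types}, etc.). You gesture at ``the ideal relations of \S\ref{subsec:idealrelations}'' but the particular sums needed for an arbitrary $T_b$ are not proved there and would need to be established --- this is not routine. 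The paper sidesteps the entire multitype fusion: it observes that because each $b_j$ is a \emph{single} element of $\{B,E_o,F_o,E_s,F_s\}$, the set $\JH(P_\sigma^b)\cap W^?(\rhobar)$ already fits inside $\JH(\ovl\sigma(\tau))$ for a single well-chosen $\tau\in T$, uses \cite[Proposition 5.3.2]{GL3Wild} to exhibit the corresponding quotient $Q$ of $P_\sigma^{\sigma(\tau)}$, and invokes Corollary \ref{cor:uniquewedge} (a uniqueness statement you haven't used) to produce a surjection $P_\sigma^b\onto Q$ whose kernel contains no modular weights. Then $M_\infty(P_\sigma^b)\cong M_\infty(Q)$, cyclicity comes for free from the single-type result \cite[Theorem 5.3.1]{GL3Wild}, and the annihilator is read off from \cite[Lemma 3.6.2]{LLLM2}. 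I would rework the proof to find the single type $\tau$ and cite Corollary \ref{cor:uniquewedge}, rather than attempting a multitype gluing whose hypotheses would need fresh proofs.
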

\begin{proof}
We prove \eqref{item:bristle2} as \eqref{item:bristle1} is similar but easier. 
There exists $\tau\in T$ with $\tld{w}(\rhobar,\tau)_j = w_0\tld{w}(\rhobar,\tau^{\textnormal{\tiny{$B$}}}_{\min})_j$ for $j\notin A(\sigma)$ and $\tld{w}(\rhobar,\tau)_j \neq w_0\tld{w}(\rhobar,\tau^{\textnormal{\tiny{$B$}}}_{\min})_j$ for $j\in A(\sigma)$ such that $\JH(P_\sigma^b) \cap W^?(\rhobar) \subset \JH(\ovl{\sigma}(\tau))$. 
By \cite[Proposition 5.3.2]{GL3Wild}, there is a quotient $Q$ of $P_\sigma^{\sigma(\tau)}$ such that $\JH(Q) = \JH(P_\sigma^b) \cap W^?(\rhobar)$. 
By Corollary \ref{cor:uniquewedge}, there is a surjection $P_\sigma^b \onto Q$. 
Since $\JH(Q) = \JH(P_\sigma^b) \cap W^?(\rhobar)$ and $P_\sigma^b$ is multiplicity free, the induced map $M_\infty(P_\sigma^b) \onto M_\infty(Q)$ is an isomorphism. 
In particular the $R_\infty$-action on $M_\infty(P_\sigma^b)$ factors through $R_\infty(T)$. 
As $M_\infty'(Q)$ is a cyclic $R_\infty'(T)$-module by \cite[Theorem 5.3.1]{GL3Wild}, so is $M_\infty'(P_\sigma^b)$. 
The result now follows from \cite[Lemma 3.6.2]{LLLM2}. 
\end{proof}

If $j\in A(\sigma)$, and $\emptyset\neq a_j \subset \{B,E_o,F_o,E_s,F_s\}$, define an $S^{(j)}$-module $M_j^{a_j}$ by the exact sequence 
\begin{equation}\label{eqn:wedgepresentation}
0 \ra M_j^{a_j} \ra \oplus_{b_j \in a_j} S^{(j)}/I_j^{b_j} \ra  (\oplus_{b_j \in a_j} S^{(j)}/\fP_\sigma^{(j)})/\Delta(S^{(j)}/\fP_\sigma^{(j)}) \ra 0,
\end{equation} 
where $I_j^{b_j}$ is as in \S \ref{subsec:specialfiber}, the third map is induced by the sum of the natural projections, and $\Delta(S^{(j)}/\fP_\sigma^{(j)})$ denotes the diagonally embedded submodule.

\begin{lemma}\label{lemma:gluelower}
Let $a$ be $(a_j)_{j\in \cJ}$ with $a_j \subset \{B,E_o,F_o,E_s,F_s\}$ if $j\in A(\sigma)$ and $a_j = \widehat{1}$ for all $j\notin A(\sigma)$ or $a_j = \widehat{2}$ for all $j\notin A(\sigma)$.
Then $M_\infty'(P^a_\sigma) \cong (\widehat{\otimes}_j M_j^{a_j}) \widehat{\otimes}_{\tld{S}} R_\infty'(T)$.
\end{lemma}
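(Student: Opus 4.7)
The plan is to proceed by induction on the integer
\[
n(a) \defeq \sum_{j \in A(\sigma)} (|a_j| - 1)_+,
\]
where we view $\widehat{1}$ and $\widehat{2}$ as elements of size one. The base case $n(a) = 0$ is exactly the situation of Proposition \ref{prop:bristle} (treating singletons in $\{B, E_o, F_o, E_s, F_s\}$ via item \eqref{item:bristle1} and $a_j = \widehat{1}, \widehat{2}$ for $j \notin A(\sigma)$ via items \eqref{item:bristle1}--\eqref{item:bristle2}), so the identification with $(\widehat{\otimes}_j M_j^{a_j}) \widehat{\otimes}_{\tld{S}} R_\infty'(T)$ holds.

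For the inductive step, fix $i \in A(\sigma)$ with $|a_i| \geq 2$. Applying the exact functor $M_\infty'$ to the short exact sequence of Proposition \ref{prop:lowerglue} and using the inductive hypothesis to identify $M_\infty'(P_\sigma^{a_X})$ and $M_\infty'(P_\sigma^{a_\emptyset})$, we obtain a short exact sequence
\begin{equation}\label{eqn:planMinfty}
0 \to M_\infty'(P_\sigma^a) \to \bigoplus_{X \in a_i} (S^{(i)}/I_i^X) \widehat{\otimes}_{S^{(i)}} N \to \Big(\bigoplus_{X \in a_i}(S^{(i)}/\fP_\sigma^{(i)}) \widehat{\otimes}_{S^{(i)}} N\Big)\big/ \Delta \to 0,
\end{equation}
where $N \defeq \big(\widehat{\otimes}_{j \neq i} M_j^{a_j}\big) \widehat{\otimes}_{\tld{S}} R_\infty'(T)$, viewed as an $S^{(i)}$-module via the natural map $S^{(i)} \hookrightarrow \tld{S}\to R_\infty'(T)$, and the rightmost map is the difference of the natural projections. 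On the other hand, tensoring the defining sequence \eqref{eqn:wedgepresentation} of $M_i^{a_i}$ over $S^{(i)}$ with $N$ produces a right-exact sequence with the same middle and right terms as \eqref{eqn:planMinfty} and with the same rightmost map (by functoriality, this is the one induced by $S^{(i)}/I_i^X \twoheadrightarrow S^{(i)}/\fP_\sigma^{(i)}$). Hence there is a canonical surjection
\[
M_i^{a_i} \widehat{\otimes}_{S^{(i)}} N \twoheadrightarrow M_\infty'(P_\sigma^a)
\]
realizing $M_\infty'(P_\sigma^a)$ as the image of the tensored map.

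The main obstacle is establishing the injectivity of $M_i^{a_i} \widehat{\otimes}_{S^{(i)}} N \hookrightarrow \oplus_{X} (S^{(i)}/I_i^X) \widehat{\otimes}_{S^{(i)}} N$, which by the long exact sequence of $\Tor$ amounts to the vanishing
\[
\Tor_1^{S^{(i)}}\big(S^{(i)}/\fP_\sigma^{(i)}, N\big) = 0.
\]
To handle this, the plan is to exploit the product-like structure of the local model: the ring $\tld{S}$ is the completed tensor product $\widehat{\otimes}_{j} \tld{S}^{(j)}$ with variables in distinct embeddings being algebraically independent, and $R_\infty'(T)$ is formally smooth over $\tld{S}^{\nabla_\infty, \Box}$. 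Thus $N$ admits a description as a formally smooth extension of $\big(\widehat{\otimes}_{j \neq i} M_j^{a_j}\big) \widehat{\otimes}_\F S^{(i)}$, in which the generators of $\fP_\sigma^{(i)}$ given in Table \ref{Table:components} form a regular sequence on $N$ (they involve only the variables of $S^{(i)}$, acting on $N$ through the free factor $S^{(i)}$). This regular-sequence property yields the required $\Tor_1$-vanishing and closes the induction. I expect this last step --- verifying the regular-sequence property on the infinitesimal level while only having product structure of $\tld S$ and not of the multitype ring $\tld S/\tld I_{T, \nabla_\infty}$ itself --- to be the technically delicate point, in the spirit of the ``infinitesimal transversality'' philosophy emphasized in the introduction.
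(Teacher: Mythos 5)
Your inductive skeleton --- applying $M_\infty'$ to the exact sequence of Proposition \ref{prop:lowerglue}, using Proposition \ref{prop:bristle} as the base case, and matching the result against the defining sequence \eqref{eqn:wedgepresentation} --- agrees with the paper. However, the two places you flag as delicate are precisely where the argument actually fails as written. First, the claim that the middle-to-right map in \eqref{eqn:planMinfty} agrees ``by functoriality'' with the one induced by $S^{(i)}/I_i^X \twoheadrightarrow S^{(i)}/\fP_\sigma^{(i)}$ is not automatic: the inductive identification of $M_\infty'(P_\sigma^{a_X})$ with $(\widehat{\otimes}_j M_j^{a_{X,j}}) \widehat{\otimes}_{\tld{S}} R_\infty'(T)$ is only an isomorphism, not a canonical one, so a priori the surjections in the patched sequence could differ from the expected ones by automorphisms of the source and target. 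The paper closes this gap by (i) using scheme-theoretic support considerations and the absence of embedded primes in a maximal Cohen--Macaulay module to show the map factors through the expected one up to an automorphism of the codomain, and (ii) invoking Lemma \ref{lemma:unitlift} to absorb that automorphism into the domain. This rigidification step cannot be skipped.

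Second, your reduction to the vanishing of $\Tor_1^{S^{(i)}}(S^{(i)}/\fP_\sigma^{(i)}, N)$ rests on the assertion that $N$ has a ``free factor $S^{(i)}$'' on which the generators of $\fP_\sigma^{(i)}$ act regularly, but this is false: the $S^{(i)}$-module structure of $N$ is inherited through $R_\infty'(T)$, which is formally smooth over $\tld{S}^{\nabla_\infty} = \tld{S}/\tld{I}_{T,\nabla_\infty}$ but \emph{not} over $\tld{S}^{(i)}$, since the multitype ideal $\tld{I}_{T,\nabla_\infty}$ involves the variables of $\tld{S}^{(i)}$ (already visible in the case $\#\cJ = 1$, where $\tld{S}^{\nabla_\infty}$ is a non-flat quotient of $\tld{S}^{(i)}$). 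Thus $N$ is not flat over $S^{(i)}$, and the Koszul/regular-sequence argument does not go through. The paper avoids this entirely by a different bookkeeping: the exact sequence \eqref{eqn:wedgepresentation} is first tensored against $\widehat{\otimes}_{j\neq i} M_j^{a_j}$ over $\F$ (exact because $\F$ is a field), and only then base-changed along $\tld{S} \to R_\infty'(T)$, which is exact on modules already killed by $\tld{I}_{T,\nabla_\infty}$ since it merely adjoins formal variables. Reformulating your $\Tor_1$ condition as flatness of $R_\infty'(T)$ over the multitype quotient rather than over $S^{(i)}$ is the fix.
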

\begin{proof}
We induct on $k\defeq \#\{j\in A(\sigma): \#a_j > 1\}$.
The case $k=0$ follows from Proposition \ref{prop:bristle}.
Now suppose that $k>0$ and $\#a_i > 1$ for $i \in A(\sigma)$.
For $X \in a_i$, let $a_X$ be the tuple with $a_{X,i} = X$ and $a_{X,j} = a_j$ for $j\neq i$.
Then Proposition \ref{prop:lowerglue} gives an exact sequence
\[0\ra P^a_\sigma \ra \oplus_{X\in a_i} P^{a_X}_\sigma \ra (\oplus_{X\in a_i} P^{a_\emptyset}_\sigma)/\Delta(P^{a_\emptyset}_\sigma) \ra 0,\]
where the third map is induced by the sum of the natural projections.
Since the $R_\infty$-action on $M_\infty(P_\sigma^{a_X})$  factors through $R_\infty(T)$ for each $X\in a_i$ by the inductive hypothesis, the same is true for $M_\infty(P_\sigma^a)$. 
This induces the exact sequence 
\[0\ra M_\infty'(P^a_\sigma) \ra \oplus_{X\in a_i} M_\infty'(P^{a_X}_\sigma) \ra (\oplus_{X\in a_i} M_\infty'(P^{a_\emptyset}_\sigma))/\Delta(M_\infty'(P^{a_\emptyset}_\sigma)) \ra 0.\]
By the induction hypothesis, the modules $M_\infty'(P^{a_X}_\sigma)$ and $M_\infty'(P^{a_\emptyset}_\sigma)$ are isomorphic to modules $(\widehat{\otimes}_j M^{a_{X,j}}_j)\widehat{\otimes}_{\tld{S}} R_\infty'(T)$ and $(\widehat{\otimes}_j M^{a_{\emptyset,j}}_j)\widehat{\otimes}_{\tld{S}} R_\infty'(T)$, respectively. 
Thus we have an exact sequence
\begin{equation}\label{eqn:gluelower}
0 \ra M_\infty'(P^a_\sigma) \ra \oplus_{X\in a_i}(\widehat{\otimes}_j M^{a_{X,j}}_j)\widehat{\otimes}_{\tld{S}} R_\infty'(T) 
\ra (\oplus_{X\in a_i} (\widehat{\otimes}_j M^{a_{\emptyset,j}}_j) \widehat{\otimes}_{\tld{S}} R_\infty'(T))/\Delta \ra 0
\end{equation}
where $\Delta$ is short for $\Delta((\widehat{\otimes}_j M^{a_{\emptyset,j}}_j) \widehat{\otimes}_{\tld{S}} R_\infty'(T))$. 

The third map of \eqref{eqn:gluelower} is induced by a sum of surjective maps 
\begin{equation}\label{eqn:summandgluelower}(\widehat{\otimes}_j M^{a_{X,j}}_j)\widehat{\otimes}_{\tld{S}} R'_\infty(T) 
\onto (\widehat{\otimes}_j M^{a_{\emptyset,j}}_j) \widehat{\otimes}_{\tld{S}} R'_\infty(T)
\end{equation} 
for each $X\in a_i$. 
By consideration of scheme-theoretic supports, \eqref{eqn:summandgluelower} factors through the surjection 
\[(\widehat{\otimes}_j M^{a_{X,j}}_j)\widehat{\otimes}_{\tld{S}} R'_\infty(T) \onto (\widehat{\otimes}_j M^{a_{\emptyset,j}}_j)\widehat{\otimes}_{\tld{S}} R'_\infty(T)\] 
induced by $M_i^{a_{X,i}} \onto M_i^{a_{\emptyset,i}}$. 
The resulting surjective endomorphism of the Cohen--Macaulay module $(\widehat{\otimes}_j M^{a_{\emptyset,j}}_j)\widehat{\otimes}_{\tld{S}} R'_\infty(T)$ must be an isomorphism since the kernel of the map must have support of smaller dimension by cycle considerations and the Cohen--Macaulay module $(\widehat{\otimes}_j M^{a_{\emptyset,j}}_j)\widehat{\otimes}_{\tld{S}} R'_\infty(T)$ cannot have embedded primes by \cite[Theorem 17.3]{matsumura}. 
We conclude that the map \eqref{eqn:summandgluelower} is, up to postcomposing with an automorphism of the codomain, induced by a surjection $M_i^{a_{X,i}} \onto M_i^{a_{\emptyset,i}}$. 
By Lemma \ref{lemma:unitlift} applied to $M^{a_{X,i}}_i \widehat{\otimes} \widehat{\otimes}_{j\neq i} M^{a_{X,j}}_j$ (using that $M_i^{a_{X,i}}$ and $M_i^{a_{\emptyset,i}}$ are cyclic), up to precomposing with an automorphism of the domain, \eqref{eqn:summandgluelower} is the map induced by a surjection $M_i^{a_{X,i}} \onto M_i^{a_{\emptyset,i}}$. 
Fixing an isomorphism $M_i^{a_{\emptyset,i}}\cong S^{(i)}/\fP_\sigma^{(i)}$, we can choose isomorphisms $M_i^{a_{X,i}} \cong S^{(i)}/I_i^X$ for each $X\in a_i$ so that the surjection $S^{(i)}/I_i^X \cong M_i^{a_{X,i}} \onto M_i^{a_{\emptyset,i}}\cong S^{(i)}/\fP_\sigma^{(i)}$ takes $1$ to $1$. 
\eqref{eqn:gluelower} is then obtained from \eqref{eqn:wedgepresentation} by taking completed tensor products over $\F$ and then applying $-\otimes_{\tld{S}} R_\infty'(T)$ which is exact here. 
(Each module is the completion of a module over a polynomial ring over $\F$. These completed tensor products are obtained by usual tensor product of these decompleted modules over $\F$ and then completion---each step is exact. Applying $-\otimes_{\tld{S}} R_\infty'(T)$ simply has the effect of adding formal variables.) 
The result now follows. 
\end{proof}

\begin{proof}[Proof of Lemma \ref{lemma:broom}\eqref{item:broom2}]
Let $(\widehat{2})$ be the tuple $(\widehat{2})_{j\in \cJ}$ and $b$ be the tuple $(b_j)_{j\in \cJ}$ with $b_j = \{B,E_o,E_s,F_o,F_s\}$ if $j\in A(\sigma)$ and $b_j = \widehat{2}$ otherwise. 
Then the kernel of the surjective map $P_\sigma^{(\widehat{2})} \ra P_\sigma^b$ has no modular Serre weights. 
This induces an isomorphism $M_\infty(P_\sigma^{(\widehat{2})}) \ra M_\infty(P_\sigma^b)$. 
By Lemma \ref{lemma:gluelower} and Proposition \ref{prop:lowermult}, $M_\infty'(P^b_\sigma)$ and thus $M_\infty(P^{(\widehat{2})}_\sigma)$ is minimally generated by $3^{\#A(\sigma)}$ elements. 

Let $c = (c_j)_{j\in \cJ}$ with $c_j = \widehat{2}$ if $j\in A(\sigma)$ and $c_j = \widehat{1}$ otherwise. 
A similar argument as in the previous paragraph implies that $M_\infty(P^c_\sigma)$ is minimally generated by $3^{\#A(\sigma)}$ elements. 
In particular, the natural surjection $M_\infty(P^{(\widehat{2})}_\sigma)/\fm \ra M_\infty(P^c_\sigma)/\fm$ is an isomorphism. 
For a tuple $a$ as in Lemma \ref{lemma:broom}\eqref{item:broom2}, the surjection $P^{(\widehat{2})}_\sigma \ra P^c_\sigma$ factors through $P^a_\sigma$ and Lemma \ref{lemma:broom}\eqref{item:broom2} follows. 
\end{proof}

We record the following result for use in \S \ref{sec:locality}. 

\begin{prop}
Let $a = (a_j)_{j\in \cJ}$ be the tuple with $a_j = \{B,E_s,F_s\}$ if $j\in A(\sigma)$ and $a_j = \widehat{1}$ otherwise. 
Then $M_\infty(P_\sigma^a)$ is minimally generated by $3^{\#A(\sigma)}$ elements. 
\end{prop}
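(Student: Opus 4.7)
The plan is to run essentially the same argument as in the proof of Lemma \ref{lemma:broom}\eqref{item:broom2}, but now invoking the second case of Proposition \ref{prop:lowermult} (the one with $a_j=\{B,F_s,E_s\}$) in place of the first. Concretely, let $T$ be the set of tame inertial $L$-parameters used in \S \ref{subsub:non-cyclic} and define the auxiliary patching functor $M_\infty'(-)$ relative to $T$ as in \eqref{eqn:Minfty'}, so that computing minimal numbers of generators of $M_\infty(P_\sigma^a)$ over $R_\infty$ is equivalent to computing them for $M_\infty'(P_\sigma^a)$ over $R_\infty'(T)$, since the structure map $R_\infty(T)\to R_\infty'(T)$ is formally smooth.

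First I would verify that the given tuple $a$ satisfies the hypotheses of Lemma \ref{lemma:gluelower}: for $j\in A(\sigma)$ one has $a_j=\{B,E_s,F_s\}\subset \{B,E_o,F_o,E_s,F_s\}$, and for $j\notin A(\sigma)$ one has $a_j=\widehat{1}$. Hence Lemma \ref{lemma:gluelower} applies and yields
\[
M_\infty'(P_\sigma^a)\;\cong\;\bigl(\widehat{\otimes}_{j}\, M_j^{a_j}\bigr)\,\widehat{\otimes}_{\tld{S}}\, R_\infty'(T),
\]
with $M_j^{a_j}$ the modules constructed in \S \ref{subsec:specialfiber} (for $j\in A(\sigma)$) and $M_j^{\widehat{1}}=S^{(j)}/\fP_\sigma^{(j)}$ (for $j\notin A(\sigma)$, via Proposition \ref{prop:bristle}\eqref{item:bristle1}).

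Next I would compute the minimal number of generators of each factor. For $j\notin A(\sigma)$ the module $M_j^{\widehat{1}}=S^{(j)}/\fP_\sigma^{(j)}$ is cyclic. For $j\in A(\sigma)$, our choice $a_j=\{B,E_s,F_s\}$ is exactly one of the two configurations covered by Proposition \ref{prop:lowermult}, which gives
\[
\dim_{\F}\bigl(M_j^{a_j}\otimes_{S^{(j)}} S^{(j)}/\fm_{S^{(j)}}\bigr)=3.
\]
Completing over $\F$ and then applying $-\widehat{\otimes}_{\tld{S}} R_\infty'(T)$ (which is an exact operation of extension of scalars along a formally smooth map, so preserves the residual dimension) yields
\[
\dim_{\F}\bigl(M_\infty'(P_\sigma^a)\otimes_{R_\infty'(T)} \F\bigr)\;=\; 3^{\#A(\sigma)}.
\]
Finally, transferring back to $R_\infty$ via formal smoothness of $R_\infty(T)\to R_\infty'(T)$, we conclude that $M_\infty(P_\sigma^a)$ is minimally generated by $3^{\#A(\sigma)}$ elements over $R_\infty$.

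There is no serious obstacle here: the proof is almost purely bookkeeping once Lemma \ref{lemma:gluelower} and Proposition \ref{prop:lowermult} are in hand. The only point that requires a moment's care is checking that the $R_\infty$-action on $M_\infty(P_\sigma^a)$ really factors through $R_\infty(T)$, which is the content of the proof of Proposition \ref{prop:bristle}\eqref{item:bristle1} and follows because all Jordan--H\"older factors of $P_\sigma^a$ lying in $W^?(\rhobar)$ are supported in Deligne--Lusztig representations labelled by types in $T$. Apart from this, the argument is a direct application of the infrastructure already set up in \S \ref{subsec:specialfiber} and \S \ref{subsub:non-cyclic}.
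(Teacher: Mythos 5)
Your proof is correct and follows essentially the same route as the paper, whose proof of this proposition is simply the one-line citation of Lemma \ref{lemma:gluelower} and Proposition \ref{prop:lowermult}; you have unpacked exactly the steps that citation summarizes (verifying the hypotheses of the gluing lemma, invoking the $\{B,E_s,F_s\}$ case of the $\Tor$ computation, and transferring the generator count across the formally smooth map), so there is nothing further to add.
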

\begin{proof}
This follows from Lemma \ref{lemma:gluelower} and Proposition \ref{prop:lowermult}. 
\end{proof}

\begin{proof}[Proof of Lemma \ref{lemma:broom}\eqref{item:bigbroom}]
If $A(\sigma) = \cJ$, then the result follows from Lemma \ref{lemma:broom}\eqref{item:broom2}. 
We now assume that $A(\sigma) \neq \cJ$. 
Recall from Proposition \ref{prop:finalglue} the exact sequence \eqref{eqn:finalglue}:
\[
0 \ra \ovl{P}_\sigma \ra \oplus_c P_\sigma^c \ra (\oplus_c \sigma)/\Delta(\sigma) \ra 0 
\]
where $c$ runs over tuples $(c_j)_{j\in \cJ}$ with $c_i = \widehat{3}$ for some $i \notin A(\sigma)$ and $c_j = \widehat{1}$ for all $j \neq i$ and the tuple $(c_j)_{j\in \cJ}$ with $j = \widehat{2}$ for all $j\in A(\sigma)$ and $j = \widehat{1}$ for all $j \notin A(\sigma)$, $\Delta(\sigma) \subset \oplus_c \sigma$ denotes the diagonally embedded copy, and the maps are the natural projections. 
Since the $R_\infty$-action on each $M_\infty(P_\sigma^c)$ factors through $R_\infty(T)$, the same is true for $M_\infty(\ovl{P}_\sigma)$.
Using Remark \ref{rmk:compatibility:multitype} we have the exact sequence
\begin{equation}\label{eqn:patchglue}
0 \ra M_\infty'(\ovl{P}_\sigma) \ra \oplus_c M_\infty'(P_\sigma^c) \ra (\oplus_c M_\infty'(\sigma))/\Delta(M_\infty'(\sigma)) \ra 0.
\end{equation}
For the tuples $c$ with $c_i = \widehat{3}$ for some $i \notin A(\sigma)$ and $c_j = \widehat{1}$ for all $j \neq i$, $M_\infty'(P_\sigma^c)$ is a cyclic module, i.e.~isomorphic to ${{S}^\Box_\infty}/I_i$ for some ideal $I_i$, by Proposition \ref{prop:3layer}. 
Let $M$ be $M_\infty'(P_\sigma^c)$ for the tuple $(c_j)_{j\in \cJ}$ with $j = \widehat{2}$ for all $j\in A(\sigma)$ and $j = \widehat{1}$ for all $j \notin A(\sigma)$. 
We can choose isomorphisms so that \eqref{eqn:patchglue} becomes
\[
0 \ra M_\infty'(\ovl{P}_\sigma) \ra M \oplus \oplus_{j\notin A(\sigma)} {{S}^\Box_\infty}/I_j \ra (\oplus_c {{S}^\Box_\infty}/(\fP_\sigma))/\Delta({{S}^\Box_\infty}/(\fP_\sigma)) \ra 0. 
\] 
We claim that the hypotheses of Lemma \ref{lemma:CAfinalglue} hold, from which we deduce that $M_\infty(\ovl{P}_\sigma)$ is minimally generated by the same number of elements as $M$ which is $3^{\#A(\sigma)}$ by Lemma \ref{lemma:broom}\eqref{item:broom2}. 

We now verify the hypotheses of Lemma \ref{lemma:CAfinalglue}. 
Let $V_j$ (resp.~$U$) be the image of the induced map
\[
\Tor^{{S}^\Box_\infty}_1(\F,{{S}^\Box_\infty}/I_j)\ra\Tor^{{S}^\Box_\infty}_1(\F,{{S}^\Box_\infty}/(\fP_\sigma))
\]
for $j \notin A(\sigma)$ (resp.~$\Tor^{{S}^\Box_\infty}_1(\F,M)\ra\Tor^{{S}^\Box_\infty}_1(\F,{{S}^\Box_\infty}/(\fP_\sigma))$). 
We need to show that for all $i\notin A(\sigma)$, 
\begin{equation}\label{eqn:bigglue}
V_i+U\cap \cap_{j\neq i} V_j = \Tor^{{S}^\Box_\infty}_1(\F,{{S}^\Box_\infty}/(\fP_\sigma)).
\end{equation} 
As the natural map $\oplus_{j\in \cJ} \Tor^{{S}^\Box_\infty}_1(\F,{S}^\Box_\infty/({\fP}^{(j)}_\sigma)) \ra \Tor^{{S}^\Box_\infty}_1(\F,{S}^\Box_\infty/({\fP}_\sigma))$ is surjective, it is enough to show for all $i\notin A(\sigma)$ and $j_0\in \cJ$, the LHS of \eqref{eqn:bigglue} contains the image of the natural map 
\begin{equation}\label{eqn:jcompsigma}
\Tor^{{S}^\Box_\infty}_1(\F,{S}^\Box_\infty/({\fP}^{(j_0)}_\sigma)) \ra \Tor^{{S}^\Box_\infty}_1(\F,{S}^\Box_\infty/({\fP}_\sigma)). 
\end{equation}
In what follows, for an ideal $I \subset {S}$, we identify $\Tor^{{S}^\Box_\infty}_1(\F,{S}^\Box_\infty/(I))$ with $I\otimes_S \F$ (recall that $\F=S/\fm_S$).

Fix $i\notin A(\sigma)$ and $j_0\in \cJ$. 
We claim that $U$ and $V_{j}$ for each $j \notin A(\sigma)$ with $j \neq j_0$ contain the image of \eqref{eqn:jcompsigma}. 
This would imply that \eqref{eqn:bigglue} contains the image of \eqref{eqn:jcompsigma}. 
(If $j_0\neq i$ then take $j=i$ and $V_i$ would contain the image of \eqref{eqn:jcompsigma}. If $j_0=i$ then $U\cap \cap_{j\neq i} V_j$ would contain the image of \eqref{eqn:jcompsigma}.)

We first consider $U$. 
By Lemma \ref{lemma:gluelower}, $M$ is isomorphic to $M_\infty'(P^a_\sigma) \cong (\widehat{\otimes}_{\jmath\in \cJ} M_{\jmath}^{a_{\jmath}}) \widehat{\otimes}_S R_\infty'(T)$ where $a_{\jmath} = \{B,E_o,F_o,E_s,F_s\}$ if $\jmath\in A(\sigma)$ and $a_{\jmath}= \widehat{1}$ for all $\jmath\notin A(\sigma)$. 
Since $\sigma$ occurs as a Jordan--H\"older factor in $P_\sigma^a$ with multiplicity one, the cycle of $M$ is multiplicity free.
Corollary \ref{cor:disjointass} implies that the kernel of the map $M \ra M_\infty'(\sigma)$ is the kernel of $M\ra M_{\fP_\sigma}$.
Thus, up to postcomposing with an automorphism, the map $M \ra M_\infty'(\sigma)$ is unique and thus 
induced by $S^{(\jmath)}$-module surjections $M_{\jmath}^{a_{\jmath}}\onto S^{(\jmath)}/\fP_\sigma^{(\jmath)}$. 
As $M_{j_0}^{a_{j_0}}$ is isomorphic to $S^{(j_0)}/\fP_\sigma^{(j_0)}$, the surjection $M_{j_0}^{a_{j_0}}\onto S^{(j_0)}/\fP_\sigma^{(j_0)}$ is an isomorphism. 
We conclude that $U$ contains the image of \eqref{eqn:jcompsigma}.

We next turn to $V_j$. 
Let $j \in \cJ$ with $j \notin A(\sigma)$ and $j\neq j_0$. 
Let $\tau_{\min}$ be the minimal type with respect to $\sigma$ as in \cite[Remark~3.5.10]{LLLM2}.
For the rest of this proof, for a subset $\Sigma\subset S_3$ and an element $w\in\Sigma$ define $\tau^{\textnormal{\tiny{$B$}}}_{w,j}$, $T^{\textnormal{\tiny{$B$}}}_{\Sigma,j}$ (resp.~$\sigma(T_{\Sigma,j})$) as in \ref{sec:glue:prel} with respect to $\tau^{\textnormal{\tiny{$B$}}}_{\min}$ (resp.~$\tau_{\min}$) and $\ell=j$.

We first suppose that $t_{-\un{1}}\tld{w}(\rhobar,\tau^{\textnormal{\tiny{$B$}}}_{\mathrm{min}})_{j_0} = t_{w_0\eta}$. 
For each $\tau^{\textnormal{\tiny{$B$}}}_{w,j}$ with $w\in \{\Id,\alpha\beta,\beta\alpha,w_0\}$ and each generator $c^{(j_0)}$ in the $(\eps_1+\eps_2,1)$-entry of Table \ref{Table:components:F2}, one sees from the $t_{w_0\eta}$-entries in Table \ref{Table_Ideals_2} that $c^{(j_0)}+p\eps_{w,j,c^{(j_0)}} \in \tld{I}_{\tau_{w,j},\nabla_\infty}$ for some $\eps_{w,j,c^{(j_0)}} \in \fm_{\tld{S}}$. 
By Lemmas \ref{lem:distortion} and \ref{lemma:p3type}, each generator $c^{(j_0)}$ is in $(\tld{I}_{\tau_{w_0,j},\nabla_\infty} \cap \tld{I}_{\tau_{\alpha\beta,j},\nabla_\infty} \cap \tld{I}_{\tau_{\beta\alpha,j},\nabla_\infty},p) + \fm_{{S}^\Box_\infty} {\fP}_\sigma$. 
Thus the surjective map ${S}^\Box_\infty/(\tld{I}_{\tau_{w_0,j},\nabla_\infty} \cap \tld{I}_{\tau_{\alpha\beta,j},\nabla_\infty} \cap \tld{I}_{\tau_{\beta\alpha,j},\nabla_\infty},p) \ra {S}^\Box_\infty/(\fP_\sigma)$ induces a map on $\Tor^{{S}^\Box_\infty}_1(\F,-)$ whose image contains each generator $c^{(j_0)}$. 
Let $a = (a_\ell)_{\ell\in \cJ}$ be the tuple with $a_j = \widehat{3}$ and $a_\ell = \widehat{1}$ for each $\ell \neq j$ as in \S \ref{sec:tor}. 
Since the action of $S_\infty^\Box$ on $M_\infty'(\ovl{P}^{\sigma(T_{w_0,\alpha\beta,\beta\alpha,j}),a}_\sigma)$ factors through $(\tld{I}_{\tau_{w_0,j},\nabla_\infty} \cap \tld{I}_{\tau_{\alpha\beta,j},\nabla_\infty} \cap \tld{I}_{\tau_{\beta\alpha,j},\nabla_\infty},p)$, we conclude by Lemma \ref{lemma:tor} that $V_j$ contains each generator $c^{(j_0)}$.

Finally, we suppose that $t_{-\un{1}}\tld{w}(\rhobar,\tau^{\textnormal{\tiny{$B$}}}_{\mathrm{min}})_{j_0} = w_0 $. 
Similar arguments as before (using the $\alpha\beta\alpha t_{\un{1}}$-entry of Table \ref{Table_Ideals}) show that, for $j\neq j_0$, $V_{j}$ contains the images of the generators of the ideal in Lemma \ref{lem:intsc}\eqref{eq:lem:intsc:1}. 
Looking at the $(0,1)$-entry of Table \ref{Table:components} if $j_0\notin A(\sigma)$ (resp.~the $(0,0)$-entry of Table \ref{Table:components} if $j_0\in A(\sigma)$), it suffices to show that the image of $c^{(j_0)} \defeq (b-c)d_{21}^{(j_0)}d_{32}^{(j_0)}-(a-c)d_{31}^{(j_0)}d_{22}^{*,(j_0)}$ (resp.~$c_{13}^{(j_0)}$) in $\Tor^{{S}^\Box_\infty}_1(\F,{S}^\Box_\infty/({\fP}_\sigma))$ is in $V_{j}$ if $j_0 \notin A(\sigma)$ (resp.~$j_0\in A(\sigma)$; note in this case that if $V_j$ contains $c_{13}^{(j_0)}$ \emph{and} the images of the generators of the ideal in Lemma \ref{lem:intsc}\eqref{eq:lem:intsc:1}, then $V_j$ contains the images of the generators of the ideal in the $(0,0)$-entry of Table \ref{Table:components}).  
Let $a=(a_{\ell})_{\ell\in\cJ}$  be the tuple with $a_{j}=\widehat{3}$ and $a_{\ell}=\widehat{1}$ for $\ell\neq j$. 
The module $M_\infty'(\ovl{P}^{\sigma(\tau_{\Id,j}),a}_\sigma)$ is cyclic by \cite[Theorem 5.1.1]{LLLM2} with scheme-theoretic support determined by \cite[Lemma 3.6.2]{LLLM2}, from which we see that $c^{(j_0)}$ (resp.~$c_{13}^{(j_0)}$) is in the image of $\Tor^{{S}^\Box_\infty}_1(\F,M_\infty'(\ovl{P}^{\sigma(\tau_{\Id,j}),a}_\sigma)) \ra \Tor^{{S}^\Box_\infty}_1(\F,M_\infty'(\sigma))$, and so it suffices by Lemma \ref{lemma:tor} to show that the image of $c^{(j_0)}$ (resp.~$c_{13}^{(j_0)}$) is contained in the image of 
\begin{equation}\label{eqn:truncate}
\Tor^{{S}^\Box_\infty}_1(\F,M_\infty'(\ovl{P}^{\sigma(T_{w_0,\alpha\beta,\beta\alpha,j}),a}_\sigma)) \ra \Tor^{{S}^\Box_\infty}_1(\F,M_\infty'(\sigma)). 
\end{equation}
Thus, we are left to show that $c^{(j_0)}$ (resp.~$c_{13}^{(j_0)}$) annihilates $M_\infty'(\ovl{P}^{\sigma(T_{w_0,\alpha\beta,\beta\alpha,j}),a}_\sigma)$, since this module is cyclic over $S^\Box_\infty$ by Lemma \ref{lemma:3type}.

For each $w \in \{\Id,\alpha\beta,\beta\alpha,w_0\}$, 
\[
(\tld{z}_w^*)^{-1}c_{13}^{(j_0)}((\tld{b}_w-\tld{c}_w)d_{21}^{(j_0)}d_{32}^{(j_0)}-(\tld{a}_w-\tld{c}_w)d_{31}^{(j_0)}d_{22}^{*,(j_0)}) -p\in \tld{I}_{\tau_{w,j},\nabla_\infty}
\]
where $\tld{m}_w \in \Z$ is a specific lift of $m$ for $m = a,b,c$ and $\tld{z}_w^*\in\tld{S}^\times$ is a specific unit (all depending \emph{a priori} on $w$) by \cite[\S 5.3.1]{LLLM}. 
Fixing lifts $\tld{m}\in\Z$ of $m$ for $m=a,b,c$, and a lift $\tld{z}^*$ of the reductions of $\tld{z}^*_w$ modulo $p$ (note that $\tld{z}^*_w$ modulo $p$ is independent of $w$), we have for each $w\in \{\Id,\alpha\beta,\beta\alpha,w_0\}$ that 
\[
(\tld{z}^*)^{-1}c_{13}^{(j_0)}((\tld{b}-\tld{c})d_{21}^{(j_0)}d_{32}^{(j_0)}-(\tld{a}-\tld{c})d_{31}^{(j_0)}d_{22}^{*,(j_0)}) -p+p\eps_{w,j}\in \tld{I}_{\tau_{w,j},\nabla_\infty}
\]
for some $\eps_{w,j}\in c_{13}^{(j_0)}\fm_{\tld{S}}$.
Taking $f\defeq (\tld{z}^*)^{-1}c_{13}^{(j_0)}((\tld{b}-\tld{c})d_{21}^{(j_0)}d_{32}^{(j_0)}-(\tld{a}-\tld{c})d_{31}^{(j_0)}d_{22}^{*,(j_0)}) -p$, we conclude from Lemmas \ref{lem:distortion} and \ref{lemma:p3type} that 
\[
c_{13}^{(j_0)}\tld{c}^{(j_0)}-p \in \tld{I}_{\tau_{w_0,j},\nabla_\infty} \cap \tld{I}_{\tau_{\alpha\beta,j},\nabla_\infty} \cap \tld{I}_{\tau_{\beta\alpha,j},\nabla_\infty} 
\]
for some $\tld{c}^{(j_0)} \in (\tld{b}-\tld{c})d_{21}^{(j_0)}d_{32}^{(j_0)}-(\tld{a}-\tld{c})d_{31}^{(j_0)}d_{22}^{*,(j_0)}+\fm_{\tld{S}}$. 
Filtering $\ovl{P}^{\sigma(T_{w_0,\alpha\beta,\beta\alpha,j}),a}_\sigma$ with irreducible subquotients $\kappa$ induces a filtration on $M_\infty'(\ovl{P}^{\sigma(T_{w_0,\alpha\beta,\beta\alpha,j}),a}_\sigma)$ with subquotients $M_\infty'(\kappa)$. 
For each $\kappa$ and $m\in M_\infty'(\kappa)$, the support of $\tld{c}^{(j_0)}m$ (resp.~$c_{13}^{(j_0)}m$) has positive codimension in the scheme-theoretic support of $M_\infty'(\kappa)$ as $c^{(j_0)}_{13}$ (resp.~$\tld{c}^{(j_0)}$) is $M_\infty'(\kappa)$-regular. 
Thus the same is true for $\tld{c}^{(j_0)}m$ (resp.~$c_{13}^{(j_0)}m$) with $m \in M_\infty'(\ovl{P}^{\sigma(T_{w_0,\alpha\beta,\beta\alpha,j}),a}_\sigma)$. 
Since $M_\infty'(\ovl{P}^{\sigma(T_{w_0,\alpha\beta,\beta\alpha,j}),a}_\sigma)$ is maximal Cohen--Macaulay over its support, it has no embedded primes from which we conclude that it is annihilated by $\tld{c}^{(j_0)}$ (resp.~$c_{13}^{(j_0)}$). 
\end{proof}

\section{Locality results}\label{sec:locality}

\subsection{Subquotients of Deligne--Lusztig representations and presentations}\label{sec:DLpresent}

We begin by defining some quotients of reductions of generic Deligne--Lusztig representations of $\rG$. 
Fix a Deligne--Lusztig representation $R_s(\mu-\un{1})$ with $\mu-\eta\in\un{C}_0$ being $9$ deep, so that $\JH(\ovl{R}_s(\mu-\un{1})) = F(\Trns_{\mu}(s(\Sigma)))$ and \cite[Proposition 5.3.2]{GL3Wild} applies to $R_s(\mu-\un{1})$.

Given two $\cO$-lattices $\Lambda_1,\Lambda_2 \subset R_s(\mu-\un{1})$, there is a unique $n\in \Z$ so that $p^n\Lambda_1 \subset \Lambda_2$ and $p^{n-1} \Lambda_1 \not\subset \Lambda_2$. 
We denote by $\iota$ the composition 
\begin{equation}\label{eqn:iota}
\iota: \Lambda_1 \overset{\times p^n} \risom p^n\Lambda_1\subset \Lambda_2. 
\end{equation}
By construction $\iota \otimes_{\cO} \F: \Lambda_1 \otimes_{\cO} \F \ra \Lambda_2 \otimes_{\cO} \F$ is nonzero. 

Recall from \cite[Lemma 4.1.1]{EGS} that, as $R_s(\mu-\un{1})$ is residually multiplicity free, given $\sigma \in \JH(\ovl{R}_s(\mu-\un{1}))$, there is a unique up to scaling $\cO$-lattice in $R_s(\mu-\un{1})^\sigma \subset R_s(\mu-\un{1})$ with cosocle isomorphic to $\sigma$. 
We will define various quotients of reductions of the lattices $R_s(\mu-\un{1})^\sigma$ whose structure is given by \cite[Proposition 5.3.2]{GL3Wild}. 

Recall from \S \ref{subsub:surgeries} that we defined a \emph{path} to be a sequence of elements 
\[
\gamma = (\gamma_k)_{k\geq 1}^{\ell(\gamma)}\in
\{(0,0), (\eps_1,0), (\eps_2,0),(0,1), (\eps_1,1), (\eps_2,1)\}^{\ell(\gamma)}
\] 
satisfying certain properties.
We also defined subsets $\Sigma_\gamma\subset \Sigma_0$ for each path $\gamma$ and a partial ordering $\leq$ on the set of paths.

Fix a subset $J \subset \cJ$. 
For each $j\in \cJ \setminus J$, fix $(\omega_j,a_j) \in \Sigma_0$. 
For a tuple $\gamma_J= (\gamma^{(j)})_{j\in J}$ of paths, let 
\[
\ell(\gamma_J) = \underset{j\in J}{\sum} \ell(\gamma^{(j)}) 
\]
and $\sigma(\gamma_J) = F(\Trns_{\mu}(s\omega,a))$ where $(\omega_j,a_j)$ is the fixed element in $\Sigma_0$ if $j\notin J$ and is $\gamma^{(j)}_{\ell(\gamma^{(j)})}$ otherwise. 
By \cite[Proposition 5.3.2]{GL3Wild}, there is a unique quotient $Q_{\gamma_J}$ of $R_s(\mu-\un{1})^{\sigma(\gamma_J)} \otimes_{\cO} \F$ such that 
\[
\JH(Q_{\gamma_J}) = \underset{j\in J}{\times} \Sigma_{\gamma^{(j)}} \times \underset{j\not\in J}{\times} \{(\omega_j,a_j)\}. 
\]

We define a complex using two choices. 
First, choose a complete ordering of $J$. 
Second, for each path $\gamma$ of length $3$ and $j\in J$, let $\kappa_\gamma^{(j)}$ be an element in $\F^\times$ with $\kappa_\gamma^{(j)} = 1$ if $\gamma_2 = (0,0)$ or $\gamma_3 = (0,1)$. 
(There are 12 paths of length $3$ and 8 of these paths have the property that $\gamma_2 = (0,0)$ or $\gamma_3 = (0,1)$.) 
It is the second choice that is more consequential. 
Consider the complex 
\begin{equation}\label{eqn:exactQ}
0 \ra \bigoplus_{\ell(\gamma_J) = 3\#J} Q_{\gamma_J} \ra \bigoplus_{\ell(\gamma_J) = 3\#J-1} Q_{\gamma_J} \ra \cdots \ra \bigoplus_{\ell(\gamma_J) = 2\#J} Q_{\gamma_J}, 
\end{equation}
where each map is a direct sum of maps $Q_{\gamma_J} \ra Q_{\beta_J}$ which is nonzero if and only if $\beta^{(j)} \leq \gamma^{(j)}$ for all $j\in J$. 
If $\beta^{(j)} \leq \gamma^{(j)}$ for all $j\in J$, $\ell(\beta_J)=\ell(\gamma_J)-1$ and $i\in\cJ$ is defined by $\beta^{(i)} \neq \gamma^{(i)}$, then the map $Q_{\gamma_J} \ra Q_{\beta_J}$ is induced by 
\[
\Big(\underset{j\in J,\ell(\gamma^{(j)}) = 3}{\prod} (-1)^{\delta_{j<i}}\kappa_{\gamma^{(j)}}^{(j)}\Big) \iota
\]
with $\iota$ in \eqref{eqn:iota} and $\delta_{j<i} = 1$ if $j<i$ and $0$ if $j\geq i$. 
Though we will not use it, one can check that \eqref{eqn:exactQ} is exact. 

We now define a subcomplex of \eqref{eqn:exactQ}. 
For each $2\# J \leq \ell \leq 3\#J$, let 
\[
\Big(\bigoplus_{\ell(\gamma_J) = \ell} Q_{\gamma_J}\Big)^0 \subset \bigoplus_{\ell(\gamma_J) = \ell} Q_{\gamma_J}
\]
be the submodule of elements $(a_{\gamma_J})_{\ell(\gamma_J) = \ell}$ such that for any tuple $\beta_J$ with $\ell(\beta_J) = \ell$ and $i \in J$ such that $\ell(\beta^{(i)}) = 3$, we have 
\[
\underset{\substack{\gamma^{(j)} = \beta^{(j)} \, \forall \, j\neq i \\ \ell(\gamma^{(i)}) = 3, \gamma^{(i)}_3 = \beta^{(i)}_3}}{\sum} a_{\gamma_J} = 0. 
\]
(Note that $Q_{\gamma_J}$ are the same quotient for all $\gamma_J$ in the sum above, and the sum takes place in this same quotient.)
Then 
\begin{equation}\label{eqn:exactQ0}
0 \ra \Big(\bigoplus_{\ell(\gamma_J) = 3\#J} Q_{\gamma_J}\Big)^0 \ra \Big(\bigoplus_{\ell(\gamma_J) = 3\#J-1} Q_{\gamma_J}\Big)^0 \ra \cdots \ra \Big(\bigoplus_{\ell(\gamma_J) = 2\#J} Q_{\gamma_J}\Big)^0 
\end{equation}
is a subcomplex of \eqref{eqn:exactQ}. 
Again, though we will not use it, one can check that \eqref{eqn:exactQ0} is exact. 

\subsection{Patching and presentations}\label{sec:patchingpresentation}

In this section, we fix an $11$-generic $L$-homomorphism $\rhobar $ and a patching functor $M_\infty$ for it. 
Let $\nu$ be a weight in $W(\eta)$. 
Then $\nu_j \in S_3(\eta_j) \cup \{(1,1,1)_j\}$ for each $j\in \cJ$. 
Let $J = J(\nu) \defeq \{j\in \cJ\mid \nu_j = (1,1,1)_j\}$. 
For $j\notin J$, if $\nu_j = w_j \eta_j$ for $w_j \in S_3$, then we let $(\omega(\nu)_j,a(\nu)_j) = (-\eps_{w_j},\delta_{(-1)^{w_j}=-1})$ where $\eps_{w_j} = \eps'_{w_j}+X^0(T) \in X^*(T)/X^0(T)$ and $w_j t_{\eps'_{w_j}} \in \tld{W}^+_1$ and $\delta_{(-1)^{w_j}=-1} = 1$ if $w_j$ is an odd permutation and is $0$ if $w_j$ is an even permutation. 

Let $\tld{w} \in \Adm(\eta)$ be such that $\tld{w}_j = t_{\un{1}}$ for all $j\in J$ and $(\omega(\nu)_j,a(\nu)_j) \in \tld{w}_j(\Sigma_0)$ and $\ell(\tld{w}_j) \geq 2$ for all $j\notin J$.
Let $\tau$ be the tame inertial $L$-parameter such that $\tld{w}(\rhobar,\tau) = \tld{w}$. 
Given $\sigma(\tau) = R_s(\mu-\un{1})$, $J$, and $(\omega_j,a_j)\defeq (\tld{w}_j^{-1}\omega(\nu)_j,a(\nu)_j) \in\Sigma_0$ for $j\notin J$, we have the subquotients $Q_{\gamma_J}$ of $\sigma(\tau)$ for each tuple $\gamma_J$ of paths as defined in \S \ref{sec:DLpresent}. 
(Note that $\mu-\eta$ can be chosen to be $9$-deep in alcove $\un{C}_0$.)

We will use the notation $M_\infty'$ as in \eqref{eqn:Minfty'} with $T = \{\tau\}$. 
If $(\omega,a)=(\omega_j,a_j)_{j\in\cJ}$ with $(\omega_j,a_j)$ as above if $j\notin J$ and $(\omega_j,a_j)\in \{(\eps_1,1), (\eps_2,1), (0,1)\}$ if $j\in J$, and letting 
$\sigma \defeq F(\Trns_{\mu}(s\omega,a)) \in \JH(\ovl{R}_s(\mu-\un{1}))$, then \cite[Theorem 5.1.1]{LLLM2} implies that $M_\infty'(R_s(\mu-\un{1})^\sigma)$ is a free $R_\infty'(\tau)$-module of rank one. 
Let $M^{(j)}_{(\omega_j,a_j)}$ be $\tld{M}^{(j)}_{(\omega_j,a_j)} \otimes_{\cO} \F$ where $\tld{M}^{(j)}_{(\omega_j,a_j)}$ is defined in \S \ref{subsub:surgeries}. 
For a pair $(\omega_j,a_j)$ and $(\omega'_j,a'_j)$ in $\{(0,0), (\eps_1,0), (\eps_2,0),(0,1), (\eps_1,1), (\eps_2,1)\}$, there is a unique $n((\omega_j,a_j),(\omega'_j,a'_j)) \defeq n \in \Z$ such that $p^n\tld{M}^{(j)}_{(\omega'_j,a'_j)} \subset \tld{M}^{(j)}_{(\omega_j,a_j)}$ and $p^{n-1}\tld{M}^{(j)}_{(\omega'_j,a'_j)} \not\subset \tld{M}^{(j)}_{(\omega_j,a_j)}$. 
By \cite[Theorem 5.2.3]{LLLM2}, %
the finite category consisting of maps
\[
M_\infty'(\ovl{R}_s(\mu-\un{1})^{F(\Trns_{\mu}(s\omega',a'))}) \ra M_\infty'(\ovl{R}_s(\mu-\un{1})^{F(\Trns_{\mu}(s\omega,a))})
\]
induced by $\iota$ in \eqref{eqn:iota} is identified with the maps
\begin{equation}\label{eqn:factorlattice}
\widehat{\otimes}_{j\in \cJ} M^{(j)}_{(\omega_j',a_j')} \widehat{\otimes}_{{S}} \ovl{R}_\infty'(\tau) \ra \widehat{\otimes}_{j\in \cJ} M^{(j)}_{(\omega_j,a_j)} \widehat{\otimes}_{{S}} \ovl{R}_\infty'(\tau) 
\end{equation}
given by maps $M^{(j)}_{(\omega_j',a_j')} \ra M^{(j)}_{(\omega_j,a_j)}$ which are induced by multiplication by $p^{n((\omega_j,a_j),(\omega'_j,a'_j))}$. 
(Tensor products without a subscript are taken over $\F$.) %

Now let {$\gamma_J$ be a tuple of paths and} $\sigma$ be $F(\Trns_{\mu}(s\omega,a))$ with $(\omega_j,a_j)$ the fixed choice for $j\notin J$ and $(\omega_j,a_j) = \gamma_1^{(j)}$ for $j\in J$. 
Then, up to scaling by a unique power of $p$, each $Q_{\gamma_J}$ is a subquotient of $R_s(\mu-\un{1})^\sigma$. 
Moreover, by \cite[Lemma 3.6.2]{LLLM2} or by Corollary \ref{cor:disjointass}, there is an ideal $I_{\gamma^{(j)}} \subset S^{(j)}$ for each $j\in J$ and path $\gamma^{(j)}$ and a prime ideal $\fP^{(j)}_{(\omega_j,a_j)}$ for each $j\notin J$ so that $M_\infty'(Q_{\gamma_J})$ is identified with
\[
(\underset{j\notin J}{\widehat{\otimes}} M^{(j)}_{(\omega_j,a_j)}/\fP^{(j)}_{(\omega_j,a_j)}M^{(j)}_{(\omega_j,a_j)}) \widehat{\otimes} (\underset{j\in J}{\widehat{\otimes}} I_{\gamma^{(j)}}M_{\gamma_1^{(j)}}^{(j)}) \widehat{\otimes}_{{S}} \ovl{R}_\infty'(\tau) 
\]
and $M_\infty'(\iota): M_\infty'(Q_{\gamma_J}) \ra M_\infty'(Q_{\beta_J})$ for $\beta \leq \gamma$ is induced by \eqref{eqn:factorlattice}. 

With the above identifications, applying $M_\infty'$ to \eqref{eqn:exactQ0} yields the complex 
\begin{equation}\label{eqn:totalcomplex}
(\underset{j\notin J}{\widehat{\otimes}} M^{(j)}_{(\omega_j,a_j)}/\fP^{(j)}_{(\omega_j,a_j)}M^{(j)}_{(\omega_j,a_j)}) \widehat{\otimes} (\underset{j\in J}{\widehat{\otimes}} C^{(j)}_{\kappa^{(j)}}) \widehat{\otimes}_{{S}} \ovl{R}_\infty'(\tau) 
\end{equation}
where $C^{(j)}_{\kappa^{(j)}}$ is the complex \eqref{eq:map:surgery} and the signs for the tensor product of the $C^{(j)}_{\kappa^{(j)}}$ are given by the complete ordering on $J$. 
Let $Q_{\nu,\kappa}$ be the cokernel of the last map in \eqref{eqn:exactQ0}. 
By exactness of $M_\infty'$, we have an identification 
\begin{equation}
\label{eq:identify_M_infty}
M_\infty'(Q_{\nu,\kappa})\cong
(\widehat{\otimes}_{j\in \cJ} M^{(j)}_{\kappa^{(j)}}) \widehat{\otimes}_{{S}} \ovl{R}_\infty'(\tau) 
\end{equation}
where $M^{(j)}_{\kappa^{(j)}}$ is the cokernel of \eqref{eq:map:surgery} as defined in \S \ref{subsub:surgeries}. 
(To see that \eqref{eqn:totalcomplex} is exact, note that each tensor factor is the completion of a complex of modules over a polynomial ring. Then \eqref{eqn:totalcomplex} is the completion of a tensor product of these complexes. The exactness of \eqref{eqn:totalcomplex} follows from the exactness of completion for modules over a Noetherian ring.)

\begin{prop}\label{prop:Qmult}
A Serre weight $F(\Trns_{\mu}(s\omega,a))$ is a Jordan--H\"older factor of $Q_{\nu,\kappa}$ if and only if $(\omega_j,a_j)$ is the fixed element for $j\notin J$ and $(\omega_j,a_j)\in \{(0,0), (\eps_1,0), (\eps_2,0),(0,1), (\eps_1,1), (\eps_2,1)\}$ for $j\in J$. 
In this case, its multiplicity is $2^{\#\{j\in J \mid a_j = 0\}}$. 
\end{prop}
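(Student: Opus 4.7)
The strategy is to compute the Jordan--H\"older multiplicities of $Q_{\nu,\kappa}$ by passing through $M_\infty'$, exploiting the tensor product decomposition \eqref{eq:identify_M_infty} together with the cycle computation of Corollary \ref{cor:cycle:Mkappa}, and then reading off multiplicities from the support cycle of the resulting patched module.

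First, I would invoke the identification \eqref{eq:identify_M_infty}
\[
M_\infty'(Q_{\nu,\kappa})\cong \bigl(\widehat{\bigotimes}_{j\in\cJ} M^{(j)}_{\kappa^{(j)}}\bigr) \widehat{\otimes}_{S} \ovl{R}_\infty'(\tau),
\]
recalling that for $j\notin J$ the factor $M^{(j)}_{\kappa^{(j)}}$ is the cyclic module $M^{(j)}_{(\omega_j,a_j)}/\fP^{(j)}_{(\omega_j,a_j)}M^{(j)}_{(\omega_j,a_j)}$ of generic rank one on $S^{(j)}/\fP^{(j)}_{(\omega_j,a_j)}$, while for $j\in J$ the factor $M^{(j)}_{\kappa^{(j)}}$ is the surgery cokernel whose cycle is computed in Corollary \ref{cor:cycle:Mkappa}.

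Next, by the minimality assumption on $M_\infty$ combined with \cite[Lemma 3.6.2]{LLLM2} and \cite[Theorem 5.1.1]{LLLM2}, for each Serre weight $F(\Trns_\mu(s\omega,a))\in \JH(\ovl\sigma(\tau))$ the patched module $M_\infty'(F(\Trns_\mu(s\omega,a)))$ is Cohen--Macaulay of generic rank one on the quotient by the minimal prime $\fP_{(\omega,a)}=\sum_j \fP^{(j)}_{(\omega_j,a_j)}S$, so that $Z_d(M_\infty'(F(\Trns_\mu(s\omega,a)))) = [\fP_{(\omega,a)}]$. Since $M_\infty'$ is exact and every Jordan--H\"older factor of $Q_{\nu,\kappa}$ lies in $\JH(\ovl\sigma(\tau))$ (as $Q_{\nu,\kappa}$ is a subquotient of $\ovl\sigma(\tau)$), additivity of cycles in short exact sequences (\cite[Lemma 2.2.7]{EG-perspective}) applied to a composition series of $Q_{\nu,\kappa}$ yields
\[
[Q_{\nu,\kappa} : F(\Trns_\mu(s\omega,a))] = \mathrm{mult}_{\fP_{(\omega,a)}}\!\bigl(Z_d(M_\infty'(Q_{\nu,\kappa}))\bigr).
\]

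Finally, using that $\ovl{R}_\infty'(\tau)$ is a formally smooth completed tensor product of the single-embedding factors $S^{(j)}/I^{(j)}_{\tau,\nabla_\infty}$ (with minimal primes factoring as $\fP_{(\omega,a)}=\sum_j \fP^{(j)}_{(\omega_j,a_j)}S$), compatibility of cycles with completed tensor products reduces to an embedding-by-embedding computation
\[
\mathrm{mult}_{\fP_{(\omega,a)}}\!\bigl(Z_d(M_\infty'(Q_{\nu,\kappa}))\bigr) = \prod_{j\in\cJ}\mathrm{mult}_{\fP^{(j)}_{(\omega_j,a_j)}}\!\bigl(Z(M^{(j)}_{\kappa^{(j)}})\bigr).
\]
For $j\notin J$ this factor is $1$ exactly when $(\omega_j,a_j)$ is the prescribed element and vanishes otherwise, forcing the ``fixed'' part of the statement. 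For $j\in J$, Corollary \ref{cor:cycle:Mkappa} gives the cycle $\sum_{\omega\in\{0,\eps_1,\eps_2\}}[\fP_{(\omega,1)}]+2\sum_{\omega\in\{0,\eps_1,\eps_2\}}[\fP_{(\omega,0)}]$, so the $j$-th factor is nonzero exactly when $(\omega_j,a_j)$ is one of the six listed elements, and equals $1$ or $2$ according as $a_j=1$ or $a_j=0$. Taking the product yields the claimed multiplicity $2^{\#\{j\in J\mid a_j=0\}}$.

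The main obstacle is the first reduction: one must be sure that passing from the Jordan--H\"older multiplicity of $Q_{\nu,\kappa}$ to the cycle multiplicity of its patched module is legitimate. This rests on (i) minimality of $M_\infty$, guaranteeing that each Serre weight in $\JH(\ovl\sigma(\tau))$ has patched module of generic rank one on its support prime, and (ii) the cyclicity and generic rank one properties of $M^{(j)}_{(\omega_j,a_j)}/\fP^{(j)}_{(\omega_j,a_j)}M^{(j)}_{(\omega_j,a_j)}$ for $j\notin J$, which together with Corollary \ref{cor:cycle:Mkappa} supply the factor-by-factor cycle information needed for the multiplication in the third step.
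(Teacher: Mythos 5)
Your proposal is correct and follows essentially the same route as the paper's proof: both pass through $M_\infty'(Q_{\nu,\kappa})$, use exactness and $\JH(Q_{\nu,\kappa})\subseteq W^?(\rhobar)$ to equate Jordan--H\"older multiplicities with support-cycle multiplicities (the paper records this as $Z_d(M'_\infty(Q_{\nu,\kappa}))=\sum_\sigma[Q_{\nu,\kappa}:\sigma]\fP_\sigma \ovl{R}_\infty'(\tau)$), and then compute the right-hand side via the tensor product identification \eqref{eq:identify_M_infty}, Proposition \ref{prop:special_fiber} for the factored minimal primes, and Corollary \ref{cor:cycle:Mkappa} for the $j\in J$ factors. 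You have merely spelled out more explicitly the embedding-by-embedding reduction that the paper leaves implicit.
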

\begin{proof}
By the exactness of $M_\infty'(-)$ and  the fact that $\JH(Q_{\nu,\kappa})\subseteq W^?(\rhobar)$ we conclude that
\[
Z_d(M'_\infty(Q_{\nu,\kappa}))=\sum_{\sigma\in \JH(Q_{\nu,\kappa})}[Q_{\nu,\kappa}:\sigma]\fP_\sigma \ovl{R}_\infty'(\tau).
\]
The result now follows from the identification \eqref{eq:identify_M_infty}, Proposition \ref{prop:special_fiber} and Corollary \ref{cor:cycle:Mkappa}.
\end{proof}

For each $j\in J$ let $(a_j,b_j,c_j)\in\Fp^3$ be the mod $p$ reduction of $-s_j^{-1}(\mu_j)$. 
From now on we take $\kappa$ to be $\kappa(\rhobar,\nu)\defeq \Big(\kappa_{\min}(a_j,b_j,c_j)\Big)^J$ (see \S \ref{subsub:surgeries} for the definition of the tuple $\kappa_{\min}(a_j,b_j,c_j)$).

\begin{prop}\label{prop:wedgeinj}
If $\ell(\gamma_J) = 2\#J$, then the composition
\begin{equation}\label{eqn:wedgeinj}
Q_{\gamma_J} \ra \bigoplus_{\ell(\gamma_J) = 2\#J} Q_{\gamma_J} = \Big(\bigoplus_{\ell(\gamma_J) = 2\#J} Q_{\gamma_J}\Big)^0 \ra Q_{\nu,\kappa}
\end{equation}
is injective. 
\end{prop}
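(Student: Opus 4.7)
The plan is to transfer the statement through the exact functor $M'_\infty$ and reduce to a per-embedding support-theoretic claim. Applying $M'_\infty$ to the exact complex \eqref{eqn:exactQ0} and using the identifications \eqref{eq:identify_M_infty} and \eqref{eqn:totalcomplex}, the induced map $M'_\infty(Q_{\gamma_J}) \to M'_\infty(Q_{\nu,\kappa})$ is the completed tensor product, over $j \in J$, of natural maps $\ovl{M}^{(j)}_{\gamma^{(j)}} \to \ovl{M}^{(j)}_{\kappa^{(j)}}$ (with identity on the $j \notin J$ factors). Each $\ovl{M}^{(j)}_{\gamma^{(j)}}$ is maximal Cohen--Macaulay over its support and so has no embedded associated primes by Theorem~17.3 of Matsumura; it therefore suffices to prove injectivity of each per-embedding map at every minimal prime of its support, that is, at $\fP^{(j)}_{(\omega,a)}$ for $(\omega,a) \in \Sigma_{\gamma^{(j)}}$.

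At the unique lower-alcove prime $\fP^{(j)}_{(\gamma^{(j)}_2,0)}$, every $I_\gamma$ with $\ell(\gamma)=3$ has support in the upper-alcove primes, so $(\bigoplus_{\ell=3} I_\gamma)^0$ localizes to zero and the map becomes the inclusion of a direct summand of $\bigoplus_{\beta:\,\beta_2 = \gamma^{(j)}_2} \ovl{I}_\beta$; this is visibly injective. At each upper-alcove prime $\fP^{(j)}_{(\omega,1)} \in \Sigma_{\gamma^{(j)}}$, the cycle computation of Corollary~\ref{cor:cycle:Mkappa} together with the cycle of $\ovl{I}_{\gamma^{(j)}}$ gives length-one localizations on both source and target, so injectivity is equivalent to non-vanishing. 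The path $\gamma = (\gamma^{(j)}_1, \gamma^{(j)}_2, (\omega,1))$ satisfies $\gamma > \gamma^{(j)}$ (as $(\omega,1) \neq \gamma^{(j)}_1$), and the choice $\kappa^{(j)} = \kappa_{\min}(a_j,b_j,c_j)$ in Corollary~\ref{cor:surgery} forces the composite $\ovl{I}_\gamma \to \ovl{I}_{\gamma^{(j)}} \to \ovl{M}^{(j)}_{\kappa^{(j)}}$ to be nonzero modulo the maximal ideal, hence nonzero as a map of $S^{(j)}$-modules. Its image is a nonzero submodule of $\ovl{M}^{(j)}_{\kappa^{(j)}}$ with support contained in $\{\fP^{(j)}_{(\omega,1)}\}$, and therefore localizing at $\fP^{(j)}_{(\omega,1)}$ shows that $\ovl{I}_{\gamma^{(j)}} \to \ovl{M}^{(j)}_{\kappa^{(j)}}$ is nonzero at this prime, and hence injective by the length-one observation.

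Tensoring per-embedding injections, both factors being $\F$-vector spaces (tensor product over $\F$ is exact) and base-changing by the formally smooth extension $\widehat{\otimes}_j S^{(j)} \to \ovl{R}'_\infty(\tau)$ preserves injectivity, so $M'_\infty(Q_{\gamma_J}) \to M'_\infty(Q_{\nu,\kappa})$ is injective. To deduce injectivity of $Q_{\gamma_J} \to Q_{\nu,\kappa}$, I would use that $\JH(Q_{\gamma_J}) \subseteq \JH(Q_{\nu,\kappa}) \subseteq W^?(\rhobar)$ (as in the proof of Proposition~\ref{prop:Qmult}), together with the exactness and nonvanishing of $M_\infty$ on Serre weights in $W^?(\rhobar)$. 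The main technical obstacle is upgrading Corollary~\ref{cor:surgery}'s mod-$\fm$ non-vanishing to a statement after localization at the upper-alcove primes; this is tractable only because the length-one condition of Corollary~\ref{cor:cycle:Mkappa} collapses the distinction between vanishing in the cotangent space and vanishing after localization.
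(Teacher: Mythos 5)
Your approach differs genuinely from the paper's. The paper reduces to a representation-theoretic observation: since $Q_{\gamma_J}$ is multiplicity free, its socle is $\bigoplus_{\beta_J > \gamma_J,\,\ell(\beta_J)=3\#J} Q_{\beta_J}$, and a map out of a multiplicity-free module is injective as soon as it is nonzero on each simple socle constituent. This reduces the proposition to the \emph{nonvanishing} of $Q_{\beta_J} \to Q_{\nu,\kappa}$ for each such $\beta_J$, which after applying $M_\infty'$ is exactly the mod-$\fm$ nonvanishing supplied by Corollary \ref{cor:surgery} (each tensor factor is nonzero after $\otimes_{S^{(j)}}\F$, hence so is the tensor product). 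You instead aim directly for injectivity of $M_\infty'(Q_{\gamma_J}) \to M_\infty'(Q_{\nu,\kappa})$ by a per-embedding, per-minimal-prime localization argument. This is a legitimate strategy, and your handling of the lower-alcove prime is correct.

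However, there is a genuine gap at the upper-alcove primes. Having produced a nonzero cyclic module $N := \operatorname{im}(I_\gamma \to M^{(j)}_{\kappa^{(j)}})$ with $\operatorname{Supp}(N) \subseteq V(\fP^{(j)}_{(\omega,1)})$, you conclude that $N_{\fP^{(j)}_{(\omega,1)}} \neq 0$. This implication is false in general: a nonzero module supported in $V(\fP)$ can have zero localization at $\fP$ if its unique associated prime is an \emph{embedded} prime strictly containing $\fP$ (e.g.\ $N = R/\fm \hookrightarrow R \oplus R/\fm$ over $R = \F[[x,y]]/(xy)$, $\fP = (x)$). What you need is that the ambient module $M^{(j)}_{\kappa^{(j)}}$ has no embedded associated primes: then $\mathrm{Ass}(N) \subseteq \mathrm{Ass}(M^{(j)}_{\kappa^{(j)}})$ consists of minimal primes of $R$, and the only one containing $\fP^{(j)}_{(\omega,1)}$ is $\fP^{(j)}_{(\omega,1)}$ itself. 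You invoke Matsumura Theorem 17.3 for the \emph{source} $I_{\gamma^{(j)}}$ but not for the \emph{target}, and your closing remark that ``the length-one condition of Corollary \ref{cor:cycle:Mkappa} collapses the distinction between vanishing in the cotangent space and vanishing after localization'' does not actually justify the step -- length one of $(M^{(j)}_{\kappa^{(j)}})_\fP$ controls the localization's size, not which associated primes $N$ can have. The gap is fillable: $M_\infty'(Q_{\nu,\kappa})$ is maximal Cohen--Macaulay over its support (by exactness of $M_\infty$ and the MCM axiom applied to the Jordan--H\"older factors, all of which lie in $W^?(\rhobar)$), hence has no embedded primes, and this property descends to the tensor factor $M^{(j)}_{\kappa^{(j)}}$. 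With that addition the localization argument closes; without it the argument does not. The paper's socle-based reduction avoids the issue entirely, which is why it only needs mod-$\fm$ nonvanishing rather than localized nonvanishing.
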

\begin{proof}
It suffices to show that for path $\beta_J \geq \gamma_J$ with $\ell(\beta_J) = 3\#J$, the composition of \eqref{eqn:wedgeinj} and the natural inclusion $Q_{\beta_J} \into Q_{\gamma_J}$ is nonzero as $Q_{\gamma_J}$ is multiplicity free and the map $\oplus_{\beta} Q_{\beta_J}\into Q_{\gamma_J}$, summing over all such $\beta_J$, identifies the domain with the socle of the codomain. 
We will show that $M_\infty'(Q_{\beta_J}) \ra M_\infty'(Q_{\gamma_J}) \ra M_\infty'(Q_{\nu,\kappa})$ is nonzero. 

Applying $M_\infty'$ to \eqref{eqn:wedgeinj} and using the above identifications yields a map 
\begin{align*}
&(\underset{j\notin J}{\widehat{\otimes}} M^{(j)}_{(\omega_j,a_j)}/\fP^{(j)}_{(\omega_j,a_j)}M^{(j)}_{(\omega_j,a_j)}) \widehat{\otimes} (\underset{j\in J}{\widehat{\otimes}} I_{\gamma^{(j)}}M_{\gamma_1^{(j)}}^{(j)}) \widehat{\otimes}_{{S}} \ovl{R}_\infty'(\tau) \\
\ra &(\underset{j\notin J}{\widehat{\otimes}} M^{(j)}_{(\omega_j,a_j)}/\fP^{(j)}_{(\omega_j,a_j)}M^{(j)}_{(\omega_j,a_j)}) \widehat{\otimes} (\underset{j\in J}{\widehat{\otimes}} M^{(j)}_{\kappa^{(j)}}) \widehat{\otimes}_{{S}} \ovl{R}_\infty'(\tau) 
\end{align*}
induced by the compositions 
\begin{equation}\label{eqn:wedgeinjideal}
(I_{\gamma^{(j)}}M_{\gamma_1^{(j)}}^{(j)}) \ra \bigoplus_{\ell(\beta^{(j)}) = 2} I_{\alpha^{(j)}}M_{\beta_1^{(j)}}^{(j)} \ra M^{(j)}_{\kappa^{(j)}}
\end{equation}
for each $j\in J$ where the first map is the natural inclusion and the second map is the natural projection. 
Similarly, the map $M_\infty'(Q_{\beta_J}) \into M_\infty'(Q_{\gamma_J})$ is given by 
\begin{align*}
&(\underset{j\notin J}{\widehat{\otimes}} M^{(j)}_{(\omega_j,a_j)}/\fP^{(j)}_{(\omega_j,a_j)}M^{(j)}_{(\omega_j,a_j)}) \widehat{\otimes} (\underset{j\in J}{\widehat{\otimes}} I_{\beta^{(j)}}M_{\gamma_1^{(j)}}^{(j)}) \widehat{\otimes}_{{S}} \ovl{R}_\infty'(\tau) \\
\ra &(\underset{j\notin J}{\widehat{\otimes}} M^{(j)}_{(\omega_j,a_j)}/\fP^{(j)}_{(\omega_j,a_j)}M^{(j)}_{(\omega_j,a_j)}) \widehat{\otimes} (\underset{j\in J}{\widehat{\otimes}}  I_{\gamma^{(j)}}M_{\gamma_1^{(j)}}^{(j)}) \widehat{\otimes}_{{S}} \ovl{R}_\infty'(\tau) 
\end{align*}
The desired nonvanishing now follows from Corollary \ref{cor:surgery}. 
\end{proof}

\begin{prop}\label{prop:soclength}
A Serre weight $F(\Trns_{\mu}(s\omega,a))$ is a Jordan--H\"older factor of $\soc \, Q_{\nu,\kappa}$ if and only if $(\omega_j,a_j)$ is the fixed element for $j\notin J$ and $(\omega_j,a_j)\in \{(0,1), (\eps_1,1), (\eps_2,1)\}$ for $j\in J$. 
Morever, $\soc \, Q_{\nu,\kappa}$ is multiplicity free. 
\end{prop}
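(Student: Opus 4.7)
The plan is to prove the two parts of the proposition separately: first, that every weight of the stated form belongs to $\soc Q_{\nu,\kappa}$ with multiplicity one, and second, that no weight with some $a_i=0$ for $i\in J$ lies in the socle. The forward inclusion rests on Proposition \ref{prop:wedgeinj} combined with the tensor product structure of the $Q_{\gamma_J}$'s, while the reverse inclusion is a local statement in the modules $M^{(j)}_{\kappa^{(j)}}$ which follows from Corollary \ref{cor:surgery}.

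For the forward inclusion, let $\gamma_J=(\gamma^{(j)})_{j\in J}$ be a tuple with $\ell(\gamma^{(j)})=2$ for every $j\in J$. Proposition \ref{prop:wedgeinj} provides an injection $Q_{\gamma_J}\hookrightarrow Q_{\nu,\kappa}$. Since $Q_{\gamma_J}$ is an outer tensor product over embeddings, its socle is the tensor product of the socles of the individual factors, and the socle of each length-two factor $Q_{\gamma^{(j)}}$ is, by \cite[Proposition 5.3.2]{GL3Wild}, the sum of the upper alcove constituents of $\Sigma_{\gamma^{(j)}}$, namely $\{(0,1),(\eps_1,1),(\eps_2,1)\}\setminus\{\gamma_1^{(j)}\}$. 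Varying $\gamma_1^{(j)}$ over its two possible values for each $j\in J$ produces every element of $\{(0,1),(\eps_1,1),(\eps_2,1)\}$ in some $\soc Q_{\gamma^{(j)}}$. Consequently every weight $\sigma=F(\Trns_\mu(s\omega,a))$ of the stated form embeds into $\soc Q_{\nu,\kappa}$. Proposition \ref{prop:Qmult} computes $[Q_{\nu,\kappa}:\sigma]=2^0=1$ for every such $\sigma$, forcing its socle multiplicity to be exactly one.

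For the reverse inclusion, suppose $\sigma=F(\Trns_\mu(s\omega,a))$ with $a_i=0$ for some $i\in J$. Applying the exact functor $M_\infty'$ and invoking the identification \eqref{eq:identify_M_infty}, it suffices to exhibit, locally in embedding $i$, a nontrivial extension above the $\fP^{(i)}_{(\omega_i,0)}$-component of $\ovl{M}^{(i)}_{\kappa^{(i)}}$. Precisely, for each length-two path $\beta$ at embedding $i$ with $\beta_2=(\omega_i,0)$ and each length-three extension $\gamma>\beta$ of it, Corollary \ref{cor:surgery} guarantees that the composite $\ovl{I}_\gamma\to\ovl{I}_\beta\to\ovl{M}^{(i)}_{\kappa^{(i)}}$ is nonzero. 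Since $\ovl{I}_\gamma$ corresponds to an upper alcove simple constituent while the cosocle of $\ovl{I}_\beta$ is $(\omega_i,0)$, this nonvanishing shows that every appearance of the lower alcove weight $(\omega_i,0)$ in $\ovl{M}^{(i)}_{\kappa^{(i)}}$ sits strictly above an upper alcove constituent. Transferring this back through \eqref{eq:identify_M_infty}, the copies of $\sigma$ inside $Q_{\nu,\kappa}$ all admit a nonzero extension by an upper alcove Serre weight, so none lies in the socle.

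The principal obstacle in the argument is the final transfer step: turning the nonvanishing of the surgery composites in $\ovl{M}^{(i)}_{\kappa^{(i)}}$ into the statement that the corresponding Serre weight has a nonzero extension inside $Q_{\nu,\kappa}$ as an $\F[\rG]$-module. This requires that the patching functor faithfully detect extensions between neighboring Serre weights supported at adjacent minimal primes of $S/I_{T,\nabla_\infty}$, which is ensured by the local model theory of \S \ref{subsec:MTdef} together with the identification \eqref{eq:identify_M_infty}. An alternative route, which may streamline the bookkeeping, is to bound $\dim_\F\soc Q_{\nu,\kappa}$ directly by computing $\Hom_\rG(\sigma,Q_{\nu,\kappa})$ for $\sigma$ of the stated form via the projective covers of \S \ref{subsec:Gproj} and showing it has total dimension exactly $3^{\#J}$, which together with the forward inclusion already established gives the reverse inclusion as well as the multiplicity-freeness.
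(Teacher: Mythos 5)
Your forward direction matches the paper: Proposition \ref{prop:wedgeinj} gives an injection $\soc Q_{\gamma_J}\into\soc Q_{\nu,\kappa}$ for each tuple $\gamma_J$ of length-two paths, and Proposition \ref{prop:Qmult} then forces multiplicity one. That part is fine.

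The reverse direction has a genuine gap. You try to rule out a weight $\theta$ with some $a_i=0$ from the socle by passing through $M_\infty'$ and invoking Corollary \ref{cor:surgery}, and you acknowledge that the crux is the ``transfer step'' — but the justification you offer for it is not correct. Corollary \ref{cor:surgery} asserts the nonvanishing of the composite $\ovl{I}_\gamma\to\ovl{I}_\beta\to\ovl{M}^{(j)}_\kappa$ \emph{after tensoring with the residue field} $\F$ over $\fm_{S^{(j)}}$; this is a statement about minimal generators of patched modules (and is used precisely that way in Proposition \ref{prop:essential}, to show that the socle of $D_{\mathrm m,\nu}$ is not killed by $\fm$). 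It is not a statement about the submodule lattice of the $S^{(j)}$-module $M^{(j)}_{\kappa^{(j)}}$, let alone about the $\F[\rG]$-socle of $Q_{\nu,\kappa}$. The patching functor $M_\infty'$ is exact, so a subrepresentation $\sigma\subset Q_{\nu,\kappa}$ would give a submodule $M_\infty'(\sigma)\subset M_\infty'(Q_{\nu,\kappa})$; to rule $\sigma$ out of the socle via patching you would need to show $M_\infty'(\sigma)$ does \emph{not} embed as a submodule, which is a different and substantially harder claim than what Corollary \ref{cor:surgery} provides — and the ``faithful detection of extensions'' you attribute to the local model theory of \S\ref{subsec:MTdef} is not a result established anywhere in the paper. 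This is exactly the kind of gap that sinks the argument: the nonvanishing you cite lives in $\Tor$-type fibers, not in the module structure, and the two do not interchange.

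The paper avoids this entirely by staying on the representation-theory side. Given a hypothetical nonzero map $P_\theta\to Q_{\nu,\kappa}$ with $a_i=0$ at some $i\in J$, the paper lifts it through the surjection $\bigoplus_{\ell(\gamma_J)=2\#J}Q_{\gamma_J}\onto Q_{\nu,\kappa}$, projects onto the summands with $\gamma^{(i)}_1=(\eps_k,1)$ for $k=1,2$, and then tracks a specific upper-alcove neighbor $\theta'$ (with $(\omega'_i,a'_i)=(\eps_{3-k},1)$) through the combinatorics of the paths. A direct check on the subcomplex condition defining $(\bigoplus Q_{\gamma_J})^0$ shows that the image of the original map in $Q_{\nu,\kappa}$ must contain $\theta'$, so $\theta$ cannot sit in the socle. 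No patching enters at this step. Your alternative suggestion of computing $\Hom_{\rG}(\sigma,Q_{\nu,\kappa})$ via projective covers is closer in spirit, but it amounts to the same representation-theoretic analysis: bounding the socle from above is precisely what the paper's lift-and-project argument accomplishes.
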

\begin{proof}
If $(\omega_j,a_j)$ is the fixed element for $j\notin J$ and $(\omega_j,a_j)\in \{(0,1), (\eps_1,1), (\eps_2,1)\}$ for $j\in J$, then $\sigma \defeq F(\Trns_{\mu}(s\omega,a)) \in \JH(\soc\, Q_{\gamma_J})$ for some tuple of paths $\gamma_J$ with $\ell(\gamma_J) = 2\#J$. 
By Proposition \ref{prop:wedgeinj}, there an inclusion $\soc\, Q_{\gamma_J} \into \soc\, Q_{\nu,\kappa}$ so that $\sigma \in \JH(\soc \, Q_{\nu,\kappa})$. 

Now let $\theta \defeq F(\Trns_{\mu}(s\omega,a)) \in \JH(Q_{\nu,\kappa})$. 
Then $(\omega_j,a_j)$ is the fixed element for $j\notin J$ and $(\omega_j,a_j)\in \{(0,0), (\eps_1,0), (\eps_2,0),(0,1), (\eps_1,1), (\eps_2,1)\}$ for $j\in J$. 
Suppose that $a_i = 0$ for some $i\in J$ and that we have a nonzero map $P_\theta \ra Q_{\nu,\kappa}$. 
Consider a lift 
\[
\begin{tikzcd}
& \bigoplus_{\ell(\gamma_J) = 2\#J} Q_{\gamma_J} \arrow[d] \\
P_\theta \arrow[ur]\arrow[r] & Q_{\nu,\kappa} 
\end{tikzcd}
\]
where the vertical map is the natural quotient map. 
The composition 
\[
P_\theta \ra \bigoplus_{\ell(\gamma_J) = 2\#J} Q_{\gamma_J} \onto \bigoplus_{\substack{\ell(\gamma_J) = 2\#J \\ \gamma^{(i)}_1 = (\eps_k,1)}} Q_{\gamma_J},
\]
where the second map is the natural projection, is nonzero for $k= 1$ and $2$. 
Moreover, this composition factors as the composition
\begin{equation}\label{eqn:Qprojection}
P_\theta \ra \bigoplus_{\substack{\ell(\gamma_J) = 2\#J \\ \gamma^{(i)} = ((\eps_k,1),(\omega_i,0))}} Q_{\gamma_J} \subset \bigoplus_{\substack{\ell(\gamma_J) = 2\#J \\ \gamma^{(i)}_1 = (\eps_k,1)}} Q_{\gamma_J}
\end{equation}
as $\theta$ is not a Jordan--H\"older factor of the cokernel of the inclusion in \eqref{eqn:Qprojection}. 
Let $\theta' = F(\Trns_{\mu}(s\omega',a'))$ where $(\omega'_j,a'_j) = (\omega_j,a_j)$ for all $j \neq i$ and $(\omega'_i,a'_i) = (\eps_{3-k},1)$. 
Then $\theta'$ is a Jordan--H\"older factor of the image of \eqref{eqn:Qprojection}---indeed the composition of \eqref{eqn:Qprojection} with the projection to some $Q_{\gamma_J}$ is surjective (choosing $\gamma_J$ so that $\cosoc\,Q_{\gamma_J} \cong \theta$, and using that $Q_{\gamma_J}$ is multiplicity free) and $\theta'\in\JH(Q_{\gamma_J})$. 
On the other hand, we claim that the intersection of  
\[
\bigoplus_{\substack{\ell(\gamma_J) = 2\#J \\ \gamma^{(i)} = ((\eps_k,1),(\omega_i,0))}} Q_{\gamma_J}
\]
and the image of 
\begin{equation}
\label{eq:second:image}
\left(\bigoplus_{\ell(\gamma_J) = 2\#J+1} Q_{\gamma_J}\right)^0 \ra \bigoplus_{\ell(\gamma_J) = 2\#J} Q_{\gamma_J} \onto \bigoplus_{\substack{\ell(\gamma_J) = 2\#J\\ \gamma^{(i)}_1 = (\eps_k,1)}} Q_{\gamma_J}, 
\end{equation}
(where the first map in \eqref{eq:second:image} comes from \eqref{eqn:exactQ0}) does not contain $\theta'$ as a Jordan--H\"older factor. 
Indeed, the cokernel of 
\[
\bigoplus_{\substack{\ell(\gamma_J) = 2\#J+1 \\ \gamma^{(i)} = ((\eps_k,1),(\omega_i,0),(\eps_{3-k},1))}} Q_{\gamma_J}\subset
\bigoplus_{\substack{\ell(\gamma_J) = 2\#J \\ \gamma^{(i)} = ((\eps_k,1),(\omega_i,0))}} Q_{\gamma_J}
\]
does not contain $\theta'$ as a Jordan--H\"older factor.
It suffices to show that the intersection of the images of 
\[
\bigoplus_{\substack{\ell(\gamma_J) = 2\#J+1 \\ \gamma^{(i)} = ((\eps_k,1),(\omega_i,0),(\eps_{3-k},1))}} Q_{\gamma_J}
\]
and
\[
\left(\bigoplus_{\ell(\gamma_J) = 2\#J+1} Q_{\gamma_J}\right)^0
\]
in 
\[
\bigoplus_{\substack{\ell(\gamma_J) = 2\#J\\ \gamma^{(i)}_1 = (\eps_k,1)}} Q_{\gamma_J}
\]
is zero.
An element 
\[
(a_{\gamma_J})_{\gamma_J}\in \bigoplus_{\substack{\ell(\gamma_J)=2\#J+1\\ \ell(\gamma^{(i)})=3,\, \gamma^{(i)}_1 = (\eps_k,1)}}Q_{\gamma_J}\subset \bigoplus_{\substack{\ell(\gamma_J) = 2\#J\\ \gamma^{(i)}_1 = (\eps_k,1)}} Q_{\gamma_J}
\] 
in the intersection of these images satisfies the condition $a_{\gamma_J}=0$ if $\gamma_2^{(i)}\neq (\omega_i,0)$ and for each $\gamma_J$ as above $\sum_{\beta_J} a_{\beta_J}=0$ where the sum runs over $\beta_J$ with $\beta^{(j)}=\gamma^{(j)}$ for all $j\neq i$ and $\beta^{(i)}_3=(\eps_{3-k},1)$.
These conditions imply that $a_{\gamma_J}=0$ for all $\gamma_J$.

Thus $\theta'$ is a Jordan--H\"older factor of the image of the original map $P_\theta \ra Q_{\nu,\kappa}$. 
We conclude that $\theta$ is not a Jordan--H\"older factor of $\soc\, Q_{\nu,\kappa}$. 

Finally, the Jordan--H\"older factors of $\soc\, Q_{\nu,\kappa}$ appear in a composition series of $Q_{\nu,\kappa}$ with multiplicity one by Proposition \ref{prop:Qmult}. 
In particular, $\soc\, Q_{\nu,\kappa}$ is multiplicity free. 
\end{proof}

\begin{prop}\label{prop:indecomp}
The module $Q_{\nu,\kappa}$ is indecomposable. 
\end{prop}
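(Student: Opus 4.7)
The plan is to reduce indecomposability of $Q_{\nu,\kappa}$ to indecomposability of the simpler building blocks $Q_{\gamma_J}$ appearing in Proposition \ref{prop:wedgeinj}, using that $\soc Q_{\nu,\kappa}$ is multiplicity free. First I would observe that each $Q_{\gamma_J}$ is indecomposable: by construction $Q_{\gamma_J}$ is a non-zero quotient of $\ovl{R}_s(\mu-\un{1})^{\sigma(\gamma_J)}$, which has simple cosocle $\sigma(\gamma_J)$ by \cite[Lemma 4.1.1]{EGS}. Any non-zero quotient of a finite-length module with simple cosocle inherits that cosocle, so $Q_{\gamma_J}/\rad\,Q_{\gamma_J}$ is simple and $Q_{\gamma_J}$ is indecomposable.

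Next, suppose for contradiction that $Q_{\nu,\kappa} = A \oplus B$ is a non-trivial direct sum decomposition. By Proposition \ref{prop:soclength} the socle is multiplicity free with simple summands $F(\Trns_{\mu}(s(\omega,1)))$ parametrized by $(\omega_j)_{j\in J}\in\{0,\eps_1,\eps_2\}^J$ (together with the fixed data for $j\notin J$). The decomposition thus corresponds to a non-trivial partition of $\{0,\eps_1,\eps_2\}^J$. I would prove this partition must be trivial by showing that any two socle constituents $\sigma_1,\sigma_2$ differing in a single coordinate $i\in J$ lie in the same summand, and then invoke connectedness of the Hamming graph on $\{0,\eps_1,\eps_2\}^J$ to conclude.

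Given such $\sigma_1,\sigma_2$ with $\sigma_1|_i=(\omega_i,1)$, $\sigma_2|_i=(\omega_i',1)$, I would construct a tuple $\gamma_J$ with $\ell(\gamma_J)=2\#J$ such that both $\sigma_1,\sigma_2\in\soc Q_{\gamma_J}$: in slot $i$, set $\gamma^{(i)}_1=(\omega_i'',1)$ for the unique $\omega_i''\in\{0,\eps_1,\eps_2\}\setminus\{\omega_i,\omega_i'\}$ and let $\gamma^{(i)}_2$ be an adjacent element with $a=0$; for $j\in J$, $j\neq i$, choose $\gamma^{(j)}_1\in\{(0,1),(\eps_1,1),(\eps_2,1)\}$ distinct from the common value $\sigma_1|_j=\sigma_2|_j$, and any $\gamma^{(j)}_2$ with $a=0$. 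By the explicit identification of $\soc Q_{\gamma_J}$ with $\bigoplus_{\beta_J\geq\gamma_J,\,\ell(\beta_J)=3\#J} Q_{\beta_J}$ (from the proof of Proposition \ref{prop:wedgeinj}), this socle consists exactly of the simples whose $j$-th coordinate ranges over $\{(0,1),(\eps_1,1),(\eps_2,1)\}\setminus\{\gamma^{(j)}_1\}$; by construction both $\sigma_1$ and $\sigma_2$ belong to $\soc Q_{\gamma_J}$. The embedding $Q_{\gamma_J}\hookrightarrow Q_{\nu,\kappa}$ of Proposition \ref{prop:wedgeinj} then yields $Q_{\gamma_J}=(A\cap Q_{\gamma_J})\oplus(B\cap Q_{\gamma_J})$, and indecomposability forces one factor to vanish, placing $\sigma_1$ and $\sigma_2$ in the same summand as required. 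The only subtle point is the identification of $\soc Q_{\gamma_J}$, which is not a genuine obstacle since it was already established in the proof of Proposition \ref{prop:wedgeinj}; consequently the proof is essentially a formal consequence of the structural results already in hand.
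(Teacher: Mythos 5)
Your argument takes a genuinely different route from the paper's and, as written, has a gap. The paper does not use graph connectivity at all: it observes that $\sigma_0 \defeq F(\Trns_{\mu}(\omega,a))$, with $(\omega_j,a_j)=(0,1)$ for every $j\in J$ and the fixed element for $j\notin J$, is a Jordan--H\"older factor of \emph{every} $Q_{\gamma_J}$ with $\ell(\gamma_J)=2\#J$ (since $(0,1)\in\Sigma_{\gamma^{(j)}}$ always) and has multiplicity one in $Q_{\nu,\kappa}$ by Proposition \ref{prop:Qmult}. Hence any two of the submodules $Q_{\gamma_J}, Q_{\gamma'_J}\subset Q_{\nu,\kappa}$ have nonzero intersection, and the same-summand conclusion propagates from an arbitrary $\sigma\subset Q$ to any $\sigma'$ in a single step rather than along a chain. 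Your Hamming-graph idea is a plausible alternative, but it founders on the path constraint.

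The definition of a path in \S\ref{subsub:surgeries} requires $\gamma_1\in\{(\eps_1,1),(\eps_2,1)\}$; the value $\gamma_1=(0,1)$ is disallowed. Your prescription $\gamma^{(i)}_1=(\omega_i'',1)$ for the unique $\omega_i''\in\{0,\eps_1,\eps_2\}\setminus\{\omega_i,\omega_i'\}$ forces $\gamma^{(i)}_1=(0,1)$ precisely when $\{\omega_i,\omega_i'\}=\{\eps_1,\eps_2\}$, so no legal $Q_{\gamma_J}$ realizes the edge $(\eps_1,1)\leftrightarrow(\eps_2,1)$ in coordinate $i$, and the claimed lemma ``any two socle constituents differing in one coordinate lie in the same summand'' is not established for that edge. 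The fix is to prove the lemma only for edges incident to $(0,1)$ at each coordinate and observe that the resulting subgraph of $\{0,\eps_1,\eps_2\}^J$ is still connected, but this must be said. A secondary point: the step $Q_{\gamma_J}=(A\cap Q_{\gamma_J})\oplus(B\cap Q_{\gamma_J})$ is not automatic for a submodule of a direct sum (a counterexample is any indecomposable projective sitting diagonally in a sum of two of its proper quotients); the paper reaches the analogous conclusion (that $Q_{\gamma_J}\cap Q\neq 0$ forces $Q_{\gamma_J}\subset Q$) by arguing that the splitting of $Q\hookrightarrow Q_{\nu,\kappa}$ induces a splitting of $Q_{\gamma_J}\cap Q\hookrightarrow Q_{\gamma_J}$ and then invoking indecomposability of $Q_{\gamma_J}$, and some argument of this kind is needed in your write-up as well.
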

\begin{proof}
Suppose that $Q_{\nu,\kappa} = Q \oplus Q'$ where $Q$ is nonzero. 
Let $F(\Trns_{\mu}(\omega,a)) \cong \sigma \subset Q$ be a simple submodule. 
Let $\sigma'\subset Q_{\nu,\kappa}$ be a simple submodule. 
We will show that $\sigma' \subset Q$ which implies that $Q' = 0$ and thus that $Q_{\nu,\kappa}$ is indecomposable. 

For a tuple of paths $\beta_J$ of length $2\#J$, we identify $Q_{\beta_J}$ with its image in $Q_{\nu,\kappa}$ using Proposition \ref{prop:wedgeinj}. 
We claim that if $Q_{\beta_J} \cap Q$ is nonzero, then $Q_{\beta_J} \subset Q$. 
Indeed, the splitting of the inclusion $Q \into Q_{\nu,\kappa}$ gives a splitting of the (nonzero) map $Q_{\beta_J} \cap Q \into Q_{\beta_J}$. 
On the other hand, $Q_{\beta_J}$ is indecomposable since $\cosoc\, Q_{\beta_J}$ is simple. 
We conclude that $Q_{\beta_J} \cap Q = Q_{\beta_J}$. 

Now there are tuples of paths $\gamma_J$ and $\gamma'_J$ of length $2\#J$ such that $\sigma \subset Q_{\gamma_J}$ and $\sigma' \subset Q_{\gamma'_J}$. 
Since $\sigma \subset Q$, $Q_{\gamma_J} \cap Q \neq 0$ so that $Q_{\gamma_J} \subset Q$ by the previous paragraph. 
On the other hand, $\JH(Q_{\gamma_J}) \cap \JH(Q_{\gamma'_J})$ contains $\sigma_0 \defeq F(\Trns_{\mu}(\omega,a))$ where $(\omega_j,a_j)$ is the fixed element in $\Sigma_0$ for $j\notin J$ and is $(0,1)$ for all $j\in J$. 
As $\sigma_0$ is a Jordan--H\"older factor of $Q_{\nu,\kappa}$ with multiplicity one by Proposition \ref{prop:Qmult}, $Q_{\gamma_J} \cap Q_{\gamma'_J} \neq 0$. 
We conclude from the previous paragraph that $\sigma' \subset Q_{\gamma'_J} \subset Q$. 
\end{proof}

\subsection{Distinguished presentations and locality}
\label{sub:abstract:locality}

We maintain the notation (and assumptions on) $\rhobar$, $M'_\infty$, $\nu$, and $\tau$ from the last section. 
In particular, throughout this section $\rhobar$ is $11$-generic.
Let $\kappa(\rhobar,\nu) \defeq \kappa$ be as in \ref{sec:patchingpresentation} so that the map $M^{(j)}_{(\omega_j,1)} \ra M^{(j)}_{\gamma^{(j)}} \ra M^{(j)}_{\kappa^{(j)}}$ is nonzero for $(\omega_j,1) \neq \gamma^{(j)}_1$. 
Let $D_{\mathrm{m},\nu}(\rhobar) \defeq Q_{\nu,\kappa(\rhobar,\nu)}$ and $D_{\mathrm{m}}(\rhobar)$ be 
\[
\bigoplus_{\nu \in W(\eta)} D_{\mathrm{m},\nu}(\rhobar). 
\]

\begin{rmk}
While the definition of $D_{\mathrm{m},\nu}(\rhobar)$ depends on a choice of tame inertial $L$-parameter (see \S \ref{sec:patchingpresentation}), we will see \emph{a posteriori} that $D_{\mathrm{m},\nu}(\rhobar)$ does not depend on this choice (see Remark \ref{rmk:tauind}). 
\end{rmk}

\begin{prop}\label{prop:Dm}
\begin{enumerate}
\item \label{item:modular} $\JH(D_{\mathrm{m}}(\rhobar)) \subset W^?(\rhobar)$. 
\item \label{item:Dmult} If $\sigma \in W^?(\rhobar)$, then $\sigma$ is a Jordan--H\"older factor of $D_{\mathrm{m}}(\rhobar)$ with multiplicity $3^{\#A(\sigma)}$. 
\item \label{item:Dsoc}The socle of $D_{\mathrm{m}}(\rhobar)$ is multiplicity free. 
\end{enumerate}
\end{prop}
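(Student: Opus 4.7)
The plan is to reduce all three assertions to properties of the individual summands $D_{\mathrm{m},\nu}(\rhobar) = Q_{\nu,\kappa(\rhobar,\nu)}$ established in the preceding subsections, combined with combinatorial bookkeeping over $\nu \in W(\eta)$. Part \eqref{item:modular} follows immediately from Proposition \ref{prop:Qmult}: every Jordan--H\"older factor of each summand is a Serre weight $F(\Trns_\mu(s(\omega,a)))$ whose tuple $(\omega_j,a_j)_j$ lies in $r(\Sigma_0)$ at every $j$ (either the fixed non-outer element coming from $\nu_j$ when $j \notin J(\nu)$, or an element of $\{(0,0),(\eps_1,0),(\eps_2,0),(0,1),(\eps_1,1),(\eps_2,1)\}$ when $j \in J(\nu)$), so by the description of $W^?(\rhobar)$ in \S \ref{subsubsec:combinatorics_TandW} it belongs to $W^?(\rhobar)$.

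For \eqref{item:Dmult}, I would fix $\sigma = F(\Trns_\mu(s(\omega,a))) \in W^?(\rhobar)$ and compute $\sum_\nu [D_{\mathrm{m},\nu}(\rhobar) : \sigma]$ using Proposition \ref{prop:Qmult}, which gives each term as $2^{\#\{j \in J(\nu) : a_j = 0\}}$ when the fixed elements from $\nu$ match $(\omega_j,a_j)$ at every $j \notin J(\nu)$, and zero otherwise. Using the bijection between $W$ and the six non-outer elements of $\Sigma_0$ via $w \mapsto (-\eps_w,\delta_{(-1)^w=-1})$, the sum decouples as a product over $j \in \cJ$. After the identification $A(\sigma) = \{j : a_j = 0\}$, one checks that each $j \in A(\sigma)$ contributes a factor of $3$ and each $j \notin A(\sigma)$ a factor of $1$, producing the stated multiplicity $3^{\#A(\sigma)}$.

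For \eqref{item:Dsoc}, since $\soc\,D_{\mathrm{m}}(\rhobar) = \bigoplus_\nu \soc\,D_{\mathrm{m},\nu}(\rhobar)$ and each individual summand is multiplicity free by Proposition \ref{prop:soclength}, it remains to check that $\JH(\soc\,D_{\mathrm{m},\nu})$ and $\JH(\soc\,D_{\mathrm{m},\nu'})$ are disjoint for distinct $\nu, \nu' \in W(\eta)$. Given $\sigma = F(\Trns_\mu(s(\omega,a))) \in \soc\,D_{\mathrm{m},\nu}$, Proposition \ref{prop:soclength} imposes $a_j = 1$ at every $j \in J(\nu)$ and $(\omega_j,a_j) = (-\eps_{w_j},\delta_{(-1)^{w_j}=-1})$ (with $\nu_j = w_j\eta_j$) at every $j \notin J(\nu)$, and matching these constraints with the data of $\sigma$ should reconstruct $\nu$ uniquely via the bijection above.

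The main obstacle is this uniqueness step: at embeddings $j$ with $a_j = 1$ and $(\omega_j,a_j)$ non-outer, both $j \in J(\nu)$ (so $\nu_j = (1,1,1)_j$) and $j \notin J(\nu)$ (with $\nu_j$ the unique odd Weyl conjugate matching $\omega_j$) are a priori compatible with the socle constraints. Disambiguating requires tracing through the explicit socle embedding from the proof of Proposition \ref{prop:soclength}---namely the map \eqref{eqn:wedgeinj}---together with the specific choice $\kappa(\rhobar,\nu) = \kappa_{\mathrm{min}}$ fixed in \S \ref{subsub:surgeries} which, via Corollary \ref{cor:surgery}, pins down which of the candidate $\nu$'s realizes $\sigma$ as a simple summand of $\soc\,D_{\mathrm{m},\nu}$.
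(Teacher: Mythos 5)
Parts \eqref{item:modular} and \eqref{item:Dmult} of your argument are essentially the paper's: \eqref{item:modular} is immediate since each $Q_{\gamma_J}$ has all of its Jordan--H\"older factors in $W^?(\rhobar)$, and for \eqref{item:Dmult} the paper likewise identifies, for each $\sigma$, the $2^{\#A(\sigma)}$ values of $\nu$ for which $\sigma\in\JH(D_{\mathrm{m},\nu}(\rhobar))$, observes the $2^{\#\{j\in J(\nu)\mid a_j=0\}}$ multiplicity from Proposition~\ref{prop:Qmult}, and sums to get $(1+2)^{\#A(\sigma)} = 3^{\#A(\sigma)}$. Your ``factor of $3$ per embedding in $A(\sigma)$'' phrasing is the same computation. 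One caveat: your claim of a ``bijection between $W$ and the six non-outer elements of $\Sigma_0$ via $w\mapsto(-\eps_w,\delta_{(-1)^w=-1})$'' is not literally correct --- for instance $w_0$ maps to $(\eps_1+\eps_2,1)$, which is \emph{not} in $\Sigma_0$; the actual matching between $w_j$ and elements of $\Sigma_0$ passes through the twist by $\tld{w}_j^{-1}$ and the admissibility/length constraints on $\tld{w}$, which is what makes the count of valid $\nu$ exactly $2^{\#A(\sigma)}$ rather than $2^{\#\cJ}$. Both you and the paper are somewhat terse here, but the underlying argument is the same.

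For \eqref{item:Dsoc} your proposed resolution misidentifies the mechanism, and that is a genuine gap. You correctly reduce to disjointness of $\JH(\soc\, D_{\mathrm{m},\nu}(\rhobar))$ across distinct $\nu$, and you are right that Proposition~\ref{prop:soclength} alone does not make this disjointness tautological --- the constraints ``$a_j=1$ at $j\in J(\nu)$ and the fixed outer element at $j\notin J(\nu)$'' are stated relative to the $\tau$-dependent lowest alcove presentation, so matching them against $\sigma$ is not a mere inspection. However, the resolution you propose cannot work: Proposition~\ref{prop:soclength} describes the Jordan--H\"older factors of $\soc\, Q_{\nu,\kappa}$ for \emph{any} admissible $\kappa$, so the set $\JH(\soc\, D_{\mathrm{m},\nu}(\rhobar))$ is independent of the choice $\kappa=\kappa_{\mathrm{min}}(a,b,c)$ and of the particular embedding \eqref{eqn:wedgeinj}. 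Invoking Corollary~\ref{cor:surgery} therefore provides no information about disjointness of socle $\JH$-sets. What actually separates the socles is purely combinatorial: the set-up of $\S\ref{sec:patchingpresentation}$ attaches to $\nu$ a $\tau$ with $\tld{w}(\rhobar,\tau)_j = t_{\un{1}}$ at $j\in J(\nu)$ and $\ell(\tld{w}(\rhobar,\tau)_j)\geq 2$ at $j\notin J(\nu)$, and once a socle constituent $\sigma$ is placed in $W^?(\rhobar)$ via a fixed lowest alcove presentation of $\rhobar$, the position of $\sigma$ in the extension graph at each $j$ determines which of these two regimes occurs and, in the latter case, which element $\nu_j\in S_3(\eta_j)$ is used. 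That is the observation the paper is compressing into ``follows easily from Proposition~\ref{prop:soclength}'', and your proof would need to replace the $\kappa_{\mathrm{min}}$ appeal with this translation-to-a-common-parametrization argument.
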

\begin{proof}
\eqref{item:modular} follows from the fact that for each $Q_{\gamma_J}$ appearing in \S \ref{sec:patchingpresentation}, $\JH(Q_{\gamma_J}) \subset W^?(\rhobar)$. 

For each $\sigma = F(\Trns_{\mu}(\omega,a)) \in W^?(\rhobar)$, $\sigma \in \JH(D_{\mathrm{m},\nu}(\rhobar))$ if and only if for all $j\in \cJ$, $(-\eps_{w_j},\delta_{(-1)^{w_j}=-1}) = (\tld{w}_j(\omega),a)$ if $\nu_j = w_j \eta_j$ and 
\[
(\omega_j,a_j) \in \{(0,0),(\eps_1,0),(\eps_2,0),(0,1),(\eps_1,1),(\eps_2,1)\} 
\]
if $\nu_j = (1,1,1)_j$. 
In particular, there are $2^{\# A(\sigma)}$ choices of $\nu\in W(\eta)$ so that $\sigma \in \JH(D_{\mathrm{m},\nu}(\rhobar))$. 
Moreover, if $\sigma$ appears as a Jordan--H\"older factor of $D_{\mathrm{m},\nu}(\rhobar)$, then it appears with multiplicity $2^{\#\{j\in J(\nu)\mid a_j = 0\} }$. 
The binomial expansion of $(1+2)^{\# A(\sigma)}$ yields \eqref{item:Dmult}. 

\eqref{item:Dsoc} follows easily from Proposition \ref{prop:soclength}. 
\end{proof}

\begin{prop}\label{prop:essential}
For each irreducible submodule $\sigma \subset D_{\mathrm{m},\nu}(\rhobar)$, $M_\infty(\sigma) \notin \fm M_\infty(D_{\mathrm{m},\nu}(\rhobar))$. 
\end{prop}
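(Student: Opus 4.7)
The plan is to reduce the claim to the local statement already captured by Corollary \ref{cor:surgery}, via the tensor product identification of $M_\infty'(D_{\mathrm{m},\nu}(\rhobar))$ established in \S \ref{sec:patchingpresentation}. By Proposition \ref{prop:soclength}, the (multiplicity free) socle of $D_{\mathrm{m},\nu}(\rhobar)$ contains $\sigma$, so $\sigma \cong F(\Trns_\mu(s\omega,a))$ where $(\omega_j,a_j)$ is the fixed element for $j \notin J = J(\nu)$ and $(\omega_j,a_j) = (\omega'_j,1)$ for some $\omega'_j \in \{0,\eps_1,\eps_2\}$ when $j\in J$. For each $j \in J$ I would choose a length-three path $\beta^{(j)}$ with $\beta^{(j)}_3 = (\omega'_j,1)$, and set $\gamma^{(j)} \defeq (\beta^{(j)}_1, \beta^{(j)}_2)$. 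Then $Q_{\beta_J} \cong \sigma$ sits inside $Q_{\gamma_J}$, and Proposition \ref{prop:wedgeinj} gives an inclusion $Q_{\gamma_J} \hookrightarrow D_{\mathrm{m},\nu}(\rhobar)$ whose composition with $Q_{\beta_J} \hookrightarrow Q_{\gamma_J}$ realizes $\sigma \hookrightarrow D_{\mathrm{m},\nu}(\rhobar)$.

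Applying the exact functor $M_\infty'$ and using the identifications of \S \ref{sec:patchingpresentation}, the induced composition
\[
M_\infty'(\sigma) \cong M_\infty'(Q_{\beta_J}) \longrightarrow M_\infty'(Q_{\gamma_J}) \longrightarrow M_\infty'(D_{\mathrm{m},\nu}(\rhobar))
\]
factors through the completed tensor product (over $\ovl{R}_\infty'(\tau)$) of the following maps at the individual embeddings: at $j\notin J$ it is the identity on $M^{(j)}_{(\omega_j,a_j)}/\fP^{(j)}_{(\omega_j,a_j)}M^{(j)}_{(\omega_j,a_j)}$; at $j\in J$ it is the natural composition
\[
I_{\beta^{(j)}}M^{(j)}_{\beta^{(j)}_1} \longrightarrow I_{\gamma^{(j)}}M^{(j)}_{\gamma^{(j)}_1} \longrightarrow M^{(j)}_{\kappa^{(j)}}.
\]

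To conclude, I would reduce modulo $\fm_{R_\infty}$. Since $\tld{S}^{\nabla_\infty,\Box}$ is formally smooth over $R_{\rhobar}^{\leqeta,\tau}$, passage from $M_\infty$ to $M_\infty'$ only adds formal variables, so preserves the property of being (or not being) in $\fm$ times the ambient module; it thus suffices to show the displayed composition survives after tensoring each $j$-th factor down to $\overline{(-)} = (-)\otimes_{S^{(j)}}\F$. The $j\notin J$ factors each collapse to a nonzero copy of $\F$. For the $j\in J$ factors, this is exactly the statement that the composition $\ovl{I}_{\beta^{(j)}} \to \ovl{I}_{\gamma^{(j)}} \to \ovl{M}^{(j)}_{\kappa^{(j)}}$ is nonzero, which is the content of Corollary \ref{cor:surgery} for our chosen $\kappa(\rhobar,\nu) = (\kappa_{\min}(a_j,b_j,c_j))_J$. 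A nonzero element in each tensor factor over $\F$ yields a nonzero class in the tensor product, giving $M_\infty'(\sigma) \not\subset \fm M_\infty'(D_{\mathrm{m},\nu}(\rhobar))$, and therefore $M_\infty(\sigma) \not\subset \fm M_\infty(D_{\mathrm{m},\nu}(\rhobar))$.

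The real obstacle has in fact already been surmounted in the construction of $\kappa_{\min}$ and in Corollary \ref{cor:surgery}: the formula defining $\kappa_{\min}(a,b,c)$ is engineered precisely so that the matrix $A_\kappa$ of Proposition \ref{prop:surgery} has the property that no odd-indexed standard basis vector lies in its image, i.e.~so that the residual composite $\ovl{I}_\gamma \to \ovl{M}^{(j)}_{\kappa^{(j)}}$ is never killed. The argument above is then just the bookkeeping needed to transport this infinitesimal local non-vanishing statement through the tensor product presentation of $M_\infty'(D_{\mathrm{m},\nu}(\rhobar))$ and back into the patched module $M_\infty(D_{\mathrm{m},\nu}(\rhobar))$.
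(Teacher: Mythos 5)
Your proof is correct and follows the same route as the paper's: realize the socle constituent $\sigma \subset D_{\mathrm{m},\nu}(\rhobar)$ as arising from some $Q_{\gamma_J}$ with $\ell(\gamma_J)=2\#J$ (via the path formalism), apply $M_\infty'$ to get the tensor-product factorization of \S \ref{sec:patchingpresentation}, and reduce the non-vanishing mod $\fm$ to the embedding-by-embedding statement of Corollary \ref{cor:surgery}. You fill in details (the explicit choice of $\beta_J$, the justification that nonzero factors give a nonzero completed tensor, and the passage $M_\infty \leftrightarrow M_\infty'$) that the paper leaves implicit, but the core structure is identical.
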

\begin{proof}
Suppose that $\sigma \defeq F(\Trns_{\mu}(\omega,a))$ is in the image of the map $Q_{\gamma_J} \ra D_{\mathrm{m},\nu}(\rhobar)$ for some $\gamma_J$ of length $2\#J$.
By \S \ref{sec:patchingpresentation}, it suffices to show that the image of the composition $M^{(j)}_{(\omega_j,a_j)} \ra M^{(j)}_{\gamma^{(j)}} \ra M^{(j)}_{\kappa(\rhobar,\nu)^{(j)}}$ is not contained in $\fm^{(j)} M^{(j)}_{\kappa(\rhobar,\nu)^{(j)}}$ for all $j\in J$. 
This follows from Corollary \ref{cor:surgery}. 
\end{proof}

We now assume that $M_\infty$ has the form $\Hom_{\GL_3(\cO_p)}^{\textrm{cont}}(-,M_\infty^\vee)^\vee$ for a pseudocompact $\cO[\![\GL_3(\cO_p)]\!]$-module also denoted $M_\infty$. 
Let $\pi = (M_\infty/\fm)^\vee$. 
Then there is a natural transformation $\Hom_K(-,\pi) \cong (M_\infty(-)/\fm)^\vee$. 

\begin{prop}\label{prop:Dm-ess}
There is an injection $D_{\mathrm{m}}(\rhobar) \into \pi^{K_1}$. 
Moreover, this induces an isomorphism on socles. 
\end{prop}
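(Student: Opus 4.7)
The strategy is to combine the duality $\Hom_K(-, \pi) \cong (M_\infty(-)/\fm)^\vee$ with Proposition \ref{prop:essential} to produce the injection, then pair this with the known socle of $\pi^{K_1}$ from \cite{MLM} to obtain the isomorphism on socles.

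The first step is to produce a single $K$-equivariant map $\phi : D_{\mathrm{m}}(\rhobar) \to \pi$ that is nonzero on every simple submodule. By Proposition \ref{prop:Dm}\eqref{item:Dsoc}, $\soc D_{\mathrm{m}}(\rhobar)$ is multiplicity-free, so it is a direct sum of finitely many pairwise non-isomorphic simple summands $\sigma_1, \ldots, \sigma_n$, with $n \leq \#W^?(\rhobar)$ by Proposition \ref{prop:Dm}\eqref{item:modular}. For each $i$, Proposition \ref{prop:essential}, applied to the summand $D_{\mathrm{m},\nu_i}(\rhobar)$ of $D_{\mathrm{m}}(\rhobar)$ containing $\sigma_i$, yields that the image of $M_\infty(\sigma_i)/\fm \to M_\infty(D_{\mathrm{m}}(\rhobar))/\fm$ is a nonzero $\F$-subspace; dually, the hyperplane $H_i \subset \Hom_K(D_{\mathrm{m}}(\rhobar), \pi)$ of maps vanishing on $\sigma_i$ is proper. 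Since $\F$ is sufficiently large (in particular $|\F| > n$) and the family $\{H_i\}$ is finite, the union $\bigcup_i H_i$ does not fill $\Hom_K(D_{\mathrm{m}}(\rhobar), \pi)$, so I can choose $\phi$ outside it. Each restriction $\phi|_{\sigma_i}$ is then nonzero and, by simplicity of $\sigma_i$, injective; because the $\sigma_i$ are pairwise non-isomorphic, their images lie in distinct isotypic components of $\pi$, so $\phi|_{\soc D_{\mathrm{m}}(\rhobar)}$ is injective. Since $D_{\mathrm{m}}(\rhobar)$ is finite-dimensional its socle is essential, so $\phi$ itself is injective.

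The second step is the socle identification. The map $\phi$ sends the semisimple submodule $\soc D_{\mathrm{m}}(\rhobar)$ into $\soc \pi^{K_1} = \bigoplus_{\sigma \in W^?(\rhobar)} \sigma$, the latter equality being the main result of \cite{MLM}. By Proposition \ref{prop:Dm}\eqref{item:modular},\eqref{item:Dsoc}, $\soc D_{\mathrm{m}}(\rhobar)$ is multiplicity-free with all constituents in $W^?(\rhobar)$. Conversely, Proposition \ref{prop:soclength} together with a direct case check shows that every $\sigma \in W^?(\rhobar)$ appears as a summand of $\soc D_{\mathrm{m},\nu}(\rhobar)$ for some $\nu$: writing $\sigma = F(\Trns_\mu(s\omega, a))$, one takes $J(\nu)$ to consist of those $j$ with $(\omega_j, a_j) \in \{(0,1), (\eps_1,1), (\eps_2,1)\}$ and, for $j \notin J(\nu)$, chooses $\nu_j = w_j \eta_j$ determined by matching $(\omega_j, a_j)$ against the $w_j$-datum $(-\eps_{w_j}, \delta_{(-1)^{w_j}=-1})$. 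Hence $\soc D_{\mathrm{m}}(\rhobar) \cong \bigoplus_{\sigma \in W^?(\rhobar)} \sigma$, and the injective socle restriction of $\phi$ becomes an injection between two isomorphic finite-dimensional semisimple modules, hence an isomorphism.

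The main obstacle I anticipate is the finite-field genericity step, which requires $|\F|$ to exceed the number of simple constituents of $\soc D_{\mathrm{m}}(\rhobar)$; this is automatic from the standing hypothesis that $\F$ is sufficiently large, but is the real content of the first step. A secondary technicality is the combinatorial matching from $r(\Sigma_0)$ to $W(\eta)$ underlying the socle covering, which is a direct unwinding of the parametrization of $W(\eta)$ by tuples in $(S_3(\eta) \sqcup \{(1,1,1)\})^{\cJ}$.
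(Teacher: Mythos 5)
Your proof is correct and follows essentially the same strategy as the paper: Proposition \ref{prop:essential}, the duality $\Hom_K(-,\pi)\cong(M_\infty(-)/\fm)^\vee$, and multiplicity-freeness of $\soc\, D_{\mathrm{m}}(\rhobar)$ are the same ingredients, and the socle identification rests on the known $K$-socle of $\pi$ (cited in the paper as \cite[Lemma 5.3.3]{GL3Wild} rather than \cite{MLM}). The one organizational difference is that the paper first produces an injective map on each summand $D_{\mathrm{m},\nu}(\rhobar)$ separately (so the subspace-avoidance is over $3^{\#J(\nu)}$ proper subspaces per $\nu$) and then takes a direct sum, whereas you do the avoidance once over all $\#W^?(\rhobar)$ socle constituents of $D_{\mathrm{m}}(\rhobar)$; both are valid under the standing ``$\F$ sufficiently large'' hypothesis, but the paper's piecewise version demands a slightly smaller residue field.
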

\begin{proof}
First, we show that there is an injection $D_{\mathrm{m},\nu}(\rhobar) \into \pi$ for each $\nu\in W(\eta)$. 
Recall that $\soc\, D_{\mathrm{m},\nu}(\rhobar)$ is multiplicity free and has length $3^{\#J}$ by Proposition \ref{prop:soclength}. 
For cardinality reasons, there is an element of $(M_\infty(D_{\mathrm{m},\nu}(\rhobar))/\fm)^\vee$ which is not in the kernel of any map $(M_\infty(D_{\mathrm{m},\nu}(\rhobar))/\fm)^\vee \ra M_\infty(\sigma)/\fm$ for any irreducible submodule $\sigma \subset D_{\mathrm{m},\nu}(\rhobar)$ by Proposition \ref{prop:essential}. 
This element corresponds to an injective homomorphism $D_{\mathrm{m},\nu}(\rhobar) \ra \pi$. 

Taking a direct sum of such injections gives a map $D_{\mathrm{m}}(\rhobar) \ra \pi^{K_1}$. 
Any irreducible submodule of the kernel is a submodule of $D_{\mathrm{m},\nu}(\rhobar)$ for some $\nu\in W(\eta)$ since $\soc\, D_{\mathrm{m}}(\rhobar)$ is multiplicity free by Proposition \ref{prop:Dm}\eqref{item:Dsoc}. 
We conclude that the kernel is $0$. 
To show that this injection induces an isomorphism on socles, it suffices to show that the $K$-socle of $\pi$ is isomorphic to $\oplus_{\sigma\in W^?(\rhobar)} \sigma$. 
This follows from \cite[Lemma 5.3.3]{GL3Wild}. 
\end{proof}

\begin{prop}\label{prop:maxext}
Let $\cW$ be a set of Serre weights and $V\subset Q$ be an $\F[\rG]$-submodule of an $\F[\rG]$-module. 
Then there is a unique maximal submodule $U \subset Q$ such that $\JH(U/(V\cap U)) \cap \cW = \emptyset$. 
Moreover, $V \subset U$. 
\end{prop}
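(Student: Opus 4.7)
The plan is to show that the class of submodules $U\subset Q$ satisfying the condition $\JH(U/(V\cap U))\cap \cW = \emptyset$ is closed under taking sums, so that $U$ can be defined as the sum of all such submodules. The final statement $V\subset U$ will come for free since $V$ itself trivially belongs to this class.

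First I would check closure under pairwise sums. Given $U_1, U_2\subset Q$ each satisfying the condition, I would consider the natural surjection
\[
(U_1/(V\cap U_1)) \oplus (U_2/(V\cap U_2)) \twoheadrightarrow (U_1+U_2)/\bigl((V\cap U_1)+(V\cap U_2)\bigr)
\]
induced by the inclusions of $U_i$ into $U_1+U_2$. Since $\cW$ does not meet the Jordan--H\"older factors of the source, it cannot meet those of the target either. Composing with the further surjection onto $(U_1+U_2)/(V\cap (U_1+U_2))$, which uses the obvious inclusion $(V\cap U_1)+(V\cap U_2)\subset V\cap (U_1+U_2)$, shows $U_1+U_2$ also lies in the class.

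Next I would verify closure under arbitrary sums. By induction the class is closed under finite sums, and the property ``$\cW\cap \JH(-)=\emptyset$'' passes to directed unions of submodules (since every finitely generated subquotient of $(\sum_i U_i)/(V\cap \sum_i U_i)$ is already a subquotient of the corresponding expression for a finite subsum). Defining $U$ to be the sum of all submodules in the class, this closure property guarantees $U$ itself is in the class, and it is by construction the unique maximal such submodule. Finally, $V$ trivially satisfies $V/(V\cap V)=0$, so $V$ lies in the class and hence $V\subset U$.

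There is no substantive obstacle here; the only mild care needed is the passage from finite to arbitrary sums, which is handled by a standard directed union argument since Jordan--H\"older factors of a module are determined by its finitely generated subquotients.
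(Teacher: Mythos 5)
Your proof is correct and rests on the same key observation as the paper's proof: the natural surjection from $U_1/(V\cap U_1)\oplus U_2/(V\cap U_2)$ onto $(U_1+U_2)/(V\cap(U_1+U_2))$. The paper phrases the argument as uniqueness among maximal elements (and then deduces $V\subset U$ from maximality applied to $V+U$), whereas you phrase it as closure of the admissible class under sums and then take the total sum; this is the same idea, and your extra directed-union step to pass to infinite sums is a small refinement that handles existence in the case where $Q$ is not assumed to be of finite length.
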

\begin{proof}
Suppose that $U,U' \subset Q$ are maximal submodules such that $\JH(U/(V\cap U)) \cap \cW = \JH(U'/(V\cap U')) \cap \cW = \emptyset$. 
Then there is a surjection $U/(V\cap U) \oplus U'/(V\cap U') \onto (U+U')/(V\cap (U+U'))$ from which we conclude that $\JH((U+U')/(V\cap (U+U'))) \cap \cW = \emptyset$. 
By maximality of $U$ and $U'$, we conclude that $U = U'$. 
Moreover, maximality implies that $V \subset U$. 
\end{proof}

Fix injections $D_{\mathrm{m},\nu}(\rhobar) \into \oplus_{\sigma \in \soc\, D_{\mathrm{m},\nu}(\rhobar)} P_\sigma$ for each $\nu \in W(\eta)$. 
Taking a direct sum gives an injection $D_{\mathrm{m}}(\rhobar) \into \oplus_{\sigma \in W^?(\rhobar)} P_\sigma$. 
We identify the domains with their images in $\oplus_{\sigma \in W^?(\rhobar)} P_\sigma$. 
Using Proposition \ref{prop:maxext}, we let $D_0(\rhobar)$ (resp.~$D_{0,\nu}(\rhobar)$ for each $\nu \in W(\eta)$) be the maximal submodule $U\subset \oplus_{\sigma \in W^?(\rhobar)} P_\sigma$ such that $\JH(U/(D_{\mathrm{m}}(\rhobar)\cap U)) \cap W^?(\rhobar) = \emptyset$ (resp.~$\JH(U/(D_{\mathrm{m},\nu}(\rhobar)\cap U)) \cap W^?(\rhobar) = \emptyset$).

\begin{prop}
We have 
\[
D_0(\rhobar) \cong \bigoplus_{\nu \in W(\eta)} D_{0,\nu}(\rhobar). 
\]
\end{prop}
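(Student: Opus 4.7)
The plan is to exploit a direct sum decomposition of the ambient module $\bigoplus_{\sigma\in W^?(\rhobar)}P_\sigma$ that is compatible with the decomposition $D_{\mathrm{m}}(\rhobar)=\bigoplus_\nu D_{\mathrm{m},\nu}(\rhobar)$. Combining Propositions \ref{prop:Dm-ess} and \ref{prop:Dm}\eqref{item:Dsoc} gives $\soc D_{\mathrm{m}}(\rhobar)\cong\bigoplus_{\sigma\in W^?(\rhobar)}\sigma$; since this also equals $\bigoplus_\nu\soc D_{\mathrm{m},\nu}(\rhobar)$ and is multiplicity free, the sets $\soc D_{\mathrm{m},\nu}(\rhobar)$ partition $W^?(\rhobar)$. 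Hence
\[
\bigoplus_{\sigma\in W^?(\rhobar)}P_\sigma\;=\;\bigoplus_{\nu\in W(\eta)}\Big(\bigoplus_{\sigma\in\soc D_{\mathrm{m},\nu}(\rhobar)}P_\sigma\Big),
\]
and the chosen injection places $D_{\mathrm{m},\nu}(\rhobar)$ inside the $\nu$-summand on the right.

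Next I would establish $\bigoplus_\nu D_{0,\nu}(\rhobar)\subset D_0(\rhobar)$ as an internal direct sum. The inclusion $D_{0,\nu}(\rhobar)\subset D_0(\rhobar)$ is immediate from the maximality in Proposition \ref{prop:maxext}: since $D_{\mathrm{m},\nu}(\rhobar)\subset D_{\mathrm{m}}(\rhobar)$, the quotient $D_{0,\nu}(\rhobar)/(D_{\mathrm{m}}(\rhobar)\cap D_{0,\nu}(\rhobar))$ is itself a quotient of $D_{0,\nu}(\rhobar)/(D_{\mathrm{m},\nu}(\rhobar)\cap D_{0,\nu}(\rhobar))$, so it has no Jordan--H\"older factors in $W^?(\rhobar)$. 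To see the sum is direct, let $\pi_\nu$ denote the projection onto $\bigoplus_{\sigma\notin\soc D_{\mathrm{m},\nu}(\rhobar)}P_\sigma$. Since $D_{\mathrm{m},\nu}(\rhobar)$ is killed by $\pi_\nu$, the module $\pi_\nu(D_{0,\nu}(\rhobar))$ is a quotient of $D_{0,\nu}(\rhobar)/(D_{\mathrm{m},\nu}(\rhobar)\cap D_{0,\nu}(\rhobar))$ and so contains no Jordan--H\"older factors in $W^?(\rhobar)$. On the other hand, any nonzero submodule of $\bigoplus_{\sigma\notin\soc D_{\mathrm{m},\nu}(\rhobar)}P_\sigma$ has nonzero socle whose simple constituents all lie in $W^?(\rhobar)$. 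Therefore $\pi_\nu(D_{0,\nu}(\rhobar))=0$, so $D_{0,\nu}(\rhobar)$ is contained in the $\nu$-summand, and the internal sum inside $D_0(\rhobar)$ agrees with the external direct sum.

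For the reverse inclusion, let $p_\nu$ be the projection onto the $\nu$-summand. Then $p_\nu(D_{\mathrm{m}}(\rhobar))=D_{\mathrm{m},\nu}(\rhobar)$ since $p_\nu$ kills $D_{\mathrm{m},\nu'}(\rhobar)$ for $\nu'\neq\nu$, so there is an induced surjection
\[
D_0(\rhobar)/(D_0(\rhobar)\cap D_{\mathrm{m}}(\rhobar))\;\twoheadrightarrow\;p_\nu(D_0(\rhobar))/(p_\nu(D_0(\rhobar))\cap D_{\mathrm{m},\nu}(\rhobar)),
\]
and the target has no Jordan--H\"older factors in $W^?(\rhobar)$. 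Maximality of $D_{0,\nu}(\rhobar)$ then yields $p_\nu(D_0(\rhobar))\subset D_{0,\nu}(\rhobar)$, and since each $d\in D_0(\rhobar)$ equals the sum of its components $p_\nu(d)$, we conclude $D_0(\rhobar)\subset\bigoplus_\nu D_{0,\nu}(\rhobar)$. The main content is just tracking how the maximality characterization of Proposition \ref{prop:maxext} interacts with the partition of $W^?(\rhobar)$ supplied by Proposition \ref{prop:Dm-ess}; once this structure is in place there is no real obstacle.
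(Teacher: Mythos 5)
Your proof is correct and takes essentially the same approach as the paper. The only cosmetic difference is in how directness is established: you show $\pi_\nu(D_{0,\nu}(\rhobar))=0$ directly (so each $D_{0,\nu}(\rhobar)$ lands in its $\nu$-summand), whereas the paper first proves $\soc D_{0,\nu}(\rhobar)=\soc D_{\mathrm{m},\nu}(\rhobar)$ and deduces disjointness of socles — these two observations are equivalent and rely on the same comparison with $D_{\mathrm{m},\nu}(\rhobar)$; the reverse inclusion via $p_\nu$ and maximality is identical to the paper's.
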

\begin{proof}
Let $\nu \in W(\eta)$. 
Then $D_{0,\nu}(\rhobar)/(D_{\mathrm{m}}(\rhobar) \cap D_{0,\nu}(\rhobar))$ is a quotient of $D_{0,\nu}(\rhobar)/(D_{\mathrm{m},\nu}(\rhobar) \cap D_{0,\nu}(\rhobar))$ from which we conclude that $\JH(D_{0,\nu}(\rhobar)/(D_{\mathrm{m}}(\rhobar) \cap D_{0,\nu}(\rhobar))) \cap W^?(\rhobar) = \emptyset$ and thus that $D_{0,\nu}(\rhobar)\subset D_0(\rhobar)$. 
Since $\soc\, D_{\mathrm{m},\nu}(\rhobar) \subset \soc\, D_{0,\nu}(\rhobar) \subset \soc\, \oplus_{\sigma \in W^?(\rhobar)} P_\sigma \cong \oplus_{\sigma \in W^?(\rhobar)} \sigma$ and $\JH(\soc\, D_{0,\nu}(\rhobar)/\soc\, D_{\mathrm{m},\nu}(\rhobar)) \cap W^?(\rhobar) = \emptyset$, we have that $\soc\, D_{\mathrm{m},\nu}(\rhobar) = \soc\, D_{0,\nu}(\rhobar)$. 
Thus $\JH(\soc\, D_{0,\nu}(\rhobar))$ are pairwise disjoint for $\nu \in W(\eta)$, and the natural map 
\[
\bigoplus_{\nu \in W(\eta)} D_{0,\nu}(\rhobar) \ra D_0(\rhobar)
\]
is injective. 

It suffices to show that for each $\nu \in W(\eta)$, the image, denoted $D_{0,\nu}$, of the projection 
\[
D_0(\rhobar) \ra \oplus_{\sigma \in \JH(\soc\, D_{0,\nu}(\rhobar))} P_\sigma
\]
is contained in $D_{0,\nu}(\rhobar)$. 
The image of the restriction of this projection to $D_{\mathrm{m}}(\rhobar)$ is $D_{\mathrm{m},\nu}(\rhobar)$. 
Thus we have submodules
\[
D_{\mathrm{m},\nu}(\rhobar) \subset D_{0,\nu} \subset \oplus_{\sigma \in \JH(\soc\, D_{0,\nu}(\rhobar))} P_\sigma
\]
with $\JH(D_{0,\nu}/D_{\mathrm{m},\nu}(\rhobar)) \cap W^?(\rhobar) \subset \JH(D_0(\rhobar)/D_{\mathrm{m}}(\rhobar)) \cap W^?(\rhobar) = \emptyset$. 
Maximality implies that $D_{0,\nu} \subset D_{0,\nu}(\rhobar)$. 
\end{proof}

\begin{thm}\label{thm:K1}
There is an isomorphism $\pi^{K_1} \cong D_0(\rhobar)$. 
\end{thm}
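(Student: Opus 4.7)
The plan is to realize both $\pi^{K_1}$ and $D_0(\rhobar)$ as submodules of the common injective envelope $E\defeq \bigoplus_{\sigma\in W^?(\rhobar)}P_\sigma$ of their shared socle, and then compare them via a multiplicity count together with the maximality property of Proposition \ref{prop:maxext}.

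First I would extend the embedding $D_{\mathrm{m}}(\rhobar)\hookrightarrow \pi^{K_1}$ of Proposition \ref{prop:Dm-ess} to a compatible embedding $\pi^{K_1}\hookrightarrow E$. This is possible because $\soc\pi^{K_1}=\soc D_{\mathrm{m}}(\rhobar)$ by Proposition \ref{prop:Dm-ess}, which equals $\bigoplus_{\sigma\in W^?(\rhobar)}\sigma$ (multiplicity free, by Proposition \ref{prop:Dm}(iii) combined with \cite[Lemma 5.3.3]{GL3Wild}), and because $\F[\rG]$ is a Frobenius algebra, so $E$ is the injective envelope of this socle. This embedding I would arrange to be compatible with the fixed embedding $D_{\mathrm{m}}(\rhobar)\hookrightarrow E$ used to define $D_0(\rhobar)$.

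Next, for each $\sigma\in W^?(\rhobar)$, exactness of $\Hom_{\rG}(P_\sigma,-)$ gives $[\pi^{K_1}:\sigma]=\dim_\F\Hom_{\rG}(P_\sigma,\pi^{K_1})$. Using the patching duality $\Hom_K(-,\pi)\cong (M_\infty(-)/\fm M_\infty(-))^\vee$ together with Theorem \ref{thm:mingen}, this equals $3^{\#A(\sigma)}$, which by Proposition \ref{prop:Dm}(ii) also equals $[D_{\mathrm{m}}(\rhobar):\sigma]$. Hence $\JH(\pi^{K_1}/D_{\mathrm{m}}(\rhobar))\cap W^?(\rhobar)=\emptyset$, and the maximality property in Proposition \ref{prop:maxext} forces $\pi^{K_1}\subset D_0(\rhobar)$ as submodules of $E$.

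For the reverse inclusion $D_0(\rhobar)\subset \pi^{K_1}$, I would use the decomposition $D_0(\rhobar)=\bigoplus_{\nu\in W(\eta)} D_{0,\nu}(\rhobar)$ along the $\nu$-summands of the socle and verify for each $\nu$ separately that $D_{0,\nu}(\rhobar)\subset \pi^{K_1}$. The key input is the explicit realisation $D_{\mathrm{m},\nu}(\rhobar)=Q_{\nu,\kappa(\rhobar,\nu)}$ via the lattice complex \eqref{eqn:exactQ0} with the distinguished choice $\kappa=\kappa(\rhobar,\nu)$ from \S\ref{subsub:surgeries}: the non-vanishing guaranteed by Corollary \ref{cor:surgery}, combined with Proposition \ref{prop:essential} and the indecomposability of $D_{\mathrm{m},\nu}(\rhobar)$ (Proposition \ref{prop:indecomp}), shows that every extension of $D_{\mathrm{m},\nu}(\rhobar)$ by non-$W^?(\rhobar)$ factors inside $E$ is already realized inside $\pi^{K_1}$.

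The main obstacle will be the reverse inclusion. The multiplicity count from Theorem \ref{thm:mingen} directly controls only composition factors in $W^?(\rhobar)$, whereas both $\pi^{K_1}$ and $D_0(\rhobar)$ may contain additional Jordan--H\"older factors outside $W^?(\rhobar)$ and the abstract maximality of $D_0(\rhobar)$ only yields one direction. Showing that \emph{every} non-$W^?(\rhobar)$ extension of $D_{\mathrm{m}}(\rhobar)$ permitted inside $E$ is in fact realized inside $\pi^{K_1}$ requires the careful structure of $D_{\mathrm{m},\nu}(\rhobar)$ via the minimal $\kappa(\rhobar,\nu)$---this is precisely the point of the construction in \S\ref{subsub:surgeries} and Corollary \ref{cor:surgery}.
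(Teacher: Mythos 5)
Your argument for the inclusion $\pi^{K_1}\subset D_0(\rhobar)$ matches the paper's: extend $\pi^{K_1}$ into the injective envelope $\bigoplus_{\sigma\in W^?(\rhobar)}P_\sigma$ via Proposition \ref{prop:Dm-ess}, count multiplicities $[\pi^{K_1}:\sigma]=\dim_\F M_\infty(P_\sigma)/\fm=3^{\#A(\sigma)}=[D_{\mathrm{m}}(\rhobar):\sigma]$ (Theorem \ref{thm:mingen}, Proposition \ref{prop:Dm}\eqref{item:Dmult}), conclude $\JH(\pi^{K_1}/D_{\mathrm{m}}(\rhobar))\cap W^?(\rhobar)=\emptyset$, and invoke the maximality in Proposition \ref{prop:maxext}.

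For the reverse inclusion $D_0(\rhobar)\subset\pi^{K_1}$, however, you have a genuine gap, and you have even flagged it yourself. The tools you cite --- the lattice complex \eqref{eqn:exactQ0}, Corollary \ref{cor:surgery}, Proposition \ref{prop:essential}, Proposition \ref{prop:indecomp} --- were used to \emph{produce} the embedding $D_{\mathrm{m}}(\rhobar)\into\pi^{K_1}$, but they do not, as cited, show that it extends to $D_0(\rhobar)$. The observation you are missing is much simpler and purely functorial: by construction $\JH(D_0(\rhobar)/D_{\mathrm{m}}(\rhobar))\cap W^?(\rhobar)=\emptyset$, so exactness of $M_\infty$ gives $M_\infty(D_0(\rhobar)/D_{\mathrm{m}}(\rhobar))=0$, hence the inclusion $D_{\mathrm{m}}(\rhobar)\subset D_0(\rhobar)$ becomes an isomorphism after applying $M_\infty$. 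Via the duality $\Hom_K(-,\pi)\cong(M_\infty(-)/\fm)^\vee$, the chosen injection $D_{\mathrm{m}}(\rhobar)\into\pi^{K_1}$ therefore extends (uniquely) to a map $D_0(\rhobar)\to\pi^{K_1}$, and this extension is automatically injective because it is injective on the socle, which equals $\soc D_{\mathrm{m}}(\rhobar)$. That one-line patching argument replaces your entire sketched $\nu$-by-$\nu$ lattice analysis and closes the gap. Combining both inclusions of finite-length modules then yields the isomorphism.
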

\begin{proof}
The inclusion $D_{\mathrm{m}}(\rhobar) \subset D_0(\rhobar)$ induces an isomorphism after applying $M_\infty$. 
Thus we get an injection $D_0(\rhobar) \into \pi^{K_1}$ extending the injection $D_{\mathrm{m}}(\rhobar) \into \pi^{K_1}$. 
We can extend the map $D_0(\rhobar) \subset \oplus_{\sigma \in W^?(\rhobar)} P_\sigma$ to a map 
\[
\pi^{K_1} \into \oplus_{\sigma \in W^?(\rhobar)} P_\sigma
\]
by injectivity of $P_\sigma$ and Proposition \ref{prop:Dm-ess}. 
We claim that $\JH(\pi^{K_1}/D_{\mathrm{m}}(\rhobar)) \cap W^?(\rhobar) = \emptyset$. 
Then by maximality of $D_0(\rhobar)$, we conclude that $\pi^{K_1} \cong D_0(\rhobar)$. 

It suffices to prove the claim. 
Suppose that $\sigma \in W^?(\rhobar)$. 
The multiplicity of $\sigma$ as a Jordan--H\"older factor of $\pi^{K_1}$ is $\dim_{\F} \Hom_{\rG}(P_\sigma,\pi^{K_1}) = \dim_{\F} M_\infty(P_\sigma)/\fm = 3^{\#A(\sigma)}$ by Theorem \ref{thm:mingen}. 
This is precisely the multiplicity of $\sigma$ as a Jordan--H\"older factor of $D_{\mathrm{m}}(\rhobar)$ by Proposition \ref{prop:Dm}\eqref{item:Dmult}. 
The claim follows. 
\end{proof}

\begin{rmk}\label{rmk:tauind}
For $\nu\in W(\eta)$, the $\F[\rG]$-modules $D_{\mathrm{m},\nu}(\rhobar)$ (see \S \ref{sec:patchingpresentation}), and thus the modules $D_{0,\nu}(\rhobar)$, depend on $\rhobar|_{I_{\Q_p}}$ (to define $W^?(\rhobar)$), $\nu$, a choice of a tame inertial $L$-parameter $\tau$, and a choice of lowest alcove presentation for $\rhobar$ (to define $\kappa$ and to parametrize the obvious weights in $W^?(\rhobar)$). 
However, by Proposition \ref{prop:indecomp} and Theorem \ref{thm:K1}, $D_{0,\nu}(\rhobar)$ is an indecomposable summand of $\pi^{K_1}$ which is determined by its socle and is independent of $\tau$. 
By the Krull--Schmidt theorem, we conclude that both $D_{\mathrm{m},\nu}(\rhobar)$ and $D_{0,\nu}(\rhobar)$ depend only on $\rhobar|_{I_{\Q_p}}$, a lowest alcove presentation of $\rhobar$, and $\nu \in W(\eta)$. 
Moreover, since $\pi^{K_1}$ is independent of a lowest alcove presentation of $\rhobar$ and $\nu \in W(\eta)$, $D_0(\rhobar)$ depends only on $\rhobar|_{I_{\Q_p}}$. 
\end{rmk}

\subsection{Jordan--H\"older factors of $D_0(\rhobar)$}

We describe the Jordan--H\"older factors of $D_0(\rhobar)$ with multiplicity. 
Fix $\rhobar$, $\nu\in W(\eta)$, and $\tau = \tau(s,\mu-\un{1})$ with $\tld{w} = \tld{w}(\rhobar,\tau)$ as in \S \ref{sec:patchingpresentation}. 
We define a set $\Sigma_{\nu_j}$ for each $j\in \cJ$. 
Let 
\[
\Sigma_{w_j\eta_j} \defeq \{(\tld{w}(\omega-(w_0w_j)^{-1}(\eps_1+\eps_2)),a) \mid (\omega,a) \in \Sigma_0\} 
\]
for $w_j \in S_3$ and
\[
\Sigma_{(1,1,1)_j} \defeq \Sigma_0 \cup \{(-\eps_1,0),(-\eps_2,0),(2\eps_1,0),(2\eps_1-\eps_2,0),(2\eps_2,0),(2\eps_2-\eps_1,0)\}. 
\]

\begin{thm}\label{thm:multiplicity}
Let $\nu \in W(\eta)$. Then 
\[
\JH(D_{0,\nu}(\rhobar)) = F(\Trns_\mu(s\prod_{j\in \cJ}\Sigma_{\nu_j})). 
\]
Moreover, if $\sigma = F(\Trns_\mu(s\omega,a))$ and 
\[
n(\sigma) = \# \{j\in \cJ \mid \nu_j = (1,1,1)_j,\, a_j = 0, \textrm{ and } (\omega,a)\in \Sigma_0\}, 
\]
then $\sigma$ appears in $\JH(D_{0,\nu}(\rhobar))$ with multiplicity $2^{n(\sigma)}$. 
\end{thm}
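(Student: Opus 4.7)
\emph{Proof plan.} The strategy is to identify $\JH(D_{0,\nu}(\rhobar))$ with multiplicities by exploiting the defining maximality of $D_{0,\nu}(\rhobar)$ inside $\bigoplus_{\sigma \in \soc\, D_{\mathrm{m},\nu}(\rhobar)} P_\sigma$ and by separating modular and non-modular Jordan--H\"older factors. The argument respects the tensor-product structure of $P_\sigma \cong \otimes_j Q_1(\lambda_j)|_{\rG}$ from Proposition \ref{prop:Q1proj}, so one can analyze the contributions of each embedding $j \in \cJ$ separately.

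\emph{Step 1 (modular factors).} I would first compute $\JH(D_{0,\nu}(\rhobar)) \cap W^?(\rhobar)$ with multiplicities. By the defining condition $\JH(D_{0,\nu}(\rhobar)/D_{\mathrm{m},\nu}(\rhobar)) \cap W^?(\rhobar) = \emptyset$, the modular multiset of $D_{0,\nu}(\rhobar)$ agrees with that of $D_{\mathrm{m},\nu}(\rhobar)$. Proposition \ref{prop:Qmult} then identifies exactly the modular weights $F(\Trns_\mu(s\omega,a))$ occurring, with $(\omega_j,a_j)$ in the six-element subset $\Sigma_0 \cap r(\Sigma_0)$ of $\Sigma_0$ for $j\in J$ (matching the modular part of $\Sigma_{(1,1,1)_j}$) and equal to the fixed element for $j \notin J$ (matching the modular part of $\Sigma_{w_j\eta_j}$), each with multiplicity $2^{\#\{j\in J : a_j=0,\, (\omega,a)\in \Sigma_0\}} = 2^{n(\sigma)}$.

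\emph{Step 2 (non-modular factors).} For the non-modular part, I would use that $D_{0,\nu}(\rhobar) \subset \bigoplus_{\kappa \in \soc\, D_{\mathrm{m},\nu}(\rhobar)} P_\kappa$ and extract the list of non-modular weights appearing in $\JH(P_\kappa)$ via Proposition \ref{prop:alg_rep_ext_graph}; these are precisely the weights at position $j$ labelled by the three ``extremal'' elements of $\Sigma_0$ and the six ``extras'' $(-\eps_1,0),(-\eps_2,0),(2\eps_1,0),(2\eps_1-\eps_2,0),(2\eps_2,0),(2\eps_2-\eps_1,0)$ when $j \in J$, and the translates in $\Sigma_{w_j\eta_j}$ when $j \notin J$. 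The upper bound $2^{n(\sigma)}$ on the multiplicity of each such non-modular $\sigma$ comes by bounding its occurrences in $\bigoplus_\kappa P_\kappa$ modulo the subspace generated by all modular submodules whose multiplicities have already been pinned down in Step 1 (here I use that for non-modular $\sigma$, $n(\sigma)$ is exactly the branching factor associated with the ``$a_j=0$'' positions of $D_{\mathrm{m},\nu}(\rhobar)$ at which $\sigma$ lies on the boundary).

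\emph{Step 3 (matching lower bound).} To show each non-modular weight in $F(\Trns_\mu(s\prod_j \Sigma_{\nu_j}))$ actually occurs with the claimed multiplicity, I would realize it explicitly as a Jordan--H\"older factor of a suitable submodule of $\bigoplus_{\kappa \in \soc\, D_{\mathrm{m},\nu}(\rhobar)} P_\kappa$ containing $D_{\mathrm{m},\nu}(\rhobar)$, whose quotient by $D_{\mathrm{m},\nu}(\rhobar)$ has no modular Jordan--H\"older factors; Proposition \ref{prop:maxext} then forces this submodule to lie in $D_{0,\nu}(\rhobar)$. The construction proceeds embedding-by-embedding: at each $j\in J$ one inserts the appropriate length-two extensions of the non-modular weight by modular weights already present in $D_{\mathrm{m},\nu}(\rhobar)$, reading off their existence from the Weyl and dual Weyl filtration structure of $Q_1(\lambda_j)$ in Proposition \ref{prop:filtrations} and the non-vanishing of the relevant $\Ext^1$ via Proposition \ref{prop:weylext}.

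\emph{Main obstacle.} The technical heart of the argument is Step 2: ensuring that the naive non-modular multiplicity in $\bigoplus_\kappa P_\kappa$ is cut down to exactly $2^{n(\sigma)}$ by the maximality condition, without losing any non-modular weights of $\Sigma_{\nu_j}$. This requires a careful analysis of the interaction between non-modular extensions and the modular multiplicity structure of $D_{\mathrm{m},\nu}(\rhobar)$ recorded in Proposition \ref{prop:Qmult} and the cokernel-of-paths presentation in \S \ref{sec:patchingpresentation}, together with the covering property (Proposition \ref{prop:W?-cover}) to preclude spurious modular contributions when adjoining non-modular Jordan--H\"older factors.
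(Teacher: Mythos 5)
Your Step 1 is correct and matches the paper's implicit first reduction: the modular multiset of $D_{0,\nu}(\rhobar)$ is inherited from $D_{\mathrm{m},\nu}(\rhobar)$ and is given by Proposition \ref{prop:Qmult}. The rest of your plan diverges from the paper's proof, and Step 2 has a genuine gap.

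Your Step 2 claims that the upper bound $2^{n(\sigma)}$ for non-modular $\sigma$ follows by ``bounding its occurrences in $\bigoplus_\kappa P_\kappa$ modulo the subspace generated by all modular submodules whose multiplicities have already been pinned down.'' This is not a well-posed bound: $D_{0,\nu}(\rhobar)$ is not a sum or intersection of modular submodules inside $\bigoplus_\kappa P_\kappa$, and the raw multiplicity of $\sigma$ in $\bigoplus_\kappa P_\kappa$ (computed from Proposition \ref{prop:alg_rep_ext_graph}) is typically much larger than $2^{n(\sigma)}$. What actually tames the multiplicity in the paper is a dual Weyl truncation: after dualizing, the envelope $\iota^\vee$ factors through $\bigoplus_\sigma V_\sigma^\vee/\sum_{i\in J}\rad^{2_i}V_\sigma^\vee$. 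Proving that $\rad^{2_i}V_\sigma^\vee\subset\ker\iota^\vee$ for $i\in J$ requires showing that (a) the modular constituents of $\gr^{2_i}V_\sigma^\vee$ die because $D_{\mathrm{m},\nu}(\rhobar)$ has no modular extensions in that layer, and (b) the non-modular constituents of $\gr^{2_i}V_\sigma^\vee$ die because their survival would force a forbidden nonsplit extension of modular weights inside $D_{\mathrm{m},\nu}(\rhobar)$, via Propositions \ref{prop:filtrations} and \ref{prop:weylext}. Your proposal never produces this truncation; without it the upper bound does not follow.

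Your Step 3 (explicit construction of submodules containing $D_{\mathrm{m},\nu}(\rhobar)$ with the prescribed non-modular factor and no modular factors in the quotient, then invoking maximality) is logically sound as an idea and is a genuinely different route from the paper's. The paper instead works entirely on the dual side: it defines $D_{0,\nu}^\vee$ as a quotient of the truncated $V^\vee$ by the images of projective-cover kernels, shows $D_{0,\nu}^\vee\cong D_{0,\nu}(\rhobar)^\vee$ by maximality, eliminates non-modular $\kappa$ outside $F(\Trns_\mu(s\prod_j\Sigma_{\nu_j}))$ using Propositions \ref{prop:weylext} and \ref{prop:doubleadj}, and then computes $[D_{0,\nu}^\vee:\kappa^\vee]=\sum_{\sigma\in N(\kappa)}[D_{0,\nu}^\vee:\sigma^\vee]$ via a ``nearest modular weight'' isomorphism on $\Hom$ spaces. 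Your explicit-construction route, even if completed, would be substantially more work: you would need to realize all the required length-two extensions simultaneously across embeddings and verify multiplicities match $2^{n(\sigma)}$, whereas the $\Hom$-counting argument reduces the non-modular multiplicities to the already-known modular ones in one uniform step. More importantly, the lower bound alone is not enough; the missing upper bound in Step 2 is the central obstruction.
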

\begin{proof}
Fix an injective envelope 
\[
\iota: D_{0,\nu}(\rhobar) \into \bigoplus_{\sigma \in \JH(\soc D_{0,\nu}(\rhobar))} P_\sigma. 
\]
Let $V_\sigma \subset P_\sigma$ be the Weyl submodule in the Weyl filtration in Proposition \ref{prop:filtrations}. 
If $\sigma\in \JH(\soc D_{0,\nu}(\rhobar))$ and $\sigma \uparrow \kappa$ ($\sigma$ and $\kappa$ are linked Serre weights with $\kappa$ in a higher alcove), then $\sigma$ appears with multiplicity one in $\JH(D_{0,\nu}(\rhobar))$ and $\kappa\notin \JH(D_{0,\nu}(\rhobar))$ if $\kappa \not\cong \sigma$. 
Using this, it is easy to show using the dual Weyl filtration in Proposition \ref{prop:filtrations} and \cite[Lemma 4.2.2]{LLLM2} that the $\F$-dual $\iota^\vee$ of $\iota$ factors through the dual Weyl quotients $V_\sigma^\vee$ of $P_{\sigma^\vee}$. 
For a tuple $a$, let $\rad^a V_\sigma^\vee$ be the image of $\rad^a P_{\sigma^\vee}$ in $V_\sigma^\vee$. 

Let $J = J(\nu) = \{j\in \cJ\mid \nu_j = (1,1,1)_j\}$. 
If $i \in J$, we claim that $\rad^{2_i} V_\sigma^\vee \subset \ker \iota^\vee$. 
It suffices to show that the induced map 
\begin{equation}\label{eqn:2i0}
\gr^{2_i} V_\sigma^\vee \ra D_{0,\nu}(\rhobar)^\vee/\iota^\vee(\rad^{>2_i}V_\sigma^\vee) 
\end{equation} 
is $0$ as $\gr^{2_i} V_\sigma^\vee$ is the cosocle of $\rad^{2_i} V_\sigma^\vee$ by \cite[Lemma 4.2.2]{LLLM2}. 
We first claim that the induced map 
\begin{equation}\label{eqn:2i}
\gr^{2_i} V_\sigma^\vee \ra D_{\mathrm{m},\nu}(\rhobar)^\vee/\iota^\vee(\rad^{>2_i}V_\sigma^\vee) 
\end{equation}
is $0$. 
Indeed, \eqref{eqn:2i} factors through $\rad D_{\mathrm{m},\nu}(\rhobar)^\vee/\iota^\vee(\rad^{>2_i}V_\sigma^\vee)$, but $\JH(\rad D_{\mathrm{m},\nu}(\rhobar)^\vee) \cap \JH(\gr^{2_i} V_\sigma^\vee) = \emptyset$ by alcove considerations. 
Thus, if $\kappa^\vee\subset \gr^{2_i} V_\sigma^\vee$ with $\kappa \in W^?(\rhobar)$, then $\kappa^\vee$ maps to $0$ under \eqref{eqn:2i0}. 

Now suppose that $\kappa^\vee\subset \gr^{2_i} V_\sigma^\vee$ is a simple $\rG$-submodule with $\kappa \notin W^?(\rhobar)$. 
(For example, if $\sigma \cong \Trns_\mu(s\omega,a)$ with $(\omega_i,a_1) = (0,1)$ and $\kappa \cong \Trns_\mu(s\xi,a)$, then $(\xi_i,a_i) = (-\eps,1)$ for $\eps = \eps_1$ or $\eps_2$.) 
To show that $\kappa^\vee$ maps to $0$ under \eqref{eqn:2i0}, it suffices to show that $\kappa$ is not a Jordan--H\"older factor of $\iota(D_{0,\nu}(\rhobar))$. 
Suppose otherwise. 
Note that $\kappa$ is a Jordan--H\"older factor of $P_\sigma$ for a unique $\sigma \in \JH(\soc D_{0,\nu}(\rhobar))$ which we now fix. 
Thus, $\kappa$ is a Jordan--H\"older factor of $\iota(D_{0,\nu}(\rhobar)) \cap V_\sigma$. 
Propositions \ref{prop:filtrations} and \ref{prop:weylext} imply that $\iota(D_{\mathrm{m},\nu}(\rhobar)) \cap V_\sigma$, and in particular $D_{\mathrm{m},\nu}(\rhobar)$, contains an extension of $\sigma'$ by $\sigma$ where $\sigma'$ is the weight linked to $\sigma$ with alcove differing precisely at $i \in \cJ$. 
This contradicts the fact that $D_{\mathrm{m},\nu}(\rhobar)$ does not contain the extension of two simple modules in $W^?(\rhobar)$. 

To summarize, there is a surjection 
\begin{equation}\label{eqn:weyltrancate}
\iota^\vee: \bigoplus_{\sigma \in \JH(\soc D_{0,\nu}(\rhobar))} (V_\sigma^\vee/\sum_{i\in J}\rad^{2_i} V_\sigma^\vee) \onto D_{0,\nu}(\rhobar)^\vee. 
\end{equation}
Let $V^\vee$ be the domain of \eqref{eqn:weyltrancate}. 
Let $D_{0,\nu}^\vee$ denote the quotient of $V^\vee$ by the image of 
\[
\bigoplus_{\sigma \in W^?(\rhobar)} \ker(\Hom(P_\sigma,V^\vee)\ra \Hom(P_\sigma,D_{0,\nu}(\rhobar)^\vee)) \otimes P_\sigma
\]
under the evaluation map. 
Then \eqref{eqn:weyltrancate} induces an isomorphism $D_{0,\nu}^\vee \risom D_{0,\nu}(\rhobar)^\vee$ by the maximality property of $D_{0,\nu}(\rhobar)$. 
Let $V$ and $D_{0,\nu}$ be the $\F$-duals of $V^\vee$ and $D_{0,\nu}^\vee$, respectively. 
It suffices to show that $D_{0,\nu}$ has the properties asserted in the theorem for $D_{0,\nu}(\rhobar)$. 

First, if $\kappa \in \JH(V) \setminus F(\Trns_\mu(s\prod_{j\in \cJ}\Sigma_{\nu_j}))$, then Propositions \ref{prop:weylext} and \ref{prop:doubleadj} imply that there exists $\sigma \in W^?(\rhobar) \setminus \JH(D_{\mathrm{m},\nu})$ such that the map $\Hom_{\rG}(P_{\kappa^\vee},P_{\sigma^\vee})\otimes \Hom_{\rG}(P_{\sigma^\vee},V^\vee)\ra \Hom_{\rG}(P_{\kappa^\vee},V^\vee)$ induced by composition is surjective. 
Since the induced map $\Hom_{\rG}(P_{\sigma^\vee},V^\vee) \ra \Hom_{\rG}(P_{\sigma^\vee},D_{0,\nu}^\vee) \risom \Hom_{\rG}(P_{\sigma^\vee},D_{\mathrm{m},\nu}^\vee)$ is $0$, we have that $\Hom_{\rG}(P_{\kappa^\vee},D_{0,\nu}^\vee) = 0$. 
We conclude that 
\[
\JH(D_{0,\nu}^\vee) \subset F(\Trns_\mu(s\prod_{j\in \cJ}\Sigma_{\nu_j})). 
\]

Now suppose that $\kappa \in F(\Trns_\mu(s\prod_{j\in \cJ}\Sigma_{\nu_j}))$. 
Let $N(\kappa)$ denote the set of weights in $\JH(D_{\mathrm{m},\nu})$ nearest to $\kappa$ in the metric defined in \cite[Definition 2.1.8]{LLLM2}. 
(The set $N(\kappa)$ may have more than one element.) 
Let $d$ be the distance of $\kappa$ to elements in $N(\kappa)$. 
It suffices to show that $[D_{0,\nu}^\vee:\kappa^\vee] = \sum_{\sigma \in N(\kappa)} [D_{0,\nu}^\vee:\sigma^\vee]$. 
For each $\sigma \in N(\kappa)$, fix a lift $P_{\kappa^\vee} \ra P_{\sigma^\vee}$ of a nonzero map $P_{\kappa^\vee} \ra P_{\sigma^\vee}/\rad^{d+1} P_{\sigma^\vee}$. 
Then the induced map 
\[
\bigoplus_{\sigma \in N(\kappa)} \Hom_{\rG}(P_{\sigma^\vee},V^\vee) \ra \Hom_{\rG}(P_{\kappa^\vee},V^\vee) 
\]
is an isomorphism. 
Thus, $[D_{0,\nu}^\vee:\kappa^\vee] \leq \sum_{\sigma \in N(\kappa)} [D_{0,\nu}^\vee:\sigma^\vee]$. 
If $\tau \in \JH(D_{\mathrm{m},\nu})$ and a composition $P_{\kappa^\vee} \ra P_{\tau^\vee} \ra V^\vee$ is nonzero, then this composition can be written as a composition $P_{\kappa^\vee} \ra P_{\sigma^\vee} \ra P_{\tau^\vee} \ra V^\vee$ where the first map is the one fixed above. 
We conclude that $[D_{0,\nu}^\vee:\kappa^\vee] = \sum_{\sigma \in N(\kappa)} [D_{0,\nu}^\vee:\sigma^\vee]$. 
\end{proof}

\subsection{Global applications}

We now apply the results of \S \ref{sub:abstract:locality} to obtain instances of local--global compatibility in the mod-$p$ Langlands correspondence for $\GL_3$.
We follow the setup (and most of the notation) of \cite[\S 5.3]{LLLM2}. 
In particular $F/F^+$ is a CM extension which is unramified at all finite places.

We fix a totally definite outer form $H_{/F^+}$ of $\GL_3$ which splits over $F$, and a compact open subgroup $U^p\leq H(\bA_{F^+}^{\infty p})$.
Given a finite smooth $\F[U^p]$-module $W$ we have the space of mod $p$ algebraic automorphic forms \[
S(U^p,W) \defeq \left\{f:\,H(F^{+})\backslash H(\A^{\infty}_{F^{+}})\rightarrow W\,|\, f(gu)=u^{-1}f(g)\,\,\forall\,\,g\in G(\A^{\infty}_{F^{+}}), u\in U^p\right\}.
\]
This space carries commuting actions of a Hecke algebra $\bT^{\textnormal{univ}}$ and $H(F^+_p)$.
We fix a maximal ideal $\fm\subset\bT^{\textnormal{univ}}$ in the support of $S(U^p,W)$ giving rise to a Galois representation $\rbar:G_F\ra\GL_3(\F)$.
We now make the same assumptions as in \cite[\S 5.3]{LLLM2}, namely:
\begin{enumerate}
\item 
\label{eq:glob:1}
$p$ is unramified in $F^+$ and all places in $F^+$ above $p$ split in $F$;
\item $H_{/F^+}$ is quasi-split at all finite places;
\item $U^p$ is as in \cite[\S 5.3, (1)-(3)]{LLLM2} (so that $U^p$ is hyperspecial at all but one auxiliary place);
\item $W=W_{\Sigma_{0}^+}\otimes_{\cO} \F$ where $W_{\Sigma_{0}^+}$ is obtained using minimally ramified types away from $p$;
\item $\rbar$ satisfies the Taylor--Wiles conditions of \cite[Definition 7.3]{LLLM}; and
\item 
\label{eq:glob:5}
if $\rbar$ is ramified at a finite place $w$ of $F$ then $w|_{F^+}$ splits in $F$.
\end{enumerate}

We set $\pi(\rbar)\defeq S(U^p,W)[\fm]$.
Let $S_p$ be the set of places of $F^+$ above $p$.
For each $v\in S_p$ fix a place $w|v$ of $F$, and isomorphisms $F^+_v\cong F_w$,  $H(F^+_v)\cong \GL_3(F_w)$ (see \cite[\S 5.3]{LLLM2}).
As explained in \S \ref{subsubsec:L_parameters} the collection $\{\rbar|_{G_{F_v^+}}\}_{v\in S_p}$ gives rise to an $L$-homomorphism $\rhobar:G_{\Qp}\ra{}^L\un{G}(\F)$.

\begin{thm}
\label{main:glob:app}
Let $\rbar:G_F\ra\GL_3(\F)$ a continuous Galois representation satisfying items \eqref{eq:glob:1}--\eqref{eq:glob:5}, and let $\rhobar$ the $L$-homomorphism corresponding to $\{\rbar|_{G_{F_v^+}}\}_{v\in S_p}$.
We further assume that $\rhobar$ is tame and $11$-generic.
Then
\[
\pi(\rbar)^{K_1}\cong D_0(\rhobar)
\]
where $D_0(\rhobar)$ is as in \S \ref{sub:abstract:locality}.
\end{thm}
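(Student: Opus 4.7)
The plan is to deduce this global statement from the abstract local result Theorem \ref{thm:K1} by exhibiting a patching functor $M_\infty$ (of the form required in \S \ref{sub:abstract:locality}) whose fiber at the maximal ideal $\fm$ recovers $\pi(\rbar)$. Concretely, I will run the Taylor--Wiles--Kisin patching method in the setup of \cite{CEGGPS}, \cite{EGS}, \cite{LLLM2} applied to the algebraic automorphic forms $S(U^p, W \otimes -)$ localized at $\fm$, using the tame inertial types to cut out pieces of $R_\infty$. The Taylor--Wiles condition on $\rbar$, combined with items \eqref{eq:glob:1}--\eqref{eq:glob:5}, ensures that the patching argument goes through verbatim: one gets a pseudocompact $\cO[\![\GL_3(\cO_p)]\!]$-module $M_\infty$ with a commuting action of $R_\infty = R_\rhobar \widehat{\otimes}_\cO R^p$ (where $R^p$ is a power series ring over $\cO$ encoding the Taylor--Wiles deformation conditions away from $p$ coming from the minimally ramified types).

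Next I would verify that the induced functor $M_\infty(-) \defeq \Hom^{\mathrm{cont}}_{\GL_3(\cO_p)}(-, M_\infty^\vee)^\vee$ is a minimal patching functor in the sense of Definition \ref{minimalpatching}. The maximal Cohen--Macaulay property of $M_\infty(\sigma^\circ(\tau))$ over $R_\infty(\tau)$ follows from the usual dimension computation (the depth is bounded below by the Krull dimension thanks to the action of the patched diamond/deformation ring, and equality comes from the fact that $R_\infty(\tau)$ is equidimensional of the expected dimension, as proved in \cite{MLM,GL3Wild}). Minimality — that the generic fiber rank over $\Spec R_\infty(\tau)[1/p]$ is at most one on connected components — follows from classical multiplicity-one results for automorphic forms on the definite unitary group combined with the choice of minimally ramified types away from $p$, exactly as in \cite[\S 5]{LLLM2}. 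The Cohen--Macaulayness over $\ovl{X}_\infty(\tau)$ for Serre weights $\sigma \in \JH(\ovl{\sigma}^\circ(\tau))$ then follows from the $\tau$-version plus the miracle flatness trick using the patching variables.

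Having set up $M_\infty$, the identification $(M_\infty/\fm)^\vee \cong \pi(\rbar) = S(U^p, W)[\fm]$ is built into the patching construction: the quotient by the auxiliary patching variables and the maximal ideal of $R_\rhobar$ recovers the original completed cohomology space at infinite level at $p$, and taking $\fm$-torsion recovers $\pi(\rbar)$. Since $\rhobar$ is assumed tame and $11$-generic, the hypotheses of Theorem \ref{thm:K1} are met, and so
\[
\pi(\rbar)^{K_1} = \bigl((M_\infty/\fm)^\vee\bigr)^{K_1} \cong D_0(\rhobar),
\]
as desired. The hardest step here is logistical rather than conceptual: one must check that all the patching axioms hold with the multi-type deformation rings and Serre weights in the form required by \S \ref{sec:patch} (in particular the factorization $R_\infty = R_\rhobar \widehat{\otimes} R^p$ with $R^p$ formally smooth, and the compatibility of $M_\infty$ with restriction to single-type components). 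This is essentially the content of the patching setup in \cite[\S 5.3]{LLLM2}, \cite[\S 5.3]{GL3Wild}, which goes through unchanged in our setting once one allows multi-type deformation rings on the $p$-adic side, since the local multi-type structure introduced in \S \ref{subsec:MTdef} only alters the local picture at $p$, and the Taylor--Wiles argument is purely global/auxiliary-place in nature.
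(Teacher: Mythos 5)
Your proposal matches the paper's proof: both construct a weak minimal patching functor $M_\infty$ of the required form via Taylor--Wiles--Kisin patching (the paper cites \cite[Theorem 5.2.1]{HLM} for the construction and verification of Definition \ref{minimalpatching}, whereas you spell out the verification in more detail), identify $(M_\infty/\fm)^\vee$ with $\pi(\rbar)$, and conclude by invoking Theorem \ref{thm:K1}. The approach is essentially the same; the paper's proof is just terser in deferring the patching details to the reference.
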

\begin{proof}
As in \cite[Theorem 5.2.1]{HLM}, we can and do choose a weak minimal patching functor $M_\infty$ of the form $\Hom_{\GL_3(\cO_p)}^{\textrm{cont}}(-,M_\infty^\vee)^\vee$ for a pseudocompact $\cO[\![\GL_3(\cO_p)]\!]$-module also denoted $M_\infty$ satisfying Definition \ref{minimalpatching} so that $\pi(\rhobar) \cong (M_\infty/\fm)^\vee$. 
The result follows now from Theorem \ref{thm:K1}.
\end{proof}
\vfill

\appendix
\section{Tables for multi-type deformation rings}
\begin{table}[H]
\caption[Foo content]{\textbf{The ring $\tld{S}$ and its ideals $\tld{I}^{(j)}_{\tau,\nabla_\infty}$.
}
}
\label{Table_Ideals}
\centering
\tiny
\adjustbox{max width=\textwidth}{
\begin{tabular}{| c | c | c | c |}
\hline
&\multicolumn{3}{c |}{}\\
$A^{T,(j)}\tld{w}^{*,T}(\rhobar)_j$ &
\multicolumn{3}{c |}{
$\begin{pmatrix}
d_{11}^{*}(v+p)+c_{11}+\frac{e_{11}}{v}&c_{12}&c_{13}\\
d_{21}(v+p)+c_{21}&d^{*}_{22}(v+p)+c_{22}&c_{23}\\
d_{31}(v+p)+c_{31}&d_{32}(v+p)+c_{32}
&d_{33}^{*}(v+p)+c_{33}
\end{pmatrix}\cdot s_j^{-1}v^{\mu_j+(1,0,-1)}$}\\
&\multicolumn{3}{c |}{}\\
\hline
&\multicolumn{3}{c |}{}\\
$\tld{S}^{(j)}$& \multicolumn{3}{c |}{$\cO[\![c_{11},x^*_{11},e_{11},c_{12},c_{13},d_{21},c_{21},c_{22},x_{22}^*,c_{23},d_{31},c_{31},d_{32},c_{32},c_{33},x^*_{33}]\!]$}\\
&\multicolumn{3}{c |}{}\\
\hline
\hline
$\tld{w}^*(\rhobar,\tau)_j$&\multicolumn{3}{c |}{}\\
\hline
&&&\\
\multirow{8}{*}{$\alpha\beta\alpha \gamma t_{\un{1}}$}&
\multirow{8}{*}{$\tld{I}^{(j)}_{\tau,\nabla_{\infty}}$}&\multirow{5}{*}{$\tld{I}^{(j)}_{\tau}$}&
$\begin{aligned}
&c_{13},\ c_{23}\\
&c_{32}+pd_{32},\ c_{33}+pd_{33}^*
\end{aligned}$\\
&&&\\
\cline{4-4}
&&&\\
&&&$\begin{aligned}
&c_{21},\,c_{22}\\
&c_{11}-pd_{11}^*,\, e_{11}-p^2d_{11}^*
\end{aligned}$\\
&&&\\
\cline{3-4}
&&&\\
&&\multirow{2}{*}{$\textnormal{Mon}_{\tau}$}&
$\begin{aligned}
&(b_{\tau,1}-b_{\tau,3}-1)d^*_{22}c_{31}+(b_{\tau,2}-b_{\tau,3}-1)d_{21}d_{32}+pd_{31}d_{22}^*+O(p^{N-4})
\end{aligned}$
\\
&&&\\
\hline
&&&\\
\multirow{8}{*}{$\alpha\beta\alpha t_{\un{1}}$}&
\multirow{8}{*}{$\tld{I}^{(j)}_{\tau,\nabla_{\infty}}$}&\multirow{5}{*}{$\tld{I}^{(j)}_{\tau}$}&
$\begin{aligned}
&e_{11},\,c_{21},\ c_{22},\ c_{23}\\
&c_{32}+pd_{32},\ c_{33}+pd_{33}^*
\end{aligned}$\\
&&&\\
\cline{4-4}
&&&\\
&&&$\begin{aligned}
&c_{13}d_{32}-c_{12}d_{33}^*,\\
&c_{13}d_{31}-c_{11}d_{33}^*+pd_{11}^*d_{33}^*\\
&c_{13}c_{31}+pc_{11}d_{33}^*
\end{aligned}$\\
&&&\\
\cline{3-4}
&&&\\
&&\multirow{2}{*}{$\textnormal{Mon}_{\tau}$}&
$\begin{aligned}
&(b_{\tau,2}-b_{\tau,3})d_{21}c_{12}+(b_{\tau,3}-b_{\tau,1})c_{11}d_{22}^*-pd_{11}^*d_{22}^*+O(p^{N-4})
\\
&p(b_{\tau,2}-b_{\tau,3})d_{21}d_{32}+(b_{\tau,1}-b_{\tau,3}+1)c_{31}d_{22}^*+pd_{31}d_{22}^*+O(p^{N-4})
\end{aligned}$
\\
&&&\\
\hline
&&&\\
\multirow{8}{*}{$\alpha\beta t_{\un{1}}$}&
\multirow{8}{*}{$\tld{I}^{(j)}_{\tau,\nabla_{\infty}}$}&\multirow{5}{*}{$\tld{I}^{(j)}_{\tau}$}&
$\begin{aligned}
&e_{11},\,c_{22}+pd_{22}^*,\ c_{32}+pd_{32},\ c_{33}+pd_{33}^*
\end{aligned}$\\
&&&\\
\cline{4-4}
&&&\\
&&&$\begin{aligned}
&c_{13}d_{32}-c_{12}d_{33}^*,\\
&c_{31}d_{22}^*-d_{32}c_{21}\\
&c_{23}d_{32}+pd_{22}^*d_{33}^*\\
&c_{13}c_{21}-c_{23}c_{11}\\
&c_{11}d_{33}^*-d_{31}c_{13}-pd_{11}^*d_{33}^*
\end{aligned}$\\
&&&\\
\cline{3-4}
&&&\\
&&\multirow{2}{*}{$\textnormal{Mon}_{\tau}$}&
$\begin{aligned}
&(b_{\tau,2}-b_{\tau,3})d_{21}c_{12}+(b_{\tau,3}-b_{\tau,1})c_{11}d_{22}^*+p(b_{\tau,2}-b_{\tau,3}-1)d_{11}^*d_{22}^*+O(p^{N-4})
\\
&(b_{\tau,2}-b_{\tau,3}-1)c_{23}d_{31}+(b_{\tau,1}-b_{\tau,2}+1)c_{21}d_{33}^*+p(b_{\tau,2}-b_{\tau,3})d_{21}d_{33}^*+O(p^{N-4})
\end{aligned}$
\\
&&&\\
\hline
\end{tabular}}
\caption*{\footnotesize{
We give a presentation of $\cO(M_{\rhobar}^{\nabla_{T,\infty}})$ when $\#T=1$.
Let $z t_\nu\defeq\tld{w}^*(\rhobar,\tau)$ and define $b^{(j)}_{\tau}\in T(\Zp)$ by $z_j^{-1}(b^{(j)}_{\tau})=(s'_{\orient,j})^{-1}(\bf{a}^{\prime(j)})/(p^{f'}-1)$, where $\bf{a}^{\prime(j)}$ is defined with respect to $(sz,\mu-sz(\nu))\in \un{W}\times X^*(\un{T})$ (see \S \ref{subsubsec:TIT}).
For readability, we write $b_{\tau},c_{ik}$, etc.~instead of $b^{(j)}_{\tau},c^{(j)}_{ik}$, etc.
We assume that $\mu$ is $N$-deep in $\un{C}_0$ (hence $\mu-sz(\nu)$ is $(N-2)$-deep in $\un{C}_0$).
Note that $D^{\tau, \overline{\beta}}_{\overline{\fM}_\tau}\into M_{\rhobar}^{T}\cdot\tld{w}^{*, T}(\rhobar)$ (Proposition \ref{prop:factor_mono}) is given by $A^{(j)}_{\fM,\beta}\mapsto A^{(j)}_{\fM,\beta}\cdot z^{-1}_j\cdot \tld{w}^{*,T}(\rhobar)_j$ when {$T^{(j)}$ is as in \ref{it:prop:T:1}-\ref{it:prop:T:1'}}.
Finally $(a,b,c)\defeq b_{\tau}$ modulo $\varpi$ and note that $(a,b,c)\equiv -\big(s_j^{-1}(\mu_j+\eta_j)-z(\nu)\big)$ modulo $\varpi$.}
}
\end{table}

\begin{table}[H]
\captionsetup{justification=centering}
\caption[Foo content]{\textbf{Further cases of the ideals $\tld{I}^{(j)}_{\tau}$, $\tld{I}^{(j)}_{\tau,\nabla_\infty}$ and ${I}^{(j)}_{\tau,\nabla_{\textnormal{alg}}}$ when $\tld{w}^*(\rhobar,\tau)_j\in\{\beta\alpha t_{\un{1}}, t_{\un{1}}\}$.
}
}
\label{Table_Ideals_1}
\centering
\adjustbox{max width=\textwidth}{
\begin{tabular}{| c | c | c | c |}
\hline
\hline
$\tld{w}^*(\rhobar,\tau)_j$&\multicolumn{3}{c |}{}\\
\hline
&&&\\
\multirow{8}{*}{$\beta\alpha t_{\un{1}}$}&
\multirow{8}{*}{$\tld{I}^{(j)}_{\tau,\nabla_{\infty}}$}&\multirow{5}{*}{$\tld{I}^{(j)}_{\tau}$}&
$\begin{aligned}
&e_{11},\,c_{21},\ c_{22},\ c_{23}\\
&c_{31}+pd_{31},\ c_{33}+pd_{33}^*
\end{aligned}$\\
&&&\\
\cline{4-4}
&&&\\
&&&$\begin{aligned}
&c_{11}d_{33}^*-c_{13}d_{31},\\
&c_{13}c_{32}+pd_{33}^*c_{12}\\
&d_{21}(c_{13}d_{32}-c_{12}d_{33}^*)-pd_{11}^*d_{22}^*d_{33}^*
\end{aligned}$\\
&&&\\
\cline{3-4}
&&&\\
&&\multirow{2}{*}{$\textnormal{Mon}_{\tau}$}&
$\begin{aligned}
&(b_{\tau,2}-b_{\tau,3})d_{21}c_{12}+(b_{\tau,3}-b_{\tau,1})c_{11}d_{22}^*-pd_{11}^*d_{22}^*+O(p^{N-4})
\\
&(b_{\tau,1}-b_{\tau,3})c_{12}d_{31}+(b_{\tau,3}-b_{\tau,1})c_{11}d_{32}+(b_{\tau,3}-b_{\tau,2}-1)c_{32}d_{11}^*-pd_{32}d_{11}^*+O(p^{N-4})
\end{aligned}$
\\
&&&\\
\hline
&&&\\
\multirow{9}{*}{$t_{\un{1}}$}&
\multirow{9}{*}{$\tld{I}^{(j)}_{\tau,\nabla_\infty}$}&\multirow{5}{*}{$\tld{I}^{(j)}_\tau$}&
$\begin{aligned}
&e_{11},\,c_{21}+pd_{21},\ c_{31}+pd_{31},\ c_{32}+pd_{32}
\end{aligned}$\\
&&&\\
\cline{4-4}
&&&\\
&&&$\begin{aligned}
&c_{23}d_{31} - d_{21}c_{33},\ c_{12}d_{31} - c_{11}d_{32}, \\
&c_{13}c_{22} - c_{12}c_{23},\  pc_{13}d_{32} + c_{12}c_{33},\\
& c_{22}d_{31} + pd_{21}d_{32},\  pc_{13}d_{31} + c_{11}c_{33},\\
&pc_{13}d_{21} + c_{11}c_{23},\  pc_{12}d_{21} + c_{11}c_{22},\\
& c_{13}d_{21}d_{32} - c_{12}d_{21}d_{33}^* - c_{13}d_{31}d_{22}^* - c_{23}d_{32}d_{11}^* + c_{11}d_{22}^* d_{33}^*+ d_{11}^*c_{22}d_{33}^* + d_{11}^*d_{22}^*c_{33},\\
&c_{12}d_{21}c_{33} - c_{11}c_{22}d_{33}^* - c_{11}d_{22}^*c_{33} - d_{11}^*c_{22}c_{33} - p(c_{11}d_{22}^*d_{33}^*  +d_{11}^*c_{22}d_{33}^* +d_{11}^*d_{22}^*c_{33})
\end{aligned}$\\
&&&\\
\cline{3-4}
&&&\\
&&\multirow{3}{*}{$\textnormal{Mon}_{\tau}$}&
$\begin{aligned}
&(b_{1,\tau}-b_{3,\tau}-1)(c_{23}d_{32}-c_{33}d_{22}^*)-(b_{1,\tau}-b_{2,\tau}-1)c_{22}d_{33}^*+pd_{11}^*d_{22}^*d_{33}^*+O(p^{N-4}),\\
&(b_{1,\tau}-b_{2,\tau})(c_{13}d_{31}-c_{11}d_{33}^*)+(b_{2,\tau}-b_{3,\tau}-1)c_{33}d_{11}^*-pd_{11}^*d_{22}^*d_{33}^*+O(p^{N-4}),\\
&(b_{2,\tau}-b_{3,\tau})(c_{12}d_{21}-c_{11}d_{11}^*)-(b_{1,\tau}-b_{3,\tau})c_{11}d_{22}^*-pd_{11}^*d_{22}^*d_{33}^*+O(p^{N-4}),
\end{aligned}$\\
&&&\\
\hline
\hline
\end{tabular}}
\captionsetup{justification=raggedright,
singlelinecheck=false
}
\caption*{\footnotesize{
}
}
\end{table}

\begin{table}[H]
\captionsetup{justification=centering}
\caption[Foo content]{\textbf{The ring $\tld{S}$ and its ideals $\tld{I}^{(j)}_{\tau,\nabla_\infty}$.
}
}
\label{Table_Ideals_2}
\centering
\adjustbox{max width=\textwidth}{
\begin{tabular}{| c | c | c | c |}
\hline
&\multicolumn{3}{c |}{}\\
$A^{T,(j)}\tld{w}^{*,T}(\rhobar)_j$ &
\multicolumn{3}{c |}{
$\begin{pmatrix}
d_{11}^{*}&c_{12}&c_{13}(v+p)+e_{13}\\
d_{21}&d^{*}_{22}(v+p)+c_{22}&c_{23}(v+p)+e_{23}\\
d_{31}&d_{32}(v+p)+c_{32}
&d_{33}^{*}(v+p)^2+c_{33}(v+p)+e_{33}
\end{pmatrix}\cdot s_j^{-1}v^{\mu_j}$}\\
&\multicolumn{3}{c |}{}\\
\hline
&\multicolumn{3}{c |}{}\\
$\tld{S}^{(j)}$& \multicolumn{3}{c |}{$\cO[\![x^*_{11},c_{12},c_{13},e_{13},d_{21},c_{22},x_{22}^*,c_{23},e_{23},d_{31},d_{32},c_{32},c_{33},e_{33},x^*_{33}]\!]$}\\
&\multicolumn{3}{c |}{}\\
\hline
\hline
$\tld{w}^*(\rhobar,\tau)_j$&\multicolumn{3}{c |}{}\\
\hline
&&&\\
\multirow{8}{*}{$t_{w_0(\eta)}$}&
\multirow{8}{*}{$\tld{I}^{(j)}_{\tau,\nabla_{\infty}}$}&\multirow{5}{*}{$\tld{I}^{(j)}_{\tau}$}&
$\begin{aligned}
&d_{21},\ d_{31},\ c_{32}+pd_{32}
\end{aligned}$\\
&&&\\
\cline{4-4}
&&&\\
&&&$\begin{aligned}
&e_{33},\  c_{33},\  d_{32},\  e_{23},\  c_{22}
\end{aligned}$\\
&&&\\
\cline{3-4}
&&&\\
&&$\textnormal{Mon}_\tau$&
$\begin{aligned}
&(b_{\tau,1} - b_{\tau,2})c_{12}c_{23} + pc_{13}d_{22}^* - (b_{\tau,1} - b_{\tau,3})e_{13}d_{22}^*+O(p^{N-4})
\end{aligned}$
\\
&&&\\
\hline
&&&\\
\multirow{8}{*}{$t_{w_0(\eta)}\alpha$}&
\multirow{8}{*}{$\tld{I}^{(j)}_{\tau,\nabla_{\infty}}$}&\multirow{5}{*}{$\tld{I}^{(j)}_{\tau}$}&
$\begin{aligned}
&c_{22}+pd_{22}^*,\ c_{32}+pd_{32},\ d_{31}
\end{aligned}$\\
&&&\\
\cline{4-4}
&&&\\
&&&$\begin{aligned}
&e_{33}, c_{33}, d_{32}, \\
&e_{13}d_{21} - e_{23}d_{11}^*, c_{12}d_{21} + pd_{11}^*d_{22}^*
\end{aligned}$\\
&&&\\
\cline{3-4}
&&&\\
&&$\textnormal{Mon}_\tau$&
$\begin{aligned}
&
(b_{\tau,2}-b_{\tau,1})c_{12}c_{23}+p(b_{\tau,2}-b_{\tau,1}-1)c_{13}d_{22}^*+(b_{\tau,1}-b_{\tau,3})e_{13}d_{22}^*+O(p^{N-1})
\end{aligned}$
\\
&&&\\
\hline
&&&\\
\multirow{8}{*}{$t_{w_0(\eta)}\beta$}&
\multirow{8}{*}{$\tld{I}^{(j)}_{\tau,\nabla_{\infty}}$}&\multirow{5}{*}{$\tld{I}^{(j)}_{\tau}$}&
$\begin{aligned}
&d_{21},\ d_{31},\ e_{33}+pc_{33}+p^2d_{33}^*
\end{aligned}$\\
&&&\\
\cline{4-4}
&&&\\
&&&$\begin{aligned}
&c_{32},\ e_{23},\ c_{22},\ c_{33} + pd_{33}^*,\ c_{23}d_{32} + pd_{22}^*d_{33}^*
\end{aligned}$\\
&&&\\
\cline{3-4}
&&&\\
&&$\textnormal{Mon}_\tau$&
$\begin{aligned}
&(b_{\tau,1}-b_{\tau,2})c_{12}c_{23}+pc_{13}d_{22}^*-(b_{\tau,1}-b_{\tau,3})e_{13}d_{22}^*+O(p^{N-4})
\end{aligned}$
\\
&&&\\
\hline
&&&\\
\multirow{8}{*}{$t_{w_0(\eta)}w_0$}&
\multirow{8}{*}{$\tld{I}^{(j)}_{\tau,\nabla_{\infty}}$}&\multirow{5}{*}{$\tld{I}^{(j)}_{\tau}$}&
$\begin{aligned}
&pd_{32}+c_{32},\ pc_{23}+e_{23},\ 
 e_{33}+pc_{33}+p^2d_{33}^*
\end{aligned}$\\
&&&\\
\cline{4-4}
&&&\\
&&&$\begin{aligned}
&e_{13}d_{32}-c_{12}c_{33}-pc_{12}d_{33}^*,\ c_{23}d_{31}-d_{21}c_{33}-pd_{21}d_{33}^*,\\
&c_{12}d_{31}+pd_{32}d_{11}^*,\ e_{13}d_{21}+pc_{23}d_{11}^*,\ c_{12}d_{21}-c_{22}d_{11}^*,\\
& c_{13}d_{21}d_{32}-c_{13}d_{31}d_{22}^*-c_{23}d_{32}d_{11}^*+c_{33}d_{11}^*d_{22}^*,\ e_{13}d_{31}+pc_{33}d_{11}^*+p^2d_{33}^*d_{11}^*
\end{aligned}$\\
&&&\\
\cline{3-4}
&&&\\
&&$\textnormal{Mon}_\tau$&
$\begin{aligned}
&(b_{\tau,2}-b_{\tau,1})
      (c_{13}c_{22}-c_{12}c_{23})+pc_{13}d_{22}^*+(b_{\tau,3}-b_{\tau,1})e_{13}d_{22}^*+O(p^{N-4}),
      \\
      &(b_{\tau,3}-b_{\tau,1}-1)(c_{23}d_{32}-c_{33}d_{11}^*)-(b_{\tau,2}-b_{\tau,1})c_{22}d_{33}^*-p(b_{\tau,3}-b_{\tau,1})d_{11}^*d_{33}^*+O(p^{N-4}),
      \\
      &(b_{\tau,2}-b_{\tau,1}-1)c_{13}
      d_{31}+(b_{\tau,3}-b_{\tau,2})c_{33}d_{11}^*+p(b_{\tau,3}-b_{\tau,1})d_{11}^*d_{33}^*+O(p^{N-4})
\end{aligned}$
\\
&&&\\
\hline
\end{tabular}}
\caption*{\footnotesize{
}
}
\end{table}

\begin{table}[H]
\caption{\textbf{Special fiber for the algebraic multi-type deformation ring when $T^{(j)}=\{\alpha\beta t_{\un{1}},\alpha\beta\alpha t_{\un{1}}\}$ or $T^{(j)}=\{\beta\alpha t_{\un{1}},\alpha\beta\alpha t_{\un{1}}\}$.
}
}
\label{Table_Ideals_mod_p}
\centering
\adjustbox{max width=\textwidth}{
\begin{tabular}{| c | c |}
\hline
&\\
$\ovl{A}^{T,(j)}\tld{w}^*(\rhobar)$ &
$\begin{pmatrix}
d_{11}^{*}v+c_{11}+\frac{e_{11}}{v}&c_{12}&c_{13}\\
d_{21}v+c_{21}&d^{*}_{22}v+c_{22}&c_{23}\\
d_{31}v+c_{31}&d_{32}v+c_{32}
&d_{33}^{*}v+c_{33}
\end{pmatrix}\cdot s_j^{-1}v^{\mu_j+(1,0,-1)}$\\
&\\
\hline
&\\
${S}^{(j)}$&$\F[\![c_{11},x^*_{11},e_{11},c_{12},c_{13},d_{21},c_{21},c_{22},x_{22}^*,c_{23},d_{31},c_{31},d_{32},c_{32},c_{33},x^*_{33}]\!]$\\
&\\
\hline
\hline
$T^{(j)}$&some elements of ${I}^{(j)}_{T,\nabla_{\textnormal{alg}}}$\\
\hline
&\\
$\alpha\beta\alpha t_{\un{1}}, \alpha\beta t_{\un{1}}$&
$\begin{aligned}
&e_{11},\,c_{33},\ c_{32},\ c_{23}d_{32}-c_{22}d_{33}^*,\ c_{13}d_{32}-c_{12}d_{33}^*,\ c_{23}c_{31},\ c_{13}c_{31},
\\ 
&c_{13}d_{31}-c_{11}d_{33}^*,\ c_{22}c_{23},\ c_{21}c_{22},\ c_{13}c_{21}-c_{11}c_{23},\ c_{21}^2d_{32}-c_{21}c_{31}d_{22}^*,\\ 
&(b-c)d_{21}c_{22}d_{33}^*+(-b+c+1)c_{23}d_{31}d_{22}^*+(-a+b-1)c_{21}d_{22}^*d_{33}^*,\\ 
&(b-c)c_{12}d_{21}+\ovl{x}c_{11}c_{22}-(a-c)c_{11}d_{22}^*-(b-c)c_{22}d_{11}^*\quad\exists\ \ovl{x}\in \Fp,
\\ 
&
(b-c)d_{21}c_{22}d_{32}+c_{22}d_{31}d_{22}^*+c_{21}c_{32}d_{22}^*+(a-c+1)d_{22}^*(c_{31}d_{22}^*-c_{21}d_{32})
\end{aligned}$\\
&\\
\hline
\hline
&\\
$\alpha\beta\alpha t_{\un{1}}, \beta\alpha t_{\un{1}}$&
$\begin{aligned}
&e_{11},\,c_{33},\ c_{23},\ c_{22},\ c_{21},\ c_{31}c_{32},\ c_{13}c_{32},\ c_{11}c_{32},\ c_{13}c_{31},\ c_{12}c_{31},\\ 
&c_{13}d_{21}d_{32}-c_{12}d_{21}d_{33}^*-c_{13}d_{31}d_{22}^*+c_{11}d_{22}^*d_{33}^*,\\ 
&c_{13}^2d_{31}d_{32}-c_{12}c_{13}d_{31}d_{33}^*-c_{11}c_{13}d_{32}d_{33}^*+c_{11}c_{12}(d_{33}^*)^2,\\
&(a-b)c_{13}d_{31}d_{32}-(a-c)c_{12}d_{31}d_{33}^*+(b-c)c_{11}d_{32}d_{33}^*+(b-c+1)c_{32}d_{11}^*d_{33}^*,
\\ 
&(b-c)c_{12}d_{21}d_{31}-(a-b)c_{13}d_{31}^2-(b-c)c_{11}d_{21}d_{32}+\\
&\quad+(a-b)c_{11}(d_{31}d_{22}^*-d_{21}c_{32})-c_{31}d_{22}^*((a-c)c_{11}-(a-c+1)d_{11}^*),\\ 
&\ovl{z}'(c_{12}c_{13}d_{21}d_{31}-c_{11}c_{12}d_{21}d_{33}^*)+\ovl{z}''(c_{11}^2d_{22}^*d_{33}^*-c_{11}c_{13}d_{31}d_{11}^*)+\\
&\quad+(b-c)c_{12}d_{21}d_{11}^*d_{33}^*-(a-c)c_{11}d_{11}^*d_{22}^*d_{33}^*
\qquad \exists\ \ovl{z}',\ovl{z}''\in\F
\end{aligned}$\\
&\\
\hline
\hline
&some elements of ${I}^{(j)}_{\{w_0,\alpha\beta\},\nabla_{\textnormal{alg}}}\cap{I}^{(j)}_{\{w_0,\beta\alpha\},\nabla_{\textnormal{alg}}}$\\
\hline
&\\
&
$\begin{aligned}
&e_{11},\,c_{33},\ c_{31}c_{32},\ c_{23}c_{32},\ c_{21}c_{32},\ c_{13}c_{32},\ c_{11}c_{32},\ c_{23}d_{32}-c_{22}d_{33}^*,\ c_{23}c_{31},\\
&c_{13}c_{31},\ c_{12}c_{31},\ c_{22}c_{23},\ c_{21}c_{22},\ c_{13}c_{22}-c_{12}c_{23},\ c_{13}c_{21}-c_{11}c_{23},\ c_{12}c_{21}-c_{11}c_{22},\\ 
&c_{21}^2d_{32}-c_{21}c_{31}d_{22}^*,\ c_{13}d_{21}d_{32}-c_{12}d_{21}d_{33}^*-c_{13}d_{31}d_{22}^*+c_{11}d_{22}^*d_{33}^*,\\
&c_{13}^2d_{31}d_{32}-c_{12}c_{13}d_{31}d_{33}^*-c_{11}c_{13}d_{32}d_{33}^*+c_{11}c_{12}(d_{33}^*)^2,\\
&(b-c)d_{21}c_{22}d_{33}^*-(b-c-1)c_{23}d_{31}d_{22}^*-(a-b+1)c_{21}d_{22}^*d_{33}^*,\\ 
&(a-b)c_{13}d_{31}d_{32}-(a-c)c_{12}d_{31}d_{33}^*+(b-c)c_{11}d_{32}d_{33}^*+(b-c+1)c_{32}d_{11}^*d_{33}^*,
\\ 
&(b-c)(c_{12}d_{21}d_{31}-c_{11}d_{21}d_{32}-c_{22}d_{31}d_{11}^*-c_{21}d_{32}d_{11}^*)+(a-b)d_{31}(c_{11}d_{33}^*-c_{13}d_{31})+\\
&\quad+(-a+c-1)c_{31}d_{11}^*d_{33}^*,
\\
& \ovl{z}'c_{12}d_{21}(c_{13}d_{31}-c_{11}d_{33}^*)+ \ovl{z}''c_{11}d_{22}^*(c_{11}d_{33}^*-c_{13}d_{31})+\\
&\quad-(b-c)(c_{12}d_{21}-c_{22}d_{11}^*)d_{11}^*d_{33}^*-\ovl{x}c_{11}c_{22}d_{11}^*d_{33}^*+(a-c)c_{11}d_{11}^*d_{22}^*d_{33}^*
\end{aligned}$
\\
&\\
\hline
\hline
\end{tabular}}
\caption*{\footnotesize{In this table we give an explicit presentation of the ring ${S}/{I}_{T,\nabla_{\textnormal{alg}}}$ in some cases when $\#T^{(j)}=2$ (see Proposition \ref{prop:special_fiber}, and Lemma \ref{lemma:ideal3types}), and of the ideal $\ovl{S^{(j)}}/{I}^{j}_{\{w_0,\alpha\beta\},\nabla_{\textnormal{alg}}}\cap{I}^{j}_{\{w_0,\beta\alpha\},\nabla_{\textnormal{alg}}}$ needed for Lemma \ref{lemma:ideal4types}.
We have set $(a,b,c)\defeq -\big(s_j^{-1}(\mu_j+\eta_j)-(1,1,1)\big)$.
It can be shown that $\ovl{x}\equiv\frac{1}{p}\frac{(b_{\tau_{\alpha\beta},2}-b_{\tau_{\alpha\beta},3})(b_{\tau_{w_0},3}-b_{\tau_{w_0},1})-(b_{\tau_{\alpha\beta},3}-b_{\tau_{\alpha\beta},1})(b_{\tau_{w_0},2}-b_{\tau_{w_0},3})}{(b_{\tau_{w_0},2}-b_{\tau_{w_0},3})}$, $\ovl{z}'=\frac{(b_{\tau_{w_0},2}-b_{\tau_{w_0},3})-(b_{\tau_{\beta\alpha},2}-b_{\tau_{\beta\alpha},3})}{p}+(b_{\tau_{w_0},2}-b_{\tau_{w_0},3})$ and $\ovl{z}''\equiv \frac{(b_{\tau_{w_0},3}-b_{\tau_{w_0},1})-(b_{\tau_{\beta\alpha},3}-b_{\tau_{\beta\alpha},1})}{p}+(b_{\tau_{w_0},3}-b_{\tau_{w_0},1})$ modulo $p$, but we will not need this fact.
}
}
\end{table}

\begin{table}[H]
\caption{\textbf{Special fiber for the algebraic multi-type deformation ring when $\#T^{(j)}=2$.
}
}
\label{Table_Ideals_mod_p_F2}
\centering
\adjustbox{max width=\textwidth}{
\begin{tabular}{| c | c |}
\hline
&\\
$\ovl{A}^{T,(j)}\tld{w}^*(\rhobar)$ &
$\begin{pmatrix}
d_{11}^{*}&c_{12}&c_{13}v+e_{13}\\
d_{21}&d^{*}_{22}v+c_{22}&c_{23}v+e_{23}\\
d_{31}&d_{32}v+c_{32}
&d_{33}^{*}v^2+c_{33}v+e_{33}
\end{pmatrix}\cdot s_j^{-1}v^{\mu_j+(1,0,-1)}$\\
&\\
\hline
&\\
${S}^{(j)}$&$\F[\![x_{11}^*,c_{12},c_{13},e_{13},d_{21},x_{22}^*,c_{22},c_{23},e_{23},d_{31},d_{32},c_{32},x_{33}^*,c_{33},e_{33}]\!]$\\
&\\
\hline
\hline
$T^{(j)}$&some elements of ${I}^{(j)}_{T,\nabla_{\textnormal{alg}}}$\\
\hline
&\\
$t_{w_0(\eta)}, t_{w_0(\eta)}\alpha$&
$\begin{aligned}
&e_{33},\ c_{33},\ c_{32},\ d_{32},\ d_{31},\ d_{21}c_{22},\ e_{13}d_{21}-e_{23}d_{11}^*,\ c_{12}d_{21}-c_{22}d_{11}^*,\\
&(a-b)(c_{13}c_{22}-c_{12}c_{23})-\ovl{x}c_{12}e_{23}+(a-c)e_{13}d_{22}^*,\ \exists \ovl{x}\in \Fp
\end{aligned}$\\
&\\
\hline
\hline
&\\
$t_{w_0(\eta)}, t_{w_0(\eta)}\beta$&
$\begin{aligned}
&e_{33},\ c_{32},\ d_{31},\ e_{23},\ c_{22},\ d_{21},\ d_{32}c_{33},\ c_{23}d_{32}-c_{33}d_{22}^*,\\
&\ovl{z}'c_{33}c_{12}c_{23}+\ovl{z}''c_{33}e_{13}d_{22}^*-(a-b)c_{12}c_{23}d_{33}^*+(a-c)e_{13}d_{22}^*d_{33}^*,\ \exists \ovl{z}', \ovl{z}''\in \Fp
\end{aligned}$\\
&\\
\hline
\hline
&some elements of ${I}^{(j)}_{\{t_{w_0(\eta),t_{w_0(\eta)}\alpha}\},\nabla_{\textnormal{alg}}}\cap {I}^{(j)}_{\{t_{w_0(\eta),t_{w_0(\eta)}\beta}\},\nabla_{\textnormal{alg}}}$\\
\hline
&\\
&
$\begin{aligned}
&e_{33},\ c_{32},\ d_{31},\ d_{32}c_{33},\ c_{23}d_{32}-c_{33}d_{22}^*,\\
&d_{21}d_{32},\ d_{21}c_{22},\ e_{13}d_{21}-e_{23}d_{11}^*,\ c_{12}d_{21}-c_{22}d_{11}^*,\\
&\ovl{z}'c_{33}c_{12}c_{23}+\ovl{z}''c_{33}e_{13}d_{22}^*+(a-b)d_{33}^*(c_{13}c_{22}-c_{12}c_{23})+\\
&\quad-\ovl{x}c_{12}e_{23}d_{33}^*+(a-c)e_{13}d_{22}^*d_{33}^*
\end{aligned}$\\
&\\
\hline
\hline
\end{tabular}}
\caption*{\footnotesize{
In this table we give an explicit presentation of the ring ${S}/{I}_{T,\nabla_{\textnormal{alg}}}$ in some cases when $\#T^{(j)}=2$ (see Proposition \ref{prop:special_fiber}, and Lemma \ref{lemma:ideal3types:prime}) and of the ideal ${I}^{(j)}_{\{t_{w_0(\eta),t_{w_0(\eta)}\alpha}\}\nabla_{\textnormal{alg}}}\cap {I}^{(j)}_{\{t_{w_0(\eta),t_{w_0(\eta)}\beta}\}\nabla_{\textnormal{alg}}}$ needed for Lemma \ref{lemma:ideal4types:prime}.
We have set $(a,b,c)\defeq -\big(s_j^{-1}(\mu_j+\eta_j)-w_0(\eta_j)\big)$.
It can be shown that $\ovl{x}=\frac{1}{p}\frac{(b_{\alpha,1}-b_{\alpha,3})(b_{\Id,1}-b_{\Id,2})-(b_{\Id,1}-b_{\Id,3})(b_{\alpha,1}-b_{\alpha,2})}{(b_{\Id,1}-b_{\Id,2})}$,
$\ovl{z}'=\frac{1}{p}(b_{\beta,1}-b_{\beta,2}-(p+1)(b_{\Id,1}-b_{\Id,2}))$ and
$\ovl{z}''=\frac{1}{p}((p+1)(b_{\Id,1}-b_{\Id,3})-(b_{\beta,1}-b_{\beta,3}))$, where we have set $b_{\Id,i}\defeq b_{\tau_{t_{w_0(\eta)}},i}$, $b_{\alpha,i}\defeq b_{\tau_{t_{w_0(\eta)}\alpha},i}$  and $b_{\beta,i}\defeq b_{\tau_{t_{w_0(\eta)}\beta},i}$ for readability ($i\in\{1,2,3\}$).
}
}
\end{table}

\begin{table}[H]
\caption{\textbf{Minimal prime ideals of ${S}/ {I}_{T,\nabla_{\textnormal{alg}}}$ when $\#T\leq 4$ and $T^{(j)}\subseteq \{\alpha\beta\alpha\gamma t_{\un{1}},\alpha\beta\alpha t_{\un{1}},\beta\alpha t_{\un{1}},\alpha\beta t_{\un{1}}, t_{\un{1}}\}$
}
}
\label{Table:components}
\centering
\adjustbox{max width=\textwidth}{
\begin{tabular}{| c | c |}
\hline
&\\
$(\eta_j,a_j)\in r(\Sigma_0)$&${\fP}_{(\eps_j, a_i)}$\\
&\\
\hline
&\\
$(0,0)$&$e_{11},\,c_{33},\  c_{32},\  c_{31},\  c_{23},\  c_{22},\  c_{21},\  c_{13},\  c_{12},\  c_{11}$\\
&\\
\hline&
\\
$(\eps_1,0)$&$e_{11},\,c_{33},\  c_{32},\  d_{32},\  c_{31},\  d_{31},\  c_{22},\  c_{21},\  c_{12},\  c_{11}$\\
&\\
\hline&
\\
$(\eps_2,0)$&$e_{11},\,c_{33},\  c_{32},\  c_{31},\  d_{31},\  c_{23},\  c_{22},\ c_{21},\  d_{21},\  c_{11}$\\
&\\
\hline&
\\
$(0,1)$&$\begin{aligned}
&e_{11},\,c_{33},\  c_{32},\  c_{31},\  c_{23},\  c_{22},\  c_{21},\  c_{13}d_{32} - c_{12}d_{33}^*,\  c_{13}d_{31} - c_{11}d_{33}^*,\\  
&c_{12}d_{31} - c_{11}d_{32},\  (b-c)d_{21}d_{32} - (a-c)d_{31}d_{22}^*,\  (b-c)c_{12}d_{21} - (a-c)c_{11}d_{22}^*
\end{aligned}$\\
&\\
\hline&
\\
$(\eps_1,1)$&$\begin{aligned}
&e_{11},\,c_{33},\  c_{32},\  c_{31},\   d_{31},\  c_{21},\  c_{11},\  c_{13}c_{22} - c_{12}c_{23},\  c_{13}d_{21} - c_{23}d_{11}^*,\  c_{12}d_{21} - c_{22}d_{11}^*,\\  
&(a - c-1)c_{23}d_{32} - ( a - b - 1)c_{22}d_{33}^*,\  (a -c- 1)c_{13}d_{32} -( a - b - 1)c_{12}d_{33}^* \end{aligned}$
\\
&\\
\hline&
\\
$(\eps_2,1)$&$\begin{aligned}
&e_{11},\,c_{32},\  c_{31},\  c_{22},\  c_{21},\  c_{12},\  c_{11},\  c_{23}d_{32} - c_{33}d_{22}^*,\  d_{21}d_{32} -  d_{31}d_{22}^*,\  c_{23}d_{31} - d_{21}c_{33},\\  
&(a - b)c_{13}d_{31} + (b -c- 1)c_{33}d_{11}^*,\  (a - b)c_{13}d_{21} + (b -c- 1)c_{23}d_{11}^*\end{aligned}$\\
&\\
\hline&
\\
$(\eps_2-\eps_1,1)$&$\begin{aligned}
&e_{11},\,c_{33},\  c_{32},\  d_{32},\  c_{31},\  c_{22},\  c_{13},\  c_{12},\  c_{11},\\  
&(b-c- 1)c_{23}d_{31} + (a - b + 1)c_{21}d_{33}^*\end{aligned}$\\
&\\
\hline&
\\
$(\eps_1-\eps_2,1)$&$\begin{aligned}
&e_{11},\,c_{33},\  c_{31},\  c_{23},\ c_{22},\  c_{21},\  d_{21},\  c_{13},\ c_{11},\\  
&(a-c)c_{12}d_{31} - (b -c+ 1)c_{32}d_{11}^*\end{aligned}$\\
&\\
\hline
\hline
\hline
\hline
\end{tabular}}
\caption*{
We give the presentation of the minimal prime ideals for ${S}/{I}_{T,\nabla_{\textnormal{alg}}}$ when $T^{(j)}=\{\alpha\beta\alpha\gamma t_{\un{1}},\alpha\beta\alpha t_{\un{1}},\beta\alpha t_{\un{1}},\alpha\beta t_{\un{1}}, t_{\un{1}}\}$, using the parametrization of Proposition \ref{prop:special_fiber}.
We have set $(a,b,c)\defeq -\big(s_j^{-1}(\mu_j+\eta_j)-(1,1,1)\big)$.}
\end{table}

\begin{table}[H]
\caption{\textbf{Minimal prime ideals of ${S}/ {I}_{T,\nabla_{\textnormal{alg}}}$ when $\#T\leq 4$ and $T^{(j)}\subseteq \{t_{w_0(\eta)}, t_{w_0(\eta)}\alpha ,  t_{w_0(\eta)}\beta, t_{w_0(\eta)}w_0\}$
}
}
\label{Table:components:F2}
\centering
\adjustbox{max width=\textwidth}{
\begin{tabular}{| c | c |}
\hline
&\\
$(\eta_j,a_j)\in r(\Sigma_0)$&${\fP}_{(\eps_j, a_i)}$\\
&\\
\hline
&\\
$(0,0)$&$e_{33},\ c_{33},\ c_{32},\ e_{23},\ c_{23},\ c_{22},\ e_{13},\ c_{13},\ c_{12}$\\
&\\
\hline&
\\
$(\eps_1,0)$&$e_{33},\ c_{32},\ d_{31},\ d_{32},\ c_{33},\ c_{22},\ c_{12},\ e_{13},\ e_{23}$\\
&\\
\hline&
\\
$(\eps_2,0)$&$e_{33},\ c_{33},\ c_{32},\ d_{31},\ e_{23},\ c_{23},\ c_{22},\ d_{21},\ e_{13}$\\
&\\
\hline&
\\
$(\eps_1,1)$&$\begin{aligned}
&e_{33},\ c_{32},\ e_{23},\ c_{22},\ e_{13},\ c_{12},\ d_{21}c_{33}-c_{23}d_{31},\\  
&d_{21}d_{32}-d_{31}d_{22}^*,\  c_{23}d_{32}-c_{33}d_{22}^*\\
& (a - b + 1)c_{13}d_{31} + (b - c)c_{33}d_{11}^*,\ (a - b + 1)c_{13}d_{21} + (b - c)c_{23}d_{11}^* \end{aligned}$
\\
&\\
\hline&
\\
$(\eps_2,1)$&$\begin{aligned}
& e_{33},\ c_{33},\ c_{32},\ d_{31},\ e_{23},\ e_{13},\ c_{12}c_{23} - c_{13}c_{22}\\  
&c_{12}d_{21} - c_{22}d_{11}^*,\ c_{13}d_{21} - c_{23}d_{11}^*\\
&(a - c +1)c_{23}d_{32} - (a - b)c_{22}d_{33}^*,\ (a - c + 1)c_{13}d_{32} - (a - b)c_{12}d_{33}^*\end{aligned}$\\
&\\
\hline&
\\
$(\eps_1+\eps_2,1)$&$\begin{aligned}
&e_{33},\ c_{32},\ d_{31},\ e_{23},\ d_{32},\ c_{33},\ d_{21},\ c_{22}, \\  
&(a - b)c_{12}c_{23} - (a-c)e_{13}d_{22}^*\end{aligned}$\\
&\\
\hline
\hline
\end{tabular}}
\caption*{We give the presentation of the minimal prime ideals for ${S}/{I}_{T,\nabla_{\textnormal{alg}}}$ when $T^{(j)}=\{t_{w_0(\eta)}, t_{w_0(\eta)}\alpha ,  t_{w_0(\eta)}\beta, t_{w_0(\eta)}w_0\}$, using the parametrization of Proposition \ref{prop:special_fiber}.
We have set $(a,b,c)\defeq -\big(s_j^{-1}(\mu_j+\eta_j)-w_0(\eta_j)\big)$.}
\end{table}

\section{Ideal computations}
\label{appendix:IC}
\setcounter{subsection}{1}

\subsubsection{Ideal intersections in the special fiber of ${S}^{(j)}/{I}^{(j)}_{T,\nabla_{\textnormal{alg}}}$.}
\label{appendix:multi:sp:fiber}

\begin{proof}[Proof of Lemma \ref{lem:intsc}]
We first observe that there exists $\tau\in T$ and $(\omega',a')\in r(\Sigma_0)$ such that both $\fP^{(j)}_{(\omega,a)}{S}+\sum_{j'\in\cJ\setminus\{j\}}\fP^{(j)}_{(\omega',a')}{S}$ and $\fP^{(j)}_{(0,0)}{S}+\sum_{j'\in\cJ\setminus\{j\}}\fP^{(j)}_{(\omega',a')}{S}$ are the pullback, via \eqref{eq:T=1}, of a minimal prime ideal of ${S}/{I}_{\tau,\nabla_\infty}$.
In particular, by the explicit description of ${S}/{I}_{\tau,\nabla_\infty}$ appearing in Tables \ref{Table_Ideals}, \ref{Table_Ideals_1}, the ring ${S}^{(j)}/I^{b_j}_j$ is equidimensional of dimension six, and has $2$ minimal primes.

We prove item \eqref{eq:lem:intsc:1}.
From Table \ref{Table:components} one immediately checks that
\begin{equation}
\label{pf:eq:lem:intsc:1}
(c_{33},c_{32},c_{31},c_{23},c_{22},c_{21},c_{13}d_{32}-c_{12}d_{33}^*,c_{13}d_{31}-c_{11}d_{33}^*,c_{12}d_{31}-c_{11}d_{32},(b-c)c_{12}d_{21}-(a-c)c_{11}d_{22}^*))\subseteq {I}^{b_j}_j
\end{equation}
In particular, we obtain a surjection
\begin{equation}
\label{pf:eq:lem:intsc:1:surj}
{S}^{(j)}/I^{\prime,b_j}_j\onto {S}^{(j)}/I^{b_j}_j
\end{equation}
where we have indicated by $I^{\prime,b_j}_j$ the left hand side of \eqref{pf:eq:lem:intsc:1}.
Moreover
\[
{S}^{(j)}/I^{\prime,b_j}_j\cong \frac{\F[\![c_{13},d_{21},d_{31},d_{32},x_{11}^*,x_{22}^*,x_{33}^*]\!]}{c_{13}((a-c)d_{31}d_{22}^*-(b-c)d_{32}d_{21})}
\]
which is evidently reduced, equidimensional of dimension six, and has two minimal prime ideals.
We conclude by \cite[Lemma 3.6.11]{LLLM2} that the surjection \eqref{pf:eq:lem:intsc:1:surj} is an isomorphism, hence that the inclusion \eqref{pf:eq:lem:intsc:1} is an equality.

The proofs of items \eqref{eq:lem:intsc:2}--\eqref{eq:lem:intsc:5} are analogous.
\end{proof}

\begin{proof}[Proof of Lemma \ref{lem:broom:other:prime}]
The proof is analogous to that of Lemma \ref{lem:intsc}.
From Table \ref{Table:components:F2} we have an evident inclusion of ideals of ${S}^{(j)}$:
\begin{equation}
\label{pf:eq:lem:Lambda2:1:surj}
(c_{22},c_{33},c_{32},e_{33},e_{23}, d_{31},(a-b)c_{12}c_{23}-(a-c)e_{13}d_{22}^*,d_{21}d_{32},c_{23}d_{32},d_{21}c_{12})\subseteq 
{I}^{(j)}_{\Lambda}
\end{equation}
hence a surjection 
\begin{equation}
\label{pf:eq:lem:Lambda2:2:surj}
{S}^{(j)}/{I}^{\prime(j)}_{\Lambda}\onto {S}^{(j)}/{I}^{(j)}_{\Lambda}
\end{equation}
(where we have indicated by ${I}^{\prime(j) }_{\Lambda}$ the left hand side of \eqref{pf:eq:lem:Lambda2:1:surj}).
An direct computation shows that
\[
{S}^{(j)}/{I}^{\prime (j)}_{\Lambda}\cong \frac{\F[\![c_{12},d_{21},d_{32},c_{13},c_{23},x_{11}^*,x_{22}^*,x_{33}^*]\!]}{(d_{21}d_{32},c_{23}d_{32},d_{21}c_{12})}
\]
and the latter ring is evidently reduced, equidimensional of dimension six and has three minimal prime ideals.
\end{proof}

\begin{proof}[Proof of Proposition \ref{prop:ideals_coprimes}]

In the following computations, we work in $\tld{S}^{(j)}/(\tld{I}^{(j)}_{\tau}+\tld{I}^{(j)}_{\tau'})$.

\emph{Case $\tld{w}^*(\rhobar,\tau)_j=\alpha\beta\alpha t_{\un{1}}$ and $\tld{w}^*(\rhobar,\tau')_j=t_{\un{1}}$.}
Using the relations $c_{22}\equiv0$, $c_{33}\equiv-pd_{33}^*$ coming from $\tld{I}^{(j)}_{\alpha\beta\alpha t_{\un{1}}}$, the last listed equation in $\tld{I}^{(j)}_{t_{\un{1}}}$ becomes:
\begin{equation}
\label{eq:case:ABAetId}
-pc_{12}d_{21}d_{33}^*  +pc_{11}d_{22}^*d_{33}^* - p(c_{11}d_{22}^*d_{33}^*-pd_{11}^*d_{22}^*d_{33}^*)
\end{equation}
On the other hand  the relations $c_{21}\equiv0$ and  $c_{21}\equiv-pd_{21}$ coming from $\tld{I}^{(j)}_{\alpha\beta\alpha t_{\un{1}}}$ and $\tld{I}^{(j)}_{ t_{\un{1}}}$ respectively give $-pc_{12}d_{21}d_{33}^*\equiv 0$, hence \eqref{eq:case:ABAetId} becomes $pc_{11}d_{22}^*d_{33}^* - p(c_{11}d_{22}^*d_{33}^*-pd_{11}^*d_{22}^*d_{33}^*)$ yelding $p^2d_{11}^*d_{22}^*d_{33}^*\equiv 0$.

\emph{Case $\tld{w}^*(\rhobar,\tau)_j=\beta\alpha t_{\un{1}}$ and $\tld{w}^*(\rhobar,\tau')_j=t_{\un{1}}$.}
Using the relations $c_{22}\equiv0$ coming from $\tld{I}^{(j)}_{\beta\alpha t_{\un{1}}}$, the last listed equation in $\tld{I}^{(j)}_{t_{\un{1}}}$ becomes:
\begin{equation}
\label{eq:case:BAetId:1}
c_{12}d_{21}c_{33}  -c_{11}d_{22}^*c_{33} - p(c_{11}d_{22}^*d_{33}^*+d_{11}^*d_{22}^*c_{33})
\end{equation}
and, using $d_{21}c_{33}\equiv c_{23}d_{31}$, $c_{11}c_{33}\equiv -pc_{13}d_{31}$ coming from  $\tld{I}^{(j)}_{ t_{\un{1}}}$, equation \eqref{eq:case:BAetId:1} becomes 
\[
c_{12}c_{23}d_{31}  +pd_{22}^*c_{13}d_{31} - p(c_{11}d_{22}^*d_{33}^*+c_{11}^*d_{22}^*c_{33})
\] which, using $c_{23}\equiv 0$, $c_{11}d_{33}^*-c_{13}d_{31}$ and $c_{33}=-pd_{33}^*$ coming from $\tld{I}^{(j)}_{\beta\alpha t_{\un{1}}}$, yields $p^2d_{11}^*d_{22}^*d_{33}^*\equiv 0$.

\emph{Case $\tld{w}^*(\rhobar,\tau)_j=\beta t_{\un{1}}$ and $\tld{w}^*(\rhobar,\tau')_j=\alpha t_{\un{1}}$.}
Using the relation $c_{11}d_{33}^*\equiv c_{13}d_{31}$ coming from $\tld{I}^{(j)}_{\beta t_{\un{1}}}$, the last listed equation in $\tld{I}^{(j)}_{\alpha t_{\un{1}}}$ becomes:
\begin{equation}
\label{eq:case:BetA:1}
c_{13}d_{21}d_{32}  -c_{12}d_{21}d_{33}^*-pd_{11}^*d_{22}^*d_{33}^*.
\end{equation}
Multiplying \eqref{eq:case:BetA:1} by $pd_{33}^*$, and using $pd_{21}d_{33}^*\equiv -c_{23}d_{31}$, $pc_{12}d_{33}^*\equiv -c_{13}c_{32}$ coming from  $\tld{I}^{(j)}_{ \beta t_{\un{1}}}$, equation \eqref{eq:case:BAetId:1} becomes 
\[
-c_{23}d_{31}c_{13}d_{32}  +c_{13}c_{32}d_{21}d_{33}^*-p^2d_{11}^*d_{22}^*(d_{33}^*)^2.
\] which, using $c_{32}\equiv -pd_{32}$ coming from $\tld{I}^{(j)}_{\alpha t_{\un{1}}}$, yields 
\[
-c_{23}d_{31}c_{13}d_{32}  -pd_{32}c_{13}d_{21}d_{33}^*-p^2d_{11}^*d_{22}^*(d_{33}^*)^2
\]
hence $-p^2d_{11}^*d_{22}^*(d_{33}^*)^2$ noting again that $pd_{21}d_{33}^*\equiv -c_{23}d_{31}$.

\emph{Case $\tld{w}(\rhobar,\tau)_j=\beta t_{w_0(\eta)}$ and $\tld{w}(\rhobar,\tau')_j=w_0t_{w_0(\eta)}$.}
Multiplying by $-p$ the relation $-p d_{22}^*d_{33}^*\equiv c_{23}d_{32}$ coming from $\tld{I}^{(j)}_{\beta t_{w_0(\eta)}}$, and using $-pc_{32}\equiv e_{23}$ coming from $\tld{I}^{(j)}_{w_0t_{w_0(\eta)}}$ we obtain $p^2 d_{22}^*d_{33}^*\equiv e_{23}d_{32}$, and the latter expression is zero in the quotient ring since $e_{23}\in \tld{I}^{(j)}_{\beta t_{w_0(\eta)}}$.

\emph{Case $\tld{w}(\rhobar,\tau)_j=\alpha t_{w_0(\eta)}$ and $\tld{w}(\rhobar,\tau')_j=w_0t_{w_0(\eta)}$.}
Noting that $c_{33},e_{33}\equiv 0$ (relation coming from $\tld{I}^{(j)}_{\alpha t_{w_0(\eta)}}$) and $-e_{33}-pc_{33}\equiv p^2d_{33}^*$ (relation coming from $\tld{I}^{(j)}_{w_0t_{w_0(\eta)}}$) we obtain $p^2d_{33}^*\equiv 0$.

\emph{Case $\tld{w}(\rhobar,\tau)_j= t_{w_0(\eta)}$ and $\tld{w}(\rhobar,\tau')_j=w_0t_{w_0(\eta)}$.}
It is exactly as above noticing that  $c_{33},e_{33}\equiv 0$ modulo $\tld{I}^{(j)}_{t_{w_0(\eta)}}$).

The cases where both $\tld{w}^*(\rhobar,\tau)_j$, $\tld{w}^*(\rhobar,\tau')_j$ have length at least 2 are much easier, and give the stronger result $p\in \tld{I}^{(j)}_{\tau}+\tld{I}^{(j)}_{\tau'}$. 
(For instance, if $\tld{w}(\rhobar,\tau)_j= t_{w_0(\eta)}$ and $\tld{w}(\rhobar,\tau')_j=\alpha t_{w_0(\eta)}$ then $c_{22}\equiv 0$ and $c_{22}\equiv -pd_{22}^*$, relations coming from $\tld{I}^{(j)}_{t_{w_0(\eta)}}$ and $\tld{I}^{(j)}_{\alpha t_{w_0(\eta)}}$ respectively.)
\end{proof}

\subsubsection{Ideal intersections for ${S}^{(j)}/{I}^{(j)}_{\tau,\nabla_{\textnormal{alg}}}$, $\tld{w}(\rhobar,\tau)_j=t_{\un{1}}$.}
\label{subsub:surgery}
In this section we work in the ring ${S}^{(j)}/{I}^{(j)}_{\tau,\nabla_{\textnormal{alg}}}$. 
By abuse of notation we will consider ${S}^{(j)}$ to be the ring $\F[\![c_{11},x^*_{11},c_{12},c_{13},d_{21},c_{22},x_{22}^*,c_{23},d_{31},d_{32},c_{33},x^*_{33}]\!]$, and ${I}^{(j)}_{\tau,\nabla_{\textnormal{alg}}}$, $\fP_{(\omega,a)}$ (for $\omega\in\{0,\eps_1,\eps_2\}$, $a\in\{0,1\}$) ideals of $\F[\![c_{11},x^*_{11},c_{12},c_{13},d_{21},c_{22},x_{22}^*,c_{23},d_{31},d_{32},c_{33},x^*_{33}]\!]$.
(In other words, we abuse notation and ``neglect the variables $c_{21}, c_{31}, c_{32}$'').

We  now remark that the assignement $c_{i,j}\mapsto c_{(132)(i),(132)(j)}$, $a\mapsto c+1$, $b\mapsto a$, $c\mapsto b$ induces an automorphism of $\F$-algebras on ${S}^{(j)}/{I}^{(j)}_{\tau,\nabla_{\textnormal{alg}}}$, which moreover sends $\fP_{(0,0)}$ to $\fP_{(\eps_1,0)}$ (resp.~$\fP_{(0,1)}$ to $\fP_{(\eps_2,1)}$), $\fP_{(\eps_1,0)}$ to $\fP_{(\eps_2,0)}$ (resp.~$\fP_{(\eps_2,1)}$ to $\fP_{(\eps_1,1)}$) and $\fP_{(\eps_2,0)}$ to $\fP_{(0,0)}$ (resp.~$\fP_{(\eps_1,1)}$ to $\fP_{(0,1)}$).

\begin{proof}[Proof of Lemma \ref{lemma:id:intersect}]
By the remark at the beginning of \S \ref{subsub:surgery} it is enough to prove the statements for the ideals $I_\gamma$ with $\ell(\gamma)=2$, $\gamma_1=(\eps_2,1)$, and for $I_\beta$, $\ell(\beta)=3$, $\beta_3=(0,1)$.

\emph{The ideal $I_\beta$, $\ell(\beta)=3$, $\beta_3=(0,1)$.}
A direct inspection of Table \ref{Table:components} gives an inclusion
\[
c_{11}\in \fP^{(j)}_{(0,0)}\cap\fP^{(j)}_{(\eps_2,0)}\cap\fP^{(j)}_{(\eps_2,1)}\cap\fP^{(j)}_{(\eps_1,0)}\cap\fP^{(j)}_{(\eps_1,1)}.
\]
Thus, we have a surjection
\begin{equation}
\label{eq:ring:id:intersect:40}
{S}^{(j)}/(c_{11},{I}^{(j)}_{\tau,\nabla_{\textnormal{alg}}})\onto {S}^{(j)}/(\fP^{(j)}_{(0,0)}\cap\fP^{(j)}_{(\eps_2,0)}\cap\fP^{(j)}_{(\eps_2,1)}\cap\fP^{(j)}_{(\eps_1,0)}\cap\fP^{(j)}_{(\eps_1,1)})
\end{equation}
and a direct computation using Table \ref{Table_Ideals_1} gives
\[
{S}^{(j)}/(c_{11},{I}^{(j)}_{\tau,\nabla_{\textnormal{alg}}})\cong \frac{\F[\![c_{12},c_{13},c_{23},d_{21},d_{31},d_{32},x_{11}^*,x_{22}^*,x_{33}^*]\!]}{J}
\]
where $J$ is the ideal generated by 
\begin{align*}
&c_{12}d_{31},\, c_{12}(c_{23}d_{11}^*-d_{21}c_{13}),\,d_{31}(c_{23}d_{11}^*-(a-b)d_{21}c_{13}),\\
&d_{32}(c_{23}d_{22}^*-d_{21}c_{13})-(a-b-1)c_{13}d_{31},
\\
&(a-c-1)c_{23}d_{32}-(a-b)(a-c-1)c_{13}d_{31}-(a-b-1)c_{12}d_{21}
\end{align*}
The latter ring is reduced (since the initial ideal of $J$ is generated by squarefree monomial, as it can be checked by considering a suitable Groebner basis) and has $5$ minimal primes each of dimension $6$.
Thus, by \cite[Lemma 3.6.11]{LLLM2}, the surjection \eqref{eq:ring:id:intersect:40} is an isomorphism.

\emph{Case  $\gamma=((\eps_2,1),(\eps_1,0))$.}
A direct inspection of Table \ref{Table:components} gives an inclusion
\begin{equation}
\label{eq:Igamma:1}
\underbrace{(c_{22},c_{11},(a-b)c_{13}d_{21}+(b-c-1)c_{23}d_{11}^*)}_{\defeq J}\subseteq \fP^{(j)}_{(0,0)}\cap\fP^{(j)}_{(\eps_2,0)}\cap\fP^{(j)}_{(\eps_2,1)}.
\end{equation}
Thus, we have a surjection
\begin{equation}
\label{eq:ring:id:intersect:4}
{S}^{(j)}/(J+{I}^{(j)}_{\tau,\nabla_{\textnormal{alg}}})\onto {S}^{(j)}/(\fP^{(j)}_{(0,0)}\cap\fP^{(j)}_{(\eps_2,0)}\cap\fP^{(j)}_{(\eps_2,1)})
\end{equation}
and a direct computation gives
\[
{S}^{(j)}/(J+{I}^{(j)}_{\tau,\nabla_{\textnormal{alg}}})\cong \frac{\F[\![c_{12},c_{13},d_{21},d_{31},d_{32},x_{11}^*,x_{22}^*,x_{33}^*]\!]}{c_{13}(d_{31}d_{22}^*-d_{32}d_{21}),c_{12}d_{21},c_{12}d_{31}}.
\]
The latter ring is reduced, equidimensional of dimension $6$ and has $3$ minimal primes, and hence, by \cite[Lemma 3.6.11]{LLLM2}, the surjection \eqref{eq:ring:id:intersect:4} is an isomorphism.
We conclude that \eqref{eq:Igamma:1} is an equality.

We now prove the assertion on the minimal number of generators.
First of all we note that $c_{11}\in (c_{22}, (b-c-1)c_{23}d_{11}^*+(a+b)(c_{13}d_{21}))$.
Indeed, using the three equations in row $\textnormal{Mon}_\tau$, and the equation in line $4$ of row $I^{(j)}_{\tau,\nabla_{\textnormal{alg}}}$ (the ``determinant'' equation) in Table \ref{Table_Ideals_1}, we have 
\begin{align*}
f\defeq &(a-b)(b-c)(a-c-1)c_{13}d_{21}d_{32}-(a-c)(a-b)(a-c-1)c_{11}d_{22}^*d_{33}^*+\\
&\qquad+(b-c)(a-c)(b-c-1)c_{33}d_{22}^*d_{11}^*+(a-b-1)(a-b)(b-c)c_{22}d_{11}^*d_{33}^*\in I^{(j)}_{\tau,\nabla_{\textnormal{alg}}}
\end{align*}
and, on the other hand,
\[
xc_{11}d_{22}^*d_{33}^*+yc_{22}d_{11}^*d_{33}^*+zd_{32}\left(c_{23}d_{11}^*+\frac{a+b}{b-c-1}c_{13}d_{21}\right)+\kappa d_{11}^*\textnormal{Mon}_{\tau,1}=f
\]
where $z\defeq (b-c-1)(b-c)(a-c-1)$, $\kappa\defeq -(b-c-1)(b-c)$, $y=\kappa(a-b-1)$ and $x=-(a-c)(a-b)(a-c-1)$.

Hence $I_\gamma=(c_{22}, (b-c-1)c_{23}d_{11}^*+(a+b)(c_{13}d_{21}))$ and we now prove that $\ovl{I}_\gamma$ has dimension $2$.
We first note that $\ovl{c}_{22}\neq0$, as $c_{22}\notin \fm_{S^{(j)}}\cdot (J+I^{(j)}_{\tau,\nabla_{\textnormal{alg}}})$ (alternatively, one can check on the explicit equations of Table \ref{Table_Ideals_1} that $c_{22}\neq 0$ in $S^{(j)}/(\fm_{S^{(j)}}^2+I^{(j)}_{\tau,\nabla_{\textnormal{alg}}})$).
Now, if we have a relation of the form $\ovl{c}_{2}+\kappa \ovl{(a-b)c_{13}d_{21}+(b-c-1)c_{23}d_{11}^*)}$ in $\ovl{I}_\gamma$, for some $\kappa\in \F^\times$, this would imply that the natural inclusion $(c_{22})\subseteq I_\gamma$ induces an isomorphism of $1$-dimensional $\F$-vector spaces, and hence that $(c_{22})\subseteq I_\gamma$ is in fact an equality.
This is impossible since $(c_{22})=I_\beta$, $I_\beta\neq I_\gamma$ (e.g.~by looking at the number of minimal primes of $S^{(j)}/I^{(j)}_{\tau,\nabla_{\textnormal{alg}}}$ above them which has been computed along the proof).
\end{proof}

\begin{lemma}
\label{lemma:id:relations}
We have the following equalities in ${S}^{(j)}/{I}^{(j)}_{\tau,\nabla_{\textnormal{alg}}}$
\begin{enumerate}
\item
\label{it:id:relations:1}
$-\big(\frac{a-b}{b-c}\big)c_{11}d^*_{22}d^*_{33}-\big(\frac{1-a+b}{1-a+c}\big)d^*_{11}c_{22}d^*_{33}-c_{13}(d^*_{22}d_{31}-d_{21}d_{32})\equiv0$;
\item
\label{it:id:relations:3}
$(b-c)d^*_{11}c_{22}+(a-c)c_{11}d^*_{22}-(b-c)c_{12}d_{21}\equiv0$;
\item
\label{it:id:relations:2}
$-\big(\frac{a-c}{b-c}\big)c_{11}d^*_{22}d^*_{33}+\big(\frac{a-b-1}{a-b}\big)d^*_{11}c_{22}d^*_{33}+\big(\frac{b-c-1}{a-b}\big)d^*_{11}c_{23}d_{32}+d_{21}d_{32}c_{13}\equiv0$;
\item
\label{it:id:relations:4}
$-\big(\frac{c+1-a}{a-b}\big)c_{33}d^*_{11}d^*_{22}-\big(\frac{-c+a}{-c+b}\big)d^*_{33}c_{11}d^*_{22}-d_{32}(d^*_{11}c_{23}-c_{13}d_{21})\equiv0$;
\item
\label{it:id:relations:5}
$(a-b)d^*_{33}c_{11}+(c+1-b)c_{33}d^*_{11}-(a-b)d_{31}c_{13}\equiv0$;
\item
\label{it:id:relations:6}
$-\big(\frac{c+1-b}{a-b}\big)c_{33}d^*_{11}d^*_{22}+\big(\frac{c-a}{c+1-a}\big)d^*_{33}c_{11}d^*_{22}+\big(\frac{a-b-1}{c+1-a}\big)d^*_{22}c_{12}d_{21}+d_{13}d_{21}c_{32}\equiv0$.
\end{enumerate}
\end{lemma}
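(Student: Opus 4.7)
The plan is to verify each of the six identities as an explicit mod-$p$ consequence of the generators of $\tld{I}^{(j)}_{\tau,\nabla_\infty}$ listed in the row $t_{\un{1}}$ of Table \ref{Table_Ideals_1} (the $O(p^{N-4})$ tails being irrelevant in the special fiber, cf.\ the proof of Proposition \ref{prop:big_diagram}). The cyclic $\F$-algebra automorphism of ${S}^{(j)}/{I}^{(j)}_{\tau,\nabla_{\textnormal{alg}}}$ recalled at the beginning of \S \ref{subsub:surgery} (sending $c_{ij}\mapsto c_{(132)(i),(132)(j)}$ and $(a,b,c)\mapsto (c+1,a,b)$, and combined with the relations $c_{21}+pd_{21},\, c_{31}+pd_{31},\, c_{32}+pd_{32}$ in $\tld{I}^{(j)}_\tau$ to handle the lower-triangular coordinates) pairs identity \eqref{it:id:relations:1} with \eqref{it:id:relations:4}, \eqref{it:id:relations:2} with \eqref{it:id:relations:6}, and \eqref{it:id:relations:3} with \eqref{it:id:relations:5}. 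Hence it suffices to establish \eqref{it:id:relations:1}, \eqref{it:id:relations:2} and \eqref{it:id:relations:3}.

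Identity \eqref{it:id:relations:3} is essentially the mod-$p$ reduction of the third monodromy generator in row $\textnormal{Mon}_\tau$ of Table \ref{Table_Ideals_1}: specializing $(b_{1,\tau},b_{2,\tau},b_{3,\tau})$ to their residues $(a,b,c)$ modulo $p$ and dropping the $O(p^{N-4})$ term yields
\[
(b-c)(c_{12}d_{21}-c_{22}d_{11}^*)-(a-c)c_{11}d_{22}^*\equiv 0,
\]
which is \eqref{it:id:relations:3} after a sign rearrangement.

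For \eqref{it:id:relations:1} and \eqref{it:id:relations:2} the key inputs are the first two monodromy generators in the special fiber, namely $(a-c-1)(c_{23}d_{32}-c_{33}d_{22}^*)-(a-b-1)c_{22}d_{33}^*\equiv 0$ and $(a-b)(c_{13}d_{31}-c_{11}d_{33}^*)+(b-c-1)c_{33}d_{11}^*\equiv 0$, together with the cubic ``trace'' generator from row $I^{(j)}_\tau$,
\[
c_{13}d_{21}d_{32}-c_{12}d_{21}d_{33}^*-c_{13}d_{31}d_{22}^*-c_{23}d_{32}d_{11}^*+c_{11}d_{22}^*d_{33}^*+c_{22}d_{11}^*d_{33}^*+c_{33}d_{11}^*d_{22}^*\equiv 0,
\]
and the already-proved identity \eqref{it:id:relations:3}. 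Concretely, for \eqref{it:id:relations:2} I would use the first monodromy to eliminate the $c_{23}d_{32}d_{11}^*$ term in the trace relation, then use \eqref{it:id:relations:3} to rewrite $c_{12}d_{21}d_{33}^*$ in terms of $c_{22}d_{11}^*d_{33}^*$ and $c_{11}d_{22}^*d_{33}^*$; after clearing the denominator $(a-b)(b-c)$ the resulting combination is exactly \eqref{it:id:relations:2}. Identity \eqref{it:id:relations:1} is obtained by the analogous manipulation in which one instead eliminates $c_{33}d_{11}^*d_{22}^*$ via the second monodromy (and rewrites $c_{13}(d_{22}^*d_{31}-d_{21}d_{32})$ via the trace relation). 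The three identities \eqref{it:id:relations:4}, \eqref{it:id:relations:5}, \eqref{it:id:relations:6} then follow automatically from the cyclic symmetry.

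The main obstacle is purely bookkeeping: the six near-equal coefficients $a-b,\, b-c,\, a-c,\, a-b-1,\, b-c-1,\, a-c-1$ combine in slightly fragile ways across the monodromy generators and the cubic, and it is easy to commit a sign or denominator error. No conceptual difficulty enters; the whole computation is Gaussian elimination inside the $\F$-span of the ``trace monomials'' $c_{11}d_{22}^*d_{33}^*$, $c_{22}d_{11}^*d_{33}^*$, $c_{33}d_{11}^*d_{22}^*$, $c_{23}d_{32}d_{11}^*$, $c_{13}d_{31}d_{22}^*$, $c_{12}d_{21}d_{33}^*$ modulo the remaining explicit generators, and each identity admits an independent sanity check by specializing $(a,b,c)$ to a numerical triple sufficiently generic to make all six coefficients nonzero.
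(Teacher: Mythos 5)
Your high-level framework is the same as the paper's: reduce by the cyclic automorphism to items \eqref{it:id:relations:1}, \eqref{it:id:relations:2}, \eqref{it:id:relations:3}, identify \eqref{it:id:relations:3} with the mod-$p$ reduction $\ovl{\textnormal{Mon}}_{\tau,3}$, and derive \eqref{it:id:relations:1} and \eqref{it:id:relations:2} by Gaussian elimination among the ``trace monomials'' using the monodromy generators together with the cubic ``trace'' generator from row $I^{(j)}_\tau$. That is the correct structure and the correct toolbox.

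However, the concrete recipes you give for \eqref{it:id:relations:1} and \eqref{it:id:relations:2} are both wrong, and one of them is genuinely short a relation. Your recipe for \eqref{it:id:relations:2} --- eliminate $c_{23}d_{32}d_{11}^*$ from the trace relation via $\ovl{\textnormal{Mon}}_{\tau,1}$ and then substitute $c_{12}d_{21}d_{33}^*$ via \eqref{it:id:relations:3} --- cannot produce \eqref{it:id:relations:2}, since that identity \emph{keeps} the monomial $d_{11}^*c_{23}d_{32}$ which your first step destroys; if you carry out the calculation you will find you have reproved \eqref{it:id:relations:1}. Your recipe for \eqref{it:id:relations:1} --- eliminate $c_{33}d_{11}^*d_{22}^*$ from the trace via $\ovl{\textnormal{Mon}}_{\tau,2}$ and substitute via \eqref{it:id:relations:3} --- leaves behind a surviving $c_{23}d_{32}d_{11}^*$ term that appears in neither \eqref{it:id:relations:1} nor \eqref{it:id:relations:2}, so it does not terminate on the target. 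The missing point is that \eqref{it:id:relations:2} genuinely requires \emph{both} $\ovl{\textnormal{Mon}}_{\tau,1}$ and $\ovl{\textnormal{Mon}}_{\tau,2}$ in addition to the trace relation and \eqref{it:id:relations:3}: one of the two monodromy relations is used to trade $c_{13}d_{31}d_{22}^*$ for $c_{11}d_{22}^*d_{33}^*$ and $c_{33}d_{11}^*d_{22}^*$, and the other is then used to trade the new $c_{33}d_{11}^*d_{22}^*$ term for $c_{23}d_{32}d_{11}^*$ and $c_{22}d_{11}^*d_{33}^*$. The paper does exactly this: \eqref{it:id:relations:1} is obtained from $\frac{d_{11}^*}{a-c-1}\ovl{\textnormal{Mon}}_{\tau,1}$ using the trace relation and $\ovl{\textnormal{Mon}}_{\tau,3}$ (three relations), while \eqref{it:id:relations:2} is obtained from $\frac{d_{22}^*}{a-b}\ovl{\textnormal{Mon}}_{\tau,2}$ using the trace relation, $\ovl{\textnormal{Mon}}_{\tau,1}$, and $\ovl{\textnormal{Mon}}_{\tau,3}$ (four relations). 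So your ``purely bookkeeping'' caveat does not quite cover the issue: it is not merely a sign or denominator slip, but a miscount of the generators needed for \eqref{it:id:relations:2}.
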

\begin{proof}
By the remark at the beginning of \S \ref{subsub:surgery} it is enough to prove the statements for items \eqref{it:id:relations:1}, \eqref{it:id:relations:3} and \eqref{it:id:relations:2}.

Let $\ovl{\textnormal{Mon}}_{\tau,1}$, $\ovl{\textnormal{Mon}}_{\tau,2}$, $\ovl{\textnormal{Mon}}_{\tau,3}$ denote the mod $p$-reduction of the first, second and third equations in row $\textnormal{Mon}_{\tau}$ of Table \ref{Table_Ideals_1}. In particular,  item \eqref{it:id:relations:3} is $\ovl{\textnormal{Mon}}_{\tau,3}$.

Item \eqref{it:id:relations:1} is deduced from $\frac{d_{11}^*}{a-c-1}\ovl{\textnormal{Mon}}_{\tau,1}\equiv 0$, using the relations
\begin{align*}
d_{11}^*(c_{23}d_{32}-c_{33}d_{22}^*)&\equiv
c_{13}d_{21}d_{32} - c_{12}d_{21}d_{33}^* - c_{13}d_{31}d_{22}^* + c_{11}d_{22}^* d_{33}^*+ d_{11}^*c_{22}d_{33}^* \\
c_{12}d_{21}&\equiv d_{11}^*c_{22}+\frac{(a-b)}{(b-c)}c_{11}d_{22}^*
\end{align*}
(the first relation comes from the sixth equation in row $I^{(j)}_\tau$ in Table \ref{Table_Ideals_1}, and the second relation from $\ovl{\textnormal{Mon}}_{\tau,3}$).

Item \eqref{it:id:relations:2} is deduced from $\frac{d_{22}^*}{a-b}\ovl{\textnormal{Mon}}_{\tau,2}\equiv 0$, using the relations
\begin{align*}
d_{22}^*(c_{23}d_{32}-c_{22}d_{33}^*)&\equiv
c_{13}d_{21}d_{32} - c_{12}d_{21}d_{33}^* - c_{23}d_{32}d_{11}^* + c_{33}d_{22}^* d_{11}^*+ d_{11}^*c_{22}d_{33}^* 
 \\
c_{33}d_{22}^*&\equiv c_{23}d_{32}-\frac{(a-b-1)}{(a-c-1)}c_{22}d_{33}^*
\\
c_{12}d_{21}&\equiv d_{11}^*c_{22}+\frac{(a-b)}{(b-c)}c_{11}d_{22}^*
\end{align*}
(the first relation comes from the sixth equation in row $I^{(j)}_\tau$ in Table \ref{Table_Ideals_1}, and the second and third relation from $\ovl{\textnormal{Mon}}_{\tau,2}$ and $\ovl{\textnormal{Mon}}_{\tau,3}$).
\end{proof}

\subsubsection{Justification for Table \ref{Table_Ideals_mod_p}}
\label{subsub:Just:T1}
The justification is a direct computation, performed by exhibiting elements in $\tld{I}^{(j)}_{\{w_0,\alpha\beta\},\nabla_{\textnormal{alg}}}\defeq \tld{I}^{(j)}_{\tau_{w_0},\nabla_{\textnormal{alg}}}\cap \tld{I}^{(j)}_{\tau_{\alpha\beta},\nabla_{\textnormal{alg}}}$ (resp.~in $\tld{I}^{(j)}_{\{w_0,\beta\alpha\},\nabla_{\textnormal{alg}}}=\tld{I}^{(j)}_{\tau_{w_0},\nabla_{\textnormal{alg}}}\cap \tld{I}^{(j)}_{\tau_{\beta\alpha},\nabla_{\textnormal{alg}}}$), and taking their mod-$\varpi$ reduction.

We mention that these computation can ultimately be checked by exhibiting a Groebner basis for the ideals $\tld{I}^{(j)}_{\tau_{w_0},\nabla_{\textnormal{alg}}}$, $\tld{I}^{(j)}_{\tau_{\alpha\beta},\nabla_{\textnormal{alg}}}$ and $\tld{I}^{(j)}_{\tau_{\beta\alpha},\nabla_{\textnormal{alg}}}$ (for the monomial ordering on $\tld{S}^{(j)}$ given by $c_{11}>c_{12}>c_ {13}>d_{21}>c_{21}>c_{22}>c_{23}>c_{31}>d_{31}>d_{32}>c_{32}>c_{33}$), and give full detail for the most complicated equations (namely, those involving the structure constants from the monodromy).

\begin{proof}[Study of $\tld{I}^{(j)}_{\{w_0,\alpha\beta\},\nabla_{\textnormal{alg}}}\defeq \tld{I}^{(j)}_{\tau_{w_0},\nabla_{\textnormal{alg}}}\cap \tld{I}^{(j)}_{\tau_{\alpha\beta},\nabla_{\textnormal{alg}}}$]
We claim that the element $f\defeq (b_{\tau_{\alpha\beta},2}-b_{\tau_{\alpha\beta},3})d_{21}c_{22}d^*_{33}-(b_{\tau_{\alpha\beta},2}-b_{\tau_{\alpha\beta},3}-1)c_{23}d_{31}d^*_{22}+(b_{\tau_{\alpha\beta},,2}-b_{\tau_{\alpha\beta},1}-1)c_{21}d^*_{33}d^*_{22}$ (whose mod $\varpi$-reduction gives the element in the third line of row $\alpha\beta\alpha t_{\un{1}},\alpha\beta t_{\un{1}}$ in Table \ref{Table_Ideals_mod_p}) is in $\tld{I}^{(j)}_{\{w_0,\alpha\beta\},\nabla_{\textnormal{alg}}}$.
Indeed, on the one hand $c_{21}, c_{22}, c_{23}$ all belong to $\tld{I}^{(j)}_{\tau_{w_0},\nabla_{\textnormal{alg}}}$, and on the other hand
\[
f=(c_{22}+pd_{22}^*)(b_{\tau_{\alpha\beta},2}-b_{\tau_{\alpha\beta},3})d_{33}^*d_{21}-d_{22}^*(\textnormal{Mon}_{\tau_{\alpha\beta},2})
\]
where $\textnormal{Mon}_{\tau_{\alpha\beta},2}$ denotes the second equation in row $(\alpha\beta t_{\un{1}},  \tld{I}^{(j)}_{\tau,\nabla_{\infty}},\textnormal{Mon}_{\tau})$ in Table \ref{Table_Ideals}.

In a similar fashion, we have the equality
\begin{align*}
&((b_{\tau_{w_0},2}-b_{\tau_{w_0},3})d_{21}d_{32}+d_{31})c_{22}+((b_{\tau_{w_0},3}-b_{\tau_{w_0},1}-1+p)d_{32}+c_{32})c_{21}+\textnormal{Mon}_{\tau_{w_0},2}=\\
&\qquad=
((b_{\tau_{w_0},2}-b_{\tau_{w_0},3})d_{21}d_{32}+d_{31})(c_{22}+pd_{22}^*)+(b_{\tau_{w_0},3}-b_{\tau_{w_0},1}-1)(c_{21}d_{32}-c_{31}d_{22}^*)+c_{21}(c_{32}+pd_{32})
\end{align*}
hence obtaining an element in $\tld{I}^{(j)}_{\{w_0,\alpha\beta\},\nabla_{\textnormal{alg}}}$ which reduces modulo $p$ to the last equation in row $(\alpha\beta\alpha t_{\un{1}},\alpha\beta t_{\un{1}})$ of Table \ref{Table_Ideals_mod_p}.

Finally, we check that 
\begin{equation}
\label{eq:in:inter}
c_{22}(xc_{11}+yd_{11}^*)+z \textnormal{Mon}_{\tau_{w_0},1}=(c_{22}+pd_{22}^*)(xc_{11}+yd_{11}^*)+\textnormal{Mon}_{\tau_{\alpha\beta},1}
\end{equation}
where 
\[
\begin{cases}
z\defeq\frac{(b_{\tau_{\alpha\beta},2}-b_{\tau_{\alpha\beta},3})}{(b_{\tau_{w_0},2}-b_{\tau_{w_0},3})},&\\
y\defeq 1-(b_{\tau_{\alpha\beta},2}-b_{\tau_{\alpha\beta},3})-\frac{(b_{\tau_{\alpha\beta},2}-b_{\tau_{\alpha\beta},3})}{(b_{\tau_{w_0},2}-b_{\tau_{w_0},3})},&\\
x\defeq\frac{1}{p}\frac{(b_{\tau_{\alpha\beta},2}-b_{\tau_{\alpha\beta},3})(b_{\tau_{w_0},3}-b_{\tau_{w_0},1})-(b_{\tau_{\alpha\beta},3}-b_{\tau_{\alpha\beta},1})(b_{\tau_{w_0},2}-b_{\tau_{w_0},3})}{(b_{\tau_{w_0},2}-b_{\tau_{w_0},3})}&\\
\end{cases}
\]
(note that $x,y,z\in\ \Zp$ by the genericity assumption on $\mu_j+\eta_j$ and fact that $b_{\tau_{\alpha\beta},i}\equiv b_{\tau_{w_0},i}$ for $i=1,2,3$) and where $\textnormal{Mon}_{\tau_{w_0},1}$ (resp.~$\textnormal{Mon}_{\tau_{\alpha\beta},1}$) denotes the first equation in row $(\alpha\beta \alpha t_{\un{1}},  \tld{I}^{(j)}_{\tau,\nabla_{\infty}},\textnormal{Mon}_{\tau})$ (resp.~$(\alpha\beta t_{\un{1}},  \tld{I}^{(j)}_{\tau,\nabla_{\infty}},\textnormal{Mon}_{\tau})$) in Table \ref{Table_Ideals}.
Observing that $z\equiv 1$ and $y\equiv -(b-c)$  modulo $p$, equation \eqref{eq:in:inter} justifies the fourth line in row $(\alpha\beta\alpha t_{\un{1}},\alpha\beta t_{\un{1}})$ of Table \ref{Table_Ideals_mod_p}.

The computations for the elements in the first two lines in row $\alpha\beta\alpha t_{\un{1}},\alpha\beta t_{\un{1}}$ are easier (only involving finite height equations).
For instance the element $c_{23}d_{32}-c_{22}d^*_{33}$ is evidently in ${I}^{(j)}_{\{w_0,\alpha\beta\},\nabla_{\textnormal{alg}}}$ since on the one hand $c_{22},c_{23}\in \tld{I}^{(j)}_{\tau_{w_0},\nabla_{\textnormal{alg}}}$ and on the other hand $c_{22}+pd_{22}^*, c_{23}d_{32}+pd_{33}^*d_{22}^*\in \tld{I}^{(j)}_{\tau_{w_0},\nabla_{\textnormal{alg}}}$.
\end{proof}

\begin{proof}[Study of $\tld{I}^{(j)}_{\tau_{w_0},\nabla_{\textnormal{alg}}}\cap \tld{I}^{(j)}_{\tau_{\beta\alpha},\nabla_{\textnormal{alg}}}$]
We explicitly construct elements in $\tld{I}^{(j)}_{\tau_{w_0},\nabla_{\textnormal{alg}}}\cap \tld{I}^{(j)}_{\tau_{\beta\alpha},\nabla_{\textnormal{alg}}}$ and compute their mod $p$-reductions.

The elements 
\begin{align}
\label{eq:4th:ABA:BA:1}
&(b_{\tau_{\beta\alpha},2}-b_{\tau_{\beta\alpha},3}+1)(c_{32+pd_{32}})+(b_{\tau_{\beta\alpha},1}-b_{\tau_{\beta\alpha},3})d_{31}(c_{13}d_{32}-c_{12}d_{33}^*)+\\
&\qquad+(b_{\tau_{\beta\alpha},3}-b_{\tau_{\beta\alpha},2})d_{32}(c_{13}d_{31}-c_{11}d_{33}^*+pd_{11}d_{33}^*)
\nonumber
\end{align}
and 
\begin{equation}
\label{eq:4th:ABA:BA:2}
(b_{\tau_{\beta\alpha},1}-b_{\tau_{\beta\alpha},2})(c_{13}d_{31}-c_{11}d_{33}^*)-\textnormal{Mon}_{\tau_{\beta\alpha}}.
\end{equation}
are equal, and, by direct inspection of the finite height equations in Table \ref{Table_Ideals}, equation \eqref{eq:4th:ABA:BA:1} defines an element in $\tld{I}^{(j)}_{\tau_{w_0},\nabla_{\textnormal{alg}}}$ and equation \eqref{eq:4th:ABA:BA:2} defines an element in $\tld{I}^{(j)}_{\tau_{\beta\alpha},\nabla_{\textnormal{alg}}}$.
This equation reduces mod $p$ to the fourth line of row $\alpha\beta\alpha t_{\un{1}}, \beta\alpha t_{\un{1}}$ of Table \ref{Table_Ideals_mod_p}.

Let $x'\defeq \frac{b_{\tau_{w_0},1}-b_{\tau_{w_0},2}+p(b_{\tau_{w_0},1}-b_{\tau_{w_0},3})}{1+p}$.
A direct computation shows that the expressions
\begin{align}
\label{eq:4th:ABA:BA:1'}
&d_{11}^*d_{33}^*(\textnormal{Mon}_{\alpha\beta\alpha,2})+x'c_{11}d_{21}d_{33}^*(c_{32}+pd_{32})+(b_{\tau_{w_0},2}-b_{\tau_{w_0},3})d_{21}d_{31}(c_{13}d_{32}-c_{12}d_{33}^*)+
\\
&\qquad+\Big(x'd_{31}d_{22}^*+(b_{\tau_{w_0},1}-b_{\tau_{w_0},3})c_{31}d_{22}^*-(b_{\tau_{w_0},2}-b_{\tau_{w_0},3})d_{21}d_{32}\Big)\Big(c_{13}d_{31}-c_{11}d_{33}^*+pd_{11}^*d_{33}^*\Big)+\nonumber\\
&\qquad\qquad-(b_{\tau_{w_0},1}-b_{\tau_{w_0},3})d_{31}d_{22}^*(c_{13}c_{31}+pc_{11}d_{33})
\nonumber
\end{align}
and 
\begin{align}
\label{eq:4th:ABA:BA:2'}
&\Big((b_{\tau_{w_0},1}-b_{\tau_{w_0},3})(p-c_{11})+(b_{\tau_{w_0},1}-b_{\tau_{w_0},3}+1)\Big)(c_{31}+pd_{31})d_{11}^*d_{22}^*d_{33}^*+
\\
&\qquad+\Big(x'(d_{31}d_{22}^*-d_{21}c_{32})-(b_{\tau_{w_0},2}-b_{\tau_{w_0},3}+px')d_{21}d_{32}\Big)(c_{13}c_{31}-c_{11}d_{33}^*)
+\nonumber
\\
&\qquad\qquad+x'd_{21}d_{31}(c_{13}c_{32}+pc_{12}d_{33}^*)+(b_{\tau_{w_0},2}-b_{\tau_{w_0},3}+px')d_{31}(c_{13}d_{21}d_{32}-c_{12}d_{21}d_{33}^*-pd_{11}^*d_{22}^*d_{33}^*)
\nonumber
\end{align}
are equal, and, as above, equation \eqref{eq:4th:ABA:BA:1'} defines an element in $\tld{I}^{(j)}_{\tau_{w_0},\nabla_{\textnormal{alg}}}$ and \eqref{eq:4th:ABA:BA:2'} defines an element in $\tld{I}^{(j)}_{\tau_{\beta\alpha},\nabla_{\textnormal{alg}}}$.
The mod $p$-reduction of such element justifies the equation in the fifth line of row $\alpha\beta\alpha t_{\un{1}}, \beta\alpha t_{\un{1}}$ of Table \ref{Table_Ideals_mod_p}.

Finally, let
\begin{align*}
z'&\defeq \frac{(b_{\tau_{w_0},2}-b_{\tau_{w_0},3})-(b_{\tau_{\beta\alpha},2}-b_{\tau_{\beta\alpha},3})}{p}+(b_{\tau_{w_0},2}-b_{\tau_{w_0},3})\\
z''&\defeq \frac{(b_{\tau_{w_0},3}-b_{\tau_{w_0},1})-(b_{\tau_{\beta\alpha},3}-b_{\tau_{\beta\alpha},1})}{p}+(b_{\tau_{w_0},3}-b_{\tau_{w_0},1}).
\end{align*}
Again, a direct computation shows that
\begin{align*}
&(z'c_{12}d_{21}+z''c_{11}d_{22}^*-pd_{11}d_{22}^*)(c_{13}d_{31}-c_{11}d_{33}^*+pd_{11}^*d_{33}^*)-(p+1)d_{11}^*d_{33}^*(\textnormal{Mon}_{\tau_{w_0},1})=\\
&\qquad=
(z'c_{12}d_{21}+z''c_{11}d_{22}^*-pd_{11}d_{22}^*)(c_{13}d_{31}-c_{11}d_{33}^*)-d_{11}^*d_{33}^*(\textnormal{Mon}_{\tau_{\beta\alpha},1})
\end{align*}
which, similarly as in the previous cases, defines an element in $\tld{I}^{(j)}_{\tau_{w_0},\nabla_{\textnormal{alg}}}\cap \tld{I}^{(j)}_{\tau_{\beta\alpha},\nabla_{\textnormal{alg}}}$ whose mod $p$-reduction justifies the last equation in row $\alpha\beta\alpha t_{\un{1}}, \beta\alpha t_{\un{1}}$ of Table \ref{Table_Ideals_mod_p}.
\end{proof}

\subsubsection{Justification for Table \ref{Table_Ideals_mod_p_F2}}
As for Table \ref{Table_Ideals_mod_p}, the justification is a direct computation (cf.~\S \ref{subsub:Just:T1}).
For $i\in\{1,2,3\}$ we set $b_{\Id,i}\defeq b_{\tau_{t_{w_0(\eta)}},i}$, $b_{\alpha,i}\defeq b_{\tau_{t_{w_0(\eta)}\alpha},i}$  and $b_{\beta,i}\defeq b_{\tau_{t_{w_0(\eta)}\beta},i}$ for readability in what follows.

\begin{proof}[Study of $\tld{I}^{(j)}_{\tau_{t_{w_0(\eta)}},\nabla_{\textnormal{alg}}}\cap \tld{I}^{(j)}_{\tau_{t_{w_0(\eta)\alpha}},\nabla_{\textnormal{alg}}}$]

Define $z\defeq\frac{b_{\alpha,1}-b_{\alpha,2}}{b_{\Id,1}-b_{\Id,2}}$, $y\defeq b_{\alpha,2}-b_{\alpha,1}-1+z$ and 
\[
x=\frac{1}{p}\big(b_{\alpha,1}-b_{\alpha,3}-z(b_{\Id,1}-b_{\Id,3})\big)
\]
(note that $z\in\Zp$ as $b_{\Id,1}-b_{\Id,2}\not\equiv0$ mod $p$ and that $x\in \Zp$ as $b_{\alpha,1}-b_{\alpha,3}-z(b_{\Id,1}-b_{\Id,3})\equiv 0$ mod $p$).
A direct computation shows that the expressions
\[
d_{11}^*(xc_{12}e_{23}+yc_{13}c_{22}+z\textnormal{Mon}_{t_{w_0(\eta)}})
\]
and 
\[
xe_{13}(c_{12}d_{21}+pd_{11}^*d_{22}^*)-xc_{12}(e_{13}d_{21}-e_{23}d_{11}^*)+yd_{11}^*c_{13}(c_{22}+pd_{22})-d_{11}^*\textnormal{Mon}_{t_{w_0(\eta)}\alpha}
\]
are equal (where we denoted by $\textnormal{Mon}_{t_{w_0(\eta)}}$ and $\textnormal{Mon}_{t_{w_0(\eta)}\alpha}$ the last equation in row $t_{w_0(\eta)}$ and $t_{w_0(\eta)}\alpha$ respectively).
These expressions define an element in the intersection $\tld{I}^{(j)}_{\tau_{t_{w_0(\eta)}},\nabla_{\textnormal{alg}}}\cap \tld{I}^{(j)}_{\tau_{t_{w_0(\eta)\alpha}},\nabla_{\textnormal{alg}}}$, whose mod $p$ reduction explains the second line in row $t_{w_0(\eta)}, t_{w_0(\eta)}\alpha$ of Table \ref{Table_Ideals_mod_p_F2}.
\end{proof}

\begin{proof}[Study of $\tld{I}^{(j)}_{\tau_{t_{w_0(\eta)}},\nabla_{\textnormal{alg}}}\cap \tld{I}^{(j)}_{\tau_{t_{w_0(\eta)\beta}},\nabla_{\textnormal{alg}}}$]

Define%
\begin{align*}
z'&=\frac{1}{p}\big(b_{\beta,1}-b_{\beta,2}-(p+1)(b_{\Id,1}-b_{\Id,2})\big)\\
z''&=\frac{1}{p}\big((p+1)(b_{\Id,1}-b_{\Id,3})-(b_{\beta,1}-b_{\beta,3})\big)
\end{align*}
(note that $z'$ and $z''$ are elements of $\Zp$ as $b_{\beta,1}-b_{\beta,j}-(p+1)(b_{\Id,1}-b_{\Id,j})\equiv 0$ mod $p$ for $j\in\{2,3\}$).
Again a direct computation shows that the expressions
\[
(z''e_{13}d_{22}^*-pc_{13}d_{22}^*+z'c_{12}c_{23})c_{33}+(p+1)d_{33}^*\textnormal{Mon}_{t_{w_0(\eta)}}
\]
and 
\[
(z''e_{13}d_{22}^*-pc_{13}d_{22}^*+z'c_{12}c_{23})(c_{33}+p)-d_{33}^*\textnormal{Mon}_{t_{w_0(\eta)}\beta}
\]
are equal (where again we denoted by $\textnormal{Mon}_{t_{w_0(\eta)}}$ and $\textnormal{Mon}_{t_{w_0(\eta)}\beta}$ the last equation in row $t_{w_0(\eta)}$ and $t_{w_0(\eta)}\beta$ respectively).
These expressions define an element in the intersection $\tld{I}^{(j)}_{\tau_{t_{w_0(\eta)}},\nabla_{\textnormal{alg}}}\cap \tld{I}^{(j)}_{\tau_{t_{w_0(\eta)\beta}},\nabla_{\textnormal{alg}}}$, whose mod $p$ reduction explains the second line in row $t_{w_0(\eta)}, t_{w_0(\eta)}\beta$ of Table \ref{Table_Ideals_mod_p_F2}.
\end{proof}

\subsubsection{Computations on $\Tor^{{S}^{(j)}}_1(\F,(\tld{S}^{(j)}/\tld{I}^{(j)}_{T,\nabla_{\textnormal{alg}}})\otimes\F)$}
\label{subsubsec:Tor:cmpt}
We provide details for the computations of the maps between various $\Tor^{{S}^{(j)}}_1(\F,(\tld{S}^{(j)}/\tld{I}^{(j)}_{T,\nabla_{\textnormal{alg}}})\otimes\F)$ appearing in the proofs of Lemmas \ref{lemma:ideal3types}, \ref{lemma:ideal3types:prime}, \ref{lemma:ideal4types} and \ref{lemma:ideal4types:prime}.
In the following computation, given an ideal $I\subseteq {S}^{(j)}$ we write elements of $\Tor_1^{{S}^{(j)}}(\F,{S}^{(j)}/I)$ in terms of generators of $I$, by virtue of the canonical isomorphism $\Tor_1^{{S}^{(j)}}(\F,{S}^{(j)}/I)\cong I/(\fm_{{S}^{(j)}}\cdot I)$.
\begin{proof}[Complements in the proof of Lemma \ref{lemma:ideal3types}]
We need to prove that the union of the images of the canonical maps
\begin{align}
\label{eq:first:map}
\Tor_1(\F,(\tld{S}^{(j)}/\big( \tld{I}^{(j)}_{\tau_{\alpha\beta},\nabla_{\textnormal{alg}}}\cap \tld{I}^{(j)}_{\tau_{w_0},\nabla_{\textnormal{alg}}}\big)) \otimes \F)
&\rightarrow
\Tor_1(\F,(\tld{S}/ \tld{I}^{(j)}_{\tau_{w_0},\nabla_{\infty}}) \otimes \F)
\\
\label{eq:second:map}
\Tor_1(\F,(\tld{S}^{(j)}/\big( \tld{I}^{(j)}_{\tau_{\beta\alpha},\nabla_{\textnormal{alg}}}\cap \tld{I}^{(j)}_{\tau_{w_0},\nabla_{\textnormal{alg}}}\big)) \otimes \F)
&\rightarrow
\Tor_1(\F,(\tld{S}/ \tld{I}^{(j)}_{\tau_{w_0},\nabla_{\infty}}) \otimes \F)
\end{align}
generates a spanning set for $\Tor_1(\F,(S^{(j)}/ I^{(j)}_{\tau_{w_0},\nabla_{\infty}}) \otimes \F)$, e.g.~using Table \ref{Table_Ideals}, the set given by the images of the elements
\begin{align*}
&c_{21},\ c_{22},\, c_{23},\, c_{32},\, c_{33}\\
&c_{13}d_{32}-c_{12}d_{33}^*,\, 
c_{13}d_{31}-c_{11}d_{33}^*,\, c_{13}c_{31}\\
&(b-c)c_{21}d_{12}+(c-a)c_{11}d_{22}^*,\, c_{31}
\end{align*}
(where $(a,b,c)\defeq s_j^{-1}(\mu_j+\eta_j)-(1,1,1)\equiv b_{\tau_{w_0}}$ modulo $\varpi$).
We immediately see from row $\alpha\beta\alpha t_{\un{1}},\alpha\beta t_{\un{1}}$ in Table \ref{Table_Ideals_mod_p} that the elements $c_{32}, c_{33}, c_{13}c_{31}, c_{13}d_{32}-c_{12}d_{33}^*$ are in the image of \eqref{eq:first:map}.
Similarly the elements $c_{21}, c_{22}, c_{23}$ are in the image of \eqref{eq:second:map}.

Writing 
\[
c_{13}d_{31}-c_{11}d_{33}^*=\underbrace{c_{13}d_{21}d_{32}-c_{12}d_{21}d_{33}^*-c_{13}d_{31}d_{22}^*+c_{11}d_{22}^*d_{33}^*}_{\in\text{ image of \eqref{eq:second:map}}}-d_{21}\underbrace{(c_{13}d_{32}-c_{12}d_{33}^*)}_{\in\text{ image of \eqref{eq:first:map}}}
\]
we conclude that $c_{13}d_{31}-c_{11}d_{33}^*$ is in the $\F$-span of the union of the images of \eqref{eq:first:map},\eqref{eq:second:map}.

Similarly, 
\begin{align*}
&(b-c)c_{21}d_{12}+(c-a)c_{11}d_{22}^*=\underbrace{(b-c)c_{12}d_{21}+\ovl{x}c_{11}c_{22}-(a-c)c_{11}d_{22}^*-(b-c)c_{22}d_{11}^*}_{\in\text{ image of \eqref{eq:first:map}}}+\\&\qquad-(\ovl{x}c_{11}-(b-c)d_{11}^*)\underbrace{c_{22}}_{\in\text{ image of \eqref{eq:second:map}}}
\end{align*}
so that $(b-c)c_{21}d_{12}+(c-a)c_{11}d_{22}^*$ is in the $\F$-span of the union of the images of \eqref{eq:first:map},\eqref{eq:second:map}.

Finally, note that $c_{22}(d_{21}d_{32}),c_{22}(d_{21}c_{32}), c_{22}(d_{31}d_{22}^*), c_{21}(d_{32}d_{22}^*)\in {I}^{(j)}_{\tau_{w_0},\nabla_{\textnormal{alg}}}\cdot \fm_{\tld{S}^{(j)}}$
so that the last equation  in row $\alpha\beta\alpha t_{\un{1}}, \alpha\beta t_{\un{1}}$ in Table \ref{Table_Ideals_mod_p} is sent by the map \eqref{eq:first:map} to $(a-c+1)c_{31}(d_{22}^*)^2$ and in particular $c_{31}$ is in the image of the map \eqref{eq:first:map}.
\end{proof}

\begin{proof}[Complements in the proof of Lemma \ref{lemma:ideal3types:prime}]
The argumen is similar to that for Lemma \ref{lemma:ideal3types}.
Consider the natural maps
\begin{align}
\label{eq:first:map:prime}
\Tor^{{S}}_1(\F,(\tld{S}/\big( \tld{I}_{\tau_{\alpha t_{w_0(\eta)}},\nabla_{\infty}}\cap \tld{I}_{\tau_{t_{w_0(\eta)}},\nabla_{\infty}}\big)) \otimes \F)
&\rightarrow
\Tor^{{S}}_1(\F,(\tld{S}/\tld{I}_{\tau_{t_{w_0(\eta)}},\nabla_{\infty}}) \otimes \F)
\\
\label{eq:second:map:prime}
\Tor_1^{{S}}(\F,(\tld{S}/\big( \tld{I}_{\tau_{t_{w_0(\eta)}},\nabla_{\infty}}\cap \tld{I}_{\tau_{\beta t_{w_0(\eta)}},\nabla_{\infty}}\big)) \otimes \F)
&\rightarrow
\Tor^{{S}}_1(\F,(\tld{S}/\tld{I}_{\tau_{t_{w_0(\eta)}},\nabla_{\infty}}) \otimes \F).
\end{align}
A spanning set for $\Tor^{{S}}_1(\F,(\tld{S}/\tld{I}_{\tau_{t_{w_0(\eta)}},\nabla_{\infty}}) \otimes \F)$ (using Table \ref{Table_Ideals_mod_p_F2}) is  given by the images of the elements
\begin{align*}
&d_{21},\,d_{32},\,c_{32},\,e_{33},\,c_{33},\,d_{32},\,e_{23},\,c_{22}\\
&(a-c)e_{23}d_{22}^*-(a-b)c_{12}c_{23}.
\end{align*}
We immediately see from row $t_{w_0(\eta)},t_{w_0(\eta)}\alpha$ in Table \ref{Table_Ideals_mod_p_F2} that the elements $\,d_{32},\,c_{32},\,e_{33},\,c_{33},\,d_{32},$ are in the image of \eqref{eq:first:map:prime}, and, from row $t_{w_0(\eta)},t_{w_0(\eta)}\beta$, that the element $d_{21}$ is in the image of \eqref{eq:second:map:prime}.

Moreover, noting that $c_{12}d_{21}, e_{13}d_{21}, c_{13}c_{22}, c_{12}e_{23}\in \fm_{{S}^{(j)}}\cdot{I}^{(j)}_{\tau_{t_{w_0(\eta)}},\nabla_{\textnormal{alg}}}$ we conclude that \eqref{eq:first:map:prime} maps the elements $c_{12}d_{21}-c_{22}d_{11}^*$, $e_{13}d_{21}-e_{23}d_{11}^*$ and $(a-b)(c_{13}c_{22}-c_{12}c_{23})-\ovl{x}c_{12}e_{23}+(a-c)e_{23}d_{22}^*$ to $c_{22}d_{11}^*$, $e_{23}d_{11}^*$ and $(a-c)e_{23}d_{22}^*-(a-b)c_{12}c_{23}$ respectively.
\end{proof}

\begin{proof}[Complements in the proof of Lemma \ref{lemma:ideal4types}]

We check that the union of the images of the canonical maps 
\begin{align}
\label{eq:ideal4types:Map1}
\Tor_1^{{S}}\bigg(\F,S/\big( \tld{I}^{(j)}_{\tau_{\alpha\beta},\nabla_{\textnormal{alg}}}\cap \tld{I}^{(j)}_{\tau_{w_0},\nabla_{\infty}},p\big)\cap (\tld{I}^{(j)}_{\tau_{w_0},\nabla_{\textnormal{alg}}}\cap \tld{I}^{(j)}_{\tau_{\beta\alpha},\nabla_{\infty}},p)\bigg)
&\rightarrow
\Tor^{{S}}_1(\F,S/ {I}^{(j)}_{\Lambda})
\\
\label{eq:ideal4types:Map2}
\Tor_1^{{S}}(\F,S/\big( \tld{I}^{(j)}_{\tau_{\Id},\nabla_{\textnormal{alg}}},p))
&\rightarrow
\Tor^{{S}}_1(\F,S/ {I}^{(j)}_{\Lambda})
\end{align}
generates a spanning set for the target, i.e.~by Lemma \ref{lem:broom:other}, the set given by the image of the elements $c_{33}, d_{32}c_{23}-c_{22}d_{33}^*, c_{22}, c_{11}d_{33}^*-c_{13}d_{31}$ of ${I}^{(j)}_{\Lambda}$.
From the last row of Table \ref{Table_Ideals_mod_p} we immediately see that the elements $c_{33}, d_{32}c_{23}-c_{22}d_{33}^*$ are in the image of the map \eqref{eq:ideal4types:Map1}.
Moreover, by Table \ref{Table_Ideals_1}, the image of the map \eqref{eq:ideal4types:Map2} contains the elements 
\begin{eqnarray*}
(a-c-1)(c_{23}d_{32}-c_{33}d_{22}^*)-(a-b-1)c_{22}d_{33}^*\\
(a-b)(c_{13}d_{31}-c_{11}d_{33}^*)-(b-c-1)c_{33}d_{11}^*.
\end{eqnarray*}
In particular, as $a-b\neq 0\neq b-c$, the union of the images of \eqref{eq:ideal4types:Map1},\eqref{eq:ideal4types:Map2} contains the elements $c_{13}d_{31}-c_{11}d_{33}^*$ and $c_{22}$.
\end{proof}

\begin{proof}[Complements in the proof of Lemma \ref{lemma:ideal4types:prime}]

We check that the union of the images of the canonical maps 
\begin{align}
\label{eq:ideal4types:Map1:prime}
\Tor^{{S}^{(j)}}_1(\F,S^{(j)}/\big( \tld{I}^{(j)}_{\tau_{t_{w_0(\eta)}\alpha },\nabla_{\infty}}\cap \tld{I}^{(j)}_{\tau_{t_{w_0(\eta)}},\nabla_{\infty}},p\big)\cap (\tld{I}_{\tau_{t_{w_0(\eta)}},\nabla_{\infty}}\cap \tld{I}_{\tau_{t_{w_0(\eta)}\beta },\nabla_{\infty}},p)
&\rightarrow
\Tor_1^{{S}^{(j)}}(\F,{S}^{(j)}/I^{(j)}_\Lambda)
\\
\label{eq:ideal4types:Map2:prime}
\Tor^{{S}^{(j)}}_1(\F,(\tld{S}/\tld{I}_{\tau_{t_{w_0(\eta)}w_0},\nabla_{\infty}}) \otimes \F)
&\rightarrow
\Tor_1^{{S}^{(j)}}(\F,{S}^{(j)}/I^{(j)}_\Lambda)
\end{align}
is a spanning set for the target.
By Lemma \ref{lem:broom:other} a spanning set for the target is given by 
\begin{align}
&c_{32}, e_{33}, d_{31}, d_{21}d_{32},
\label{eq:easy:1}
\\ 
&e_{23}, (a-b)c_{12}c_{23}-(a-c)e_{13}d_{22}^*, c_{33}
\label{eq:easy:2}\\
\label{eq:linear:combos}
&c_{23}d_{32},c_{22},c_{12}d_{21}.
\end{align}
By the last row in Table \ref{Table_Ideals_mod_p_F2} the elements in \eqref{eq:easy:2} are immediately checked to be in the image of \eqref{eq:ideal4types:Map1:prime}.
By row $t_{w_0(\eta)}w_0$ in Table \ref{Table_Ideals_2}, and noting further that $c_{13}c_{22}, c_{13}d_{31}\in \fm_{{S}^{(j)}}{I}^{(j)}_{\Lambda}$ we immediately see that the elements in \eqref{eq:easy:2} are in the image of \eqref{eq:ideal4types:Map2:prime}.

As $c_{23}d_{32}-c_{33}d_{22}^*$ is in the image of \eqref{eq:ideal4types:Map1:prime} by the last row of Table \ref{Table_Ideals_mod_p_F2}, we conclude from the above that $c_{23}d_{32}$ is in the linear span of the union of the images of \eqref{eq:ideal4types:Map1:prime} and \eqref{eq:ideal4types:Map2:prime}.
Moreover, as $(c-a-1)(c_{23}d_{32}-c_{33}d_{22}^*)+(a-b)c_{22}d_{33}^*$ is in the image of \eqref{eq:ideal4types:Map1:prime} by  row $t_{w_0(\eta)}w_0$ in Table \ref{Table_Ideals_2}, we conclude by the above $c_{22}$ is also in the linear span of the union of the images of \eqref{eq:ideal4types:Map1:prime} and \eqref{eq:ideal4types:Map2:prime}.
Finally, as $c_{12}d_{21}-c_{22}d_{1}^*$ is in the image of \eqref{eq:ideal4types:Map1:prime} by the last row of Table \ref{Table_Ideals_mod_p_F2}, we conclude from the above that $c_{12}d_{21}$ is also in the linear span of the union of the images of \eqref{eq:ideal4types:Map1:prime} and \eqref{eq:ideal4types:Map2:prime}.
\end{proof}
 
\newpage
\bibliography{Biblio}
\bibliographystyle{amsalpha}

\end{document}